\definecolor{darkred}{rgb}{1,0,0} 
\definecolor{darkgreen}{rgb}{0,0.8,0}
\definecolor{darkblue}{rgb}{0,0,1}
 \numberwithin{equation}{section}
\newtheorem {Theorem}{Theorem}
 \numberwithin{Theorem}{section}
\newtheorem {Lemma}[Theorem]    {Lemma}
\newtheorem {Proposition}[Theorem]{Proposition}
\newtheorem {Corollary}[Theorem]{Corollary}
\theoremstyle{definition}
\newtheorem{Definition}[Theorem]{Definition}
\newtheorem{Remark}[Theorem]{Remark}
\newtheorem{Example}[Theorem]{Example}
\theoremstyle{remark}
\newtheoremstyle{monThm}
{} 
{} 
{\itshape} 
{} 
{\bfseries} 
{.} 
{ } 
{} 
\theoremstyle{monThm}
\newtheorem{MonThm}{Theorem}
\newtheorem{MonCorollary}[MonThm]{Corollary}
\newtheorem{MonConjecture}[MonThm]{Conjecture}
\newtheoremstyle{TheoremForIntro} 
        {.6em}{.6em}              
        {\itshape}                      
        {}                              
        {\bfseries}                     
        { }                             
        { }                             
        {\thmname{#1}\thmnote{ \itshape #3}}
    \theoremstyle{TheoremForIntro}
\chardef\csname pre amssym.def at\endcsname=\the\catcode`\@
\def\undefine#1{\let#1\undefined}
\def\newsymbol#1#2#3#4#5{\let\next@\relax
 \ifnum#2=\@ne\let\next@\msafam@\else
 \ifnum#2=\tw@\let\next@\msbfam@\fi\fi
 \mathchardef#1="#3\next@#4#5}
\def\mathhexbox@#1#2#3{\relax
 \ifmmode\mathpalette{}{\m@th\mathchar"#1#2#3}%
 \else\leavevmode\hbox{$\m@th\mathchar"#1#2#3$}\fi}
\def\hexnumber@#1{\ifcase#1 0\or 1\or 2\or 3\or 4\or 5\or 6\or 7\or 8\or
 9\or A\or B\or C\or D\or E\or F\fi}
\font\teneufm=eufm10
\font\seveneufm=eufm7
\font\fiveeufm=eufm5
\newcommand{\fgl}{{\mathfrak {gl}}}
\newcommand{\fso}{{\mathfrak {so}}}
\newcommand{\fsp}{{\mathfrak {sp}}}
\newcommand{\fg}{{\mathfrak g}}
\newcommand{\fh}{{\mathfrak h}}
\newcommand{\fs}{{\mathfrak s}}
\newcommand{\fc}{{\mathfrak c}}
\newcommand{\ft}{{\mathfrak t}}
\newcommand{\fm}{{\mathfrak m}}
\newcommand{\fu}{{\mathfrak u}}
\newcommand{\sSU}{{\mathsf {SU}}}
\newcommand{\sSL}{{\mathsf {SL}}}
\newcommand{\sSO}{{\mathsf {SO}}}
\newcommand{\sO}{{\mathsf {O}}}
\newcommand{\sSp}{{\mathsf {Sp}}}
\newcommand{\sGL}{{\mathsf {GL}}}
\newcommand{\sSpin}{{\mathsf {Spin}}}
\newcommand{\sPin}{{\mathsf {Pin}}}
\newcommand{\sG}{{\mathsf G}}
\newcommand{\sH}{{\mathsf H}}
\newcommand{\sS}{{\mathsf S}}
\newcommand{\sC}{{\mathsf C}}
\newcommand{\sT}{{\mathsf T}}
\newcommand{\sP}{{\mathsf P}}
\newcommand{\sM}{{\mathsf M}}
\newcommand{\Mod}{{\mathsf {Mod}}}
\newcommand{\Teich}{{\mathsf{Teich}}}
\newcommand{\Prym}{{\mathsf{Prym}}}
\newcommand{\Pic}{{\mathsf{Pic}}}
\newcommand{\Fuch}{{\mathsf{Fuch}}}
\newcommand{\Hom}{{\mathsf{Hom}}}
\newcommand{\Diff}{{\mathsf{Diff}}}
\newcommand{\End}{{\mathsf{End}}}
\newcommand{\Hit}{{\mathsf{Hit}}}
\newcommand{\Aut}{{\mathsf{Aut}}}
\newcommand{\Aa}{{\mathcal A}}
\newcommand{\Bb}{{\mathcal B}}
\newcommand{\Xx}{{\mathcal X}}
\newcommand{\Ii}{{\mathcal I}}
\newcommand{\Jj}{{\mathcal J}}
\newcommand{\Ee}{{\mathcal E}}
\newcommand{\Ff}{{\mathcal F}}
\newcommand{\Gg}{{\mathcal G}}
\newcommand{\Hh}{{\mathcal H}}
\newcommand{\Kk}{{\mathcal K}}
\newcommand{\Mm}{{\mathcal M}}
\newcommand{\Nn}{{\mathcal N}}
\newcommand{\Oo}{{\mathcal O}}
\newcommand{\Pp}{{\mathcal P}}
\newcommand{\Qq}{{\mathcal Q}}
\newcommand{\Uu}{{\mathcal U}}
\newcommand{\Vv}{{\mathcal V}}
\newcommand{\Ww}{{\mathcal W}}
\newcommand{\Zz}{{\mathcal Z}}
\newcommand{\floor}[1]{\left\lfloor #1\right\rfloor}
\newcommand{\mtrx}[1]{\left (\begin{matrix}#1\end{matrix}\right)}
\newcommand{\smtrx}[1]{\left (\begin{smallmatrix}#1\end{smallmatrix}\right)}
\newcommand{\Thmtrx}[1]{\left (\vcenter{\xymatrix@=-.2em{#1}}\right)}
\newcommand{\Sym}{\text{Sym}}
\def    \C      {{\mathbb C}}
\def    \R      {{\mathbb R}}
\def    \Z      {{\mathbb Z}}
\def    \N      {{\mathbb N}}
\def    \Q      {{\mathbb Q}}
\def    \P    {{\mathbb P}}
\def    \ra     {{\rightarrow}}
\def    \lra     {{\longrightarrow}}
\def    \haf    {{\frac{1}{2}}}
\def    \p      {\partial}
\def    \rk     {\operatorname{rk}}
\def    \tr     {\operatorname{tr}}
\newcommand{\An}{\xymatrix{ *{\circ} \ar@{-}[r]|*\dir{ } & *{\circ}\ar@{-}[r]&{\cdots}&*{\circ}\ar@{-}[l]|*\dir{ }\ar@{-}[r]|*\dir{ }&*{\circ} }}
\newcommand{\Anlabel}{\xymatrix@R=.25em{ *{\circ}\ar@<-1ex>@{}[d]^{\alpha_{1}} \ar@{-}[r]|*\dir{ } & *{\circ}\ar@<-1ex>@{}[d]^{\alpha_{2}}\ar@{-}[r]&{\cdots}&*{\circ}\ar@{-}[l]|*\dir{ }\ar@{-}[r]|*\dir{ }\ar@<-2ex>@{}[d]^{\alpha_{n-1}}&*{\circ}\ar@<-1ex>@{}[d]^{\alpha_{n}}\\&&&& }}
\newcommand{\AnExtended}{\xymatrix@R=.25em{&&&&\\&& *{\circ}\ar@<-1ex>@{}[u]^{\ \ \ \ \ \ \alpha_0}\ar@{-}[dddll]\ar@{-}[dddrr]&& \\ &&&&\\&&&& \\*{\circ}\ar@<-1ex>@{}[d]^{} \ar@{-}[r]|*\dir{ } & *{\circ}\ar@<-1ex>@{}[d]^{}\ar@{-}[r]&{\cdots}&*{\circ}\ar@{-}[l]|*\dir{ }\ar@{-}[r]|*\dir{ }\ar@<-2ex>@{}[d]^{ }&*{\circ}\ar@<-1ex>@{}[d]^{ }\\&&&& }}
\newcommand{\AnExtendedlabel}{\xymatrix@R=.25em{&&&&\\&& *{\circ}\ar@<-1ex>@{}[u]^{1}\ar@{-}[dddll]\ar@{-}[dddrr]&& \\ &&&&\\&&&& \\*{\circ}\ar@<-1ex>@{}[d]^{1} \ar@{-}[r]|*\dir{ } & *{\circ}\ar@<-1ex>@{}[d]^{1}\ar@{-}[r]&{\cdots}&*{\circ}\ar@{-}[l]|*\dir{ }\ar@{-}[r]|*\dir{ }\ar@<-2ex>@{}[d]^{1}&*{\circ}\ar@<-1ex>@{}[d]^{1}\\&&&& }}
\newcommand{\Bn}{\xymatrix{ *{\circ} \ar@{-}[r]|*\dir{ } & *{\circ}\ar@{-}[r]&{\cdots}&*{\circ}\ar@{-}[l]|*\dir{ }\ar@{=}[r]|*\dir{>}&*{\circ} }}
\newcommand{\Bnlabel}{\xymatrix@R=.25em{ *{\circ}\ar@<-1ex>@{}[d]^{\alpha_{1}} \ar@{-}[r]|*\dir{ } & *{\circ}\ar@<-1ex>@{}[d]^{\alpha_{2}}\ar@{-}[r]&{\cdots}&*{\circ}\ar@{-}[l]|*\dir{ }\ar@{=}[r]|*\dir{>}\ar@<-2ex>@{}[d]^{\alpha_{n-1}}&*{\circ}\ar@<-1ex>@{}[d]^{\alpha_{n}}\\&&&& }}
 \newcommand{\BnExtended}{\xymatrix@R=.25em{*{\circ}\ar@<-1ex>@{}[d]^{\alpha_0}\ar@{-}[dr]&&&& \\  & *{\circ}\ar@<-1ex>@{}[d]^{ }\ar@{-}[r]&{\cdots}&*{\circ}\ar@{-}[l]|*\dir{ }\ar@{=}[r]|*\dir{>}|*\dir{ }\ar@<-1ex>@{}[d]^{ }&*{\circ}\ar@<-1ex>@{}[d]^{ } \\*{\circ}\ar@<-2ex>@{}[u]^(-.5){ }\ar@{-}[ur] &&&&}}
 \newcommand{\BnExtendedlabel}{\xymatrix@R=.25em{*{\circ}\ar@<-1ex>@{}[d]^{1}\ar@{-}[dr]&&&& \\  & *{\circ}\ar@<-1ex>@{}[d]^{2}\ar@{-}[r]&{\cdots}&*{\circ}\ar@{-}[l]|*\dir{ }\ar@{=}[r]|*\dir{>}|*\dir{ }\ar@<-1ex>@{}[d]^{2}&*{\circ}\ar@<-1ex>@{}[d]^{2} \\*{\circ}\ar@<-2ex>@{}[u]^(-.5){1}\ar@{-}[ur] &&&&}}
\newcommand{\Cn}{\xymatrix{ *{\circ} \ar@{-}[r]|*\dir{ } & *{\circ}\ar@{-}[r]&{\cdots}&*{\circ}\ar@{-}[l]|*\dir{ }\ar@{=}[r]|*\dir{<}&*{\circ} }}
\newcommand{\Cnlabel}{\xymatrix@R=.25em{ *{\circ}\ar@<-1ex>@{}[d]^{\alpha_{1}} \ar@{-}[r]|*\dir{ } & *{\circ}\ar@<-1ex>@{}[d]^{\alpha_{2}}\ar@{-}[r]&{\cdots}&*{\circ}\ar@{-}[l]|*\dir{ }\ar@{=}[r]|*\dir{<}\ar@<-2ex>@{}[d]^{\alpha_{n-1}}&*{\circ}\ar@<-1ex>@{}[d]^{\alpha_{n}}\\&&&& }}
\newcommand{\CnExtended}{\xymatrix@R=.25em{*{\circ}\ar@<-1ex>@{}[d]^{\alpha_0}\ar@{=}[r]|*\dir{>} &*{\circ}\ar@<-1ex>@{}[d]^{ } \ar@{-}[r]|*\dir{ } & *{\circ}\ar@<-1ex>@{}[d]^{ }\ar@{-}[r]&{\cdots}&*{\circ}\ar@{-}[l]|*\dir{ }\ar@{=}[r]|*\dir{<}\ar@<-2ex>@{}[d]^{ }&*{\circ}\ar@<-1ex>@{}[d]^{ }\\&&&&& }}
\newcommand{\CnExtendedlabel}{\xymatrix@R=.25em{*{\circ}\ar@<-1ex>@{}[d]^{1}\ar@{=}[r]|*\dir{>} &*{\circ}\ar@<-1ex>@{}[d]^{2} \ar@{-}[r]|*\dir{ } & *{\circ}\ar@<-1ex>@{}[d]^{2}\ar@{-}[r]&{\cdots}&*{\circ}\ar@{-}[l]|*\dir{ }\ar@{=}[r]|*\dir{<}\ar@<-2ex>@{}[d]^{2}&*{\circ}\ar@<-1ex>@{}[d]^{1}\\&&&&& }}
\newcommand{\Dn}{\xymatrix@R=.25em{&&&&*{\circ} \\ *{\circ} \ar@{-}[r]|*\dir{ } & *{\circ}\ar@{-}[r]&{\cdots}&*{\circ}\ar@{-}[l]|*\dir{ }\ar@{-}[ur]|*\dir{ }\ar@{-}[dr]|*\dir{ }& \\ &&&&*{\circ} }}
\newcommand{\Dnlabel}{\xymatrix@R=.25em{&&&&*{\circ}\ar@<-1ex>@{}[d]^{\alpha_{n-1}} \\ *{\circ}\ar@<-1ex>@{}[d]^{\alpha_{1}} \ar@{-}[r]|*\dir{ } & *{\circ}\ar@<-1ex>@{}[d]^{\alpha_{2}}\ar@{-}[r]&{\cdots}&*{\circ}\ar@{-}[l]|*\dir{ }\ar@{-}[ur]|*\dir{ }\ar@{-}[dr]|*\dir{ }\ar@<-4ex>@{}[d]^{\alpha_{n-2}}& \\ &&&&*{\circ}\ar@<-2ex>@{}[u]^(-.5){\alpha_{n}} }}
\newcommand{\DnExtended}{\xymatrix@R=.25em{*{\circ}\ar@<-1ex>@{}[d]^{\alpha_0}\ar@{-}[dr]&&&&*{\circ}\ar@<-1ex>@{}[d]^{ }\ar@{-}[dl] \\  & *{\circ}\ar@<-1ex>@{}[d]^{ }\ar@{-}[r]&{\cdots}&*{\circ}\ar@{-}[l]|*\dir{ }\ar@<-1ex>@{}[d]^{ }& \\*{\circ}\ar@<-2ex>@{}[u]^(-.5){ }\ar@{-}[ur] &&&&*{\circ}\ar@<-2ex>@{}[u]^(-.5){ }\ar@{-}[ul]}}
\newcommand{\DnExtendedlabel}{\xymatrix@R=.25em{*{\circ}\ar@<-1ex>@{}[d]^{1}\ar@{-}[dr]&&&&*{\circ}\ar@<-1ex>@{}[d]^{1}\ar@{-}[dl] \\  & *{\circ}\ar@<-1ex>@{}[d]^{2}\ar@{-}[r]&{\cdots}&*{\circ}\ar@{-}[l]|*\dir{ }\ar@<-1ex>@{}[d]^{2}& \\*{\circ}\ar@<-2ex>@{}[u]^(-.5){1}\ar@{-}[ur] &&&&*{\circ}\ar@<-2ex>@{}[u]^(-.5){1}\ar@{-}[ul]}}
\definecolor{NoteColor}{rgb}{1,0,0}
\newcommand{\MCG}{\sM\sC\sG}
\newcommand{\PMCG}{\sP\sM\sC\sG}
\newcommand{\SMCG}{\sS\sM\sC\sG}
\begin{document}

\title[Components of maximal $\sP\sSp(4,\R)$ representations]{The geometry of maximal components of the $\sP\sSp(4,\R)$ character variety}

\author{Daniele Alessandrini}
\address{Daniele Alessandrini, Universitaet Heidelberg, Mathematisches Institut, INF 205, 69120, Heidelberg, Germany}
\email{daniele.alessandrini@gmail.com}

\author{Brian Collier}
\address{Brian Collier, University of Maryland, 4176 Campus Drive - William E. Kirwan Hall, College Park, MD 20742-4015}
\curraddr{}
\email{briancollier01@gmail.com}

\subjclass[2010]{Primary 53C07, 22E40; Secondary 20H10, 14H60}

\copyrightinfo{2017}{Daniele Alessandrini and Brian Collier}

\keywords{}

\date{\today}

\dedicatory{}

\begin{abstract}

In this paper we describe the space of maximal components of the character variety of surface group representations into $\sP\sSp(4,\R)$ and $\sSp(4,\R)$.

For every real rank 2 Lie group of Hermitian type, we construct a mapping class group invariant complex structure on the maximal components.
For the groups $\sP\sSp(4,\R)$ and $\sSp(4,\R)$, we give a mapping class group invariant parameterization of each maximal component as an explicit holomorphic fiber bundle over Teichm\"uller space. 
Special attention is put on the connected components which are singular, we give a precise local description of the singularities and their geometric interpretation. We also describe the quotient of the maximal components of $\sP\sSp(4,\R)$ and $\sSp(4,\R)$ by the action of the mapping class group as a holomorphic submersion over the moduli space of curves.

These results are proven in two steps, first we use Higgs bundles to give a non-mapping class group equivariant parameterization, then we prove an analogue of Labourie's conjecture for maximal $\sP\sSp(4,\R)$ representations. 
\end{abstract}

\setlength{\smallskipamount}{6pt}
\setlength{\medskipamount}{10pt}
\setlength{\bigskipamount}{16pt}

\maketitle

\section{Introduction}

Let $\Gamma$ be the fundamental group of a closed orientable surface $S$ of genus $g\geq2$ and let $\sG$ be a connected real semi-simple algebraic Lie group. 
Our main object of interest is the character variety $\Xx(\Gamma,\sG)$ of representations of $\Gamma$ into $\sG$. It can be seen as the set of reductive representations $\Hom^+(\Gamma,\sG)$ up to conjugation:
\[\Xx(\Gamma,\sG) = \Hom^+(\Gamma,\sG)/\sG~. \] 
The mapping class group $\MCG(S) = \Diff^+(S)/\Diff_0(S)$ acts naturally on $\Xx(\Gamma,\sG)$.

Character varieties and their mapping class group symmetry are the main objects of study in Higher Teichm\"uller theory (see for example \cite{BurgerIozziWienhardSurvey} and  \cite{GoldmanMappingClassAction}). They also play an important role in other areas of geometry and theoretical physics.

The {\em natural} geometric structures on $\Xx(\Gamma,\sG)$ are the ones preserved by the action of the mapping class group. For instance, the Goldman symplectic form  defines a natural symplectic structure on $\Xx(\Gamma,\sG)$. 
When $\sG$ is a complex group, the complex structure on $\sG$ gives $\Xx(\Gamma,\sG)$ a natural complex structure. However, when $\sG$ is a real Lie group, there is no obvious natural complex structure on $\Xx(\Gamma,\sG)$. 

There is however a classical subspace of $\Xx(\Gamma,\sP\sSL(2,\R))$ which does admit a natural complex structure. 
This is the set of discrete and faithful representations or \emph{Fuchsian representations} $\Fuch(\Gamma)$; it is a union of two connected components of $\Xx(\Gamma,\sP\sSL(2,\R))$. 
The uniformization theorem defines a mapping class group equivariant diffeomorphism between $\Fuch(\Gamma)$ and a disjoint union of two copies of Teichm\"uller space $\Teich(S)$. 
The complex structure on Teichm\"uller space then induces a natural complex structure on $\Fuch(\Gamma).$  
In fact, the Goldman symplectic form on $\Fuch(\Gamma)$ is the K\"ahler form given by the Weil-Peterson metric. 
Moreover, $\MCG(S)$ acts properly discontinuously on $\Teich(S)$ and the quotient is the Riemann moduli space of curves. 

It is important to note that the presence of a natural complex structure on $\Fuch(\Gamma)$ comes from the uniformization theorem and the work of Teichm\"uller on the moduli of Riemann surfaces (see \cite{AthanaseSurvey2014}). This is a deep result of complex geometry, and there is no elementary way to describe this natural complex structure directly from the definition of $\Xx(\Gamma,\sP\sSL(2,\R))$.

The goal of {\em higher} Teichm\"uller theory is to generalize these classical features to the character varieties of higher rank Lie groups. 
The space of {\em Hitchin representations} into a split real Lie group and the space of {\em maximal representations} into a Lie group of Hermitian type both define particularly interesting components of the character variety. 
The group $\sP\sSL(2,\R)$ is both split and of Hermitian type, and the spaces of Hitchin and maximal representations into $\sP\sSL(2,\R)$ agree and are exactly the subspace $\Fuch(\Gamma)$. 
Hence, for $\sP\sSL(2,\R)$, Hitchin representations and maximal representations are in one to one correspondence with $\Teich(S)$. 

In general, both Hitchin representations and maximal representations have many interesting geometric and dynamical properties, most notably, they are Anosov representations \cite{AnosovFlowsLabourie,MaxRepsAnosov}. 
Consequently, both Hitchin representations and  maximal representations define connected components of discrete and faithful representations which are holonomies of geometric structures on closed manifolds \cite{GWDomainsofDiscont} and carry a properly discontinuous action of the mapping class group \cite{CrossRatioAnosoveProperEnergy,WienhardAction}.

While many of the interesting features of Teichm\"uller theory generalize to these higher rank Lie groups, the analog of the complex geometry of Teichm\"uller space has not yet been developed. 
Indeed, it is not clear that a natural complex structure exists on these generalizations of Teichm\"uller space. 
There are some results in this direction. 
Namely, Loftin \cite{AffSpheresConvexRPn} and Labourie \cite{LabourieCubic} independently constructed a natural complex structure on the $\sP\sSL(3,\R)$-Hitchin component. More recently, Labourie \cite{cyclicSurfacesRank2} constructed a natural complex structure on the Hitchin component for all real split Lie groups of rank two  (namely $\sP\sSL(3,\R)$, $\sP\sSp(4,\R)$ and $\sG_2$). 
This was done by constructing a mapping class group equivariant diffeomorphism between $\Hit(\sG)$ and a holomorphic vector bundle over $\Teich(S)$.

In this paper, we construct a {\em natural complex structure} on the space of maximal representations into any rank two real Lie group $\sG$ of Hermitian type.

\begin{MonThm}\label{Theorem intro MCG C structure}
Let $\Gamma$ be the fundamental group of a closed orientable surface of genus $g\geq 2$ and let $\sG$ be a real rank two semi-simple Lie group of Hermitian type. The space of conjugacy classes of maximal representations of $\Gamma$ into $\sG$ has a mapping class group invariant complex structure. 
\end{MonThm}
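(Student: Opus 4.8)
The plan is to produce, for each real rank two Lie group $\sG$ of Hermitian type, a mapping class group equivariant diffeomorphism from the maximal component(s) of $\Xx(\Gamma,\sG)$ onto the total space of a holomorphic fiber bundle over Teichm\"uller space $\Teich(S)$, and then to pull back the complex structure from that total space. Since $\Teich(S)$ carries its Teichm\"uller complex structure and the fibers will be constructed as (Dolbeault-type) holomorphic objects depending holomorphically on the Riemann surface structure, the total space is a complex manifold (or complex analytic space, in the singular cases); equivariance of the identification transports the $\MCG(S)$-action to the total space as a group of biholomorphisms, giving the claimed $\MCG(S)$-invariant complex structure on the maximal component.

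The first and main step is to build the equivariant parameterization. Here the strategy follows the two-step scheme announced in the abstract. First, fix a Riemann surface structure $X$ on $S$ and use the non-abelian Hodge correspondence: maximal $\sG$-representations correspond to polystable maximal $\sG$-Higgs bundles on $X$. For the rank two Hermitian groups the maximal $\sG$-Higgs bundles have been classified (via the Toledo invariant and the Cayley correspondence) in terms of simpler holomorphic data on $X$ — essentially a point of a moduli space fibering over the vector space of holomorphic differentials of the relevant weights, together with the additional geometric data (a square root of the canonical bundle, a stable bundle, etc.) that the Cayley partner requires. This exhibits each maximal component, for the fixed $X$, as (the total space of) a holomorphic bundle over a vector space of holomorphic differentials, hence as a complex manifold; but this identification depends on the choice of $X$ and is visibly \emph{not} $\MCG(S)$-equivariant, because a mapping class does not fix $X$.

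The second step, which I expect to be the genuine obstacle, is to promote this to an $X$-independent, hence equivariant, statement. This requires an analogue of Labourie's conjecture for maximal $\sP\sSp(4,\R)$ (and $\sSp(4,\R)$) representations: one must show that every maximal representation $\rho$ determines a \emph{unique} $\rho$-equivariant minimal (equivalently, conformal harmonic) map from $\widetilde S$ into the symmetric space of $\sG$, and hence a unique point of $\Teich(S)$ — the conformal structure making the harmonic map minimal. Existence of an equivariant harmonic map for reductive $\rho$ is classical (Corlette/Donaldson); the hard part is \emph{uniqueness} of the minimal one. In rank two this should follow Labourie's cyclic-surfaces method: translate the harmonic map / Higgs bundle data into a solution of Hitchin's equations, exploit the special (``cyclic'', or its maximal-representation analogue given by the Cayley form of the Higgs field) structure of the equations to get a maximum-principle or curvature estimate, and conclude that the energy functional on $\Teich(S)$ has a unique critical point along the relevant variations. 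One must also check properness/Morse-theoretic control so that this critical point is the unique minimum. Once uniqueness holds, the map $\rho\mapsto(X_\rho,\text{Higgs data at }X_\rho)$ is well defined independently of any choice, is a diffeomorphism onto the holomorphic bundle over $\Teich(S)$ assembled fiberwise from Step one (the holomorphic structure on the total space coming from the fact that the classifying data — differentials, Prym-type or moduli data — vary holomorphically in Teichm\"uller families), and is $\MCG(S)$-equivariant because every ingredient (harmonic maps, Higgs bundles, Teichm\"uller space) is natural under diffeomorphisms of $S$.

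Finally, I would handle the singular components with care: when the Higgs-bundle side forces extra identifications (e.g.\ in the components where the Cayley partner is only a polystable, not stable, bundle, or where finite automorphism groups act), the fiber is a complex analytic space rather than a manifold, and one records the complex structure in that category; the equivariant diffeomorphism argument is unchanged, and the resulting $\MCG(S)$-invariant complex structure is exactly the one whose explicit description — a holomorphic fiber bundle over $\Teich(S)$, descending to a holomorphic submersion over the moduli space of curves — is the content of the more detailed theorems stated later in the paper.
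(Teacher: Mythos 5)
Your overall architecture is the same as the paper's: non-abelian Hodge parameterization over a fixed Riemann surface, plus uniqueness of the equivariant minimal surface (Labourie's conjecture for maximal representations), plus equivariance of every ingredient. But there is a genuine gap at the step you describe as "the holomorphic bundle over $\Teich(S)$ assembled fiberwise from Step one." You assert that the fiberwise moduli spaces glue into a complex analytic space over $\Teich(S)$ because the classifying data "vary holomorphically in Teichm\"uller families," but this is precisely the nontrivial construction the paper has to supply. The paper builds a \emph{universal moduli space of Higgs bundles} $\Mm(\Uu,\sG_\C)\to\Teich(S)$ using Simpson's relative moduli spaces, and this cannot be done directly over $\Teich(S)$ because Teichm\"uller space is not a scheme of finite type over $\C$: one must first pass to the finite-level quotients $\Teich_m(S)=\Teich(S)/\sH_m$ ($m\geq 3$), which are quasi-projective, apply Simpson's construction to the universal curve there, and then pull back along the covering $\Teich(S)\to\Teich_m(S)$. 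Holomorphicity of the mapping class group action on the total space is then not automatic either; it follows from functoriality of Simpson's construction, and has to be checked separately for classes outside $\sH_m$. Without some version of this construction, your "pull back the complex structure from the total space" step has no well-defined target.

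Two further points. First, for a \emph{general} real rank two Hermitian group $\sG$ the explicit Cayley-partner parameterizations you lean on are not available (the paper only produces them for $\sP\sSp(4,\R)$ and $\sSp(4,\R)$); the paper deliberately runs the argument abstractly, cutting out the minimal-surface locus as the complex analytic subspace $\mathrm{Tr}(\varphi^2)^{-1}(0)$ inside the universal maximal Higgs bundle moduli space and showing the resulting map to the character variety is a bijection (surjective because maximal representations are Anosov, hence the energy functional is proper; injective by uniqueness). Second, in the stated generality the uniqueness input is not the cyclic-surface argument you sketch (which the paper carries out only for $\sSO_0(2,3)$, with separate treatment of the singular locus via Zariski closures and of the $sw_1\neq 0$ components via double covers); for arbitrary rank two Hermitian groups the paper invokes the extension of Labourie's conjecture from \cite{CollierTholozanToulisse}. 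Your proposal would be essentially complete for $\sP\sSp(4,\R)$ and $\sSp(4,\R)$ once the universal moduli space (or an equivalent relative holomorphic structure on the explicit fibrations, together with a proof that $\MCG(S)$ acts holomorphically) is actually constructed.
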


\begin{Remark}
It is natural to ask whether the Goldman symplectic form is compatible with this complex structure, as it would then define a mapping class group invariant K\"ahler metric on the space of maximal representations.
\end{Remark}

\begin{Remark}
The Lie group $\sP\sSp(4,\R)$ and its coverings are the only simple rank two groups which are both split and of Hermitian type. The Hitchin component is only one connected component of the many connected components of the set of maximal $\sP\sSp(4,\R)$-representations (see  \eqref{EQ connected components of max PSP}). For the $\sP\sSp(4,\R)$-Hitchin component, the above theorem was proved by Labourie in \cite{cyclicSurfacesRank2}. We note that unlike the Hitchin component, the connected components of maximal representations are neither smooth nor contractible in general. 
\end{Remark}

For the groups $\sP\sSp(4,\R)$ and $\sSp(4,\R)$ we will give a parameterization of each connected component of the space of maximal representations as an explicit holomorphic fiber bundle over $\Teich(S)$. These parameterizations allow us to describe the complex structure from Theorem \ref{Theorem intro MCG C structure} more explicitly, 
and determine the global topology of these spaces and their homotopy type. 

Special attention is placed on components which contain singularities. 
 To understand the singularities, we will describe the local topology around every singular point, and we will show how the type of the singularity is related with the Zariski closure of the associated representation.
We also describe the quotient of the maximal components of $\sP\sSp(4,\R)$ and $\sSp(4,\R)$ by the action of the mapping class group as a holomorphic submersion over the moduli space of curves. 
\begin{Remark}
    Many aspects of the components of maximal representations into $\sSp(4,\R)$ were studied in  \cite{MaximalSP4} using Higgs bundles and in \cite{TopInvariantsAnosov} using representation theory techniques. In this paper, we both extend these results to the group $\sP\sSp(4,\R)$ and provide a more detailed analysis of the components, giving a finer description of the space and its structure.
\end{Remark}

\subsection{Maximal $\sP\sSp(4,\R)$ representations}
The group $\sP\sSp(4,\R)$ is a real rank two Lie group of Hermitian type. 
The (disconnected) subspace of maximal representations of the $\sP\sSp(4,\R)$-character variety will be denoted by $\Xx^{\mathrm{max}}(\Gamma,\sP\sSp(4,\R)).$ 

The space $\Xx^\mathrm{max}(\Gamma,\sP\sSp(4,\R))$ has $2(2^{2g}-1)+4g-3$ connected components \cite{MaxRepsHermSymmSpace}. More precisely, it was shown that there is a bijective correspondence between the connected components of $\Xx^{\mathrm{max}}(\Gamma,\sP\sSp(4,\R))$ and the set
\[\{0,1,\cdots,4g-4\}\sqcup (H^1(S,\Z_2)\setminus\{0\})\times H^2(S,\Z_2)~.\]
For each $d\in\{0,\cdots,4g-4\}$ and each $(sw_1,sw_2)\in (H^1(S,\Z_2)\setminus\{0\})\times H^2(S,\Z_2)$, denote the associated connected component of $\Xx^{\mathrm{max}}(\Gamma,\sP\sSp(4,\R))$ by 
\begin{equation}
\label{EQ connected components of max PSP}\xymatrix{\Xx_{0,d}^{\mathrm{max}}(\Gamma,\sP\sSp(4,\R))&\text{and}&\Xx_{sw_1}^{\mathrm{max},sw_2}(\Gamma,\sP\sSp(4,\R))}~.
\end{equation}

The connected components $\Xx_{0,d}^\mathrm{max}(\Gamma,\sP\sSp(4,\R)),$ for $d\in(0,4g-4],$ are the easiest to parameterize since they are smooth. 
\begin{MonThm}\label{MonTHM Xd}
Let $\Gamma$ be the fundamental group of a closed oriented surface $S$ of genus $g\geq2.$ For each integer $d\in(0,4g-4]$, there is a mapping class group equivariant diffeomorphism between $\Xx_{0,d}^{\mathrm{max}}(\Gamma,\sP\sSp(4,\R))$ and a holomorphic fiber bundle 
\[\pi:\mathcal{Y}_d\to \Teich(S)~.\] Here, for each Riemann surface $\Sigma\in\Teich(S)$, the fiber $\pi^{-1}(\Sigma)$ is a rank $d+3g-3$ vector bundle over the $(4g-4-d)^{th}$ symmetric product of $\Sigma.$ The mapping class group acts on $\mathcal{Y}_d$ by pullback by the action on $\Teich(S).$  
\end{MonThm}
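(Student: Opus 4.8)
The proof follows the two--step scheme announced in the introduction. Fix first a point $\Sigma\in\Teich(S)$. By the non-abelian Hodge correspondence, $\Xx_{0,d}^{\mathrm{max}}(\Gamma,\sP\sSp(4,\R))$ is homeomorphic to the moduli space of polystable $\sP\sSp(4,\R)$--Higgs bundles on $\Sigma$ lying in the corresponding connected component. Using the exceptional isomorphism $\sP\sSp(4,\R)\cong\sSO_0(2,3)$ together with the Cayley correspondence for maximal Higgs bundles (as developed for $\sSp(4,\R)$ in \cite{MaximalSP4}, adapted to $\sP\sSp(4,\R)$), such a Higgs bundle is equivalent to holomorphic data $(N,\mu,q)$, where $N$ is a holomorphic line bundle of degree $d$, $\mu\in H^0(N^{-1}K^2)$ and $q\in H^0(NK^2)$; here the subscript $0$ of the component records the triviality of a $2$-torsion (rank-one orthogonal) line bundle entering the Cayley partner, whose nontrivial values produce instead the components indexed by classes in $H^1(S,\Z_2)$. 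A polystability argument then shows that for $0<d\le 4g-4$ one necessarily has $\mu\not\equiv 0$. Since $\deg(N^{-1}K^2)=4g-4-d$, the pair $(N,[\mu])$ is precisely the effective divisor $(\mu)\in\Sym^{4g-4-d}\Sigma$, which in turn determines $N=K^2(-(\mu))$; and since $\deg(NK^2)=4g-4+d$ with $H^1(NK^2)=0$, the section $q$ varies freely over a vector space of constant dimension $h^0(NK^2)=d+3g-3$. Hence the $\Sigma$-moduli space is biholomorphic to the total space of a rank $d+3g-3$ holomorphic vector bundle over $\Sym^{4g-4-d}\Sigma$; in particular it is smooth.

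Letting $\Sigma$ vary over $\Teich(S)$ and working with the universal curve, its $(4g-4-d)$-fold relative symmetric product, and the relative $H^0$ of the resulting family of line bundles $NK^2$ --- which has constant rank by the vanishing just noted, hence forms a vector bundle by cohomology and base change --- these fiberwise descriptions assemble into a holomorphic fiber bundle $\pi\colon\mathcal{Y}_d\to\Teich(S)$ with the fibers described in the statement, carrying the pullback $\MCG(S)$-action. The fiberwise identifications of the previous step are not yet mapping class group equivariant, because the non-abelian Hodge homeomorphism depends on the choice of $\Sigma$. To fix this we invoke the analogue of Labourie's conjecture for maximal $\sP\sSp(4,\R)$-representations proven in this paper: every maximal $\rho$ admits a \emph{unique} $\rho$-equivariant minimal surface in the symmetric space, equivalently there is a unique $\Sigma_\rho\in\Teich(S)$ for which the harmonic metric associated to $\rho$ is conformal, so that the Higgs bundle representing $\rho$ on $\Sigma_\rho$ lies in the moduli space above. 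The assignment $\rho\mapsto\Sigma_\rho$ is canonical, hence $\MCG(S)$-equivariant, and is smooth by the implicit function theorem applied to the minimal surface equations; composing it with the fiberwise biholomorphisms yields the desired mapping class group equivariant diffeomorphism onto $\pi\colon\mathcal{Y}_d\to\Teich(S)$, with smooth inverse since $\Sigma_\rho$ depends smoothly on $\rho$ and the fiber maps are biholomorphisms.

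The main difficulty lies in the last step: the Labourie--type uniqueness of the equivariant minimal surface --- equivalently, well-definedness of $\rho\mapsto\Sigma_\rho$ --- is the substantial input, and is precisely where the rank-two hypothesis on $\sG$ is used. A secondary, more technical point is the polystability analysis guaranteeing $\mu\not\equiv 0$ and, more generally, that for $0<d\le 4g-4$ the holomorphic parameterization by $(N,\mu,q)$ is a bijection onto the whole component with no exceptional lower-dimensional strata; this is exactly what fails when $d=0$ and is responsible for the singular components, which are treated separately. Once the constant-rank statement is in place, the assembly of the fiberwise pictures into a holomorphic bundle over $\Teich(S)$, and the verification that $\MCG(S)$ acts by pullback, are routine.
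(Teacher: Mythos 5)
Your overall architecture is the paper's: parameterize the Higgs bundle component over a fixed $\Sigma$, invoke the uniqueness of the $\rho$-equivariant minimal surface to select $\Sigma_\rho$, and assemble the fiberwise pictures over $\Teich(S)$. However, there is a concrete error in the first step. The moduli space $\Mm^{\mathrm{max}}_{0,d}(\Sigma,\sSO_0(2,3))$ is \emph{not} parameterized by $(N,\mu,q)$ with $\mu\in H^0(N^{-1}K^2)$ and $q\in H^0(NK^2)$ alone: a maximal Higgs bundle in this component is determined by a tuple $(M,\mu,\nu,q_2)$ where, besides $\nu\in H^0(MK^2)$ (your $q$), there is an independent quadratic differential $q_2\in H^0(\Sigma,K^2)$, the component of $\beta$ along $\det(\Ff)\otimes K$ in the splitting $\Ww=\det(\Ff)\oplus\Ff$ --- equivalently, a multiple of the Hopf differential $\tr(\varphi^2)$ of the harmonic map. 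So the $\Sigma$-moduli space is $\Ff_d\times H^0(\Sigma,K^2)$ (real dimension $20g-20=\dim_{\R}\Xx^{\mathrm{max}}_{0,d}$), not $\Ff_d$ (real dimension $14g-14$). This is not cosmetic: the mechanism of your last step is precisely that the Higgs bundle of $\rho$ on the minimal-surface Riemann surface $\Sigma_\rho$ lies in the slice $\{q_2=0\}\cong\Ff_d$, and it is the trade of the $H^0(K^2)$ factor for the $\Teich(S)$ direction that makes the dimensions of $\Xx^{\mathrm{max}}_{0,d}$ and $\mathcal{Y}_d$ agree. With your parameterization the sentence ``the Higgs bundle representing $\rho$ on $\Sigma_\rho$ lies in the moduli space above'' is vacuous (it would hold for every $\Sigma$), and bijectivity of the resulting map cannot be argued. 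The repair is simply to restore $q_2$ and then intersect with $\{q_2=0\}$; the rest of your fiberwise analysis (polystability forcing $\mu\not\equiv 0$, the divisor of $\mu$ determining $N$, $h^0(NK^2)=d+3g-3$ via vanishing of $H^1$) agrees with Theorem \ref{THM d>0}, though you should note that $(\mu,\nu)$ is only defined up to the residual gauge action $(\lambda\mu,\lambda^{-1}\nu)$, which is why the identification of the fiber of $\Ff_d$ with $H^0(NK^2)$ is non-canonical.

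Two further remarks. For the assembly over $\Teich(S)$ you use a direct relative construction (relative symmetric product of the universal curve plus cohomology and base change), whereas the paper builds a universal Higgs bundle moduli space via Simpson's construction over the level covers $\Teich_m(S)$ (because $\Teich(S)$ itself is not a scheme of finite type over $\C$) and realizes $\mathcal{Y}_d$ as the zero locus of $\mathrm{Tr}_2$ there; your route is reasonable for this smooth, explicitly parameterized component, but you should justify holomorphy of the relative constructions over $\Teich(S)$. Also, the smoothness of $\rho\mapsto\Sigma_\rho$ is not a bare consequence of the implicit function theorem: it requires the critical locus of the energy functional in $\Teich(S)\times\Xx^{\mathrm{max}}_{0,d}$ to be transverse to the fibers of the projection, which is exactly what the cyclic-surface argument of Section \ref{minimal} establishes; citing Theorem \ref{UniqueMinSurface} together with Theorem \ref{DiffGeomTHM} is the correct way to get this.
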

As a direct corollary we have the following.
\begin{MonCorollary}
    Let $\Gamma$ be the fundamental group of a closed surface $S$ of genus $g\geq2.$
     For each integer $d\in(0,4g-4]$, the connected component $\Xx_{0,d}^\mathrm{max}(\Gamma,\sP\sSp(4,\R))$ is smooth and deformation retracts onto the $(4g-4-d)^{th}$ symmetric product of $S.$
\end{MonCorollary}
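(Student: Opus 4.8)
The plan is to read everything off Theorem~\ref{MonTHM Xd}, which provides a diffeomorphism between $\Xx_{0,d}^{\mathrm{max}}(\Gamma,\sP\sSp(4,\R))$ and the total space $\mathcal{Y}_d$ of the holomorphic fiber bundle $\pi:\mathcal{Y}_d\to\Teich(S)$. A holomorphic fiber bundle is in particular a smooth (indeed complex) manifold, so this diffeomorphism immediately yields that $\Xx_{0,d}^{\mathrm{max}}(\Gamma,\sP\sSp(4,\R))$ is smooth. For the statement about homotopy type we may forget the mapping class group action entirely and analyze $\mathcal{Y}_d$ purely as a topological space.

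For the deformation retraction I would compose three standard retractions, using that a composite of deformation retractions is again a deformation retraction. First, $\Teich(S)$ is diffeomorphic to $\R^{6g-6}$, hence contractible and paracompact; a locally trivial fiber bundle over a contractible paracompact base is trivial, so fixing a basepoint $\Sigma_0\in\Teich(S)$ gives $\mathcal{Y}_d\cong\Teich(S)\times\pi^{-1}(\Sigma_0)$, and contracting the first factor to $\Sigma_0$ deformation retracts $\mathcal{Y}_d$ onto the fiber $\pi^{-1}(\Sigma_0)$. Second, by Theorem~\ref{MonTHM Xd} the fiber $\pi^{-1}(\Sigma_0)$ is the total space of a rank $d+3g-3$ complex vector bundle over $\Sym^{4g-4-d}(\Sigma_0)$, and fiberwise scalar multiplication $(v,t)\mapsto tv$ with $t\in[0,1]$ deformation retracts it onto the zero section $\Sym^{4g-4-d}(\Sigma_0)$. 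Third, the symmetric product $\Sym^{4g-4-d}(\Sigma_0)$ is, as a topological space, independent of the Riemann surface structure $\Sigma_0$ carried by $S$; this is exactly the space denoted $\Sym^{4g-4-d}(S)$ in the statement. Chaining the three retractions produces a deformation retraction of $\Xx_{0,d}^{\mathrm{max}}(\Gamma,\sP\sSp(4,\R))$ onto $\Sym^{4g-4-d}(S)$.

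The argument is genuinely routine: all of the substance sits in Theorem~\ref{MonTHM Xd}, and there is no real obstacle here. The one point I would be careful to state is the triviality of $\pi$ over $\Teich(S)$, which uses that $\pi$ is an honest locally trivial fiber bundle and not merely a holomorphic submersion with the indicated fibers --- this is part of the content of Theorem~\ref{MonTHM Xd}, so it is available to us. (If one preferred to avoid triviality one could instead retract the family of fibers over $\Teich(S)$ fiberwise onto the sub-bundle of symmetric products, but the triviality route is shorter.) Finally, in the boundary case $d=4g-4$ one has $\Sym^{0}(S)=\{\pt\}$, so the corollary specializes to the assertion that $\Xx_{0,4g-4}^{\mathrm{max}}(\Gamma,\sP\sSp(4,\R))$ is contractible.
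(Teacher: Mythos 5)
Your proposal is correct and follows essentially the same route the paper intends: the corollary is stated as a direct consequence of Theorem~\ref{MonTHM Xd}, with smoothness coming from the explicit fiber bundle description (ultimately Proposition~\ref{Prop: smooth/orbifold in Md} on the Higgs bundle side) and the homotopy type obtained exactly as you describe, by contracting $\Teich(S)$ and then retracting the vector bundle $\Ff_d$ onto its base $\Sym^{4g-4-d}(\Sigma)$ as in the corollary to Theorem~\ref{THM d>0}. Your explicit attention to the triviality of $\pi$ over the contractible base is a reasonable way to make the first retraction precise.
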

Note that when $d=4g-4,$ the above theorem says that $\Xx_{0,4g-4}(\Gamma,\sP\sSp(4,\R))$ is diffeomorphic to a rank $7g-7$ vector bundle over $\Teich(S).$ 
In this case, the component $\Xx_{0,4g-4}^{\mathrm{max}}(\Gamma,\sP\sSp(4,\R))$ is the $\sP\sSp(4,\R)$-Hitchin component and the fiber bundle $\Ff_{4g-4}$ is the vector bundle of holomorphic quartic differentials on $\Teich(S).$ In particular, Theorem \ref{MonTHM Xd} recovers Labourie's mapping class group invariant parameterization of the $\Hit(\sP\sSp(4,\R))$ from \cite{cyclicSurfacesRank2}.

\begin{Remark}
Notably, the connected components $\Xx_{0,d}^\mathrm{max}(\Gamma,\sP\sSp(4,\R))$ behave very differently for $d=4g-4$ and $d\in(0,4g-4).$ In particular, they are contractible if and only if $d=4g-4.$ 
Moreover, for $d\in(0,4g-4)$ we show that every representation $\rho\in \Xx_{0,d}^\mathrm{max}(\Gamma,\sP\sSp(4,\R))$ is Zariski dense. This generalizes similar results of \cite{MaximalSP4} and \cite{TopInvariantsAnosov} for certain components of maximal $\sSp(4,\R)$ representations (see Sections \ref{PSp4R} and \ref{Sp4R}).
\end{Remark} 

The connected component $\Xx_{0,0}^{\mathrm{max}}(\Gamma,\sP\sSp(4,\R))$ is the most singular, and thus the hardest to parameterize. We describe it here briefly. For each Riemann surface $\Sigma\in\Teich(S)$, let $\Pic^0(\Sigma)$ denote the abelian variety of degree zero line bundles on $\Sigma$. 
Denote the tautological holomorphic line bundle over $\C\P^{3g-4}$ by $\mathcal{O}_{\C\P^{3g-4}}(-1)$, and let $\mathcal{U}_{3g-3}$ denote the quotient of the total space of the direct sum of $3g-3$ copies of $\mathcal{O}_{\C\P^{n-1}}(-1)$ by the equivalence relation that collapses the zero section to a point. Note that $\Uu_{3g-3}$ is a singular space. 

\begin{MonThm}\label{MonTHM X0}
    Let $\Gamma$ be the fundamental group of a closed oriented surface $S$ of genus $g\geq2.$ There is a mapping class group equivariant homeomorphism between the component $\Xx_{0,0}^{\mathrm{max}}(\Gamma,\sP\sSp(4,\R))$ and a holomorphic fiber bundle
     \[\pi:\mathcal{Y}_0\to \Teich(S)~.\] 
     Here, for each $\Sigma\in\Teich(S),$ $\pi^{-1}(\Sigma)$ is a $\Z_2$ quotient of a holomorphic fiber bundle $\Aa\to\Pic^0(\Sigma)$ with fiber $\mathcal{U}_{3g-3}$ and $\Z_2$ acts by pullback to $\Aa$ of the inversion map $L\mapsto L^{-1}$ on $\Pic^0(\Sigma).$ The mapping class group acts on $\mathcal{Y}_0$ by pullback by the action on $\Teich(S).$   
\end{MonThm}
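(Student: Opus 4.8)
The plan is to combine the non-equivariant Higgs bundle parameterization of $\Xx_{0,0}^{\mathrm{max}}(\Gamma,\sP\sSp(4,\R))$ (established earlier in the paper) with the Labourie-type uniqueness result for maximal $\sP\sSp(4,\R)$ representations, exactly as in the proof of Theorem \ref{MonTHM Xd}, the only new difficulty being that the relevant moduli space is singular. First I would recall the Higgs bundle description of the component with vanishing invariants: after fixing a Riemann surface structure $\Sigma$ on $S$, a maximal Higgs bundle in this component is built from a square root $K^{1/2}$ of the canonical bundle together with a degree zero line bundle $L \in \Pic^0(\Sigma)$ and a collection of holomorphic sections governing the off-diagonal terms of the Higgs field. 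The moduli of such data, for fixed $\Sigma$, is precisely the space $\pi^{-1}(\Sigma)$ appearing in the statement: the holomorphic sections with a common scaling produce the cone bundle $\mathcal{U}_{3g-3}$ (the $3g-3$ comes from $h^0(K) = g$ plus the relevant twists — I would pin this down from the explicit BNR-type description given earlier), the line bundle $L$ varies over $\Pic^0(\Sigma)$, and the residual gauge transformation $L \mapsto L^{-1}$ gives the $\Z_2$ action. The collapsing of the zero section records the degeneration where the Higgs bundle becomes a direct sum (this is where the fixed-point locus of $\Z_2$ and the singularities will sit).

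Next I would upgrade this fiberwise statement to a statement over all of Teichm\"uller space. The non-abelian Hodge correspondence gives, for each $\Sigma$, a real-analytic diffeomorphism between the relevant Higgs bundle moduli space and $\Xx_{0,0}^{\mathrm{max}}(\Gamma,\sP\sSp(4,\R))$; letting $\Sigma$ vary produces a map $\Ff: \mathcal{Y}_0 \to \Xx_{0,0}^{\mathrm{max}}(\Gamma,\sP\sSp(4,\R))$ where $\mathcal{Y}_0$ is assembled as the total space of the holomorphic family $\{\pi^{-1}(\Sigma)\}_{\Sigma \in \Teich(S)}$. That $\mathcal{Y}_0$ is a genuine holomorphic fiber bundle over $\Teich(S)$ (with the stated $\Aa \to \Pic^0(\Sigma)$ and $\Z_2$-quotient structure on fibers) follows from the fact that all the ingredients — $K^{1/2}$, $\Pic^0$, the symmetric-power/cone constructions, $\mathcal{O}_{\C\P^{3g-4}}(-1)$ — vary holomorphically in families over the Teichm\"uller (in fact Torelli) curve, using the universal curve over $\Teich(S)$; I would cite the earlier sections for the precise family-version of these constructions.

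The key step — and the main obstacle — is showing $\Ff$ is a \emph{homeomorphism} and is mapping class group equivariant. Equivariance is the heart of the matter and is where the analogue of Labourie's conjecture is used: the non-equivariant map $\Ff$ depends on the auxiliary choice of $\Sigma$, and to make it $\MCG(S)$-equivariant one must show that each maximal representation $\rho$ determines a \emph{canonical} Riemann surface $\Sigma(\rho)$, namely the unique $\rho$-equivariant minimal immersion $\tilde S \to \sP\sSp(4,\R)/\U(2)$ (existence from Schoen--Yau/Corlette-type results for reductive representations, uniqueness being precisely the Labourie-type theorem proved for maximal $\sP\sSp(4,\R)$ representations in this paper). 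Given this, $\rho \mapsto (\Sigma(\rho), \text{Higgs data of } \rho \text{ at } \Sigma(\rho))$ is a well-defined inverse to $\Ff$, manifestly $\MCG(S)$-equivariant since $\MCG(S)$ permutes both representations and points of $\Teich(S)$ compatibly. Continuity of $\Ff$ and its inverse I would get from continuity of solutions to Hitchin's equations in the data and continuity of the minimal immersion $\Sigma(\rho)$ in $\rho$ (the latter from the implicit function theorem away from singular points, plus a separate limiting argument at the singular/reducible locus). Finally I would note that $\Ff$ is a \emph{homeomorphism} but not a diffeomorphism because $\mathcal{U}_{3g-3}$, and hence $\mathcal{Y}_0$, is singular along the collapsed zero section — the $\Z_2$-fixed locus — matching the singular locus of $\Xx_{0,0}^{\mathrm{max}}(\Gamma,\sP\sSp(4,\R))$; identifying these two singular loci and checking the homeomorphism respects them is the last thing to verify, and is exactly the local model discussed in the singularity analysis elsewhere in the paper.
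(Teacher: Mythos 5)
Your proposal follows the paper's strategy exactly: the fiberwise description is Theorem \ref{thm:zero_component}, the canonical Riemann surface attached to each representation comes from the Labourie-type uniqueness statement (Theorem \ref{UniqueMinSurface}), and mapping class group equivariance is then automatic. Two points deserve correction or more care. First, for the adjoint group $\sP\sSp(4,\R)\cong\sSO_0(2,3)$ no square root $K^{1/2}$ enters: the data in this component is a tuple $(M,\mu,\nu,q_2)$ with $M\in\Pic^0(\Sigma)$, $\mu\in H^0(\Sigma,M^{-1}K^2)$, $\nu\in H^0(\Sigma,MK^2)$ (whence $3g-3$ by Riemann--Roch), and the spin structure only appears for $\sSp(4,\R)$ — indeed its absence here is precisely why the $\sSp(4,\R)$ components are $2^{2g}$-fold covers of these. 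Second, your one-line assertion that the fibers ``vary holomorphically over the universal curve'' is the step where the paper has to do genuine work: $\Teich(S)$ is not a scheme of finite type over $\C$, so Simpson's relative moduli construction cannot be applied to it directly; the paper instead passes to the level-$m$ quotient $\Teich_m(S)$ (which is quasi-projective for $m\geq 3$), builds $\Mm(\Uu_m,\sG_\C)$ there, and pulls back along the covering $\Teich(S)\to\Teich_m(S)$ to obtain the universal moduli space in which $\Xx^{\mathrm{max}}_{0,0}$ sits as the analytic subspace $\mathrm{Tr}_2^{-1}(0)$. Your appeal to the implicit function theorem plus a limiting argument for continuity of $\rho\mapsto\Sigma(\rho)$ is a reasonable substitute for the paper's more structural identification, but the holomorphic-family step should be justified along the lines above rather than asserted.
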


In Lemma \ref{Un contractible}, we show that the space $\Uu_{3g-3}$ is contractible, thus we have:
\begin{MonCorollary}\label{MonCor X0 cohomology}
The connected component $\Xx_{0,0}^\mathrm{max}(\Gamma,\sP\sSp(4,\R))$ deformation retracts to the quotient of $(S^1)^{2g}$ by the inversion map $x\mapsto x^{-1}$. In particular, its rational cohomology is:
\[H^j(\Xx_{0,0}^\mathrm{max}(\Gamma,\sP\sSp(4,\R)),\Q)\cong \begin{cases}
             H^{j}((S^1)^{2g},\Q) & \text{if\ j\ is even,}\\
             0&\text{otherwise.}
             \end{cases}\]
 \end{MonCorollary}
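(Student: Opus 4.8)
The plan is to derive the statement formally from Theorem~\ref{MonTHM X0} and Lemma~\ref{Un contractible}: the geometric content is the identification of the homotopy type of a fibre over Teichm\"uller space, after which the cohomology follows from an elementary transfer computation.

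First I would pass to a fibre of $\pi\colon\mathcal{Y}_0\to\Teich(S)$. Since $\Teich(S)$ is contractible and paracompact (being homeomorphic to $\R^{6g-6}$), the locally trivial fibre bundle of Theorem~\ref{MonTHM X0} is trivial; fixing $\Sigma_0\in\Teich(S)$ and contracting the $\Teich(S)$-factor, one gets that $\mathcal{Y}_0$, and hence $\Xx_{0,0}^{\mathrm{max}}(\Gamma,\sP\sSp(4,\R))$, deformation retracts onto $F:=\pi^{-1}(\Sigma_0)$. By Theorem~\ref{MonTHM X0}, $F=\Aa/\Z_2$, where $\Aa\to\Pic^0(\Sigma_0)$ is a holomorphic fibre bundle with fibre $\Uu_{3g-3}$ and the nontrivial element of $\Z_2$ covers the inversion $L\mapsto L^{-1}$ of $\Pic^0(\Sigma_0)$.

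Next I would contract the fibres. The space $\Uu_{3g-3}$ carries a distinguished point $o$, the image of the collapsed zero section, and the radial homotopy that scales the $3g-3$ fibre coordinates of $\mathcal{O}_{\C\P^{3g-4}}(-1)$ by $t\in[0,1]$ is a deformation retraction of $\Uu_{3g-3}$ onto $o$; this is the contraction behind Lemma~\ref{Un contractible}. Carried out fibrewise on $\Aa$, it retracts $\Aa$ onto the image of its zero section, a copy of $\Pic^0(\Sigma_0)$. Since the $\Z_2$-action on $\Aa$ is pulled back from the inversion and is accordingly fibrewise linear on the tautological bundles, this radial homotopy is $\Z_2$-equivariant, so it descends to a deformation retraction of $F=\Aa/\Z_2$ onto $\Pic^0(\Sigma_0)/(L\sim L^{-1})$. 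As a real Lie group $\Pic^0(\Sigma_0)$ is the torus $(S^1)^{2g}$, under which the group inversion becomes $x\mapsto x^{-1}$; composing the two retractions gives that $\Xx_{0,0}^{\mathrm{max}}(\Gamma,\sP\sSp(4,\R))$ deformation retracts onto $(S^1)^{2g}/(x\sim x^{-1})$, which is the first assertion.

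For the cohomology I would invoke the transfer: since $\Z_2$ is finite and $\Q$ has characteristic zero, $H^*\big((S^1)^{2g}/\Z_2;\Q\big)\cong H^*\big((S^1)^{2g};\Q\big)^{\Z_2}$. The involution $x\mapsto x^{-1}$ acts by $-1$ on $H^1((S^1)^{2g};\Q)$, hence by $(-1)^j$ on $H^j((S^1)^{2g};\Q)=\Lambda^j H^1((S^1)^{2g};\Q)$; so the $\Z_2$-invariants are all of $H^j$ in even degrees and $0$ in odd degrees, which is exactly the stated formula. I expect the only real obstacle to be bookkeeping: confirming that the bundle over $\Teich(S)$ is honestly trivial, so that one obtains a genuine deformation retract onto the fibre rather than a mere homotopy equivalence, and checking that the fibrewise radial homotopy is truly $\Z_2$-equivariant, so that it descends to the singular quotient $F$. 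Both are routine given the explicit descriptions in Theorem~\ref{MonTHM X0} and Lemma~\ref{Un contractible}, and the rest of the corollary is formal.
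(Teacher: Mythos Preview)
Your proposal is correct and follows essentially the same route as the paper: the paper reduces to a fibre over $\Teich(S)$, uses the contractibility of $\Uu_{3g-3}$ (Lemma~\ref{Un contractible}) to retract $\Aa$ onto $\Pic^0(\Sigma)\cong(S^1)^{2g}$, and then computes the rational cohomology of the $\Z_2$-quotient via the invariant-cohomology/transfer argument exactly as you do (this is Proposition~\ref{Prop: cohomology of Torus mod inversion}). Your added care about triviality over $\Teich(S)$ and $\Z_2$-equivariance of the fibrewise retraction is appropriate but, as you note, routine.
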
 

Recall that a non-zero cohomology class $sw_1\in H^1(S,\Z_2)$ is equivalent to the data of a connected double covering $\pi: S_{sw_1}\to S.$ For each Riemann surface $\Sigma\in\Teich(S),$ denote the pullback of the complex structure to $S_{sw_1}$ by $\Sigma_{sw_1}$. If $\iota$ denotes the covering involution on $\Sigma_{sw_1},$ consider the following space:
\[\Prym(\Sigma_{sw_1})=\{L\in\Pic^0(\Sigma_{sw_1})\ |\ \iota^*L=L^{-1} \}~.\]
The space $\Prym(\Sigma_{sw_1})$ has two isomorphic connected components $\Prym^0(\Sigma_{sw_1})$ and $\Prym^1(\Sigma_{sw_1})$. The connected component of the identity $\Prym^0(\Sigma_{sw_1})$ is an abelian variety of complex dimension $g-1$ called the Prym variety of the covering.

\begin{MonThm}\label{MonTHM Xsw1}
Let $\Gamma$ be the fundamental group of a closed oriented surface $S$ of genus $g\geq2.$ For each $(sw_1,sw_2)\in H^1(S,\Z_2)\setminus\{0\}\times H^2(S,\Z_2),$ there is a mapping class group equivariant homeomorphism between the component $\Xx_{sw_1}^{\mathrm{max},sw_2}(\Gamma,\sP\sSp(4,\R))$ and a holomorphic fiber bundle
 \[\pi:\mathcal{Y}_{sw_1}^{sw_2}\to \Teich(S)~.\] Here, for each $\Sigma\in\Teich(S)$, $\pi^{-1}(\Sigma)$ is a $\Z_2$ quotient of an explicit holomorphic bundle over $\Prym^{sw_2}(\Sigma_{sw_1})$. The mapping class group acts on $\mathcal{Y}_{sw_1}^{sw_2}$ by pullback by the action on $\Teich(S).$  
\end{MonThm}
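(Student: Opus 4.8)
The strategy mirrors the Higgs-bundle approach that should already be in place for Theorems \ref{MonTHM Xd} and \ref{MonTHM X0}, followed by an invocation of the Labourie-type uniqueness result for maximal $\sP\sSp(4,\R)$-representations. First I would fix a topological double cover $\pi\colon S_{sw_1}\to S$ determined by $sw_1\in H^1(S,\Z_2)\setminus\{0\}$, with covering involution $\iota$. The Higgs-bundle description of the component $\Xx_{sw_1}^{\mathrm{max},sw_2}(\Gamma,\sP\sSp(4,\R))$: for a choice of Riemann surface structure $\Sigma\in\Teich(S)$, a maximal $\sP\sSp(4,\R)$-Higgs bundle in this component is (by the Cayley correspondence for maximal $\Sp(4,\R)$, together with the $\Z_2$-quotient accounting for $\sP\sSp(4,\R)$ versus $\sSp(4,\R)$) equivalent to the data of a holomorphic line bundle $L$ on $\Sigma_{sw_1}$ with $\iota^*L\cong L^{-1}$ and $\deg L=0$ — i.e.\ a point of $\Prym^{sw_2}(\Sigma_{sw_1})$, the superscript $sw_2$ selecting the connected component of $\Prym(\Sigma_{sw_1})$ via $H^2(S,\Z_2)$ — together with a section of an associated bundle of Higgs fields (a quartic-differential-type object, pushed down or expressed as a $\iota$-equivariant datum on $\Sigma_{sw_1}$), all modulo the residual $\Z_2$ coming from $L\mapsto L^{-1}$. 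Assembling these fibrewise over $\Teich(S)$, using that the Hitchin equations depend holomorphically on the Riemann surface parameter, produces the holomorphic fiber bundle $\pi\colon\mathcal Y_{sw_1}^{sw_2}\to\Teich(S)$ with the stated fiber.

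Next I would promote the Higgs-bundle parameterization to a genuine statement about the character variety. The non-abelian Hodge correspondence gives, for each fixed $\Sigma$, a real-analytic isomorphism between the moduli of polystable $\sP\sSp(4,\R)$-Higgs bundles on $\Sigma$ (in this topological type) and $\Xx_{sw_1}^{\mathrm{max},sw_2}(\Gamma,\sP\sSp(4,\R))$; this gives the homeomorphism but only for a fixed $\Sigma$, hence is \emph{not} mapping class group equivariant. To upgrade to equivariance, I would invoke the analogue of Labourie's conjecture for maximal $\sP\sSp(4,\R)$-representations (stated in the abstract as one of the two main technical inputs): every maximal $\sP\sSp(4,\R)$-representation $\rho$ admits a \emph{unique} $\rho$-equivariant minimal (conformal harmonic) map to the symmetric space, hence a canonically associated point of $\Teich(S)$. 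This defines a map $\Xx_{sw_1}^{\mathrm{max},sw_2}(\Gamma,\sP\sSp(4,\R))\to\Teich(S)$ which is $\MCG(S)$-equivariant by naturality/uniqueness, and whose fiber over $\Sigma$ is exactly the Higgs-bundle moduli space over $\Sigma$ described above. Combining the two yields the mapping class group equivariant homeomorphism with $\mathcal Y_{sw_1}^{sw_2}$, and the $\MCG(S)$-action on $\mathcal Y_{sw_1}^{sw_2}$ is then automatically the one by pullback along the action on $\Teich(S)$.

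The remaining work is to identify the fiber precisely as ``a $\Z_2$ quotient of an explicit holomorphic bundle over $\Prym^{sw_2}(\Sigma_{sw_1})$'' and to check holomorphicity of the total space. Here I would: (i) unwind the Cayley partner of a maximal $\Sp(4,\R)$-Higgs bundle in topological type $(sw_1,sw_2)$, showing its underlying $\sSO(2,1)$- (equivalently $\sPGL(2,\R)$-)data reduces to the Prym line bundle $L$ on $\Sigma_{sw_1}$ plus a holomorphic differential/Higgs field valued in a power of the canonical bundle twisted by $L^2$; (ii) describe the space of such Higgs fields as the total space of a holomorphic vector bundle (or a mild modification thereof, as in the $\mathcal U_{3g-3}$ construction of Theorem \ref{MonTHM X0}) over $\Prym^{sw_2}(\Sigma_{sw_1})$; (iii) account for the two sources of $\Z_2$: the inversion $L\mapsto L^{-1}$ on the Prym, and the covering $\sSp(4,\R)\to\sP\sSp(4,\R)$, showing they combine into the single $\Z_2$ in the statement; (iv) verify that as $\Sigma$ varies the Prym varieties, their line bundles, and the Higgs-field bundles fit together holomorphically over $\Teich(S)$ — this uses that $\Sigma\mapsto\Sigma_{sw_1}$ and $\Sigma\mapsto\Prym(\Sigma_{sw_1})$ are holomorphic families (a standard fact about families of curves and their relative Picard/Prym schemes). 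The main obstacle is step (iii) together with the precise bookkeeping of the topological invariants: keeping track of how $sw_1$ and $sw_2$ enter — $sw_1$ choosing the cover, $sw_2$ selecting the Prym component — through the Cayley correspondence and the $\sP\sSp(4,\R)$ quotient, and confirming that the resulting quotient is exactly the claimed $\Z_2$-action by inversion and not something larger (e.g.\ checking that no extra identifications arise from automorphisms of the Higgs bundles). Holomorphicity of the $\MCG$-action and the bundle structure over $\Teich(S)$ should then follow formally from the holomorphic dependence of solutions of the Hitchin equations on the conformal parameter, exactly as in the proofs of the earlier theorems.
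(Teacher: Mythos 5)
Your global architecture is the same as the paper's: a Higgs-bundle parameterization of the component as a bundle over $\Prym^{sw_2}(\Sigma_{sw_1})$ modulo an involution, then the Labourie-type uniqueness of the equivariant minimal surface to single out a point of $\Teich(S)$ and make the parameterization $\MCG(S)$-equivariant, with holomorphicity coming from a relative (universal) moduli space of Higgs bundles over Teichm\"uller space. However, the specific route you propose for identifying the fiber contains two concrete errors. First, you propose to obtain the $\sP\sSp(4,\R)$ components from the maximal $\sSp(4,\R)$ components via ``the $\Z_2$-quotient accounting for $\sP\sSp(4,\R)$ versus $\sSp(4,\R)$.'' The relevant covering $\Mm^{\mathrm{max}}_{sw_1}(\sSp(4,\R))\to\Mm^{\mathrm{max},0}_{sw_1}(\sP\sSp(4,\R))$ is obtained by twisting $\Vv$ by $2$-torsion line bundles, so its deck group is $H^1(\Sigma,\Z_2)\cong\Z_2^{2g}$, not $\Z_2$; the single $\Z_2$ in the statement is instead the covering involution $\iota$ of $\Sigma_{sw_1}$ (equivalently, inversion on the Prym), and it is already present at the level of $\sSO_0(2,3)$-Higgs bundles. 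Second, and more seriously, the components with $sw_2(\Ww,Q_W)=1$ do not lift to $\sSp(4,\R)$ at all (the obstruction to lifting to $\sSpin$ is exactly $sw_2$ since $\deg L=2g-2$ is even), so they cannot be produced as quotients of $\sSp(4,\R)$ components. The paper avoids both problems by working directly with $\sSO_0(2,3)\cong\sP\sSp(4,\R)$: a maximal Higgs bundle in type $(sw_1,sw_2)$ is a triple $((\Ff,Q_F),q_2,\delta)$ with $(\Ff,Q_F)$ an $\sO(2,\C)$-bundle of type $(sw_1,sw_2)$, which after pullback to $\Sigma_{sw_1}$ becomes a tuple $(M,f,\mu,q_2)$ with $M\in\Prym^{sw_2}(\Sigma_{sw_1})$, $f:M\to\iota^*M^{-1}$ an isomorphism and $\mu\in H^0(M^{-1}K_{sw_1}^2)$ (a twisted quadratic differential on the cover, not a quartic differential); the fiber is then $\Hh'/\Z_2$ with $\Hh'=\Hh/\C^*$ for the action $\lambda\cdot(M,f,\mu)=(M,\lambda^2f,\lambda\mu)$ and $\Z_2$ acting by $\iota^*$.

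A further imprecision: you assert that holomorphicity of the bundle structure and of the $\MCG(S)$-action ``follows formally from the holomorphic dependence of solutions of the Hitchin equations on the conformal parameter.'' The harmonic metrics do not depend holomorphically on $\Sigma$, and the non-abelian Hodge correspondence is only a homeomorphism (real-analytic at best). The paper instead builds the complex structure entirely on the Higgs-bundle side, via Simpson's relative moduli space over a finite level cover $\Teich_m(S)$ of Teichm\"uller space (where the base becomes a scheme of finite type over $\C$), pulls it back to $\Teich(S)$, and identifies the component of the character variety with the complex analytic subspace $\mathrm{Tr}_2^{-1}(0)$ of the universal moduli space using the uniqueness of the minimal surface. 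Without some such construction, the holomorphicity claims in the statement are not justified.
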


The spaces $\Xx_{sw_1}^{\mathrm{max},sw_2}(\Gamma,\sP\sSp(4,\R))$ are singular, but the singularities consist only of $\Z_2$ and $\Z_2\oplus\Z_2 $-orbifold points. The space $\Hh'/\Z_2$ also has an orbifold structure.  
The homeomorphism above is an orbifold isomorphism, in particular, it is smooth away from the singular set.

\begin{MonCorollary}
    Each space $\Xx_{sw_1}^{\mathrm{max},sw_2}(\Gamma,\sP\sSp(4,\R))$ deformation retracts onto the quotient of $(S^1)^{2g-2}$ by inversion. In particular, its rational cohomology is:
    \[H^j(\Xx^{\mathrm{max},sw_2}_{sw_1}(\Gamma,\sP\sSp(4,\R)),\Q)\cong \begin{cases}
             H^{j}((S^1)^{2g-2},\Q) & \text{if\ j\ is even,}\\
             0&\text{otherwise.}
             \end{cases}\]
\end{MonCorollary}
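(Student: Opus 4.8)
The plan is to derive the corollary from Theorem~\ref{MonTHM Xsw1} by peeling off the contractible factors one at a time, until what remains is a torus modulo inversion, whose rational cohomology is easy to compute. First I would use the homeomorphism of Theorem~\ref{MonTHM Xsw1} to replace $\Xx_{sw_1}^{\mathrm{max},sw_2}(\Gamma,\sP\sSp(4,\R))$ by $\mathcal{Y}_{sw_1}^{sw_2}$. Since $\Teich(S)$ is diffeomorphic to a ball, it admits a strong deformation retraction onto a point $\Sigma$; lifting this retraction through the locally trivial fiber bundle $\pi\colon\mathcal{Y}_{sw_1}^{sw_2}\to\Teich(S)$ shows that $\mathcal{Y}_{sw_1}^{sw_2}$ deformation retracts onto the single fiber $F=\pi^{-1}(\Sigma)$. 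So it suffices to analyse $F$ up to homotopy.

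Next I would analyse $F$ using the description in Theorem~\ref{MonTHM Xsw1}: $F=E/\Z_2$, where $E$ is the total space of an explicit holomorphic bundle $p\colon E\to\Prym^{sw_2}(\Sigma_{sw_1})$ and the $\Z_2$-action on $E$ covers an involution of $\Prym^{sw_2}(\Sigma_{sw_1})$. From the construction one reads off that (i) the fibers of $p$ are contractible --- the role here is exactly the one played by the contractible spaces $\Uu_n$ of Lemma~\ref{Un contractible} in Theorem~\ref{MonTHM X0} --- and (ii) $E$ admits a $\Z_2$-equivariant strong deformation retraction onto a section preserved by the involution, on which $\Z_2$ acts by the inversion $L\mapsto L^{-1}$. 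Granting (i)--(ii), $F=E/\Z_2$ deformation retracts onto $\Prym^{sw_2}(\Sigma_{sw_1})/\Z_2$. Since $\Prym^0(\Sigma_{sw_1})$ is an abelian variety of complex dimension $g-1$, its torsor $\Prym^{sw_2}(\Sigma_{sw_1})$ is diffeomorphic to $(S^1)^{2g-2}$, and after choosing a fixed point of the involution as the identity the residual $\Z_2$ acts as $-\mathrm{id}$ on $H_1$, i.e.\ as inversion. This yields the asserted deformation retraction of $\Xx_{sw_1}^{\mathrm{max},sw_2}(\Gamma,\sP\sSp(4,\R))$ onto $(S^1)^{2g-2}/\Z_2$.

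To finish, for an action of a finite group on a space the rational cohomology of the quotient is the invariant subspace of the rational cohomology of the space (transfer), so $H^j((S^1)^{2g-2}/\Z_2,\Q)\cong H^j((S^1)^{2g-2},\Q)^{\Z_2}$. Inversion acts on $H^1((S^1)^{2g-2},\Q)$ as $-\mathrm{id}$, hence on $H^j=\Lambda^jH^1$ as multiplication by $(-1)^j$; the invariants are therefore all of $H^j$ for $j$ even and $0$ for $j$ odd, which is the stated formula (it coincides with Corollary~\ref{MonCor X0 cohomology} after replacing $2g$ by $2g-2$).

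The main obstacle is the second step, and specifically point (ii): one must use the concrete bundle and involution produced in the proof of Theorem~\ref{MonTHM Xsw1} to build a deformation retraction of $E$ onto its zero section that is genuinely equivariant for the $\Z_2$-action --- in particular to check that this action is compatible with the (possibly singular, $\Uu_n$-type) fibers and that it restricts on $\Prym^{sw_2}(\Sigma_{sw_1})$ to honest inversion rather than to some other order-two automorphism. Everything else --- contractibility of $\Teich(S)$, the transfer isomorphism for finite quotients over $\Q$, and the action of inversion on the cohomology of a torus --- is formal, and the equivariant fiberwise retraction should be assembled exactly as in the contraction arguments for $\Uu_n$ and for vector bundles used elsewhere in the paper.
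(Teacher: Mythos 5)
Your proposal is correct and follows essentially the same route as the paper: the paper retracts the fiber $\Hh'/\Z_2$ onto its zero section $\Prym^{sw_2}(\Sigma_{sw_1})/\Z_2$ via a $\Z_2$-equivariant fiberwise contraction (Proposition \ref{prop:description-orbifold}), identifies the involution on $\Prym^{sw_2}(\Sigma_{sw_1})$ with inversion $M\mapsto\iota^*M=M^{-1}$, and computes the cohomology by the transfer isomorphism $H^*(X/\Delta,\Q)\cong H^*(X,\Q)^\Delta$ exactly as in Proposition \ref{Prop: cohomology of Torus mod inversion}. The equivariance you flag as the main obstacle is immediate from the explicit formula $\tau\cdot[(M,f,\mu)]=[(\iota^*M,\iota^*f,\iota^*\mu)]$, which commutes with scaling $\mu\mapsto t\mu$.
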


\begin{Remark}
In Theorems \ref{THM SP4 d>0}, \ref{THM Sp4 d=0 } and \ref{THM Higgs Sp4 sw1not0} of Section \ref{sec: param sp4}, we also find analogous descriptions for the components of the character variety $\Xx^\mathrm{max}(\Gamma,\sSp(4,\R))$. 
While every component is a covering of a component of the character variety of $\sP\sSp(4,\R)$, the order of this cover depends on the topological invariants of the component. 
\end{Remark}
\subsection{Higgs bundles and Labourie's conjecture}
For a real semi-simple Lie group $\sG,$ a $\sG$-Higgs bundle consists of a certain holomorphic bundle on a Riemann surface $\Sigma$ together with a section of an associated bundle. 
The remarkable theorem of Hitchin \cite{selfduality} for $\sSL(2,\C)$ and Simpson \cite{SimpsonVHS} for $\sG$ complex semi-simple is that the moduli space $\Mm(\Sigma,\sG)$ of poly-stable $\sG$-Higgs bundles on $\Sigma$ is homeomorphic to the character variety $\Xx(\Gamma,\sG)$. 
This correspondence, usually called the nonabelian Hodge correspondence, also holds for real reductive groups $\sG$ \cite{HiggsPairsSTABILITY}. 

The moduli space of $\sG$-Higgs bundles has more structure than the character variety. For example, there is a Hamiltonian circle action on $\Mm(\Sigma,\sG)$, and, when $\Mm(\Sigma,\sG)$ is smooth, the associated moment map is a perfect Morse-Bott function. Thus, Higgs bundles provide useful tools to study the topology of the character variety. 
When $\Mm(\Sigma,\sG)$ is not smooth, the moment map only provides enough structure to determine bounds on the connected components of the moduli space. 

In special cases, one can explicitly parameterize a connected component of the Higgs bundle moduli space. The only previous examples of this are for the connected components of $\Mm(\Sigma,\sP\sSL(2,\R))$  with non-zero Euler class \cite{selfduality} and the Hitchin component of $\Mm(\Sigma,\sG)$ when $\sG$ is a real split Lie group \cite{liegroupsteichmuller}. 
More precisely, Hitchin proved \cite{liegroupsteichmuller} that, for each Riemann surface $\Sigma\in\Teich(S)$, the Hitchin component $\Hit(\sG)\subset\Xx(\Gamma,\sG)$ is homeomorphic to the vector space 
\begin{equation}\label{INto THM Hitchin comp}
\Hit(\sG)\cong\bigoplus\limits_{j=1}^{\rk(\sG)}H^0(\Sigma, K^{m_j+1}) ~,
\end{equation} where $K$ is the canonical bundle of $\Sigma$, $m_1=1$ and the integers $\{m_j\}$ are the exponents of $\sG.$

\begin{Remark}\label{Intro Remark Higgs param} We note that the nonzero Euler class components of $\Xx(\Gamma,\sP\sSL(2,\R))$ and the Hitchin component are smooth. In Section \ref{PSp4R}, we will explicitly parameterize the connected components of maximal $\sP\sSp(4,\R)$-Higgs bundle moduli space as the product of the fiber from Theorems \ref{MonTHM Xd}, \ref{MonTHM X0} and \ref{MonTHM Xsw1} with the vector space $H^0(\Sigma,K^2)$ of holomorphic quadratic differentials. 
    These parameterizations are the first description of a singular connected component of the Higgs bundle moduli space.   
\end{Remark}

One drawback of the non-abelian Hodge correspondence is that it requires fixing a Riemann surface $\Sigma\in\Teich(S)$ and thus breaks the mapping class group symmetry of $\Xx(\Gamma,\sG)$. In particular, the mapping class group does not act on the parameterization of the Hitchin component from \eqref{INto THM Hitchin comp}. 
To obtain a mapping class group invariant parameterization of the Hitchin component, Labourie suggested the following method of associating a preferred Riemann surface to each $\rho\in\Hit(\sG)$: 

For each $\rho\in\Xx(\Gamma,\sG)$ one can define an energy function
\begin{equation}
    \label{enegyfunction EQ}\Ee_\rho:\Teich(S)\to \R
\end{equation}
by defining $\Ee_\rho(\Sigma)$ to be the energy of a $\rho$-equivariant harmonic map from the universal cover of $\Sigma $ to the symmetric space of $\sG.$ The existence of such harmonic maps is guaranteed by Corlette's Theorem \cite{canonicalmetrics}. 
The critical points of $\Ee_\rho$ are given by those harmonic maps which are weakly conformal, or equivalently, whose image is a branched minimal immersion \cite{SchoenYauMinimalSurfEnergy,MinImmofRiemannSurf}. 
In \cite{CrossRatioAnosoveProperEnergy}, Labourie showed that, for each Anosov representation $\rho\in\Xx(\Gamma,\sG),$ the energy function $\Ee_\rho$ is smooth and proper and thus admits a critical point. He then conjectured that for Hitchin representations, the critical point was unique.

\begin{MonConjecture}{(Labourie \cite{CrossRatioAnosoveProperEnergy})} Let $\Gamma$ be the fundamental group of a closed oriented surface $S$ of genus at least two and let $\sG$ be a semi-simple split real Lie group. If $\rho\in\Xx(\Gamma,\sG)$ is a Hitchin representation, then there is a unique Riemann surface structure $\Sigma\in\Teich(S)$ which is a critical point of the energy function $\Ee_\rho$ from \eqref{enegyfunction EQ}.
\end{MonConjecture}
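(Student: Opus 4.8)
The plan is to reduce the uniqueness statement to a Morse-theoretic fact and then to establish the required non-degeneracy through the Higgs bundle description of Hitchin representations. First, recall from the setup that, by Labourie's work \cite{CrossRatioAnosoveProperEnergy}, for a Hitchin (hence Anosov) representation $\rho$ the energy function $\Ee_\rho\colon\Teich(S)\to\R$ is smooth and proper, so it is bounded below and attains its infimum. Since $\Teich(S)$ is diffeomorphic to a ball of real dimension $6g-6$, and in particular is connected and contractible, uniqueness of the critical point would follow if I can show that \emph{every} critical point of $\Ee_\rho$ is a strict local minimum: a proper $C^2$ function on a connected manifold possessing two distinct strict local minima necessarily has a third critical point of mountain-pass type, whose Morse index is at least $1$. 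Hence, if all critical points have index zero, there can be only one. Thus the entire problem is reduced to proving that the Hessian of $\Ee_\rho$ is positive definite at each critical point.

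Next I would compute this Hessian. At a critical point $\Sigma$ the $\rho$-equivariant harmonic map $f$ into the symmetric space $\sG/\sK$ is weakly conformal, i.e.\ a branched minimal immersion, and its Hopf differential vanishes. The differential of $\Ee_\rho$ at $\Sigma$ is, up to a universal constant, the Hopf differential of $f$ regarded as a holomorphic quadratic differential, i.e.\ as a cotangent vector to $\Teich(S)$; this is the content of the Eells--Sampson variational formula together with the standard identification of $T^*_\Sigma\Teich(S)$ with $H^0(\Sigma,K^2)$. Differentiating once more, the Hessian at a critical point becomes a quadratic form on $H^0(\Sigma,K^2)$ given by the linearization of the harmonic map equation coupled to the variation of conformal structure. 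Concretely it is governed by an elliptic Jacobi-type operator, and its positivity is equivalent to the \emph{strict stability} of the minimal surface $f$: the absence of a nonzero infinitesimal deformation of the pair $(\Sigma,f)$ that preserves harmonicity while decreasing energy to second order.

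The core of the argument, and the step I expect to be the main obstacle, is proving this strict stability. Here the special structure of Hitchin representations is essential, and I would exploit it through the non-abelian Hodge correspondence. Over the critical Riemann surface $\Sigma$, the pair $(\rho,\Sigma)$ corresponds to a Higgs bundle lying in the Hitchin section, whose Higgs field has cyclic (companion-matrix) form; the harmonic metric then solves Hitchin's self-duality equations, which in the cyclic case decouple into a generalized Toda system for the logarithms of the metric coefficients. The linearization of this system at a solution is a Schr\"odinger-type operator whose potential is built from the nonpositive curvature of the target, and one shows by a maximum-principle argument that a nontrivial element of its kernel would have to be a holomorphic section of a negative line bundle, hence vanish on a closed surface of genus $g\geq 2$. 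This forces the Jacobi operator to have trivial kernel and positive spectrum, yielding the desired strict convexity of $\Ee_\rho$ at the critical point.

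I expect the control of this linearized cyclic system to be the genuinely hard point: for a general split group $\sG$ the Toda system is complicated and the maximum-principle estimate is not known to go through in full generality, so this proposal is complete only modulo that analytic input. In the rank-two split case, which is the one relevant to $\sP\sSp(4,\R)$, the required stability is exactly Labourie's theorem in \cite{cyclicSurfacesRank2}, and I would invoke it directly to conclude the uniqueness of the energy-critical Riemann surface structure.
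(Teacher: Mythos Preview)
The statement you are attempting to prove is a \emph{conjecture}, not a theorem: the paper states it as Labourie's conjecture and does not prove it. The paper only proves the analogous statement for maximal $\sP\sSp(4,\R)$-representations (Theorem~\ref{UniqueMinSurface}), and explicitly notes that beyond rank two little is known. So there is no proof in the paper to compare your attempt to, and your proposal is, as you yourself concede at the end, incomplete precisely at the decisive step.

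That said, it is worth contrasting your strategy with the one that actually succeeds in the known cases. Your Morse-theoretic reduction (show the Hessian of $\Ee_\rho$ is positive definite at every critical point, hence every critical point is a strict local minimum, hence by a mountain-pass argument there is only one) is logically sound as a plan, but it is \emph{not} how Labourie's rank-two theorem or the paper's Theorem~\ref{UniqueMinSurface} are proved. Neither computes the Hessian of $\Ee_\rho$. Instead, one works on the total space $P=\Teich(S)\times(\text{component of }\Xx^{\mathrm{max}})$ and applies the general lemma recorded here as Theorem~\ref{DiffGeomTHM}: it suffices to show that the critical locus $N=\{(\Sigma,\rho):d_\Sigma\Ee_\rho=0\}$ is connected and everywhere \emph{transverse to the fibers} of $P\to\Xx^{\mathrm{max}}$. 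Transversality is then obtained not by linearizing a Toda system and invoking a maximum principle, but by constructing for each critical pair $(\Sigma,\rho)$ an equivariant \emph{cyclic surface} $f_\rho:\widetilde\Sigma\to\sG/\sT$ lifting the harmonic metric, and proving that any first-order deformation of the cyclic surface with $\dot\Sigma\neq 0$ forces $\dot\rho\neq 0$ (Proposition~\ref{cyclicsurfacesProp} and Remark~\ref{transversalityRemark}). This transversality statement is strictly weaker than positive-definiteness of the Hessian and is what the cyclic-surface technology actually delivers. Your proposed maximum-principle argument on the linearized cyclic/Toda system is not how the rank-two case is established, and in higher rank it is exactly the missing analytic input that keeps the conjecture open.
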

For a Hitchin representation $\rho,$ a Riemann surface $\Sigma\in\Teich(S)$ is a critical point of the energy function $\Ee_\rho$ if and only the tuple of holomorphic differentials $(q_2,q_{m_2+1},\cdots,q_{m_{\rk\sG}+1})$ associated to $\rho$ via \eqref{INto THM Hitchin comp} has $q_2=0$.
 Thus, a consequence of Labourie's conjecture would be that there is a mapping class group equivariant diffeomorphism between $\Hit(\sG)$ and the vector bundle 
 \[ \xymatrix{\pi:\Vv\to\Teich(S)&\text{with}&\pi^{-1}(\Sigma)=\bigoplus\limits_{j=2}^{\rk(\sG)} H^0(K_X^{m_j})}.\]
Since $\Vv$ is naturally a holomorphic vector bundle, a positive answer to Labourie's conjecture would provide the Hitchin component with a mapping class group invariant complex structure. For this reason, Labourie's conjecture is arguably among the most important conjectures in the field of higher Teichm\"uller theory. 

Labourie's conjecture has been proven when the rank of $\sG$ is two. This was done independently by Loftin \cite{AffSpheresConvexRPn} and Labourie \cite{LabourieCubic} for $\sG=\sP\sSL(3,\R)$ and by Labourie \cite{cyclicSurfacesRank2} in the general case. 
Little is known when $\rk(\sG)>2.$

To go from our Higgs bundle parameterization of the components of maximal $\sP\sSp(4,\R)$ representations to a mapping class group invariant parameterization, we prove an analog of Labourie's conjecture for maximal representations. 
\begin{MonThm}\label{Into Thm Lab conj}
Let $\Gamma$ be the fundamental group of a closed oriented surface with genus at least two. If $\rho\in\Xx^{\mathrm{max}}(\Gamma,\sP\sSp(4,\R))$ is a maximal representation, then there is a unique critical point of the energy function $\Ee_\rho$ from \eqref{enegyfunction EQ}.
\end{MonThm}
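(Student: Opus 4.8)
The plan is to prove Theorem \ref{Into Thm Lab conj} following the strategy introduced by Labourie in \cite{cyclicSurfacesRank2} for the Hitchin component, adapted to the maximal setting. Since $\sP\sSp(4,\R)$ is a rank two Hermitian group, a maximal representation $\rho$ is Anosov by \cite{MaxRepsAnosov}, so the energy function $\Ee_\rho$ is smooth and proper by \cite{CrossRatioAnosoveProperEnergy}; in particular a critical point exists and what must be shown is \emph{uniqueness}. The critical points of $\Ee_\rho$ correspond to pairs $(\Sigma, f)$ where $f$ is a $\rho$-equivariant branched minimal immersion into the symmetric space, equivalently (via nonabelian Hodge at the Riemann surface $\Sigma$) to a maximal $\sP\sSp(4,\R)$-Higgs bundle on $\Sigma$ whose quadratic differential $q_2$ vanishes. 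Such objects lift to solutions of Hitchin's equations, and as in Labourie's cyclic surfaces framework they are governed by a system of the form $\bar\partial$-equation plus a curvature/Hitchin equation; the aim is to show that the space of such solutions, with the Higgs field normalized so that $q_2 = 0$, is rigid in the horizontal (Teichm\"uller) directions.

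First I would set up the relevant moduli space: fix the topological type of the component (one of the families in \eqref{EQ connected components of max PSP}) and consider the space of solutions to Hitchin's equations over the universal curve $\Teich(S)$, i.e. pairs of a Riemann surface structure and a harmonic metric, cut down by the condition $q_2 = 0$. A critical point of $\Ee_\rho$ is exactly a point where the ``variation of complex structure'' part of the Higgs field vanishes. Following \cite{cyclicSurfacesRank2}, the key is that this configuration space carries a natural structure (Labourie's ``cyclic surfaces'' / the associated Toda-type system) whose linearization at a solution is controlled by a Bochner–Weitzenb\"ock argument: one shows that a Jacobi field along the family of minimal maps corresponding to a deformation of $(\Sigma, \rho)$ fixing $\rho$ must be trivial, by integrating a pointwise inequality coming from the curvature of the symmetric space of $\sP\sSp(4,\R)$ (which is of noncompact type, hence has the correct sign). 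This gives \emph{local uniqueness}: the critical point is nondegenerate and isolated. Then properness of $\Ee_\rho$ together with a connectedness/Morse-theoretic argument — the function has no critical points of the ``wrong'' index, so a unique minimum — upgrades local uniqueness to global uniqueness. Alternatively, as in Labourie's argument, one shows the map from the normalized Higgs bundle data (with $q_2 = 0$) over a varying Riemann surface to the character variety is a local diffeomorphism and a proper map between manifolds of the same dimension, hence a covering, and identifies the number of sheets as one using the already-established parameterizations (Theorems \ref{MonTHM Xd}, \ref{MonTHM X0}, \ref{MonTHM Xsw1}).

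The main obstacle, and the place where the maximal case genuinely differs from \cite{cyclicSurfacesRank2}, is that the components $\Xx_{0,0}^{\mathrm{max}}$ and $\Xx_{sw_1}^{\mathrm{max},sw_2}$ are \emph{singular} and the associated Higgs bundles are not all stable/simple, so the clean ``local diffeomorphism between smooth manifolds'' step breaks down at the singular locus; moreover the harmonic maps there need not be immersions (they may have branch points, and the image may lie in a proper totally geodesic subspace when $\rho$ is not Zariski dense). I would handle this by first proving uniqueness on the Zariski-dense, smooth locus via the Bochner argument, and then treating the non-Zariski-dense representations separately: these reduce, via the classification of their Zariski closures (into $\sSL(2,\R)$-type or product subgroups), to Labourie's conjecture for lower rank or product groups, where it is known — for $\sP\sSL(2,\R)$ it is classical (the harmonic map is the uniformizing one), and for products it follows componentwise. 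A uniform treatment via the cyclic-surface maximum principle, valid even in the branched/degenerate case by working with the pulled-back metric and allowing the Jacobi field estimate to degenerate on the branch divisor, is the cleanest route and is what I would aim to carry out in detail.
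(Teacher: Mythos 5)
Your overall strategy matches the paper's: existence from properness of $\Ee_\rho$ for Anosov representations, uniqueness on the smooth locus via Labourie's cyclic-surface rigidity fed into his fiber-bundle theorem (properness $+$ connectedness of the critical set $+$ transversality to the fibers implies a unique critical point), and a separate treatment of the singular locus via the classification of Zariski closures. However, there are two concrete gaps. First, your claim that uniqueness for the non-Zariski-dense representations landing in product subgroups ``follows componentwise'' is false: for a representation into $\sSO_0(2,2)\cong\sP\sSL(2,\R)\times\sP\sSL(2,\R)$ the energy is the \emph{sum} of the energies of the two factors and the minimal-surface condition is $q_2^{(1)}+q_2^{(2)}=0$, not the vanishing of each Hopf differential separately; a critical point of the sum need not be a critical point of either summand. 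Uniqueness here is a genuine theorem (Schoen's uniqueness of the minimal surface in a product of hyperbolic planes, cited in the paper as \cite{SL2xSL2uniqueMinSurface}), and the $\sSO_0(2,1)$ case is \cite{TeichOfHarmonic}; you need to invoke these rather than a componentwise reduction.

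Second, your plan to run the Bochner/cyclic-surface argument directly on the smooth (Zariski-dense) locus of \emph{every} component does not go through as stated for the components $\Xx_{sw_1}^{\mathrm{max},sw_2}$ with $sw_1\neq 0$. The cyclic-surface lift $f_\rho:\widetilde\Sigma\to\sG/\sT$ requires the associated $\sSO(5,\C)$-bundle to split holomorphically as a sum of line bundles (a reduction to the maximal torus compatible with the harmonic metric), and when $sw_1\neq 0$ the rank-two orthogonal summand $\Ff$ has no isotropic line subbundles (Proposition \ref{Prop: no isotropic subbundles}), so no such splitting exists over $\Sigma$. The paper circumvents this by pulling the whole problem back to the unramified double cover $S_{sw_1}$, where the Higgs bundle lands in $\Mm_{0,0}^{\mathrm{max}}(\Sigma_{sw_1},\sSO_0(2,3))$ and the $d=0$ case applies, and then descends uniqueness using injectivity of $\pi^*:\Teich(S)\to\Teich(S_{sw_1})$. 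Without this (or some substitute for the torus reduction), your argument does not cover the $2(2^{2g}-1)$ components with $sw_1\neq 0$.
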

 
\begin{Remark}
For $d\in(0,4g-4],$ the above theorem was proven for the connected components $\Xx_{0,d}^{\mathrm{max}}$ by the second author in \cite{MySp4Gothen} by generalizing Labourie's techniques for the Hitchin component. The proof of Theorem \ref{Into Thm Lab conj} is also along these lines. In a more recent work and using completely different methods, the second author with Tholozan and Toulisse \cite{CollierTholozanToulisse} extended Theorem \ref{Into Thm Lab conj} to maximal representations into any real rank two Lie group of Hermitian type. 
\end{Remark}
 We conjecture that the extension of Labourie's conjecture to all maximal representations holds. 
\begin{MonConjecture}\label{Conj MaxLabourie}
Let $\Gamma$ be the fundamental group of a closed oriented surface $S$ of genus at least two and let $\sG$ be a real Lie group of Hermitian type. If $\rho\in\Xx(\Gamma,\sG)$ is a maximal representation, then there is a unique Riemann surface structure $\Sigma\in\Teich(S)$ which is a critical point of the energy function $\Ee_\rho$ from \eqref{enegyfunction EQ}.
\end{MonConjecture}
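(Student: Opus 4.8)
The plan is to split the statement into existence and uniqueness of the critical point. Existence is essentially already available: a maximal representation $\rho$ into a Hermitian Lie group is Anosov \cite{MaxRepsAnosov}, so Labourie's theorem in \cite{CrossRatioAnosoveProperEnergy} applies and the energy function $\Ee_\rho\colon\Teich(S)\to\R$ is smooth and proper; being also bounded below it attains a minimum, and every minimum is a critical point. By \cite{SchoenYauMinimalSurfEnergy,MinImmofRiemannSurf} a critical point of $\Ee_\rho$ is precisely a Riemann surface structure for which the $\rho$-equivariant harmonic map to the symmetric space $\sG/\sK$ is weakly conformal, i.e.\ a $\rho$-equivariant branched minimal immersion. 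Thus the whole content of the conjecture is the \emph{uniqueness} of this equivariant branched minimal surface.

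For uniqueness I would follow the strategy behind Theorem \ref{Into Thm Lab conj} (and Labourie's proof of the Hitchin case): promote ``proper'' to ``proper, with every critical point a non-degenerate local minimum''. Granting this, uniqueness is then soft — a proper function that is bounded below on the connected manifold $\Teich(S)$ and all of whose critical points are non-degenerate local minima has exactly one critical point, since by Morse theory each sublevel set is built from the empty set by attaching one $0$-cell per critical point, so a large sublevel set (which is all of $\Teich(S)$) would be disconnected if there were more than one. Hence the crux is the second variation: one must compute $\mathrm{Hess}\,\Ee_\rho$ at a critical point and show that maximality of $\rho$ forces it to be positive definite.

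I would carry out this Hessian computation through the non-abelian Hodge dictionary. At a critical point the $(2,0)$-part of the pulled-back metric — the holomorphic quadratic differential obtained by projecting the Higgs field — vanishes, and the second variation of $\Ee_\rho$ along a Beltrami-type deformation of the complex structure is controlled by a linearized Hitchin operator acting on the Higgs bundle data. The input from maximality is the (partial) Cayley normal form of maximal $\sG$-Higgs bundles, in which the Higgs field has a distinguished nowhere-vanishing ``top'' component: this should pin down a sign in the relevant Bochner--Weitzenb\"ock identity so that a hypothetical nonzero Jacobi field for the minimal surface is eliminated by an integration-by-parts argument in which the maximality condition controls the error term — this is the mechanism behind the rank-two proofs and behind the $\sP\sSp(4,\R)$ case proved in this paper.

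The main obstacle is passing beyond real rank two. All known proofs — Loftin \cite{AffSpheresConvexRPn} and Labourie \cite{LabourieCubic} for $\sP\sSL(3,\R)$, Labourie's cyclic surfaces \cite{cyclicSurfacesRank2} in the split rank-two case, the second author \cite{MySp4Gothen} for the smooth maximal $\sP\sSp(4,\R)$ components, and Collier--Tholozan--Toulisse \cite{CollierTholozanToulisse} for Hermitian rank two — exploit the fact that in real rank two the minimal surface system decouples into a small number of scalar PDE of Toda or affine-sphere type, where the sign coming from maximality is directly visible through the maximum principle. For a Hermitian group of higher real rank the corresponding equations form a genuinely coupled system of Hitchin equations, and the positivity of the Hessian is no longer an elementary consequence of maximality. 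Resolving this seems to require either a structural convexity property of $\Ee_\rho$ (for instance plurisubharmonicity with respect to the natural complex structure on $\Teich(S)$, to be combined with the holomorphic fibration structure of the maximal components coming from the Higgs bundle parameterization) or a new uniqueness theorem for equivariant minimal surfaces in the pseudo-Riemannian geometry attached to maximal representations, extending the Collier--Tholozan--Toulisse approach beyond rank two.
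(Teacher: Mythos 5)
You are being asked about Conjecture \ref{Conj MaxLabourie}, which the paper explicitly leaves \emph{open}: the authors prove it only for $\sG=\sP\sSp(4,\R)$ and $\sSp(4,\R)$ (Theorem \ref{Into Thm Lab conj}, via Theorem \ref{UniqueMinSurface}) and cite \cite{CollierTholozanToulisse} for the remaining real rank two Hermitian groups. So there is no proof in the paper to compare against, and your proposal, as you yourself acknowledge, does not close the gap: everything after ``the crux is the second variation'' is a research program, not an argument. The existence half is correct and is exactly what the paper records (maximal implies Anosov by \cite{MaxRepsAnosov}, hence $\Ee_\rho$ is smooth and proper by \cite{CrossRatioAnosoveProperEnergy}, hence attains a minimum, and critical points are precisely the equivariant branched minimal immersions). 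The entire content of the conjecture is the uniqueness statement in arbitrary real rank, and that remains unproved both in your sketch and in the literature.

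Beyond the unavoidable gap, one substantive correction: the uniqueness mechanism you attribute to the proven cases is not the one actually used. Neither Labourie's cyclic-surface argument nor the paper's Theorem \ref{UniqueMinSurface} establishes positive definiteness of $\mathrm{Hess}\,\Ee_\rho$ and then runs Morse theory on $\Teich(S)$ representation by representation. Instead they invoke Theorem \ref{DiffGeomTHM} on the product $P=\Teich(S)\times\Xx^{\mathrm{max}}(\Gamma,\sG)$: one must show that the fiberwise critical locus $N$ is \emph{connected} (this is where the explicit Higgs bundle parameterizations of the maximal components enter, and why the singular locus of $\Xx^{\mathrm{max}}_{0,0}$ has to be treated separately by factoring through smaller groups) and \emph{transverse to the fibers}, the latter being an infinitesimal rigidity statement for cyclic surfaces (Remark \ref{transversalityRemark}), not a sign computation for the second variation of the energy. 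Connectedness of $N$ is an essential global hypothesis with no counterpart in your Morse-theoretic route; conversely, the Hessian positivity your route requires is not known even in rank two, and it genuinely can fail for non-maximal Anosov representations (the quasi-Fuchsian examples of \cite{HuangWang15} cited in Remark \ref{Remark maximal are anosov} have several minimal surfaces), so any such argument must exploit maximality in a way no one has yet implemented beyond rank two. Read your proposal as a correct reduction of the conjecture to its open core together with a plausible but unexecuted strategy; it is not a proof.
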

Now, putting our Higgs bundle parameterization of $\Xx^{\mathrm{max}}(\Gamma,\sP\sSp(4,\R))$ from Remark \ref{Intro Remark Higgs param} together with Theorem \ref{Into Thm Lab conj} we obtain a homeomorphism between each component of $\Xx^\mathrm{max}(\Gamma,\sP\sSp(4,\R))$ and the fibrations over Teichm\"uller space from Theorems \ref{MonTHM Xd}, \ref{MonTHM X0} and \ref{MonTHM Xsw1}. 
At this point it is neither clear that these homeomorphisms are equivariant with respect to the mapping class group action nor is it clear that the fibrations are holomorphic. 
To solve these issues, we construct a {\em universal Higgs bundle moduli space}.

\begin{MonThm}     \label{Intro thm:universal}
Given an algebraic Lie group of Hermitian type $\sG$, there is a complex analytic space $\Mm^\mathrm{max}(\Uu,\sG)$ with a holomorphic map $\pi:\Mm^\mathrm{max}(\Uu,\sG) \to \Teich(S)$ such that 
\begin{enumerate}
\item for every $\Sigma \in \Teich(S)$, $\pi^{-1}(\Sigma)$ is biholomorphic to $\Mm^\mathrm{max}(\Sigma,\sG)$,
\item $\pi$ is a trivial smooth fiber bundle.
\item the pullback operation on Higgs bundles gives a natural action of $\MCG(S)$ on $\Mm^\mathrm{max}(\Uu,\sG)$ by holomorphic maps that lifts the action on $\Teich(S)$. 
\end{enumerate}
\end{MonThm}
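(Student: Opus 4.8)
The plan is to build $\Mm^{\mathrm{max}}(\Uu,\sG)$ by assembling the fiberwise moduli spaces $\Mm^{\mathrm{max}}(\Sigma,\sG)$ into a family over $\Teich(S)$ using a single smooth reference surface $S$ together with Teichm\"uller theory's universal family of Riemann surfaces. Recall that $\Teich(S)$ carries the universal curve $\mathcal{C}\to\Teich(S)$, whose fiber over $\Sigma$ is the Riemann surface $\Sigma$ itself; moreover $\Teich(S)$ is a cell and $\mathcal{C}\to\Teich(S)$ is a smooth (indeed real-analytically trivial) fiber bundle. The first step is to recall the construction of the Higgs bundle moduli space as an analytic GIT-type quotient: $\Mm^{\mathrm{max}}(\Sigma,\sG)$ is cut out inside an appropriate bundle of holomorphic data (a Dolbeault-type configuration space of pairs, a holomorphic structure plus a Higgs field) by the polystability condition modulo the complex gauge group. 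The key point is that all of this data varies holomorphically with $\Sigma$ once one works over $\mathcal{C}$: fixing the underlying smooth surface, a point of $\Teich(S)$ is a point in the space of complex structures $\mathcal{J}(S)$ modulo $\Diff_0(S)$, and the $\bar\partial$-operators, the associated bundles $K^{m_j}$, and the Hitchin-type equations all depend holomorphically on the Beltrami parameter. So I would define $\Mm^{\mathrm{max}}(\Uu,\sG)$ as the relative moduli space of polystable $\sG$-Higgs bundles for the family $\mathcal{C}\to\Teich(S)$, taking the maximal Toledo-invariant components.

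The second step is to establish the three itemized properties. Property (1), that $\pi^{-1}(\Sigma)$ is biholomorphic to $\Mm^{\mathrm{max}}(\Sigma,\sG)$, is essentially built into the construction: the fiber of the relative moduli space over $\Sigma$ is the moduli space for the curve $\mathcal{C}_\Sigma=\Sigma$, and the biholomorphism is the tautological one — one must only check that the complex-analytic structure the relative construction puts on the fiber agrees with the intrinsic one on $\Mm^{\mathrm{max}}(\Sigma,\sG)$, which follows from the base-change/universality property of the analytic moduli construction. For property (3), the mapping class group $\MCG(S)=\Diff^+(S)/\Diff_0(S)$ acts on $\Teich(S)$ and on $\mathcal{C}$ compatibly by pullback of complex structures; a diffeomorphism $\phi$ of $S$ pulls a $\sG$-Higgs bundle on $(\Sigma,J)$ back to one on $(\Sigma,\phi^*J)$, polystability and the Toledo invariant are preserved, and this descends to a holomorphic self-map of $\Mm^{\mathrm{max}}(\Uu,\sG)$ covering the action on $\Teich(S)$; holomorphicity is immediate since pullback of $\bar\partial$-operators and Higgs fields is holomorphic in the parameters.

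The main obstacle is property (2): that $\pi$ is a \emph{globally trivial} smooth fiber bundle, not merely a smooth submersion or a locally trivial bundle. Local triviality over the contractible base $\Teich(S)$ would, by standard arguments, already give global triviality, so really the issue is producing a smooth local (hence, by contractibility, global) trivialization in the presence of singular fibers — the components $\Xx_{0,0}^{\mathrm{max}}$ and $\Xx_{sw_1}^{\mathrm{max},sw_2}$ are genuinely singular. Here is where I would invoke the non-abelian Hodge correspondence in its parametrized form: for each $\Sigma$ there is a homeomorphism $\Mm^{\mathrm{max}}(\Sigma,\sG)\cong\Xx^{\mathrm{max}}(\Gamma,\sG)$, and the target is independent of $\Sigma$. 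The standard approach is to show these homeomorphisms fit together continuously — i.e. the Hitchin--Kobayashi/Corlette solutions to the self-duality equations depend continuously (in fact smoothly on the smooth locus, and continuously across singularities by the properness and uniqueness in Corlette's theorem) on the Riemann surface $\Sigma\in\Teich(S)$. This gives a continuous map $\Mm^{\mathrm{max}}(\Uu,\sG)\to\Teich(S)\times\Xx^{\mathrm{max}}(\Gamma,\sG)$ over $\Teich(S)$ which is a fiberwise homeomorphism, hence (once one checks it is a homeomorphism, using properness of $\pi$ which follows from properness of each $\Mm^{\mathrm{max}}(\Sigma,\sG)$) the desired global trivialization. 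The delicate point to get right is the continuous dependence of the harmonic metric on $\Sigma$ uniformly across the singular strata; I would handle this by combining the continuity of solutions to the Hitchin equations under variation of the metric on the smooth part with a gluing/limiting argument using the explicit Higgs bundle parametrizations of the singular components to be given in Section \ref{PSp4R}, which show the singularities are of the mild (orbifold, or cone-over-projective-space) type described in Theorems \ref{MonTHM X0} and \ref{MonTHM Xsw1} and therefore cause no monodromy obstruction to trivialization over the cell $\Teich(S)$.
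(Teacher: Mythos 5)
There is a genuine gap at the heart of your proposal: the existence of the relative moduli space itself. You define $\Mm^{\mathrm{max}}(\Uu,\sG)$ as "the relative moduli space of polystable $\sG$-Higgs bundles for the family $\mathcal{C}\to\Teich(S)$," constructed by an "analytic GIT-type quotient" in which everything "varies holomorphically with the Beltrami parameter." But no such off-the-shelf construction exists over $\Teich(S)$: producing a complex analytic space whose fibers are the $\Mm^{\mathrm{max}}(\Sigma,\sG)$, with a genuine relative analytic structure (not just a set-theoretic union of fibers), is precisely the content of the theorem, and it cannot be waved through by saying the $\bar\partial$-operators depend holomorphically on Beltrami differentials. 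The paper's route is designed around this obstruction: the only available machinery is Simpson's construction of the moduli space of Higgs bundles for a smooth projective family over a base of \emph{finite type over $\C$} (\cite[Corollary 6.7]{SimpsonModuli2}), and $\Teich(S)$ is not such a base. The paper therefore passes to the level-$m$ quotient $\Teich_m(S)=\Teich(S)/\sH_m$ with $m\geq 3$, which (being torsion-free of finite index) is a smooth quasi-projective scheme carrying a universal curve $\Uu_m^{qp}$, applies Simpson's relative construction there to get $\Mm^{qp}(\Uu_m^{qp},\sG_\C)$, and then defines $\Mm(\Uu,\sG_\C)$ as the fiber product with the covering $\Teich(S)\to\Teich_m(S)$. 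This two-step descent-and-pullback is the key idea your proposal is missing. A second, related omission: Simpson's construction is for complex reductive groups, so the real form $\sG$ must be reached indirectly, as (components of) the fixed-point set of the holomorphic involution $\sigma$ on $\Mm(\Uu,\sG_\C)$ induced by the involution defining $\sH_\C$; the injectivity of $\Mm^\tau(\Sigma,\sG)\to\Mm(\Sigma,\sG_\C)^\sigma$ uses $\tau\neq 0$, which is where maximality enters. Your proposal treats the real-group relative moduli space as already constructed.

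By contrast, your treatment of property (2) is more elaborate than necessary. The paper's trivialization is simply the map $E\mapsto(N_{\pi(E)}(E),\pi(E))$ into $\Xx(\Gamma,\sG_\C)\times\Teich(S)$ given by the fiberwise nonabelian Hodge correspondence; this is a homeomorphism over $\Teich(S)$, and the claim is only topological triviality (the fibers are singular, so no smooth structure on the trivialization is asserted or needed). Your proposed "gluing/limiting argument using the explicit Higgs bundle parametrizations" of the singular components would also tie the argument to $\sP\sSp(4,\R)$, whereas the theorem is stated for arbitrary Hermitian $\sG$. Finally, your argument for property (3) for a general mapping class $\phi$ needs the functoriality of the moduli construction under the isomorphism $\phi^{\Uu_m}$ of families over $\Teich_m(S)$, which again presupposes that the construction is Simpson's rather than an ad hoc analytic quotient.
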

\begin{Remark}
The proof of Theorem \ref{Intro thm:universal} relies on Simpson's construction of the moduli space of Higgs bundles over Riemann surfaces over schemes of finite type over $\C$. More specifically, we will use \cite[Corollary 6.7]{SimpsonModuli2}. We note that $\Teich(S)$ is not a scheme of finite type over $\C$.
\end{Remark}
Theorem \ref{Into Thm Lab conj} defines a map 
\begin{equation}\label{EQ map intro}
    \Psi:\xymatrix@R=0em{\Xx^\mathrm{max}(\Gamma,\sP\sSp(4,\R))\ar[r]&\Mm^\mathrm{max}(\Uu,\sP\sSp(4,\R))\\\rho\ar@{|->}[r]&(\Sigma_\rho,E,\Phi)}~,
\end{equation}
where $\Sigma_\rho\in\Teich(S)$ is the unique critical point of $\Ee_\rho$ and $(E,\Phi)$ is the Higgs bundle associated to $\rho$ on the Riemann surface $\Sigma_\rho.$ The natural complex structure on $\Xx^\mathrm{max}(\Gamma,\sP\sSp(4,\R))$ is given by the following corollary.
\begin{MonCorollary}
The map $\Psi$ is equivariant with respect to the mapping class group action and its image is a complex analytic subspace of $\Mm^\mathrm{max}(\Uu,\sP\sSp(4,\R)).$
\end{MonCorollary}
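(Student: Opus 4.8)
The approach is to prove the two assertions in turn. The equivariance of $\Psi$ is a naturality statement, and the analyticity of its image I would establish by identifying $\mathrm{image}(\Psi)$ with the zero locus of a holomorphic ``quadratic differential'' map on the universal moduli space, after which one invokes the fact that the zero set of such a map is a complex analytic subspace.

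For equivariance, fix $\phi\in\MCG(S)$, which acts on $\Xx^\mathrm{max}(\Gamma,\sP\sSp(4,\R))$, on $\Teich(S)$, and --- through Theorem~\ref{Intro thm:universal}(3) --- on $\Mm^\mathrm{max}(\Uu,\sP\sSp(4,\R))$. Precomposing a $\rho$-equivariant harmonic map into the symmetric space of $\sP\sSp(4,\R)$ with the lift of a diffeomorphism representing $\phi$ produces a $(\phi\cdot\rho)$-equivariant harmonic map, and since the Dirichlet energy of a map from a Riemann surface is conformally invariant, the two maps have the same energy. Hence the energy functions are intertwined by the $\MCG(S)$-action on $\Teich(S)$, i.e.\ $\Ee_{\phi\cdot\rho}=\phi\cdot\Ee_\rho$. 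In particular $\phi$ carries the unique critical point of $\Ee_\rho$ to the unique critical point of $\Ee_{\phi\cdot\rho}$ (Theorem~\ref{Into Thm Lab conj}), so $\Sigma_{\phi\cdot\rho}=\phi\cdot\Sigma_\rho$. Because the non-abelian Hodge correspondence is natural under pullback of the Riemann surface structure, the Higgs bundle attached to $\phi\cdot\rho$ on $\Sigma_{\phi\cdot\rho}$ is the $\phi$-pullback of the Higgs bundle attached to $\rho$ on $\Sigma_\rho$; therefore $\Psi(\phi\cdot\rho)=\phi\cdot\Psi(\rho)$. Since $\Diff_0(S)$ acts trivially on all three spaces, this descends to $\MCG(S)$.

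For the image, recall that on a fixed Riemann surface $\Sigma$ a maximal $\sP\sSp(4,\R)$-Higgs bundle $(E,\Phi)$ carries a well-defined holomorphic quadratic differential $q_2(E,\Phi)\in H^0(\Sigma,K^2)$ --- the degree-two Hitchin invariant, a constant multiple of $\tr(\Phi^2)$ --- and under the parameterization of Remark~\ref{Intro Remark Higgs param} this is exactly the $H^0(\Sigma,K^2)$-component. Letting $\Sigma$ vary over $\Teich(S)$, these assemble into a map
\[
q_2\colon \Mm^\mathrm{max}(\Uu,\sP\sSp(4,\R))\longrightarrow \Qq,
\]
where $\Qq\to\Teich(S)$ is the holomorphic vector bundle whose fiber over $\Sigma$ is $H^0(\Sigma,K^2)$ (the holomorphic cotangent bundle of $\Teich(S)$). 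This $q_2$ is holomorphic: it is a fiberwise polynomial expression in the entries of the Higgs field, which varies holomorphically in Simpson's relative moduli space, so $q_2$ is the degree-two component of the relative Hitchin map, a morphism of complex analytic spaces lifting $\pi$. The key geometric input is that $q_2(\Sigma,E,\Phi)$ equals, up to a nonzero constant, the Hopf differential of the harmonic map produced from $(\Sigma,E,\Phi)$ by non-abelian Hodge; granting this, $\Sigma$ is a critical point of the energy function of the underlying representation if and only if $q_2(\Sigma,E,\Phi)=0$. It follows that $\Psi(\rho)\in q_2^{-1}(0)$ for all $\rho$; conversely, if $q_2(\Sigma,E,\Phi)=0$ and $\rho$ is the representation recovered from $(\Sigma,E,\Phi)$ by non-abelian Hodge on $\Sigma$, then $\Sigma$ is a critical point of $\Ee_\rho$, so $\Sigma=\Sigma_\rho$ by Theorem~\ref{Into Thm Lab conj} and $(\Sigma,E,\Phi)=\Psi(\rho)$. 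Hence $\mathrm{image}(\Psi)=q_2^{-1}(0)$, the preimage under a holomorphic map of the zero section of $\Qq$, and is therefore a complex analytic subspace of $\Mm^\mathrm{max}(\Uu,\sP\sSp(4,\R))$.

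The step I expect to be the main obstacle is the analytic bookkeeping on the universal moduli space: checking that the degree-two Hitchin invariant really glues to a holomorphic map $q_2$ on $\Mm^\mathrm{max}(\Uu,\sP\sSp(4,\R))$ --- which requires a relative Higgs field on Simpson's construction, at least \'etale-locally, together with the compatibility of the local models --- and nailing down the comparison of $q_2$ with the Hopf differential in families, so that the equivalence between $q_2(\Sigma,E,\Phi)=0$ and $\Sigma$ being a critical point holds uniformly over $\Teich(S)$ rather than merely fiber by fiber. Once this is secured, the remaining point needed for Theorem~\ref{Theorem intro MCG C structure} in the $\sP\sSp(4,\R)$ case --- that $\Psi$ is a homeomorphism onto the analytic subspace $q_2^{-1}(0)$, hence transports a complex structure to $\Xx^\mathrm{max}(\Gamma,\sP\sSp(4,\R))$ --- follows from continuity of $\rho\mapsto\Sigma_\rho$, the fiberwise homeomorphism property of non-abelian Hodge, and the inverse map given by taking the monodromy of the associated flat connection.
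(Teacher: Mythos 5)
Your proposal is correct and follows essentially the same route as the paper: the authors also prove equivariance via naturality of the harmonic map/non-abelian Hodge constructions, and identify the image of $\Psi$ with the zero locus of the holomorphic map $\mathrm{Tr}_2(\Sigma,\Ee,\varphi)=\tr(\varphi^2)$ from $\Mm^\mathrm{max}(\Uu,\sP\sSp(4,\R))$ to the bundle over $\Teich(S)$ with fiber $H^0(\Sigma,K^2)$, using that $\tr(\varphi^2)$ is (a multiple of) the Hopf differential and that Theorem \ref{Into Thm Lab conj} makes the resulting map a bijection onto $\mathrm{Tr}_2^{-1}(0)$. The technical point you flag as the main obstacle (holomorphicity of the relative Hitchin invariant on Simpson's universal moduli space) is exactly the one the paper relies on as well.
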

\begin{Remark}
Let $\sG$ be a real semi-simple Lie group of Hermitian type. A positive answer to Conjecture \ref{Conj MaxLabourie} would define a map analogous to \eqref{EQ map intro}
\[\Psi:\xymatrix@R=0em{\Xx^\mathrm{max}(\Gamma,\sG)\ar[r]&\Mm^\mathrm{max}(\Uu,\sG)\\\rho\ar@{|->}[r]&(E,\Phi,\Sigma_\rho)}~.\]
In this case also the map $\Psi$ is equivariant with respect to the mapping class group and its image is a complex analytic subspace. This would give a natural complex structure on $\Xx^{\mathrm{max}}(\Gamma,\sG).$ 
In particular, the extension of Theorem \ref{Into Thm Lab conj} to all rank two Hermitian Lie groups of \cite{CollierTholozanToulisse} implies Theorem \ref{Theorem intro MCG C structure}.
\end{Remark}

\subsection{Organization of the paper}

In Section \ref{section_preliminaries}, we introduce character varieties, Higgs bundles and some Lie theory for the groups $\sP\sSp(4,\R)$ and $\sSp(4,\R)$. In Section \ref{Orthogonal_bundles}, we describe holomorphic orthogonal bundles, with special attention to the description of the moduli space of holomorphic $\sO(2,\C)$-bundles. This is a necessary tool which we will use repeatedly. In Section \ref{PSp4R}, Higgs bundles over a fixed Riemann surface $\Sigma$ are used to describe the topology of $\Xx^\mathrm{max}(\Gamma,\sP\sSp(4,\R))$, special attention is placed on the singular components. In Section \ref{Sp4R}, we prove analogous results for $\Xx^\mathrm{max}(\Gamma,\sSp(4,\R))$. In Section \ref{minimal}, we  prove Labourie's conjecture concerning uniqueness of minimal surfaces. In Section \ref{sec:mcg_inv_cmplx_str}, the universal Higgs bundle moduli space is constructed and in Section \ref{equivariant} we will put everything together and describe the action of $\MCG(S)$ on  $\Xx^\mathrm{max}(\Gamma,\sP\sSp(4,\R))$ and $\Xx^\mathrm{max}(\Gamma,\sSp(4,\R))$.

\subsection*{Acknowledgments} We would like to thank Steve Bradlow, Camilla Felisetti, Ian McIntosh, Gabriele Mondello and Anna Wienhard for very helpful conversations concerning this work. We would also like to thank Olivier Guichard for many useful suggestions and comments. 
The authors gratefully acknowledge support from the NSF grants DMS-1107452, 1107263 and 1107367 “RNMS: GEometric structures And Representation varieties” (the GEAR Network) and the hospitality of the Mathematical
Sciences Research Institute in Berkeley where some of this research was carried out. B. Collier's research is supported by the National Science Foundation under Award No. 1604263.

\section{Character varieties and Higgs bundles}   \label{section_preliminaries}
In this section we recall general facts about character varieties and Higgs bundles. 

\subsection{Character varieties}

Let $\Gamma$ be the fundamental group of an orientable closed surface $S$ with genus $g\geq2$ and let $\sG$ be a connected real semi-simple algebraic Lie group. 
Denote the fundamental group of $S$ by $\Gamma.$
The set of representations of $\Gamma$ into $\sG$ is defined to be the set of group homomorphisms $\Hom(\Gamma,\sG).$ 
Since $\sG$ is algebraic and $\Gamma$ is finitely generated, $\Hom(\Gamma,\sG)$ can be given the structure of an algebraic variety.
A representation $\rho\in\Hom(\Gamma,\sG)$ is called {\em reductive} if the Zariski closure of $\rho(\Gamma)$ is a reductive subgroup of $\sG.$ Denote the space of reductive representations by $\Hom^+(\Gamma,\sG).$
\begin{Definition}
    The $\sG$-character variety $\Xx(\Gamma,\sG)$ is the quotient space $\Xx(\Gamma,\sG)=\Hom^+(\Gamma,\sG)/\sG$
    where $\sG$ acts by conjugation.
    \end{Definition}
The $\sG$-character variety $\Xx(\Gamma,\sG)$ is a real semi-algebraic set of dimension $(2g-2)\mathrm{dim}(\sG)$ \cite{SymplecticNatureofFund} which carries a natural action of the  mapping class group of $S$
\[\MCG(S)=\Diff^+(S)/\Diff_0(S)~.\] 
An element
$\phi\in\MCG(S)$ acts on $\Xx(\Gamma,\sG)$ by precomposition: $\phi\cdot\rho=\rho\circ\phi_*,$
 \[\xymatrix{\Gamma\ar[r]^{\phi_*}&\Gamma\ar[r]^{\rho}&\sG}~.\]

\begin{Example}
The set of Fuchsian representations $\Fuch(\Gamma)\subset\Xx(\Gamma,\sP\sSL(2,\R))$ is defined to be the subset of conjugacy classes of {\em faithful} representations with {\em discrete image}.  
The space $\Fuch(\Gamma)$ consists of two isomorphic connected components of $\Xx(\Gamma,\sP\sSL(2,\R)))$ \cite{TopologicalComponents}. 
Each of these components is in one to one correspondence with the \emph{Teichm\"uller space} $\Teich(S)$ of isotopy classes of Riemann surface structures on the surface $S.$
Furthermore, the mapping class group acts properly discontinuously on $\Fuch(\Gamma)$.

When $\sG$ is a split real group of adjoint type, such as $\sP\sSL(n,\R)$ or $\sP\sSp(2n,\R)$, the unique (up to conjugation) irreducible representation $\iota:\sP\sSL(2,\R)\to\sG$ defines a map \[\iota:\Xx(\Gamma,\sP\sSL(2,\R))\to\Xx(\Gamma,\sG)~,\]
and allows one to try to deform Fuchsian representations into $\Xx(\Gamma,\sG)$. 
The space of \emph{Hitchin representations} $\Hit(\sG)\subset\Xx(\Gamma,\sG)$ is defined to be the union of the connected components containing $\iota(\Fuch(\Gamma)).$ A connected component of $\Hit(\sG)$ is called a {\em Hitchin component}. 

For each Riemann surface structure $\Sigma$ on $S$, Hitchin parameterized each Hitchin component by a vector space of holomorphic differentials \cite{liegroupsteichmuller}; 
\begin{equation}
    \label{EQ Hitchin comp param}\Hit(\sG)\cong \bigoplus\limits_{j=1}^{\mathrm{\rk}(\sG)}H^0(\Sigma,K^{m_j+1})~,
\end{equation} 
where $m_1=1$ and $\{m_j\}$ are the so called exponents of $\sG.$ For $\sG=\sP\sSp(4,\R),$ $m_1=1$ and $m_2=3.$
The mapping class group $\MCG(S)$ acts properly discontinuously on $\Hit(\sG)$ \cite{CrossRatioAnosoveProperEnergy}. Note however that $\MCG(S)$ does not act naturally on the parameterization \eqref{EQ Hitchin comp param} because of the choice of complex structure.
\end{Example}
Associated to a representation $\rho:\Gamma\to\sG$ there is a flat principal $\sG$-bundle $\widetilde S\times_\rho\sG$ on $S.$ In fact, there is a homeomorphism
\[\Xx(\Gamma,\sG)\cong \{\text{Reductive flat } \sG\text{-bundles on } S\}/\text{isomorphism}.\]
We will usually blur the distinction between $\rho\in\Xx(\Gamma,\sG)$ and the corresponding isomorphism class of the flat $\sG$-bundle $\widetilde S\times_\rho\sG$.

\subsection{Higgs bundles}

As above, let $\sG$ be a connected real semi-simple algebraic Lie group and $\sH$ be a maximal compact subgroup. Fix a Cartan involution $\theta:\fg\to\fg$ with Cartan decomposition $\fg=\fh\oplus\fm$; the complexified splitting  $\fg_\C=\fh_\C\oplus\fm_\C$ is $Ad_{H_\C}$-invariant. We will mostly deal with simple Lie groups $\sG.$

Let $\Sigma$ be a compact Riemann surface of genus $g\geq2$ with canonical bundle $K$. 

\begin{Definition} \label{DEF Higgs bundle}
 A {\em $\sG$-Higgs bundle} on $\Sigma$ is a pair $(\Pp_{\sH_\C},\varphi)$ where $\Pp_{\sH_\C}$ is a holomorphic principal $\sH_\C$-bundle on $\Sigma$ and $\varphi$ is a holomorphic $(1,0)$-form valued in the associated bundle with fiber $\fm_\C,$ i.e., $\varphi\in H^0(\Sigma, \Pp_{\sH_\C}[\fm_\C]\otimes K)$. The section $\varphi$ is called the {\em Higgs field}.
\end{Definition}
 Two Higgs bundles $(\Pp,\varphi)$ and $(\Pp',\varphi')$ are \emph{isomorphic} if there exists an isomorphism of the underlying smooth bundles $f:P_{\sH_\C}\to P_{\sH_\C}'$ such that $f^*\Pp'=\Pp$ and $f^*\varphi'=\varphi.$
 We will usually think of the underlying smooth bundle $P_{\sH_\C}$ as being fixed and define the gauge group $\Gg(P_{\sH_\C})$ as the group of smooth bundle automorphisms.

\begin{Example}
    For $\sG=\sGL(n,\C)$, we have $\fh=\fu(n)$, $\fm=i\fu(n)$ and $\sH_\C=\sGL(n,\C).$ 
    Thus a $\sGL(n,\C)$-Higgs bundle is a holomorphic principal $\sGL(n,\C)$-bundle $\Pp\to\Sigma$ and a holomorphic section $\varphi$ of the adjoint bundle of $\Pp$ twisted by $K$.  
    Using the standard representation of $\sGL(n,\C)$ on $\C^n,$ the data of a $\sGL(n,\C)$-Higgs bundle can be equivalently described by a pair $(\Ee,\Phi)$ where $\Ee=(E,\bar\p_E)$ is a rank $n$ holomorphic vector bundle on $\Sigma$ and $\Phi\in H^0(\Sigma,\End(\Ee)\otimes K)$ is a holomorphic endomorphism of $\Ee$ twisted by $K$.  
    Similarly, an $\sSL(n,\C)$-Higgs bundle is a pair $(\Ee,\Phi)$ where $\Ee$ is a rank $n$ holomorphic vector bundle with trivial determinant and $\Phi\in H^0(\End(\Ee)\otimes K)~$ is a {\em traceless} $K$-twisted endomorphism.
\end{Example}

\begin{Definition}\label{SL(n,C)stability}
A $\sGL(n,\C)$-Higgs bundle $(\Ee,\Phi)$ is called {\em stable} if for all $\Phi$-invariant subbundles $\Ff\subset\Ee$ we have $\frac{\deg(\Ff)}{\rk(\Ff)}<\frac{\deg(\Ee)}{\rk(\Ee)}~.$
An $\sSL(n,\C)$-Higgs bundle $(\Ee,\Phi)$ is 
\begin{itemize}
    \item {\em stable} if all $\Phi$-invariant subbundles $\Ff\subset\Ee$ satisfy $\deg(\Ff)<0,$
    \item {\em poly-stable} if $(\Ee,\Phi)=\bigoplus(\Ee_j,\Phi_j)$ where each $(\Ee_j,\Phi_j)$ is a stable $\sGL(n_j,\C)$-Higgs bundle with $\deg(\Ee_j)=0$ for all $j.$
\end{itemize}
 \end{Definition} 
We will also need the notion of stability for $\sSO(n,\C)$-Higgs bundles to simplify some proofs in Section \ref{PSp4R}. Let $Q$ be a nondegenerate symmetric bilinear form of $\C^n$ and define the group 
\[\sSO(n,\C)=\{A\in\sSL(n,\C)| A^TQA=Q\}~.\]
Using the standard representation of $\sSO(n,\C)$, a smooth principal bundle $\sSO(n,\C)$-bundle $P$ gives rise to a rank $n$ smooth vector bundle $E$ with trivial determinant bundle. Moreover, the nondegenerate symmetric form $Q$ defines an everywhere nondegenerate section $Q\in\Omega^0(S^2(E^*)).$ We will usually interpret the section $Q$ as a symmetric isomorphism $Q:E\to E^*.$ By nondegeneracy, $\det(Q):\Lambda^n E\to \Lambda^n E^*$ is an isomorphism, and defines a trivialization of the line bundle $(\Lambda^nE)^2.$

\begin{Proposition}
    An $\sSO(n,\C)$-Higgs bundle $(\Pp_{\sSO(n,\C)},\varphi)$ on $\Sigma$ is equivalent to a triple $(\Ee,Q,\Phi)$ where
\begin{itemize}
    \item $\Ee$ is a rank $n$ holomorphic vector bundle with $\Lambda^n\Ee\cong\Oo,$
    \item $Q\in H^0(\Sigma,S^2\Ee^*)$ is every nondegenerate,
    \item $\Phi\in H^0(\Sigma,\End(\Ee)\otimes K)$ and satisfies $\Phi^TQ+Q\Phi=0.$
\end{itemize}  
\end{Proposition}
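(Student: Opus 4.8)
The plan is to unwind the two descriptions of the data on both sides of the equivalence and match them, using the standard representation of $\sSO(n,\C)$ and the general principle that reductions of structure group correspond to sections of associated fiber bundles. First I would recall that $\sSO(n,\C)$ is the stabilizer of the pair $(Q,\omega)$, where $Q$ is the nondegenerate symmetric form on $\C^n$ and $\omega=\det$ is the volume form; equivalently, $\sSO(n,\C)\subset\sGL(n,\C)$ is cut out by $A^TQA=Q$ and $\det A=1$. A holomorphic principal $\sSO(n,\C)$-bundle $\Pp$ thus gives a rank $n$ holomorphic vector bundle $\Ee=\Pp[\C^n]$ via the standard representation, together with the two reductions: the holomorphic section $Q\in H^0(\Sigma, S^2\Ee^*)$ coming from the $\sSO(n,\C)$-invariant element of $S^2(\C^n)^*$, which is everywhere nondegenerate since it is, and a holomorphic trivialization of $\Lambda^n\Ee$ coming from the invariant volume form. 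Conversely, given $(\Ee,Q)$ with these properties, the bundle of frames of $\Ee$ in which $Q$ takes its standard form and the volume form is $+1$ is a holomorphic principal $\sSO(n,\C)$-bundle; this is the usual equivalence between $\sSO(n,\C)$-bundles and quadratic bundles with trivialized determinant, and it is functorial, so isomorphisms correspond on the two sides.

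Next I would handle the Higgs field. For a $\sG$-Higgs bundle with $\sG=\sSO(n,\C)$ (viewed as a complex group, so that $\sH_\C=\sSO(n,\C)$ and $\fm_\C=\mathfrak{so}(n,\C)$ sits inside $\mathfrak{gl}(n,\C)$), Definition \ref{DEF Higgs bundle} asks for $\varphi\in H^0(\Sigma,\Pp[\mathfrak{so}(n,\C)]\otimes K)$. Under the standard representation, $\Pp[\mathfrak{so}(n,\C)]$ is precisely the subbundle of $\End(\Ee)$ consisting of endomorphisms $\Phi$ that are skew-symmetric with respect to $Q$, i.e.\ satisfy $Q\Phi+\Phi^TQ=0$ — this is exactly the pointwise description of the Lie algebra $\mathfrak{so}(n,\C)=\{X : X^TQ+QX=0\}$ transported through the frame bundle. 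Hence a holomorphic section of $\Pp[\mathfrak{so}(n,\C)]\otimes K$ is the same as a holomorphic $\Phi\in H^0(\Sigma,\End(\Ee)\otimes K)$ with $\Phi^TQ+Q\Phi=0$. Assembling the two matchings — the bundle part and the Higgs part — and checking that the notion of isomorphism of $\sSO(n,\C)$-Higgs bundles (Definition \ref{DEF Higgs bundle}) translates into isomorphisms of triples $(\Ee,Q,\Phi)$ intertwining all three pieces of data, gives the claimed equivalence of groupoids, hence of moduli.

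The only genuinely substantive point, and the one I would spend the most care on, is the identification $\Pp[\mathfrak{so}(n,\C)]\cong\{\Phi\in\End(\Ee): Q\Phi+\Phi^TQ=0\}$ as holomorphic subbundles of $\End(\Ee)$, and the compatibility of this isomorphism with the $K$-twist; everything else is bookkeeping about reductions of structure group. Concretely, one observes that $Q:\Ee\to\Ee^*$ lets us write $\End(\Ee)\cong\Ee\otimes\Ee^*\cong\Ee^*\otimes\Ee^*$, and under this identification the $Q$-skew endomorphisms are exactly the image of $\Lambda^2\Ee^*$, which is the associated bundle $\Pp[\Lambda^2(\C^n)^*]\cong\Pp[\mathfrak{so}(n,\C)]$ (using the standard $\sSO(n,\C)$-equivariant isomorphism $\Lambda^2(\C^n)^*\cong\mathfrak{so}(n,\C)$ induced by $Q$). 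I expect no obstacle beyond being careful that all identifications are holomorphic and natural, so that they glue over $\Sigma$; the trivialization of $(\Lambda^n\Ee)^2$ furnished by $\det Q$, noted just before the statement, is what guarantees one can consistently pass between $\sSO(n,\C)$ (rather than merely $\sO(n,\C)$) reductions and the datum $\Lambda^n\Ee\cong\Oo$.
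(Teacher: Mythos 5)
Your proposal is correct and follows exactly the route the paper takes: the paper states this proposition without a formal proof, and the paragraph immediately preceding it records precisely the observations you elaborate (the standard representation yields $\Ee$ with trivial determinant, the invariant form yields the nondegenerate $Q\in H^0(S^2\Ee^*)$, and the adjoint bundle is the bundle of $Q$-skew endomorphisms). Your identification $\Pp[\fso(n,\C)]\cong\Lambda^2\Ee^*\subset\End(\Ee)$ is the right way to make the key step precise.
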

Two $\sSO(n,\C)$-Higgs bundles $(\Ee,Q,\Phi)$ and $(\Ee',Q',\Phi')$ are isomorphic if there is a smooth bundle isomorphism $f:E\to E'$ with $f^TQ'f=Q$ and such that $f^*\bar\p_{E'}=\bar\p_E$ and $f^*\Phi'=\Phi.$ 
A holomorphic subbundle of $\Ff\subset\Ee$ is called \emph{isotropic} if $Q|_\Ff\equiv0.$ Note that an isotropic subbundle has rank at most $\floor{\frac{n}{2}}.$
\begin{Definition} \label{DEF SOn stability}
    An $\sSO(n,\C)$-Higgs bundle $(\Ee, Q, \Phi)$ is {\em stable} if for all $\Phi$-invariant {\em isotropic subbundles} $\Ff\subset\Ee$ we have $\deg(F)<0.$
\end{Definition}

In general, the notion of stability for a $\sG$-Higgs bundle is more subtle. When $\sG$ is a real form of a complex subgroup of $\sSL(n,\C)$ we have the following simplified notion of poly-stability \cite{HiggsPairsSTABILITY}. 
\begin{Definition}\label{DEF SLnC poly-stable is enough}
    Let $\sG$ be a real form of a semi-simple Lie subgroup of $\sSL(n,\C)$. A $\sG$-Higgs bundle $(\Pp_{\sH_\C},\varphi)$ is poly-stable if and only if the corresponding $\sSL(n,\C)$-Higgs bundle $(\Pp_{\sSL(n,\C)},\varphi)$ obtained via extension of structure group is poly-stable in the sense of Definition \ref{SL(n,C)stability}.
\end{Definition}
Although it is not immediately clear from the above definition, a stable $\sSO(n,\C)$-Higgs bundle is poly-stable in the sense of Definition \ref{DEF SLnC poly-stable is enough}.
Let $\Gg(P_{\sH_\C})$ be the gauge group of smooth bundle automorphisms of a principal $\sH_\C$-bundle $P_{\sH_\C}$.  
The group $\Gg(P_{\sH_\C})$ acts on the set of holomorphic structures on $P_{\sH_\C}$ and sections of $\Omega^{1,0}(\Sigma, P_{\sH_\C}[\fm_\C])$ by pullback. This action preserves the set of poly-stable $\sG$-Higgs bundles, and the orbits through poly-stable points are closed.

\begin{Definition} \label{DefModuliSpace}
Fix a smooth principal $\sH_\C$-bundle $P_{\sH_\C}$ on $\Sigma.$ The moduli space of $\sG$-Higgs bundle structures on $P_{\sH_\C}$ consists of isomorphism classes of poly-stable Higgs bundles with underlying smooth bundle $P_{\sH_\C}$ 
\[\Mm(\Sigma,P_{\sH_\C},\sG) = \{\text{poly-stable } \sG\text{-Higgs bundle structures on } P_{\sH_\C} \}/\Gg(P_{\sH_\C})~.\]
The union over the set of isomorphism classes of smooth principal $\sH_\C$-bundles on $\Sigma$ of the spaces $\Mm(\Sigma,P_{\sH_\C},\sG)$ will be referred to as the moduli space of $\sG$-Higgs bundles and denoted by $\Mm(\Sigma,\sG)$, or, when there is no confusion, by $\Mm(\sG)$. 
\end{Definition}

In fact, the space $\Mm(\Sigma,\sG)$ can be given the structure of a complex algebraic variety of complex dimension $(g-1)\dim(\sG)$ \cite{selfduality,SimpsonVHS,schmitt_2005}.
Moreover, we have the following fundamental result which allows one to go back and forth between statements about the Higgs bundle moduli space and the character variety. 
We will say more about how this correspondence works in Section \ref{minimal}.
\begin{Theorem}\label{THM: NAHC}
    Let $\Gamma$ be the fundamental group of a closed oriented surface $S$ and let $\sG$ be a real simple Lie group with maximal compact subgroup $\sH.$ For each choice of a Riemann surface structure $\Sigma\in\Teich(S),$ the moduli space $\Mm(\Sigma, \sG)$ of $\sG$-Higgs bundles on $\Sigma$ is homeomorphic to the $\sG$-character variety $\Xx(\Gamma,\sG)$. 
\end{Theorem}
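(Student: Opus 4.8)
The plan is to obtain Theorem \ref{THM: NAHC} by assembling three classical inputs: Corlette's existence theorem for equivariant harmonic maps, the Hitchin--Kobayashi correspondence for $\sG$-Higgs bundles, and the algebraic dictionary of Corlette--Donaldson--Simpson relating a harmonic metric on a flat bundle to a Higgs field. Throughout, fix the Riemann surface structure $\Sigma\in\Teich(S)$ with canonical bundle $K$, and use the Cartan decomposition $\fg=\fh\oplus\fm$ and its complexification $\fg_\C=\fh_\C\oplus\fm_\C$.

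First I would construct the map $\Xx(\Gamma,\sG)\to\Mm(\Sigma,\sG)$. Given a reductive representation $\rho$, form the flat $\sG$-bundle $P_\rho=\widetilde S\times_\rho\sG$ with its flat connection $D$. Since $\rho$ is reductive, Corlette's theorem \cite{canonicalmetrics} furnishes a $\rho$-equivariant harmonic map $\widetilde\Sigma\to\sG/\sH$, equivalently a harmonic reduction of $P_\rho$ to $\sH$. Relative to this reduction, decompose $D=d_A+\psi$ with $d_A$ a connection on the $\sH$-reduction and $\psi$ a $1$-form valued in the associated bundle with fibre $\fm$. I would set $\varphi=\psi^{1,0}$ and let $\bar\partial$ be the $(0,1)$-part of $d_A$, which makes the complexified $\sH_\C$-bundle holomorphic. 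The key point is that flatness of $D$ together with harmonicity of the reduction is exactly equivalent to $\bar\partial\varphi=0$ together with Hitchin's equations; hence $(\Pp_{\sH_\C},\varphi)$ is a $\sG$-Higgs bundle, and solvability of Hitchin's equations forces it to be poly-stable. Since conjugate representations give gauge-equivalent Higgs bundles, this descends to the quotients.

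For the inverse, given a poly-stable $\sG$-Higgs bundle $(\Pp_{\sH_\C},\varphi)$ I would apply the Hitchin--Kobayashi correspondence for real $\sG$-Higgs bundles \cite{HiggsPairsSTABILITY}, using Definition \ref{DEF SLnC poly-stable is enough} to reduce poly-stability to the $\sSL(n,\C)$ case, to produce a metric $h$ (a reduction to $\sH$) solving Hitchin's equations. Then $\nabla_h=A_h+\varphi+\tau_h(\varphi)$, where $A_h$ is the Chern connection and $\tau_h$ the conjugation defined by $h$, is a flat $\sG$-connection whose holonomy is a reductive representation $\rho_h$, well defined up to conjugation. Uniqueness in Corlette's theorem and uniqueness of the solution of Hitchin's equations on a poly-stable object show that the two constructions are mutually inverse, giving a bijection $\Mm(\Sigma,\sG)\leftrightarrow\Xx(\Gamma,\sG)$.

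The hard part will be upgrading this bijection to a homeomorphism for the analytic topologies --- the quotient topology on $\Hom^+(\Gamma,\sG)/\sG$ and the topology on $\Mm(\Sigma,\sG)$ as a complex algebraic variety. On the smooth loci this follows from continuous (indeed real-analytic) dependence of Corlette's harmonic metric on $\rho$ and of the solution of Hitchin's equations on the Higgs data, via Uhlenbeck compactness and the implicit function theorem. The genuine obstacle is continuity at the singular points, where the smooth argument is unavailable and one argues instead through properness of the energy functional and a limiting argument. Rather than redo this, I would invoke \cite{HiggsPairsSTABILITY}, where the homeomorphism is established in the generality of real reductive $\sG$ needed here, the case of complex $\sG$ going back to \cite{SimpsonVHS} and of $\sSL(2,\C)$ to \cite{selfduality}.
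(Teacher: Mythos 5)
Your proposal follows essentially the same route as the paper: the authors also assemble the correspondence from Corlette's theorem (their Theorem \ref{CorletteTheorem}) and the Hitchin--Kobayashi correspondence for real groups (their Theorem \ref{HitchinEqTheorem}), observe that $A+\varphi-\tau(\varphi)$ is flat to get the bijection, and defer the homeomorphism statement to \cite{HiggsPairsSTABILITY} (with \cite{selfduality} and \cite{SimpsonVHS} for the classical cases). Your write-up is correct and, if anything, slightly more explicit than the paper about where the topological upgrade genuinely requires the cited reference.
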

\begin{Remark}
When $\sG$ is compact, Theorem \ref{THM: NAHC} was proven using the theory of stable holomorphic bundles by Narasimhan and Seshadri \cite{NarasimhanSeshadri} for $\sG=\sSU(n),$ and Ramanathan \cite{ramanathan_1975} in general. For $\sG$ complex, it was proven by Hitchin \cite{selfduality} and Donaldson \cite{harmoicmetric} for $\sG=\sSL(2,\C)$ and Simpson \cite{SimpsonVHS} and Corlette \cite{canonicalmetrics} in general using the theory of Higgs bundles and harmonic maps. For $\sG$ a general real reductive Lie group and appropriate notions of stability, Theorem \ref{THM: NAHC} was proven in \cite{HiggsPairsSTABILITY}.
\end{Remark}

It will be useful to know when the equivalence class of a $\sG$-Higgs bundle $(\Pp,\varphi)$ defines a smooth point or a singular point of the moduli space $\Mm(\sG).$ Moreover, it will be important to distinguish between ``mild'' singular points, i.e., orbifold points, and other singular points. Define the automorphism group of $(\Pp,\varphi)$ by
\[\Aut(\Pp,\varphi)=\{g\in\Gg(P_{\sH_\C})\ |\ g\cdot (\bar\p_\Pp,\varphi)=(\bar\p_\Pp,\varphi)\}~.\]
Note that the center $\Zz(\sG_\C)$ of $\sG_\C$ is equal to the intersection of the kernel of the representation $Ad: \sH_\C\to\sGL(\fm_\C)$ with the center of $\sH_\C.$
For every $\sG$-Higgs bundle $(\Pp,\varphi)$ we have $\Zz(\sG_\C)\subset\Aut(\Pp,\varphi).$  
The following proposition follows from Propositions 3.17 and 3.18 of \cite{HiggsPairsSTABILITY}.
\begin{Proposition}\label{Proposition: Orbifold points}
    For a simple Lie group $\sG,$ if $(\Pp,\varphi)$ is a poly-stable $\sG$-Higgs bundle which is stable as a $\sG_\C$-Higgs bundle, then $\Aut(\Pp,\varphi)$ is finite and the isomorphism class of $(\Pp,\varphi)$ in $\Mm(\sG)$ is an orbifold point of type $\Aut(\Pp,\varphi)\slash\Zz(\sG_\C).$ In particular, $(\Pp,\varphi)$ defines a smooth point of $\Mm(\sG)$ if and only if $\Aut(\Pp,\varphi)=\Zz(\sG_\C).$  
\end{Proposition}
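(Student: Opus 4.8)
The plan is to extract the local structure of $\Mm(\sG)$ at the class $[(\Pp,\varphi)]$ from the deformation theory of $\sG$-Higgs bundles, following \cite[\S3]{HiggsPairsSTABILITY}. The object to work with is the two-term \emph{deformation complex} on $\Sigma$,
\[ C^\bullet(\Pp,\varphi)\ :\quad \Pp[\fh_\C]\ \xrightarrow{\ \mathrm{ad}(\varphi)\ }\ \Pp[\fm_\C]\otimes K, \]
placed in degrees $0$ and $1$, with $\mathrm{ad}(\varphi)(s)=[\varphi,s]$ (legitimate since $[\fm_\C,\fh_\C]\subseteq\fm_\C$). Its hypercohomology governs the moduli problem: $\HH^0(C^\bullet)$ is the Lie algebra of $\Aut(\Pp,\varphi)$ (holomorphic sections of $\Pp[\fh_\C]$ killed by $\mathrm{ad}(\varphi)$), $\HH^1(C^\bullet)$ is the space of infinitesimal deformations, and the obstruction to smoothness sits in $\HH^2(C^\bullet)$; a Riemann--Roch count — both $\Pp[\fh_\C]$ and $\Pp[\fm_\C]$ are associated to representations preserving the Killing form, hence have degree zero — gives $\dim\HH^1-\dim\HH^0-\dim\HH^2=(g-1)\dim\sG$.

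First I would show that $\Aut(\Pp,\varphi)$ is finite. Since $(\Pp,\varphi)$ is stable as a $\sG_\C$-Higgs bundle, the associated $\sG_\C$-Higgs bundle has automorphism group with Lie algebra $\fz(\fg_\C)$, which vanishes because $\sG_\C$ is semisimple; hence that group is finite. As $\Aut(\Pp,\varphi)$, taken inside the $\sH_\C$-gauge group, embeds into it, $\Aut(\Pp,\varphi)$ is finite too, i.e.\ $\HH^0(C^\bullet)=0$; this is Proposition~3.17 of \cite{HiggsPairsSTABILITY}. Recall also that $\Zz(\sG_\C)$ is finite (as $\sG$ is simple) and is always contained in $\Aut(\Pp,\varphi)$. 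Next I would check that the deformation problem is unobstructed. Extending the structure group along $\sH_\C\hookrightarrow\sG_\C$ and using $\fg_\C=\fh_\C\oplus\fm_\C$ together with $[\fm_\C,\fm_\C]\subseteq\fh_\C$, the deformation complex $\Pp[\fg_\C]\xrightarrow{\mathrm{ad}(\varphi)}\Pp[\fg_\C]\otimes K$ of the associated $\sG_\C$-Higgs bundle decomposes as the direct sum of $C^\bullet(\Pp,\varphi)$ and the ``mirror'' complex $\Pp[\fm_\C]\to\Pp[\fh_\C]\otimes K$. This $\sG_\C$-complex is Serre self-dual via the Killing form, so its $\HH^2$ is dual to its $\HH^0$, which is $0$; being a direct summand, $\HH^2(C^\bullet(\Pp,\varphi))$ vanishes as well.

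With no obstruction, the gauge-theoretic slice theorem of \cite{HiggsPairsSTABILITY} — available because $(\Pp,\varphi)$ is poly-stable, so $\Aut(\Pp,\varphi)$ is reductive — identifies a neighbourhood of $[(\Pp,\varphi)]$ in $\Mm(\sG)$ with a neighbourhood of $0$ in the quotient of $\HH^1(C^\bullet)$, a complex vector space of dimension $(g-1)\dim\sG$, by the finite group $\Aut(\Pp,\varphi)$ acting linearly. I would then observe that $\Zz(\sG_\C)$ acts trivially on $\Pp[\fh_\C]$ (being central in $\sH_\C$) and on $\Pp[\fm_\C]$ (lying in the kernel of the isotropy representation), hence trivially on $\HH^1(C^\bullet)$, so the quotient is effectively by $\Aut(\Pp,\varphi)\slash\Zz(\sG_\C)$. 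This exhibits $[(\Pp,\varphi)]$ as an orbifold point of that type; if the group is trivial the local model is the manifold $\HH^1(C^\bullet)$, so $[(\Pp,\varphi)]$ is a smooth point, and the converse — that a nontrivial $\Aut(\Pp,\varphi)\slash\Zz(\sG_\C)$ produces a genuine singularity — is Proposition~3.18 of \cite{HiggsPairsSTABILITY}.

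The computations above are routine; what genuinely has to be imported from \cite{HiggsPairsSTABILITY} are the gauge-theoretic facts that poly-stability of the \emph{real} $\sG$-Higgs bundle already suffices both for the slice description and for reductivity of $\Aut(\Pp,\varphi)$, and the sharp part of Proposition~3.18 guaranteeing that a nontrivial $\Aut(\Pp,\varphi)\slash\Zz(\sG_\C)$ always yields a genuine (non-manifold) quotient singularity — this last point is the only delicate one, and is what makes the final statement a biconditional rather than merely ``orbifold point''. The only place where further work is genuinely required is downstream, in Sections~\ref{PSp4R} and~\ref{Sp4R}, where one must verify case by case that the $\sP\sSp(4,\R)$- and $\sSp(4,\R)$-Higgs bundles in question are in fact stable as $\C$-Higgs bundles, so that this proposition applies.
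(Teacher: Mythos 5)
The paper offers no argument of its own here: it simply states that the proposition ``follows from Propositions 3.17 and 3.18 of \cite{HiggsPairsSTABILITY}.'' Your write-up is a correct reconstruction of the standard deformation theory underlying that citation --- the splitting of the $\sG_\C$-deformation complex into $C^\bullet(\Pp,\varphi)$ and its mirror, Serre duality giving $\HH^2=0$ from $\sG_\C$-stability, and the Kuranishi/slice model $\HH^1/\Aut(\Pp,\varphi)$ with $\Zz(\sG_\C)$ acting trivially --- and you correctly isolate the one genuinely delicate point (that a nontrivial $\Aut(\Pp,\varphi)/\Zz(\sG_\C)$ forces an actual singularity, which is not automatic for a linear finite-group quotient) as the part that must still be imported from Proposition 3.18 of the reference, exactly as the paper does. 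So your proof is consistent with, and strictly more explicit than, the paper's.
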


We will use this proposition to determine when an poly-stable $\sSO_0(2,3)$-Higgs bundle defines a smooth point or orbifold point of the moduli space.

\subsection{The Lie groups $\sSO_0(2,3)$ and $\sSp(4,\R)$}\label{section the isomorphism}

Here we collect the necessary Lie theory for the groups of interest. In particular, we explain the isomorphism between the groups $\sP\sSp(4,\R)$ and $\sSO_0(2,3).$ 
\medskip

\noindent\textbf{The group $\sSp(4,\R)$:}  Consider the symplectic form $\Omega=\smtrx{0&\mathrm{Id}\\-\mathrm{Id}&0}$ on $\C^4.$ 
The symplectic group $\sSp(4,\C)$ consists of linear transformations $g\in\sGL(4,\C)$ such that $g^T\Omega g=\Omega.$ 
The Lie algebra $\fsp(4,\C)$ of $\sSp(4,\C)$ consists of matrices $X$ such that $X^T\Omega+\Omega X=0.$ Such an $X\in\fsp(4,\C)$ is given by $X=\smtrx{A&B\\C&-A^T}$ where $A,$ $B$ and $C$ are $2\times2$ complex matrices with $B$ and $C$ symmetric.

One way of defining the group $\sSp(4,\R)$ is as the subgroup of $\sSp(4,\C)$ consisting of matrices with real entries. 
However, when dealing with $\sSp(4,\R)$-Higgs bundles it will be useful to consider $\sSp(4,\R)$ as the fixed point set of a conjugation $\lambda$ which acts by 
\[\lambda(g)=\mtrx{0&\mathrm{Id}\\\mathrm{Id}&0}\overline g\mtrx{0&\mathrm{Id}\\\mathrm{Id}&0}.\]

The fixed points of the induced involution (also denoted by $\lambda$) on the Lie algebra $\fsp(4,\C)$ gives the Lie algebra $\fsp(4,\R)$ as the set of matrices $X=\smtrx{A&B\\C&-A^T}$ where $A, B$ and $C$ are $2\times2$ complex valued matrices with $A=-\overline A^T$ and $B=\overline{C}=B^T$. 
Since the conjugation $\lambda$ commutes with the compact conjugation $g\to \overline{g^{-1}}^T$ of $\sSp(4,\C)$, the composition defines the complexification of a Cartan involution $\theta:\fsp(4,\C)\to\fsp(4,\C)$ for $\fsp(4,\R)$. On the Lie algebra $\fsp(4,\C)$ the involution $\theta$ acts by 
\[\theta\left(\mtrx{A&B\\C&-A^T}\right)=\mtrx{A&-B\\-C&-A^T}.\] Thus, the complexification of the Cartan decomposition of $\fsp(4,\R)$ is given by
\begin{equation}
    \label{eq: complexified Cartan decomp Sp(4,R)}
\fsp(4,\C)=\fh_\C\oplus\fm_\C=\fgl(2,\C)\oplus S^2(V)\oplus S^2(V^*)
\end{equation}
where $S^2(V)$ denotes the symmetric product of the standard representation $V$ of $\sGL(2,\C)$. 
\medskip

\noindent\textbf{The group $\sSO_0(2,3)$:}
Fix positive definite quadratic forms $Q_2$ and $Q_3$ on $\R^2$ and $\R^3$ respectively and consider the signature $(2,3)$ form $Q=\smtrx{Q_2&\\&-Q_3}$ on $\R^5$. The group $\sSO(2,3)$ consists of matrices $g\in\sSL(5,\R)$ such that $g^TQg=Q.$ There are two connected components of $\sSO(2,3)$, and the connected component of the identity will be denoted by $\sSO_0(2,3).$ 

The Lie algebra $\fso(2,3)$ consists of matrices $X$ such that $X^TQ+QX=0.$ 
A matrix $X\in\fso(2,3)$ decomposes as 
$$\mtrx{A & Q_3^{-1}B^TQ_2\\B  & C},$$ 
where $B$ is a $3\times2$ matrix, $A$ is a $2\times 2$ matrix which satisfies $A^TQ_2+Q_2A=0$, and $C$ is a $3\times 3$ matrix which satisfies $C^TQ_3+Q_3C=0$. Thus, the Cartan decomposition is given by
\[\fso(2,3)=\fh\oplus\fm=(\fso(2)\oplus\fso(3))\oplus\Hom(\R^2,\R^3).\] 
Complexifying this gives a decomposition of $\sH_\C=\sSO(2,\C)\times\sSO(3,\C)$-modules
\begin{equation}
    \label{eq: complexified Cartan decomp SO(2,3)}
    \fso(5,\C)=\fh_\C\oplus\fm_\C=(\fso(2,\C)\times\fso(3,\C))\oplus\Hom(V,W)
\end{equation}
where $V$ and $W$ denote the standard representations of $\sSO(2,\C)$ and $\sSO(3,\C)$ on $\C^2$ and $\C^3$ respectively.  \medskip

\noindent\textbf{The isomorphism $\sP\sSp(4,\R)\cong\sSO_0(2,3)$:}
Let $U$ be a 4 dimensional real vector space with a symplectic form $\Omega\in\Lambda^2U^*.$ 
The $6$ dimensional vector space $\Lambda^2U^*$ has a natural orthogonal structure given by $\langle a,b\rangle=C$ where $a\wedge b=C\Omega\wedge\Omega$. Moreover, the signature of this orthogonal structure is $(3,3).$ 
Since $\Omega\in\Lambda^2U^*$ has norm $1$, the orthogonal complement of the subspace spanned by $\Omega$ defines a 5 dimensional orthogonal subspace with a signature $(2,3)$ inner product.
This defines a surjective map $\sSp(4,\R)\to\sSO_0(2,3)$ with kernel $\pm Id.$ Since $\pm Id$ is the center of $\sSp(4,\R)$, the group $\sSO_0(2,3)$ is isomorphic to the adjoint group $\sP\sSp(4,\R)$.

The universal cover of $\sSO(5,\C)$ is the spin group $\sSpin(5,\C).$ The split real form of $\sSpin(5,\C)$ will be denoted by $\sSpin(2,3)$. The isomorphism $\sP\sSp(4,\R)\cong\sSO_0(2,3)$ defines an isomorphism between $\sSp(4,\R)$ and the connected component of the identity of the spin group $\sSpin_0(2,3)$.

\section{Complex orthogonal bundles}   \label{Orthogonal_bundles}

Holomorphic $\sO(n,\C)$-bundles will be an important tool in the next sections. We describe their main properties and, for $n=2$, we describe their parameter spaces.

\subsection{General properties}
For $Q$ a symmetric nondegenerate form on $\C^n$, define 
\[\sO(n,\C)=\{A\in\sGL(n,\C)|A^TQA=Q\}~.\]
The standard representation $\sO(n,\C)$ on $\C^n$ allows one to describe a principal $\sO(n,\C)$-bundle in terms of a rank $n$ complex vector bundle $V$. 
On $V$ the form $Q$ defines a global section $Q\in \Omega^0(\Sigma,S^2(V^*))$ which is everywhere nondegenerate. 
A holomorphic structure on the orthogonal bundle $(V,Q)$ is a holomorphic structure $\bar\p_V$ on $V$ for which $Q$ is holomorphic. 
The determinant $\det(V)$ is a holomorphic line bundle, and non-degeneracy of $Q$ is equivalent to $\det(Q)$ never vanishing. 
Thus, $\det(V)^2\cong\Oo$ and $\det(V)$ is a holomorphic $\sO(1,\C)$-bundle. 
In particular, holomorphic $\sO(1,\C)$-bundles are exactly the $2^{2g}$ holomorphic line bundles $L$ with $L^2=\Oo$.

There are two main topological invariants of $\sO(n,\C)$-bundles on a Riemann surface: the first and second Stiefel-Whitney classes:
\[\xymatrix{sw_1(V,Q_V) \in H^1(\Sigma,\Z_2) = \Z_2^{2g}&\text{and}&sw_2(V,Q_V) \in H^2(\Sigma,\Z_2) = \Z_2}~.\]
The first Stiefel-Whitney class is the obstruction to reducing the structure group to $\sSO(n,\C)$. Hence, $sw_1(V,Q_V)=sw_1(\det(V),\det(Q_V))$ vanishes if and only if $\det(V)=\Oo.$
The class $sw_1\in H^1(\Sigma,\Z_2)$ can also be interpreted as a $\Z_2$-bundle $\pi:\Sigma_{sw_1}\ra\Sigma$, i.e. an unramified double cover, which is connected if and only if $sw_1 \neq 0.$ The cover $\Sigma_{sw_1}$ inherits the complex structure from $\Sigma$ by pullback, and it is a Riemann surface of genus $g'=2g-1$. 
The second Stiefel-Whitney class is the obstruction to lifting the structure group from $\sO(n,\C)$ to $\sPin(n,\C)$. 

The following Whitney sum formula will help us to compute these invariants:  
\begin{equation} \label{WhitneySum}
\begin{array}{lcr}
    sw_1(V\oplus W) = sw_1(V) + sw_1(W)~,\\
sw_2(V\oplus W) = sw_2(V) + sw_2(W) + sw_1(V) \wedge sw_1(W)~.
\end{array}
\end{equation}

\subsection{Bundles of rank $2$ with vanishing first Stiefel-Whitney class}   \label{orthogonal_bundles_vanishing}

We will now recall Mumford's \cite{MumO2Bun} classification of the holomorphic $\sSO(2,\C)$ and $\sO(2,\C)$-bundles. These parameter spaces will be respectively denoted by $\Bb(\Sigma,\sSO(2,\C))$ and $\Bb(\Sigma,\sO(2,\C))$, and shortened to $\Bb(\sSO(2,\C))$ and $\Bb(\sO(2,\C))$ when possible.

We will write a holomorphic $\sSO(2,\C)$-bundle as $(V,Q_V,\omega)$, where $(V,Q_V)$ is a holomorphic $\sO(2,\C)$-bundle and $\omega$ is a holomorphic volume form compatible with $Q_V$. The form $\omega$ can be seen as a non-zero holomorphic section of $\Lambda^2V=\det(V) \cong \Oo$. Such an $\sSO(2,\C)$-bundle can be described explicitly since $\sSO(2,\C)\cong\C^*$:
\[\sSO(2,\C)\cong\left\{A\in\sSL(2,\C)\ \big|\ A^T\mtrx{0&1\\1&0}A=\mtrx{0&1\\1&0}\right\}=\left\{\mtrx{e^\lambda&0\\0&e^{-\lambda}}
\ \big |\ \lambda\in\C
\right\}~.\]

Thus, the isomorphism class of $(V,Q_V,\omega)$ is determined by a holomorphic line bundle $L$:
\begin{equation}\label{SO(2,C) holomorphic bundles}
    (V,Q_V,\omega)=\left(L\oplus L^{-1}~,~ \mtrx{0&1\\1&0}~,~ \mtrx{0&1\\-1&0}\right),
\end{equation}
here $\omega=\smtrx{0&1\\-1&0}$ is seen as a skew symmetric bilinear form on  $L\oplus L^{-1}.$

The degree of $L$ provides a topological invariant of $\sSO(2,\C)$-bundles whose reduction modulo $2$ is the second Stiefel-Whitney class. The parameter space $\Bb(\sSO(2,\C))$ is then the Picard group of holomorphic line bundles on $\Sigma$, 
\[\Bb(\sSO(2,\C)) =\Pic(\Sigma)~.\]
In particular, it is a disjoint union of countably many tori of complex dimension $g$.

The classification of $\sO(2,\C)$-bundles is more complicated and will depend on the values of the Stiefel-Whitney classes. Denote by $\Bb_{sw_1}(\sO(2,\C))$ and $\Bb_{sw_1}^{sw_2}(\sO(2,\C))$ the subsets containing bundles with fixed values of $sw_1$ or of $sw_1$ and $sw_2$. 

Orthogonal bundles in the subspace $\Bb_0(\sO(2,\C))$ admit two different $\sSO(2,\C)$-structures: 
\[\left(L\oplus L^{-1}, \mtrx{0&1\\1&0}, \mtrx{0&1\\-1&0}\right)\ \ \ \text{and}\ \ \  \left(L^{-1}\oplus L, \mtrx{0&1\\1&0}, \mtrx{0&1\\-1&0}\right)~.\]
This equivalence corresponds to an action of $\Z_2$ on $\Pic(\Sigma)$ given by $L\mapsto L^{-1}$. Thus
\[\Bb_0(\sO(2,\C)) = \Pic(\Sigma)/\Z_2~.\]
Since $sw_2(V,Q_V)=|\deg(L)|\ (\text{mod } 2)$, for an $\sO(2,\C)$-bundle with vanishing first Stiefel-Whitney class, the second Stiefel-Whitney class lifts to an $\N$-invariant. Denote by $\Bb_{0,d}(\sO(2,\C))$ the subspace containing $\sO(2,\C)$-bundles with $|\deg(L)|=d$. 

When $d>0$, we can choose $L\in\Pic^d(\Sigma)$ which represents a point in $\Bb_{0,d}(\sO(2,\C))$. Thus,
for $d>0$, $\Bb_{0,d}(\sO(2,\C))$ is identified with the torus $\Pic^d(\Sigma)$ of degree $d$ line bundles. 
When $d=0$, both $L$ and $L^{-1}$ have degree zero. Hence, $\Bb_{0,0}(\sO(2,\C)) = \Pic^0(\Sigma)/\Z_2$ is singular with $2^{2g}$ orbifold points corresponding to the line bundles $L\in\Pic^0(\Sigma)$ with $L\cong L^{-1}$ or, equivalently, $L^2\cong\Oo$.
\[\Bb_{0,d}(\sO(2,\C)) = \begin{dcases}
    \Pic^d(\Sigma)& \text{if } d>0,\\\Pic^0(\Sigma)\slash\Z_2& \text{if } d=0.
\end{dcases}\]

\subsection{Bundles of rank $2$ with non-vanishing first Stiefel-Whitney class}   \label{orthogonal_bundles_nonvanishing}

Now consider the spaces $\Bb_{sw_1}(\Sigma,\sO(2,\C))$ for a $sw_1\neq0.$ Note that it is nonempty.

\begin{Lemma}   \label{sw_2_non_empty}
For every $sw_1\in H^1(\Sigma, \Z_2) \setminus \{0\}$ and $sw_2 \in H^2(\Sigma, \Z_2)$, the space $\Bb_{sw_1}^{sw_2}(\Sigma,\sO(2,\C))$ is not empty.  
\end{Lemma}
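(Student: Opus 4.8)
The claim is that for any nonzero $sw_1 \in H^1(\Sigma,\Z_2)$ and any $sw_2 \in H^2(\Sigma,\Z_2)$ there exists a holomorphic $\sO(2,\C)$-bundle on $\Sigma$ realizing these two Stiefel--Whitney classes. The plan is to build such bundles explicitly by \emph{inducing} them up from the double cover determined by $sw_1$. Recall that $sw_1 \neq 0$ corresponds to a connected unramified double cover $\pi \colon \Sigma_{sw_1} \to \Sigma$, with covering involution $\iota$. Given a holomorphic line bundle $N$ on $\Sigma_{sw_1}$, form the rank two holomorphic bundle $V = \pi_* N$ on $\Sigma$. One checks that $V$ carries a natural nondegenerate symmetric form $Q_V$: indeed $\pi^* V = N \oplus \iota^* N$, and the pairing of a local section against its $\iota$-conjugate gives a symmetric form valued in $\pi_* \Oo_{\Sigma_{sw_1}}$, whose ``trace'' part is the canonical map $\pi_*\Oo_{\Sigma_{sw_1}} \to \Oo_\Sigma$; this descends to a holomorphic $Q_V$ on $V$ (this is the standard norm form on a pushforward). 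So $(V, Q_V)$ is a holomorphic $\sO(2,\C)$-bundle on $\Sigma$.

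The next step is to compute the two Stiefel--Whitney classes of $(V,Q_V)$ and show that the construction is flexible enough to hit every value. For $sw_1$: since $\det(\pi_* N) = \det(\pi_*\Oo_{\Sigma_{sw_1}}) \otimes (\mathrm{Nm}\, N)$ and $\det \pi_*\Oo_{\Sigma_{sw_1}}$ is precisely the order-two line bundle $L_{sw_1}$ classifying the cover, we get $\det(V) = L_{sw_1} \otimes \mathrm{Nm}(N)$, whose mod-$2$ reduction of the degree-zero component recovers $sw_1(V,Q_V) = sw_1$ as long as $\mathrm{Nm}(N)$ has even degree (e.g. take $N$ of even degree, or degree zero). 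For $sw_2$: here the degree of $N$ (equivalently $\deg V = \deg N - (2g-2)$, using Riemann--Roch for $\pi_*$) controls the second Stiefel--Whitney class, and by varying $\deg N$ by $1$ one toggles $sw_2$. So choosing $N$ of the appropriate parity of degree produces a bundle with the prescribed $(sw_1, sw_2)$. I would also note the alternative, purely cohomological, argument: the Whitney sum formula \eqref{WhitneySum} lets one combine a rank-one $\sO(1,\C)$-bundle $L_{sw_1}$ (which has $sw_1 = sw_1$, $sw_2 = 0$) with a copy of $\Oo$ to get $sw_1 \oplus 0$, and then twist or modify to adjust $sw_2$ — but the pushforward construction is cleaner because it directly produces a genuine rank-two orthogonal bundle rather than a reducible one, and reducible $\sO(2,\C)$-bundles with $sw_1 \neq 0$ may not exist with all $sw_2$.

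The step I expect to be the main obstacle is the precise bookkeeping of $sw_2$ in terms of $\deg N$: one must verify that as $\deg N$ ranges over the integers, $sw_2(\pi_* N)$ takes both values in $H^2(\Sigma,\Z_2) = \Z_2$, and that this is compatible with keeping $sw_1$ fixed. The cleanest way is to use the formula $sw_2(\pi_* N) \equiv \deg(\pi_* N) + (\text{correction from } \pi_*\Oo) \pmod 2$ together with the Grothendieck--Riemann--Roch / parity computation $\deg(\pi_*\Oo_{\Sigma_{sw_1}}) = 1-g = \tfrac{1}{2}\deg K_\Sigma$ appropriately interpreted, or simply to exhibit two explicit examples — e.g. $N = \Oo_{\Sigma_{sw_1}}$ and $N = \Oo_{\Sigma_{sw_1}}(p)$ for a point $p$ — and directly compare their $sw_2$ using that an elementary modification at one point changes $\deg V$ by one, hence flips $sw_2$ while leaving $\det(V)$'s degree-zero reduction, hence $sw_1$, unchanged. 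Once both parities are achieved with the same fixed $sw_1$, non-emptiness of every $\Bb_{sw_1}^{sw_2}(\Sigma,\sO(2,\C))$ follows.
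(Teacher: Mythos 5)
There is a genuine gap, and it sits exactly where you predicted: the control of $sw_2$. The pushforward construction does not work for an arbitrary line bundle $N$ on $\Sigma_{sw_1}$. Since $\pi^*(\pi_*N)=N\oplus\iota^*N$, an $\Oo$-valued nondegenerate symmetric form on $\pi_*N$ exists only when $N\otimes\iota^*N\cong\Oo$, i.e.\ $N\in\Prym(\Sigma_{sw_1})$ (this is precisely the content of the proposition following this lemma in the paper). Any such $N$ has degree zero, so your proposed mechanism — vary $\deg N$ by one, e.g.\ replace $\Oo_{\Sigma_{sw_1}}$ by $\Oo_{\Sigma_{sw_1}}(p)$, to flip $sw_2$ — is unavailable: $\Oo(p)\otimes\iota^*\Oo(p)=\Oo(p+\iota(p))\neq\Oo$, so $\pi_*\Oo(p)$ carries no orthogonal structure at all. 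Relatedly, the formula ``$sw_2\equiv\deg(\pi_*N)+\text{const}\pmod 2$'' is false when $sw_1\neq 0$: one has $\deg(\pi_*N)=\deg N=0$ for every admissible $N$, yet both values of $sw_2$ occur. (The degree computes $sw_2$ only in the $sw_1=0$ case, via the $\sSO(2,\C)$-reduction.) What actually distinguishes the two values of $sw_2$ in the pushforward picture is which of the two connected components $\Prym^0$ or $\Prym^1$ of $\Prym(\Sigma_{sw_1})$ contains $N$ — a subtler invariant (the parity of $\deg L$ when $N=L\otimes\iota^*L^{-1}$), which the paper only establishes \emph{after} this lemma and indeed uses this lemma to prove that $\Prym^1$ realizes $sw_2=1$.

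The ``alternative'' you dismiss is in fact the paper's proof, and your reason for dismissing it is mistaken: reducible $\sO(2,\C)$-bundles with $sw_1\neq 0$ do realize every $sw_2$. The point you are missing is the Whitney sum formula's cross term. Taking $(V,Q_V)=\bigl(L_1\oplus L_2,\smtrx{1&0\\0&1}\bigr)$ with $L_1^2=L_2^2=\Oo$, one gets $sw_2(V)=sw_1(L_1)\wedge sw_1(L_2)$; by nondegeneracy of the cup product on $H^1(\Sigma,\Z_2)$ there is, for any $sw_1\neq 0$, a class $t$ with $sw_1\wedge t\neq 0$, and then $L_1,L_2$ with $sw_1(L_1)=sw_1+t$, $sw_1(L_2)=t$ give $sw_1(V)=sw_1$ and $sw_2(V)=sw_1\wedge t\neq 0$; the choice $L_1=L_{sw_1}$, $L_2=\Oo$ gives $sw_2=0$. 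If you want to salvage the pushforward route you must prove the two-component structure of $\Prym(\Sigma_{sw_1})$ and identify $sw_2$ with the component — which is more work than the lemma itself and, in the paper's logical order, depends on it.
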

\begin{proof}
Fix $sw_1\neq0.$ If $sw_2=0$, consider the bundle $(V,Q_V)=\left(L\oplus \Oo, \smtrx{1&0\\0&1}\right)$, with $L^2=\Oo$ and $sw_1(L)=sw_1$. By \eqref{WhitneySum}, $sw_1(V,Q_V) = sw_1$ and $sw_2(V,Q_V)=0$.

For $sw_2\neq0$, by non-degeneracy of the cup product there exists a $t\in H^1(\Sigma, \Z_2)$ such that $sw_1 \wedge t  \neq0$.
Now consider the bundle $(V,Q_V)=\left(L_1\oplus L_2, \smtrx{1&0\\0&1}\right)$ with $L_1^2=L_2^2=\Oo$ and $sw_1(L_1)=sw_1 + t$ and $sw_1(L_2)=t$. By \eqref{WhitneySum}, $sw_1(V,Q_V) = sw_1$ and $sw_2(V,Q_V)= sw_1(L_1)\wedge sw_1(L_2) = (sw_1+t) \wedge t =sw_1\wedge t \neq 0.$
\end{proof}

For $sw_1 \in H^1(\Sigma,\Z_2)\setminus \{0\}$, consider the double cover $\pi:\Sigma_{sw_1}\to\Sigma$ of genus $g'=2g-1$. We have the following pullback map
\[\pi^*: \Bb_{sw_1}(\Sigma,\sO(2,\C)) \ra  \Bb(\Sigma_{sw_1},\sO(2,\C))~.\]
 For $\iota$ the covering involution of $\Sigma_{sw_1},$ consider the following group homomorphism: 
\[Id\otimes\iota^*:\xymatrix@R=0em{\Pic(\Sigma_{sw_1})\ar[r] & \Pic(\Sigma_{sw_1})\\M\ar@{|->}[r]& M\otimes\iota^*M ~.}\]
We will denote the kernel $Id\otimes\iota^*$ by $\Prym(\Sigma_{sw_1}) = \ker(Id\otimes\iota^*)$ because it is closely related to the Prym variety of the covering (see below). 
Note that $\iota$ induces a $\Z_2$ action on $\Prym(\Sigma_{sw_1})$ by $\iota(M)=M^{-1}$. Thus, there is a natural injective map 
\[\xymatrix{\Prym(\Sigma_{sw_1})\slash\Z_2\ar[r]&\Bb_0(\Sigma_{sw_1},\sO(2,\C))}~.\]

\begin{Proposition} (see \cite{MumO2Bun})
The map $\pi^*$ maps bijectively onto $\Prym(\Sigma_{sw_1})\slash\Z_2$:
\[\pi^*(\Bb_{sw_1}(\Sigma,\sO(2,\C)))=\Prym(\Sigma_{sw_1})\slash\Z_2\subset\Bb_0(\Sigma_{sw_1},\sO(2,\C))~.\]
\end{Proposition}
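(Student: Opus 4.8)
I want to understand the image of the pullback map $\pi^*:\Bb_{sw_1}(\Sigma,\sO(2,\C))\to\Bb(\Sigma_{sw_1},\sO(2,\C))$. The key structural fact is that a holomorphic $\sO(2,\C)$-bundle $(V,Q_V)$ with $sw_1(V,Q_V)=sw_1\neq 0$ becomes, after pullback by the double cover $\pi:\Sigma_{sw_1}\to\Sigma$ determined by $sw_1$, an $\sO(2,\C)$-bundle with \emph{vanishing} first Stiefel-Whitney class: indeed $sw_1(\pi^*(V,Q_V))=\pi^*sw_1(V,Q_V)=\pi^*sw_1=0$, precisely because $sw_1$ is the class cut out by the cover $\Sigma_{sw_1}$. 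Hence $\pi^*(\Bb_{sw_1}(\Sigma,\sO(2,\C)))\subset\Bb_0(\Sigma_{sw_1},\sO(2,\C))=\Pic(\Sigma_{sw_1})/\Z_2$, where the last identification is the one recalled in \S\ref{orthogonal_bundles_vanishing}. So the first step is to record that $\pi^*$ lands in $\Bb_0$, and to track a bundle $(V,Q_V)\in\Bb_{sw_1}$ to a line bundle $M\in\Pic(\Sigma_{sw_1})$ (well-defined up to $M\mapsto M^{-1}$) such that $\pi^*(V,Q_V)\cong(M\oplus M^{-1},\smtrx{0&1\\1&0})$.

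The second step is to show the image consists exactly of those $M$ lying in $\Prym(\Sigma_{sw_1})=\ker(Id\otimes\iota^*)$, i.e. $M\otimes\iota^*M\cong\Oo$. The ``$\subseteq$'' inclusion: the covering involution $\iota$ acts on $\pi^*(V,Q_V)$ by a lift of the deck transformation, and since $\pi^*V$ descends to $V$ on $\Sigma$, there is a canonical isomorphism $\iota^*\pi^*(V,Q_V)\cong\pi^*(V,Q_V)$ as orthogonal bundles. Under the splitting $\pi^*V\cong M\oplus M^{-1}$, this isomorphism must either preserve or swap the two isotropic line subbundles $M$ and $M^{-1}$ (these are the only $Q$-isotropic line subbundles, since $\sSO(2,\C)\cong\C^*$); in either case one reads off $\iota^*M\cong M$ or $\iota^*M\cong M^{-1}$. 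In the first case $M$ would descend to a line bundle on $\Sigma$ and $(V,Q_V)$ would reduce to $\sSO(2,\C)$, contradicting $sw_1\neq 0$; so $\iota^*M\cong M^{-1}$, i.e. $M\in\Prym(\Sigma_{sw_1})$. For ``$\supseteq$'': given $M\in\Prym(\Sigma_{sw_1})$, one constructs the descent datum directly — equip $M\oplus M^{-1}$ with the isomorphism to its $\iota$-pullback that swaps the factors (using $\iota^*M\cong M^{-1}$), check this is an orthogonal descent datum of order two (here one must be careful with the sign/cocycle condition on $\Z_2$, possibly twisting $M$ by an appropriate square root of $\Oo$ to make the descent datum an honest involution), and take the quotient to get $(V,Q_V)$ on $\Sigma$ with the right $sw_1$. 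Finally, one notes the $\Z_2$ ambiguity $M\mapsto M^{-1}$ on the source matches exactly the $\Z_2$ already quotiented in $\Prym(\Sigma_{sw_1})/\Z_2$, giving the asserted bijection onto $\Prym(\Sigma_{sw_1})/\Z_2$.

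**Main obstacle.**
The delicate point is the descent-datum bookkeeping in the ``$\supseteq$'' direction: an $\iota$-equivariant structure on $M\oplus M^{-1}$ swapping the summands squares, on the nose, to multiplication by a scalar on each line, and for this to define an actual $\Z_2$-action (hence a bundle on $\Sigma$) one needs that scalar to be $+1$, not $-1$. Resolving this may force the choice of the isomorphism $\iota^*M\cong M^{-1}$ to be normalized, or require replacing $M$ by $M\otimes L_0$ for a suitable $L_0$ with $L_0^2=\Oo$ to absorb the obstruction; keeping track of which component of $\Prym$ (and which value of $sw_2$) each choice produces is where the genuine content lies, and it is exactly the analysis carried out by Mumford in \cite{MumO2Bun}. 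Since the statement cites Mumford, the cleanest route is to set up steps one and two as above and then invoke \cite{MumO2Bun} for the precise identification, rather than reprove the descent lemma from scratch.
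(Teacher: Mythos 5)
Your first two steps coincide with the paper's argument: pull back, observe the image lies in $\Bb_0(\Sigma_{sw_1},\sO(2,\C))$, write $\pi^*V\cong M\oplus M^{-1}$, and use $\iota^*\pi^*V\cong\pi^*V$ to force $\iota^*M\cong M^{-1}$ (the paper rules out the alternative $\iota^*M\cong M$ in essentially the same way you do, by noting it would make $sw_1$ vanish). Where you diverge is surjectivity. You propose to build a $\Z_2$-descent datum on $M\oplus M^{-1}$ by hand and correctly flag the cocycle issue (the swap may square to a scalar other than $+1$); but you do not resolve it, and instead defer to \cite{MumO2Bun}. The paper avoids the descent bookkeeping entirely by using the \emph{pushforward}: for $M\in\Prym(\Sigma_{sw_1})$ it sets $(V,Q_V)=(\pi_*M,\pi_*\iota^*)$, where the orthogonal form comes from the natural pairing $M\otimes\iota^*M\cong\Oo$, and then uses that the cover is unramified to get $\pi^*\pi_*M\cong M\oplus\iota^*M\cong M\oplus M^{-1}$ on the nose. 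This is both shorter and, importantly for the rest of the paper, produces the explicit model of the preimage that is used repeatedly in Sections \ref{HB_non_vanishing_sw_1} and \ref{parameterizing_nonvanishing}.

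Two smaller remarks. First, your worry that one might need to twist $M$ by a $2$-torsion line bundle $L_0$ is unfounded: if $f:M\to\iota^*M^{-1}$ and $g:M^{-1}\to\iota^*M$ are chosen so that the swap is orthogonal for $Q=\smtrx{0&1\\1&0}$, then $g$ is determined by $f$ and the resulting square $\iota^*g\circ f$ can be normalized to $+1$ by rescaling $f$ (equivalently, this is exactly what the $\pi_*$ construction hands you for free). Second, since the statement is cited to Mumford, ending by invoking \cite{MumO2Bun} is not wrong, but note the paper deliberately gives a self-contained proof because the explicit isomorphism class of $\pi^*V$ and the induced map on $\Prym(\Sigma_{sw_1})/\Z_2$ are needed later; a proof that black-boxes the surjectivity step would not supply those.
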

We provide a proof of this and the following propositions because the details are important for the next section. 
\begin{proof}
Let $(V,Q_V)\in\Bb_{sw_1}(\Sigma,\sO(2,\C))$. By the geometric interpretation of $sw_1$ as a double cover, $(\pi^*V,\pi^*Q_V) \in \Bb_0(\Sigma_{sw_1},\sO(2,\C))$. In particular, there is a line bundle $M \in \Pic(\Sigma_{sw_1})$ such that
\[(\pi^*V,\pi^*Q_V)=\left(M\oplus M^{-1}, \mtrx{0&1\\1&0}\right)~.\]
Since $M\oplus M^{-1}$ is a pullback, it is isomorphic to $\iota^*(M)\oplus \iota^*(M^{-1})$. Thus either $M=\iota^*(M)$ or $M=\iota^*(M^{-1})$.
But, if $\iota^*M=M$, then $(V,Q_V)$ would have $sw_1=0$. 

Every line bundle $M$ satisfying $M=\iota^*(M^{-1})$ can be obtained in this way, since we can construct an $\sO(2,\C)$-bundle $(V,Q_V)=(\pi_* M,\pi_* \iota^*)$ by pushforward. Since $\Sigma_{sw_1}\ra\Sigma$ is unramified, $\pi^*\pi_*(M)=M\oplus \iota^*M$.
\end{proof}

We now only need to understand the subspace $\Prym(\Sigma_{sw_1})$. 

\begin{Lemma} (see \cite{MumO2Bun}).
Every $M \in \Prym(\Sigma_{sw_1})$ admits a meromorphic section $s$ such that $s \otimes \iota^*s = 1$.
\end{Lemma}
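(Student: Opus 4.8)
The plan is to reduce the statement to a fact about divisors and then to the surjectivity of a norm map. First note that a meromorphic section $s$ of $M$ satisfies $s\otimes\iota^*s\in\C^*$ precisely when the meromorphic function $s\otimes\iota^*s$ — a section of $M\otimes\iota^*M\cong\Oo$, the isomorphism being available because $M\in\Prym(\Sigma_{sw_1})$ — has trivial divisor, i.e. $\operatorname{div}(s)+\iota^*\operatorname{div}(s)=0$; rescaling $s$ by a constant then normalizes the value to $1$. So it suffices to produce a meromorphic section of $M$ whose divisor $D$ is $\iota$-anti-invariant ($\iota^*D=-D$). Since $\pi$ is unramified, $\iota$ has no fixed points, so grouping the support of such a $D$ into two-element orbits $\{p,\iota(p)\}$ writes it as $A-\iota^*A$ for a divisor $A$, and conversely every such difference is anti-invariant. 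Thus the lemma is equivalent to: every $M\in\Prym(\Sigma_{sw_1})$ lies in the image of the homomorphism $N\mapsto N\otimes(\iota^*N)^{-1}$ on $\Pic(\Sigma_{sw_1})$; for if $M\cong N\otimes(\iota^*N)^{-1}$ and $\Oo(A)\cong N$, then the section $s$ of $M$ with $\operatorname{div}(s)=A-\iota^*A$ does the job.

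To prove this surjectivity, the inclusion $\operatorname{im}(1-\iota^*)\subseteq\Prym(\Sigma_{sw_1})$ is formal, since $(\iota^*)^2=\mathrm{id}$ on $\Pic(\Sigma_{sw_1})$ gives $\big(N\otimes(\iota^*N)^{-1}\big)\otimes\iota^*\big(N\otimes(\iota^*N)^{-1}\big)\cong\Oo$. For the reverse inclusion, take any nonzero meromorphic section $t$ of $M$ and set $f=t\otimes\iota^*t\in\C(\Sigma_{sw_1})^*$. The key point is that $f$ is $\iota$-invariant: for the line bundles $M$ at hand — those arising from $\sO(2,\C)$-bundles on $\Sigma$ via the construction in the proof of the preceding Proposition, equivalently those for which the pushforward orthogonal structure makes $\operatorname{Nm}_\pi(M)\cong\Oo_\Sigma$ — the isomorphism $M\otimes\iota^*M\cong\Oo$ may be chosen $\iota$-equivariant, and then the tautologically $\iota$-symmetric section $t\otimes\iota^*t$ maps to an invariant function. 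An $\iota$-invariant meromorphic function descends: $f=g\circ\pi$ for some $g\in\C(\Sigma)^*$. Now use surjectivity of the norm $\operatorname{Nm}\colon\C(\Sigma_{sw_1})^*\to\C(\Sigma)^*$; this holds because $\C(\Sigma)$ is the function field of a curve over $\C$, hence a $C_1$-field with trivial Brauer group, so $\C(\Sigma)^*/\operatorname{Nm}\big(\C(\Sigma_{sw_1})^*\big)\cong H^2(\Z/2,\C(\Sigma_{sw_1})^*)=0$. Pick $h\in\C(\Sigma_{sw_1})^*$ with $h\cdot\iota^*h=g\circ\pi=f$ and set $s=h^{-1}t$: then $s\otimes\iota^*s=(t\otimes\iota^*t)/(h\cdot\iota^*h)=1$, and $\operatorname{div}(s)=\operatorname{div}(t)-\operatorname{div}(h)$ is anti-invariant with $\Oo(\operatorname{div}(s))\cong M$, confirming $M\in\operatorname{im}(1-\iota^*)$.

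The main obstacle is exactly this surjectivity of the norm — equivalently, the identity $\Prym(\Sigma_{sw_1})=\operatorname{im}(1-\iota^*)$ on the \emph{full} Picard group; restricted to $\Pic^0$ the image is only the component $\Prym^0$, and the other component $\Prym^1$ is reached using line bundles of odd degree. The remaining delicate point is the $\iota$-invariance of $f$: a priori one has only $\iota^*f=\pm f$, the minus sign occurring for line bundles $L$ with $\iota^*L\cong L^{-1}$ but $\operatorname{Nm}_\pi(L)$ equal to the nontrivial $2$-torsion class determined by $sw_1$; for such $L$ the conclusion fails, so it is essential that these do not occur among the line bundles relevant to the preceding Proposition.
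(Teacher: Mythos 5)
Your core argument is the same as the paper's: Tsen's theorem (in your formulation, triviality of the relative Brauer group, hence surjectivity of the norm $h\mapsto h\cdot\iota^*h$ onto the $\iota$-invariant functions), followed by dividing a given meromorphic section $t$ by a norm; your $s=h^{-1}t$ is the paper's $s=tg^{-1}$, and the preliminary reduction through anti-invariant divisors and $\operatorname{im}(1-\iota^*)$ is correct but not needed, since the division step already produces $s$. What you add is the sign check on $f=t\otimes\iota^*t$, and you are right that this is a genuine issue which the paper's proof passes over. The section $t\otimes\iota^*t$ is canonically invariant for the swap linearization of $M\otimes\iota^*M$, and that linearized bundle descends to $\mathrm{Nm}_\pi(M)$ on $\Sigma$; so $f$ becomes a pullback from $\Sigma$ exactly when $\mathrm{Nm}_\pi(M)\cong\Oo_\Sigma$, and becomes an anti-invariant function when $\mathrm{Nm}_\pi(M)$ is the nontrivial $2$-torsion class defining the cover. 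Since $\mathrm{Nm}_\pi:\Pic^0(\Sigma_{sw_1})\to\Pic^0(\Sigma)$ is surjective, the second case does occur inside $\ker(Id\otimes\iota^*)$ as literally defined, and for such $M$ no meromorphic section has $s\otimes\iota^*s$ constant (otherwise $\operatorname{div}(s)$ would be anti-invariant and $M$ would lie in $\operatorname{im}(\Psi)\subseteq\ker(\mathrm{Nm}_\pi)$). So the lemma, and the exactness at $\Pic^0(\Sigma_{sw_1})$ in the sequence that follows, should be read with $\Prym(\Sigma_{sw_1})$ taken to be $\ker(\mathrm{Nm}_\pi)$ --- equivalently, the set of $M$ admitting an $\iota$-equivariant trivialization of $M\otimes\iota^*M$, which is exactly what the pulled-back quadratic form of an $\sO(2,\C)$-bundle supplies. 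With the restriction you impose in your last paragraph, your proof is complete; none of the later results are affected, since only $\operatorname{im}(\Psi)$ is ever used downstream.
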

\begin{proof}
By Tsen's theorem (see \cite{Lang52}), for every meromorphic function $f$ on $\Sigma$, there exists a meromorphic function $g$ on $\Sigma_{sw_1}$ such that $g \iota^*g = \pi^*f$.  
We start with a meromorphic section $t$ of $M$. Then $t \otimes \iota^*t$ is a meromorphic function on $\Sigma_{sw_1}$ that is the pullback of a function on $\Sigma$. So, we can find a meromorphic function $g$ on $\Sigma_{sw_1}$ such that $t \otimes \iota^*t = g\iota^*g$. Define $s = t g^{-1}$, this is again a meromorphic section of $M$ and $s \otimes \iota^*(s) = 1$.
\end{proof}

Consider the group homomorphism:
\[\Psi:\xymatrix@R=0em{\Pic(\Sigma_{sw_1}) \ar[r]& \Pic^0(\Sigma_{sw_1})\\
 L \ar[r]& L\otimes \iota^* L^{-1}}~.\]

\begin{Lemma}
There is an exact sequence:
\[\xymatrix{ 0 \ar[r] &\Z_2 \ar[r] &\Pic(\Sigma)  \ar[r]^{\pi^*}  & \Pic(\Sigma_{sw_1}) \ar[r]^{\Psi}& \Pic^0(\Sigma_{sw_1}) \ar[r]^{Id\otimes\iota^*} & \Pic(\Sigma_{sw_1})}~.\]
\end{Lemma}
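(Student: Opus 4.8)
The plan is to verify exactness of
\[
\xymatrix{ 0 \ar[r] &\Z_2 \ar[r] &\Pic(\Sigma)  \ar[r]^{\pi^*}  & \Pic(\Sigma_{sw_1}) \ar[r]^{\Psi}& \Pic^0(\Sigma_{sw_1}) \ar[r]^{Id\otimes\iota^*} & \Pic(\Sigma_{sw_1})}
\]
one node at a time, working left to right. All maps are group homomorphisms, so the content is an equality of subgroups (kernel $=$ image) at each interior node, plus injectivity of $\Z_2\hookrightarrow\Pic(\Sigma)$ and identification of $\ker(\pi^*)$.

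First, the map $\Z_2\to\Pic(\Sigma)$: I would take this to be the inclusion sending the nontrivial element to the two-torsion line bundle $L_{sw_1}$ on $\Sigma$ determined by $sw_1\in H^1(\Sigma,\Z_2)$ (the one whose associated double cover is $\pi:\Sigma_{sw_1}\to\Sigma$). This is injective since $L_{sw_1}\not\cong\Oo$ (as $sw_1\neq 0$). For exactness at $\Pic(\Sigma)$, I must show $\ker(\pi^*)=\{\Oo,L_{sw_1}\}$. The inclusion $\supseteq$ is the standard fact that the double cover trivializes its own defining $\Z_2$-bundle: $\pi^*L_{sw_1}\cong\Oo$. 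For $\subseteq$, if $\pi^*N\cong\Oo$ then $N$ is a two-torsion line bundle (since $\pi^*$ is injective on torsion orders... more carefully: $N^2=\Oo$ because $\deg N=0$ and $\pi^*N^2\cong\Oo$ together with the fact that $\pi_*\Oo_{\Sigma_{sw_1}}\cong\Oo\oplus L_{sw_1}$ forces $N\in\{\Oo,L_{sw_1}\}$ by projection formula and $H^0$ considerations). The cleanest argument: $\ker(\pi^*:\Pic(\Sigma)\to\Pic(\Sigma_{sw_1}))$ is dual to $\operatorname{coker}(\pi^*:H^1(\Sigma_{sw_1},\Z)\to H^1(\Sigma,\Z))$ via $H^1$, or one simply invokes that for an unramified $\Z_2$-cover the kernel of pullback on $\Pic$ is exactly the subgroup of order $2$ generated by the cover class — this is classical and can be cited or proved via the Hochschild–Serre / transfer argument ($\pi_*\pi^* = \otimes(\Oo\oplus L_{sw_1})$ combined with $\pi^*\pi_* M = M\oplus\iota^*M$).

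Next, exactness at $\Pic(\Sigma_{sw_1})$ (the $\pi^*$ vs. $\Psi$ node): I claim $\operatorname{im}(\pi^*)=\ker(\Psi)$. For $\subseteq$: if $L=\pi^*N$ then $\iota^*L\cong\pi^*N\cong L$ since $\pi\circ\iota=\pi$, so $\Psi(L)=L\otimes\iota^*L^{-1}\cong\Oo$. For $\supseteq$: if $\iota^*L\cong L$, then $L$ descends — i.e. $L$ carries an $\iota$-equivariant structure. There is a $\Z_2$ ambiguity in the choice of equivariant structure (multiplication by $-1$ on the action), and the two choices descend to $N$ and $N\otimes L_{sw_1}$; either way $L\in\operatorname{im}(\pi^*)$. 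This descent argument is the one place where I should be slightly careful: I should phrase it as "an $\iota$-invariant line bundle on an unramified double cover descends, possibly after the automatic twist, and $\operatorname{im}(\pi^*)$ is precisely the $\iota$-invariant subgroup $\Pic(\Sigma_{sw_1})^{\iota}$." Finally, exactness at $\Pic^0(\Sigma_{sw_1})$ (the $\Psi$ vs. $Id\otimes\iota^*$ node): I want $\operatorname{im}(\Psi)=\ker(Id\otimes\iota^*)=\Prym(\Sigma_{sw_1})$. For $\subseteq$: $(Id\otimes\iota^*)(\Psi(L))=(L\otimes\iota^*L^{-1})\otimes\iota^*(L\otimes\iota^*L^{-1})=(L\otimes\iota^*L^{-1})\otimes(\iota^*L\otimes L^{-1})\cong\Oo$ — a direct cancellation. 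For $\supseteq$: given $M$ with $\iota^*M\cong M^{-1}$, I must produce $L$ with $L\otimes\iota^*L^{-1}\cong M$; the natural candidate exploits the meromorphic section $s$ with $s\otimes\iota^*s=1$ from the preceding Lemma, or alternatively a dimension count: $\Psi$ has image of dimension $g'-g = (2g-1)-g = g-1$ (since $\dim\ker\Psi=\dim\operatorname{im}\pi^* = g$ from the first part and $\dim\Pic(\Sigma_{sw_1})=g'=2g-1$), and $\Prym(\Sigma_{sw_1})$ also has dimension $g-1$; combined with connectedness of the relevant identity components and the inclusion $\operatorname{im}(\Psi)\subseteq\Prym$, equality follows on identity components, and the component-group count (both have two components, matching $sw_2$) finishes it.

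The main obstacle I anticipate is the descent step at the $\pi^*/\Psi$ node — precisely pinning down that $\operatorname{im}(\pi^*)=\Pic(\Sigma_{sw_1})^{\iota}$ rather than a priori a smaller subgroup, because an $\iota$-invariant line bundle on the total space of a $\Z_2$-cover need not naively descend (there is a cohomological obstruction in $H^2(\Z_2,\C^*)$ type terms, which here vanishes but should be addressed, e.g. by the explicit two-element choice of linearization, or by the pushforward construction $L\mapsto\pi_*L$ used in the proof of the preceding Proposition). The surjectivity-onto-$\Prym$ step is the second delicate point, but there the Tsen's-theorem Lemma already in hand (existence of a meromorphic $s$ with $s\otimes\iota^*s=1$) essentially hands us the construction, or else the dimension/component count closes it. Everything else is formal cancellation in the Picard group.
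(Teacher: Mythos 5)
Your proposal is correct and follows essentially the same route as the paper: the containment $\operatorname{im}(\Psi)\subseteq\ker(Id\otimes\iota^*)$ by direct cancellation, surjectivity onto the Prym via the meromorphic section $s$ with $s\otimes\iota^*s=1$ from the preceding (Tsen's theorem) lemma, and the identification $\ker(\Psi)=\pi^*(\Pic(\Sigma))$ through $\iota$-invariance and descent. You are in fact more careful than the paper at the two points it glosses over — the computation $\ker(\pi^*)=\{\Oo,L_{sw_1}\}$ via $\pi_*\pi^*N=N\otimes(\Oo\oplus L_{sw_1})$, and the vanishing of the linearization obstruction in the descent step — but these are refinements of the same argument, not a different one.
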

\begin{proof}
The image of $\Psi$ is in $\ker(Id\otimes\iota^*)$ since $(L\otimes \iota^* L^{-1})\otimes \iota^*(L\otimes \iota^* L^{-1})=\Oo $. Moreover, if $M \in \ker(Id\otimes\iota^*)$, we can find a meromorphic section $s$ of $M$ such that $s\otimes \iota^*s =1$. Let $D$ be the divisor of the zeros of $s$ (but not the poles, so that $D(s) = D - \iota(D)$). Now, since the line bundle $L(D)$ has the property that $L(D)\iota^*L(D)^{-1} = M$, $M$ is in the image of $\Psi$.
The kernel of $\Psi$ can also be computed explicitly: if $L\otimes \iota^* L^{-1} = \Oo$, then $L = \iota^* L$, hence $L$ is the pullback of a line bundle on $\Sigma$, and vice versa. Hence $\ker(\Psi) = \pi^*(\Pic(\Sigma))$. 
\end{proof}

\begin{Proposition}\label{Prop Prym disconnected}
The group $\Prym(\Sigma_{sw_1})$ is the disjoint union of two homeomorphic connected components $\Prym^0(\Sigma_{sw_1})$ and $\Prym^1(\Sigma_{sw_1})$ so that 
\[\Psi(\Pic^i(\Sigma_{sw_1})) = \Prym^{i (\text{mod } 2)}(\Sigma_{sw_1})~.\]
Moreover, $\Prym^0(\Sigma_{sw_1})$ is an abelian variety of dimension $g-1$. 
\end{Proposition}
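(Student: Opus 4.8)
The plan is to analyze the homomorphism $\Psi : \Pic(\Sigma_{sw_1}) \to \Pic^0(\Sigma_{sw_1})$, $L \mapsto L \otimes \iota^* L^{-1}$, and use the exact sequence from the previous lemma, which identifies $\Prym(\Sigma_{sw_1}) = \ker(\Id \otimes \iota^*)$ with $\im(\Psi)$. First I would observe that $\Psi$ restricted to $\Pic^0(\Sigma_{sw_1})$ is a morphism of abelian varieties, so its image $\Psi(\Pic^0(\Sigma_{sw_1}))$ is an abelian subvariety; call it $\Prym^0(\Sigma_{sw_1})$. Since $\Psi$ shifts degrees by $\deg L - \deg \iota^* L = 0$, in fact $\im(\Psi) \subseteq \Pic^0(\Sigma_{sw_1})$, and $\Psi(\Pic^i(\Sigma_{sw_1}))$ depends only on $i \bmod 2$: indeed if $\deg L$ and $\deg L'$ have the same parity then $L \otimes L'^{-1}$ differs from a pullback $\pi^* N$ (any even-degree line bundle on $\Sigma_{sw_1}$ of the right degree is, up to $\Pic^0$, a pullback — more carefully, $L\otimes L'^{-1} \in \Pic^{2k}$ and $\Pic^{2k}(\Sigma_{sw_1})$ meets $\pi^*\Pic(\Sigma)$) — hence by $\ker\Psi = \pi^*\Pic(\Sigma)$ we get $\Psi(L) = \Psi(L')$ up to the subgroup... actually the cleanest route: pick any line bundle $L_1$ of odd degree, then every odd-degree bundle is $L_1 \otimes (\text{even degree})$, and $\Psi(L_1 \otimes P) = \Psi(L_1)\otimes\Psi(P)$ with $\Psi(P) \in \Prym^0$, so $\Psi(\Pic^{\text{odd}}) = \Psi(L_1) \cdot \Prym^0 =: \Prym^1$, a coset.

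Next I would show $\Prym(\Sigma_{sw_1}) = \Prym^0 \sqcup \Prym^1$ with these two components genuinely distinct and each connected. Connectedness of each is clear since each is a coset of the connected group $\Prym^0$ (the image of a connected group under a morphism is connected). For the disjointness: $\Prym^0 \subseteq \Psi(\Pic^{\text{even}})$ and one must check $\Psi(\Pic^{\text{odd}}) \not\subseteq \Psi(\Pic^{\text{even}})$, equivalently that there exists $M$ with $\iota^* M = M^{-1}$ not of the form $L \otimes \iota^* L^{-1}$ with $\deg L$ even. This is a parity obstruction: if $M = L \otimes \iota^* L^{-1}$ then $M$ has degree $0$ but also "$M$ is in the identity component" — one detects the component via the standard fact that $\Psi$ has connected kernel $\pi^*\Pic(\Sigma)$ (genus $g$) inside $\Pic^0(\Sigma_{sw_1})$ (genus $2g-1$), so $\Psi(\Pic^0)$ has dimension $(2g-1) - g = g-1$, and then one invokes that $\Prym(\Sigma_{sw_1})$, being the kernel of $\Id\otimes\iota^*$, is a subgroup of $\Pic^0(\Sigma_{sw_1})$ whose identity component has dimension $g-1$ and has exactly two components because $[\Prym : \Prym^0]$ equals the number of components, which the exact sequence pins down via $\im\Psi$. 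I would make the two-component count precise by noting $\Prym^0$ is the image of the connected $\Pic^0$, and the obstruction to $M \in \Prym$ lying in $\Prym^0$ is exactly the class in $\coker(\Psi|_{\Pic^0} \to \Prym)$, which I would compute to be $\Z_2$, generated by any $M$ arising as $\pi_*$-type construction with odd-degree pullback pattern (equivalently, using that the norm map $\Nm: \Pic(\Sigma_{sw_1}) \to \Pic(\Sigma)$ composed with parity detects it).

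Finally, $\Prym^0(\Sigma_{sw_1})$ is an abelian variety of dimension $g-1$: it is the connected component of the identity of the kernel of the homomorphism $\Id\otimes\iota^* : \Pic^0(\Sigma_{sw_1}) \to \Pic^0(\Sigma_{sw_1})$ between abelian varieties, hence an abelian subvariety; its dimension is $g - 1$ by the dimension count above ($\dim\Pic^0(\Sigma_{sw_1}) = g' = 2g-1$ and $\dim\ker(\Psi) = \dim \pi^*\Pic(\Sigma) = g$, so $\dim\Prym^0 = \dim\im\Psi = 2g - 1 - g = g-1$). The homeomorphism $\Prym^0 \cong \Prym^1$ is given by translation by any fixed element of $\Prym^1$.

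I expect the main obstacle to be the component count — proving that $\Prym(\Sigma_{sw_1})$ has \emph{exactly} two components, i.e.\ that the two cosets $\Prym^0$ and $\Prym^1$ are distinct and exhaust $\Prym$. The exhaustion is immediate from $\Prym = \im\Psi = \Psi(\Pic^{\text{even}}) \cup \Psi(\Pic^{\text{odd}})$; the genuine distinctness (that $\Prym^1 \neq \Prym^0$) is the delicate point and is where one must use the unramifiedness of $\pi$ and a parity/norm-map argument rather than pure dimension counting, since a priori $\Psi(\Pic^{\text{odd}})$ could coincide with $\Psi(\Pic^{\text{even}})$ if some odd-degree bundle differed from an even-degree one by an element of $\ker\Psi = \pi^*\Pic(\Sigma)$ — but $\pi^*$ preserves the parity class in a suitable sense (every bundle in $\pi^*\Pic(\Sigma)$ has even degree, indeed $\deg\pi^* N = 2\deg N$), which rules this out and forces the two cosets apart.
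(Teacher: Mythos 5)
Your proposal is correct and follows essentially the same route as the paper: identify $\Prym(\Sigma_{sw_1})$ with $\im(\Psi)$ via the exact sequence, observe that $\ker\Psi=\pi^*\Pic(\Sigma)$ consists entirely of even-degree bundles so that the parity of $\deg L$ descends to $\Psi(L)$ and separates the two cosets, and compute $\dim\Prym^0 = g'-g = g-1$. The ``delicate point'' you flag at the end is in fact settled by exactly the parity observation you give there ($\deg\pi^*N = 2\deg N$), which is the paper's entire disjointness argument.
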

\begin{proof}
The kernel of $\Psi$ consists of line bundles which are pullbacks, in particular, they all have even degree. Thus, the components $\Pic^{2k}(\Sigma_{sw_1})$ all have the same image. Similarly, the components $\Pic^{2k+1}(\Sigma_{sw_1})$ all have the same image which is disjoint from the image of the even components.  
The space $\Prym^0(\Sigma_{sw_1})$ is the quotient of the abelian variety $\Pic(\Sigma_{sw_1})$ by the abelian subvariety $\pi^*(\Pic(\Sigma))$, so it is an abelian variety of dimension $g'-g = g-1$.
\end{proof}

The abelian variety  $\Prym^0(\Sigma_{sw_1})$ is usually called the \emph{Prym variety} of the cover $\pi:\Sigma_{sw_1}\to\Sigma$~.

\begin{Proposition}
If $(V,Q_V) \in \Bb_{sw_1}^{sw_2}(\Sigma,\sO(2,\C))$, then its pullback to the double cover $\Sigma_{sw_1}$ defines a point in $\Prym^{sw_2}(\Sigma_{sw_1})$. This gives a bijection
\[\pi^*: \Bb_{sw_1}^{sw_2}(\Sigma,\sO(2,\C)) \ra \Prym^{sw_2}(\Sigma_{sw_1})/\Z_2~.\]
\end{Proposition}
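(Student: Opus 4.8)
The plan is to combine the previous proposition, which identifies $\pi^*(\Bb_{sw_1}(\Sigma,\sO(2,\C)))$ with $\Prym(\Sigma_{sw_1})/\Z_2$ inside $\Bb_0(\Sigma_{sw_1},\sO(2,\C)) = \Pic(\Sigma_{sw_1})/\Z_2$, with the refinement provided by Proposition \ref{Prop Prym disconnected}, which splits $\Prym(\Sigma_{sw_1})$ into the two components $\Prym^0$ and $\Prym^1$ according to the parity of the degree. So the real content is to show that, for $(V,Q_V)\in\Bb_{sw_1}(\Sigma,\sO(2,\C))$, the component of $\Prym(\Sigma_{sw_1})$ into which $\pi^*(V,Q_V)$ falls is detected precisely by $sw_2(V,Q_V)$. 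Once this compatibility of invariants is established, bijectivity is immediate: $\pi^*$ is already a bijection onto $\Prym(\Sigma_{sw_1})/\Z_2$ by the earlier proposition, and restricting a bijection to the preimage of a subset (here the component $\Prym^{sw_2}/\Z_2$) gives a bijection onto that subset.

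The key computation is therefore: if $(\pi^*V,\pi^*Q_V)=(M\oplus M^{-1},\smtrx{0&1\\1&0})$ with $M=\iota^*M^{-1}$, then $\deg(M)\equiv sw_2(V,Q_V)\pmod 2$. First I would recall from the proof of the earlier proposition that such a $(V,Q_V)$ is realized as a pushforward $(\pi_*M, \pi_*\iota^*)$ of a line bundle $M$ on $\Sigma_{sw_1}$ with $\iota^*M = M^{-1}$, and that $\pi^*\pi_*M = M\oplus\iota^*M = M\oplus M^{-1}$. Then I would compute $sw_2$ of the pushforward. The cleanest route is via the degree: $\deg(\pi_*M) = \deg(M) + \deg(\pi_*\Oo)$ for an unramified double cover, and $\pi_*\Oo = \Oo\oplus\eta$ where $\eta$ is the $2$-torsion line bundle on $\Sigma$ corresponding to $sw_1$, so $\deg(\pi_*M)=\deg M$. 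The second Stiefel--Whitney class of an $\sO(2,\C)$-bundle $(V,Q_V)$ with $sw_1\neq 0$ is the mod-$2$ reduction of $\deg(V)$ — indeed, writing $\det V = \eta$ (forced by $sw_1(V)=sw_1\neq 0$ since $\det(Q_V)$ trivializes $(\det V)^{2}$), one checks $sw_2(V,Q_V)\equiv \deg(V)\pmod 2$ exactly as in the $\sSO(2,\C)$ case recorded above. Hence $sw_2(V,Q_V)\equiv \deg(\pi_*M)=\deg M\pmod 2$, which by Proposition \ref{Prop Prym disconnected} is the index of the component $\Prym^{sw_2}(\Sigma_{sw_1})$ containing $M$. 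This is well defined on the $\Z_2$-quotient since $M$ and $M^{-1}=\iota^*M$ have the same degree, and $\Psi$ sends both $\Pic^{2k}$ and the class of such $M$ consistently.

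I would then assemble the argument: by the earlier proposition $\pi^*$ restricts to a bijection $\Bb_{sw_1}(\Sigma,\sO(2,\C))\to\Prym(\Sigma_{sw_1})/\Z_2$; the invariant $sw_2$ on the source corresponds under this bijection to the component decomposition $\Prym(\Sigma_{sw_1})/\Z_2 = \Prym^0(\Sigma_{sw_1})/\Z_2 \sqcup \Prym^1(\Sigma_{sw_1})/\Z_2$ on the target, by the degree computation above; therefore $\pi^*$ restricts further to a bijection $\Bb_{sw_1}^{sw_2}(\Sigma,\sO(2,\C))\to\Prym^{sw_2}(\Sigma_{sw_1})/\Z_2$ for each value of $sw_2\in H^2(\Sigma,\Z_2)=\Z_2$. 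The main obstacle I anticipate is the bookkeeping in the Stiefel--Whitney / degree correspondence for pushforwards — in particular being careful that $sw_2$ of an $\sO(2,\C)$-bundle with non-trivial $sw_1$ really is the mod-$2$ degree of the underlying rank-$2$ bundle (this needs the Whitney sum formula \eqref{WhitneySum} together with the observation that the ambiguity $t$ in writing $V\cong L_1\oplus L_2$ drops out), and that $\deg\pi_* = \deg$ for our unramified cover. Everything else is a formal consequence of the results already proved.
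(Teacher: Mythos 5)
Your reduction of the problem is the right one: given the previous proposition, which identifies $\pi^*(\Bb_{sw_1}(\Sigma,\sO(2,\C)))$ with $\Prym(\Sigma_{sw_1})/\Z_2$, all that remains is to match $sw_2$ on the source with the two-component decomposition of the target. But your ``key computation'' does not do this, and in fact proves something false. Every $M\in\Prym(\Sigma_{sw_1})$ satisfies $\iota^*M\cong M^{-1}$, hence $\deg M=\deg(\iota^*M)=-\deg M$, so $\deg M=0$; equivalently, $\deg V=\deg(\pi_*M)=\deg M=0$ for every $(V,Q_V)$ with $sw_1\neq 0$ (this is also forced by $(\det V)^2\cong\Oo$). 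So the congruence you assert, $sw_2(V,Q_V)\equiv\deg V\pmod 2$, would give $sw_2=0$ identically, contradicting Lemma \ref{sw_2_non_empty}. The analogy with the $\sSO(2,\C)$ case is the source of the error: there the paper records $sw_2=\deg L\pmod 2$ for the \emph{isotropic line subbundle} $L$ of $V=L\oplus L^{-1}$, not $\deg V\pmod 2$ (which is always $0$), and when $sw_1\neq 0$ there is no isotropic subbundle whose degree one could take (Proposition \ref{Prop: no isotropic subbundles}). For the same reason, Proposition \ref{Prop Prym disconnected} does \emph{not} label the components of $\Prym(\Sigma_{sw_1})$ by $\deg M\bmod 2$: it labels them by the parity of $\deg L$ for any $L\in\Pic(\Sigma_{sw_1})$ with $M=L\otimes\iota^*L^{-1}$, which is a genuinely finer invariant than anything visible from $\deg M$. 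Your degree bookkeeping therefore collapses to $0=0$ and cannot distinguish $\Prym^0$ from $\Prym^1$.

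The gap can be closed without any direct computation of $sw_2$ by a connectedness argument, which is what the paper does. Since $sw_2$ is a locally constant function and each $\Prym^{i}(\Sigma_{sw_1})/\Z_2$ is connected (being a continuous image of the connected space $\Prym^{i}(\Sigma_{sw_1})$), it suffices to evaluate $sw_2$ on a single bundle mapping into each component. The bundle $(V,Q_V)=\bigl(L\oplus\Oo,\smtrx{1&0\\0&1}\bigr)$ with $L^2=\Oo$ and $sw_1(L)=sw_1$ has $sw_2=0$ by the Whitney sum formula, and its pullback is the trivial $\sO(2,\C)$-bundle, i.e.\ the point $\Oo\in\Prym^0(\Sigma_{sw_1})$; hence everything over $\Prym^0(\Sigma_{sw_1})/\Z_2$ has $sw_2=0$. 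Since $\Bb^1_{sw_1}(\Sigma,\sO(2,\C))$ is nonempty (Lemma \ref{sw_2_non_empty}) and $\pi^*$ is a bijection onto $\Prym(\Sigma_{sw_1})/\Z_2$, the bundles with $sw_2=1$ must land in $\Prym^1(\Sigma_{sw_1})/\Z_2$, and by connectedness they exhaust it. With that substitution the rest of your argument (restricting the known bijection to the preimage of each component) goes through.
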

\begin{proof}
Consider the bundle $(V,Q_V)=\left(L\oplus \Oo, \smtrx{1&0\\0&1}\right)$, with $L^2=\Oo$ and $sw_1(L)=sw_1$. By the proof of Lemma \ref{sw_2_non_empty}, $(V,Q_V)$ is in $\Bb_{sw_1}^{0}(\Sigma,\sO(2,\C))$ and we have $(\pi^*V,\pi^*Q_V) = \left(\Oo\oplus \Oo, \smtrx{1&0\\0&1}\right)$. Hence, $(V,Q_V)$ is in $\Prym^0(\Sigma_{sw_1})$. Since $sw_2$ is constant on connected components, all the points of $\Prym^0(\Sigma_{sw_1})$ have $sw_2=0$. 
By Lemma \ref{sw_2_non_empty}, there exists a bundle in $\Bb_{sw_1}^{1}(\Sigma,\sO(2,\C))$. This bundle must pullback to $\Prym^1(\Sigma_{sw_1})$, so all the points in $\Prym^1(\Sigma_{sw_1})$ must have $sw_2=1$.
\end{proof}

The space $\Prym^{sw_2}(\Sigma_{sw_1})/\Z_2$ is singular, it has $2^{2g-2}$ orbifold points corresponding to the fixed points of the $\Z_2$-action. They correspond to poly-stable $\sO(2,\C)$-bundles, who split orthogonally as a direct sum of two distinct $\sO(1,\C)$-bundles: 
$$(V,Q_V) = \left(L_1\oplus L_2, \mtrx{1&0\\0&1}\right)~, $$
with $L_1^2 = L_2^2 = \Oo$.

Summarizing, the space $\Bb(\sO(2,\C))$ splits in the following connected components:
\[
\bigsqcup\limits_{d\in\N}
\Bb_{0,d}(\sO(2,\C)) \ \sqcup \ 
\bigsqcup\limits_{\substack{sw_1\neq0\\ sw_2}}
\Bb_{sw_1}^{sw_2}(\sO(2,\C)) ~, \]
and every one of these pieces can be described explicitly. In the next section the parameter spaces of maximal $\sP\sSp(4,\R)$-Higgs bundles are described and we will see that its connected components are indexed by a finite subset of $\pi_0(\Bb(\sO(2,\C)))$. 

Topologically, a connected component of $\Bb(\sO(2,\C))$ is a torus or the quotient of a torus by the inversion involution ($x\mapsto x^{-1}$). Their rational cohomology is given by the following.
\begin{Proposition}
     \label{Prop: cohomology of Torus mod inversion} The cohomology of each component of $\Bb(\Sigma,\sO(2,\C))$ is 
     \begin{itemize}
         \item if $d\neq0$, then $H^*(\Bb_{0,d}(\Sigma,\sO(2,\C)))=H^*((S^1)^{2g},\Q)$,
         \item $H^j(\Bb_{0,0}(\Sigma,\sO(2,\C)))=\begin{cases}
             H^{j}((S^1)^{2g},\Q) & \text{if\ j\ is even,}\\
             0&\text{otherwise,}
         \end{cases}$
         \item if $sw_1\neq0,$ then $H^j(\Bb_{sw_1}^{sw_2}(\Sigma,\sO(2,\C)))=\begin{cases}
             H^{j}((S^1)^{2g-2},\Q) & \text{if\ j\ is even,}\\
             0&\text{otherwise.}
         \end{cases}$
     \end{itemize}
 \end{Proposition}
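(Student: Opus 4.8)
The plan is to reduce the statement, in each of the three cases, to the rational cohomology of a real torus or of the quotient of a real torus by the inversion involution, and then to compute the latter by a transfer argument together with the structure of the cohomology ring of a torus.

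First I would dispose of the case $d\neq0$: by the identification in Section~\ref{orthogonal_bundles_vanishing}, $\Bb_{0,d}(\Sigma,\sO(2,\C))=\Pic^d(\Sigma)$, which is a torsor over the complex torus $\Pic^0(\Sigma)\cong\C^g/\Lambda$ and hence diffeomorphic to $(S^1)^{2g}$; there is nothing more to prove. For the two remaining cases the relevant spaces are $\Pic^0(\Sigma)/\Z_2$ and $\Prym^{sw_2}(\Sigma_{sw_1})/\Z_2$, where $\Z_2$ acts by $L\mapsto L^{-1}$. I would treat both uniformly: let $T$ denote the torus in question, namely $\Pic^0(\Sigma)\cong(S^1)^{2g}$ in the case $d=0$ and a component $\Prym^{sw_2}(\Sigma_{sw_1})$ in the case $sw_1\neq0$ --- the latter being a torsor over the abelian variety $\Prym^0(\Sigma_{sw_1})$ of complex dimension $g-1$ (Proposition~\ref{Prop Prym disconnected}), hence diffeomorphic to $(S^1)^{2g-2}$ --- and let $\sigma\colon T\to T$ be the corresponding involution.

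The first key step is the transfer isomorphism: since $2$ is invertible in $\Q$, the quotient map $p\colon T\to T/\Z_2$ induces an isomorphism $p^*\colon H^*(T/\Z_2;\Q)\xrightarrow{\ \sim\ }H^*(T;\Q)^{\Z_2}$ onto the invariant subring, with inverse induced by one-half of the transfer homomorphism; this is standard and requires nothing about smoothness of the (singular) quotient. The second key step is to compute the invariants. Writing $V=H^1(T;\Q)$, the cup product gives $H^*(T;\Q)=\Lambda^\bullet V$ with $\dim_\Q V=2g$ (resp. $2g-2$), and I claim $\sigma^*$ acts on $V$ by $-\mathrm{id}$; granting this, $\sigma^*$ acts on $H^k(T;\Q)=\Lambda^kV$ by $(-1)^k$, so $H^*(T;\Q)^{\Z_2}=\bigoplus_{k\text{ even}}H^k(T;\Q)$, and combined with the transfer isomorphism this yields exactly the asserted formula ($H^j\cong H^j((S^1)^{2g},\Q)$, resp. $H^j((S^1)^{2g-2},\Q)$, for $j$ even and $0$ for $j$ odd).

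The one point that genuinely requires care --- and the only thing I would call an obstacle --- is the claim that $\sigma^*=-\mathrm{id}$ on $H^1$. When $T$ is a torus and $\sigma$ is the group inversion $x\mapsto -x$ this is classical, since the inversion lifts to $v\mapsto-v$ on the universal cover. The subtle case is $T=\Prym^1(\Sigma_{sw_1})$, which is only a torsor, not a group. Here I would pick a point $M_0\in\Prym^1(\Sigma_{sw_1})$ with $M_0^2\cong\Oo$ --- such a point exists because the $\Z_2$-action on $\Prym^1(\Sigma_{sw_1})$ has $2^{2g-2}>0$ fixed points, as recorded after Proposition~\ref{Prop Prym disconnected} --- and translate by $M_0$ to identify $T$ with $\Prym^0(\Sigma_{sw_1})$; under this identification $M\mapsto M^{-1}$ becomes $N\mapsto N^{-1}$ because $M_0^{-1}\cong M_0$, so $\sigma$ is conjugate to the group inversion and the classical case applies. (Equivalently, one lifts $\sigma$ to an affine involution $v\mapsto -v+c$ of the universal cover, which acts by $-\mathrm{id}$ on $H^1$ regardless of $c$.) With this in hand the proof is complete.
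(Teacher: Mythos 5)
Your proof is correct and follows essentially the same route as the paper: identify each component with a torus or a torus modulo inversion, use the isomorphism $H^*(T/\Z_2;\Q)\cong H^*(T;\Q)^{\Z_2}$ (which the paper cites from the literature and you derive via transfer), and observe that inversion acts by $(-1)^j$ on $H^j$. Your extra care in reducing the torsor $\Prym^1(\Sigma_{sw_1})$ to the group-inversion case via translation by a $2$-torsion point is a welcome detail that the paper's proof leaves implicit.
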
 
 \begin{proof}
 For $d\neq0,$ the space $\Bb_{0,d}(\Sigma,\sO(2,\C))$ is given by $\Pic^d(\Sigma)$ and hence it is a torus of dimension $2g.$ The component $\Bb_{0,0}(\Sigma,\sO(2,\C))$ is the quotient of a $2g$-dimensional torus by inversion and the components $\Bb_{sw_1}^{sw_2}(\Sigma,\sO(2,\C))$ are quotients of a $(2g-2)$-dimensional torus by inversion. 

Given a CW-complex $X$ with an action of a finite group $\Delta,$ there is an isomorphism between the cohomology of the quotient $X/\Delta$ and the $\Delta$-invariant cohomology of $X$ (see \cite[section 2]{SymmetricProductsofAlgebraicCurves}, where the author summarizes chapter 5 of \cite {GrothendickTohoku})
\[H^*(X,\Q)^\Delta= H^*(X/\Delta,\Q)~.\]
Since, the $\Z_2$ action by inversion on a torus $(S^1)^{2m}$ acts on the cohomology group $H^j((S^1)^{2m},\Q)$ by $(-1)^j,$ the result follows.
 \end{proof}

\begin{Proposition}\label{Prop: no isotropic subbundles}
    A holomorphic $\sO(2,\C)$-bundle on $\Sigma$ with nonzero first Stiefel-Whitney class has no holomorphic isotropic line sub-bundles. 
\end{Proposition}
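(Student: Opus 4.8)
The plan is to argue by contradiction. Suppose $(V,Q_V)$ is a holomorphic $\sO(2,\C)$-bundle with $sw_1(V,Q_V)\neq 0$ which admits a holomorphic isotropic line sub-bundle $L\subset V$. First I would reduce to the case where $L$ is saturated, i.e.\ where $V/L$ is again a holomorphic line bundle: if $L'$ denotes the saturation of $L$ in $V$, then $Q_V|_{L'}$ is a holomorphic section of $S^2((L')^*)$ vanishing on the dense open locus where $L'=L$, hence vanishing identically, so $L'$ is also isotropic and we may replace $L$ by $L'$.

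Next I would use the non-degeneracy of $Q_V$ to pin down the orthogonal complement. Since $Q_V\colon V\to V^*$ is an isomorphism and the restriction map $V^*\to L^*$ is surjective, the composite $V\to L^*$ is surjective; its kernel $L^\perp$ is a line sub-bundle of $V$, and the isotropy of $L$ says precisely that $L\subseteq L^\perp$. Being a saturated rank-one subsheaf of a line bundle of the same rank, $L$ equals $L^\perp$. Hence $Q_V$ induces an isomorphism $V/L=V/L^\perp\xrightarrow{\ \sim\ }L^*$, and therefore $\det(V)\cong L\otimes(V/L)\cong L\otimes L^{-1}\cong\Oo$.

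Finally I would invoke the description of the first Stiefel--Whitney class recalled at the beginning of this section, namely that $sw_1(V,Q_V)=sw_1(\det(V),\det(Q_V))$ vanishes if and only if $\det(V)\cong\Oo$. Since we have just shown $\det(V)\cong\Oo$, this forces $sw_1(V,Q_V)=0$, contradicting the hypothesis. I do not expect any genuine obstacle in this argument; the only steps needing a little care are the passage to a saturated sub-bundle and the rank count yielding $L=L^\perp$, both of which are routine facts about subsheaves of a vector bundle over a smooth curve.
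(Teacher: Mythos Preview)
Your proof is correct and follows essentially the same strategy as the paper: both argue by contradiction and show that the existence of an isotropic line sub-bundle $L$ forces $\det(V)\cong\Oo$, hence $sw_1=0$. The only cosmetic difference is that the paper works fiberwise, picking at each point the \emph{other} isotropic line to build a complementary sub-bundle $M$ with $V=L\oplus M$ and $M\cong L^{-1}$, whereas you work sheaf-theoretically, identifying $L=L^\perp$ and reading off $V/L\cong L^*$ from the induced map; your route has the mild advantage of not needing to justify that the second isotropic line varies holomorphically, and of making the saturation step explicit.
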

\begin{proof}
Let $(V, Q_V)$ be a holomorphic $\sO(2,\C)$-bundle on $\Sigma$, and suppose $L\subset V$ is an isotropic line sub-bundle. In every fiber $V_z$ there are two isotropic lines. Denote by $m_z$ the isotropic line that is not in $L$. The union of all the $m_z$ is a holomorphic line sub-bundle $M$, and $V = L \oplus M$. Hence $\det(V) = L M$. Since $L$ and $M$ are nowhere perpendicular, $Q_V$ defines a holomorphic isomorphism between $M$ and $L^* = L^{-1}$. This implies $\det(V) = \Oo$ and $sw_1(V,Q_V) = 0$.   
\end{proof}

\subsection{Gauge transformations}  \label{orth_gauge_transf}
We can now easily determine the group of holomorphic gauge transformations $\mathcal{H}_{\sO(2,\C)}(\Vv,Q_V)$ of an $\sO(2,\C)$-bundle.

In the case when $sw_1(V,Q_V)=0$, for $\deg(L)\geq 0$, we can write 
\[(\Vv,Q_V)=\left(L\oplus L^{-1}, \mtrx{0&1\\1&0}\right)~.\]
 In this splitting every holomorphic gauge transformation can be written as a matrix
$ \smtrx{a&b\\c&d} $
where $a,d\in \C$, $b\in H^0(\Sigma,L^{2})$, $c\in H^0(\Sigma,L^{-2})$. There are two cases:
\begin{itemize}
    \item If $L^2=\Oo$, then $L\cong L^{-1}$ and $b,c\in \C$. In this case, $\Hh_{\sO(2,\C)}\cong\sO(2,\C)$:
\begin{equation}\label{EQ HO2 sw1=0 M=M^-1}
    \mathcal{H}_{\sO(2,\C)}(\Vv,Q_V)= \left\{\mtrx{a&0\\0&a^{-1}} \ |\ a\in \C^*\right\} \cup \left\{\mtrx{0&b\\b^{-1}&0}\ |\ b\in \C^*\right\}~.
\end{equation}
\item If $L^2\neq\Oo$, then $b = 0$. The condition that the matrix preserves $Q_V$ implies that $c=0$, and $a d = 1$. In this case, 
\begin{equation}
    \label{EQ HO2 sw1=0}\mathcal{H}_{\sO(2,\C)}(\Vv,Q_V) = \C^* \cong
\sSO(2,\C) = \left\{\mtrx{a&0\\0&a^{-1}} \ |\ a\in \C^*\right\}~.
\end{equation}
 \end{itemize}

 When $sw_1(V,Q_V) \neq 0$, we can pullback $(\Vv,Q_V)$ to the double cover $\Sigma_{sw_1}$:
\[(\pi^*\Vv,\pi^*Q_V) = \left(M\oplus M^{-1}, \mtrx{0&1\\1&0}\right)~. \] 
Every gauge transformation of $(\Vv,Q_V)$ induces a $\iota$-invariant gauge transformation of $(\pi^*\Vv,\pi^*Q_V)$. Written in matrix form, the condition of $\iota$-invariance becomes
\[\mtrx{a&b\\c&d} = \mtrx{0&1\\1&0}  \mtrx{a&b\\c&d}   \mtrx{0&1\\1&0} =  \mtrx{d&c\\b&a} ~.\]
This implies $a=d$ and $b=c$. Together with the condition of preserving $Q$, this gives only 4 possible elements:
\[\mtrx{1&0\\0&1}~~,~~~~ \mtrx{-1&0\\0&-1}~~,~~~~ \mtrx{0&1\\1&0}~~~~ \text{and}~~~~\mtrx{0&-1\\-1&0} ~.\]

If $M^2 = \Oo$, we have $\mathcal{H}_{\sO(2,\C)}(\Vv,Q_V) = \Z_2 \oplus\Z_2$. In this case, $(\Vv,Q_V)$ splits as an orthogonal direct sum $L_1\oplus L_2$ with $L_1^2 = L_2^2 = \Oo$ and $L_1\neq L_2$.
When $M^2 \neq \Oo$, only diagonal elements are possible, thus, in this case $\mathcal{H}_{\sO(2,\C)}(\Vv,Q_V) = \Z_2$.

\section{$\sP\sSp(4,\R)$-Higgs bundles}     \label{PSp4R}

In this section, we describe the moduli space of $\sP\sSp(4,\R)$-Higgs bundles. To do this, we use the isomorphism of $\sP\sSp(4,\R)$ with $\sSO_0(2,3)$ described in Section \ref{section the isomorphism}. After some set up, we prove Theorems \ref{THM d>0}, \ref{thm:zero_component} and \ref{THM HiggsParamsw1sw2orbifold} parameterizing all connected components of maximal $\sP\sSp(4,\R)$-Higgs bundles.

\subsection{General definition}

Using the complexified Cartan decomposition \eqref{eq: complexified Cartan decomp SO(2,3)} and Definition \ref{DEF Higgs bundle}, an $\sSO_0(2,3)$-Higgs bundle is a pair $(\Pp,\varphi)$, where $\Pp$ is a holomorphic principal $\sSO(2,\C)\times \sSO(3,\C)$-bundle and $\varphi\in H^0(\Sigma, \Pp[\Hom(\C^2,\C^3)]\otimes K)$. 
The vector bundle $\Ee = \Pp[\C^2 \oplus \C^3]$ associated to the standard representations of $\sSO(2,\C)\times\sSO(3,\C)$  splits as a direct sum $\Ee = \Vv \oplus \Ww$, where $\Vv$ and $\Ww$ are respectively holomorphic vector bundles of rank $2$ and $3$, with holomorphic orthogonal structures $Q_V$ and $Q_W$ and with trivial determinants $\det(\Vv) = \det(\Ww) = \Oo$. 
In this notation, $\varphi\in H^0(\Sigma, \Hom(\Vv,\Ww)\otimes K)$.

The $\sSO(2,\C)$-bundle $(\Vv,Q_V)$ splits holomorphically as a direct sum:
\begin{equation}
    (\Vv,Q_V)=\left(L\oplus L^{-1}, \mtrx{0&1\\1&0}\right)~.
\end{equation}
The splitting of $\Vv$ allows us to split the Higgs field:  $\varphi = (\gamma, \beta)$, where
\[\xymatrix{\gamma\in H^0(\Sigma,L^{-1}\otimes \Ww\otimes K)&\text{and}&\beta\in H^0(\Sigma, L\otimes \Ww\otimes K)}~.\]

\begin{Definition}\label{DEF of SO(2,3) Higgs bundle}
    An $\sSO_0(2,3)$-Higgs bundle is a tuple $(L,(\Ww,Q_W),\beta,\gamma)$ 
    where
    \begin{itemize}
        \item $L$ is a holomorphic line bundle 
        and $(\Ww,Q_W)$ is a holomorphic rank three holomorphic orthogonal vector bundle with $\det(\Ww)=\Oo.$ 
        \item $\gamma\in H^0(\Sigma,L^{-1}\otimes \Ww\otimes K)$ and $\beta\in H^0(\Sigma, L\otimes \Ww\otimes K).$
    \end{itemize}
\end{Definition}

The $\sSL(5,\C)$-Higgs bundle associated to an $\sSO_0(2,3)$-Higgs bundle determined by  $(L,(\Ww,Q_W),\beta,\gamma)$ is 
\begin{equation}\label{SL(5,C) Higgs of SO(2,3) Higgs}
    (\Ee,\Phi)=\left(L\oplus \Ww\oplus L^{-1}, \mtrx{0&\beta^T&0\\\gamma&0&\beta\\0&\gamma^T&0}\right).
\end{equation}
Here $\beta$ and $\gamma$ are interpreted as holomorphic bundle maps 
\[\xymatrix{\beta:L^{-1}\to \Ww\otimes K &\text{and}&\gamma:L\to \Ww\otimes K~,}\] and $\beta^T=\beta^*\circ Q_W:\Ww\to LK$ and $\gamma^T=\gamma^*\circ Q_W:\Ww\to L^{-1}K$.
\begin{Remark}
Since $\Ee=L\oplus \Ww\oplus L^{-1}$ has a holomorphic orthogonal structure \[Q=\mtrx{0&0&1\\0&-Q_W&0\\1&0&0}\]with respect to which $\Phi^TQ+Q\Phi=0,$ $(\Ee,Q,\Phi)$ is an $\sSO(5,\C)$-Higgs bundle. 
\end{Remark}

To construct the moduli space we need to restrict our attention to the tuples $(L,(\Ww,Q_W),\beta,\gamma)$ that give rise to poly-stable Higgs bundles, we will call them \emph{poly-stable tuples}. Recall $(L,(\Ww,Q_W),\beta,\gamma)$ is a poly-stable tuple if the $\sSL(5,\C)$-Higgs bundle \eqref{SL(5,C) Higgs of SO(2,3) Higgs} is poly-stable in the sense of Definition \ref{SL(n,C)stability}. 

The moduli space of $\sSO_0(2,3)$-Higgs bundles on $\Sigma$ can be described as
\[\Mm(\sSO_0(2,3)) = \{ (L,(\Ww,Q_W),\beta,\gamma) \ |\ \text{ poly-stable tuples }\} / \sim~.\]

For $\sSO_0(2,3)$-Higgs bundles, there are two topological invariants, the {\em Toledo number} $\tau = \deg L \in \Z$ and the second Stiefel-Whitney class $ sw_2(\Ww,Q_W)$.

\begin{Lemma} \label{LemmaGamma}
Let $(L,(\Ww,Q_W),\beta,\gamma)$ be a poly-stable tuple. If $\deg(L)>0$, then $\gamma \neq 0$. Moreover, if $\deg(L)>g-1$, then $\gamma^T \circ \gamma \neq 0$. 
\end{Lemma}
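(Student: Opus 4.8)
The plan is to argue by contradiction using the poly-stability of the $\sSL(5,\C)$-Higgs bundle \eqref{SL(5,C) Higgs of SO(2,3) Higgs}. First suppose $\deg(L)>0$ but $\gamma = 0$. Then from \eqref{SL(5,C) Higgs of SO(2,3) Higgs} the sub-line-bundle $L\subset\Ee = L\oplus\Ww\oplus L^{-1}$ is $\Phi$-invariant: indeed $\Phi$ sends $L$ into $\Ww\oplus L^{-1}$ only via $\gamma$ (the $(2,1)$-block) and via $0$ in the $(3,1)$-block, so with $\gamma=0$ we get $\Phi(L)=0\subset L\otimes K$. Since $\deg(L)>0$, Definition \ref{SL(n,C)stability} (poly-stability, where for $\sSL$-Higgs bundles every $\Phi$-invariant subbundle must have degree $\le 0$, with equality only in the poly-stable-splitting case) is violated — more carefully, a $\Phi$-invariant subbundle of positive degree contradicts semistability, so $\gamma\neq 0$. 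I would phrase this cleanly via Definition \ref{DEF SOn stability} as well, since $L$ is an isotropic $\Phi$-invariant sub-bundle of the $\sSO(5,\C)$-Higgs bundle $(\Ee,Q,\Phi)$, and stability/poly-stability forces $\deg(L)\le 0$.

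For the second statement, suppose $\deg(L)>g-1$ and $\gamma^T\circ\gamma = 0$, where $\gamma: L\to \Ww\otimes K$ and $\gamma^T = \gamma^*\circ Q_W:\Ww\otimes K\to L^{-1}\otimes K^2$ — wait, I need to track the twists: $\gamma^T\circ\gamma$ is a section of $L^{-2}K^2$, i.e. an element of $H^0(\Sigma, L^{-2}K^2)$. The key point is that $\gamma$ defines a holomorphic bundle map whose image, away from its vanishing locus, is a line sub-bundle of $\Ww\otimes K$ on which $Q_W$ vanishes (that is exactly what $\gamma^T\circ\gamma=0$ says, since $\gamma^T\circ\gamma$ measures $Q_W(\gamma(\cdot),\gamma(\cdot))$). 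Saturating, one gets an isotropic line sub-bundle $N\subset \Ww\otimes K$ with $\deg(N)\ge \deg(L\otimes K) = \deg(L)+2g-2$. Then $N\otimes K^{-1}\subset \Ww$ is an isotropic line sub-bundle of the rank-three orthogonal bundle $\Ww$ of degree $\ge \deg(L)+g-1 > 2g-2 = \deg(K) \ge 2g-2$; in particular it has positive degree. I then want to leverage this to produce a $\Phi$-invariant isotropic sub-bundle of $(\Ee,Q,\Phi)$ of positive degree, contradicting poly-stability.

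The main obstacle — and the step I would spend the most care on — is turning "$\Ww$ has an isotropic line sub-bundle $N'$ of large positive degree coming from $\mathrm{im}(\gamma)$" into an honest $\Phi$-invariant isotropic sub-bundle of $\Ee$ that destabilizes. The natural candidate is something like $L\oplus N'\subset L\oplus\Ww\subset\Ee$ (or its saturation), and one must check: (i) it is isotropic for $Q$ — $L$ is isotropic, $N'$ is isotropic in $\Ww$, and $L\perp\Ww$ by the block form of $Q$, so this is fine; (ii) it is $\Phi$-invariant — $\Phi(L)\subset \gamma(L)\otimes K^{-1}\subset N'$ by construction (choosing $N'$ to contain the image of $\gamma$), $\Phi(N')\subset \beta^T(N')\oplus\gamma^T(N')$ lands in $L\oplus L^{-1}$, and here one needs $\gamma^T(N')=0$, which is where the hypothesis $\gamma^T\gamma=0$ re-enters (it forces $\gamma^T$ to kill $\mathrm{im}(\gamma)$), while $\beta^T(N')\subset L$ is already in the sub-bundle — so $\Phi$-invariance holds provided we also do not pick up the $L^{-1}$ summand, which we don't. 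Then $\deg(L\oplus N') = \deg(L) + \deg(N') \ge 2\deg(L)+g-1 > 0$, contradicting Definition \ref{DEF SOn stability}. I would need to be slightly careful about degrees at the branch/vanishing points of $\gamma$ (saturation only increases degree, so the inequalities go the right way) and about the borderline poly-stable case (a $\Phi$-invariant isotropic sub-bundle of degree exactly $0$ is allowed, which is why the strict bounds $\deg(L)>0$ and $\deg(L)>g-1$ are exactly what is needed). I expect the degree bookkeeping with the $K$-twists to be the only genuinely fiddly part; the structural argument is forced by the shape of \eqref{SL(5,C) Higgs of SO(2,3) Higgs}.
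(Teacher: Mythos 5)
Your overall strategy is the paper's: the first claim is proved identically ($\gamma=0$ would make $L$ a positive-degree $\Phi$-invariant subbundle, contradicting poly-stability), and for the second claim the paper likewise saturates the image of $\gamma$ to an isotropic line subbundle $M\subset\Ww$ and destabilizes with the invariant subbundle $M^{\perp}\oplus L$. Your candidate $L\oplus N'$ differs only in using $M$ in place of $M^{\perp}$, which is immaterial: since $M$ is isotropic in a rank-three orthogonal bundle, $M^{\perp}/M$ is an $\sO(1,\C)$-bundle, so $\deg(M^{\perp})=\deg(M)$. Your verification of isotropy and of $\Phi$-invariance (including the point that $\gamma^{T}\circ\gamma=0$ forces $\gamma^{T}$ to vanish on the saturation of $\mathrm{im}(\gamma)$) is correct.

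However, the degree bookkeeping — which you yourself flagged as the crux — is wrong as written. If $N\subset\Ww\otimes K$ is the saturation of $\gamma(L)$, then $\gamma$ is a nonzero holomorphic section of $L^{-1}\otimes N$, so the correct bound is $\deg(N)\ge\deg(L)$, not $\deg(N)\ge\deg(L\otimes K)=\deg(L)+2g-2$: the image of $L$ does not pick up an extra $2g-2$. Consequently $\deg(N')=\deg(N)-(2g-2)\ge\deg(L)-2g+2$, and your intermediate bounds $\deg(N')\ge\deg(L)+g-1$ and $\deg(L\oplus N')\ge 2\deg(L)+g-1$ do not follow from either version of the estimate. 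The correct final inequality is $\deg(L\oplus N')\ge 2\deg(L)-2g+2$, which is positive precisely when $\deg(L)>g-1$, so the lemma is recovered with exactly the stated threshold — but only after this correction. This is precisely the computation in the paper, which avoids the twist confusion by viewing $\gamma$ as a map $LK^{-1}\to\Ww$ whose saturated image $M$ satisfies $\deg(M)\ge\deg(L)-2g+2$. In short: right structure, but a genuine (if fixable) arithmetic error at the decisive step.
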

\begin{proof}
We will interpret $\gamma$ as a map $\gamma:L K^{-1}\to \Ww$, so that $\gamma^T \circ \gamma:L K^{-1} \to L^{-1} K$.
First note that if $\gamma=0$ and $\deg(L) > 0$, then $L$ would be an invariant subbundle of positive degree. This contradicts poly-stability. 

The image $\gamma(L K^{-1})$ is contained in a line subbundle $M\subset W$, with $\deg(M) \geq \deg(L) - 2g + 2$. The kernel of $\gamma^T$ is orthogonal to the image of $\gamma$. So, outside the zeros of $\gamma$, $\ker(\gamma^T) = M^\perp$. Let's assume now that $\gamma^T \circ \gamma = 0$, hence $M \subset M^\perp$, i.e. $M$ is isotropic. In this case, $M^\perp / M$ has degree zero since it is an $\sO(1,\C)$-bundle. This implies $\deg(M^\perp) = \deg(M)$. Now $M^\perp \oplus L$ is a $\varphi$-invariant subbundle and $\deg(M^\perp \oplus L) \geq 2\deg(L)-2g+2$. By poly-stability, we have $\deg(L)\leq g-1$.
\end{proof}

\begin{Proposition}\label{Prop MW inequality}
    For poly-stable $\sSO_0(2,3)$-Higgs bundles, the Toledo number satisfies the Milnor-Wood inequality $|\tau|\leq 2g-2.$
\end{Proposition}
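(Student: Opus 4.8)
The plan is to reduce to the case $\tau = \deg L > 0$ (the case $\tau < 0$ follows by swapping the roles of $\beta$ and $\gamma$, i.e.\ by the symmetry $L \leftrightarrow L^{-1}$, $\beta \leftrightarrow \gamma$ in Definition \ref{DEF of SO(2,3) Higgs bundle}, and the case $\tau = 0$ is trivial). So suppose $\deg L > 0$ and, for contradiction, that $\deg L \geq 2g-1 > g-1$. By Lemma \ref{LemmaGamma}, both $\gamma \neq 0$ and $\gamma^T \circ \gamma \neq 0$, where we view $\gamma : LK^{-1} \to \Ww$ and hence $\gamma^T\circ\gamma : LK^{-1} \to L^{-1}K$. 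The composition $\gamma^T \circ \gamma$ is a nonzero holomorphic section of $\Hom(LK^{-1}, L^{-1}K) = L^{-2}K^2$, so it has non-negative degree; therefore $\deg(L^{-2}K^2) = -2\deg L + (4g-4) \geq 0$, which gives exactly $\deg L \leq 2g-2$. This contradicts $\deg L \geq 2g-1$, so $|\tau| \leq 2g-2$.

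The key steps, in order: first, set up the symmetry argument so that WLOG $\tau \geq 0$; second, dispatch $\tau = 0$; third, for $\tau > 0$ invoke Lemma \ref{LemmaGamma} to get $\gamma^T\circ\gamma \neq 0$ (this is where poly-stability is genuinely used, already packaged inside that lemma); fourth, observe that a nonzero holomorphic section of a line bundle forces that line bundle to have non-negative degree, and compute $\deg(L^{-2}K^2) = 4g - 4 - 2\tau \geq 0$.

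The main obstacle is really already handled by Lemma \ref{LemmaGamma}: the only subtlety is making sure $\gamma^T \circ \gamma$ is genuinely a map of line bundles whose target is $L^{-2}K^2$ — one must be careful that $\gamma^T = \gamma^* \circ Q_W$ is defined using the orthogonal structure $Q_W$ on $\Ww$, as spelled out after \eqref{SL(5,C) Higgs of SO(2,3) Higgs}, and that $Q_W$ being everywhere nondegenerate means it does not introduce spurious zeros or poles. Once the bundle bookkeeping is correct, the degree count is immediate. A small remark worth including: when $\tau = 2g-2$ the section $\gamma^T\circ\gamma$ of $L^{-2}K^2$ must be a nonzero constant (degree zero), i.e.\ $L^2 \cong K^2$ and $\gamma^T\circ\gamma$ is an isomorphism, which is the Higgs-bundle shadow of maximality and previews the analysis of the maximal components in the rest of Section \ref{PSp4R}.
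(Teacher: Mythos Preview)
Your proof is correct and follows essentially the same route as the paper: invoke Lemma \ref{LemmaGamma} to get $\gamma^T\circ\gamma\neq 0$ in $H^0(L^{-2}K^2)$ once $\deg L>g-1$, then read off $\deg L\leq 2g-2$ from the non-negativity of the degree; the paper handles $\tau<1-g$ by the same symmetry you describe. One small wording quibble: in your ``key steps'' summary you say ``for $\tau>0$ invoke Lemma \ref{LemmaGamma} to get $\gamma^T\circ\gamma\neq 0$'', but that conclusion of the lemma requires $\tau>g-1$, not merely $\tau>0$ --- your actual argument gets this right (via the contradiction hypothesis $\tau\geq 2g-1>g-1$), so just tighten the summary to match.
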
  

\begin{proof}
Consider a poly-stable $\sSO_0(2,3)$-Higgs bundle with $\tau>g-1$. By Lemma \ref{LemmaGamma},  poly-stability forces $\gamma^T\circ\gamma\in H^0(L^{-2}K^2)\setminus\{0\},$ and hence $\deg(L)\leq 2g-2.$ For $\tau<1-g,$ similar considerations imply $\deg(L)\geq-2g+2.$
\end{proof}

For every $\tau \in \Z$, let $\Mm^{\tau}(\sSO_0(2,3))\subset\Mm(\sSO_0(2,3))$ denote the subspace containing Higgs bundles with Toledo number $\tau$. The Milnor-Wood inequality gives a decomposition of the moduli space as:
\[\Mm(\sSO_0(2,3))=\bigsqcup\limits_{|\tau|\leq 2g-2}\Mm^{\tau}(\sSO_0(2,3)) ~.\]

We can subdivide these subspaces further. Namely, for every $sw_2 \in H^2(\Sigma,\Z_2)$, let $\Mm^{\tau,sw_2}(\sSO_0(2,3))$ denote the subspace of $\Mm^{\tau}(\sSO_0(2,3))$ containing Higgs bundles with second Stiefel-Whitney class $sw_2.$
As these discrete invariants vary continuously, each $\Mm^{\tau,sw_2}(\sSO_0(2,3))$ is a union of connected components of $\Mm(\sSO_0(2,3))$.

\begin{Remark} \label{only Consider Positive Toledo Remark}
    The map $\xymatrix{(L,(\Ww,Q_W),\beta,\gamma)\ar@{<->}[r]&(L^{-1},(\Ww,Q_W),\gamma,\beta)}$ defines an isomorphism $\Mm^{\tau,sw_2}(\sSO_0(2,3))\cong\Mm^{-\tau,sw_2}(\sSO_0(2,3)).$ Thus, it suffices to restrict our analysis to $0\leq\tau\leq 2g-2.$
\end{Remark}

The $\sSO_0(2,3)$-Higgs bundles with $|\tau|=2g-2$ are called \emph{maximal $\sSO_0(2,3)$-Higgs bundles}, these Higgs bundles will be the focus of the remainder of the paper. By Remark \ref{only Consider Positive Toledo Remark}, we can restrict our attention to $\tau = 2g-2$. We will use the notation 
\[\Mm^{\mathrm{max}}(\sSO_0(2,3)) = \Mm^{2g-2}(\sSO_0(2,3))~,\]
\[\Mm^{\mathrm{max}, sw_2}(\sSO_0(2,3)) = \Mm^{2g-2, sw_2}(\sSO_0(2,3))~. \]

\subsection{Maximal $\sSO_0(2,3)$-Higgs bundles}

In this subsection, we will describe the Higgs bundles in $\Mm^{\mathrm{max}}(\sSO_0(2,3))$.

Maximal $\sSO_0(2,3)$-Higgs bundles satisfy the following important property.

\begin{Proposition}\label{Prop gamma not zero}
    Let $(L,(\Ww,Q_W),\beta,\gamma)$ be a maximal $\sSO_0(2,3)$-Higgs bundle, then the map $\gamma:L\to \Ww\otimes K$ is nowhere vanishing, nowhere isotropic and $(L^{-1}K)^2=\Oo$.  
\end{Proposition}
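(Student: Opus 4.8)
The plan is to use the Milnor--Wood bound from Proposition \ref{Prop MW inequality} together with the degree estimates in Lemma \ref{LemmaGamma}, pushed to the extremal case $\tau = \deg(L) = 2g-2$. First I would recall that by Lemma \ref{LemmaGamma}, since $\deg(L) = 2g-2 > g-1$, poly-stability forces $\gamma^T\circ\gamma \in H^0(L^{-2}K^2)\setminus\{0\}$. But $\deg(L^{-2}K^2) = -2(2g-2) + 2(2g-2) = 0$, so a nonzero holomorphic section of a degree-zero line bundle is nowhere vanishing and hence exhibits an isomorphism $L^{-2}K^2 \cong \Oo$, i.e. $(L^{-1}K)^2 = \Oo$. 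This is the third assertion, and it already carries most of the content: $\gamma^T\circ\gamma$ is nowhere zero.

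Next I would deduce that $\gamma$ itself is nowhere vanishing. Interpreting $\gamma$ as a bundle map $LK^{-1}\to\Ww$, at any point $z$ where $\gamma$ vanishes the composition $\gamma^T\circ\gamma$ would also vanish, contradicting the previous paragraph. So $\gamma(z)\neq 0$ for all $z$, and $\gamma$ is a nowhere-vanishing section, equivalently a holomorphic line subbundle inclusion $LK^{-1}\hookrightarrow\Ww$ of degree $\deg(L)-(2g-2) = 0$ onto its image. Finally, nowhere isotropic: if $\gamma(z)$ were isotropic for the form $Q_W$ at some point, then revisiting the argument in the proof of Lemma \ref{LemmaGamma} — the image line subbundle $M \subset \Ww$ generated by $\gamma$ would be isotropic wherever this happens — one sees that $Q_W(\gamma,\gamma)$, which is exactly (a twist of) $\gamma^T\circ\gamma$, vanishes at $z$, again contradicting nowhere-vanishing of $\gamma^T\circ\gamma$. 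So $\gamma$ is nowhere isotropic as well.

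The key observation that makes all three statements collapse into one is the numerical coincidence $\deg(L^{-2}K^2)=0$ in the maximal case, which upgrades "$\gamma^T\circ\gamma\neq 0$" (a statement about a section being nonzero somewhere) to "$\gamma^T\circ\gamma$ is nowhere zero" (a pointwise statement), and the latter immediately propagates down to $\gamma$ via the factorization. I expect the main obstacle to be purely bookkeeping: being careful about the various twists by $K$ and by $L^{\pm 1}$ so that the identification "$Q_W(\gamma,\gamma) = \gamma^T\circ\gamma$ up to the canonical trivialization" is stated cleanly, and making sure the isotropy claim is phrased fiberwise (at a point $z$, $\gamma(z)\in\Ww_z\otimes K_z$ spans an isotropic line iff $Q_W$ evaluated on it vanishes) rather than as a statement about subbundles. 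No genuinely new idea beyond Lemma \ref{LemmaGamma} and Proposition \ref{Prop MW inequality} should be needed.
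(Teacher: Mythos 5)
Your proposal is correct and follows essentially the same route as the paper: invoke Lemma \ref{LemmaGamma} to get $\gamma^T\circ\gamma\neq 0$, note that $(L^{-1}K)^2$ has degree zero when $\deg(L)=2g-2$ so this section is nowhere vanishing and trivializes the bundle, and then read off the nowhere-vanishing and nowhere-isotropic properties of $\gamma$ pointwise. The extra care you take with the twists and the fiberwise phrasing of isotropy is exactly the bookkeeping the paper leaves implicit.
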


\begin{proof}
By Lemma \ref{LemmaGamma}, $\gamma^T\circ\gamma$ is a non-zero section of $(L^{-1}K)^2.$ For $\deg(L)=2g-2$, this implies $(L^{-1}K)^2=\Oo$, and thus $\gamma$ is nowhere vanishing and nowhere isotropic. 
\end{proof}

Since $(L K^{-1})^2=\Oo$,  $L K^{-1}$ is a holomorphic $\sO(1,\C)$-bundle. Hence, for maximal $\sSO_0(2,3)$-Higgs bundles, the first Stiefel-Whitney class of $LK^{-1}$ gives an additional topological invariant: 
\[sw_1 = sw_1(L K^{-1}) \in H^1(\Sigma, \Z_2) \simeq \Z_2^{2g}~.\]

\begin{Proposition}\label{splittingofWProp}
If $(L,(\Ww,Q_W),\beta,\gamma)$ is a maximal $\sSO_0(2,3)$-Higgs bundle, then $\gamma(L^{-1}K)\subset\Ww$ is an orthogonal subbundle. 
    If $\Ff = \gamma(L K^{-1})^\perp$, then $\Ww$ splits holomorphically as $\Ww=L K^{-1}\oplus \Ff$ and  $\det(\Ff)=L^{-1}K=L K^{-1}$. 
\end{Proposition}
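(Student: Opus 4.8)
The plan is to use the nowhere-vanishing, nowhere-isotropic section $\gamma$ constructed in Proposition \ref{Prop gamma not zero} to produce an orthogonal splitting of $\Ww$. First I would observe that since $\gamma:L\to\Ww\otimes K$ is a holomorphic bundle map that vanishes nowhere, its image is a holomorphic line subbundle of $\Ww\otimes K$, hence $\gamma(LK^{-1})$ is a holomorphic line subbundle of $\Ww$; since $\gamma$ is injective as a bundle map, this subbundle is isomorphic to $LK^{-1}$. Because $\gamma$ is nowhere isotropic, the restriction $Q_W|_{\gamma(LK^{-1})}$ is a nowhere-vanishing section of $S^2(\gamma(LK^{-1})^*)$, i.e.\ $\gamma(LK^{-1})$ is a nondegenerate (orthogonal) line subbundle of $(\Ww,Q_W)$.

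The key step is then the standard fact that a nondegenerate holomorphic subbundle of a holomorphic orthogonal bundle splits off orthogonally: setting $\Ff=\gamma(LK^{-1})^\perp$, the fiberwise orthogonal projection is holomorphic (the inverse of $Q_W|_{\gamma(LK^{-1})}$ is holomorphic since that form is nowhere degenerate), so $\Ww=\gamma(LK^{-1})\oplus\Ff$ holomorphically and orthogonally, with $Q_W$ restricting to a nondegenerate form on $\Ff$ as well. Identifying $\gamma(LK^{-1})\cong LK^{-1}$ gives the claimed holomorphic splitting $\Ww=LK^{-1}\oplus\Ff$.

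Finally, for the determinant computation I would take determinants of the orthogonal splitting: $\det(\Ww)=LK^{-1}\otimes\det(\Ff)$, and since $(L,(\Ww,Q_W),\beta,\gamma)$ is an $\sSO_0(2,3)$-Higgs bundle we have $\det(\Ww)=\Oo$, so $\det(\Ff)=(LK^{-1})^{-1}=L^{-1}K$. The identity $L^{-1}K=LK^{-1}$ is exactly the statement $(L^{-1}K)^2\cong\Oo$ established in Proposition \ref{Prop gamma not zero} (equivalently $(LK^{-1})^2\cong\Oo$), so $\det(\Ff)=L^{-1}K=LK^{-1}$ as asserted.

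I expect the only genuine point requiring care — rather than a real obstacle — is justifying that the orthogonal complement $\Ff$ is a holomorphic subbundle and that the splitting is holomorphic; this rests on the holomorphicity of the projection operator, which follows because $Q_W|_{\gamma(LK^{-1})}$ is holomorphic and nowhere vanishing (so its pointwise inverse is holomorphic), and because $Q_W$ itself is holomorphic. Everything else is bookkeeping with line bundles together with the already-proven facts about $\gamma$.
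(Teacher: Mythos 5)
Your proposal is correct and follows essentially the same route as the paper: both use Proposition \ref{Prop gamma not zero} to see that $\gamma(LK^{-1})$ is a nowhere-isotropic line subbundle on which $Q_W$ restricts nondegenerately, deduce the orthogonal holomorphic splitting, and read off $\det(\Ff)$ from $\det(\Ww)=\Oo$ together with $(LK^{-1})^2\cong\Oo$. Your version simply makes explicit the holomorphicity of the projection onto the nondegenerate subbundle, which the paper leaves implicit.
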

\begin{proof}
By Proposition \ref{Prop gamma not zero}, $\gamma(L K^{-1})$ is never isotropic, and hence the restriction of $Q_W$ there is non-degenerate. Thus $\Ww=\gamma(L K^{-1})\oplus\gamma(L K^{-1})^\perp$, and since $\det(L K^{-1}\oplus \Ff)=\Oo$ we have $\det(\Ff)=L^{-1}K.$
\end{proof}

The $\sO(2,\C)$-bundle $(\Ff,Q_F)$ determines the bundles $L$ and $(\Ww,Q_W)$.

\begin{Proposition}\label{Prop invariants}
The two topological invariants $sw_1, sw_2$ of the maximal $\sSO_0(2,3)$-Higgs bundle agree with the topological invariants of $(\Ff,Q_F)$:
$$sw_1(\Ff,Q_F) = sw_1(L K^{-1}) = sw_1, \hspace{0.5cm} sw_2(\Ff,Q_F) = sw_2(\Ww,Q_W) = sw_2.$$
\end{Proposition}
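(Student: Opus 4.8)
The plan is to derive both equalities directly from the orthogonal holomorphic splitting $\Ww = LK^{-1}\oplus\Ff$ supplied by Proposition~\ref{splittingofWProp}, in which $LK^{-1}$ is a rank-one (hence $\sO(1,\C)$-) orthogonal bundle, together with the Whitney sum formula \eqref{WhitneySum}.

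First I would treat $sw_1$. By definition $sw_1 = sw_1(LK^{-1})$, while the hypothesis $\det(\Ww)=\Oo$ forces $sw_1(\Ww,Q_W)=0$. Feeding the splitting into the first line of \eqref{WhitneySum} gives $0 = sw_1(LK^{-1}) + sw_1(\Ff,Q_F)$ in $H^1(\Sigma,\Z_2)$, hence $sw_1(\Ff,Q_F) = sw_1(LK^{-1}) = sw_1$. (Equivalently, $\det(\Ff)=L^{-1}K=LK^{-1}$, and $sw_1$ of an $\sO(2,\C)$-bundle equals $sw_1$ of its determinant line bundle.)

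Next I would treat $sw_2$. By definition $sw_2 = sw_2(\Ww,Q_W)$, and the second line of \eqref{WhitneySum} applied to $\Ww = LK^{-1}\oplus\Ff$ reads
\[ sw_2(\Ww,Q_W) = sw_2(LK^{-1}) + sw_2(\Ff,Q_F) + sw_1(LK^{-1})\wedge sw_1(\Ff,Q_F). \]
Here $sw_2(LK^{-1})=0$ since $LK^{-1}$ is a line bundle, and by the previous step the cross term equals $sw_1\wedge sw_1$. I would then use that $sw_1\wedge sw_1=0$ in $H^2(\Sigma,\Z_2)$, since the mod-$2$ intersection pairing on $H^1(\Sigma,\Z_2)$ is alternating ($S$ being orientable; equivalently $\mathrm{Sq}^1=0$ on $H^1(S;\Z_2)$). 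This leaves $sw_2 = sw_2(\Ww,Q_W) = sw_2(\Ff,Q_F)$, completing the proof.

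I do not anticipate any real obstacle: the statement is essentially a bookkeeping consequence of Proposition~\ref{splittingofWProp} and \eqref{WhitneySum}. The only step that uses anything beyond elementary manipulation of Stiefel-Whitney classes is the vanishing of $sw_1\wedge sw_1$, which rests on the orientability of $S$.
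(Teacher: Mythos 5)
Your proposal is correct and follows essentially the same route as the paper: both equalities come from the splitting $\Ww = LK^{-1}\oplus\Ff$ of Proposition \ref{splittingofWProp}, the Whitney sum formula \eqref{WhitneySum}, the vanishing of $sw_1(\Ww,Q_W)$, and the identity $sw_1\wedge sw_1=0$ in $H^2(\Sigma,\Z_2)$. The only difference is that you justify this last vanishing (via orientability of $S$), whereas the paper simply asserts it.
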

\begin{proof}
This follows from the Whitney sum formula \eqref{WhitneySum}. Since $(\Ww,Q_W)$ is an $\sSO(3,\C)$-bundle, we have $sw_1(\Ww,Q_W) = 0$, and hence \[sw_1(\Ff,Q_F) =  sw_1(L K^{-1})~.\] 
Since $sw_1(\Ff,Q_F) \wedge sw_1(L K^{-1}) = 0$, \eqref{WhitneySum} implies $sw_2(\Ww,Q_W) = sw_2(\Ff,Q_F)$.
\end{proof}

Let $\Mm_{sw_1}^{\mathrm{max}}(\sSO_0(2,3))$ be the subset of $\Mm^{\mathrm{max}}(\sSO_0(2,3))$ containing Higgs bundles with $sw_1(L K^{-1})=sw_1$, and $\Mm_{sw_1}^{\mathrm{max},sw_2}(\sSO_0(2,3))$ be the subset of $\Mm_{sw_1}^{\mathrm{max}}(\sSO_0(2,3))$ containing the Higgs bundles with $sw_2(W,Q_W)=sw_2$.

The orthogonal bundle $(\Ff,Q_F)$ defines a map to the space of $\sO(2,\C)$-bundles:
\[\xymatrix{\Mm^{\mathrm{max}}(\sSO_0(2,3)) \ar[r]& \Bb(\sO(2,\C))}~.\]
By Proposition \ref{Prop invariants}, this map sends $\Mm_{sw_1}^{\mathrm{max},sw_2}(\sSO_0(2,3))$ to $\Bb_{sw_1}^{sw_2}(\sO(2,\C))$.

For a poly-stable maximal $\sSO_0(2,3)$-Higgs bundle $(L,(\Ww,Q_W),\beta,\gamma)$, we can write the maps $\beta$ and $\gamma$ in terms of the decomposition $\Ww = L K^{-1} \oplus \Ff$. Since the holomorphic splitting of $\Ww$ was determined by the image of $\gamma,$ we can take $\gamma = \smtrx{1\\0}$. The map $\beta$ will be written as $\beta = \smtrx{q_2\\ \delta}$, where $q_2:L^{-1}\to L K^{-1} \otimes K$, is a quadratic differential and $\delta\in H^0(\Ff\otimes LK)=H^0(\Ff\otimes \det(\Ff)K^2)$. We have thus proven:

\begin{Proposition}
    A poly-stable maximal $\sSO_0(2,3)$-Higgs bundle is determined by the triple $((\Ff,Q_F),q_2,\delta)$, where $(\Ff,Q_F)$ is a holomorphic orthogonal bundle, $q_2 \in H^0(\Sigma, K^2)$ is a quadratic differential and $\delta\in H^0(\Ff \otimes \det(\Ff)K^2)$.
\end{Proposition}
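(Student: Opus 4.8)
The statement packages the structural results just obtained, Propositions~\ref{Prop gamma not zero} and~\ref{splittingofWProp}, into a bijective correspondence, so the plan is to carry out the extraction of the triple and then check that nothing is lost. Starting from a poly-stable maximal $\sSO_0(2,3)$-Higgs bundle $(L,(\Ww,Q_W),\beta,\gamma)$, Proposition~\ref{Prop gamma not zero} tells me that $\gamma$ is nowhere vanishing and nowhere isotropic and that $(L^{-1}K)^2\cong\Oo$; in particular $L^2\cong K^2$ canonically. Viewing $\gamma$ as a holomorphic inclusion $LK^{-1}\hookrightarrow\Ww$ and setting $\Ff=\gamma(LK^{-1})^\perp$, Proposition~\ref{splittingofWProp} gives the orthogonal splitting $\Ww=LK^{-1}\oplus\Ff$ with $\det(\Ff)=L^{-1}K=LK^{-1}$, so $(\Ff,Q_F)$ is a holomorphic $\sO(2,\C)$-bundle. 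This is the first piece of the triple.

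Next I would read off the Higgs field in the decomposition $\Ww=LK^{-1}\oplus\Ff$. Since $LK^{-1}\otimes K\cong L$, the map $\gamma:L\to\Ww\otimes K$ is, after fixing the splitting, the inclusion of the first factor, which I normalize to $\gamma=\smtrx{1\\0}$. Writing $\beta=\smtrx{q_2\\\delta}$ accordingly, the first component $q_2$ is a map $L^{-1}\to LK^{-1}\otimes K\cong L$, i.e.\ a section of $L^{2}\cong K^{2}$, a holomorphic quadratic differential; the second component $\delta$ lies in $H^0(L\otimes\Ff\otimes K)$, and since $\det(\Ff)^2\cong\Oo$ (it is an $\sO(2,\C)$-determinant) we have $L=K\det(\Ff)^{-1}=K\det(\Ff)$, whence $H^0(L\otimes\Ff\otimes K)=H^0(\Ff\otimes\det(\Ff)K^2)$. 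This yields the triple $((\Ff,Q_F),q_2,\delta)$ with the asserted types.

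Finally I would exhibit the inverse construction and check mutual invertibility. Given a triple $((\Ff,Q_F),q_2,\delta)$, set $L=K\det(\Ff)^{-1}$, note $(L^{-1}K)^2=\det(\Ff)^2\cong\Oo$, let $(\Ww,Q_W)=\bigl(LK^{-1}\oplus\Ff,\ Q_0\oplus Q_F\bigr)$ where $Q_0$ is the $\sO(1,\C)$-structure on $LK^{-1}=\det(\Ff)$ coming from $\det(\Ff)^2\cong\Oo$ (so $\det(\Ww)=\Oo$), and take $\gamma=\smtrx{1\\0}$, $\beta=\smtrx{q_2\\\delta}$. One checks directly that this is a maximal $\sSO_0(2,3)$-Higgs bundle in the sense of Definition~\ref{DEF of SO(2,3) Higgs bundle} whose associated triple is the given one, and conversely that applying this to the triple extracted from $(L,(\Ww,Q_W),\beta,\gamma)$ reproduces the original Higgs bundle up to isomorphism. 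The only point demanding care is this last bijectivity on isomorphism classes: I must verify that an isomorphism of maximal Higgs bundles necessarily carries $\gamma(LK^{-1})$ to $\gamma'(L'K^{-1})$ and hence restricts to an isomorphism of the orthogonal complements, so that the normalization $\gamma=\smtrx{1\\0}$ neither collapses distinct bundles nor identifies non-isomorphic triples — this is where tracking the residual gauge freedom is needed, but it follows once one notes that $\gamma(LK^{-1})\subset\Ww$ is intrinsically characterized as the image of $\gamma$.
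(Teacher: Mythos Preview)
Your proposal is correct and follows essentially the same route as the paper: extract the orthogonal complement $\Ff$ via Propositions~\ref{Prop gamma not zero} and~\ref{splittingofWProp}, normalize $\gamma=\smtrx{1\\0}$, and read off $\beta=\smtrx{q_2\\\delta}$ with the stated types. The paper's argument is in fact briefer---it stops after the extraction and declares the proposition proven---whereas you additionally sketch the inverse construction and the bijectivity on isomorphism classes; in the paper that last point is deferred to the subsequent Proposition~\ref{Prop gauge equivalent triples}.
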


\begin{Remark}\label{SL5 Higgs bundle for sw_1}
    The $\sSL(5,\C)$-Higgs bundle associated to a maximal $\sSO_0(2,3)$-Higgs bundle determined by a triple $((\Ff,Q_F),q_2,\delta)$ is

\[(\Ee, \Phi) = \left( \det(\Ff)K \oplus \det(\Ff)\oplus \Ff\oplus\det(\Ff)K^{-1}, \mtrx{0&q_2&\delta^T&0\\1&0&0&q_2\\0&0&0&\delta\\0&1&0&0} \right)~. \]
We find it helpful to think of such an object schematically as
    \begin{equation}\label{EQ schematic sw1 not 0}
        \xymatrix@R=0em{\det(\Ff)K\ar[r]_{\ 1}&\det(\Ff)\ar[r]_{1\ \ \ }\ar@/_1pc/[l]_{q_2}&\det(\Ff)K^{-1}\ar@/_1pc/[l]_{q_2}\ar[ddl]^{\delta}\\&\oplus&\\&\Ff\ar[uul]^{\delta^T}&}~~.
    \end{equation}
\end{Remark}

We first need to understand when two triples $((\Ff,Q_F),q_2,\delta)$, $((\Ff',Q_F),q_2',\delta')$ give rise to isomorphic Higgs bundles. In this notation, $\Ff$ and $\Ff'$ denote holomorphic structures $\bar\p_F$ and $\bar\p_{F}'$ on an underlying smooth orthogonal bundle $(F,Q_F)$ such that $Q_F$ is holomorphic with respect to both $\bar\p_F$ and $\bar\p_{F}'$. 
\begin{Proposition}\label{Prop gauge equivalent triples}
Let $\Ff$ and $\Ff'$ be two holomorphic structures on a smooth rank $2$ orthogonal bundle $(F,Q)$. Two triples $(\Ff,q_2,\delta)$, $(\Ff',q_2',\delta')$ give rise to $\sSO_0(2,3)$-Higgs bundles which are isomorphic if and only if $q_2=q_2'$ and there is a smooth gauge transformation $g\in\Gg_{\sO(2,\C)}(F,Q)$ such that $g\cdot \Ff\cdot g^{-1}=\Ff'$ and $\delta'=\det(g)\cdot g\cdot\delta.$
\end{Proposition}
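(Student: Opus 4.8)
The plan is to unwind the isomorphism relation on $\sSO_0(2,3)$-Higgs bundles (Definition \ref{DEF of SO(2,3) Higgs bundle}, via the associated $\sSL(5,\C)$-Higgs bundle in \eqref{SL(5,C) Higgs of SO(2,3) Higgs}) down to the level of the triples $((\Ff,Q_F),q_2,\delta)$, keeping track of exactly which gauge freedom survives after we have normalized the decomposition $\Ww = L K^{-1}\oplus\Ff$ and the Higgs field to the form $\gamma=\smtrx{1\\0}$, $\beta=\smtrx{q_2\\\delta}$. First I would recall that an isomorphism of $\sSO_0(2,3)$-Higgs bundles is an isomorphism of the underlying holomorphic $\sSO(2,\C)\times\sSO(3,\C)$-bundles intertwining the Higgs fields; on the bundle $\Vv = L\oplus L^{-1}$ the $\sSO(2,\C)$-gauge group is just $\C^*$ acting by $\mathrm{diag}(\mu,\mu^{-1})$ (see \eqref{EQ HO2 sw1=0}, since for maximal Higgs bundles $L K^{-1}$ has order two but $L$ itself has degree $2g-2\neq 0$, so $L^2\neq\Oo$), while on $\Ww$ we have the $\sO(3,\C)$-gauge group. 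The key structural input is Proposition \ref{Prop gamma not zero} and Proposition \ref{splittingofWProp}: the splitting $\Ww = \gamma(LK^{-1})\oplus\Ff$ is canonically determined by $\gamma$, so any Higgs bundle isomorphism must respect it, i.e. the $\sO(3,\C)$-gauge transformation is block-diagonal with respect to $LK^{-1}\oplus\Ff$.

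The heart of the argument is then a bookkeeping computation. Write a candidate isomorphism as $(\mu,\ \mathrm{diag}(\nu, g))$ with $\mu\in\C^*$ acting on $\Vv$, $\nu$ acting on the line bundle $LK^{-1}\subset\Ww$, and $g\in\Gg_{\sO(2,\C)}(F,Q)$ acting on $\Ff$. Intertwining with $\gamma=\smtrx{1\\0}$ forces $\nu$ to be determined by $\mu$ (essentially $\nu=\mu$, after identifying $LK^{-1}$ with $\det(\Ff)$; one also gets the constraint $\mu^{-1}\nu=1$ when $\gamma$ is normalized, since $\gamma$ lands in the first summand as the identity section), and intertwining with $\beta=\smtrx{q_2\\\delta}$ splits into two conditions: the $LK^{-1}$-component gives $q_2' = q_2$ after using that $q_2$ is a section of $(LK^{-1})\otimes L^{-1}\otimes K = K^2$ and the line-bundle scalars cancel (this is where the "$q_2 = q_2'$" in the statement comes from, and it is forced rather than assumed); the $\Ff$-component gives $\delta' = \det(g)\,g\,\delta$ — the factor $\det(g)$ entering because $\delta$ is valued in $\Ff\otimes\det(\Ff)K^2$ and $g$ acts on the $\det(\Ff)$ factor by $\det(g)$. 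Simultaneously the requirement that $g$ and $\nu$ be compatible with $\det(\Ww)=\Oo$ and with $Q_W$ pins things down so that there are no leftover continuous parameters beyond $g$ itself; in particular the holomorphicity condition $g\cdot\bar\p_F\cdot g^{-1} = \bar\p_F'$ is exactly the statement that $g$ carries $\Ff$ to $\Ff'$. The converse direction is a direct check: given such a $g$, the triple $(\mu,\mathrm{diag}(\nu,g))$ with the forced values of $\mu,\nu$ is an $\sSO_0(2,3)$-Higgs bundle isomorphism.

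The main obstacle I anticipate is the careful matching of line-bundle twists and the appearance of the $\det(g)$ factor: one has to be scrupulous about whether $g$ is a gauge transformation of the smooth orthogonal bundle $(F,Q_F)$ or of $\Ff\otimes\det(\Ff)K^2$, and about the identification $\det(\Ff)=LK^{-1}=L^{-1}K$ (Proposition \ref{splittingofWProp}) which makes the normalization $\gamma=\smtrx{1\\0}$ legitimate. A secondary subtlety is ruling out the possibility that the $\sO(3,\C)$-gauge transformation is \emph{off}-diagonal (swapping $LK^{-1}$ with a line in $\Ff$): this is excluded precisely because $\gamma(LK^{-1})$ is the distinguished non-isotropic line determined intrinsically by the non-vanishing, non-isotropic section $\gamma$ (Proposition \ref{Prop gamma not zero}), so its image is preserved by any isomorphism. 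Once these points are handled, the equivalence in the statement drops out; I would present the forward direction as the substantive computation and the converse as a one-line verification.
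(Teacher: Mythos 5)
Your proposal is correct and follows essentially the same route as the paper: reduce the isomorphism to a pair of gauge transformations on $\Vv$ and $\Ww$, show the $\sSO(3,\C)$-factor is block-diagonal for the splitting $\Ww=\det(\Ff)\oplus\Ff$, and read off $q_2'=q_2$ and $\delta'=\det(g)\,g\,\delta$ from the intertwining with $\smtrx{1&q_2\\0&\delta}$ together with orthogonality. The only (cosmetic) difference is that you obtain block-diagonality conceptually from the canonicity of the splitting determined by $\gamma$, whereas the paper derives the vanishing of the off-diagonal blocks directly from the matrix computation and the orthogonality of $g_2$; both yield the same final constraints $\lambda=\nu=\det(g)=\pm1$.
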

\begin{proof}
Let $I$ be the smooth bundle underlying $\Ii=\det(\Ff),$ and consider the two smooth gauge transformations 
\[g_1\in\Gg_{\sSO(2,\C)}(IK\oplus IK^{-1})\ \ \ \text{and}\ \ \ g_2\in \Gg_{\sSO(3,\C)}\left(\Lambda^2 F\oplus  F,\mtrx{1&\\&Q_F}\right)~.\]
If the $\sSO_0(2,3)$-Higgs bundles associated to $(\Ff,q_2,\delta)$ and $(\Ff',q_2',\delta')$ are isomorphic,
    \[g_2\cdot\mtrx{1&q_2\\0&\delta}g_1^{-1}=\mtrx{1&q_2'\\0&\delta'}~.\] 
    Write $g_1=\smtrx{\lambda&\\&\lambda^{-1}}$ and $g_2=\smtrx{a&b\\c&d}:\Lambda^2 F\oplus F\to \Lambda^2 F\oplus  F,$ and note that $g_2^T\smtrx{1&\\&Q_F}g_2=\smtrx{1&\\&Q_F}$.
    With the this decomposition we compute
    \[\mtrx{a&b\\c&d}\mtrx{1&q_2\\0&\delta}\mtrx{\lambda^{-1}&\\&\lambda}=\mtrx{\lambda^{-1} a&\lambda  aq_2+b\lambda\\c\lambda^{-1}&c\lambda q_2+d\lambda\delta}~.\]
    Thus, we have $c=0$ and $\lambda=a$. The orthogonality of $g_2$ implies $b=0$, $d^TQ_Fd=Q_F$ and $a=\det(d)=\pm1.$ Thus, we have $d\in\Gg_{\sO(2,\C)}(F,Q_F)$, $\lambda=\det(d)$ and
    \[g_2\cdot\mtrx{1&q_2\\0&\delta}\cdot g_1^{-1}=\mtrx{\lambda&\\&d}\mtrx{1&q_2\\0&\delta}\mtrx{\lambda^{-1}&\\&\lambda}=\mtrx{1&q_2\\0&\lambda d\delta}~.\] 
\end{proof}

\begin{Remark}
    When a triple $(\Ff,q_2,\delta)$ defines a poly-stable Higgs bundle, we will call it a \emph{poly-stable triple}. The moduli space of maximal $\sSO_0(2,3)$-Higgs bundles on $\Sigma$ can be described as
\[\Mm^{\mathrm{max}}(\sSO_0(2,3)) = \{ ((\Ff,Q_F),q_2,\delta) \ |\ \text{ poly-stable triples }\} / \sim~,\]
where two triples are equivalent if and only if the associated $\sSO_0(2,3)$-Higgs bundles are isomorphic.
The space $\Mm^{\mathrm{max}}(\sSO_0(2,3))$ can be further subdivided in the pieces $\Mm^{\mathrm{max},sw_2}_{sw_1}(\sSO_0(2,3))$ according to the Stiefel-Whitney classes of $(\Ff,Q_F)$. 
\end{Remark}

\subsection{The case $sw_1(\Ff,Q_F)=0$} \label{HB_vanishing_sw_1}
To determine when a triple $(\Ff,q_2,\delta)$ is poly-stable we start with the case $sw_1=0.$ For this case, $(\Ff,Q_F)$ reduces to an $\sSO(2,\C)$-bundle, hence $\det(\Ff)=L^{-1}K=\Oo$. 
As in Section \ref{orthogonal_bundles_nonvanishing}, there is a holomorphic line bundle $M\in\Pic(\Sigma)$ so that 
\[(\Ff,Q_F)=\left(M\oplus M^{-1},\mtrx{0&1\\1&0}\right).\]
The splitting of $\Ff$ gives a decomposition of the map $\delta$ 
\[\delta:=\mtrx{\nu\\\mu}:K^{-1}\to (M\oplus M^{-1})\otimes K~,\]
where $\nu\in H^0(\Sigma, M K^2)$ and $\mu\in H^0(\Sigma, M^{-1}K^2).$ 

A poly-stable maximal $\sSO_0(2,3)$-Higgs bundle with vanishing $sw_1$ is then determined by the tuple $(M,q_2,\mu,\nu)$, where $M\in\Pic(\Sigma)$ is a holomorphic line bundle, $q_2 \in H^0(\Sigma, K^2)$, $\nu\in H^0(\Sigma, M K^2)$ and $\mu\in H^0(\Sigma,  M^{-1}K^2).$
The $\sSO(5,\C)$-Higgs bundle $(\Ee, Q, \Phi)$ associated to a maximal $\sSO_0(2,3)$-Higgs bundle determined by a tuple $(M,q_2,\mu,\nu)$ is
\begin{equation} \label{SL5_vanishing}
 \left( M \oplus K \oplus \Oo \oplus K^{-1} \oplus M^{-1}, \mtrx{&&&&-1\\&&&1&\\&&-1&&\\&1&&&\\-1&&&&} ,
\mtrx{0&0&0&\nu&0\\\mu&0&q_2&0&\nu\\0&1&0&q_2&0\\0&0&1&0&0\\0&0&0&\mu&0} \right)~.
\end{equation}

\begin{Proposition}\label{Prop sw1=0 stability}
The $\sSO(5,\C)$-Higgs bundle $(\Ee,Q,\Phi)$ associated to $(M,\mu,\nu,q_2)$ is stable if and only if one of the following holds
\begin{enumerate}
\item $0<\deg(M)\leq 4g-4$ and $\mu \neq 0$,
\item $4-4g\leq\deg(M)< 0$ and $\nu \neq 0$,
\item $\deg(M) = 0$, $\mu\neq 0$ and $ \nu\neq0$~.
\end{enumerate}
The $\sSO_0(2,3)$-Higgs bundle determined by $(M,\mu,\nu,q_2)$ is poly-stable if and only if  the associated $\sSO(5,\C)$-Higgs bundle is stable or $\deg(M)=0$, $\mu=0$ and $\nu=0.$
\end{Proposition}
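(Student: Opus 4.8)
The plan is to work with the associated $\sSL(5,\C)$-Higgs bundle $(\Ee,\Phi)$ of \eqref{SL5_vanishing}: by Definition~\ref{DEF SLnC poly-stable is enough} the tuple $(M,\mu,\nu,q_2)$ is poly-stable iff $(\Ee,\Phi)$ is poly-stable as an $\sSL(5,\C)$-Higgs bundle, while $\sSO(5,\C)$-stability of $(\Ee,Q,\Phi)$ is tested only on $\Phi$-invariant \emph{isotropic} subbundles (Definition~\ref{DEF SOn stability}), which have rank $\le 2$. I will also use the isomorphism $(M,\mu,\nu,q_2)\mapsto(M^{-1},\nu,\mu,q_2)$ coming from the two $\sSO(2,\C)$-structures on $M\oplus M^{-1}$ (Section~\ref{orthogonal_bundles_vanishing}): it interchanges (1) and (2) and fixes (3), so it suffices to treat $\deg(M)\ge 0$, in which case ``(1), (2) or (3)'' just says ``$\mu\neq 0$, and in addition $\nu\neq 0$ when $\deg M=0$''. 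Note finally that $\mu\in H^0(M^{-1}K^2)$ forces $\mu=0$ once $\deg M>4g-4$, so the upper bounds $\deg M\le 4g-4$ in (1)--(2) are automatic. For the implication (stable $\Rightarrow$ conditions) I argue contrapositively with $\deg M\ge 0$: if the conditions fail then either $\mu=0$, in which case the first column of the matrix $\Phi$ in \eqref{SL5_vanishing} vanishes and $M\subset\Ee$ is a $\Phi$-invariant isotropic subbundle of degree $\deg M\ge 0$; or $\deg M=0$ and $\nu=0$, in which case the last column of $\Phi$ vanishes and $M^{-1}$ is a $\Phi$-invariant isotropic subbundle of degree $0$. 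Either way $(\Ee,Q,\Phi)$ is not stable.

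\textbf{The converse: conditions imply stability.}
Assume $\deg M\ge 0$ and $\mu\neq 0$ (and $\nu\neq 0$ if $\deg M=0$), and let $\mathcal F\subsetneq\Ee$ be a nonzero $\Phi$-invariant isotropic subbundle. The structural fact driving the argument is that the ``subdiagonal'' of $\Phi$, namely the chain of maps $M\to K\to\Oo\to K^{-1}\to M^{-1}$ with values $\mu,1,1,\mu$ (so nowhere vanishing because $\mu\neq0$), dominates the off-diagonal corrections $\nu$ and $q_2$. Writing the inclusion $\mathcal F\hookrightarrow\Ee$ through its five components and imposing $\Phi(\mathcal F)\subset\mathcal F\otimes K$, one shows: first, that the projection of $\mathcal F$ to the summand $M$ vanishes (if not, the nonzero $K$-component of $\Phi(\mathcal F)$ would force a nonzero $M^{-1}$-component, and no subbundle of $M\oplus K^{\pm1}\oplus M^{-1}$ with nonzero projection to both $M$ and $M^{-1}$ is isotropic); then, using the nowhere-vanishing map $K\to\Oo$ together with isotropy, that $\mathcal F$ lies inside a subbundle of negative degree -- concretely $\mathcal F\subset K^{-1}\oplus M^{-1}$, or $\mathcal F\subset M^{-1}\oplus\mathcal L$ with $\mathcal L\subset K\oplus\Oo\oplus K^{-1}$ an isotropic line forced by $\Phi$-invariance to have $\deg\mathcal L\le 2-2g$. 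In every case $\deg\mathcal F<0$, the only borderline possibility being $\deg M=0$ and $\mathcal F=M^{-1}$, which is excluded since $\nu\neq 0$ makes $M^{-1}$ non-$\Phi$-invariant. Hence $(\Ee,Q,\Phi)$ is stable.

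\textbf{The poly-stability clause.}
If $(\Ee,Q,\Phi)$ is $\sSO(5,\C)$-stable then it is poly-stable as an $\sSL(5,\C)$-Higgs bundle (the fact recalled just after Definition~\ref{DEF SLnC poly-stable is enough}), so the tuple is poly-stable. If $\deg M=0$ and $\mu=\nu=0$, then the first and last columns of $\Phi$ vanish and $(\Ee,\Phi)$ splits as a direct sum of Higgs bundles $(M,0)\oplus(K\oplus\Oo\oplus K^{-1},\Phi_0)\oplus(M^{-1},0)$; the middle factor is the stable $\sSO(3,\C)$-Higgs bundle with nowhere-vanishing subdiagonal (all its $\Phi_0$-invariant isotropic subbundles lie inside $K^{-1}$, of negative degree), so the three summands are poly-stable of degree $0$ and the sum is poly-stable. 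Conversely, suppose the tuple is poly-stable but $(\Ee,Q,\Phi)$ is not $\sSO(5,\C)$-stable; then there is a $\Phi$-invariant isotropic subbundle of non-negative degree, which by semistability has degree $0$ and must split off as a direct Higgs summand. By the contrapositive analysis this subbundle is $M$ with $\mu=0$ (and then $\deg M=0$ by semistability), or $M^{-1}$ with $\nu=0$ and $\deg M=0$; writing a putative $\Phi$-invariant complement to $M$ (resp. $M^{-1}$) as the graph of a morphism $\Ee/M\to M$ and solving the resulting equation shows the complement exists only if the remaining off-diagonal term, $\nu$ (resp. $\mu$), also vanishes. Thus $\deg M=0$ and $\mu=\nu=0$, which finishes the characterization.

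\textbf{Main obstacle.}
I expect the real work to be the converse ``conditions $\Rightarrow$ stable'': one must carry out the component-by-component propagation carefully for both $\rk(\mathcal F)=1$ and $\rk(\mathcal F)=2$, tracking how the nowhere-vanishing subdiagonal and the quadratic isotropy condition interact -- the term $q_2$ in particular produces a handful of extra subcases -- and the borderline $\deg M=0$ is exactly where the hypothesis $\nu\neq 0$ of (3) is indispensable. A secondary subtlety is the poly-stability converse, where one must verify that when exactly one of $\mu,\nu$ vanishes (with $\deg M=0$) the slope-zero invariant line subbundle genuinely fails to split off, i.e. compute the relevant obstruction to splitting the short exact sequence of Higgs bundles.
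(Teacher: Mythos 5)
Your overall architecture matches the paper's: both directions of the stability claim are settled by a componentwise analysis of $\Phi$-invariant isotropic subbundles of the explicit $\sSO(5,\C)$-Higgs bundle \eqref{SL5_vanishing}, split by rank, and the poly-stability clause is handled via the splitting $(M,0)\oplus(K\oplus\Oo\oplus K^{-1},\Phi_0)\oplus(M^{-1},0)$ exactly as in the paper. Your preliminary reduction to $\deg M\ge 0$ via $(M,\mu,\nu)\mapsto(M^{-1},\nu,\mu)$ is a harmless economy the paper does not bother with; the easy direction and the poly-stability discussion are fine in outline.

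The gap is in the converse, precisely where you locate ``the real work,'' and the specific route you propose would fail. You begin by killing the projection $a$ of $\mathcal{F}$ to $M$, arguing that $a\neq 0$ propagates along the subdiagonal to a nonzero $M^{-1}$-component and then contradicts isotropy. Two things break: (i) the row of $\Phi$ landing in the $K$-slot is $\mu a+\nu e$ plus a $q_2$-term, not $\mu a$ alone, so $a\neq 0$ does not force a nonzero $K$-component of $\Phi(\mathcal{F})$ --- cancellation against the $\nu$ and $q_2$ contributions is possible; and (ii) the claim that no isotropic subbundle projects nontrivially to both $M$ and $M^{-1}$ is false in general, since isotropy of a line with components $(a,b,c,d,e)$ reads $-ae+bd-c^2=0$ and permits $ae\neq 0$. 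The paper runs the propagation in the opposite order, which is what makes it airtight: first $d=0$, because $\deg\mathcal{F}\ge 0$ gives $H^0(\mathcal{F}^*K^{-1})=0$ (in the rank-two case after first disposing of a possible positive-degree line subbundle, so that $\mathcal{F}$ is semistable); then invariance forces $c=0$ and $b=0$, because once $d=0$ the rows of $\Phi$ landing in the $K^{-1}$- and $\Oo$-slots are exactly $c$ and $b$ with no competing terms; only then does isotropy collapse to $ae=0$, and the remaining row $\mu a+\nu e$ forces $a=0$ (when $\mu\neq0$) or $e=0$ (when $\nu\neq0$), leaving $\mathcal{F}$ inside $M$ or $M^{-1}$, where hypotheses (1)--(3) make the degree negative. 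If you reorganize your propagation in this order the proof closes; as written, your first step is unjustified.
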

\begin{proof}
The $\sSO(5,\C)$-Higgs bundle $(\Ee,Q,\Phi)$ associated to a tuple $(M,\mu,\nu,q_2)$ is given by \eqref{SL5_vanishing}. 
Recall from Definition \ref{DEF SOn stability} that  an $\sSO(5,\C)$-Higgs bundle is stable if and only if there are no nonnegative degree isotropic subbundles which are left invariant by the Higgs field. 
Suppose $\Vv\subset \Ee$ is an invariant isotropic subbundle with nonnegative degree. Denote the inclusion map by 
\[\smtrx{a\\b\\c\\d\\e}:\Vv\to M\oplus K\oplus \Oo\oplus K^{-1}\oplus M^{-1}~.\]
Note that since $\Vv$ is isotropic we have $-ae+bd-c^2=0$ and invariance is given by 
\[\Phi\smtrx{a\\b\\c\\d\\e}=\smtrx{\nu d\\\mu a+\nu e\\b+q_2c\\c\\\mu d}~.\]

First suppose $\rk(\Vv)=1.$ 
Since $\deg(\Vv)\geq 0$ we have $d=0$, and hence, by invariance, we must have $b=0$ and $c=0.$ Now, since $\Vv$ is isotropic, either $a=0$ or $e=0.$ 
Therefore, if $\deg(M)>0$ the $\sSO(5,\C)$-Higgs bundle is stable only if $\mu\neq0\in H^0(M^{-1}K^{2}).$ 
In particular, this gives a bound $0\leq \deg(M)\leq 4g-4.$ Similarly, if $\deg(M)<0$ the $\sSO(5,\C)$-Higgs bundle is stable if and only if $\nu\neq0$ and $4-4g\leq \deg(M)\leq 0.$ 
Finally, if $\deg(M)=0$ and either $\mu=0$ or $\nu=0,$ then the Higgs bundle is not stable since $M$ or $M^{-1}$ is an invariant isotropic line bundle of nonnegative degree. However, if $\mu\neq0$ and $\nu\neq0$, $(\Ee,Q,\Phi)$ is stable. 

Now suppose $\rk(\Vv)=2$ and $\Ee$ has no invariant positive degree invariant line subbundle.  
If $\Vv$ has a line subbundle $L$ of positive degree, then the restriction of the above map has $d=c=0$ and $e=0$ if     $\deg(M)\geq 0$ and $a=0$ if $\deg(M)\leq0$. 
Since $L$ is assumed to not be invariant we have $b\neq0$. But $b\neq0$ contradicts the fact that $\Phi(L)\subset\Vv$ is isotropic. 
Finally, suppose $\Vv$ has no positive degree line subbundles. In this case, $\Vv$ is a semi-stable vector bundle. Hence, $\Vv^*K^{-1}$ is a semi-stable vector bundle with nonpositive degree. This implies $H^0(\Vv^*\otimes K^{-1})=0,$ and thus $d=0.$ 
By invariance, we have $c=0$ and thus $b=0$. 
Since $\Vv$ is isotropic, $a=0$ or $e=0$. Thus, $(\Ee,Q,\Phi)$ has no non-negative degree rank two invariant isotropic subbundles.

To complete the proof, note that if $(\Ee,Q,\Phi)$ is a stable $\sSO(5,\C)$-Higgs bundle, then $(M,\mu,\nu,q_2)$ defines a poly-stable $\sSO_0(2,3)$-Higgs bundle. 
If $\deg(M)>0$ and $\mu=0$ or $\deg(M)<0$ and $\nu=0,$ then the associated $\sSL(5,\C)$-Higgs bundle is not poly-stable. Finally, if $\deg(M)=0$ and $\mu=0$ and $\nu\neq0$, then the associated $\sSL(5,\C)$-Higgs bundle is not poly-stable since $M$ defines a degree zero invariant subbundle with no invariant complement. Similarly, if $\nu=0$, then $M^{-1}$ is a degree zero invariant subbundle with no invariant complement. 
Thus, we conclude that the $\sSO_0(2,3)$-Higgs bundle determined by $(M,\mu,\nu,q_2)$ is poly-stable if and only if the associated $\sSO(5,\C)$-Higgs bundle is stable or $\deg(M)=0$, $\mu=0$ and $\nu=0.$
\end{proof}

By Proposition \ref{Prop gauge equivalent triples}, the $\sSO_0(2,3)$-Higgs bundles defined by $(M,\mu,\nu,q_2)$ and $(M',\mu',\nu',q_2)$ are isomorphic if and only if there is a smooth gauge transformation $g\in\Gg_{\sO(2,\C)}(M\oplus M^{-1},\smtrx{0&1\\1&0})$ such that $g\cdot M=M\in\Pic(\Sigma)$ or $g\cdot M=M^{-1}\in\Pic(\Sigma)$ and $g\cdot\smtrx{\nu\\\mu}=\smtrx{\nu'\\\mu'}.$ 
Thus, the number 
\[d = |\deg(M)|\in\Z \cap [0,4g-4]\] gives a new invariant to the maximal poly-stable $\sSO_0(2,3)$-Higgs bundles with $sw_1=0$. Let 
\[\Mm^{\mathrm{max}}_{0,d}(\sSO_0(2,3))\subset\Mm^{\mathrm{max}}_0(\sSO_0(2,3))\] denote the subspace of Higgs bundles determined by tuples $(M,\mu,\nu,q_2)$ such that $|\deg(M)|=d$.
This new invariant, only depends on $(\Ff,Q_F)\in\Bb(\sO(2,\C))$ and refines the second Stiefel-Whitney class: 
\[sw_2=d\ \ (\text{mod } \ 2)~.\]
We will see that all of these subspaces define connected components of $\Mm(\sSO_0(2,3))$. 
The orbifold points of $\Mm^\mathrm{max}_{0,d}(\sSO_0(2,3))$ are determined as follows.
\begin{Proposition}
    \label{Prop: smooth/orbifold in Md}
For $d>0$, the space $\Mm_{0,d}^{\max}(\sSO_0(2,3))$ is smooth. When $d=0$ the isomorphism class of the poly-stable $\sSO_0(2,3)$-Higgs bundle associated to a tuple $(M,\mu,\nu,q_2)$ is a 
\begin{itemize}
\item non-orbifold singularity if and only if $\mu=\nu=0$,
\item  $\Z_2$-orbifold singularity if and only $M=M^{-1}$, $\mu\neq0$ and $\mu=\lambda\nu$ for some $\lambda\in\C^*$,
\item  smooth point otherwise.
\end{itemize}
\end{Proposition}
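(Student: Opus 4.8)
The plan is to read off the local type of each point of $\Mm^{\mathrm{max}}_{0,d}(\sSO_0(2,3))$ from the size of its automorphism group, via Proposition \ref{Proposition: Orbifold points}. First I would specialize that proposition to $\sG=\sSO_0(2,3)$: this group is simple, and its complexification $\sSO(5,\C)$ has trivial center (indeed $\Zz(\sG_\C)=\ker(Ad\colon\sSO(2,\C)\times\sSO(3,\C)\to\sGL(\fm_\C))\cap\Zz(\sSO(2,\C)\times\sSO(3,\C))=\{1\}$). Hence, for a poly-stable $\sSO_0(2,3)$-Higgs bundle which is moreover \emph{stable} as an $\sSO(5,\C)$-Higgs bundle, $\Aut(\Pp,\varphi)$ is finite, and the point is smooth exactly when $\Aut(\Pp,\varphi)=\{1\}$ and is an orbifold point of type $\Aut(\Pp,\varphi)$ otherwise.

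Next I would make the computation of $\Aut$ explicit. Write the tuple in the form $((\Ff,Q_F),q_2,\delta)$ with $\Ff=M\oplus M^{-1}$, $Q_F=\smtrx{0&1\\1&0}$ and $\delta=\smtrx{\nu\\\mu}$. Taking $(\Ff',q_2',\delta')=(\Ff,q_2,\delta)$ in Proposition \ref{Prop gauge equivalent triples} identifies $\Aut$ of the associated Higgs bundle with
\[\bigl\{\,g\in\Hh_{\sO(2,\C)}(M\oplus M^{-1},\smtrx{0&1\\1&0})\ :\ \det(g)\,g\,\delta=\delta\,\bigr\}.\]
When $M^2\neq\Oo$, formula \eqref{EQ HO2 sw1=0} gives $\Hh_{\sO(2,\C)}=\{\smtrx{a&0\\0&a^{-1}}:a\in\C^*\}$, all of determinant $1$; poly-stability forces at least one of $\mu,\nu$ to be nonzero (Proposition \ref{Prop sw1=0 stability}), so $g\delta=\delta$ forces $a=1$ and $\Aut=\{1\}$. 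This already handles every poly-stable tuple with $d=|\deg M|>0$, which is automatically stable as an $\sSO(5,\C)$-Higgs bundle by cases (1)--(2) of Proposition \ref{Prop sw1=0 stability}; hence $\Mm^{\mathrm{max}}_{0,d}(\sSO_0(2,3))$ is smooth for $d>0$. It also handles the $d=0$, $M\neq M^{-1}$ subcase (necessarily with $\mu,\nu\neq0$ by Proposition \ref{Prop sw1=0 stability}(3), hence stable), which is therefore a smooth point.

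It remains to treat $d=0$ with $M^2=\Oo$, i.e. $M=M^{-1}$. If $\mu=\nu=0$, the $\sSO(5,\C)$-Higgs bundle \eqref{SL5_vanishing} is only strictly poly-stable — it is the orthogonal direct sum of $(M\oplus M^{-1},\smtrx{0&1\\1&0})$ with zero Higgs field and a rank-three Higgs bundle — so Proposition \ref{Proposition: Orbifold points} does not apply; nonetheless $\Aut$ contains $\Hh_{\sO(2,\C)}(M\oplus M^{-1},\smtrx{0&1\\1&0})$ acting on the first summand and trivially on the rest, which is positive-dimensional, so the point is a genuine, non-orbifold singularity. (This reasoning applies whenever $\mu=\nu=0$, regardless of $M$.) If instead $(\mu,\nu)\neq(0,0)$, then $\mu,\nu\in H^0(MK^2)\setminus\{0\}$ by Proposition \ref{Prop sw1=0 stability}(3) and the Higgs bundle is stable as an $\sSO(5,\C)$-Higgs bundle, so Proposition \ref{Proposition: Orbifold points} applies. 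By \eqref{EQ HO2 sw1=0 M=M^-1}, $\Hh_{\sO(2,\C)}=\{\smtrx{a&0\\0&a^{-1}}\}\cup\{\smtrx{0&b\\b^{-1}&0}\}$; the diagonal elements again force $a=1$, while an anti-diagonal element $g=\smtrx{0&b\\b^{-1}&0}$ has $\det(g)=-1$ and satisfies $\det(g)\,g\,\delta=\delta$ iff $\nu=-b\mu$, which has a solution $b\in\C^*$ precisely when $\mu=\lambda\nu$ for some $\lambda\in\C^*$. When such $b$ exists one checks $g^2=\mathrm{Id}$, so $\Aut\cong\Z_2$ and the point is a $\Z_2$-orbifold point; otherwise $\Aut=\{1\}$ and the point is smooth. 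This yields all three cases of the statement.

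The step I expect to be the main obstacle is the case $\mu=\nu=0$: there the Higgs bundle is only strictly poly-stable, so Proposition \ref{Proposition: Orbifold points} is silent, and one must argue separately — from the positive-dimensional stabilizer, or directly from the local model of $\Mm(\sSO_0(2,3))$ at a strictly poly-stable point — that such a point is a singularity that is not of orbifold type. The remaining cases reduce to bookkeeping with the explicit gauge groups computed in Section \ref{orth_gauge_transf}.
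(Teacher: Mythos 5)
Your proposal is correct and follows essentially the same route as the paper's proof: identify $\Aut$ with $\{g\in\Hh_{\sO(2,\C)}(M\oplus M^{-1}):\det(g)\,g\,\delta=\delta\}$ via Proposition \ref{Prop gauge equivalent triples}, use the explicit gauge groups \eqref{EQ HO2 sw1=0} and \eqref{EQ HO2 sw1=0 M=M^-1} together with the stability criterion of Proposition \ref{Prop sw1=0 stability}, and conclude via Proposition \ref{Proposition: Orbifold points}. Your explicit flag that the strictly poly-stable case $\mu=\nu=0$ falls outside the scope of Proposition \ref{Proposition: Orbifold points} and needs the positive-dimensional stabilizer argument is exactly the point the paper handles (somewhat more tersely) in the same way.
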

\begin{proof}
By Proposition \ref{Prop sw1=0 stability}, the $\sSO(5,\C)$-Higgs bundle given by a tuple $(M,\mu,\nu,q_2)$ is stable if and only if $d\neq0$ or $d=0$ and $\mu\neq0$ and $\nu\neq0.$ 
Thus, by Proposition \ref{Proposition: Orbifold points} the isomorphism class of such tuples define smooth and orbifold points of $\Mm_{0,d}^{\max}(\sSO_0(2,3))$. To determine the type of orbifold point we need to compute the automorphism group $\Aut(\Vv,\Ww,\eta)$ of the associated $\sSO_0(2,3)$-Higgs bundle.

By Proposition \ref{Prop gauge equivalent triples}, we need only consider how the holomorphic automorphism group $\Hh_{\sO(2,\C)}(M\oplus M^{-1})$ acts on the sections $\mu$ and $\nu.$ Recall from \eqref{EQ HO2 sw1=0} that, if $M\neq M^{-1}$, then the holomorphic gauge transformations  are given by 
\[g=\mtrx{\lambda&\\&\lambda^{-1}}: M\oplus M^{-1}\to M\oplus M^{-1}\]
for $\lambda\in\C^*$. We have $g\cdot\mtrx{\nu\\\mu}=\mtrx{\lambda&\\&\lambda^{-1}}\mtrx{\nu\\\mu}=\mtrx{\lambda\nu\\\lambda^{-1}\mu}~.$ Thus, by Proposition \ref{Prop gauge equivalent triples}, for $M\neq M^{-1}$, the automorphism group of the $\sSO_0(2,3)$-Higgs bundle associated to a tuple $(M,\mu,\nu,q_2)$ is trivial for $\mu\neq0$ or $\nu\neq0.$ In particular, for $d>0,$ the space $\Mm_{0,d}(\sSO_0(2,3))$ is smooth.

For $d=0$ and $M=M^{-1}\in\Pic^0(\Sigma)$, recall from \eqref{EQ HO2 sw1=0 M=M^-1} that $\Hh_{\sO(2,\C)}(M\oplus M^{-1})\cong\sO(2,\C)$ and we need to also consider holomorphic gauge transformation of the form 
\[g=\mtrx{&\lambda\\\lambda^{-1}&}: M\oplus M^{-1}\to M\oplus M^{-1}\]
for $\lambda\in\C^*.$ We have $g\cdot\mtrx{\nu\\\mu}=-\mtrx{&\lambda\\\lambda^{-1}&}\mtrx{\nu\\\mu}=\mtrx{-\lambda\mu\\-\lambda^{-1}\nu}.$ Thus, by Proposition \ref{Prop gauge equivalent triples}, if $\mu\neq0$ and $\nu\neq0$, the Higgs bundles associated to $(M,\mu,\nu,q_2)$ and $(M,\mu',\nu',q_2)$ are isomorphic if and only if $\nu=-\lambda\mu.$ In this case the automorphism group of the associated Higgs bundle is $\Z_2.$
On the other hand, if $\mu=\nu=0$, then the automorphism group of the associated Higgs bundle is not discrete, and thus a tuple $(M,0,0,q_2)$ defines a non-orbifold singularity.
\end{proof}

\begin{Remark}
For a geometric interpretation of the singular points of the subspace $\Mm_{0,0}(\sSO_0(2,3))$, see Proposition \ref{Prop Higgs reductions M0d}.
\end{Remark}

 \subsection{The case $sw_1(\Ff,Q_F)\neq0$}   \label{HB_non_vanishing_sw_1}
When $sw_1\neq0,$ the associated $\sSO(5,\C)$-Higgs bundle is always stable.
\begin{Proposition}\label{Prop sw1 not 0 stability}
    The $\sSO(5,\C)$-Higgs bundle associated to a triple $((\Ff,Q_F),q_2,\delta)$ with $sw_1(\Ff,Q_F)\neq0$ is stable.
\end{Proposition}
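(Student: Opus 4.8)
The plan is to prove the stronger statement that $(\Ee,Q,\Phi)$ is stable as an $\sSO(5,\C)$-Higgs bundle directly from Definition \ref{DEF SOn stability}. Since $\rk\Ee=5$, every isotropic subbundle has rank $\le 2$, so I must show there is no $\Phi$-invariant isotropic subbundle of nonnegative degree of rank $1$ or $2$. Throughout write $I:=\det(\Ff)=LK^{-1}$; by Proposition \ref{Prop gamma not zero} we have $I^2\cong\Oo$, and since $sw_1(\Ff,Q_F)\ne0$ it follows that $I\not\cong\Oo$, hence $\deg I=0$ and $H^0(I)=H^0(I^{-1})=0$.

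The first ingredient is that $\Ff$ is a \emph{stable} bundle of degree $0$: any line subbundle $\Nn\subseteq\Ff$ has $Q_F|_\Nn\in H^0(\Nn^{-2})$ nonzero by Proposition \ref{Prop: no isotropic subbundles}, so $\deg\Nn\le0$, and $\deg\Nn=0$ would produce an orthogonal splitting $\Ff=\Nn\oplus\Nn^{\perp}$ with $\det\Ff=\Oo$, contradicting $I\not\cong\Oo$. Stability of $\Ff$ then gives $\Hom(N,\Ff)=0$ for every line bundle $N$ of degree $\ge0$. Using the explicit model $(\Ee,Q,\Phi)$ of Remark \ref{SL5 Higgs bundle for sw_1}, in which $\Ee=IK\oplus I\oplus\Ff\oplus IK^{-1}$ and $Q$ pairs $IK$ with $IK^{-1}$, pairs $I$ with itself (by a nowhere-vanishing pairing $I\otimes I\to\Oo$), and pairs $\Ff$ with itself (by $Q_F$), I will then classify the saturated line subbundles $\Nn\subseteq\Ee$ with $\deg\Nn\ge0$: the components of $\Nn\hookrightarrow\Ee$ into $\Ff$ and into $IK^{-1}$ vanish (by stability of $\Ff$, respectively by negativity of $\deg(\Nn^{-1}IK^{-1})$), so $\Nn$ is either $IK$ itself, or else $\deg\Nn=0$ and $\Nn$ maps isomorphically onto the summand $I$, in which case $Q|_\Nn$ is the nowhere-vanishing form on $I$ and $\Nn$ is everywhere anisotropic.

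The rank-$1$ case is then immediate: for a $\Phi$-invariant saturated line subbundle $\Nn$ with $\deg\Nn\ge0$, the components of $\iota\colon\Nn\hookrightarrow\Ee$ into $\Ff$ and $IK^{-1}$ vanish, and writing $\Phi\circ\iota=(\iota\otimes\mathrm{id}_K)\circ\psi$ with $\psi\in H^0(K)$, the equation in the $IK^{-1}$-summand — where $\Phi$ acts by the identity $I\to IK^{-1}\otimes K$ — forces the $I$-component of $\iota$ to be $0$, whence the $I$-summand equation forces the $IK$-component to be $0$, so $\iota=0$, a contradiction.

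The main work is the rank-$2$ case, so let $\Vv\subseteq\Ee$ be $\Phi$-invariant, isotropic, of rank $2$ with $\deg\Vv\ge0$. I will split according to whether $\Vv$ has a sub-line-bundle of degree $\ge0$, using that a sub-line-bundle saturated in $\Vv$ is automatically saturated in $\Ee$. If it does, the classification says it is either everywhere anisotropic — forcing $\Vv$ itself to be non-isotropic — or it equals $IK$; but then $\Phi(IK)=I\otimes K$, so $\Phi$-invariance gives $I\subseteq\Vv$, hence $\Vv=IK\oplus I$, on which $Q$ restricts nondegenerately, again contradicting isotropy. If $\Vv$ has no sub-line-bundle of degree $\ge0$, then (as $\deg\Vv\ge0$) $\Vv$ is semistable of slope $\ge0$; this kills $\Hom(\Vv,IK^{-1})$, so $\Vv\subseteq IK\oplus I\oplus\Ff$, and examining the projection $\Vv\to\Ff$ shows its image cannot be a line bundle (stability of $\Ff$ versus semistability of $\Vv$) and cannot be all of $\Ff$ (otherwise $\Vv\cong\Ff$ is a graph over $\Ff$ and hence carries the nondegenerate $Q_F$), so the projection vanishes and $\Vv\subseteq IK\oplus I$ — which is incompatible with $\Phi$-invariance, since the identity entry $I\to IK^{-1}\otimes K$ of $\Phi$ would send a nonzero element of $\Vv$ outside $(IK\oplus I)\otimes K$. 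In every case we reach a contradiction, which proves stability. The only delicate points will be the saturation bookkeeping and, in the semistable subcase, combining the slope inequalities for $\Vv$ with stability of $\Ff$ and $I$ and the precise shape of $\Phi$; no input beyond the results already established in the excerpt is needed, and $q_2$ and $\delta$ play no role.
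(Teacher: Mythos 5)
Your overall strategy is the same as the paper's (a direct check that $(\Ee,Q,\Phi)$ has no $\Phi$-invariant isotropic subbundle of nonnegative degree), but there is a genuine error in your first ingredient: the claim that $(\Ff,Q_F)$ is a \emph{stable} bundle. Your argument for this is that a degree-zero line subbundle $\Nn\subseteq\Ff$ would give an orthogonal splitting $\Ff=\Nn\oplus\Nn^\perp$ with $\det\Ff=\Oo$. But the splitting only gives $\det\Ff=\Nn\otimes\Nn^\perp$ with $\Nn^2=(\Nn^\perp)^2=\Oo$, and since $sw_1(\Ff,Q_F)=sw_1(\Nn)+sw_1(\Nn^\perp)\neq0$ one has $\Nn\not\cong\Nn^\perp$, so $\det\Ff=\Nn\otimes\Nn^\perp$ is nontrivial --- no contradiction. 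Indeed such $\Ff$ exist and are exactly the decomposable bundles $\Ff=L_1\oplus L_2$ with $L_1^2=L_2^2=\Oo$, $L_1\neq L_2$, which the paper identifies as the $2^{2g-2}$ orbifold points of $\Bb_{sw_1}^{sw_2}(\sO(2,\C))$ (and which give the $\Z_2\oplus\Z_2$ orbifold points of Proposition \ref{Prop: smooth/orbifold in M sw1not0}); these $\Ff$ are only polystable of slope $0$. Consequently your derived statement ``$\Hom(N,\Ff)=0$ for every line bundle $N$ of degree $\geq 0$'' fails for $\deg N=0$ (e.g.\ $\Hom(L_1,\Ff)\neq0$), and with it your classification of degree-$\geq0$ saturated line subbundles of $\Ee$, which is then used in both the rank-$1$ and rank-$2$ cases. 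These are precisely the Higgs bundles the proposition must cover, so the gap is not cosmetic.

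The repair is to use $\Phi$-invariance \emph{before} trying to kill the $\Ff$-component, which is what the paper does. For a rank-$1$ invariant subbundle $\Nn=(a,b,c,d)\hookrightarrow IK\oplus I\oplus\Ff\oplus IK^{-1}$ of degree $\geq0$: the degree bound kills $d$, and the invariance equations $\Phi\circ\iota=\iota\otimes\psi$ read off the $IK^{-1}$- and $I$-summands force $b=0$ and then $a=0$, so $\Nn$ sits inside $\Ff$; isotropy in $\Ee$ then makes $\Nn$ an isotropic line subbundle of $(\Ff,Q_F)$, contradicting Proposition \ref{Prop: no isotropic subbundles}. (Your route gets $b=0$, $a=0$ correctly but then needs $c=0$ to conclude $\iota=0$, and that step is the one resting on the false stability of $\Ff$.) Similarly, in the rank-$2$ case invariance and the degree bounds force $\Vv=\Ff$, which is not isotropic. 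Your other ingredients (the degree estimate on line subbundles of $\Ff$, $I^2\cong\Oo$ with $I\not\cong\Oo$, $\Hom(\Vv,IK^{-1})=0$ for $\Vv$ semistable of slope $\geq0$) are correct; also note that in your ``graph over $\Ff$'' step the restriction of $Q$ to a subbundle of $IK\oplus I\oplus\Ff$ is $-b^2-Q_F(c,c)$, not just $-Q_F(c,c)$, so that step too needs the invariance argument rather than the quoted one.
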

\begin{proof}
    The proof is very similar to Proposition \ref{Prop sw1=0 stability}. Recall that the $\sSO(5,\C)$-Higgs bundle associated to a triple $((\Ff,Q_F),q_2,\delta)$ is given by
    \[(\Ee,Q,\Phi)=\left(K\Ii\oplus \Ii\oplus \Ff\oplus K^{-1}\Ii,\ \mtrx{0&0&0&1\\0&-1&0&0\\0&0&-Q_F&0\\1&0&0&0},\mtrx{0&q_2&\delta^T&0\\1&0&0&q_2\\0&0&0&\delta\\0&1&0&0} \right)~.\] 
    where $\Ii=\det(\Ff).$ We will show that $\Ee$ has no $\Phi$-invariant isotropic subbundles with non-negative degree. 

Suppose $L$ is an isotropic invariant line subbundle with non-negative degree. As in the proof of Proposition \ref{Prop sw1=0 stability}, $L$ must be an isotropic line subbundle of $(\Ff,Q_F).$ However, by Proposition \ref{Prop: no isotropic subbundles}, since $sw_1\neq0,$ $\Ff$ has no isotropic line subbundles. 
Again, as in the proof of Proposition \ref{Prop sw1=0 stability}, if $\Vv\subset\Ee$ is an isotropic rank two bundle with non-negative degree, then $\Vv= \Ff.$ But $\Ff$ is not isotropic, and we conclude that the $\sSO(5,\C)$-Higgs bundle $(\Ee,Q,\Phi)$ is stable. 
\end{proof}

As in Section \ref{Orthogonal_bundles}, for $sw_1\in H^1(\Sigma,\Z_2)\setminus \{0\}$ let  $\pi:\Sigma_{sw_1} \to \Sigma$ be the associated double cover and denote the covering involution by $\iota$. Let $K_{sw_1}$ be the canonical bundle of $\Sigma_{sw_1}$; since the covering is unramified, we have $\pi^*K=K_{sw_1}.$ Recall that 
\[\Prym(\Sigma_{sw_1})=\{M\in\Pic^0(\Sigma_{sw_1})~|~\iota^*M\cong M^{-1}\}~.\]

\begin{Proposition} 
\label{Prop: pullbackHiggsBundles} A poly-stable maximal $\sSO_0(2,3)$-Higgs bundle with non-vanishing $sw_1$ is determined by the tuple $(M,f,\mu,q_2)$, where
\[\xymatrix@R=0em@C=1em{M \in \Prym(\Sigma_{sw_1})~,&\mu\in H^0(\Sigma, M^{-1}K_{sw_1}^2)~,& q_2 \in H^0(\Sigma, K^2)~,\\\text{and}& f:M \to \iota^*M^{-1} \text{ is an isomorphism}~.}\]
Moreover, the covering map $\pi:\Sigma_{sw_1}\to\Sigma$ induces a pullback map
\begin{equation}\label{pullbackHiggsBundles}
    \pi^*: \xymatrix{\Mm^{\mathrm{max}}_{sw_1}(\Sigma,\sSO_0(2,3)) \ar[r]& \Mm^{\mathrm{max}}_{0,0}(\Sigma_{sw_1},\sSO_0(2,3))}~.
\end{equation}
\end{Proposition}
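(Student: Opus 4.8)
\emph{Proof strategy.} The plan is to combine the description of a poly-stable maximal $\sSO_0(2,3)$-Higgs bundle as a triple $((\Ff,Q_F),q_2,\delta)$ with $\delta\in H^0(\Sigma,\Ff\otimes\det(\Ff)K^2)$, established above, with the classification of rank-two orthogonal bundles with non-vanishing first Stiefel--Whitney class from Section~\ref{orthogonal_bundles_nonvanishing}. By Proposition~\ref{Prop sw1 not 0 stability} poly-stability is automatic once $sw_1(\Ff,Q_F)=sw_1\neq 0$, so there is no stability condition to track. Pulling $(\Ff,Q_F)$ back along $\pi:\Sigma_{sw_1}\to\Sigma$ kills $sw_1$, so by Section~\ref{orthogonal_bundles_nonvanishing} one has $(\pi^*\Ff,\pi^*Q_F)\cong\bigl(M\oplus M^{-1},\smtrx{0&1\\1&0}\bigr)$ for a line bundle $M\in\Prym(\Sigma_{sw_1})\subset\Pic^0(\Sigma_{sw_1})$, determined up to $M\leftrightarrow M^{-1}$; in particular $\pi^*\det(\Ff)=M\otimes M^{-1}=\Oo$. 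The bundle $(\Ff,Q_F)$ is then equivalent to $M$ together with the canonical descent isomorphism $\iota^*(\pi^*\Ff)\cong\pi^*\Ff$; in the splitting $M\oplus M^{-1}$ this isomorphism interchanges the two line subbundles, and compatibility with $Q_F$ and with $\iota^2=\mathrm{id}$ reduces it to a single isomorphism $f:M\to\iota^*M^{-1}$, whose existence is precisely the condition $M\in\Prym(\Sigma_{sw_1})$.

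Next I would pull back the Higgs field. Since $\pi^*\det(\Ff)=\Oo$, the bundle $\pi^*(\Ff\otimes\det(\Ff)K^2)$ splits as $MK_{sw_1}^2\oplus M^{-1}K_{sw_1}^2$, so $\pi^*\delta=\smtrx{\nu\\\mu}$ with $\nu\in H^0(MK_{sw_1}^2)$ and $\mu\in H^0(M^{-1}K_{sw_1}^2)$. As $\delta$ is defined on $\Sigma$, the section $\pi^*\delta$ is invariant under the descent isomorphism, which identifies $\nu$ with the image of $\iota^*\mu$ under the isomorphism induced by $f$; hence $\nu$ is determined by $(M,f,\mu)$, and the remaining data is exactly $(M,f,\mu,q_2)$, with $q_2\in H^0(\Sigma,K^2)$ unchanged. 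Conversely, from such a tuple one recovers $(\Ff,Q_F)$ by descent of $\bigl(M\oplus M^{-1},\smtrx{0&1\\1&0}\bigr)$ with the equivariant structure coming from $f$, recovers $\delta$ by descent of $\smtrx{\nu\\\mu}$ with $\nu$ built from $\mu$ and $f$, and keeps $q_2$; the resulting triple is poly-stable by Proposition~\ref{Prop sw1 not 0 stability}. This proves the first assertion; the precise description of which tuples give isomorphic Higgs bundles follows from Proposition~\ref{Prop gauge equivalent triples} together with the computation of $\mathcal{H}_{\sO(2,\C)}(\Vv,Q_V)$ for $sw_1\neq0$ in Section~\ref{orth_gauge_transf}, and is only needed afterwards.

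For the pullback map, given a maximal $\sSO_0(2,3)$-Higgs bundle on $\Sigma$ with $sw_1\neq0$ described by $(M,f,\mu,q_2)$, pulling back the whole triple $((\Ff,Q_F),q_2,\delta)$ to $\Sigma_{sw_1}$ yields the maximal $\sSO_0(2,3)$-Higgs bundle on $\Sigma_{sw_1}$ determined, in the $sw_1=0$ notation of Section~\ref{HB_vanishing_sw_1}, by the tuple $(M,\pi^*q_2,\mu,\nu)$; here $M\in\Prym(\Sigma_{sw_1})$ has degree zero, so $d=|\deg M|=0$. Since $f$ is an isomorphism one has $\nu=0$ if and only if $\mu=0$, so Proposition~\ref{Prop sw1=0 stability} shows this tuple is always poly-stable, hence defines a point of $\Mm^{\mathrm{max}}_{0,0}(\Sigma_{sw_1},\sSO_0(2,3))$; and functoriality of pullback makes the assignment well defined on isomorphism classes. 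I expect the main technical point to be the descent bookkeeping of the first two steps: pinning down the $\iota$-equivariant structure in terms of the isomorphism $f$, and verifying that $\iota$-invariance of $\pi^*\delta$ is exactly the relation expressing $\nu$ in terms of $\mu$ and $f$.
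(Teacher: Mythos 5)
Your proposal is correct and follows essentially the same route as the paper's proof: pull back $(\Ff,Q_F)$ and $\delta$ to the double cover, identify the descent datum of $\pi^*\Ff$ with the isomorphism $f:M\to\iota^*M^{-1}$, read off the $\iota$-invariance relation expressing $\nu$ in terms of $\mu$ and $f$, and invoke Proposition \ref{Prop sw1=0 stability} to see that the pullback lands in $\Mm^{\mathrm{max}}_{0,0}(\Sigma_{sw_1},\sSO_0(2,3))$. The only difference is that you spell out the converse (descent) direction more explicitly, which the paper leaves implicit.
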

\begin{proof}
Let $(\Ff,Q_F)$ be an orthogonal rank two bundle with nonzero first Stiefel-Whitney class $sw_1,$ and denote $\Ii=\det(\Ff).$ Then $\pi^*\Ii=\Oo$ and there is $M\in\Prym(\Sigma_{sw_1})$ such that 
\[(\pi^*\Ff,\pi^*Q_F)\cong \left(M\oplus M^{-1},\mtrx{0&1\\1&0}\right)~.\]
Note that $\pi^*\Ff$ is $\iota^*$-invariant: $\iota^*\pi^*\Ff \cong \pi^*\Ff$. Moreover, the natural projection $\pi^*\Ff \ra \Ff$ gives a choice of an isomorphism $\pi^*\Ff \ra \iota^*\pi^*\Ff$. When this isomorphism is restricted to $M$, we get an isomorphism $f:M \ra \iota^*M^{-1}$, and when it is restricted to $M^{-1}$, the isomorphism is $\iota^* f^{-1}: M^{-1} \ra \iota^*M$.

Recall that the maximal $\sSO_0(2,3)$-Higgs bundle associated to $((\Ff,Q_F),q_2,\delta)$ is given by 
\[(\Vv,Q_V,\Ww,Q_W,\eta)=\left(K\Ii\oplus K^{-1}\Ii,\mtrx{0&1\\1&0},\Ii\oplus \Ff,\mtrx{1&0\\0&Q_F},\mtrx{1&q_2\\0&\delta}\right)~.\]
We have 
        \[\pi^*(\Vv,Q_V,\Ww,Q_W)=\left(K_{sw_1}\oplus K_{sw_1}^{-1},\mtrx{0&1\\1&0},M\oplus \Oo\oplus M^{-1},\mtrx{0&0&1\\0&1&0\\1&0&0}\right)~.\]
Moreover, $\pi^*q_2\in H^0(\Sigma_{sw_1},K^2_{sw_1})$ and the decomposition of $\pi^*\Ff$ splits $\pi^*\delta$ as 
\[\pi^*\delta:=\mtrx{\nu\\\mu}:\xymatrix{K_{sw_1}^{-1}\ar[r]& (M\oplus M^{-1})\otimes K_{sw_1}}~,\]
where $\nu\in H^0(\Sigma_{sw_1}, M K_{sw_1}^2)$ and $\mu\in H^0(\Sigma_{sw_1}, M^{-1}K_{sw_1}^2).$ Also, since the pulled back objects are invariant under the covering involution and $\iota^*f:\iota^*M \to M^{-1},$ we have $\iota^*\nu\circ f  =\mu$. 
Thus, we have $\mu=0$ if and only if $\nu=0.$ 
By Proposition \ref{Prop sw1=0 stability}, the pulled back $\sSO_0(2,3)$-Higgs bundle is a maximal poly-stable $\sSO_0(2,3)$-Higgs bundle whose isomorphism class defines a point in $\Mm^{\mathrm{max}}_{0,0}(\Sigma_{sw_1},\sSO_0(2,3))$.
\end{proof}

\begin{Proposition}     \label{Prop: equivalence_nonvanishing}  
Two tuples $(M,f,\mu,q_2)$, $(M',f',\mu',q_2')$ give rise to isomorphic $\sSO_0(2,3)$-Higgs bundles if and only if one of the following holds:
\begin{enumerate}
\item $M'=M$, $q_2' = q_2$, $f'=\lambda^{-2} f$ and $\mu' = \lambda^{-1} \mu$ for $\lambda\in\C^*$, 
\item $M'= M^{-1}$, $q_2' = q_2$, $f'=\lambda^{-2} \iota^*f^{-1}$ and $\mu' = -\lambda^{-1} f^{-1}\circ \iota^*\mu$ for $\lambda\in \C^*$.
\end{enumerate}
\end{Proposition}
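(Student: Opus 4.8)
The plan is to reduce the statement about isomorphisms of $\sSO_0(2,3)$-Higgs bundles to the already-established Proposition \ref{Prop gauge equivalent triples}, and then to carry out the explicit computation of which smooth gauge transformations of the $\sO(2,\C)$-bundle $(F,Q_F)$ can occur, using the pullback description from Proposition \ref{Prop: pullbackHiggsBundles}. First I would recall that by Proposition \ref{Prop gauge equivalent triples}, the tuples $(M,f,\mu,q_2)$ and $(M',f',\mu',q_2')$ (which encode triples $(\Ff,q_2,\delta)$ and $(\Ff',q_2',\delta')$) give isomorphic Higgs bundles if and only if $q_2=q_2'$ and there is a smooth gauge transformation $g\in\Gg_{\sO(2,\C)}(F,Q_F)$ with $g\cdot\bar\p_F\cdot g^{-1}=\bar\p_F'$ and $\delta'=\det(g)\,g\,\delta$. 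So it remains to translate the conditions ``$g\cdot\Ff=\Ff'$'' and ``$\delta'=\det(g)g\delta$'' into conditions on the Prym data $(M,f,\mu)$.

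**Key steps.** The computation runs on the double cover $\Sigma_{sw_1}$. Pulling back via $\pi$, the $\sO(2,\C)$-bundle $(\Ff,Q_F)$ becomes $(M\oplus M^{-1},\smtrx{0&1\\1&0})$ with $M\in\Prym(\Sigma_{sw_1})$, and the descent datum is exactly the isomorphism $f:M\to\iota^*M^{-1}$; similarly $\delta$ pulls back to $\smtrx{\nu\\\mu}$ with $\iota^*\nu\circ f=\mu$ (as in the proof of Proposition \ref{Prop: pullbackHiggsBundles}). A smooth gauge transformation $g$ of $(\Ff,Q_F)$ pulls back to an $\iota$-invariant gauge transformation of $(M\oplus M^{-1},\smtrx{0&1\\1&0})$. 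Now I would invoke the analysis from Section \ref{orth_gauge_transf}: the $\iota$-invariance condition forces the matrix $\smtrx{a&b\\c&d}$ to satisfy $a=d$, $b=c$ together with the orthogonality constraint — but here we must be careful, because we are classifying \emph{when two possibly different holomorphic structures} are related, not automorphisms of a fixed one, so $g$ need only be \emph{smooth} (not holomorphic), and the holomorphic-structure condition $g\cdot\bar\p_F\cdot g^{-1}=\bar\p_F'$ is what pins down how $M'$ relates to $M$. Concretely: either $g$ is ``diagonal'' on $M\oplus M^{-1}$, in which case $M'=M$, $f'$ differs from $f$ by the square of the scalar (since $f$ is built from the identification $\pi^*\Ff\to\iota^*\pi^*\Ff$ and rescaling $M$ by $\lambda$ rescales $f$ by $\lambda^{-2}$), and $\mu'=\lambda^{-1}\mu$; or $g$ is ``off-diagonal'', swapping $M$ and $M^{-1}$, in which case $M'=M^{-1}$, $f'=\lambda^{-2}\iota^*f^{-1}$, and $\mu'=-\lambda^{-1}f^{-1}\circ\iota^*\mu$, where the minus sign comes (exactly as in the displayed off-diagonal computation $g\cdot\smtrx{\nu\\\mu}=-\smtrx{0&\lambda\\\lambda^{-1}&0}\smtrx{\nu\\\mu}$ in the proof of Proposition \ref{Prop: smooth/orbifold in Md}) from the $\det(g)=-1$ factor in $\delta'=\det(g)g\delta$. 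Finally I would note $q_2'=q_2$ in both cases, directly from Proposition \ref{Prop gauge equivalent triples}. Checking that conversely each of these two families of relations does produce an isomorphism is the reverse of the same computation.

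**Main obstacle.** The delicate point is bookkeeping the descent data $f$ correctly: one must verify that a rescaling $M\xrightarrow{\lambda} M$ (covering the identity on $\Sigma$) transforms the gluing isomorphism $f:M\to\iota^*M^{-1}$ by the factor $\lambda^{-2}$ rather than $\lambda^{-1}$ or $\lambda$, and likewise track how the off-diagonal swap conjugates $f$ into $\lambda^{-2}\iota^*f^{-1}$ and drags $\mu$ through $f^{-1}\circ\iota^*(-)$. This is purely a matter of carefully composing the isomorphisms $\pi^*\Ff\to\Ff$ and $\pi^*\Ff\to\iota^*\pi^*\Ff$ with the scalar, but it is where sign and exponent errors would creep in; the rest reduces to the $\sO(2,\C)$ gauge-group computation already done in Section \ref{orth_gauge_transf}. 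I would also make sure the argument does not secretly assume $M^2=\Oo$ (the orbifold locus), since the statement is meant to hold for all $M\in\Prym(\Sigma_{sw_1})$: for generic $M$ only the scalar gauge transformations above occur, and the two listed cases exhaust them.
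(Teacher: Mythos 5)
Your proposal is correct and follows essentially the same route as the paper: the paper's (very terse) proof likewise pulls the two Higgs bundles back to $\Sigma_{sw_1}$, reduces to the diagonal and off-diagonal $\C^*$-gauge transformations computed in the proof of Proposition \ref{Prop: smooth/orbifold in Md} (which itself rests on Proposition \ref{Prop gauge equivalent triples}), and then tracks their effect on $\mu$ and on the descent datum $f$. Your extra care about the $\lambda^{-2}$ scaling of $f$ and the sign from $\det(g)=-1$ is exactly the computation the paper leaves implicit.
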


\begin{proof}
The two Higgs bundles on $\Sigma$ are isomorphic if and only if their pullbacks to $\Sigma_{sw_1}$ are isomorphic via a gauge transformation which is invariant under the covering involution. Thus, we can apply Proposition \ref{Prop: smooth/orbifold in Md}, and compute how the gauge transformations act on $\mu$ and $f$.
\end{proof}

By Proposition \ref{Prop sw1 not 0 stability}, all tuples $(M,f,\mu,q_2)$ from Proposition \ref{Prop: pullbackHiggsBundles} define poly-stable $\sSO_0(2,3)$-Higgs bundles on $\Sigma$ whose associated $\sSO(5,\C)$-Higgs bundle is stable. Thus, all points of $\Mm_{sw_1}^{sw_2,\mathrm{max}}(\sSO_0(2,3))$ are smooth or orbifold points.
Using Proposition \ref{Proposition: Orbifold points}, we have the following.
\begin{Proposition}
\label{Prop: smooth/orbifold in M sw1not0} 
The singularities of  $\Mm^{sw_2,\mathrm{max}}_{sw_1}(\sSO_0(2,3))$ are all orbifold singularities. Moreover,  the poly-stable $\sSO_0(2,3)$-Higgs bundle associated to a tuple $(M,f,\mu,q_2)$ defines a
    \begin{itemize}
        \item $\Z_2\oplus\Z_2$ orbifold point if $M=M^{-1}$ and $\mu=0,$ 
        \item $\Z_2$ orbifold point if $M=M^{-1}$, $\mu\neq0,$ $f=\lambda^{-2}\iota^* f^{-1}$ and $\mu = -\lambda \iota^*(f\mu)$, for some $\lambda \in \C^*$. 
        \item $\Z_2$ orbifold point if $M\neq M^{-1}$ and $\mu=0,$
        \item  smooth point otherwise.
    \end{itemize} 
\end{Proposition}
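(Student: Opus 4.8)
The plan is to apply Proposition~\ref{Proposition: Orbifold points}. Proposition~\ref{Prop sw1 not 0 stability} shows that the $\sSO(5,\C)$-Higgs bundle attached to any triple $((\Ff,Q_F),q_2,\delta)$ with $sw_1(\Ff,Q_F)\neq0$ is stable; since $\sSO_0(2,3)$ is simple and $\Zz(\sSO(5,\C))=\{1\}$, Proposition~\ref{Proposition: Orbifold points} then says that every point of $\Mm^{sw_2,\mathrm{max}}_{sw_1}(\sSO_0(2,3))$ is an orbifold point whose isotropy group is the automorphism group $\Aut$ of the associated $\sSO_0(2,3)$-Higgs bundle, and that the point is smooth exactly when $\Aut$ is trivial. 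So the whole proof reduces to computing $\Aut$.

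For this I would use Proposition~\ref{Prop gauge equivalent triples}: a holomorphic automorphism of the $\sSO_0(2,3)$-Higgs bundle attached to $((\Ff,Q_F),q_2,\delta)$ is exactly a $g\in\Hh_{\sO(2,\C)}(\Ff,Q_F)$ with $\det(g)\,g\cdot\delta=\delta$, so $\Aut$ does not see $q_2$ and sits inside $\Hh_{\sO(2,\C)}(\Ff,Q_F)$. Since $sw_1(\Ff,Q_F)\neq0$, Section~\ref{orth_gauge_transf} identifies this group: it is $\Z_2\oplus\Z_2=\{\pm\mathrm{Id},s_+,s_-\}$ when $M\cong M^{-1}$ (the orthogonally split case), where $s_\pm$ are involutions with $\det(s_\pm)=-1$ and $s_-=-s_+$, and it is $\{\pm\mathrm{Id}\}\cong\Z_2$ when $M\not\cong M^{-1}$. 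Now I would split into cases. If $\mu=0$, then $\nu=0$ by Proposition~\ref{Prop: pullbackHiggsBundles}, hence $\delta=0$ and the constraint $\det(g)\,g\cdot\delta=\delta$ is vacuous, so $\Aut=\Hh_{\sO(2,\C)}(\Ff,Q_F)$, giving $\Z_2\oplus\Z_2$ if $M\cong M^{-1}$ and $\Z_2$ if $M\not\cong M^{-1}$. If $\mu\neq0$, then $\delta\neq0$, and since $\det(\pm\mathrm{Id})=1$ the element $-\mathrm{Id}$ acts by $-1$ on $\delta$ and is excluded; hence $\Aut=\{\mathrm{Id}\}$ (a smooth point) when $M\not\cong M^{-1}$, while for $M\cong M^{-1}$ one has $\det(s_\pm)\,s_\pm\cdot\delta=\mp\,s_+\cdot\delta$, so at most one of $s_+,s_-$ fixes $\delta$, and rewriting $\det(g)\,g\cdot\delta=\delta$ in terms of $(f,\mu)$ via the relation $\iota^*\nu\circ f=\mu$ produces the stated condition $f=\lambda^{-2}\iota^*f^{-1}$, $\mu=-\lambda\,\iota^*(f\mu)$; thus $\Aut\cong\Z_2$ exactly when this is solvable for some $\lambda\in\C^*$, and $\Aut$ is trivial otherwise.

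The argument is mostly bookkeeping, and the essential inputs — the stability in Proposition~\ref{Prop sw1 not 0 stability} and the finiteness of $\Hh_{\sO(2,\C)}(\Ff,Q_F)$ from Section~\ref{orth_gauge_transf} — are already available, so I expect no real obstacle. The points that do require care are: checking that $\Zz(\sSO(5,\C))$ is trivial (equivalently, that $\ker(Ad\colon\sH_\C\to\sGL(\fm_\C))\cap Z(\sH_\C)$ is trivial), so that the orbifold type is $\Aut$ itself and not a proper quotient; checking that $-\mathrm{Id}_\Ff$, and the $s_\pm$ when $\delta=0$, genuinely extend to automorphisms of the whole $\sSO_0(2,3)$-Higgs bundle — here one uses that $\delta=0$ whenever $\mu=0$, so the Higgs field has no entries along $\Ff$ and an automorphism supported on $\Ff$ automatically commutes with it; and reconciling the $\lambda\in\C^*$ bookkeeping of Proposition~\ref{Prop: equivalence_nonvanishing} with the intrinsic conditions in the statement.
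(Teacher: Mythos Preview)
Your proposal is correct and follows essentially the same approach as the paper: both reduce, via Proposition~\ref{Proposition: Orbifold points} and Proposition~\ref{Prop sw1 not 0 stability}, to computing the automorphism group inside $\Hh_{\sO(2,\C)}(\Ff,Q_F)$, and then do the same case analysis. The paper's proof is simply terser, packaging the gauge-group computation into a reference to Proposition~\ref{Prop: equivalence_nonvanishing} (setting $(M',f',\mu',q_2')=(M,f,\mu,q_2)$ there) rather than invoking Section~\ref{orth_gauge_transf} and Proposition~\ref{Prop gauge equivalent triples} directly as you do.
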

\begin{proof}
We need to check which of the gauge transformations described in Proposition \ref{Prop: smooth/orbifold in Md} act trivially on the Higgs bundle described by a tuple $(M,f,\mu,q_2)$. When $M= M^{-1}$, the first two points follow from item $(2)$ of Proposition \ref{Prop: equivalence_nonvanishing}. When $M\neq M^{-1}$ the last two points follow from item $(1)$ of Proposition \ref{Prop: equivalence_nonvanishing}.

\end{proof}

\begin{Remark}
For a geometric interpretation of the singular points of the subspace $\Mm^{sw_2,\max}_{sw_1}(\sSO_0(2,3))$, see Proposition \ref{Prop Higgs reduction sw1not0}.
\end{Remark}

\subsection{Parameterizing the components $\Mm^{\textmd{max}}_{0,d}(\sSO_0(2,3))$}  \label{parameterizing_vanishing}
We start by parameterizing the components $\Mm^{\textmd{max}}_{0,d}(\sSO_0(2,3))$ for $d>0.$

\begin{Theorem}\label{THM d>0}
    For $0<d\leq 4g-4$, the space $\Mm^{\mathrm{max}}_{0,d}(\sSO_0(2,3))$ is diffeomorphic to the product $\Ff_d \times H^0(\Sigma, K^2)$, where $H^0(\Sigma, K^2)$ is the space of holomorphic quadratic differentials on $\Sigma$ and $\Ff_d$ is a rank $3g-3+d$ holomorphic vector bundle over the $(4g-4-d)^{th}$-symmetric product $\Sym^{4g-4-d}(\Sigma)$ of $\Sigma$. 
\end{Theorem}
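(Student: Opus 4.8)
The starting point is the explicit description, obtained in Proposition~\ref{Prop sw1=0 stability} and the discussion preceding Proposition~\ref{Prop: smooth/orbifold in Md}, of a maximal $\sSO_0(2,3)$-Higgs bundle with $sw_1 = 0$ and $|\deg(M)| = d > 0$ as a tuple $(M,\mu,\nu,q_2)$. Since the quadratic differential $q_2 \in H^0(\Sigma,K^2)$ plays no role in the stability condition and is acted on trivially by all relevant gauge transformations (Proposition~\ref{Prop gauge equivalent triples}), it splits off as a direct factor $H^0(\Sigma,K^2)$, and the whole problem reduces to parameterizing the tuples $(M,\mu,\nu)$ modulo equivalence. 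By Remark~\ref{only Consider Positive Toledo Remark} and the symmetry $(M,\mu,\nu) \leftrightarrow (M^{-1},\nu,\mu)$, I may assume $\deg(M) = d > 0$; then Proposition~\ref{Prop sw1=0 stability} says poly-stability is equivalent to $\mu \in H^0(\Sigma, M^{-1}K^2) \setminus \{0\}$, with $\nu \in H^0(\Sigma, M K^2)$ arbitrary, and there is no residual identification on $M \in \Pic^d(\Sigma)$ (since for $d>0$ the bundle $M$ cannot be isomorphic to $M^{-1}$ as elements of $\Pic$, and the holomorphic gauge group is just $\sSO(2,\C) = \C^*$).

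First I would build the map to $\Sym^{4g-4-d}(\Sigma)$. A nonzero section $\mu \in H^0(\Sigma, M^{-1}K^2)$ vanishes on an effective divisor $D_\mu$ of degree $\deg(M^{-1}K^2) = 4g-4-d$; sending $(M,\mu,\nu)$ to $D_\mu \in \Sym^{4g-4-d}(\Sigma)$ is well defined because rescaling $\mu$ by $\lambda \in \C^*$ (the only gauge freedom) does not change its divisor. Conversely, the Abel–Jacobi picture recovers $M$: the line bundle $M^{-1}K^2$ is $\mathcal{O}(D_\mu)$, so $M = K^2 \otimes \mathcal{O}(-D_\mu)$ is determined by $D_\mu$, and $\mu$ is then the tautological section of $\mathcal{O}(D_\mu)$, unique up to scale. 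This exhibits the pair $(M,\mu)$ modulo scaling as exactly the data of a point of $\Sym^{4g-4-d}(\Sigma)$; more precisely one gets a universal line bundle over $\Sym^{4g-4-d}(\Sigma) \times \Sigma$ together with its universal section, and $\Sym^{4g-4-d}(\Sigma)$ is the base. (One should note $4g-4-d \geq 0$ exactly when $d \leq 4g-4$, consistent with the statement; the case $d = 4g-4$ gives $\Sym^0 = \mathrm{pt}$, recovering the Hitchin component.)

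Next I would identify the fiber over a fixed $D_\mu$. With $(M,\mu)$ pinned down, the remaining datum is $\nu \in H^0(\Sigma, MK^2) = H^0(\Sigma, K^4 \otimes \mathcal{O}(-D_\mu))$, which is a vector space; its dimension is computed by Riemann–Roch: $\deg(MK^2) = d + 4g-4 \geq 4g-4 > 2g-2$, so $H^1$ vanishes and $h^0(MK^2) = (d+4g-4) - g + 1 = 3g-3+d$. As $D_\mu$ varies these spaces fit together into a holomorphic vector bundle $\Ff_d \to \Sym^{4g-4-d}(\Sigma)$ of rank $3g-3+d$, constructed as the pushforward to $\Sym^{4g-4-d}(\Sigma)$ of (the $K^4$-twist of the dual of) the universal ideal sheaf on $\Sym^{4g-4-d}(\Sigma)\times\Sigma$; since the rank is constant, this pushforward is locally free. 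Putting the three pieces together gives a bijection between $\Mm^{\mathrm{max}}_{0,d}(\sSO_0(2,3))$ and $\Ff_d \times H^0(\Sigma,K^2)$.

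Finally I would upgrade this bijection to a diffeomorphism. The cleanest route is to observe that for $d > 0$ the moduli space $\Mm^{\mathrm{max}}_{0,d}(\sSO_0(2,3))$ is smooth (Proposition~\ref{Prop: smooth/orbifold in Md}), so it suffices to check that the map and its inverse are morphisms of smooth (indeed complex-analytic) spaces, which follows from the fact that the construction is carried out via universal families over $\Sym^{4g-4-d}(\Sigma)\times\Sigma$ and standard base-change/semicontinuity, together with the universal property of the Higgs-bundle moduli space. \textbf{The main obstacle} I anticipate is not any single deep step but rather the careful bookkeeping of universal objects: one must produce a genuine family of poly-stable Higgs bundles over $\Ff_d \times H^0(\Sigma,K^2)$ (so as to invoke the universal property of $\Mm$), verify that the two natural maps are mutually inverse at the level of families and not just points, and confirm local freeness of the pushforward sheaf defining $\Ff_d$ — i.e. checking that $h^0(MK^2)$ is genuinely constant in families, which is exactly the Riemann–Roch computation above showing $H^1(MK^2) = 0$ throughout.
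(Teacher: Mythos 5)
Your proposal is correct and follows essentially the same route as the paper: split off $q_2$, reduce to $\C^*$-equivalence classes of tuples $(M,\mu,\nu)$ with $\mu\neq 0$, map to $\Sym^{4g-4-d}(\Sigma)$ via the divisor of $\mu$, and identify the fiber with $H^0(\Sigma,\Oo(D)^{-1}K^4)\cong\C^{3g-3+d}$ by the same Riemann--Roch vanishing. The only difference is that you spell out the universal-family and base-change bookkeeping that the paper leaves implicit.
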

\begin{proof}
    By Proposition \ref{Prop: smooth/orbifold in Md}, when $0<d\leq 4g-4,$  the space $\Mm_{0,d}^{\mathrm{max}}(\sSO_0(2,3))$ is smooth.
    Define the space 
    \[\widehat\Ff_d=\{(M,\mu,\nu)~|~ M\in\Pic^d(\Sigma),\ \mu\in H^0(M^{-1}K^2)\setminus 0,\ \nu\in H^0(M K^2)\}~.\]
    In Section \ref{HB_vanishing_sw_1} we described a surjective map 
    \[\widehat\Psi:\xymatrix{\widehat\Ff_d\times H^0(K^2)\ar[r]&\Mm_{0,d}^{\mathrm{max}}(\sSO_0(2,3))}~.\] 
    There is an action of $\C^*$ on $\widehat\Ff_d$ given by  $\lambda \cdot (M,\mu,\nu) =  (M,\lambda\mu,\lambda^{-1}\nu)$. 
    Moreover, by the proof of Proposition \ref{Prop: smooth/orbifold in Md},
    $\widehat\Psi(M,\mu,\nu,q_2)=\widehat\Psi(M',\mu',\nu',q_2')$
    if and only if $(M,\mu,\nu,q_2)$ and $(M',\mu',\nu',q_2')$ are in the same $\C^*$ orbit. Thus, if $\Ff_d=\widehat\Ff_d/\C^*$, then $\Ff_d\times H^0(\Sigma, K^2)$ is diffeomorphic to $\Mm_{0,d}^{\mathrm{max}}(\sSO_0(2,3)).$ 
    
 Given an $\C^*$-equivalence class $[(M,\mu,\nu)]$, the projective class of the nonzero section $\mu$ defines an effective divisor on $\Sigma$ of degree $-d+4g-4.$ This defines a projection $\pi:\Ff_d\to\Sym^{-d+4g-4}(\Sigma)$. 
  If $D\in\Sym^{-d+4g-4}(\Sigma)$ and $\Oo(D)$ is the holomorphic line bundle associated to $D$, then $\pi^{-1}(D)$ is identified (noncanonically) with the vector space $H^0(\Oo(D)^{-1}K^4)\cong\C^{d+3g-3}$.
\end{proof}
\begin{Corollary}
    For $d\neq0,$ the connected component $\Mm_{0,d}^\mathrm{max}(\sSO_0(2,3))$ is homotopically equivalent to the space $\Sym^{4g-4-d}(\Sigma).$ 
\end{Corollary}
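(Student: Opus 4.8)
The plan is to derive this immediately from Theorem \ref{THM d>0}, using only elementary homotopy theory. First I would note that the hypothesis $d\neq 0$ combined with the a priori constraint on the invariant $d=|\deg(M)|\in\Z\cap[0,4g-4]$ puts us in the range $0<d\leq 4g-4$ to which Theorem \ref{THM d>0} applies. That theorem gives a diffeomorphism
\[
\Mm^{\mathrm{max}}_{0,d}(\sSO_0(2,3))\ \cong\ \Ff_d\times H^0(\Sigma,K^2),
\]
where $\Ff_d$ is a holomorphic vector bundle of rank $3g-3+d$ over the symmetric product $\Sym^{4g-4-d}(\Sigma)$. Since homotopy type is a diffeomorphism invariant, it suffices to show that the right-hand side is homotopy equivalent to $\Sym^{4g-4-d}(\Sigma)$.

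The key steps are then: (i) the factor $H^0(\Sigma,K^2)$ is a finite-dimensional complex vector space (of dimension $3g-3$), hence contractible, so the projection $\Ff_d\times H^0(\Sigma,K^2)\to\Ff_d$ is a homotopy equivalence; (ii) for the vector bundle $\pi\colon\Ff_d\to\Sym^{4g-4-d}(\Sigma)$, the fiberwise scaling homotopy $H_t(v)=t\,v$, $t\in[0,1]$, is a continuous deformation retraction of the total space onto the image of the zero section, which is canonically homeomorphic to the base $\Sym^{4g-4-d}(\Sigma)$; (iii) composing (i) and (ii) yields $\Mm^{\mathrm{max}}_{0,d}(\sSO_0(2,3))\simeq\Sym^{4g-4-d}(\Sigma)$. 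One may phrase (i)--(ii) uniformly by saying that $\Mm^{\mathrm{max}}_{0,d}(\sSO_0(2,3))$ is the total space of a fiber bundle over $\Sym^{4g-4-d}(\Sigma)$ with fibers the affine spaces $\C^{d+3g-3}\times\C^{3g-3}$, and that the product of the zero section and the origin gives a section onto which the total space deformation retracts.

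There is no real obstacle in this argument; it is a direct corollary. The only point requiring a word of care is matching the index ranges: Theorem \ref{THM d>0} is stated for $0<d\leq 4g-4$, and this is precisely the set of admissible values of $d$ once $d\neq 0$ is assumed, because the invariant $d=|\deg(M)|$ is bounded above by $4g-4$ (Proposition \ref{Prop sw1=0 stability}). I would include one sentence recording this so that the statement "for $d\neq 0$" is unambiguous.
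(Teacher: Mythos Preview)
Your argument is correct and matches the paper's intent: the corollary is stated without proof, as an immediate consequence of Theorem \ref{THM d>0} via the contractibility of the vector-space factor $H^0(\Sigma,K^2)$ and the deformation retraction of the vector bundle $\Ff_d$ onto its zero section. Your remark about the index range $0<d\leq 4g-4$ is also appropriate and consistent with the paper's conventions.
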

The cohomology of the symmetric product of a surface was computed in \cite{SymmetricProductsofAlgebraicCurves}.
\begin{Remark}
    When $d=4g-4,$ the space $\Mm_{0,4g-4}^{\mathrm{max}}(\sSO_0(2,3))$ is the Hitchin component from \eqref{EQ Hitchin comp param} and the parametrization was given by Hitchin in \cite{liegroupsteichmuller}.
\end{Remark}

The component $\Mm^{\mathrm{max}}_{0,0}(\sSO_0(2,3))$ (for $d=0$) is the hardest to describe because of the presence of singularities. We  will describe it in Theorem \ref{thm:zero_component}, but we first introduce some notation and prove some preliminary lemmas.

Let $\mathcal{O}_{\C\P^{n-1}}(-1)$ denote the tautological holomorphic line bundle over $\C\P^{n-1}$. Let $T_n$ denote the rank $n$ holomorphic vector bundle over $\C\P^{n-1}$ obtained by taking the direct sum of $\mathcal{O}_{\C\P^{n-1}}(-1)$ with itself $n$ times: 
\[T_n = \mathcal{O}_{\C\P^{n-1}}(-1)\underbrace{\oplus \dots \oplus}_{n \text{-times}} \mathcal{O}_{\C\P^{n-1}}(-1)~.\]
Let $\mathcal{U}_n$ be the quotient of the total space of $T_n$, by the equivalence relation that collapses the zero section of $T_n$ to a point:
\[\mathcal{U}_n = T_n / \{\text{zero section}\}~.\]

\begin{Lemma}\label{Un contractible}
The topological space $\mathcal{U}_{n}$ is contractible.
\end{Lemma}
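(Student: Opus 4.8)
The plan is to exhibit an explicit deformation retraction of $\mathcal{U}_n$ onto the point obtained by collapsing the zero section. Recall that the total space of $\mathcal{O}_{\C\P^{n-1}}(-1)$ is the blow-up of $\C^n$ at the origin, so the total space of $T_n = \mathcal{O}_{\C\P^{n-1}}(-1)^{\oplus n}$ can be described concretely as
\[
\widetilde{T}_n = \{\,(\ell, v_1, \dots, v_n) \in \C\P^{n-1} \times \C^n \times \dots \times \C^n \ \mid\ v_i \in \ell \text{ for all } i\,\}.
\]
The zero section is $\{(\ell, 0, \dots, 0)\}$, and after collapsing it to a point $\ast$ we obtain $\mathcal{U}_n$. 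The first step is to make this identification precise and to note that the quotient map $q:\widetilde{T}_n \to \mathcal{U}_n$ restricted to the complement of the zero section is a homeomorphism onto $\mathcal{U}_n \setminus \{\ast\}$.

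The second step is to write down the retraction. On $\widetilde{T}_n$, consider the scaling homotopy $H_t(\ell, v_1, \dots, v_n) = (\ell, t v_1, \dots, t v_n)$ for $t \in [0,1]$; since each $v_i$ lies on the line $\ell$, so does $t v_i$, so this is well-defined, and it is continuous. At $t = 1$ it is the identity, and at $t = 0$ it maps everything into the zero section. This homotopy does \emph{not} descend to $\mathcal{U}_n$ as a homotopy through self-maps in the naive sense at every fixed $t>0$ (it is injective on fibers for $t>0$), but composing with $q$ gives a continuous map $\widetilde{H}: \mathcal{U}_n \times [0,1] \to \mathcal{U}_n$, $\widetilde{H}([x], t) = q(H_t(x))$, which is well-defined because $H_t$ carries the zero section into the zero section for every $t$, hence is compatible with the collapsing equivalence relation. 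One checks $\widetilde{H}(\cdot, 1) = \mathrm{id}_{\mathcal{U}_n}$ and $\widetilde{H}(\cdot, 0) \equiv \ast$, so $\mathcal{U}_n$ is contractible.

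The main point requiring care, and the step I would expect to be the genuine (if modest) obstacle, is the continuity of $\widetilde{H}$ at the collapsed point $\ast$, i.e. verifying that the quotient topology on $\mathcal{U}_n$ is such that $q \circ H_t$ descends continuously. Since $H_t$ is continuous on $\widetilde{T}_n$ and $q$ is a quotient map, the composite $q \circ H_t \circ (\mathrm{id} \times \{t\})$ is continuous; the subtlety is uniformity in $t$, which follows because $H: \widetilde{T}_n \times [0,1] \to \widetilde{T}_n$ is continuous and $q \times \mathrm{id}_{[0,1]}: \widetilde{T}_n \times [0,1] \to \mathcal{U}_n \times [0,1]$ is again a quotient map (a product of a quotient map with the identity on a locally compact space is a quotient map). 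Thus $q \circ H$ factors through a continuous $\widetilde{H}$ on $\mathcal{U}_n \times [0,1]$. I would state this last point explicitly, since it is the only place where the non-Hausdorff-looking collapse could cause trouble; everything else is formal.
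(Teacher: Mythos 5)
Your proof is correct and is essentially the same as the paper's: the paper also retracts the total space of the vector bundle $T_n$ to its zero section and observes that this retraction descends to $\mathcal{U}_n$, collapsing it to the singular point. You simply spell out the continuity of the descended homotopy (via the fact that $q\times\mathrm{id}_{[0,1]}$ is a quotient map), a detail the paper leaves implicit.
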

\begin{proof}
Since $T_n$ is a vector bundle, its total space can be retracted to the zero section. When the same retraction is applied to $\mathcal{U}_{n}$, it retracts the latter space to its singular point. Hence $\mathcal{U}_{n}$ is contractible.   
\end{proof}

\begin{Lemma}   \label{lemma:tautological_bundle}
Consider the action of $\C^*$ on $\C^n \times \C^n$ given by $\lambda \cdot (v,w) = (\lambda v, \lambda^{-1}w)$. If $\widehat\Uu_n$ is the $\C^*$-invariant subspace 
\[\widehat\Uu_n = (\C^n\setminus \{0\})\times(\C^n\setminus \{0\}) \cup \{(0,0)\} \subset \C^n \times \C^n~,\] then the quotient $\widehat{\mathcal{U}}_n / \C^*$ is homeomorphic to $\mathcal{U}_{n}$.
\end{Lemma}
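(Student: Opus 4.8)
The plan is to realize both sides as the free quotient $\bigl((\C^n\setminus\{0\})\times(\C^n\setminus\{0\})\bigr)/\C^*$ with one additional point glued on, and then to verify that the topology near that point agrees.

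First I would recall the standard description of the tautological bundle as an associated bundle: viewing $\C^n\setminus\{0\}\to\C\P^{n-1}$ as a principal $\C^*$-bundle, one has $\mathcal O_{\C\P^{n-1}}(-1)\cong(\C^n\setminus\{0\})\times_{\C^*}\C$ with $\lambda\cdot(v,t)=(\lambda v,\lambda^{-1}t)$, so the total space of $T_n$ is $\bigl((\C^n\setminus\{0\})\times\C^n\bigr)/\C^*$ for the diagonal action $\lambda\cdot(v,w)=(\lambda v,\lambda^{-1}w)$, the quotient map $p\colon(\C^n\setminus\{0\})\times\C^n\to T_n$ being open (orbit maps are open) and hence a quotient map; under this identification the zero section $Z$ is the image of $(\C^n\setminus\{0\})\times\{0\}$. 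Since $\mathcal U_n=T_n/Z$, it follows that, as a set, $\mathcal U_n$ is $\bigl((\C^n\setminus\{0\})^2\bigr)/\C^*$ together with the single point $*$ obtained by collapsing $Z$. On the other side, the $\C^*$-action on $\widehat{\mathcal U}_n$ is free on $(\C^n\setminus\{0\})^2$ and fixes $(0,0)$, so $\widehat{\mathcal U}_n/\C^*$ is, as a set, $\bigl((\C^n\setminus\{0\})^2\bigr)/\C^*$ together with $[(0,0)]$. Define $\Theta\colon\widehat{\mathcal U}_n/\C^*\to\mathcal U_n$ to be the identity on $\bigl((\C^n\setminus\{0\})^2\bigr)/\C^*$ and to send $[(0,0)]$ to $*$. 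Then $\Theta$ is a bijection which is a homeomorphism away from the two distinguished points, and it remains to check continuity of $\Theta$ and $\Theta^{-1}$ there.

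Continuity of $\Theta^{-1}$ is the easy direction. Consider $\Xi\colon(\C^n\setminus\{0\})\times\C^n\to\widehat{\mathcal U}_n/\C^*$ sending $(v,w)$ with $w\neq0$ to $[(v,w)]$ and $(v,0)$ to $[(0,0)]$; it is $\C^*$-invariant. Away from $(\C^n\setminus\{0\})\times\{0\}$ it is obviously continuous, and at a point $(v_0,0)$ it is continuous because the basic neighbourhoods of $[(0,0)]$ in $\widehat{\mathcal U}_n/\C^*$ are the images of the $\C^*$-saturated open sets $\{(v,w):\|v\|\,\|w\|<\varepsilon^2\}$, while near $(v_0,0)$ the quantity $\|v\|\,\|w\|$ is controlled by $\|w\|$ alone. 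Therefore $\Xi$ factors through $p$ and then, since it is constant on $Z$, through $T_n\to\mathcal U_n$, yielding a continuous map $\mathcal U_n\to\widehat{\mathcal U}_n/\C^*$ which is exactly $\Theta^{-1}$.

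The remaining point, continuity of $\Theta$, is the crux: it amounts to continuity at $(0,0)$ of the induced map $\widehat{\mathcal U}_n\to\mathcal U_n$, i.e., given a $\C^*$-invariant open set $\widetilde W\subseteq(\C^n\setminus\{0\})\times\C^n$ containing $(\C^n\setminus\{0\})\times\{0\}$, one must produce $\varepsilon>0$ with $\{(v,w):0<\|v\|<\varepsilon,\ 0<\|w\|<\varepsilon\}\subseteq\widetilde W$. This is where the compactness of $\C\P^{n-1}$ enters. For each unit vector $v$, openness of $\widetilde W$ gives a box $U_v\times B_{\rho_v}(0)\subseteq\widetilde W$ around $(v,0)$; extracting a finite subcover of the unit sphere from the $U_v$ produces a uniform $\rho_0>0$ with $\{(v,w):\|v\|=1,\ \|w\|<\rho_0\}\subseteq\widetilde W$, and $\C^*$-invariance then gives $\{(v,w):v\neq0,\ \|v\|\,\|w\|<\rho_0\}\subseteq\widetilde W$; taking $\varepsilon=\sqrt{\rho_0}$ finishes the proof. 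Note that this uniformity genuinely fails for general, non-invariant, neighbourhoods of the zero section, which is why one cannot simply compare $\widehat{\mathcal U}_n$ with $\bigl((\C^n\setminus\{0\})\times\C^n\bigr)$ modulo collapsing $(\C^n\setminus\{0\})\times\{0\}$ before passing to the $\C^*$-quotient.
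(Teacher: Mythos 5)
Your proof is correct and follows essentially the same route as the paper: both identify $(\C^n\setminus\{0\})\times(\C^n\setminus\{0\})/\C^*$ with $T_n$ minus its zero section (you via the associated-bundle description $T_n=\bigl((\C^n\setminus\{0\})\times\C^n\bigr)/\C^*$, the paper via the explicit embedding $(v,w)\mapsto([v],(w_1v,\dots,w_nv))$) and then extend the resulting bijection over the collapsed point. Your verification of continuity at that point --- in particular the compactness argument producing a uniform tube $\{\|v\|\,\|w\|<\rho_0\}$ inside any saturated neighbourhood of the zero section --- carefully spells out what the paper compresses into a one-line sequence criterion.
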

\begin{proof}
Consider the map: 
\[\hat\phi:\xymatrix@R=.2em{(\C^n\setminus \{0\})\times(\C^n\setminus \{0\})\ar[r]&\C\P^{n-1} \times (\C^n\oplus\dots\oplus\C^n)\\(v,w) \ar@{|->}[r] & ([v],(w_1 v, w_2 v, \dots, w_n v) )}~.\]
The image of this map is exactly the vector bundle $T_n$ minus the zero section, and the map is $\C^*$-invariant. This map induces a homeomorphism 
$$\phi: (\C^n\setminus \{0\})\times(\C^n\setminus \{0\})/\C^* \to T_n \setminus \{\text{zero section}\}~. $$  
We can extend $\phi$ to a map 
$$\phi': \widehat{\mathcal{U}}_n/\C^* \to \mathcal{U}_{n}$$ 
by defining it as $\phi$ on $(\C^n\setminus \{0\})\times(\C^n\setminus \{0\})/\C^*$, and by mapping the point $(0,0)$ to the point of $\mathcal{U}_{n}$ corresponding to the zero section of $T_n$. 
To check that $\phi'$ is a homeomorphism, we just need to verify the following elementary fact: given a sequence $(v_m)$ in $\C^n$ and $(x_m)$ in $\C$, then $x_m v_m \to 0$ if and only if there exists a sequence $(\lambda_m)$ in $\C^*$ such that $\lambda_m^{-1} x_m \to 0$ and $\lambda_m\to 0$.  
\end{proof}

\begin{Theorem}   \label{thm:zero_component}
The component $\Mm_{0,0}^{\mathrm{max}}(\sSO_0(2,3))$ is homeomorphic to 
\[(\Aa/\Z_2) \times H^0(\Sigma, K^2)~,\] where $H^0(\Sigma, K^2)$ the space of holomorphic quadratic differentials on $\Sigma$, $\Aa$ is a holomorphic fiber bundle over $\Pic^0(\Sigma)$ with fiber $\mathcal{U}_{3g-3}$ and $\Z_2$ acts on $\Aa$ by pullback by inversion on $\Pic^0(\Sigma).$
In particular, $\Mm_{0,0}^{\mathrm{max}}(\sSO_0(2,3))$ is homotopically equivalent to the quotient $\Pic^0(\Sigma)/\Z_2$.
\end{Theorem}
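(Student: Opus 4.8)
The plan is to assemble the description directly from the classification of poly-stable tuples carried out in Section~\ref{HB_vanishing_sw_1}. Recall that a poly-stable maximal $\sSO_0(2,3)$-Higgs bundle with $sw_1=0$ and $d=0$ is given by a tuple $(M,\mu,\nu,q_2)$ with $M\in\Pic^0(\Sigma)$, $\mu\in H^0(M^{-1}K^2)$, $\nu\in H^0(MK^2)$ and $q_2\in H^0(K^2)$, and by Proposition~\ref{Prop sw1=0 stability} poly-stability amounts to the condition that either $\mu\neq0$ and $\nu\neq0$, or $\mu=\nu=0$. The quadratic differential $q_2$ enters as a completely free parameter (no stability condition involves it, and by Proposition~\ref{Prop gauge equivalent triples} no gauge equivalence changes it), so it splits off as a direct factor $H^0(\Sigma,K^2)$ and it remains to identify the space of triples $(M,\mu,\nu)$ modulo gauge.

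First I would set up the ambient family over $\Pic^0(\Sigma)$. For every $M\in\Pic^0(\Sigma)$ one has $\deg(MK^2)=4g-4$ while $\deg(M^{-1}K^{-1})=2-2g<0$, so by Serre duality $h^1(MK^2)=h^0(M^{-1}K^{-1})=0$, and Riemann--Roch gives $h^0(MK^2)=3g-3$ independently of $M$; likewise $h^0(M^{-1}K^2)=3g-3$ for all $M$. Hence, via a Poincaré line bundle on $\Sigma\times\Pic^0(\Sigma)$ and cohomology-and-base-change, the assignments $M\mapsto H^0(MK^2)$ and $M\mapsto H^0(M^{-1}K^2)$ are honest holomorphic vector bundles $\mathcal{A}^{+}$ and $\mathcal{A}^{-}=\iota^{*}\mathcal{A}^{+}$ of rank $3g-3$ over $\Pic^0(\Sigma)$, where $\iota\colon M\mapsto M^{-1}$. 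Let $\widehat{\Aa}$ be the poly-stable locus inside the total space of $\mathcal{A}^{+}\oplus\mathcal{A}^{-}$: over $M$ its fibre is exactly $\bigl(H^0(MK^2)\setminus 0\bigr)\times\bigl(H^0(M^{-1}K^2)\setminus 0\bigr)\cup\{(0,0)\}$, i.e.\ the space $\widehat{\Uu}_{3g-3}$ of Lemma~\ref{lemma:tautological_bundle}. The residual $\C^{*}$-action coming from the holomorphic gauge transformations of $(M\oplus M^{-1},\smtrx{0&1\\1&0})$ with $M^2\neq\mathcal{O}$ is $\lambda\cdot(M,\nu,\mu)=(M,\lambda\nu,\lambda^{-1}\mu)$, so Lemma~\ref{lemma:tautological_bundle} identifies the fibrewise quotient with $\mathcal{U}_{3g-3}$. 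Concretely $\Aa:=\widehat{\Aa}/\C^{*}$ is the holomorphic fibre bundle over $\Pic^0(\Sigma)$ obtained from the rank $3g-3$ bundle $\mathcal{O}_{\mathbb{P}(\mathcal{A}^{+})}(-1)\otimes\mathrm{pr}^{*}\mathcal{A}^{-}$ on $\mathbb{P}(\mathcal{A}^{+})$ by collapsing, in each fibre of $\mathbb{P}(\mathcal{A}^{+})\to\Pic^0(\Sigma)$, the zero section to a point; this is the bundle $\Aa\to\Pic^0(\Sigma)$ with fibre $\mathcal{U}_{3g-3}$ in the statement.

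Next I would account for the remaining gauge freedom. By the analysis following Proposition~\ref{Prop sw1=0 stability} together with Proposition~\ref{Prop gauge equivalent triples}, two triples are gauge-equivalent precisely when they lie in the same orbit of $\C^{*}$ and the involution induced by the component of $\sO(2,\C)$ not in $\sSO(2,\C)$, which sends $(M,\nu,\mu)$ to $(M^{-1},-\mu,-\nu)$. Passing to $\Aa=\widehat{\Aa}/\C^{*}$ and absorbing the sign into the $\C^{*}$-action, this becomes a $\Z_2$-action on $\Aa$ covering inversion $M\mapsto M^{-1}$ on $\Pic^0(\Sigma)$ and identifying the fibre over $M$ with the fibre over $M^{-1}$ by interchanging the roles of $H^0(MK^2)$ and $H^0(M^{-1}K^2)$; this is the ``pullback by inversion'' action in the statement. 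Since the Higgs moduli space carries the quotient topology of the poly-stable locus by the full gauge group, and Proposition~\ref{Prop gauge equivalent triples} reduces that group to $\C^{*}\times\Z_2$ acting as above, the induced continuous bijection $(\Aa/\Z_2)\times H^0(\Sigma,K^2)\to\Mm_{0,0}^{\mathrm{max}}(\sSO_0(2,3))$ is a homeomorphism; it is a homeomorphism and not a diffeomorphism because of the cone points of the fibres $\mathcal{U}_{3g-3}$ and the $\Z_2$-fixed points over the $2^{2g}$ two-torsion line bundles $M^2=\mathcal{O}$, in accordance with Proposition~\ref{Prop: smooth/orbifold in Md}. For the homotopy statement, the fibrewise scaling $(\nu,\mu)\mapsto(t\nu,t\mu)$, $t\to 0$, retracts each $\mathcal{U}_{3g-3}$ onto its cone point (Lemma~\ref{Un contractible}) and is visibly both $\C^{*}$- and $\Z_2$-equivariant, hence yields a $\Z_2$-equivariant deformation retraction of $\Aa$ onto its zero section $\Pic^0(\Sigma)$; therefore $\Aa/\Z_2$ deformation retracts onto $\Pic^0(\Sigma)/\Z_2$, and taking the product with the contractible factor $H^0(\Sigma,K^2)$ gives $\Mm_{0,0}^{\mathrm{max}}(\sSO_0(2,3))\simeq\Pic^0(\Sigma)/\Z_2$.

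I expect the main obstacle to be the globalization in the second paragraph: upgrading the fibrewise homeomorphisms of Lemma~\ref{lemma:tautological_bundle} to a genuine holomorphic fibre bundle structure on $\Aa$ over $\Pic^0(\Sigma)$, and carefully matching the jump of the holomorphic gauge group at the $2^{2g}$ points $M^2=\mathcal{O}$ (where it grows from $\C^{*}\cong\sSO(2,\C)$ to $\sO(2,\C)$) with the restriction of the global $\Z_2$ to those fibres. Everything else is bookkeeping layered on top of Propositions~\ref{Prop sw1=0 stability}, \ref{Prop gauge equivalent triples} and \ref{Prop: smooth/orbifold in Md} and Lemmas~\ref{Un contractible} and~\ref{lemma:tautological_bundle}.
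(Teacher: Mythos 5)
Your proposal is correct and follows essentially the same route as the paper: the same reduction to tuples $(M,\mu,\nu,q_2)$ with the poly-stability dichotomy from Proposition \ref{Prop sw1=0 stability}, the same $\C^*$-quotient identified fibrewise with $\mathcal{U}_{3g-3}$ via Lemma \ref{lemma:tautological_bundle}, and the same residual $\Z_2$ covering inversion on $\Pic^0(\Sigma)$ read off from Proposition \ref{Prop: smooth/orbifold in Md}. The only difference is that you supply details the paper leaves implicit (the Riemann--Roch count $h^0(MK^2)=3g-3$, the Poincar\'e-bundle construction of the ambient vector bundles, and the equivariance of the fibrewise retraction), which strengthens rather than alters the argument.
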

\begin{proof}

Define the spaces
\[\widetilde\Aa=\{(M,\mu,\nu)| M\in\Pic^0(\Sigma),\ \mu\in H^0(\Sigma, M^{-1}K^2),\ \nu\in H^0(\Sigma, M K^2)\}~,\]
\[\widehat\Aa=\{(M,\mu,\nu)\in\widetilde\Aa\ |\ \mu=0\ \text{if\ and\ only\ if}\ \nu=0\}~.\]
In Section \ref{HB_vanishing_sw_1}, we constructed a surjective map from $\widehat\Aa\times H^0(\Sigma, K^2)$ to the space $\Mm_{0,0}^{\mathrm{max}}(\sSO_0(2,3)).$ 
By the proof of Proposition \ref{Prop: smooth/orbifold in Md}, $(M,\mu,\nu,q_2)$ and $(M',\mu'\nu',q_2')$ define the same point in $\Mm_{0,0}^{\max}(\sSO_0(2,3))$ if and only if, for $\lambda\in\C^*$
\[(M',\mu',\nu',q_2')=(M,\lambda\mu,\lambda^{-1}\nu,q_2)\ \ \ \ or\ \ \ \ (M',\mu',\nu',q_2')=(M^{-1},\lambda\nu,\lambda^{-1}\mu,q_2)~. \]

Let $\Aa$ be the quotient of $\widehat\Aa$ by the $\C^*$ action $\lambda \cdot (M,\mu,\nu) = (M,\lambda \mu,\lambda^{-1}\nu)$. 
We claim that the map $\Aa\to\Pic^0(\Sigma)$ defined by sending an equivalence class $[M,\mu,\nu]$ to $M$ is a holomorphic bundle over $\Pic^0(\Sigma)$ with fiber $\mathcal{U}_{3g-3}$. In particular, $\Aa$ is homotopically equivalent to $\Pic^0(\Sigma) \simeq (\mathbb{S}^1)^{2g}$.
Indeed, the fiber of this map over the point $M\in \Pic^0(\Sigma)$ is given by 
\[((H^0(\Sigma, M K^2)\setminus\{0\} \times H^0(\Sigma, M^{-1}K^2)\setminus\{0\})\cup\{(0,0)\})\slash \C^*~.\] We have $\dim H^0(\Sigma, M K^2)=\dim H^0(\Sigma, M^{-1} K^2) = 3g-3$, hence, by Lemma \ref{lemma:tautological_bundle}, the fiber is the space $\mathcal{U}_{3g-3}$.  

The action of $\Z_2$ on $\Pic^0(\Sigma)$ by inversion ($M \to M^{-1}$) lifts to an action on $\Aa$ by sending $(M,\mu,\nu)$ to $(M^{-1},\nu,\mu)$. 
We conclude that the component $\Mm_{0,0}^{\mathrm{max}}(\sSO_0(2,3))$ is homeomorphic to $\Aa/\Z_2 \times H^0(\Sigma, K^2)$. 
\end{proof}
\begin{Corollary}
    The component $\Mm_{0,0}^\mathrm{max}(\sSO_0(2,3))$ is homotopically equivalent to the space $\Bb_{0,0}(\sO(2,\C)).$ Its rational cohomology is given by Proposition \ref{Prop: cohomology of Torus mod inversion}.
\end{Corollary}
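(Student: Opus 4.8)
The plan is to read off this corollary from Theorem \ref{thm:zero_component} together with the identification of $\Bb_{0,0}(\sO(2,\C))$ made in Section \ref{orthogonal_bundles_vanishing}. Recall from that section that $\Bb_{0,0}(\Sigma,\sO(2,\C))=\Pic^0(\Sigma)/\Z_2$, where $\Z_2$ acts by the inversion $L\mapsto L^{-1}$. On the other side, Theorem \ref{thm:zero_component} presents $\Mm_{0,0}^{\mathrm{max}}(\sSO_0(2,3))$ as $(\Aa/\Z_2)\times H^0(\Sigma,K^2)$, with $\Aa\to\Pic^0(\Sigma)$ a holomorphic fiber bundle with fiber $\Uu_{3g-3}$ and $\Z_2$ acting by the lift of inversion. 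First I would note that $H^0(\Sigma,K^2)$ is a finite dimensional complex vector space, hence contractible, so it may be discarded up to homotopy. Then I would use Lemma \ref{Un contractible}: the fiber $\Uu_{3g-3}$ is contractible via the fiberwise rescaling retraction onto its singular point, and in the $\widehat\Uu_{3g-3}/\C^*$ model of Lemma \ref{lemma:tautological_bundle} this retraction $[\mu,\nu]\mapsto[\mu,t\nu]$ is readily checked to commute with the $\Z_2$-action $[\mu,\nu]\mapsto[\nu,\mu]$. Hence $\Aa$ deformation retracts $\Z_2$-equivariantly onto the zero section $\Pic^0(\Sigma)$, so $\Aa/\Z_2$ retracts onto $\Pic^0(\Sigma)/\Z_2$. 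Combining these, $\Mm_{0,0}^{\mathrm{max}}(\sSO_0(2,3))\simeq \Pic^0(\Sigma)/\Z_2=\Bb_{0,0}(\Sigma,\sO(2,\C))$; this homotopy equivalence is in fact already recorded in the last sentence of Theorem \ref{thm:zero_component}.

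For the rational cohomology statement, I would then simply invoke the second bullet of Proposition \ref{Prop: cohomology of Torus mod inversion} applied to $\Bb_{0,0}(\Sigma,\sO(2,\C))$, giving
\[H^j(\Mm_{0,0}^{\mathrm{max}}(\sSO_0(2,3)),\Q)\cong\begin{cases}H^j((S^1)^{2g},\Q)&\text{if }j\text{ is even},\\0&\text{otherwise}.\end{cases}\]
There is no real obstacle here: all the substantive work has been done in Theorem \ref{thm:zero_component} and Proposition \ref{Prop: cohomology of Torus mod inversion}, and this corollary is just the bookkeeping step linking the Higgs-bundle description to the orthogonal-bundle description. If one preferred a self-contained cohomology computation rather than citing Proposition \ref{Prop: cohomology of Torus mod inversion}, one could instead use $H^*(\Pic^0(\Sigma)/\Z_2,\Q)=H^*(\Pic^0(\Sigma),\Q)^{\Z_2}$ and the fact that inversion acts on $H^j((S^1)^{2g},\Q)$ by $(-1)^j$, exactly as in the proof of that proposition.
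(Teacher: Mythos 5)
Your argument is exactly the intended one: the paper states this corollary without a separate proof because it is immediate from the last sentence of Theorem \ref{thm:zero_component} (which already records the homotopy equivalence $\Mm_{0,0}^{\mathrm{max}}(\sSO_0(2,3))\simeq \Pic^0(\Sigma)/\Z_2$), the identification $\Bb_{0,0}(\sO(2,\C))=\Pic^0(\Sigma)/\Z_2$ from Section \ref{orthogonal_bundles_vanishing}, and Proposition \ref{Prop: cohomology of Torus mod inversion}. Your explicit check that the fiberwise retraction of $\Aa$ is $\Z_2$-equivariant is a correct filling-in of a detail the paper leaves implicit, so the proposal matches the paper's route.
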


\subsection{Parameterizing the components $\Mm^{\textmd{max},sw_2}_{sw_1}(\sSO_0(2,3))$} \label{parameterizing_nonvanishing}

Fix a pair of cohomology classes $(sw_1,sw_2)\in H^1(\Sigma,\Z_2)\times H^2(\Sigma,\Z_2)$ with $sw_1\neq 0.$ We will use the notation of Section \ref{orthogonal_bundles_nonvanishing}. Let $\pi:\Sigma_{sw_1}\to\Sigma$ be the genus $g'=2g-1$ double cover associated to $sw_1$, $\iota:\Sigma_{sw_1}\to \Sigma_{sw_1}$ be the corresponding covering involution and $\Prym^{sw_2}(\Sigma_{sw_1})$ be a connected component of $\ker(Id \otimes \iota^*)$.

\begin{Proposition} \label{bundleoverPrym} 
There is a holomorphic vector bundle $\Ee\to \Prym^{sw_2}(\Sigma_{sw_1})$ of rank $6g-6$ such that for every $M \in \Prym^{sw_2}(\Sigma_{sw_1})$, the fiber $\Ee|_{\{M\}}$ is the space $H^0(\Sigma_{sw_1},M^{-1} K_{sw_1}^2)$.  
\end{Proposition}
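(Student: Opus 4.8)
The plan is to realize $\Ee$ as a direct image (pushforward) of a line bundle on a product. First I would record the relevant numerology. Since $\Sigma_{sw_1}$ has genus $g'=2g-1$, its canonical bundle has degree $\deg K_{sw_1}=2g'-2=4g-4$, so for every $M\in\Prym^{sw_2}(\Sigma_{sw_1})\subset\Pic^0(\Sigma_{sw_1})$ the line bundle $M^{-1}K_{sw_1}^2$ has degree $8g-8$. As $g\geq2$, this is strictly greater than $2g'-2$, so by Serre duality on $\Sigma_{sw_1}$ we get $H^1(\Sigma_{sw_1},M^{-1}K_{sw_1}^2)\cong H^0(\Sigma_{sw_1},MK_{sw_1}^{-1})^*=0$, the bundle $MK_{sw_1}^{-1}$ having negative degree. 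Riemann--Roch then gives $h^0(\Sigma_{sw_1},M^{-1}K_{sw_1}^2)=8g-8-g'+1=6g-6$, independently of $M$.

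Next I would introduce a Poincaré line bundle. Because $\Sigma_{sw_1}$ is a smooth projective curve it has a point, so there is a universal line bundle $\Pp$ on $\Sigma_{sw_1}\times\Pic^0(\Sigma_{sw_1})$ with $\Pp|_{\Sigma_{sw_1}\times\{M\}}\cong M$ for all $M$. Restricting $\Pp$ to $\Sigma_{sw_1}\times\Prym^{sw_2}(\Sigma_{sw_1})$ (the latter being a translate of an abelian subvariety of the Jacobian, hence a closed subvariety), and letting $p$ and $q$ denote the projections of this product onto $\Sigma_{sw_1}$ and $\Prym^{sw_2}(\Sigma_{sw_1})$ respectively, I would set
\[\Ll=\Pp^{-1}\otimes p^*K_{sw_1}^2\qquad\text{and}\qquad \Ee:=q_*\Ll~.\]
By construction $\Ll|_{\Sigma_{sw_1}\times\{M\}}\cong M^{-1}K_{sw_1}^2$ for every $M$, so $q_*\Ll$ has exactly the desired fibers at the level of global sections.

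The main step is then cohomology and base change. Since $H^1$ of $\Ll$ along each fiber of $q$ vanishes by the first paragraph, Grauert's theorem applies: $\Ee=q_*\Ll$ is locally free, its rank is the constant value $h^0(\Sigma_{sw_1},M^{-1}K_{sw_1}^2)=6g-6$, and for every $M\in\Prym^{sw_2}(\Sigma_{sw_1})$ the base-change homomorphism identifies $\Ee|_{\{M\}}$ with $H^0(\Sigma_{sw_1},M^{-1}K_{sw_1}^2)$. This is exactly the assertion. I do not expect a serious obstacle: the only point requiring care is the existence and fiberwise normalization of the Poincaré bundle on $\Sigma_{sw_1}\times\Pic^0(\Sigma_{sw_1})$ together with the fact that its restriction to $\Sigma_{sw_1}\times\Prym^{sw_2}(\Sigma_{sw_1})$ still has the correct restrictions to curve fibers, which is standard; everything else is the Riemann--Roch bookkeeping above plus a direct invocation of base change.
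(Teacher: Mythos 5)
Your proof is correct and follows essentially the same route as the paper: both construct $\Ee$ as the direct image of the Poincar\'e bundle twisted by $K_{sw_1}^2$, the only cosmetic difference being that you dualize $\Pp$ and restrict to the Prym before pushing forward, whereas the paper pushes forward over all of $\Pic^0(\Sigma_{sw_1})$ and then pulls back along the inversion map $M\mapsto M^{-1}$. Your inclusion of the Serre duality/Riemann--Roch computation and the appeal to Grauert's theorem makes explicit the local freeness and base-change step that the paper leaves implicit.
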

\begin{proof}
Consider the Poincar\'e line bundle $\Pp \to \Pic^0(\Sigma_{sw_1})\times \Sigma_{sw_1}$. This is the universal bundle of the fine moduli space $\Pic^0(\Sigma_{sw_1})$; it has the property that for every $M\in \Pic^0(\Sigma_{sw_1})$, the restriction $\Pp|_{\{M\} \times \Sigma_{sw_1}}$ is a line bundle on  $\Sigma_{sw_1}$ isomorphic to $M$.
Let $\pi_{\Pic^0}$ and $\pi_{\Sigma_{sw_1}}$ be the projections from $\Pic^0(\Sigma_{sw_1})\times \Sigma_{sw_1}$ to the two respective factors. Now $\Pp \otimes \pi_{\Sigma_{sw_1}}^*K_{sw_1}^2$ is a line bundle over $\Pic^0(\Sigma_{sw_1})\times \Sigma_{sw_1}$ with the property that its restriction to every $M \in \Pic^0(\Sigma_{sw_1})$ is a line bundle over $\Sigma_{sw_1}$ isomorphic to $M K_{sw_1}^2$. 
The push forward $\Ee' = (\pi_{\Pic^0})_*(\Pp \otimes \pi_{\Sigma_{sw_1}}^*K_{sw_1}^2)$ is a vector bundle over $\Pic^0(\Sigma_{sw_1})$ 
whose fiber over every point $M \in \Pic^0(\Sigma_{sw_1})$ is the vector space $H^0(\Sigma_{sw_1},M K_{sw_1}^2)$. In particular, it has dimension $3g'-3= 6g-6$. The bundle $\Ee$ is the pull back of $\Ee'$ via the map 
\[\xymatrix@R=0em{\Prym^{sw_2}(\Sigma_{sw_1})\ar[r]&\Pic^0(\Sigma_{sw_1})~.\\M\ar@{|->}[r]& M^{-1}}\]
\end{proof}

\begin{Proposition} \label{bundleoverPrym2}  
There is a holomorphic line bundle $\Jj\to \Prym^{sw_2}(\Sigma_{sw_1})$ such that for every $M \in \Prym^{sw_2}(\Sigma_{sw_1})$, the fiber $\Jj|_{\{M\}}$ is the space $\mathrm{End}(M, \iota^* M^{-1})$.  
\end{Proposition}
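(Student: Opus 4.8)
The statement is the exact analogue of Proposition \ref{bundleoverPrym}, and the plan is to run the same Poincar\'e-bundle argument. First I would record what the fibre is supposed to be: for a line bundle $M$ on $\Sigma_{sw_1}$, the space $\mathrm{End}(M,\iota^*M^{-1})$ of homomorphisms $M\to\iota^*M^{-1}$ is canonically $H^0(\Sigma_{sw_1},M^{-1}\otimes\iota^*M^{-1})$, and for $M\in\Prym^{sw_2}(\Sigma_{sw_1})$ the defining relation $\iota^*M\cong M^{-1}$ makes $M^{-1}\otimes\iota^*M^{-1}$ a degree-zero line bundle isomorphic to $\Oo_{\Sigma_{sw_1}}$, so this space is one-dimensional. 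That is why one expects a line bundle here rather than a higher-rank bundle.

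For the construction I would start from the Poincar\'e line bundle $\Pp\to\Pic^0(\Sigma_{sw_1})\times\Sigma_{sw_1}$ used in Proposition \ref{bundleoverPrym}, and let $\mathrm{id}\times\iota$ denote the automorphism of $\Pic^0(\Sigma_{sw_1})\times\Sigma_{sw_1}$ induced by the covering involution on the second factor; then $(\mathrm{id}\times\iota)^*\Pp$ restricts to $\iota^*M$ on each slice $\{M\}\times\Sigma_{sw_1}$. Set
\[\Nn \;=\; \Pp^{-1}\otimes(\mathrm{id}\times\iota)^*\Pp^{-1}\;\longrightarrow\;\Pic^0(\Sigma_{sw_1})\times\Sigma_{sw_1}~,\]
so that $\Nn$ restricts to $M^{-1}\otimes\iota^*M^{-1}$ on $\{M\}\times\Sigma_{sw_1}$. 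Since $\Prym^{sw_2}(\Sigma_{sw_1})$ is a closed subvariety of $\Pic^0(\Sigma_{sw_1})$, I can restrict $\Nn$ to $\Prym^{sw_2}(\Sigma_{sw_1})\times\Sigma_{sw_1}$ and then push it forward along the projection $p$ to $\Prym^{sw_2}(\Sigma_{sw_1})$, defining $\Jj=p_*\bigl(\Nn|_{\Prym^{sw_2}(\Sigma_{sw_1})\times\Sigma_{sw_1}}\bigr)$.

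The one delicate point --- and the reason the statement is phrased over the Prym subvariety and not over all of $\Pic^0(\Sigma_{sw_1})$ --- is to check that $\Jj$ really is a holomorphic line bundle. Over all of $\Pic^0(\Sigma_{sw_1})$ the fibrewise $H^0$ of $\Nn$ jumps (it is zero off $\Prym(\Sigma_{sw_1})$ and one-dimensional on it), so the pushforward would not be locally free; but over $\Prym^{sw_2}(\Sigma_{sw_1})$ we computed $h^0(\Sigma_{sw_1},\Nn|_{\{M\}\times\Sigma_{sw_1}})=h^0(\Sigma_{sw_1},\Oo_{\Sigma_{sw_1}})=1$ for every $M$. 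With this dimension constant, Grauert's theorem on cohomology and base change gives that $\Jj$ is locally free of rank one and that the natural map $\Jj|_{\{M\}}\to H^0(\Sigma_{sw_1},\Nn|_{\{M\}\times\Sigma_{sw_1}})=\mathrm{End}(M,\iota^*M^{-1})$ is an isomorphism, which is exactly the assertion. (Alternatively one can avoid invoking base change by the seesaw theorem: $\Nn|_{\Prym^{sw_2}(\Sigma_{sw_1})\times\Sigma_{sw_1}}$ is trivial on every slice $\{M\}\times\Sigma_{sw_1}$, hence isomorphic to $p^*\Jj$ for a line bundle $\Jj$ on the base, and one reads off the fibres from this.) I expect no further obstacle; everything else is formal.
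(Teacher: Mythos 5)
Your proof is correct and follows essentially the same route the paper intends: the paper's own proof is just ``similar to the proof of the previous proposition,'' i.e.\ the Poincar\'e-bundle/pushforward construction you carry out. Your added care about why the pushforward is locally free over the Prym locus (constancy of $h^0$ plus Grauert, or seesaw) is exactly the detail the paper leaves implicit, and it is handled correctly.
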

\begin{proof}
Similar to the proof of the previous proposition.
\end{proof}

We will consider the direct sum $\Ee \oplus \Jj$ as a vector bundle of rank $6g-5$ over $\Prym^{sw_2}(\Sigma_{sw_1})$ whose total space parametrizes the tuples $(M,f,\mu)$ where $M\in \Prym^{sw_2}(\Sigma_{sw_1})$, $f\in \mathrm{End}(M, \iota^* M^{-1})$ and $\mu \in H^0(\Sigma_{sw_1},M^{-1} K_{sw_1}^2)$. We will denote by $\Hh$ the open subset:
\begin{equation}
    \label{eq H definition}\Hh=\{ (M,f,\mu) \in \Ee \oplus \Jj \mid f\neq 0 \} 
\end{equation}
parameterizing the tuples $(M,f,\mu)$ where $f$ is an isomorphism. 
We define an action of $\C^*$ on the total space of $\Hh$ via the following formula:
\begin{equation}
    \label{eq C* SO23}\lambda \cdot (M,f,\mu) = (M, \lambda^2 f, \lambda \mu)~.
\end{equation}

The quotient $\Hh'=\Hh/\C^*$ is the space parameterizing gauge equivalence classes of the triples $(M,f,\mu)$, with $f$ an isomorphism. This space is a bundle over $\Prym^{sw_2}(\Sigma_{sw_1})$ whose fiber over the point $M$ is isomorphic to
$$H^0(\Sigma_{sw_1},M^{-1} K_{sw_1}^2) / \pm 1. $$ 
The space $\Hh'$ is an orbifold which has one orbifold point in each fiber with orbifold group $\Z_2$. This point is defined by the class $[(M,f,0)]$. 
On the space $\Hh'$ we have a $\Z_2$-action given by 
\begin{equation}
    \label{eq Z2 action So23}\tau \cdot [(M,f,\mu)] = [(\iota^*M, \iota^*f, \iota^*\mu)]~.
\end{equation}

We can describe the quotient space by this action.

\begin{Proposition}   \label{prop:description-orbifold}
The quotient space $\Hh' / \Z_2$ is an orbifold where:
\begin{enumerate}
\item The image of the $2^{2g-2}$ points $[(M,f,0)]$ where $M=\iota^* M$ define orbifold points with orbifold group $\Z_2\oplus\Z_2$.
\item The image of the points $[(M,f,0)]$ with $M\neq\iota^* M$ form a (non-closed) submanifold of orbifold points with orbifold group $\Z_2$.
\item The image of the points $[(M,f,\mu)]$ with $M=\iota^* M$, $\mu=\iota^*\mu$ and $\mu\neq 0$ form a (non-closed) submanifold of orbifold points with orbifold group $\Z_2$. 
\item All the other points are smooth.
\end{enumerate}  
The image of the points $[(M,f,0)]$ form a closed subspace which is orbifold isomorphic to $\Prym^{sw_2}(\Sigma_{sw_1})/\Z_2$. 
Moreover, the quotient space $\Hh' / \Z_2$ is homotopically equivalent to $\Prym^{sw_2}(\Sigma_{sw_1})/\Z_2$.
\end{Proposition}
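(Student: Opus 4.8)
The plan is to analyze the $\Z_2$-action on $\Hh'$ fiberwise over $\Prym^{sw_2}(\Sigma_{sw_1})$, identify the fixed-point locus and the stabilizers, and then deduce the orbifold structure and homotopy type. First I would recall that $\Hh'$ is a bundle over $\Prym^{sw_2}(\Sigma_{sw_1})$ with fiber $H^0(\Sigma_{sw_1},M^{-1}K_{sw_1}^2)/\{\pm1\}$ over $M$, and that the involution $\tau$ covers the involution $M\mapsto \iota^*M$ on the base. Since $\tau$ acts on the base with fixed points exactly the $2^{2g-2}$ points where $\iota^*M\cong M$ (these are the fixed points of the $\Z_2$-action defining $\Prym^{sw_2}(\Sigma_{sw_1})/\Z_2$, cf. Proposition~\ref{Prop Prym disconnected} and the discussion of orbifold points of $\Prym^{sw_2}(\Sigma_{sw_1})/\Z_2$), any point of $\Hh'$ fixed by $\tau$ must lie over such an $M$. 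So the fixed locus is contained in the restriction of $\Hh'$ to these $2^{2g-2}$ points, and over each such $M$ I would work out which classes $[(M,f,\mu)]$ are $\tau$-fixed: using $\iota^*M\cong M^{-1}\cong M$ and combining $\tau$ with the $\C^*$-action \eqref{eq C* SO23}, the class is fixed precisely when $\iota^*\mu=\pm\mu$ after rescaling, i.e. $\mu$ lies in the $\pm1$-eigenspaces of $\iota^*$ on $H^0(\Sigma_{sw_1},M^{-1}K_{sw_1}^2)$. This bookkeeping mirrors exactly the gauge-equivalence computation of Proposition~\ref{Prop: equivalence_nonvanishing} and the orbifold analysis of Proposition~\ref{Prop: smooth/orbifold in M sw1not0}, so I would cite those rather than redo it.

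Next I would assemble the four cases in the statement. The image of $\{[(M,f,0)]\}$: when $M=\iota^*M$ the point is $\tau$-fixed and already carried a $\Z_2$-orbifold structure in $\Hh'$ (the stabilizer from the $\C^*$-action at $\mu=0$), so in the quotient the local group is $\Z_2\oplus\Z_2$ — this gives case (1); when $M\neq\iota^*M$ the $\Z_2$ orbifold structure of $\Hh'$ survives but $\tau$ moves the point, so the image is a (non-closed) orbifold submanifold with group $\Z_2$ — case (2). For $\mu\neq 0$: if $M=\iota^*M$ and $\mu$ can be normalized so $\iota^*\mu=\mu$ (the $+1$-eigenspace, after absorbing signs via \eqref{eq C* SO23}), the point is $\tau$-fixed but $\Hh'$ was smooth there, giving a $\Z_2$-orbifold submanifold — case (3); and everything else is a free orbit of $\tau$ on the smooth part, hence smooth — case (4). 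To see that the locus $\{[(M,f,0)]\}$ is closed and orbifold-isomorphic to $\Prym^{sw_2}(\Sigma_{sw_1})/\Z_2$: the map $[(M,f,0)]\mapsto M$ is a bijection onto $\Prym^{sw_2}(\Sigma_{sw_1})$ (any two choices of $f$ differ by a $\C^*$-rescaling, which acts on the $\mu=0$ slice with the full $\C^*$, collapsing the $f$-fiber to a point), it is closed because $\mu=0$ is a closed condition, and it is $\tau$-equivariant, so it descends to an orbifold isomorphism with the quotient.

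Finally, for the homotopy equivalence with $\Prym^{sw_2}(\Sigma_{sw_1})/\Z_2$, I would produce a $\Z_2$-equivariant deformation retraction of $\Hh'$ onto the zero-section locus $\{[(M,f,0)]\}$ and then pass to quotients. Concretely, on $\Hh$ the scaling $t\cdot(M,f,\mu)=(M,f,t\mu)$ for $t\in[0,1]$ retracts onto $\{\mu=0\}$; this commutes with the $\C^*$-action \eqref{eq C* SO23} only up to reparametrization, so I would instead use the well-defined radial retraction on the quotient fiber $H^0(\Sigma_{sw_1},M^{-1}K_{sw_1}^2)/\{\pm1\}$ scaling toward its cone point, which is manifestly $\tau$-equivariant and descends to $\Hh'/\Z_2$. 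Since equivariant retractions induce retractions on orbifold quotients, this yields a deformation retraction of $\Hh'/\Z_2$ onto the image of $\{[(M,f,0)]\}$, which we already identified with $\Prym^{sw_2}(\Sigma_{sw_1})/\Z_2$.

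The main obstacle I anticipate is the careful handling of the interaction between the $\C^*$-action and the $\Z_2$-action when identifying stabilizers — in particular verifying that at $\mu\neq 0$ fixed points the orbifold group is exactly $\Z_2$ and not $\Z_2\oplus\Z_2$, which requires checking that no residual $\C^*$-stabilizer survives once $\mu\neq0$, and tracking how signs from \eqref{eq C* SO23} convert the condition $\iota^*\mu=-\mu$ into $\iota^*\mu=\mu$. This is the same delicate sign-chasing already present in Propositions~\ref{Prop: equivalence_nonvanishing} and~\ref{Prop: smooth/orbifold in M sw1not0}, so the safest route is to reduce each case to a direct citation of those results applied to the pulled-back Higgs bundle on $\Sigma_{sw_1}$.
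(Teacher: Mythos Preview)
Your proposal is correct and follows essentially the same approach as the paper: compute stabilizers of the $\Z_2$-action on $\Hh'$ (keeping track of the pre-existing $\Z_2$-orbifold structure at $\mu=0$) to obtain the four cases, and then use a $\Z_2$-equivariant retraction to the zero section for the homotopy equivalence. The paper's own proof is extremely terse---it simply asserts that one computes stabilizers and that the fibers being contractible gives an equivariant retraction---so your write-up is in fact considerably more detailed than what appears there, but the underlying argument is identical.
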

\begin{proof}
The action of the group $\Z_2$ is not free, so the quotient is an orbifold. To understand the orbifold points we just need to compute the stabilizer of every point. Since $\Hh'$ is a bundle whose fiber is contractible, it can be retracted to its zero section. The retraction can be made in a $\Z_2$ equivariant way, so this passes to the quotient. 
\end{proof}

The $\Z_2$-action on $\Hh'$ can be extended trivially to an action on $\Hh'\times H^0(\Sigma,K^2)$.

\begin{Proposition}  \label{prop:non-zero-sw1}
There is a $\Z_2$-invariant surjective map 
\[\widehat\Psi:\xymatrix{\Hh'\times H^0(\Sigma,K^2)\ar[r]& \Mm^{\mathrm{max},sw_2}_{sw_1}(\sSO_0(2,3))}~.\] 
This induces a bijective map on the quotient
\[\Psi: \left(\Hh' / \Z_2\right)\times H^0(\Sigma,K^2)\lra \Mm^{\mathrm{max},sw_2}_{sw_1}(\sSO_0(2,3)) ~.\]
\end{Proposition}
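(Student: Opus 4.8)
The plan is to construct $\widehat\Psi$ out of the correspondence of Proposition~\ref{Prop: pullbackHiggsBundles} and to deduce every property of $\widehat\Psi$ and $\Psi$ from Propositions~\ref{Prop: pullbackHiggsBundles}, \ref{Prop: equivalence_nonvanishing} and \ref{Prop sw1 not 0 stability}. First I would define
\[\widehat\Psi_0\colon\ \Hh\times H^0(\Sigma,K^2)\ \longrightarrow\ \Mm^{\mathrm{max},sw_2}_{sw_1}(\sSO_0(2,3))\]
by sending $((M,f,\mu),q_2)$ to the isomorphism class of the $\sSO_0(2,3)$-Higgs bundle it determines via the construction underlying Proposition~\ref{Prop: pullbackHiggsBundles}: descend $M\oplus M^{-1}$ along $f$ to the orthogonal bundle $(\Ff,Q_F)$ on $\Sigma$, recover $\delta$ from $\mu$ and $f$ (using the constraint $\iota^*\nu\circ f=\mu$ to produce an $\iota$-invariant $\pi^*\delta$), and form the triple $((\Ff,Q_F),q_2,\delta)$. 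By Proposition~\ref{Prop sw1 not 0 stability} this triple is always poly-stable, so $\widehat\Psi_0$ is well defined; and since $M\in\Prym^{sw_2}(\Sigma_{sw_1})$ forces $(\Ff,Q_F)$ to have Stiefel--Whitney classes $(sw_1,sw_2)$ (Section~\ref{orthogonal_bundles_nonvanishing} together with Proposition~\ref{Prop invariants}), the image lands in $\Mm^{\mathrm{max},sw_2}_{sw_1}(\sSO_0(2,3))$.

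Next I would compare $\widehat\Psi_0$ with the equivalence relation of Proposition~\ref{Prop: equivalence_nonvanishing}. Family $(1)$ there, after the substitution $\lambda\mapsto\lambda^{-1}$, is literally the orbit relation of the $\C^*$-action \eqref{eq C* SO23}; hence $\widehat\Psi_0$ is $\C^*$-invariant and descends to $\widehat\Psi\colon\Hh'\times H^0(\Sigma,K^2)\to\Mm^{\mathrm{max},sw_2}_{sw_1}(\sSO_0(2,3))$. Family $(2)$, using $\iota^*M\cong M^{-1}$ for $M\in\Prym(\Sigma_{sw_1})$, says exactly that $(M,f,\mu,q_2)$ and $(\iota^*M,\iota^*f,\iota^*\mu,q_2)$ define isomorphic Higgs bundles up to the $\C^*$-ambiguity that has already been quotiented out; so $\widehat\Psi$ is invariant under the $\Z_2$-action \eqref{eq Z2 action So23} and descends further to $\Psi\colon(\Hh'/\Z_2)\times H^0(\Sigma,K^2)\to\Mm^{\mathrm{max},sw_2}_{sw_1}(\sSO_0(2,3))$.

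It then remains to check that $\Psi$ is a bijection. Surjectivity is immediate from Proposition~\ref{Prop: pullbackHiggsBundles}: every poly-stable maximal $\sSO_0(2,3)$-Higgs bundle with first Stiefel--Whitney class $sw_1$ is determined by a tuple $(M,f,\mu,q_2)$ with $f$ an isomorphism, i.e.\ $(M,f,\mu)\in\Hh$, and it lies in $\Mm^{\mathrm{max},sw_2}_{sw_1}$ precisely when $M\in\Prym^{sw_2}(\Sigma_{sw_1})$. For injectivity, suppose $\widehat\Psi([(M,f,\mu)],q_2)=\widehat\Psi([(M',f',\mu')],q_2')$; choosing representatives, the two tuples define isomorphic Higgs bundles, so by Proposition~\ref{Prop: equivalence_nonvanishing} they are related by $(1)$ or by $(2)$. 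In case $(1)$ we get $q_2=q_2'$ and the two tuples lie in one $\C^*$-orbit, hence define the same point of $\Hh'$; in case $(2)$ we get $q_2=q_2'$ and the second tuple is, up to a $\C^*$-action, the $\tau$-image of the first, hence the two classes agree in $\Hh'/\Z_2$. Either way the two points of $(\Hh'/\Z_2)\times H^0(\Sigma,K^2)$ coincide.

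The only step with any content is matching the explicit $\C^*$- and $\Z_2$-actions \eqref{eq C* SO23}--\eqref{eq Z2 action So23} with the two families of equivalences in Proposition~\ref{Prop: equivalence_nonvanishing}: this amounts to keeping careful track of the identifications $\iota^*M\cong M^{-1}$, $\iota^*K_{sw_1}=K_{sw_1}$, and of how $\iota^*$ interacts with inverting $f$ and with the section $\mu$. Since essentially this bookkeeping is already present in the proof of Proposition~\ref{Prop: equivalence_nonvanishing}, I expect no real obstacle, and the rest of the argument is purely formal.
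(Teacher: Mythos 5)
Your proposal is correct and follows essentially the same route as the paper: the paper's proof simply cites the surjective map constructed in Section \ref{HB_non_vanishing_sw_1} and invokes Proposition \ref{Prop: equivalence_nonvanishing} for $\Z_2$-invariance and injectivity on the quotient, which is exactly the argument you spell out (you just make the matching of the $\C^*$- and $\Z_2$-actions with the two equivalence families explicit). No gaps.
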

\begin{proof}
In Section \ref{HB_non_vanishing_sw_1}, we described a surjective map from $\Hh'\times H^0(K^2)$ to the space $\Mm_{sw_1}^{\mathrm{max},sw_2}(\sSO_0(2,3))$. By Proposition \ref{Prop: equivalence_nonvanishing} this map is $\Z_2$-invariant and injective on the quotient by $\Z_2$.
\end{proof}

\begin{Theorem}\label{THM HiggsParamsw1sw2orbifold}
Let $(sw_1,sw_2)\in H^1(\Sigma,\Z_2)\times H^2(\Sigma,\Z_2)$ be a pair of cohomology classes with $sw_1\neq 0$. Let $\Mm^{\mathrm{max},sw_2}_{sw_1}(\sSO_0(2,3))$ be the corresponding component of moduli space of maximal $\sSO_0(2,3)$-Higgs bundles from \eqref{maximalSO(2,3)decomp}, and let $\Hh'\to\Prym^{sw_2}(X_{sw_1},\Sigma)$ be the bundle defined above. 
There is an orbifold isomorphism between $\Mm^{\mathrm{max},sw_2}_{sw_1}(\sSO_0(2,3))$ and the space 
$$\left(\Hh' / \Z_2\right)\times H^0(\Sigma,K^2)~.$$
\end{Theorem}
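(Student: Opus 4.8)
The plan is to promote the bijection $\Psi$ of Proposition \ref{prop:non-zero-sw1} to an isomorphism of complex analytic orbifolds. Concretely, the two things to establish are that $\Psi$ is biholomorphic away from the singular loci and that it identifies the local uniformizing systems at the orbifold points.

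First I would show that $\Psi$ is a holomorphic homeomorphism. The constructions of Sections \ref{HB_non_vanishing_sw_1} and \ref{parameterizing_nonvanishing} — the Poincar\'e bundle on $\Prym^{sw_2}(\Sigma_{sw_1})\times\Sigma_{sw_1}$ entering Propositions \ref{bundleoverPrym} and \ref{bundleoverPrym2}, together with the explicit $\sSL(5,\C)$-Higgs bundle of Remark \ref{SL5 Higgs bundle for sw_1} — assemble into a holomorphic family of poly-stable $\sSO_0(2,3)$-Higgs bundles over $\Hh\times H^0(\Sigma,K^2)$. By the coarse moduli property of $\Mm(\Sigma,\sSO_0(2,3))$ this induces a holomorphic map to $\Mm^{\mathrm{max},sw_2}_{sw_1}(\sSO_0(2,3))$, which by Proposition \ref{Prop: equivalence_nonvanishing} is invariant under the $\C^*$-action \eqref{eq C* SO23} and the $\Z_2$-action \eqref{eq Z2 action So23}; since $\Hh'/\Z_2=(\Hh/\C^*)/\Z_2$ is a geometric quotient, it descends to the holomorphic bijection $\Psi$. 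Conversely, Propositions \ref{splittingofWProp} and \ref{Prop: pullbackHiggsBundles} recover the tuple $(M,f,\mu,q_2)$ from a maximal $\sSO_0(2,3)$-Higgs bundle by holomorphic operations — the canonical isotropic sub-bundle $L\subset\Vv$ of degree $2g-2$, then $\Ff=\gamma(LK^{-1})^\perp$, then pullback to $\Sigma_{sw_1}$ — so $\Psi^{-1}$ is continuous and $\Psi$ is a homeomorphism.

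Next I would record the dimension count $\dim_\C\bigl((\Hh'/\Z_2)\times H^0(\Sigma,K^2)\bigr)=(7g-7)+(3g-3)=10g-10=(g-1)\dim\sSO_0(2,3)$, matching $\dim_\C\Mm^{\mathrm{max},sw_2}_{sw_1}(\sSO_0(2,3))$, and compare singular loci: under $\Psi$ the $\Z_2\oplus\Z_2$-orbifold points of Proposition \ref{prop:description-orbifold} correspond to those of Proposition \ref{Prop: smooth/orbifold in M sw1not0} (the tuples with $M=\iota^*M$ and $\mu=0$), and likewise the two families of $\Z_2$-orbifold points correspond. Hence $\Psi$ respects the singular strata, and on the complementary smooth loci it is an injective holomorphic map between complex manifolds of equal dimension, hence an open embedding and, being bijective, a biholomorphism. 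Near an orbifold point with isotropy $\Gamma\in\{\Z_2,\Z_2\oplus\Z_2\}$, $\Mm$ is locally a Kuranishi ball modulo $\Aut(\Pp,\varphi)/\Zz(\sG_\C)$ by Proposition \ref{Proposition: Orbifold points}, while the matching point of $\Hh'/\Z_2$ is, by the computations behind Propositions \ref{Prop: equivalence_nonvanishing} and \ref{Prop: smooth/orbifold in M sw1not0}, a ball modulo the residual $\C^*$- and $\Z_2$-stabilizers, again $\Gamma$; I would lift $\Psi$ near such a point to a $\Gamma$-equivariant biholomorphism of these uniformizing balls, which together with the biholomorphy on the smooth locus gives the orbifold isomorphism.

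The step I expect to be the main obstacle is this last one: identifying the $\Aut(\Pp,\varphi)$-action on the Kuranishi slice of $\Mm$ at the orbifold points with the explicit residual $\C^*$- and $\Z_2$-stabilizer action on $\Hh'/\Z_2$, and thereby producing the $\Gamma$-equivariant local uniformizers for $\Psi$. Everything else — the moduli-functor formalism producing $\Psi$, the continuity of $\Psi^{-1}$, the dimension count, and the matching of singular strata — is essentially bookkeeping built on the explicit descriptions already in hand.
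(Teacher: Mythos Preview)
Your proposal is correct and follows essentially the same approach as the paper: the map is the bijection $\Psi$ of Proposition~\ref{prop:non-zero-sw1}, and the orbifold isomorphism is established by matching the orbifold stratifications given in Proposition~\ref{Prop: smooth/orbifold in M sw1not0} and Proposition~\ref{prop:description-orbifold}. The paper's proof is in fact just these two sentences, while you supply considerably more detail --- the holomorphic family via the Poincar\'e bundle, the dimension count, biholomorphy on the smooth locus via equidimensional injectivity, and the Kuranishi--slice comparison at the singular points --- none of which the paper makes explicit but all of which are implicit in its assertion that the map ``is an orbifold isomorphism.''
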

\begin{proof} The isomorphism is given by the map described in Proposition \ref{prop:non-zero-sw1}. 
This map is an orbifold isomorphism by Proposition \ref{Prop: smooth/orbifold in M sw1not0} and Proposition \ref{prop:description-orbifold}.  
\end{proof}
\begin{Corollary}
    The component $\Mm_{sw_1}^{\mathrm{max},sw_2}(\sSO_0(2,3))$ is homotopically equivalent to the space $\Bb_{sw_1}^{sw_2}(\sO(2,\C)).$ Its rational cohomology is given by Proposition \ref{Prop: cohomology of Torus mod inversion}.
\end{Corollary}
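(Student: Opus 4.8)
The plan is to concatenate the structural identifications established in this section and in Section \ref{orthogonal_bundles_nonvanishing}; essentially all of the work has already been done, so the argument is a short chain of homotopy equivalences.

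First I would invoke Theorem \ref{THM HiggsParamsw1sw2orbifold}, which gives an orbifold isomorphism, and in particular a homeomorphism of underlying topological spaces,
\[
\Mm^{\mathrm{max},sw_2}_{sw_1}(\sSO_0(2,3)) \;\cong\; \bigl(\Hh'/\Z_2\bigr)\times H^0(\Sigma,K^2).
\]
Since $H^0(\Sigma,K^2)$ is a finite-dimensional complex vector space, hence contractible, projection to the first factor is a homotopy equivalence, so $\Mm^{\mathrm{max},sw_2}_{sw_1}(\sSO_0(2,3)) \simeq \Hh'/\Z_2$. Next I would apply the last assertion of Proposition \ref{prop:description-orbifold}, namely that $\Hh'/\Z_2$ deformation retracts onto the image of the points $[(M,f,0)]$, which is orbifold-isomorphic to $\Prym^{sw_2}(\Sigma_{sw_1})/\Z_2$. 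Finally, the Proposition of Section \ref{orthogonal_bundles_nonvanishing} identifies $\Bb_{sw_1}^{sw_2}(\Sigma,\sO(2,\C))$ with $\Prym^{sw_2}(\Sigma_{sw_1})/\Z_2$ via the pullback $\pi^*$. Composing these homotopy equivalences yields $\Mm^{\mathrm{max},sw_2}_{sw_1}(\sSO_0(2,3)) \simeq \Bb_{sw_1}^{sw_2}(\Sigma,\sO(2,\C))$, which is the first assertion.

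For the cohomology statement I would then recall from Proposition \ref{Prop Prym disconnected} that $\Prym^{sw_2}(\Sigma_{sw_1})$ is an abelian variety of complex dimension $g-1$, hence homeomorphic to the real torus $(S^1)^{2g-2}$, with $\Z_2$ acting by the inversion involution; the third bullet of Proposition \ref{Prop: cohomology of Torus mod inversion} then computes the rational cohomology of $\Bb_{sw_1}^{sw_2}(\Sigma,\sO(2,\C))$, and the homotopy equivalence above transports the answer to $\Mm^{\mathrm{max},sw_2}_{sw_1}(\sSO_0(2,3))$.

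I do not anticipate a genuine obstacle: the content has been front-loaded into Theorem \ref{THM HiggsParamsw1sw2orbifold} and Propositions \ref{prop:description-orbifold}, \ref{Prop Prym disconnected} and \ref{Prop: cohomology of Torus mod inversion}. The only points requiring minor care are that a chain of homotopy equivalences is again a homotopy equivalence, and that the two a priori different occurrences of $\Prym^{sw_2}(\Sigma_{sw_1})/\Z_2$ — one as the orbifold base in the Higgs bundle picture, one as the moduli space $\Bb_{sw_1}^{sw_2}(\sO(2,\C))$ of rank-two orthogonal bundles — are the same space with the same $\Z_2$-action, which is immediate from their constructions. It is worth noting that this corollary is the exact analogue, for components with $sw_1\neq 0$, of the Corollaries to Theorems \ref{THM d>0} and \ref{thm:zero_component}, so the argument parallels those.
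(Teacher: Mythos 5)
Your argument is correct and is precisely the chain of identifications the paper intends: the orbifold isomorphism of Theorem \ref{THM HiggsParamsw1sw2orbifold}, contractibility of $H^0(\Sigma,K^2)$, the retraction of $\Hh'/\Z_2$ onto $\Prym^{sw_2}(\Sigma_{sw_1})/\Z_2$ from Proposition \ref{prop:description-orbifold}, the identification of that quotient with $\Bb_{sw_1}^{sw_2}(\sO(2,\C))$ via $\pi^*$, and then Proposition \ref{Prop: cohomology of Torus mod inversion} for the cohomology. No gaps.
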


\subsection{Zariski closures of maximal $\sP\sSp(4,\R)$-representations}

In this section we will use the parameterizations from the previous section to compute the Zariski closure of a maximal representation. This will play a key role in Section \ref{minimal}. 

Let $\Xx^{\mathrm{max}}(\sSO_0(2,3))$ denote the subset of the $\sSO_0(2,3)$-character variety which corresponds to $\Mm^{\mathrm{max}}(\sSO_0(2,3)),$ we will call $\rho\in\Xx^{\max}(\sSO_0(2,3))$ a maximal representation.
Using the correspondence between Higgs bundles and representations, for each integer $d\in[0,4g-4]$ we will suggestively denote the connected component of $\Xx^\mathrm{max}(\Gamma,\sSO_0(2,3))$ corresponding to $\Mm_{0,d}^\mathrm{max}(\sSO_0(2,3))$ by $\Xx^\mathrm{max}_{0,d}(\Gamma,\sSO_0(2,3))$. Similarly, for each $(sw_1,sw_2)\in H^1(S,\Z_2)\setminus\{0\}\times H^2(S,\Z_2)$ we will denote the connected component of $\Xx^\mathrm{max}(\Gamma,\sSO_0(2,3))$ corresponding to $\Mm_{sw_1}^{\mathrm{max},sw_2}(\sSO_0(2,3))$ by $\Xx^{\mathrm{max},sw_2}_{sw_1}(\Gamma,\sSO_0(2,3)).$ 
Thus $\Xx^\mathrm{max}(\Gamma,\sSO_0(2,3))$ decomposes as 
\[\bigsqcup\limits_{d\in[0,4]}\Xx^\mathrm{max}_{0,d}(\Gamma,\sSO_0(2,3))\sqcup\bigsqcup\limits_{\substack{(sw_1,sw_2)\in\\ H^1(S,\Z_2)\setminus\{0\}\times H^2(S,\Z_2)}}\Xx^{\mathrm{max},sw_2}_{sw_1}(\Gamma,\sSO_0(2,3))~.\]

To determine when a maximal $\sSO_0(2,3)$-Higgs bundle gives rise to a maximal representation $\rho$ with smaller Zariski closure we need the following definition of reduction of structure group for a Higgs bundle.
\begin{Definition}\label{DEF:Higgs bundle Reduction}
    Let $\sG$ and $\sG'$ be reductive Lie groups with maximal compact subgroups $\sH$ and $\sH'$ and Cartan decompositions $\fg=\fh\oplus\fm$ and $\fg=\fh'\oplus\fm'$. Given $i:\sG'\ra\sG$, we can always assume, up to changing $i$ by a conjugation, that $i(\sH')\subset\sH$ and $di(\fm')\subset\fm.$ A $\sG$-Higgs bundle $(\Pp,\varphi)$ {\em reduces to a $\sG'$-Higgs bundle} $(\Pp',\varphi)$ if the holomorphic $\sH_\C$-bundle $\Pp$ admits a holomorphic reduction of structure group to an $\sH_\C'$-bundle $\Pp'$ and, with respect to this reduction, $\varphi\in H^0(\Pp'[\fm_\C']\otimes K)\subset H^0(\Pp[\fm_\C]\otimes K)$. 
\end{Definition}

Using the non-abelian Hodge correspondence, this definition can be interpreted as a property of the corresponding representation of $\Gamma = \pi_1(\Sigma)$. 

\begin{Proposition}
    Let $\sG'$ be a reductive Lie subgroup of a reductive Lie group $\sG.$ The Zariski closure of a reductive representation $\rho:\Gamma\ra\sG$ is contained in $\sG'$ up to conjugation if and only if the corresponding poly-stable $\sG$-Higgs bundle reduces to a $\sG'$-Higgs bundle. 
\end{Proposition}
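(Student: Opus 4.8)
The plan is to prove both directions by going through the non-abelian Hodge correspondence and the characterization of Zariski closures in terms of equivariant harmonic maps, reducing the statement to a well-known principle: a harmonic metric (equivalently, the solution of the Hitchin equations) for a poly-stable $\sG$-Higgs bundle is compatible with a reduction of structure group to $\sG'$ precisely when the Higgs bundle itself reduces holomorphically to a $\sG'$-Higgs bundle. I would cite \cite{HiggsPairsSTABILITY} and \cite{canonicalmetrics} for the analytic input.

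First I would recall the setup. Fix a Riemann surface structure $\Sigma$ on $S$ and let $(\Pp,\varphi)$ be the poly-stable $\sG$-Higgs bundle corresponding to $\rho$ under Theorem \ref{THM: NAHC}. By the theorem of \cite{HiggsPairsSTABILITY}, poly-stability gives a harmonic metric, i.e.\ a reduction $h$ of $\Pp$ to the maximal compact $\sH$ solving the Hitchin equations; the flat $\sG$-connection $\nabla = \nabla_h + \varphi + \varphi^{*_h}$ has holonomy conjugate to $\rho$, and the associated $\rho$-equivariant harmonic map $\tilde\Sigma \to \sG/\sH$ is the one underlying $h$.

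Now suppose $\rho$ has Zariski closure contained in $\sG'$ (up to conjugation). Then the flat bundle reduces to a flat $\sG'$-bundle, and a standard uniqueness argument (the harmonic metric of a reductive flat bundle is unique, and the $\sG'$-harmonic metric on the reduction is also a $\sG$-harmonic metric) shows that the harmonic reduction $h$ takes values in $\sH'$, i.e.\ $\Pp$ reduces smoothly to an $\sH_\C'$-bundle in a way compatible with $h$. The Hitchin equations for the reduction then show this smooth reduction is holomorphic, and $\varphi$ lands in $H^0(\Pp'[\fm_\C']\otimes K)$ because $\varphi$ is the $(1,0)$-part of $\nabla$ and $\nabla$ is a $\sG'$-connection. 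This is exactly Definition \ref{DEF:Higgs bundle Reduction}. Conversely, if $(\Pp,\varphi)$ reduces to a poly-stable $\sG'$-Higgs bundle $(\Pp',\varphi)$, solve the Hitchin equations for $(\Pp',\varphi)$ inside $\sG'$; the resulting harmonic metric is also a solution for $(\Pp,\varphi)$ as a $\sG$-Higgs bundle, hence by uniqueness it is \emph{the} harmonic metric, and the flat connection it produces is a $\sG'$-connection, so $\rho$ factors (up to conjugation) through $\sG'$; taking Zariski closures gives the claim. One subtlety to address: poly-stability of $(\Pp,\varphi)$ as a $\sG$-Higgs bundle need not be literally equivalent to poly-stability of the reduction as a $\sG'$-Higgs bundle, but for reductive $\sG'$ the relevant implication (poly-stable $\sG'$-Higgs bundle $\Rightarrow$ poly-stable $\sG$-Higgs bundle, with the same harmonic metric) is what we need, and it follows from \cite{HiggsPairsSTABILITY}; in the reverse direction one uses that a $\sG$-harmonic metric valued in $\sH'$ automatically solves the $\sG'$-Hitchin equations.

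The main obstacle is the careful handling of the equivalence between ``holomorphic reduction of the Higgs bundle'' and ``harmonic-metric-compatible reduction of the flat bundle'': one must invoke uniqueness of the harmonic metric for reductive representations and check that the reduction one obtains analytically is holomorphic (this is where the Hitchin equations, rather than just flatness, are used) and that the Higgs field lies in the smaller twisted bundle. All of this is standard once phrased correctly, so I would present it at the level of citing \cite{HiggsPairsSTABILITY,canonicalmetrics,selfduality} and the uniqueness of harmonic metrics, rather than reproving the analysis. I would also remark that this proposition will be applied in Section \ref{minimal} only to the specific subgroups of $\sSO_0(2,3)$ arising from the parameterizations, where poly-stability of the reductions has already been verified explicitly (Propositions \ref{Prop sw1=0 stability} and \ref{Prop sw1 not 0 stability}).
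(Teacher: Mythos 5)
The paper states this proposition with no proof at all --- it is presented as a standard consequence of the non-abelian Hodge correspondence, immediately after Definition \ref{DEF:Higgs bundle Reduction} --- so there is nothing in the text to compare your argument against. Your sketch is the standard argument and is essentially sound: in the forward direction, a reductive $\rho$ with Zariski closure in $\sG'$ is still reductive as a representation into $\sG'$ (since $\sG'$ is Zariski closed in $\sG$), Corlette gives a $\rho$-equivariant harmonic map into $\sG'/\sH'$, the totally geodesic embedding $\sG'/\sH'\hookrightarrow\sG/\sH$ makes this a $\sG$-harmonic metric, and uniqueness up to the centralizer (Theorem \ref{CorletteTheorem}) identifies it, after an isomorphism, with the metric producing the $\sG$-Higgs bundle, which therefore reduces holomorphically with Higgs field in $\Pp'[\fm'_\C]\otimes K$.

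The one soft spot is in the converse, and you have flagged the right issue but oriented it the wrong way. Definition \ref{DEF:Higgs bundle Reduction} asks only for a holomorphic reduction, not a poly-stable one, so before you can ``solve the Hitchin equations for $(\Pp',\varphi)$ inside $\sG'$'' you must know that a holomorphic $\sG'$-reduction of a poly-stable $\sG$-Higgs bundle is itself poly-stable as a $\sG'$-Higgs bundle (equivalently, that the $\sG$-harmonic metric can be chosen compatible with the reduction). The implication you quote from \cite{HiggsPairsSTABILITY} --- poly-stable for $\sG'$ implies poly-stable for $\sG$, with the same metric --- is what lets you transport a $\sG'$-solution up to $\sG$ and conclude by uniqueness; it does not by itself produce the $\sG'$-solution. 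The missing implication is also in the literature (it is part of the functoriality of the Hitchin--Kobayashi correspondence under reductive homomorphisms), but it is the genuinely nontrivial content of the converse and should be cited as such rather than the other direction. Your closing remark is apt: in the paper's actual applications (Propositions \ref{Prop S(O(2,1)xO(2)) Higgs bundle} through \ref{Prop Higgs reduction sw1not0}) the reductions are written down explicitly and their poly-stability is verified by hand, so nothing in the paper depends on the general form of this step.
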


 In \cite{BIWmaximalToledoAnnals}, it is shown that if the Zariski closure of a maximal representation $\rho$ is a proper subgroup $\sG'\subset\sSO_0(2,3),$ then $\sG'$ is a group of Hermitian type and the the inclusion map $\sG'\to\sSO_0(2,3)$ is a \emph{tight embedding}. Moreover, the associated representation $\rho:\Gamma\to \sG'$ is also maximal. 

The list of tightly embedded subgroups of $\sO(2,3)$ is as follows \cite{TightHomomorphismClassification}
\begin{itemize}
    \item $\sO(2,1)\times\sO(2)$ where the inclusion is induced by the isometric embedding of $\R^{2,1}\to\R^{2,3}$ which sends $(x_1,x_2,x_3)\to(x_1,x_2,x_3,0,0).$ 
    \item $\sO(2,2)\times \sO(1)$ where the inclusion is induced by the isometric embedding of $\R^{2,2}\to\R^{2,3}$ which sends $(x_1,x_2,x_3,x_4)\to(x_1,x_2,x_3,x_4,0).$ 
    \item $\sO(2,1)$ where the inclusion is induced by the irreducible five dimensional representation of $\sO(2,1).$
\end{itemize} 
Denote the subgroup of $\sO(2,1)\times\sO(2)$ contained in $\sSO_0(2,3)$ by $\sS_0(\sO(2,1) \times \sO(2))$ and the subgroup of $\sO(2,2)\times\sO(1)$ contained in $\sSO_0(2,3)$ by $\sS_0(\sO(2,2)\times\sO(1))$. Both of these groups have two connected component.
The identity component of $\sS_0(\sO(2,1)\times\sO(2))$ is $\sSO_0(2,1)\times\sSO(2)$ and the identity component of $\sS_0(\sO(2,2)\times\sO(1))$ is $\sSO_0(2,2).$ The subgroup of $\sO(2,1)$ contained in $\sSO_0(2,3)$ is $\sSO_0(2,1).$ 

\begin{Proposition}\label{Prop S(O(2,1)xO(2)) Higgs bundle}
    An $\sS_0(\sO(2,1)\times\sO(2))$-Higgs bundle is determined by a tuple $(L,\Ww,\gamma,\beta)$ where $L\in\Pic(\Sigma)$, $\Ww$ is a rank two holomorphic orthogonal bundle, $\gamma\in H^0(L^{-1}\det(\Ww)\otimes K)$ and $\beta\in H^0(L\otimes\det(\Ww)\otimes K).$ Moreover, such a Higgs bundle reduces to an $\sSO_0(2,1)\times\sSO(2)$-Higgs bundle if and only if $\det(\Ww)=\Oo.$
\end{Proposition}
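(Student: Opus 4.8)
The plan is to compute the complexified Cartan decomposition of $\sG':=\sS_0(\sO(2,1)\times\sO(2))$ directly, read off the $\sG'$-Higgs bundle data from Definition \ref{DEF Higgs bundle}, and then settle the reducibility claim via Definition \ref{DEF:Higgs bundle Reduction}. First I would pin down a maximal compact subgroup of $\sG'$ and its complexification. Decomposing $\R^{2,1}=\R^{2,0}\oplus\R^{0,1}$, the maximal compact of $\sO(2,1)$ is $\sO(2)\times\sO(1)$, so a maximal compact subgroup of $\sG'$ is $\sSO(2)\times\sS(\sO(1)\times\sO(2))$, where the ``$\sS$'' imposes that the product of the determinants of the $\sO(1)$- and $\sO(2)$-blocks equals $1$; complexifying gives $\sH'_\C=\sSO(2,\C)\times\sS(\sO(1,\C)\times\sO(2,\C))$. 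The useful point here is that $(a,B)\mapsto B$ is an isomorphism of $\sS(\sO(1,\C)\times\sO(2,\C))$ onto $\sO(2,\C)$, with inverse $B\mapsto(\det(B)^{-1},B)$. Consequently a holomorphic $\sH'_\C$-bundle is the same thing as a pair consisting of a holomorphic $\sSO(2,\C)$-bundle $\Vv$ and a holomorphic rank-two orthogonal bundle $(\Ww,Q_W)$, the rank-one orthogonal bundle in the reduction being forced to be $\Ii:=\det(\Ww)^{-1}\cong\det(\Ww)$ (using $\det(\Ww)^2\cong\Oo$). By the classification recalled in Section \ref{orthogonal_bundles_vanishing} (see \eqref{SO(2,C) holomorphic bundles}), $\Vv=L\oplus L^{-1}$ for a unique $L\in\Pic(\Sigma)$.

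Next I would work out the space in which the Higgs field lives. The Cartan decomposition $\fso(2,1)=\fso(2)\oplus\fm_{\fso(2,1)}$ has $\fm_{\fso(2,1)}\cong\Hom(\R^{0,1},\R^{2,0})$, on which the $\sO(2)$-block acts by its standard representation and the $\sO(1)$-block by its sign character, while the $\sO(2)$-factor of $\sG'$ acts trivially, being compact. Hence $\fm'_\C=\Hom(V,\C_{\mathrm{sgn}})$, and its associated bundle is $\Hom(\Vv,\Ii)=L^{-1}\det(\Ww)\oplus L\det(\Ww)$, so a Higgs field is a pair $\varphi=(\gamma,\beta)$ with $\gamma\in H^0(\Sigma,L^{-1}\det(\Ww)\otimes K)$ and $\beta\in H^0(\Sigma,L\det(\Ww)\otimes K)$. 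This is exactly the asserted tuple $(L,\Ww,\gamma,\beta)$; as a consistency check one sees that extending the structure group back to $\sSO(2,\C)\times\sSO(3,\C)$ produces an $\sSO_0(2,3)$-Higgs bundle in the sense of Definition \ref{DEF of SO(2,3) Higgs bundle} with rank-three orthogonal bundle $\det(\Ww)\oplus\Ww$ and with the Higgs field supported in the $\det(\Ww)$-summand.

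For the reducibility statement I would take $\sG'':=\sSO_0(2,1)\times\sSO(2)$ with maximal compact $\sSO(2)\times\sSO(2)$, included into the maximal compact of $\sG'$ above as $\sSO(2,\C)\times\sSO(2,\C)\subset\sSO(2,\C)\times\sO(2,\C)$; crucially $\fm_{\sG''}=\fm_{\fso(2,1)}\oplus\fm_{\fso(2)}=\fm_{\fso(2,1)}=\fm'$, so the differential of the inclusion is the identity on the $\fm$-factors. By Definition \ref{DEF:Higgs bundle Reduction}, the $\sG'$-Higgs bundle $(L,\Ww,\gamma,\beta)$ reduces to a $\sG''$-Higgs bundle exactly when the $\sH'_\C$-bundle reduces to $\sH''_\C$, i.e. when $(\Ww,Q_W)$ reduces from $\sO(2,\C)$ to $\sSO(2,\C)$, since the condition on the Higgs field is automatic once $\fm_{\sG''}=\fm'$; and the obstruction to reducing $(\Ww,Q_W)$ to $\sSO(2,\C)$ is $sw_1(\Ww,Q_W)$, which vanishes if and only if $\det(\Ww)\cong\Oo$. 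The main obstacle, though a bookkeeping issue rather than a conceptual one, is correctly tracking the determinant-one (``$\sS$'') coupling between the $\sO(1,\C)$- and $\sO(2,\C)$-blocks: this is what forces the rank-one bundle to be $\det(\Ww)$ rather than an independent square-trivial line bundle, and it is also why the noncompact directions of $\fso(2,1)$ pair with the $\R^{0,1}$-line, so that $\gamma,\beta$ are sections of line bundles twisted by $\det(\Ww)$ rather than $\Ww$-valued; once this is straight, the vanishing of the rank-one bundle and the reducibility of $\Ww$ to $\sSO(2,\C)$ collapse into the single condition $\det(\Ww)\cong\Oo$.
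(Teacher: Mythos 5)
Your proposal is correct and follows essentially the same route as the paper: identify the maximal compact $\sO(2)\times\sO(1)\times\sO(2)$ of $\sO(2,1)\times\sO(2)$, observe that membership in $\sS_0(\sO(2,1)\times\sO(2))$ forces $A\in\sSO(2)$ and ties the $\sO(1)$-block to $\det$ of the $\sO(2)$-block (so the rank-one bundle is $\det(\Ww)$ and the Higgs field is valued in $\det(\Ww)\otimes K$), and then characterize the reduction to $\sSO_0(2,1)\times\sSO(2)$ by the vanishing of $sw_1(\Ww,Q_W)$, i.e. $\det(\Ww)\cong\Oo$. The paper's argument is just a terser version of the same computation, so there is nothing substantive to flag.
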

\begin{proof}
    The maximal compact subgroup of $\sO(2,1)\times\sO(2)$ is $\sH=\sO(2)\times\sO(1)\times\sO(2).$ A triple $(A,B,C)\in\sH$ belongs to $\sS_0(\sO(2,1)\times\sO(2))$ if and only if $A\in\sSO(2),$ and $B=\det(C)$. 
    Thus, an $\sS_0(\sO(2,1)\times\sO(2))$-Higgs bundle is given by a holomorphic $\sO(2,\C)$-bundle $\Ww$, a holomorphic $\sO(1,\C)$-bundle given by $\det(\Ww)$, an $\sSO(2,\C)$-bundle $(L\oplus L^{-1})$, and a holomorphic map \[\eta=(\gamma,\beta): L\oplus L^{-1} \to \det(\Ww)\otimes K~.\] 
Such a Higgs bundle reduces to $\sSO_0(2,1)$ if and only if $\det(\Ww)=\Oo$.
\end{proof}

\begin{Proposition}\label{Prop S(O(2,2)xO(1)) Higgs bundle}
    An $\sS_0(\sO(2,2)\times\sO(1))$-Higgs bundle is determined by a tuple $(L,\Ww,\gamma,\beta)$ where $L\in\Pic(\Sigma)$, $\Ww$ is a rank two holomorphic orthogonal bundle, $\gamma\in H^0(L^{-1}\otimes\Ww\otimes K)$ and $\beta\in H^0(L\otimes\Ww\otimes K).$ Moreover, such a Higgs bundle reduces to an $\sSO_0(2,2)$-Higgs bundle if and only if $\det(\Ww)=\Oo.$
\end{Proposition}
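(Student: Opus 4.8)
The plan is to follow the proof of Proposition~\ref{Prop S(O(2,1)xO(2)) Higgs bundle} line by line, with the roles of the two orthogonal blocks interchanged. Under the isometric embedding $\R^{2,2}\to\R^{2,3}$, $(x_1,x_2,x_3,x_4)\mapsto(x_1,x_2,x_3,x_4,0)$, the factor $\sO(2,2)$ acts on the first four coordinates and $\sO(1)$ on the last one. First I would fix a maximal compact subgroup $\sH=\sO(2)\times\sO(2)\times\sO(1)$ of $\sO(2,2)\times\sO(1)$, where the first $\sO(2)$ is the compact part of the positive definite block of $\R^{2,2}$, the second $\sO(2)$ is the (compact) negative definite block of $\R^{2,2}$, and $\sO(1)$ is the third factor. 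Recalling that $\sS_0(\sO(2,2)\times\sO(1))$ is the subgroup of the image of $\sO(2,2)\times\sO(1)$ lying in $\sSO_0(2,3)$, one checks at the level of maximal compacts that a triple $(A,B,C)\in\sH$ gives an element of this subgroup if and only if $A\in\sSO(2)$ and $C=\det(B)$; the identity component is then $\sSO_0(2,2)$, consistent with the description preceding the statement. Passing to complexifications, $\sH_\C=\sSO(2,\C)\times\sO(2,\C)\times\sO(1,\C)$ with the last factor pinned to $\det$ of the middle one.

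Next I would read off the Higgs bundle data from the complexified Cartan decomposition $\fso(2,2)=(\fso(2)\oplus\fso(2))\oplus\Hom(\R^2,\R^2)$ together with Definition~\ref{DEF Higgs bundle}. The factor $A\in\sSO(2,\C)$ gives a holomorphic $\sSO(2,\C)$-bundle, i.e.\ a holomorphic line bundle $L$ with underlying split orthogonal bundle $L\oplus L^{-1}$; the factor $B\in\sO(2,\C)$ gives a rank two holomorphic orthogonal bundle $(\Ww,Q_W)$; and the constraint $C=\det(B)$ forces the $\sO(1,\C)$-bundle to be $\det(\Ww)$. As a module over the two $\sO(2)$-factors, $\Hom(\R^2,\R^2)$ is the tensor product of the two standard representations, so the $\fm_\C$-associated bundle is $(L\oplus L^{-1})\otimes\Ww$, where I use the orthogonal structures on $L\oplus L^{-1}$ and on $\Ww$ to identify each factor with its dual. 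Twisting by $K$, a Higgs field is therefore a pair $\varphi=(\gamma,\beta)$ with $\gamma\in H^0(\Sigma,L^{-1}\otimes\Ww\otimes K)$ and $\beta\in H^0(\Sigma,L\otimes\Ww\otimes K)$, which is the asserted parameterization by tuples $(L,\Ww,\gamma,\beta)$.

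For the reduction statement I would apply Definition~\ref{DEF:Higgs bundle Reduction} to the inclusion $\sSO_0(2,2)\hookrightarrow\sS_0(\sO(2,2)\times\sO(1))$, $g\mapsto(g,1)$. Since $\fo(1)=0$, the $\fm$-part is unchanged when passing to $\sSO_0(2,2)$, so the Higgs field automatically lies in the required subbundle and the only condition is that the underlying $\sH_\C$-bundle reduce to $\sSO(2,\C)\times\sSO(2,\C)$. This happens exactly when the $\sO(2,\C)$-bundle $\Ww$ admits a reduction of structure group to $\sSO(2,\C)$, which, as recalled in Section~\ref{Orthogonal_bundles}, is equivalent to $sw_1(\Ww)=0$, i.e.\ to $\det(\Ww)=\Oo$ (and then the $\sO(1,\C)$-bundle $\det(\Ww)$ is automatically trivial, consistent with the reduction).

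I do not expect a genuine obstacle, since the argument is entirely parallel to Proposition~\ref{Prop S(O(2,1)xO(2)) Higgs bundle}. The one point that requires care is the bookkeeping in the second paragraph: checking that the constraint $C=\det(B)$ introduces no extra twist into $\Ww$ or into $L$, and that the orthogonal structures do identify the relevant bundles with their duals, so that the two components of the Higgs field come out weighted precisely by $L^{-1}$ and $L$ rather than by some other power of $L$.
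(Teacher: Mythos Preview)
Your proposal is correct and follows essentially the same approach as the paper's proof: identify the maximal compact $\sH=\sO(2)\times\sO(2)\times\sO(1)$, observe that $(A,B,C)\in\sH$ lies in $\sS_0(\sO(2,2)\times\sO(1))$ if and only if $A\in\sSO(2)$ and $C=\det(B)$, read off the bundle data $(L,\Ww,\det(\Ww))$ and the Higgs field $\eta=(\gamma,\beta):L\oplus L^{-1}\to\Ww\otimes K$, and note that reduction to $\sSO_0(2,2)$ is equivalent to $\det(\Ww)=\Oo$. You supply more detail than the paper (the explicit Cartan decomposition of $\fso(2,2)$ and the appeal to Definition~\ref{DEF:Higgs bundle Reduction}), but the argument is the same.
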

\begin{proof}
    The maximal compact subgroup of $\sO(2,2)\times\sO(1)$ is $\sH=\sO(2)\times\sO(2)\times\sO(1).$ A triple $(A,B,C)\in\sH$ belongs to $\sS_0(\sO(2,2)\times\sO(1))$ if and only if $A\in\sSO(2),$ and $\det(B)=C$. 
    Thus, an $\sS_0(\sO(2,2)\times\sO(2))$-Higgs bundle is given by a holomorphic $\sO(2,\C)$-bundle $\Ww$, a holomorphic $\sO(1,\C)$-bundle given by $\det(\Ww)$, an $\sSO(2,\C)$-bundle $(L\oplus L^{-1})$, and a holomorphic map \[\eta=(\gamma,\beta): L\oplus L^{-1} \to \Ww\otimes K~.\] 
Such a Higgs bundle reduces to $\sSO_0(2,2)$ if and only if $\det(\Ww)=\Oo$.
\end{proof}

We have the following characterization of when a maximal $\sSO_0(2,3)$-Higgs bundles reduces to one of the tightly embedded subgroups listed above. 
\begin{Proposition}\label{Prop Higgs reductions M0d}
    A maximal $\sSO_0(2,3)$-Higgs bundle in $\Mm_{0}^{\mathrm{max}}(\sSO_0(2,3))$ determined by a poly-stable tuple $(M,\mu,\nu,q_2)$ reduces to an
    \begin{enumerate}
        \item $\sSO_0(2,1)$-Higgs bundle (irreducibly embedded) if and only if $d=4g-4$ and $\nu=0.$
        \item $\sSO_0(2,1)\times\sSO(2)$-Higgs bundle if and only if $d=0$ and $\mu=\nu=0.$
        \item $\sS_0(\sO(2,2) \times \sO(1))$-Higgs bundle if and only if $d=0$ and $M^2=\Oo$ and $\nu=\lambda\mu$
         for some $\lambda\in\C^*.$
        \item $\sSO_0(2,2)$-Higgs bundle if and only if $d=0$ and $M=\Oo$ and $\nu=\lambda\mu$.
    \end{enumerate}
\end{Proposition}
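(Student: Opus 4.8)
The plan is to argue entirely on the Higgs bundle side. Throughout we are in the case $sw_1(\Ff,Q_F)=0$, so $L\cong K$ and (Proposition~\ref{splittingofWProp}) $\Ww\cong LK^{-1}\oplus\Ff\cong\Oo\oplus(M\oplus M^{-1})$ with the hyperbolic form on $\Ff$, the section $\gamma=\smtrx{1\\0}$ is nowhere vanishing with image the summand $LK^{-1}\cong\Oo$ (Proposition~\ref{Prop gamma not zero}), and $(\Ee,Q,\Phi)$ is the explicit $\sSO(5,\C)$-Higgs bundle \eqref{SL5_vanishing}. Recall (Definition~\ref{DEF:Higgs bundle Reduction}) that a reduction of $(\Ee,Q,\Phi)$ to a subgroup $\sG'$ with Cartan decomposition $\fg'=\fh'\oplus\fm'$ is a holomorphic reduction of the underlying $\sSO(2,\C)\times\sSO(3,\C)$-bundle in which the Higgs field $\varphi=(\gamma,\beta)$ takes values in the associated bundle with fibre $\fm'_{\C}$ twisted by $K$. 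Each of the four items is an ``if and only if'' whose two implications I would handle separately: for each subgroup I first read off from the tight-embedding classification \cite{TightHomomorphismClassification} (together with Propositions~\ref{Prop S(O(2,1)xO(2)) Higgs bundle} and~\ref{Prop S(O(2,2)xO(1)) Higgs bundle}) exactly what such a reduction means as a holomorphic structure on $\Ww$, and then compare it with the tuple $(M,\mu,\nu,q_2)$.

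For items (2)--(4) the reducible subgroups $\sS_0(\sO(2,1)\times\sO(2))$ and $\sS_0(\sO(2,2)\times\sO(1))$ act on $\R^{2,3}$ with a one-dimensional invariant factor inside the negative part, so a reduction amounts to an orthogonal splitting $\Ww=\Ww_1\oplus\Ww_2$ with $\rk\Ww_1=1$, $\rk\Ww_2=2$, $\det\Ww_2\cong\Ww_1^{-1}$, in which $\varphi$ takes values in $\Hom(V,\Ww_1)\otimes K$ (for $\sS_0(\sO(2,1)\times\sO(2))$) or in $\Hom(V,\Ww_2)\otimes K$ (for $\sS_0(\sO(2,2)\times\sO(1))$); passing to the identity components $\sSO_0(2,1)\times\sSO(2)$, resp.\ $\sSO_0(2,2)$, adds the requirement $\det\Ww_2\cong\Oo$. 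Since $\gamma$ is nowhere vanishing, its image $LK^{-1}\cong\Oo$ must be one of $\Ww_1,\Ww_2$. In the first case $\Ww_1=LK^{-1}$ and $\Ww_2=\Ff$, and comparing the $\Ff$-component of $\beta$ forces $\delta=0$, i.e.\ $\mu=\nu=0$, while $\det\Ff\cong\Oo$ is automatic; this gives item (2). In the second case $\gamma$ and the $q_2$-part of $\beta$ already land in $LK^{-1}\subset\Ww_2$, so $\Ww_2=LK^{-1}\oplus P$ for a line sub-bundle $P\subset\Ff$; non-degeneracy of $Q_W|_{\Ww_2}$ forces $P$ non-isotropic, hence $\Ff\cong P\oplus P^{-1}$, $\Ww_1\cong P^{-1}$, and $P^2\cong\Oo$ (because $\Ww_1$ is an $\sO(1,\C)$-bundle); since the only isotropic sub-line-bundles of $M\oplus M^{-1}$ are $M$ and $M^{-1}$, a non-isotropic $2$-torsion sub-bundle $P$ exists exactly when $M^2\cong\Oo$, and the condition that the $\Ff$-component $\delta=\smtrx{\nu\\\mu}$ of $\beta$ lands in $PK$ is precisely $\nu=\lambda\mu$ for some $\lambda\in\C^*$; this gives item (3), and item (4) is obtained by additionally demanding $\det\Ww_2=P\cong\Oo$, which forces $M\cong\Oo$. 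The converse implications simply reverse these constructions: from the stated numerical conditions one writes down the orthogonal splitting of $\Ww$ and checks that $\varphi$ lands in the prescribed summand. Finally, a poly-stable tuple with $\mu=\nu=0$ automatically has $d=0$ (Proposition~\ref{Prop sw1=0 stability}), which makes the clause ``$d=0$'' redundant but harmless, and all the conditions are invariant under the residual $\C^*$-action $(\mu,\nu)\mapsto(\lambda\mu,\lambda^{-1}\nu)$ and under the inversion $(M,\mu,\nu)\mapsto(M^{-1},\nu,\mu)$, so they descend to $\Mm^{\mathrm{max}}_0(\sSO_0(2,3))$.

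For item (1) a different argument is needed because the embedding is irreducible. By \cite{BIWmaximalToledoAnnals}, if the Zariski closure of a maximal $\sSO_0(2,3)$-representation is the irreducibly embedded $\sSO_0(2,1)$, then the associated $\sSO_0(2,1)$-representation is again maximal, so the reduced Higgs bundle is $S^2_0(E_3,\Phi_3)$ where $(E_3,\Phi_3)$ is a maximal poly-stable $\sSO_0(2,1)$-Higgs bundle; by an analysis entirely analogous to, and simpler than, that of Section~\ref{HB_vanishing_sw_1}, such a bundle is $E_3\cong K\oplus\Oo\oplus K^{-1}$ carrying a one-parameter family $\Phi_3$ of Higgs fields encoding a quadratic differential. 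I would first note that $S^2_0(K\oplus\Oo\oplus K^{-1})\cong K^2\oplus K\oplus\Oo\oplus K^{-1}\oplus K^{-2}$ as an orthogonal bundle, so matching with \eqref{SL5_vanishing} forces $M\cong K^2$ and hence $d=4g-4$; then compare the $\sSO(5,\C)$-Higgs fields $S^2_0(\Phi_3)$ and the normal form \eqref{SL5_vanishing} by equating the two spectral invariants $\tr(\Phi^2)\in H^0(\Sigma,K^2)$ and $\tr(\Phi^4)\in H^0(\Sigma,K^4)$ of the Hitchin map, which forces $\nu=0$. For the converse, given $d=4g-4$ (so $M\cong K^2$ and $\mu$ a nonzero constant) and $\nu=0$, one exhibits the reduction explicitly, realising $(\Ee,Q,\Phi)$ as $S^2_0$ of the rank-three orthogonal bundle of an $\sSL(2,\R)$-Hitchin Higgs bundle through the companion-matrix form of the $\sP\sSp(4,\R)$-Hitchin section \eqref{EQ Hitchin comp param}. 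I expect item (1) to be the main obstacle: unlike (2)--(4) it does not reduce to linear algebra with the hyperbolic bundle $M\oplus M^{-1}$, and it requires making the principal embedding $\sSO_0(2,1)\hookrightarrow\sSO_0(2,3)$ completely explicit at the level of Higgs bundles and verifying that its image meets the normal form \eqref{SL5_vanishing} along exactly the locus $\{d=4g-4,\ \nu=0\}$.
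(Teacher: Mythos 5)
Your handling of items (2)--(4) is correct and takes a genuinely different (and cleaner) route than the paper. You characterize a reduction directly as an orthogonal splitting $\Ww=\Ww_1\oplus\Ww_2$ and then classify the admissible line sub-bundles $P\subset M\oplus M^{-1}$: a nowhere-isotropic $P$ projects isomorphically onto both factors, forcing $M^2\cong\Oo$, and $\delta\in H^0(PK^2)$ is exactly $\nu=\lambda\mu$. This gives both implications at once. The paper instead proves the ``if'' direction by the explicit change of frame $M_1=\{(x,x)\}$, $M_2=\{(x,-x)\}$, and gets the ``only if'' direction from its automorphism computation: a bundle reducing to $\sS_0(\sO(2,2)\times\sO(1))$ carries a visible nontrivial automorphism, and Proposition~\ref{Prop: smooth/orbifold in Md} says the only points of $\Mm^{\mathrm{max}}_{0,0}$ with nontrivial automorphisms are those with $M=M^{-1}$, $\mu=\lambda\nu$ (or $\mu=\nu=0$). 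Both arguments are sound; yours is more self-contained, the paper's recycles Proposition~\ref{Prop: smooth/orbifold in Md}.

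Item (1) is where your proposal breaks, and the problem is concrete: the spectral-invariant comparison you propose, if actually carried out, does \emph{not} give $\nu=0$. Normalize $\mu=1$ and $M=K^2$. For the normal form \eqref{SL5_vanishing} one computes $\tr(\Phi^2)=4q_2$, $\tr(\Phi^4)=8q_2^2+8\nu$, i.e.\ characteristic polynomial $\lambda^5-2q_2\lambda^3-2\nu\lambda$. The maximal $\sSO_0(2,1)$-Higgs bundle $\bigl(K\oplus\Oo\oplus K^{-1},\,\smtrx{0&q&0\\1&0&q\\0&1&0}\bigr)$ has eigenvalues $\{t,0,-t\}$ with $t^2=2q$, so $S^2_0$ of it has eigenvalues $\{\pm 2t,\pm t,0\}$ and characteristic polynomial $\lambda^5-5t^2\lambda^3+4t^4\lambda$. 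Matching coefficients gives $q_2=\tfrac{5}{2}t^2$ and $\nu=-2t^4=-\tfrac{8}{25}q_2^2$. So the locus of reductions to the principal $\sSO_0(2,1)$ is $\{\nu=-\tfrac{8}{25}q_2^2\}$ in these coordinates, which meets $\{\nu=0\}$ only where $q_2=0$. Your converse direction fails for the same reason: the bundle with $\nu=0$, $q_2\neq0$ has eigenvalues $\{0,0,0,\pm\sqrt{2q_2}\}$ and therefore cannot be $S^2_0$ of any rank-three Higgs bundle.

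The source of the discrepancy is that the $q_2$-entries of \eqref{SL5_vanishing} sit only in positions $(2,3)$ and $(3,4)$, so the $q_2$-deformation direction is \emph{not} the raising operator $e$ of the principal $\fsl_2$ (which also has entries at $(1,2)$ and $(4,5)$); it is a combination of $e$ and the weight-$2$ vector of the seven-dimensional summand of $\fso(5,\C)$ under the principal $\fsl_2$. Hence the normal-form coordinate $\nu$ and the quartic coordinate of Hitchin's section differ by a nonzero multiple of $q_2^2$, and the Fuchsian locus is a ``parabola'' rather than the linear slice $\{\nu=0\}$. Your argument therefore does not establish item (1) as stated; carried out honestly it shows the statement needs $\nu=0$ replaced by $\mu\nu=cq_2^2$ for the appropriate constant (or a restriction to $q_2=0$). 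For what it is worth, the paper's own proof of item (1) is the single sentence ``follows from the definition of the Hitchin component'' and does not engage with this normalization issue either, so this is a point worth raising with the authors rather than papering over; but as a proof of the statement as written, your item (1) does not close.
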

\begin{proof}
The first part of the proposition follows from the definition of the Hitchin component, and the second part follows directly from Proposition \ref{Prop S(O(2,1)xO(2)) Higgs bundle}.

For the third and fourth parts, consider a maximal $\sSO_0(2,3)$-Higgs bundle determined by $(M,\mu,\nu,q_2)$ with $M^2=\Oo$. 
In this case, the $\sSO(2,\C)$-bundle $(M\oplus M,\smtrx{0&1\\1&0})$ has two holomorphic line subbundles $M_1$ and $M_2$ which are orthogonal and isomorphic to $M$. They are given by 
\[\xymatrix@R0em{M_1\ar[r]& M\oplus M&&\text{and}&&M_2\ar[r]&M\oplus M\\x\ar@{|->}[r]&(x,x)&&&&x\ar@{|->}[r]&(x,-x)}~.\]
In the splitting $M_1\oplus M_2,$ the map $\smtrx{\nu\\\mu}:K^{-1}\to MK\oplus M^{-1}K$ is given by 
\[\mtrx{\mu+\nu\\\mu-\nu}:K^{-1}\to M_1K\oplus M_2K~.\]
If $\mu=\lambda^{-1}\nu$ for some $\lambda\in\C^*,$ then, by the proof of Proposition \ref{Prop: smooth/orbifold in Md}, such a Higgs bundle is isomorphic to the one determined by $(M,\lambda^\haf\mu,\lambda^\haf\mu,q_2).$ In the splitting $M_1\oplus M_2,$ the map $\smtrx{\nu\\\mu}:K^{-1}\to MK\oplus M^{-1}K$ is given by 
\[\smtrx{2\lambda^\haf\mu\\0}:K^{-1}\to M_1K\oplus M_2K~.\] 
Since $\Oo\oplus M_1$ is a holomorphic $\sO(2,\C)$-bundle and $M_1\otimes M_2\otimes\Oo=\Oo,$ by Proposition \ref{Prop S(O(2,2)xO(1)) Higgs bundle}, such a Higgs bundle reduces to an $\sS_0(\sO(2,2)\times\sO(1))$-Higgs bundle. Moreover, this Higgs bundle reduces to $\sSO_0(2,2)$ if and only if $M=\Oo.$

Note that the $\sSO_0(2,3)$-Higgs bundle $(L,\Ww,\beta,\gamma)$ associated to an $\sS_0(\sO(2,2)\times\sO(1))$-Higgs bundle $(L',\Ww',\beta',\gamma')$ is given by $L=L',$ $\Ww=\Ww'\oplus \det(\Ww)$ and
\[\beta=\mtrx{\beta'\\0}:L^{-1}\to \Ww\oplus\det(\Ww)\ \ \ \ \ \text{and}\ \ \ \ \ \ \gamma=\mtrx{\gamma'\\0}:L\to\Ww\oplus\det(\Ww)~.\]
Thus, the non-trivial holomorphic $\sSO_0(2,3)$-gauge transformation given by
\[-Id:L\oplus L^{-1}\to L\oplus L^{-1}\ \ \ \ \ \text{and}\ \ \ \ \ \mtrx{-Id&0\\0&1}:\Ww\oplus\det(\Ww)\to\Ww\oplus\det(\Ww)\]
defines a non-trivial automorphism of the Higgs bundle. By Proposition \ref{Prop: smooth/orbifold in Md}, all other Higgs bundle $\Mm_{0}^\mathrm{max}(\sSO_0(2,3))$ have trivial automorphism groups. Thus, no other Higgs bundles in $\Mm_{0}^\mathrm{max}(\sSO_0(2,3))$ reduce to $\sS_0(\sO(2,2)\times\sO(1)).$ 
\end{proof}
For the components $\Mm_{sw_1}^{\mathrm{max},sw_2}(\sSO_0(2,3))$ with $sw_1\neq0$, we have the following classification of Higgs bundle reductions. 
Recall from Proposition \ref{Prop: pullbackHiggsBundles} that a Higgs bundle in the component $\Mm_{sw_1}^{\mathrm{max},sw_2}(\sSO_0(2,3))$ is determined by a tuple $(M,f,\mu,q_2)$ where $M\in\Prym^{sw_2}(\Sigma_{sw_1})$, $f:M \ra \iota^*M^{-1}$ an isomorphism, $\iota$ is the covering involution of the double cover $\Sigma_{sw_1}$, $\mu\in H^0(M^{-1}K_{sw_1}^2)$ and $q_2\in H^0(K^2).$

\begin{Proposition}\label{Prop Higgs reduction sw1not0}
    Fix $sw_1\in H^1(\Sigma,\Z_2)\neq 0,$ a maximal $\sSO_0(2,3)$-Higgs bundle in $\Mm_{sw_1}^{\mathrm{max},sw_2}(\sSO_0(2,3))$ determined by a poly-stable tuple $(M,f,\mu,q_2)$ 
    \begin{enumerate}
        \item reduces to an $\sS_0(\sO(2,1)\times\sO(2))$-Higgs bundle if and only if $\mu=0,$
        \item reduces to an $\sS_0(\sO(2,2) \times \sO(1))$-Higgs bundle if and only if $M=\iota^* M$, $f = \lambda^{-2} \iota^*f^{-1}$ and $\mu=-\lambda\iota^*(f\mu)$, for some $\lambda \in \C^*$. 
    \end{enumerate}
    If both conditions are met, the Higgs bundle reduces to an $\sS_0(\sO(2,1)\times\sO(1)\times \sO(1))$-Higgs bundle.
\end{Proposition}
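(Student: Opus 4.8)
The plan is to run the argument entirely on the double cover $\Sigma_{sw_1}$, using the pullback description from Proposition \ref{Prop: pullbackHiggsBundles}. First I would recall that a maximal $\sSO_0(2,3)$-Higgs bundle in $\Mm_{sw_1}^{\mathrm{max},sw_2}(\sSO_0(2,3))$ is given by the tuple $((\Ff,Q_F),q_2,\delta)$ with $\det(\Ff)=LK^{-1}$ and $sw_1(\Ff,Q_F)=sw_1\neq 0$, and that reducing the structure group to one of the tightly embedded subgroups is equivalent, via Definition \ref{DEF:Higgs bundle Reduction}, to a holomorphic splitting of the orthogonal data compatible with the Higgs field. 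By Proposition \ref{Prop S(O(2,1)xO(2)) Higgs bundle} and Proposition \ref{Prop S(O(2,2)xO(1)) Higgs bundle}, the relevant reductions are governed by the position of the image of $\beta=\smtrx{q_2\\\delta}$ inside $\Ww=LK^{-1}\oplus\Ff$: a reduction to $\sS_0(\sO(2,1)\times\sO(2))$ corresponds to $\delta=0$ (so that $\beta$ lands in the $\sO(1,\C)$-summand $LK^{-1}=\det(\Ff)$, matching the $\sO(2,1)\times\sO(2)$ structure where $\Ww$ plays the role of the $\sO(2)$-bundle), while a reduction to $\sS_0(\sO(2,2)\times\sO(1))$ corresponds to a holomorphic orthogonal splitting $\Ff=\Ff_1\oplus\Ff_2$ into $\sO(1,\C)$-bundles for which $\delta$ lands in one summand.

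For item (1), I would translate $\delta=0$ through the pullback dictionary of Proposition \ref{Prop: pullbackHiggsBundles}: there $\pi^*\delta=\smtrx{\nu\\\mu}$ with $\iota^*\nu\circ f=\mu$, so $\delta=0$ iff $\mu=0$ (equivalently $\nu=0$). This gives the "if and only if $\mu=0$" statement directly. For item (2), the reduction to $\sS_0(\sO(2,2)\times\sO(1))$ requires a holomorphic $\iota$-equivariant orthogonal splitting of $\pi^*\Ff=M\oplus M^{-1}$ into two $\sO(1,\C)$-bundles descending to $\Sigma$; since the two isotropic lines $M,M^{-1}$ of $\pi^*\Ff$ are interchanged by $\iota$ (this is exactly the content of $sw_1\neq 0$ and was used in Proposition \ref{Prop: no isotropic subbundles}), such a splitting into non-isotropic line bundles forces $M^2=\Oo$, i.e. $M=\iota^*M^{-1}=\iota^*M$, and then the two descending $\sO(1,\C)$-subbundles are $M_1,M_2$ as in the proof of Proposition \ref{Prop Higgs reductions M0d}. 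Demanding $\delta$ (equivalently $\smtrx{\nu\\\mu}$) to land in one of them, and to be $\iota$-equivariant, is precisely the condition $f=\lambda^{-2}\iota^*f^{-1}$ together with $\mu=-\lambda\,\iota^*(f\mu)$ — this is the same computation already carried out in Proposition \ref{Prop: equivalence_nonvanishing}(2) and Proposition \ref{Prop: smooth/orbifold in M sw1not0}, read as a fixed-point (automorphism) condition rather than an equivalence. So I would phrase it as: the $\Z_2$-gauge transformation of Proposition \ref{Prop: equivalence_nonvanishing}(2) fixes the Higgs bundle iff these equations hold, and its fixed locus is exactly the locus of Higgs bundles carrying the extra $\sO(1,\C)$ orthogonal summand.

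Finally, if both conditions hold simultaneously, then $\mu=0$ and $M=\iota^*M$, so $\Ff$ splits orthogonally over $\Sigma$ as $L_1\oplus L_2$ with $L_1^2=L_2^2=\Oo$ (the $M^2=\Oo$ orbifold case of Section \ref{orthogonal_bundles_nonvanishing}, cf. the last paragraph of Section \ref{orth_gauge_transf}), and $\delta=0$; then $\Ww=\det(\Ff)\oplus L_1\oplus L_2$ is a sum of three $\sO(1,\C)$-bundles, the Higgs field only involves $L=LK^{-1}$-twisted maps into $\det(\Ff)$, and the structure group reduces to $\sS_0(\sO(2,1)\times\sO(1)\times\sO(1))$. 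I expect the main obstacle to be bookkeeping: making precise the correspondence between "holomorphic orthogonal splitting of $\Ww$ compatible with $\varphi$" and the explicit tuple conditions, and checking that the $\iota$-equivariance of all the data on $\Sigma_{sw_1}$ translates exactly into the stated equations for $f$ and $\mu$ — but all the needed computations are essentially already present in Propositions \ref{Prop S(O(2,1)xO(2)) Higgs bundle}, \ref{Prop S(O(2,2)xO(1)) Higgs bundle}, \ref{Prop Higgs reductions M0d}, \ref{Prop: equivalence_nonvanishing} and \ref{Prop: smooth/orbifold in M sw1not0}, so the proof should largely be an assembly of these pieces.
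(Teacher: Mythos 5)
Your proposal is correct and follows essentially the same route as the paper: pull everything back to the double cover $\Sigma_{sw_1}$ via Proposition \ref{Prop: pullbackHiggsBundles} and reduce the question to the $sw_1=0$ classification of Proposition \ref{Prop Higgs reductions M0d}, translating the conditions on $(\mu,\nu)$ into the stated conditions on $(f,\mu)$. If anything, you are more explicit than the paper about the one point it glosses over --- that a reduction on $\Sigma$ corresponds to an $\iota$-equivariant reduction on $\Sigma_{sw_1}$, which is exactly where the fixed-point conditions $M=\iota^*M$, $f=\lambda^{-2}\iota^*f^{-1}$, $\mu=-\lambda\iota^*(f\mu)$ come from.
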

\begin{proof}
Let $\pi:\Sigma_{sw_1}\to\Sigma$ be the connected double cover of a nonzero $sw_1\in H^1(\Sigma,\Z_2).$ An $\sSO_0(2,3)$-Higgs bundle on $\Sigma$ reduces to a subgroup of $\sSO_0(2,3)$ if and only if its pullback to $\Sigma_{sw_1}$ reduces. 
Recall that the pullback of a Higgs bundle  in $\Mm_{sw_1}^{\mathrm{max},sw_2}(\Sigma,\sSO_0(2,3))$ determined by a tuple $(M,f,\mu,q_2)$ defines a Higgs bundle in $\Mm_{0,0}^{\mathrm{max}}(\Sigma_{sw_1},\sSO_0(2,3))$ determined by $(M,\mu,\iota^*(\mu f^{-1}),\pi^*(q_2))$.

The result now follows from Proposition \ref{Prop Higgs reductions M0d}.
\end{proof}

Putting together the above propositions we have the following:

\begin{Theorem}
    Let $\Gamma$ be the fundamental group of a closed oriented surface of genus $g\geq2$. If $\rho:\Gamma\to\sSO_0(2,3)$ is a maximal representation which is not in the Hitchin component, then $\rho$ defines a smooth point of the character variety $\Xx^{\mathrm{max}}(\Gamma,\sSO_0(2,3))$ if and only if the image of $\rho$ is Zariski dense. In particular, for $0<d<4g-4$ every representation in $\Xx_{0,d}^{\mathrm{max}}(\Gamma,\sSO_0(2,3))$ is Zariski dense. 
\end{Theorem}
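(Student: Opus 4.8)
## Proof proposal

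The plan is to combine the Higgs-bundle parameterizations of the singular loci (Propositions \ref{Prop: smooth/orbifold in Md} and \ref{Prop: smooth/orbifold in M sw1not0}) with the classification of Higgs-bundle reductions (Propositions \ref{Prop Higgs reductions M0d} and \ref{Prop Higgs reduction sw1not0}) to show that, outside the Hitchin component, the singular set and the non-Zariski-dense locus coincide. The key bridge is the Proposition stating that a reductive $\rho$ has Zariski closure contained in a reductive subgroup $\sG'$ if and only if the corresponding poly-stable Higgs bundle reduces to a $\sG'$-Higgs bundle, together with the result of \cite{BIWmaximalToledoAnnals} that a maximal representation with proper Zariski closure must reduce to one of the tightly embedded subgroups $\sS_0(\sO(2,1)\times\sO(2))$, $\sS_0(\sO(2,2)\times\sO(1))$, or the irreducible $\sSO_0(2,1)$.

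First I would set up the dichotomy via Proposition \ref{Proposition: Orbifold points}: a poly-stable $\sSO_0(2,3)$-Higgs bundle fails to be a smooth point of $\Mm^{\mathrm{max}}(\sSO_0(2,3))$ precisely when either it is not stable as an $\sSO(5,\C)$-Higgs bundle (a genuine singularity), or it is stable but has automorphism group strictly larger than the center (an orbifold point). I would then go through the two families of components separately. For the components $\Mm^{\mathrm{max}}_{0,d}(\sSO_0(2,3))$ with $0<d\leq 4g-4$, Proposition \ref{Prop: smooth/orbifold in Md} says the whole space is smooth; among these only $d=4g-4$ is the Hitchin component, so for $0<d<4g-4$ there is nothing to prove on the ``smooth'' side — I must instead check every such Higgs bundle is Zariski dense. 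By Proposition \ref{Prop Higgs reductions M0d}, a reduction to any tight subgroup forces either $d=4g-4$ (the first, irreducible case) or $d=0$ (the remaining cases); hence for $0<d<4g-4$ no reduction exists and, by \cite{BIWmaximalToledoAnnals}, $\rho$ is Zariski dense. This already yields the ``in particular'' clause.

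For the general statement I would handle the remaining singular components, namely $\Mm^{\mathrm{max}}_{0,0}(\sSO_0(2,3))$ and $\Mm^{\mathrm{max},sw_2}_{sw_1}(\sSO_0(2,3))$ with $sw_1\neq 0$. In the $d=0$ case, Proposition \ref{Prop: smooth/orbifold in Md} lists the non-smooth tuples $(M,\mu,\nu,q_2)$ as exactly those with $\mu=\nu=0$ (non-orbifold singularity) or $M=M^{-1}$, $\mu\neq 0$, $\mu=\lambda\nu$ ($\Z_2$-orbifold point). Comparing with Proposition \ref{Prop Higgs reductions M0d}, the first condition is precisely reduction to $\sSO_0(2,1)\times\sSO(2)$, and the second ($M^2=\Oo$, $\nu=\lambda\mu$) is precisely reduction to $\sS_0(\sO(2,2)\times\sO(1))$; conversely a reduction forces one of these, so ``singular'' $\iff$ ``reduces'' $\iff$ ``not Zariski dense''. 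The case $sw_1\neq 0$ is identical in structure: Proposition \ref{Prop: smooth/orbifold in M sw1not0} lists the orbifold tuples $(M,f,\mu,q_2)$ — either $\mu=0$, or $M=M^{-1}$ with $\mu\neq 0$ and the twisted symmetry $f=\lambda^{-2}\iota^*f^{-1}$, $\mu=-\lambda\iota^*(f\mu)$ — and these match verbatim the two reduction conditions of Proposition \ref{Prop Higgs reduction sw1not0} ($\mu=0$ gives $\sS_0(\sO(2,1)\times\sO(2))$; the symmetric condition gives $\sS_0(\sO(2,2)\times\sO(1))$). Passing through the non-abelian Hodge correspondence (Theorem \ref{THM: NAHC}) and the reduction criterion, I conclude: a maximal non-Hitchin $\rho$ is a smooth point of the character variety iff its associated Higgs bundle admits no reduction to a tight subgroup iff, by \cite{BIWmaximalToledoAnnals}, $\rho$ is Zariski dense.

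The main obstacle I anticipate is purely bookkeeping rather than conceptual: one must verify that the algebraic conditions describing the orbifold/singular loci in Propositions \ref{Prop: smooth/orbifold in Md} and \ref{Prop: smooth/orbifold in M sw1not0} are \emph{exactly} — not merely up to the cases already excluded — the conditions describing reductions in Propositions \ref{Prop Higgs reductions M0d} and \ref{Prop Higgs reduction sw1not0}, and that the list of tight subgroups in \cite{BIWmaximalToledoAnnals}, \cite{TightHomomorphismClassification} is genuinely exhausted by the reductions considered (e.g. that the extra component of $\sS_0(\sO(2,1)\times\sO(2))$ versus its identity component $\sSO_0(2,1)\times\sSO(2)$, and the ``if both conditions are met'' case $\sS_0(\sO(2,1)\times\sO(1)\times\sO(1))$, do not produce Zariski-dense-but-singular or smooth-but-degenerate representations). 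One subtlety worth a sentence: a Higgs bundle that is an orbifold point but has automorphism group only $\Z_2$ (the center of $\sSp(4,\R)$ maps to the trivial center of $\sSO_0(2,3)$, so $\Z(\sSO_0(2,3))$ is trivial) must be confirmed to genuinely be a non-smooth point of $\Xx^{\mathrm{max}}(\Gamma,\sSO_0(2,3))$ and to correspond to a non-Zariski-dense representation — this is where Proposition \ref{Proposition: Orbifold points} with $\Zz(\sG_\C)$ trivial for the adjoint group does the work.
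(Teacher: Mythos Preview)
Your proposal is correct and is essentially the approach the paper takes: the theorem is stated immediately after Propositions \ref{Prop Higgs reductions M0d} and \ref{Prop Higgs reduction sw1not0} with the single sentence ``Putting together the above propositions we have the following,'' so the intended proof is exactly the case-by-case matching of the singular loci (Propositions \ref{Prop: smooth/orbifold in Md}, \ref{Prop: smooth/orbifold in M sw1not0}) with the reduction loci (Propositions \ref{Prop Higgs reductions M0d}, \ref{Prop Higgs reduction sw1not0}) that you outline. The only cosmetic point is that in the $sw_1\neq 0$ case the orbifold conditions are phrased with $M=M^{-1}$ while the reduction conditions use $M=\iota^*M$; since $M\in\Prym(\Sigma_{sw_1})$ means $\iota^*M\cong M^{-1}$, these are equivalent, and your matching goes through.
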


\subsection{Other comments}

The extra invariants for maximal $\sSO_0(2,3)$-Higgs bundles give the following decomposition of $\Mm^{\mathrm{max}}(\sSO_0(2,3))$ as:
\begin{equation}
    \label{maximalSO(2,3)decomp}
    \bigsqcup\limits_{0\leq d\leq 4g-4}\Mm^{\mathrm{max}}_{0,d}(\sSO_0(2,3))  \ \sqcup\    \bigsqcup\limits_{\substack{sw_1\neq0\\ sw_2}}\Mm_{sw_1}^{\mathrm{max},sw_2}(\sSO_0(2,3))~.
\end{equation}

\begin{Remark}
\label{TotalSO(23)ConnectedComponents}
We have shown that every one of the spaces in \eqref{maximalSO(2,3)decomp} is non-empty and connected, so we have $2(2^{2g}-1)+4g-3$ connected components of $\Mm^{\mathrm{max}}(\sSO_0(2,3))$.  In \cite{AndreQuadraticPairs}, it is proven that $\Mm^{\tau,sw_2}(\sSO_0(2,3))$ is connected for $|\tau|<2g-2.$ This gives $2(2(2^{2g}-1)+4g-3) + 4(2g-3)+2= 2^{2g+2}+16g-20$ connected components of $\Mm(\sSO_0(2,3)).$
\end{Remark}

\section{$\sSp(4,\R)$-Higgs bundles}     \label{Sp4R}

In this section, we describe the $\sG$-Higgs bundles in the case when $\sG$ is the group $\sSp(4,\R)=\sSpin_0(2,3).$ 
Recall from Section \ref{section the isomorphism}
that, for $\sSp(4,\R)$, the complexification of the maximal compact subgroup is $\sH_\C=\sGL(2,\C)$ and the complexified Cartan decomposition is given by 
\[\fsp(4,\C)=\fgl(2,\C)\oplus (S^2(V)\oplus S^2(V^*))\]
where $V$ is the standard representation of $\sGL(2,\C)$ and $S^2(V)$ is the symmetric tensor product.

\begin{Definition}\label{DEF of  Sp(4,R) Higgs bundle}
    An $\sSp(4,\R)$-Higgs bundle over $\Sigma$ is given by a triple $(\Vv,\beta,\gamma)$ where $\Vv\to\Sigma$ is a holomorphic rank $2$ vector bundle, $\beta\in H^0(S^2(\Vv)\otimes K)$ and $\gamma\in H^0(S^2(\Vv^*)\otimes K)$. 
    \end{Definition}

The $\sSL(4,\C)$-Higgs bundle associated to an $\sSp(4,\R)$-Higgs bundle $(\Vv,\beta,\gamma)$ is 
\begin{equation}\label{SL(4,C) Higgs of Sp(4,R) Higgs}
    (\Ee,\Phi)=\left(\Vv\oplus \Vv^*, \mtrx{0&\beta\\\gamma&0}\right).
\end{equation}
Here $\beta$ and $\gamma$ are symmetric holomorphic maps 
$\beta:\Vv^*\to \Vv\otimes K$ and $\gamma:\Vv\to \Vv^*\otimes K$.
Since $\Ee=\Vv\oplus \Vv^*$ has a holomorphic symplectic structure $\Omega=\mtrx{0&Id\\-Id&0}$ with respect to which $\phi^T\Omega+\Omega\phi=0,$ this is an $\sSp(4,\C)$-Higgs bundle. 
\begin{Proposition}\label{SO(23)bundleofSp4Rbundle}
    Given an $\sSp(4,\R)$-Higgs bundle $(\Vv,\beta,\gamma)$ the associated $\sSO_0(2,3)$-Higgs bundle is 
    $(L,\Ww,\beta,\gamma)=(\Lambda^2\Vv,S^2(\Vv^*)\otimes\Lambda^2\Vv,\beta,\gamma).$
\end{Proposition}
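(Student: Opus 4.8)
The plan is to unwind the isomorphism $\sP\sSp(4,\R)\cong\sSO_0(2,3)$ at the level of Higgs bundles by tracking what happens to the structure groups and their Cartan decompositions. Recall from Section \ref{section the isomorphism} that the isomorphism is realized geometrically: if $U$ is the standard $4$-dimensional symplectic representation of $\sSp(4,\R)$, then $\sSO_0(2,3)$ acts on the $5$-dimensional orthogonal space $\Omega^\perp\subset\Lambda^2U^*$, where $\Lambda^2 U^*$ carries the pairing $\langle a,b\rangle$ defined by $a\wedge b=\langle a,b\rangle\,\Omega\wedge\Omega$ and $\Omega\in\Lambda^2U^*$ spans a positive line. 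The first step is to decompose $\Lambda^2 U^*$ as a module for $\sH_\C=\sGL(2,\C)$, the complexified maximal compact of $\sSp(4,\R)$. Here $U_\C=V\oplus V^*$ with $V$ the standard $\sGL(2,\C)$-representation, so
\[\Lambda^2 U^*_\C=\Lambda^2(V^*\oplus V)=\Lambda^2 V^*\ \oplus\ (V^*\otimes V)\ \oplus\ \Lambda^2 V~.\]
The middle summand $V^*\otimes V=\fgl(2,\C)$ splits further as $\fsl(2,\C)\oplus\C$, and the trivial summand $\C$ is exactly the line spanned by the symplectic form $\Omega$. Its orthogonal complement inside $\Lambda^2 U^*_\C$ is therefore $\Lambda^2 V^*\oplus\fsl(2,\C)\oplus\Lambda^2 V$, which must be the $5$-dimensional complexified orthogonal representation of $\sSO_0(2,3)$.

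Next I would match this with the Cartan decomposition $\fso(5,\C)=(\fso(2,\C)\times\fso(3,\C))\oplus\Hom(V,W)$ from \eqref{eq: complexified Cartan decomp SO(2,3)}, where $V$ and $W$ are the standard $\sSO(2,\C)$- and $\sSO(3,\C)$-modules (I will rename them to avoid clashing with the $\sGL(2,\C)$-module above). Under the homomorphism $\sGL(2,\C)\to\sSO(2,\C)\times\sSO(3,\C)$ induced by the isomorphism, the $2$-dimensional $\sSO(2,\C)$-factor is $\Lambda^2 V^*\oplus\Lambda^2 V$ — these are the two isotropic lines, so as an $\sO(2,\C)$-bundle it is $(L\oplus L^{-1},\left(\begin{smallmatrix}0&1\\1&0\end{smallmatrix}\right))$ with $L=\Lambda^2 V^*$; and the $3$-dimensional $\sSO(3,\C)$-factor is $\fsl(2,\C)=S^2(V^*)\otimes\Lambda^2 V=S^2(V)\otimes\Lambda^2 V^*$. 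Translating into holomorphic bundle language on $\Sigma$: given the $\sSp(4,\R)$-Higgs bundle $(\Vv,\beta,\gamma)$ with underlying rank-$2$ bundle $\Vv$, the associated $\sSO_0(2,3)$-data has line bundle $L=\Lambda^2\Vv^*$ (or $\Lambda^2\Vv$, up to the inversion ambiguity discussed around \eqref{SO(2,C) holomorphic bundles}, which is why one may write $L=\Lambda^2\Vv$) and rank-$3$ orthogonal bundle $\Ww=S^2(\Vv^*)\otimes\Lambda^2\Vv$, with $\det\Ww=\Oo$ automatically since $\det S^2(\Vv^*)=(\Lambda^2\Vv^*)^3$ and tensoring by $(\Lambda^2\Vv)^3$ trivializes it.

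Finally I would check that the Higgs fields are carried along unchanged. The $\sSp(4,\R)$-Higgs field lives in $S^2(\Vv)\oplus S^2(\Vv^*)$, twisted by $K$; the $\sSO_0(2,3)$-Higgs field lives in $\Hom(\Vv_{\mathrm{orth}},\Ww)$ where $\Vv_{\mathrm{orth}}=L\oplus L^{-1}$, twisted by $K$, and in the notation of Definition \ref{DEF of SO(2,3) Higgs bundle} it splits as $\gamma\in H^0(L^{-1}\otimes\Ww\otimes K)$ and $\beta\in H^0(L\otimes\Ww\otimes K)$. Substituting $L=\Lambda^2\Vv$ and $\Ww=S^2(\Vv^*)\otimes\Lambda^2\Vv$ gives $L^{-1}\otimes\Ww=S^2(\Vv^*)$ and $L\otimes\Ww=S^2(\Vv^*)\otimes(\Lambda^2\Vv)^2=S^2(\Vv)$, so indeed $\gamma\in H^0(S^2(\Vv^*)\otimes K)$ and $\beta\in H^0(S^2(\Vv)\otimes K)$ with no change. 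Comparing the two $\sSL(n,\C)$-Higgs bundle descriptions — \eqref{SL(4,C) Higgs of Sp(4,R) Higgs} versus \eqref{SL(5,C) Higgs of SO(2,3) Higgs}, or better the factorization $\Lambda^2$ of the standard $4$-dimensional representation as the $6$-dimensional orthogonal representation minus a trivial summand — confirms that $\beta$ and $\gamma$ correspond to one another literally, which is why the proposition reuses the same letters. I expect the main obstacle to be purely bookkeeping: keeping the two different meanings of "$V$" and "$W$" straight, and being careful about the factors of $\Lambda^2\Vv$ (equivalently, the choice of $L$ versus $L^{-1}$, which only matters up to the $\Z_2$-ambiguity already built into the description of $\sO(2,\C)$-bundles). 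No deep input is needed beyond the representation-theoretic identification already set up in Section \ref{section the isomorphism}.
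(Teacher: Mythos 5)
Your proposal is correct and follows essentially the same route as the paper: both decompose $\Lambda^2(\Vv\oplus\Vv^*)$, identify the span of $\Omega$ with the identity in $\Hom(\Vv,\Vv)$, take its orthogonal complement to get $\Ww=\Hom_0(\Vv,\Vv)\cong S^2(\Vv^*)\otimes\Lambda^2\Vv$ and $L\oplus L^{-1}=\Lambda^2\Vv\oplus\Lambda^2\Vv^*$, and then verify $L^{-1}\otimes\Ww=S^2(\Vv^*)$ and $L\otimes\Ww=S^2(\Vv)$ so that $\beta,\gamma$ carry over literally. The only cosmetic difference is that you work with $\Lambda^2 U^*$ rather than $\Lambda^2 U$, and your final substitution correctly pins down $L=\Lambda^2\Vv$ (not its inverse) so that the $\beta$/$\gamma$ labels match the paper's convention.
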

\begin{proof}
Given an $\sSp(4,\R)$-Higgs bundle $(\Vv,\beta,\gamma),$ the corresponding $\sSO_0(2,3)$-Higgs bundle is determined by the map $\sSp(4,\C)\to\sSO(5,\C)$ described in Section \ref{section the isomorphism}.
For the bundle, one takes the second exterior product
\[  \Lambda^2(\Vv\oplus \Vv^*)\cong\Lambda^2(\Vv)\oplus \Vv\otimes \Vv^*\oplus \Lambda^2(\Vv^*)=\Lambda^2\Vv\oplus\Lambda^2(\Vv^*)\oplus \Hom(\Vv,\Vv)~.\]
The orthogonal structure on this bundle is given by $\mtrx{0&1\\1&0}$ on $\Lambda^2(\Vv)\oplus\Lambda^2(\Vv^*)$ and the Killing form on $\Hom(\Vv,\Vv)$ (i.e. $\langle A,B\rangle=\tr(AB)$).
The symplectic structure $\Omega=\mtrx{0&Id\\-Id&0}\in\Lambda^2(\Vv^*\oplus \Vv)$ corresponds to $Id\in \Hom(\Vv,\Vv)$. If $\Hom_0(\Vv,\Vv)$ is the space of traceless homomorphisms, then
\[\langle\Omega\rangle^\perp= \Lambda^2 \Vv\oplus \Hom_0(\Vv,\Vv)\oplus \Lambda^2\Vv^*.\]  
If $V$ is the standard representation of $\sGL(2,\C)$, then $\Hom_0(V,V)$ is the representation $S^2(V)\otimes \Lambda^2V^*\cong S^2(V)^*\otimes \Lambda^2V.$ 
Thus, 
\[\Hom_0(\Vv,\Vv)=S^2(\Vv)\otimes \Lambda^2\Vv^*\cong S^2(\Vv^*)\otimes\Lambda^2\Vv~.\] 
This gives $L=\Lambda^2\Vv$ and $\Ww=S^2(\Vv)\otimes \Lambda^2\Vv^*\cong S^2(\Vv^*)\otimes\Lambda^2\Vv$.
Finally, note that $\gamma\in H^0(\Sigma, S^2(\Vv^*)\otimes K)=H^0(\Sigma, L^{-1}\otimes \Ww\otimes K)$ and $\beta\in H^0(\Sigma, S^2(\Vv)\otimes K)=H^0(\Sigma, L\otimes \Ww\otimes K)$.
\end{proof}

For an $\sSp(4,\R)$-Higgs bundle $(\Vv,\beta,\gamma)$, the integer $\deg(\Vv) = \deg(\Lambda^2 \Vv)$ is a topological invariant called the \emph{Toledo number}.  This agrees with the Toledo number $\deg(L)$ we defined for the associated $\sSO_0(2,3)$-Higgs bundle.

The Milnor-Wood inequality for $\sSO_0(2,3)$ gives $|\deg(\Vv)|\leq 2g-2$  
(for the original proof of this fact, see \cite{sp4GothenConnComp}.)
 If $\Mm^\tau(\sSp(4,\R))$ is the moduli space of $\sSp(4,\R)$-Higgs bundles $(\Vv,\beta,\gamma)$ with $\deg(\Vv) = \tau$, then 
\[\Mm(\sSp(4,\R))=\bigsqcup\limits_{|\tau|\leq 2g-2}\Mm^\tau(\sSp(4,\R))~.\]

\begin{Proposition}\label{liftingtoSp(4R)Prop}
The image of the map $\pi:\Mm^\tau(\sSp(4,\R))\to \Mm^\tau(\sSO_0(2,3))$ is $\Mm^{\tau,1}(\sSO_0(2,3))$ when $\tau$ is odd and  $\Mm^{\tau,0}(\sSO_0(2,3))$ when $\tau$ is even.
\end{Proposition}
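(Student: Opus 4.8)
The plan is to show that for an $\sSp(4,\R)$-Higgs bundle $(\Vv,\beta,\gamma)$ with $\deg(\Vv)=\tau$, the second Stiefel--Whitney class of the orthogonal bundle $(\Ww,Q_W)$ appearing in the associated $\sSO_0(2,3)$-Higgs bundle is exactly $\tau \bmod 2$, and conversely that every $\sSO_0(2,3)$-Higgs bundle in the correct component lifts. By Proposition \ref{SO(23)bundleofSp4Rbundle}, the associated $\sSO_0(2,3)$-Higgs bundle has $L=\Lambda^2\Vv$ and $\Ww = S^2(\Vv^*)\otimes\Lambda^2\Vv = \Hom_0(\Vv,\Vv)$, so in particular $\deg(L)=\tau$ and the Toledo numbers agree as already noted. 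Thus $\pi$ certainly lands in $\Mm^\tau(\sSO_0(2,3))$, and the only thing to pin down is the value of $sw_2(\Ww,Q_W)$.

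First I would compute $sw_2(\Ww,Q_W)$ topologically. Since $(\Ww,Q_W)$ is an $\sSO(3,\C)$-bundle, $sw_2(\Ww,Q_W)\in H^2(\Sigma,\Z_2)=\Z_2$ is the obstruction to lifting the structure group from $\sSO(3,\C)$ to $\sSpin(3,\C)=\sSL(2,\C)$; equivalently, writing $\Ww = \mathfrak{so}(3,\C)$-type bundle via the adjoint of the rank-two bundle $\Vv$, one has that $sw_2(\Ww)$ is the obstruction to $\Vv$ itself existing as an honest $\sGL(2,\C)$-bundle lifting the projectivization $\P\Vv$. Concretely, because $\Ww = \Hom_0(\Vv,\Vv) = \operatorname{End}_0(\Vv)$ only depends on the $\sPGL(2,\C)$-bundle $\P\Vv$, and $\Vv$ is a lift of $\P\Vv$ to $\sGL(2,\C)$ precisely when a certain class in $H^2(\Sigma,\Z_2)$ vanishes, the standard computation gives $sw_2(\operatorname{End}_0\Vv) = \deg(\Lambda^2\Vv)\bmod 2 = \tau\bmod 2$. (This is the usual fact that the second Stiefel--Whitney class of the $\sSO(3)$-bundle $\operatorname{End}_0(\Vv)$ associated to a rank-two bundle $\Vv$ equals $c_1(\Vv)\bmod 2$; one can verify it on a single example, e.g. $\Vv=\Oo\oplus\Oo(p)$, using the Whitney sum formula \eqref{WhitneySum} after decomposing $\operatorname{End}_0(\Vv)$ as a sum of line bundles.) This shows $\pi(\Mm^\tau(\sSp(4,\R)))\subseteq \Mm^{\tau,\,\tau\bmod 2}(\sSO_0(2,3))$.

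For the reverse inclusion (surjectivity onto the claimed component), I would start from an $\sSO_0(2,3)$-Higgs bundle $(L,(\Ww,Q_W),\beta,\gamma)$ with $\deg(L)=\tau$ and $sw_2(\Ww,Q_W)=\tau\bmod 2$, and construct an $\sSp(4,\R)$-Higgs bundle mapping to it. Since $sw_1(\Ww,Q_W)=0$, the bundle $\Ww$ together with a compatible volume form reduces to $\sSO(3,\C)$; the condition $sw_2(\Ww,Q_W)=\deg(L)\bmod 2$ is exactly what is needed so that the $\sSpin(3,\C)=\sSL(2,\C)$-lift, twisted appropriately by a square root issue involving $L$, can be chosen. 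Precisely, one seeks a rank-two $\Vv$ with $\Lambda^2\Vv\cong L$ and $\operatorname{End}_0(\Vv)\cong\Ww$ as orthogonal bundles; the obstruction to finding $\Vv$ with $\operatorname{End}_0(\Vv)\cong\Ww$ is $sw_2(\Ww)\in\Z_2$, and once that vanishes mod the parity constraint the determinant can be adjusted to be exactly $L$ by tensoring with a line bundle (which changes $\deg(\Lambda^2\Vv)$ by an even amount, consistent with the parity). Then $\beta\in H^0(L\otimes\Ww\otimes K)=H^0(S^2\Vv\otimes K)$ and $\gamma\in H^0(L^{-1}\otimes\Ww\otimes K)=H^0(S^2\Vv^*\otimes K)$ give the Higgs field, and poly-stability is preserved under this correspondence (it can be checked directly on the $\sSL(5,\C)$- versus $\sSL(4,\C)$-pictures, since the associated bundles differ by the fixed summand $L\oplus L^{-1}$ versus an extra trivial-type factor — invariant subbundles correspond).

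The main obstacle I anticipate is the bookkeeping in the reverse direction: carefully matching the orthogonal structure $Q_W$ with the Killing form on $\operatorname{End}_0(\Vv)$, and verifying that the parity condition $sw_2(\Ww)=\tau\bmod 2$ is both necessary and sufficient for the existence of a lift $\Vv$ with $\Lambda^2\Vv\cong L$ — as opposed to merely $\Lambda^2\Vv\cong L\otimes(\text{even line bundle})$. Handling the stability transfer cleanly (rather than re-deriving it) by invoking Definition \ref{DEF SLnC poly-stable is enough} and the explicit $\sSL(n,\C)$-Higgs bundle descriptions \eqref{SL(5,C) Higgs of SO(2,3) Higgs} and \eqref{SL(4,C) Higgs of Sp(4,R) Higgs} should keep that part routine.
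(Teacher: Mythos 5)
Your argument is correct, but it takes a different route from the paper's. The paper's proof is a two-line appeal to the spin obstruction for the rank-five bundle: an $\sSO_0(2,3)$-Higgs bundle lifts to $\sSp(4,\R)=\sSpin_0(2,3)$ if and only if the associated $\sSO(5,\C)$-bundle $\Ee=L\oplus\Ww\oplus L^{-1}$ lifts to $\sSpin(5,\C)$, i.e.\ if and only if $sw_2(\Ee,Q_\Ee)=(\deg L\bmod 2)+sw_2(\Ww,Q_W)$ vanishes, where the formula comes from the Whitney sum formula \eqref{WhitneySum} applied to $(L\oplus L^{-1})\oplus\Ww$. You instead work directly with the rank-two bundle: you compute $sw_2(\End_0(\Vv))=c_1(\Vv)\bmod 2$ for the forward inclusion, and for surjectivity you lift the $\sSO(3,\C)=\sPGL(2,\C)$-bundle $\Ww$ to a $\sGL(2,\C)$-bundle $\Vv$ and adjust its determinant. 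The paper's version is shorter and works uniformly through the exceptional isomorphism; yours has the virtue of producing the lift $\Vv$ explicitly and making visible exactly where the parity constraint enters. The one point you flag as a worry — arranging $\Lambda^2\Vv\cong L$ on the nose rather than $L$ times an even-degree line bundle — is not actually an obstacle: two lifts of $\Ww$ differ by tensoring $\Vv$ by a line bundle $N$, which multiplies $\Lambda^2\Vv$ by $N^2$, and since $\Pic^0(\Sigma)$ is a divisible group the squares in $\Pic(\Sigma)$ are exactly the line bundles of even degree; so the class of $\Lambda^2\Vv$ in $\Pic(\Sigma)/2\Pic(\Sigma)\cong\Z_2$ is its degree mod $2$, and the parity condition is both necessary and sufficient to hit $L$ exactly. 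Your verification of $sw_2(\End_0(\Vv))=c_1(\Vv)\bmod 2$ on a split example does suffice, since rank-two complex bundles over a surface are topologically classified by $c_1$. Finally, note that the paper's proof is silent on the transfer of poly-stability (the map $\pi$ is taken for granted as a map of moduli spaces); your remark that it follows from Definition \ref{DEF SLnC poly-stable is enough} and the comparison of \eqref{SL(5,C) Higgs of SO(2,3) Higgs} with \eqref{SL(4,C) Higgs of Sp(4,R) Higgs} is consistent with that.
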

\begin{proof}
If $(L,(\Ww,Q_W),\beta,\gamma)$ is an $\sSO_0(2,3)$-Higgs bundle, then it can be lifted to a Higgs bundle for $\sSp(4,\R) = \sSpin_0(2,3)$ if and only if the structure group of the $\sSO(5,\C)$-bundle $(\Ee,Q_\Ee)$ lifts to $\sSpin(5,\C).$ 
This happens if and only if the second Stiefel-Whitney class $sw_2(\Ee,Q_\Ee)=(\deg(L)\ mod\ 2) + sw_2\left(W,Q_W\right)$ vanishes. 
\end{proof}

Let $(L,(\Ww,Q_W),\beta,\gamma)$ be the $\sSO_0(2,3)$-Higgs bundle associated to $(\Vv,\beta,\gamma).$ 
Note that, for each of the $2^{2g}$ line bundles $I\in\Pic^0(\Sigma)$ with $I^2=\Oo,$ the $\sSO_0(2,3)$-Higgs bundles associated to $(V,\beta,\gamma)$ and $(V\otimes I,\beta,\gamma)$ are the same.

\subsection{Maximal $\sSp(4,\R)$-Higgs bundles}

We now focus on the case when the Toledo number of an $\sSp(4,\R)$-Higgs bundle $(\Vv,\beta,\gamma)$ is maximal.
Maximal $\sSp(4,\R)$-Higgs bundles have been studied in \cite{sp4GothenConnComp} and \cite{MaximalSP4}, our main goal here is to relate previous work with our description of the maximal $\sP\sSp(4,\R)$ components. Using the invariants established for $\sSO_0(2,3)$-maximal Higgs bundles we will write $\Mm^{\mathrm{max}}(\sP\sSp(4,\R))$ as
\[\bigsqcup\limits_{\substack{sw_1\in H^1(\Sigma,\Z_2)\setminus\{0\}\\sw_2\in H^2(\Sigma,\Z_2)}}\Mm_{sw_1}^{\mathrm{max},sw_2}(\sP\sSp(4,\R))\ \sqcup \bigsqcup\limits_{0\leq d\leq 4g-4}\Mm_{0,d}^\mathrm{max}(\sP\sSp(4,\R))~.\]

By Propositions \ref{SO(23)bundleofSp4Rbundle} and \ref{Prop gamma not zero}, for a maximal $\sSp(4,\R)$-Higgs bundle $(\Vv,\beta,\gamma),$ the map $\gamma:\Vv\to \Vv^*\otimes K$ is a holomorphic isomorphism\footnote{This holds more generally for maximal $\sSp(2n,\R)$-Higgs bundles (see \cite{HiggsbundlesSP2nR}).}. Thus, for each choice of square root $K^\haf$, we have an isomorphism
\[\gamma^*\circ\gamma:\Vv\otimes K^{-\haf}\to \Vv\otimes K^{-\haf}~.\]
Moreover, since $\gamma$ is symmetric, the pair $(\Vv\otimes K^{-\haf},\gamma^*\circ\gamma)$ defines a holomorphic $\sO(2,\C)$-bundle. The first and second Stiefel-Whitney class of $\Vv\otimes K^{-\haf}$ help distinguish the connected components of maximal $\sSp(4,\R)$-Higgs bundles. 

If the first Stiefel-Whitney classes of $\Vv\otimes K^{-\haf}$ vanishes, then there is a holomorphic line bundle $N$ with $\deg(N)\geq0$ such that 
\[\Vv=NK^{\haf}\oplus N^{-1}K^\haf.\]
In this case, poly-stability forces $\deg(N)\leq 2g-2$ since $\beta:\Vv^*\to \Vv\otimes K$
is given by 
\begin{equation}\label{eq sp4 beta form}
    \beta=\mtrx{a&b\\b&c}: N^{-1}K^{-\haf}\oplus NK^{-\haf}\to NK^{\frac{3}{2}}\oplus N^{-1}K^{\frac{3}{2}}
\end{equation}
and if $\deg(N)>2g-2,$ then $c=0$ and $NK^{-\haf}\subset \Vv\oplus \Vv^*$ is a positive degree which is invariant by the Higgs field $\Phi=\smtrx{0&\beta\\\gamma&0}.$ Note that when $\deg(N)=2g-2$, $N^2=K$ and we are in one of the $2^{2g}$ Hitchin components for $\sSp(4,\R)$. In the cases $0\leq \deg(N)<2g-2$ and $(sw_1,sw_2)\in H^1(\Sigma,\Z_2)\setminus\{0\}\times H^2(\Sigma,\Z_2)$, Gothen showed that the invariants of the orthogonal bundle distinguish the connected components of the moduli space of maximal $\sSp(4,\R)$-Higgs bundles $\Mm^{\mathrm{max}}(\sSp(4,\R)).$ 

\begin{Theorem}
    (\cite{sp4GothenConnComp}) Fix a square root $K^\haf$ of $K$ and let $\Mm_{sw_1}^{\mathrm{max},sw_2}(\sSp(4,\R))$ denote the set of maximal $\sSp(4,\R)$-Higgs bundles $(\Vv,\beta,\gamma)$ such that the Stiefel-Whitney classes of the orthogonal bundle $(\Vv\otimes K^{-\haf},\gamma^*\circ\gamma)$ are $sw_1$ and $sw_2$ with $sw_1\neq0.$ 
    For $0\leq d<2g-2,$ let $\Mm_{0,d}^{\mathrm{max}}(\sSp(4,\R))$ denote the set of maximal $\sSp(4,\R)$-Higgs bundles $(\Vv,\beta,\gamma)$ such that $\Vv\otimes K^{-\haf}=N\oplus N^{-1}$ for $\deg(N)=d.$ 
    Then the spaces $\Mm_{sw_1}^{\mathrm{max},sw_2}(\sSp(4,\R))$ and $\Mm_{0,d}^{\mathrm{max}}(\sSp(4,\R))$ are nonempty and connected.
\end{Theorem}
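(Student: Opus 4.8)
This statement is due to Gothen \cite{sp4GothenConnComp}; I outline how I would recover it within the framework of this section, following the pattern of the analysis of $\sSO_0(2,3)$-Higgs bundles in Section \ref{PSp4R}. Fix $\Sigma$ and a square root $K^\haf$ of $K$. The first step is to record that maximality rigidifies the Higgs bundle: by Proposition \ref{SO(23)bundleofSp4Rbundle} a maximal $\sSp(4,\R)$-Higgs bundle $(\Vv,\beta,\gamma)$ has $\deg(\Lambda^2\Vv)=2g-2$, so Proposition \ref{Prop gamma not zero} forces $\gamma\colon\Vv\to\Vv^*\otimes K$ to be an isomorphism. Hence $P:=\Vv\otimes K^{-\haf}$ carries the everywhere-nondegenerate symmetric form $Q_P:=\gamma^*\circ\gamma$, so $(P,Q_P)$ is a holomorphic $\sO(2,\C)$-bundle with $\det P=\Lambda^2\Vv\otimes K^{-1}$ a square root of $\Oo$. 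Its Stiefel--Whitney classes $sw_1,sw_2$, together with the integer $d=|\deg N|$ in the case $sw_1=0$, $P=N\oplus N^{-1}$ with $\deg N\ge 0$, are locally constant invariants; the Milnor--Wood inequality together with poly-stability of the $\sSp(4,\C)$-Higgs bundle \eqref{SL(4,C) Higgs of Sp(4,R) Higgs} forces $0\le d\le 2g-2$, with $d=2g-2$ the Hitchin locus $N^2=K$ (one of the $2^{2g}$ Hitchin components, treated separately). This gives the stated decomposition of $\Mm^\mathrm{max}(\sSp(4,\R))$, and it remains to prove non-emptiness and connectedness of each piece.

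For non-emptiness, one exhibits a single poly-stable Higgs bundle in each piece. When $sw_1=0$ and $0\le d<2g-2$, take $\Vv=NK^\haf\oplus N^{-1}K^\haf$ with $\deg N=d$, $\gamma=\smtrx{0&1\\1&0}$ and $\beta=\smtrx{0&0\\0&c}$ with $0\ne c\in H^0(N^{-2}K^2)$ (non-zero since $\deg(N^{-2}K^2)=4g-4-2d>0$); the classification of Higgs-invariant isotropic subbundles of \eqref{SL(4,C) Higgs of Sp(4,R) Higgs}, carried out exactly as in Proposition \ref{Prop sw1=0 stability}, shows this is poly-stable. When $sw_1\ne 0$ one proceeds as in Lemma \ref{sw_2_non_empty}, fixing an $\sO(2,\C)$-bundle $(P,Q_P)$ with the prescribed invariants and a suitable $\beta$; equivalently one may realize such examples by lifting appropriate $\sSO_0(2,3)$-Higgs bundles along the map of Proposition \ref{liftingtoSp(4R)Prop}, once one records the dictionary between the invariants of $(P,Q_P)$, those of the associated rank-three orthogonal bundle $(\Ww,Q_W)$, and the liftability criterion. (Establishing this dictionary is a small but unavoidable Stiefel--Whitney-class computation, and is what pins down precisely which pairs $(sw_1,sw_2)$ occur.)

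For connectedness I would give an explicit parameterization modelled on Theorems \ref{THM d>0}, \ref{thm:zero_component} and \ref{THM HiggsParamsw1sw2orbifold}, with $(P,Q_P)$ playing the role of $(\Ff,Q_F)$. Writing $\beta=\smtrx{a&b\\b&c}$ as in \eqref{eq sp4 beta form} when $sw_1=0$, the component $b\in H^0(K^2)$ does not enter the stability analysis and splits off as a product factor, while the $\C^*$-gauge action $(N,a,c)\mapsto(N,\lambda^2 a,\lambda^{-2}c)$ together with poly-stability (Proposition \ref{Prop sw1=0 stability} and the analogue of Proposition \ref{Prop: smooth/orbifold in Md}) identifies, for $d>0$, the space $\Mm^\mathrm{max}_{0,d}(\sSp(4,\R))$ with the total space of a holomorphic vector bundle over a symmetric product of $\Sigma$ --- via the divisor of the section $c$, which poly-stability forces to be non-zero --- crossed with $H^0(\Sigma,K^2)$. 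For $d=0$ one obtains the analogue of Theorem \ref{thm:zero_component}: a $\Z_2$-quotient (by $N\mapsto N^{-1}$) of a $\mathcal U_{3g-3}$-bundle over $\Pic^0(\Sigma)$, crossed with $H^0(\Sigma,K^2)$. For $sw_1\ne 0$ one pulls back to the double cover $\Sigma_{sw_1}$ and repeats the argument of Theorem \ref{THM HiggsParamsw1sw2orbifold}, obtaining a bundle over a $\Z_2$-quotient of $\Prym^{sw_2}(\Sigma_{sw_1})$. In every case the base --- a symmetric product, a Picard torus, or a component of a Prym variety (connected by Proposition \ref{Prop Prym disconnected}) --- is connected, and the fibers are vector spaces, copies of $\mathcal U_n$, or $\Z_2$-quotients of these, hence connected; so each of the listed spaces is connected.

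The main obstacle is the stability bookkeeping in the last step: because $\Vv$ has rank two whereas the $\sSO_0(2,3)$-analysis used the rank-three bundle $\Ww$, the classification of $\Phi$-invariant isotropic subbundles of the rank-four bundle $\Vv\oplus\Vv^*$ with Higgs field $\smtrx{0&\beta\\\gamma&0}$ has to be carried out directly. It parallels Propositions \ref{Prop sw1=0 stability} and \ref{Prop sw1 not 0 stability} closely, but getting the case analysis and the matching of gauge-equivalences exactly right --- and, in the $sw_1\ne 0$ case, transporting everything correctly to the double cover --- is where the real work lies. I note that Gothen's original proof instead proceeds Morse-theoretically, minimizing the Hitchin function (the moment map of the circle action) on $\Mm^\mathrm{max}(\sSp(4,\R))$ and showing that its critical submanifold of minima --- essentially one of the connected bases above --- is connected; this gives an alternative route that does not pass through the $\sSO_0(2,3)$ picture at all.
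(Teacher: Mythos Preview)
The paper does not prove this theorem: it is stated with the citation \cite{sp4GothenConnComp} and no proof is given. The paper's own contribution (Theorems \ref{THM SP4 d>0}, \ref{THM Sp4 d=0 } and \ref{THM Higgs Sp4 sw1not0}) is the explicit parameterization of these components, which is developed \emph{assuming} Gothen's connectedness result rather than re-deriving it. Your outline is therefore not so much a reconstruction of the paper's proof as an independent argument, and indeed it is essentially the argument implicit in Section \ref{sec: param sp4}: once the parameterizations are in hand, connectedness is visible. You are also right that Gothen's original route is Morse-theoretic via the Hitchin function.

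There is one slip worth flagging. For $d>0$ you say the moduli space is a vector bundle over a symmetric product of $\Sigma$, obtained ``via the divisor of the section $c$''. But the divisor of $c$ only determines $N^{-2}K^2$, hence $N^2$, not $N$; there are $2^{2g}$ choices of $N$ for each divisor. The correct base is the fiber product $a^*\Pic^{2g-2-d}(\Sigma)=\{(D,L):a(D)=L^2\}$ of \eqref{EQ fiberproduct symm prod}, a $2^{2g}$-cover of $\Sym^{4g-4-2d}(\Sigma)$, exactly as in Theorem \ref{THM SP4 d>0}. This does not break your connectedness argument, but it does require one more step: you must check that this cover is connected. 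It is, because for $0<d<2g-2$ the Abel--Jacobi map $\Sym^{4g-4-2d}(\Sigma)\to\Pic^{4g-4-2d}(\Sigma)$ is surjective on fundamental groups, so the pullback of the connected squaring cover $\Pic^{2g-2-d}(\Sigma)\to\Pic^{4g-4-2d}(\Sigma)$ remains connected. A parallel remark applies in the $sw_1\neq0$ case: the relevant base is $\Prym^{sw_2}(\Sigma_{sw_1})$ parameterizing $N$ itself (not $N^2$), and the two components of $\Prym(\Sigma_{sw_1})$ are what separate the two values of $sw_2$; compare Proposition \ref{Prop squaring prym} and Theorem \ref{THM Higgs Sp4 sw1not0}.
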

Counting the above invariants and adding the $2^{2g}$ $\sSp(4,\R)$-Hitchin components give the following corollary.
\begin{Corollary}
    The space $\Mm^{\mathrm{max}}(\sSp(4,\R))$ has $3\cdot2^{2g}+2g-4$ connected components. 
\end{Corollary}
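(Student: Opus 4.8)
The plan is to assemble the count directly from the theorem of Gothen quoted just above, which already does the real work: it establishes that the listed subspaces of $\Mm^{\mathrm{max}}(\sSp(4,\R))$ are nonempty and connected, and that the Stiefel--Whitney classes (respectively the integer $d$) of the orthogonal bundle $(\Vv\otimes K^{-\haf},\gamma^{*}\circ\gamma)$ distinguish connected components. So the first step is to observe that, since $sw_{1}$, $sw_{2}$ and $d$ are discrete invariants, they are locally constant, hence constant on each connected component of $\Mm^{\mathrm{max}}(\sSp(4,\R))$. Therefore the subspaces $\Mm_{sw_{1}}^{\mathrm{max},sw_{2}}(\sSp(4,\R))$ with $sw_{1}\neq 0$ and $\Mm_{0,d}^{\mathrm{max}}(\sSp(4,\R))$ with $0\leq d<2g-2$ are pairwise disjoint, and by Gothen's theorem each of them is a single connected component.

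Next I would treat the remaining case $\deg(N)=2g-2$ separately: as noted in the discussion preceding the theorem, poly-stability then forces $N^{2}=K$, so $N$ ranges over the $2^{2g}$ theta characteristics of $\Sigma$, and the corresponding Higgs bundles make up the $2^{2g}$ $\sSp(4,\R)$-Hitchin components. Combined with the assertion of Gothen's theorem that the orthogonal-bundle invariants exhaust the connected components, this shows the list of pieces is complete. The final step is the arithmetic: there are $|H^{1}(\Sigma,\Z_{2})\setminus\{0\}|\cdot|H^{2}(\Sigma,\Z_{2})|=(2^{2g}-1)\cdot 2$ components of the first type, one component for each $d\in\{0,1,\dots,2g-3\}$, i.e. $2g-2$ of the second type, and $2^{2g}$ Hitchin components, giving the total
\[
2(2^{2g}-1)+(2g-2)+2^{2g}=3\cdot 2^{2g}+2g-4 .
\]

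I do not expect a genuine obstacle here; the statement is essentially bookkeeping on top of Gothen's classification. The only points deserving a word of care are disjointness and exhaustiveness of the three families: disjointness is immediate from the invariants ($sw_{1}\neq 0$ versus $sw_{1}=0$, and then $d<2g-2$ versus $d=2g-2$ for the Hitchin locus), while exhaustiveness is precisely the content of Gothen's theorem. One could also note that the count is independent of the auxiliary choice of square root $K^{\haf}$ used to form the orthogonal bundle $(\Vv\otimes K^{-\haf},\gamma^{*}\circ\gamma)$, since a different choice merely relabels the pieces and does not change their number.
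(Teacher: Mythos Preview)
Your proposal is correct and follows exactly the paper's approach: the paper treats this corollary as immediate bookkeeping, writing only ``Counting the above invariants and adding the $2^{2g}$ $\sSp(4,\R)$-Hitchin components give the following corollary.'' Your write-up is simply a more explicit version of that count, with the same decomposition into the $2(2^{2g}-1)$ components with $sw_1\neq 0$, the $2g-2$ components with $sw_1=0$ and $0\leq d<2g-2$, and the $2^{2g}$ Hitchin components.
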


To obtain the new invariants for a maximal $\sSp(4,\R)$-Higgs bundle we had to fix a square root of the canonical bundle $K$. The associated invariants depend on this choice in the following manner. 
\begin{Proposition}
    \label{Prop sw2 of Sp4R} The first Stiefel-Whitney class of the orthogonal bundle 
    $(\Vv\otimes K^{-\haf},\gamma^*\circ\gamma)$ does not depend on the choice of square root $K^\haf$. The second Stiefel-Whitney class does not depend on the choice of square root if and only if the first Stiefel-Whitney class vanishes.
\end{Proposition}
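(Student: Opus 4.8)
The plan is to compare the two $\sO(2,\C)$-bundles $(\Vv\otimes K^{-\haf},\gamma^*\gamma)$ and $(\Vv\otimes (K^\haf)',\gamma^*\gamma)$ obtained from two different square roots $K^\haf$ and $(K^\haf)'$ of $K$. Since $(K^\haf)'=K^\haf\otimes I$ for a unique $2$-torsion line bundle $I\in\Pic^0(\Sigma)$ with $I^2=\Oo$, the two underlying $\sO(2,\C)$-bundles differ by tensoring with the $\sO(1,\C)$-bundle $I$: writing $(\Vv',Q') = (\Vv\otimes K^{-\haf}\otimes I,\, \gamma^*\gamma)$, where we note that the symmetric form $\gamma^*\gamma$ lands in $\Sym^2(\Vv^*\otimes K^\haf)\otimes I^2 = \Sym^2(\Vv^*\otimes K^\haf\otimes I)$ because $I^2=\Oo$, so tensoring by $I$ genuinely produces a holomorphic $\sO(2,\C)$-bundle. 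Thus the question reduces to how the Stiefel--Whitney classes of a rank two orthogonal bundle $(E,Q)$ change under $(E,Q)\mapsto (E\otimes I, Q)$ with $sw_1(I)$ arbitrary in $H^1(\Sigma,\Z_2)$ and $I$ of order dividing $2$.

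First I would treat $sw_1$. Tensoring by a line bundle does not change the associated $\RP^1$-bundle (the projectivization), and $sw_1$ of an $\sO(2,\C)$-bundle is detected by $sw_1(\det) = sw_1(E)$ read in $H^1(\Sigma,\Z_2)$; but $\det(E\otimes I) = \det(E)\otimes I^2 = \det(E)$ since $I^2=\Oo$. Hence $sw_1(\Vv'\otimes\cdots) = sw_1(\Vv\otimes K^{-\haf})$, proving the first assertion. The cleanest way to see this is simply that $\det(\Vv\otimes K^{-\haf}) = \Lambda^2\Vv\otimes K^{-1}$ is independent of the choice of square root altogether, since $K^{-\haf}$ enters squared in the determinant. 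So $sw_1$ is manifestly independent of $K^\haf$.

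Next, for $sw_2$, I would invoke the Whitney sum formula \eqref{WhitneySum} together with the geometric description of orthogonal bundles. When $sw_1(\Vv\otimes K^{-\haf}) = 0$, the bundle splits (possibly after moving to a poly-stable representative) and one computes directly: if $(E,Q) = (N\oplus N^{-1},\smtrx{0&1\\1&0})$ then $E\otimes I = NI \oplus N^{-1}I$, and since $NI$ and $N^{-1}I$ need not be dual unless $I^2=\Oo$ — which holds — we get $(E\otimes I, Q)\cong (NI\oplus (NI)^{-1},\smtrx{0&1\\1&0})$, whose $sw_2 = \deg(NI) \bmod 2 = \deg(N)\bmod 2 = sw_2(E,Q)$. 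So when $sw_1=0$, tensoring by $I$ does not change $sw_2$, giving the ``if'' direction. For the ``only if'' direction, suppose $sw_1 := sw_1(\Vv\otimes K^{-\haf})\neq 0$. By non-degeneracy of the cup product on $H^1(\Sigma,\Z_2)$ I can choose $I$ with $I^2=\Oo$ and $sw_1(I)\wedge sw_1 \neq 0$; there are $2^{2g}$ choices of square root, realizing every $2$-torsion $I$, so such an $I$ does occur. Then \eqref{WhitneySum} applied to $E\otimes I$ versus $E$ — more precisely, the formula $sw_2(E\otimes I) = sw_2(E) + sw_1(E)\wedge sw_1(I)$ valid for rank two $E$ tensored by a line bundle of order two (which follows from the splitting principle or from Proposition \ref{Prop: pullbackHiggsBundles}'s pullback picture, passing to the double cover $\Sigma_{sw_1}$ where $E$ becomes $M\oplus M^{-1}$ and $I$ becomes trivial, forcing the change to come entirely from the non-$\iota$-invariant part) — shows $sw_2(E\otimes I) = sw_2(E) + sw_1\wedge sw_1(I) \neq sw_2(E)$.

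The main obstacle I anticipate is pinning down the tensor-product formula $sw_2(E\otimes I) = sw_2(E) + sw_1(E)\wedge sw_1(I)$ rigorously in the holomorphic orthogonal-bundle setting rather than quoting it from smooth topology. The safe route, which I would take, is to reduce to the explicit models already developed in Section \ref{Orthogonal_bundles}: in the $sw_1\neq 0$ case pull everything back along $\pi:\Sigma_{sw_1}\to\Sigma$, where $\pi^*I = \Oo$ (as $\deg_{\Sigma_{sw_1}}$ of a $2$-torsion pullback... — actually one must check $sw_1(I)$ may not lie in the kernel of $\pi^*$, so instead use that $\pi^*(E\otimes I) = \pi^*E$ only when $sw_1(I)$ is in that kernel; in general one argues on $\Sigma$ directly via the $\Bb_{sw_1}^{sw_2}$ parametrization and Lemma \ref{sw_2_non_empty}, whose proof already exhibits exactly the cup-product mechanism by which $sw_2$ toggles). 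Concretely, the proof of Lemma \ref{sw_2_non_empty} shows $(L_1\oplus L_2,\mathrm{Id})$ has $sw_2 = sw_1(L_1)\wedge sw_1(L_2)$; tensoring the model bundle with $I$ shifts both $sw_1(L_i)$ by $sw_1(I)$ and one reads off the claimed change. So the obstacle is really bookkeeping, and the excerpt already contains every tool needed.
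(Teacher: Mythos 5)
Your proposal is correct and follows essentially the same route as the paper: reduce to comparing $(\Vv\otimes K^{-\haf},\gamma^*\circ\gamma)$ with its twist by a $2$-torsion line bundle $I$, observe that $sw_1$ is unchanged (the determinant $\Lambda^2\Vv\otimes K^{-1}$ sees $K^{-\haf}$ only squared), and show $sw_2$ shifts by $sw_1(\Vv\otimes K^{-\haf})\wedge sw_1(I)$, which by nondegeneracy of the cup product is nonzero for some $I$ exactly when $sw_1\neq 0$. The only difference is that the paper simply cites the Milnor--Stasheff formula \eqref{EQ: twisted O2 invariant} for the total Stiefel--Whitney class of a rank-two bundle tensored with a line bundle, whereas you rederive the relevant instance from the split models of Section \ref{Orthogonal_bundles} and the Whitney sum formula \eqref{WhitneySum}.
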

\begin{proof}
    Recall that two different square roots of $K$ differ by an $I\in\Pic^0(\Sigma)$ with $I^2=\Oo.$ Thus, we need to compare the Stiefel-Whitney classes of $\Vv\otimes K^{-\haf}\otimes I$ with those of $\Vv\otimes K^{-\haf}$. 
    If $\xi$ is a rank two bundle and $\eta$ is a line bundle, then the total Stiefel-Whitney class of $\xi\otimes\eta$ is given by  \cite[Exercise 7.C]{MilnorStasheff}:
    \begin{equation}
        \label{EQ: twisted O2 invariant}
        \xymatrix@=0em{sw(\xi\otimes \eta)=(1+sw_1(\xi)+sw_1(\eta))\wedge(1+ sw_2(\xi)+sw_1(\eta))\\
    =1+sw_1(\xi)+(sw_1(\xi)\wedge sw_1(\eta)+sw_2(\xi))~.}
    \end{equation}
    Thus, the first Stiefel-Whitney classes of $\Vv\otimes K^{-\haf}$ and $\Vv\otimes K^{-\haf}\otimes I$ are the same for all choices of $I.$ 
    The second Stiefel-Whitney classes of $\Vv\otimes K^{-\haf}$ and $\Vv\otimes K^{-\haf}\otimes I$ are the same for all choices of $I$ if and only if $sw_1(\Vv\otimes K^{-\haf})=0.$ 
 \end{proof}

\begin{Proposition}\label{Prop Sp(4) component covering}
Consider the map $\pi:\Mm^{\max}(\sSp(4,\R))\to\Mm^{\mathrm{max},0}(\sP\sSp(4,\R))$:
\begin{itemize}
\item For each $sw_1\in H^1(\Sigma,\Z_2)\setminus\{0\}$, $\pi^{-1}(\Mm_{sw_1}^{\mathrm{max},0}(\sP\sSp(4,\R)))=\Mm^{\mathrm{max}}_{sw_1}(\sSp(4,\R))$, in particular, it has two connected components. 
    \item For $0\leq d\leq 2g-2$, $\pi^{-1}(\Mm_{0,2d}^\mathrm{max}(\sP\sSp(4,\R)))=\Mm_{0,d}^\mathrm{max}(\sSp(4,\R)),$ in particular, it is connected for $d\in[0,2g-2)$ and has $2^{2g}$ connected components when $d=2g-2$.
    \item The inverse image of all the other components (when $sw_2=1$) is empty.
\end{itemize}
\end{Proposition}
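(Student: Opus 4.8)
The plan is to analyze the map $\pi : \Mm^{\max}(\sSp(4,\R)) \to \Mm^{\mathrm{max}}(\sP\sSp(4,\R))$ via the obstruction-theoretic criterion established in Proposition \ref{liftingtoSp(4R)Prop}: an $\sSO_0(2,3)$-Higgs bundle $(L,(\Ww,Q_W),\beta,\gamma)$ lifts to $\sSp(4,\R) = \sSpin_0(2,3)$ if and only if $sw_2(\Ee,Q_\Ee) = (\deg(L) \bmod 2) + sw_2(\Ww,Q_W) = 0$. For a \emph{maximal} Higgs bundle, $\deg(L) = 2g-2$, so the obstruction is $(2g-2) + sw_2 = sw_2 \pmod 2$. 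This immediately gives the last bullet: if $sw_2 = 1$ (which happens precisely for the components $\Mm_{sw_1}^{\mathrm{max},1}$ with $sw_1 \neq 0$), the obstruction is nonzero and the preimage is empty. It also shows $\pi^{-1}$ of a component with $sw_2 = 0$ equals the full $\sSp(4,\R)$-preimage of that component, so the first two bullets reduce to counting connected components of these preimages.

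For the first bullet, fix $sw_1 \neq 0$. A maximal $\sSp(4,\R)$-Higgs bundle lifting a point of $\Mm_{sw_1}^{\mathrm{max},0}(\sP\sSp(4,\R))$ has, after choosing a square root $K^\haf$, an associated $\sO(2,\C)$-bundle $(\Vv \otimes K^{-\haf}, \gamma^*\circ\gamma)$ whose first Stiefel–Whitney class is $sw_1$ by the compatibility worked out in Section \ref{section the isomorphism} combined with Proposition \ref{Prop gamma not zero} (that $\gamma$ is an isomorphism for maximal Higgs bundles); by Proposition \ref{Prop sw2 of Sp4R} this $sw_1$ is independent of the choice of $K^\haf$. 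Since $sw_1 \neq 0$, by Proposition \ref{Prop sw2 of Sp4R} the second Stiefel–Whitney class of $(\Vv\otimes K^{-\haf}, \gamma^*\circ\gamma)$ \emph{does} depend on the choice of square root: replacing $K^\haf$ by $K^\haf \otimes I$ with $I^2 = \Oo$ changes $sw_2$ by $sw_1 \wedge sw_1(I)$, and since $sw_1 \neq 0$ and the cup product is nondegenerate, there is an $I$ with $sw_1 \wedge sw_1(I) \neq 0$. Thus fixing $K^\haf$ once and for all, $\Mm^{\max}_{sw_1}(\sSp(4,\R)) = \Mm_{sw_1}^{\mathrm{max},0}(\sSp(4,\R)) \sqcup \Mm_{sw_1}^{\mathrm{max},1}(\sSp(4,\R))$, and by Gothen's theorem (quoted above) each of these two pieces is nonempty and connected. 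This gives $\pi^{-1}(\Mm_{sw_1}^{\mathrm{max},0}(\sP\sSp(4,\R))) = \Mm^{\max}_{sw_1}(\sSp(4,\R))$ with exactly two components.

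For the second bullet I would trace through the relation between the invariant $d$ on the $\sSp(4,\R)$ side and the Toledo invariant on the $\sP\sSp(4,\R)$ side. By Proposition \ref{SO(23)bundleofSp4Rbundle}, the associated $\sSO_0(2,3)$-Higgs bundle has $L = \Lambda^2\Vv$. If $\Vv \otimes K^{-\haf} = N \oplus N^{-1}$ with $\deg N = d$, then $\Vv = NK^\haf \oplus N^{-1}K^\haf$ and $L = \Lambda^2\Vv = K$; more to the point, the $\sSO_0(2,3)$-invariant distinguishing the $\Mm_{0,\bullet}^{\mathrm{max}}$ components was $|\deg(M)|$ where $\Ww = LK^{-1} \oplus \Ff$ and $M \subset \Ff$ — I would verify that unwinding $\Ww = S^2(\Vv)\otimes\Lambda^2\Vv^*$ in the split case gives $M$-degree equal to $2d$, so $\pi$ sends $\Mm_{0,d}^{\mathrm{max}}(\sSp(4,\R))$ into $\Mm_{0,2d}^{\mathrm{max}}(\sP\sSp(4,\R))$, and conversely any maximal $\sP\sSp(4,\R)$-Higgs bundle with $sw_2 = 0$ and even invariant $2d$ lifts (obstruction vanishes). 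When $0 \le d < 2g-2$ the fiber of $\pi$ over each point is a single $\sSp(4,\R)$-Higgs bundle up to the $\Pic^0[2]$-twisting ambiguity $\Vv \mapsto \Vv \otimes I$ noted at the end of Section \ref{Sp4R}; since $\Mm_{0,d}^{\mathrm{max}}(\sSp(4,\R))$ is connected (Gothen), so is the preimage. When $d = 2g-2$ we have $N^2 = K$, i.e. $N$ is a theta characteristic, and the $2^{2g}$ choices of theta characteristic give the $2^{2g}$ $\sSp(4,\R)$-Hitchin components — each connected — all mapping onto the single $\sP\sSp(4,\R)$-Hitchin component $\Mm_{0,4g-4}^{\mathrm{max}}(\sP\sSp(4,\R))$; this is exactly the well-known $2^{2g}$-fold structure of the $\sSp(4,\R)$ Hitchin components covering the $\sP\sSp(4,\R)$ one.

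The main obstacle I anticipate is the bookkeeping in the second bullet: carefully matching the invariant $d$ of $\Mm_{0,d}^{\mathrm{max}}(\sSp(4,\R))$ (defined via $\deg N$ of the splitting $\Vv\otimes K^{-\haf} = N \oplus N^{-1}$) with the invariant $2d$ of $\Mm_{0,2d}^{\mathrm{max}}(\sP\sSp(4,\R))$ (defined via $|\deg M|$ of the orthogonal subbundle $M \subset \Ff \subset \Ww$), and confirming that the map on the level of moduli is surjective and has the stated fiber structure, including the identification of the fiber over a point with the $\Pic^0[2]$-orbit of twistings. Everything else is a direct consequence of the parity obstruction of Proposition \ref{liftingtoSp(4R)Prop}, the Stiefel–Whitney class behavior of Proposition \ref{Prop sw2 of Sp4R}, and the connectedness statements of Gothen.
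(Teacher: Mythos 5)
Your proposal is correct and follows essentially the same route as the paper's proof: the parity obstruction of Proposition \ref{liftingtoSp(4R)Prop} (which for maximal Toledo invariant reduces to $sw_2$) empties the $sw_2=1$ components, the identity $\Lambda^2(\Vv)=\Lambda^2(\Vv\otimes K^{-\haf})\otimes K$ matches the $sw_1$ invariants and shows the splitting $\Vv=NK^{\haf}\oplus N^{-1}K^{\haf}$ produces $M=N^2$ of degree $2d$ on the $\sSO_0(2,3)$ side, and Gothen's connectedness results give the component counts. One small slip, inherited from a typo in the paper's own Section \ref{Sp4R}: when $\deg N=2g-2$ the constraint forced by $c\in H^0(N^{-2}K^2)\setminus\{0\}$ is $N^2\cong K^2$, i.e.\ $N=K\otimes I$ with $I^2=\Oo$, not that $N$ is a theta characteristic --- the count of $2^{2g}$ Hitchin components is unaffected.
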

\begin{proof}
    By Proposition \ref{liftingtoSp(4R)Prop}, $\Mm^{\mathrm{max}}(\sSp(4,\R))$ is a covering 
    of $\Mm^{\mathrm{max},0}(\sP\sSp(4,\R)).$ 
    Moreover, two maximal $\sSp(4,\R)$-Higgs bundles $(\Vv,\gamma,\beta)$ and $(\Vv',\gamma',\beta')$ map to the same $\sP\sSp(4,\R)$-Higgs bundle if and only if $\Vv'\cong\Vv\otimes I$ with $I^2=\Oo$, $\gamma\cong\gamma'$ and $\beta\cong\beta'.$ 
    For a fixed square root $K^\haf$ of $K,$ let $sw_1$ and $sw_2$ denote the Stiefel-Whitney classes of the orthogonal bundle $\Vv\otimes K^{-\haf}$. 
 The first Stiefel-Whitney class invariant of a maximal $\sSp(4,\R)$-Higgs bundle agrees with the first Stiefel-Whitney class invariant of the associated $\sSO_0(2,3)$-Higgs bundle since $\Lambda^2(\Vv)=\Lambda^2(\Vv\otimes K^{-\haf})\otimes K.$ 
    Thus, $\Mm_{sw_1}^{\mathrm{max}}(\sSp(4,\R))$ is a covering of $\Mm_{sw_1}^{\mathrm{max},0}(\sP\sSp(4,\R)).$

If $sw_1\neq0$, then $\Mm_{sw_1}^{\mathrm{max}}(\sSp(4,\R))$ has two connected components which are distinguished by the second Stiefel-Whitney class of $\Vv\otimes K^{-\haf}.$
    If $sw_1=0$ and $(\Vv,\gamma,\beta)$ is a maximal Higgs bundle in $\Mm_{0,d}^\mathrm{max}(\sSp(4,\R)),$ then $\Vv=NK^\haf\oplus N^{-1}K^\haf$ for some $N\in\Pic^d(\Sigma).$ The bundle $\Lambda^2(\Vv\oplus\Vv^*)$ is then given by 
    \[K\oplus N^2\oplus \Oo\oplus \Oo\oplus  N^{-2}\oplus K^{-1}~.\]
    Thus, for $0\leq d<2g-2$, the space $\Mm_{0,d}^\mathrm{max}(\sSp(4,\R))$ maps to
$\Mm_{0,2d}^\mathrm{max}(\sP\sSp(4,\R)),$ and the space $\Mm_{0,2g-2}^\mathrm{max}(\sSp(4,\R))$ maps to
$\Mm_{0,4g-4}^\mathrm{max}(\sP\sSp(4,\R))$.
 \end{proof}

\subsection{Parameterizing $\Mm^\mathrm{max}(\sSp(4,\R))$}\label{sec: param sp4} We now turn to parameterizing the connected components of $\Mm^{\mathrm{max}}(\sSp(4,\R))$ as coverings of the parameterizations of the components of $\Mm^{\mathrm{max},0}(\sP\sSp(4,\R))$ from Theorems \ref{THM d>0}, \ref{thm:zero_component} and \ref{THM HiggsParamsw1sw2orbifold}. Recall that the Abel-Jacobi map sends a divisor $D$ to the line bundle $\Oo(D);$ this defines a map 
 \[a:\xymatrix{\Sym^{m}(\Sigma)\ar[r]&\Pic^{m}(\Sigma)}~.\]
Recall also that the squaring map defines a $2^{2g}$-covering $s:\Pic^{m}(\Sigma)\to\Pic^{2m}(\Sigma).$
The fiber product 
\begin{equation}
    \label{EQ fiberproduct symm prod}a^*\Pic^{m}(\Sigma)=\{(D,L)\in\Sym^{2m}(\Sigma)\times\Pic^m(\Sigma)\ |\ a(D)=L^2\}
\end{equation}
thus defines a smooth $2^{2g}$-covering of the symmetric product $\Sym^{2m}(\Sigma).$
\begin{Theorem}\label{THM SP4 d>0}
Let $\Sigma$ be a Riemann surface with genus $g\geq 2.$ For $0<d\leq2g-2,$ the subspace $\Mm_{0,d}^\mathrm{max}(\sSp(4,\R)$ of the moduli space of $\sSp(4,\R)$-Higgs bundles on $\Sigma$ is diffeomorphic to $\pi^*\Ff_{2d}\times H^0(K^2)$ where 
\begin{itemize}
    \item $H^0(K^2)$ is the space of holomorphic quadratic differentials on $\Sigma$,
    \item$\Ff_{2d}$ is the rank $3g-3+2d$ holomorphic vector bundle over the symmetric product $\Sym^{4g-4-2d}(\Sigma)$ from Theorem \ref{THM d>0},
    \item$\pi:a^*\Pic^{2g-2-d}(\Sigma)\to\Sym^{4g-4-2d}(\Sigma)$ is the $2^{2g}$-covering given by \eqref{EQ fiberproduct symm prod}.
\end{itemize}
In particular, $\Mm_{0,d}^\mathrm{max}(\sSp(4,\R))$ is a $2^{2g}$-covering of $\Mm_{0,2d}^\mathrm{max}(\sP\sSp(4,\R)).$
\end{Theorem}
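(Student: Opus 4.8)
The plan is to combine the dictionary between maximal $\sSp(4,\R)$- and $\sSO_0(2,3)$-Higgs bundles (Proposition \ref{SO(23)bundleofSp4Rbundle}) with the parameterization $\Mm^{\mathrm{max}}_{0,2d}(\sP\sSp(4,\R))\cong\Ff_{2d}\times H^0(\Sigma,K^2)$ from Theorem \ref{THM d>0}, and to identify the covering $\pi\colon\Mm^{\mathrm{max}}_{0,d}(\sSp(4,\R))\to\Mm^{\mathrm{max}}_{0,2d}(\sP\sSp(4,\R))$ of Proposition \ref{Prop Sp(4) component covering} explicitly with the pullback of the squaring map on line bundles. Since $d>0$ the target is smooth by Proposition \ref{Prop: smooth/orbifold in Md}, so any covering of it is automatically a smooth manifold; hence the only real content is to pin down the covering space and the pulled back bundle, after which the $2^{2g}$-sheeted assertion is immediate.

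First I would introduce coordinates on the source. For $(\Vv,\beta,\gamma)\in\Mm^{\mathrm{max}}_{0,d}(\sSp(4,\R))$ the component $\gamma\colon\Vv\to\Vv^*\otimes K$ is a holomorphic isomorphism (Propositions \ref{SO(23)bundleofSp4Rbundle} and \ref{Prop gamma not zero}), and by definition $\Vv=NK^{\haf}\oplus N^{-1}K^{\haf}$ with $\deg N=d>0$; as $\deg N>0$, the line bundle $NK^{\haf}$ is the unique maximal subbundle, so it is canonically attached to $\Vv$. From $S^2(\Vv^*)\otimes K=N^{-2}\oplus\Oo\oplus N^2$ and $S^2(\Vv)\otimes K=N^2K^2\oplus K^2\oplus N^{-2}K^2$ one sees that the $H^0(N^{-2})$-component of $\gamma$ vanishes; using holomorphic gauge transformations fixing $NK^{\haf}$ one removes the $H^0(N^2)$-component of $\gamma$ and rescales, so that after normalization $\gamma=\smtrx{0&1\\1&0}$ and $\beta=\smtrx{\beta_2&\beta_0\\\beta_0&\beta_1}$ with $\beta_2\in H^0(N^2K^2)$, $\beta_0\in H^0(K^2)$, $\beta_1\in H^0(N^{-2}K^2)$. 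The residual gauge transformations form the diagonal torus; since $-\mathrm{Id}$ acts trivially on the Higgs bundle, the effective residual group is a $\C^*$ acting by $(\beta_1,\beta_2)\mapsto(\lambda^{-1}\beta_1,\lambda\beta_2)$ and fixing $\beta_0$, and this action is free on the locus $\beta_1\neq0$. By Proposition \ref{SO(23)bundleofSp4Rbundle} the associated $\sSO_0(2,3)$-Higgs bundle has orthogonal bundle $\Ff\cong M\oplus M^{-1}$ with $M=N^2\in\Pic^{2d}(\Sigma)$, and since $0<2d\leq 4g-4$, Proposition \ref{Prop sw1=0 stability} shows that poly-stability holds exactly when $\beta_1\neq0$. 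Writing $\widehat{\mathcal{M}}_d$ for the set of triples $(N,\beta_1,\beta_2)$ with $N\in\Pic^d(\Sigma)$, $\beta_1\in H^0(N^{-2}K^2)\setminus\{0\}$, $\beta_2\in H^0(N^2K^2)$, we obtain
\[\Mm^{\mathrm{max}}_{0,d}(\sSp(4,\R))\;\cong\;\bigl(\widehat{\mathcal{M}}_d/\C^*\bigr)\times H^0(\Sigma,K^2).\]

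Next I would match $\widehat{\mathcal{M}}_d/\C^*$ with a fibre product over $\Pic$. Via the identifications $H^0(N^{\mp2}K^2)=H^0(M^{\mp1}K^2)$ for $M=N^2$, the assignment $(N,\beta_1,\beta_2)\mapsto(N^2,\beta_1,\beta_2)$ exhibits $\widehat{\mathcal{M}}_d$ as the pullback of $\widehat\Ff_{2d}$ (in the notation of the proof of Theorem \ref{THM d>0}, with $\mu\leftrightarrow\beta_1$, $\nu\leftrightarrow\beta_2$) along the squaring map $s\colon\Pic^d(\Sigma)\to\Pic^{2d}(\Sigma)$; it is $\C^*$-equivariant up to the automorphism $\lambda\mapsto\lambda^{-1}$, so on free quotients it yields $\widehat{\mathcal{M}}_d/\C^*\cong s^{*}\Ff_{2d}$. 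The map $\Ff_{2d}\to\Sym^{4g-4-2d}(\Sigma)$ sends a class to $D=\operatorname{div}(\mu)$, so $M^{-1}K^2=\Oo(D)$ and the fibre-product condition $N^2=M$ rewrites as $(N^{-1}K)^2=\Oo(D)=a(D)$; setting $P=N^{-1}K\in\Pic^{2g-2-d}(\Sigma)$ this is precisely the defining equation of $a^{*}\Pic^{2g-2-d}(\Sigma)$ in \eqref{EQ fiberproduct symm prod} (with $2m=4g-4-2d$). Hence $\widehat{\mathcal{M}}_d/\C^*\cong\pi^{*}\Ff_{2d}$ for the projection $\pi\colon a^{*}\Pic^{2g-2-d}(\Sigma)\to\Sym^{4g-4-2d}(\Sigma)$, so $\Mm^{\mathrm{max}}_{0,d}(\sSp(4,\R))\cong\pi^{*}\Ff_{2d}\times H^0(\Sigma,K^2)$; and because $s$, hence $\pi$, is a $2^{2g}$-covering, this diffeomorphism intertwines the projection onto $\Mm^{\mathrm{max}}_{0,2d}(\sP\sSp(4,\R))$ with that $2^{2g}$-covering, in agreement with Proposition \ref{Prop Sp(4) component covering}.

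The main obstacle will be the bookkeeping in the second paragraph: checking that normalizing $\gamma$ leaves exactly the claimed residual $\C^*$-action (in particular that $-\mathrm{Id}$ acts trivially, so that this action agrees with the one on $\widehat\Ff_{2d}$ after $\lambda\mapsto\lambda^{-1}$), that poly-stability transports correctly across the $\sSp(4,\R)\leftrightarrow\sSO_0(2,3)$ dictionary of Proposition \ref{SO(23)bundleofSp4Rbundle}, and that the degree arithmetic really puts $P=N^{-1}K$ in $\Pic^{2g-2-d}(\Sigma)$ with $P^2=a(D)$. None of this is deep, but it must be executed carefully; everything else is a formal consequence of Theorem \ref{THM d>0} and the covering statement.
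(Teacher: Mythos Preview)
Your proposal is correct and follows essentially the same approach as the paper: normalize $\gamma$, identify the residual diagonal $\C^*$-gauge action on $(\beta_1,\beta_2)$ (the paper writes it as $(\mu,\nu)\mapsto(\lambda^{-2}\mu,\lambda^{2}\nu)$ before quotienting by $-\mathrm{Id}$, which is your action after the reparametrization $\lambda^2\mapsto\lambda$), and then read off the fibre product with the squaring map. Your treatment of the degree arithmetic identifying $s^*\Ff_{2d}$ with $\pi^*\Ff_{2d}$ via $P=N^{-1}K$ is in fact slightly more explicit than the paper's, but there is no substantive difference in strategy.
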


\begin{proof}
Similar to the proof of Theorem \ref{THM d>0}, set 
\[\widehat \Ff_{2d}=\{(M,\mu,\nu)\ |\ M\in \Pic^{2d}(\Sigma),\ \mu\in H^0(M^{-1}K^2)\setminus\{0\}, \ \nu\in H^0(MK^2) \}\]
and let $\Ff_{2d}=\widehat\Ff_{2d}/\C^*$ where $\lambda\in\C^*$ acts by $\lambda\cdot(M,\mu,\nu)=(M,\lambda\mu,\lambda^{-1}\nu).$ Recall that the map which associates to an equivalence class $[(M,\mu,\nu)]$ the projective class of $\mu$ turns $\Ff_{2d}$ into a rank ($3g-3+2d$)-vector bundle over the symmetric product $\Sym^{4g-4-2d}(\Sigma).$
 Let $s:\Pic^{2g-2-d}(\Sigma)\to\Pic^{4g-4-2d}(\Sigma)$ be the $2^{2g}$ covering defined by the squaring map. Pulling back this covering by the Abel-Jacobi map $a:\Sym^{4g-4-2d}(\Sigma)\to\Pic^{4g-4-2d}(\Sigma)$ defines a $2^{2g}$-covering 
 \[\pi:\xymatrix{a^*\Pic^{2g-2-d}(\Sigma)\ar[r]&\Sym^{4g-4-d}(\Sigma)}~.\] 
 This covering can be interpreted as the space of effective divisors $D$ of degree $4g-4-2d$ together with a choice of square root of $a(D).$ 
 Finally, the pullback $\pi^*\Ff_{2d}$ of the vector bundle $\Ff_{2d}$ to $a^*\Pic^{2g-2-d}(\Sigma)$ can be interpreted as the set of tuples consisting of a point in $\Ff_{2d}$ together with choice square root of the line bundle associated to the corresponding effective divisor. 

By Proposition \ref{Prop Sp(4) component covering}, for $0< d\leq 2g-2$, the space $\Mm_{0,d}^{\max}(\sSp(4,\R))$ is a connected covering of $\Mm_{0,2d}^{\max}(\sP\sSp(4,\R)).$ Recall from \eqref{eq sp4 beta form} that, after fixing a square root $K^\haf$ of $K,$ a Higgs bundle in $\Mm_{0,d}^{\max}(\sSp(4,\R))$ is determined by 
\[(\Vv,\beta,\gamma)=\left(NK^\haf\oplus N^{-1}K^\haf,\mtrx{\nu&q_2\\q_2&\mu},\mtrx{0&1\\1&0}\right)~,\]
where $N\in\Pic^d(\Sigma),$ $q_2\in H^0(K^2)$, $\nu\in H^0(N^{2}K^2)$ and $\mu\in H^0(N^{-2}K^2).$ As in the $\sP\sSp(4,\R)$-case, the section $\mu$ must be nonzero by stability. Thus, such a Higgs bundle is determined by a tuple $(\mu,\nu,q_2)$ and a choice of square root of the line bundle $N^{-2}K^2$. 

Two such Higgs bundles 
$\bigg(NK^\haf\oplus N^{-1}K^\haf,\mtrx{\nu&q_2\\q_2&\mu},\mtrx{0&1\\1&0}\bigg)$ 
and 
$\bigg(N'K^\haf\oplus N'^{-1}K^\haf, \mtrx{\nu'&q_2'\\q_2'&\mu'},\mtrx{0&1\\1&0}\bigg)$ 
are isomorphic if and only if $N=N'$ and there is a holomorphic gauge transformation $g:\Vv\to \Vv$ so that 
\[(g^{-1})^T\mtrx{0&1\\1&0}g^{-1}=\mtrx{0&1\\1&0}\ \ \ \ \ \text{and}\ \ \ \ \ g\mtrx{\nu&q_2\\q_2&\mu}g^T=\mtrx{\nu'&q_2'\\q_2'&\mu'}~.\] 
Thus, in the splitting $\Vv=NK^\haf\oplus N^{-1}K^\haf$, we have $g=\mtrx{\lambda&0\\0&\lambda^{-1}}$ for $\lambda\in\C^*$ and such a gauge transformation acts by $g\mtrx{\nu&q_2\\q_2&\mu}g^T=\mtrx{\lambda^2\nu&q_2\\q_2&\lambda^{-2}\mu}.$ 

In particular, if $[\mu]$ denotes the degree $4g-4-2d$ effective divisor associated to the projective class of $\mu,$ then the isomorphism class of such an $\sSp(4,\R)$-Higgs bundle is uniquely determined by the data $([\mu],\nu,q_2)$ and a choice of square root of $a([\mu])$. Thus, the component $\Mm_{0,d}^{\max}(\sSp(4,\R))$ is diffeomorphic to 
$\pi^*\Ff_{2d}\times H^0(K^2).$
\end{proof}
\begin{Remark}
    In the special case of $0<d<g-1$, a different parametrization of the components $\Mm_{0,d}^{\max}(\sSp(4,\R))$ was given in \cite{MaximalSP4}.
\end{Remark}
\begin{Theorem} \label{THM Sp4 d=0 }
Let $\Sigma$ be a Riemann surface with genus $g\geq 2.$ The connected component $\Mm_{0,0}^\mathrm{max}(\sSp(4,\R)$ of the moduli space of $\sSp(4,\R)$-Higgs bundles is homeomorphic to $s^*\Aa/\Z_2\times H^0(K^2)$ where 
\begin{itemize}
    \item $H^0(K^2)$ is the space of holomorphic differentials on $\Sigma$,
    \item $\Aa$ is the holomorphic fiber bundle over $\Pic^0(\Sigma)$ from Theorem \ref{thm:zero_component},
    \item $s:\Pic^0(\Sigma)\to\Pic^0(\Sigma)$ is the squaring map, 
    \item $\Z_2$ acts on $s^*\Aa$ by pullback by inversion on $\Pic^0(\Sigma).$ 
\end{itemize} 
In particular, $\Mm_{0,0}^\mathrm{max}(\sSp(4,\R))$ is a $2^{2g}$-covering of $\Mm_{0,0}^\mathrm{max}(\sP\sSp(4,\R)).$
\end{Theorem}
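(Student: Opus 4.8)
The plan is to run the argument of Theorem~\ref{THM SP4 d>0} in the $d = 0$ case, identifying the natural parameter space modulo gauge with a pullback along the squaring map of the space appearing in Theorem~\ref{thm:zero_component}. First I would recall, using Propositions~\ref{SO(23)bundleofSp4Rbundle} and~\ref{Prop gamma not zero}, that for a maximal $\sSp(4,\R)$-Higgs bundle $(\Vv,\beta,\gamma)$ the field $\gamma$ is a holomorphic isomorphism, so $(\Vv\otimes K^{-\haf},\gamma^*\circ\gamma)$ is a holomorphic $\sO(2,\C)$-bundle; the component $\Mm_{0,0}^\mathrm{max}(\sSp(4,\R))$ is the locus where this bundle has vanishing $sw_1$ and $d = |\deg N| = 0$, i.e. $\Vv = NK^\haf\oplus N^{-1}K^\haf$ with $N\in\Pic^0(\Sigma)$. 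Choosing the splitting so that $\gamma = \smtrx{0&1\\1&0}$, symmetry of $\beta$ gives $\beta = \smtrx{\nu&q_2\\q_2&\mu}$ with $\nu\in H^0(N^2K^2)$, $\mu\in H^0(N^{-2}K^2)$ and $q_2\in H^0(K^2)$, and applying Proposition~\ref{Prop sw1=0 stability} to the associated $\sSO_0(2,3)$-Higgs bundle (whose line bundle is $M = N^2$, of degree $0$) shows poly-stability is equivalent to $\mu = 0 \iff \nu = 0$. This produces a surjection
\[\widehat\Psi\colon\widehat{s^*\Aa}\times H^0(\Sigma,K^2)\lra\Mm_{0,0}^\mathrm{max}(\sSp(4,\R))~,\]
where $\widehat{s^*\Aa} = \{(N,\mu,\nu)\mid N\in\Pic^0(\Sigma),\ \mu\in H^0(N^{-2}K^2),\ \nu\in H^0(N^2K^2),\ \mu = 0\iff\nu=0\}$ is the fiber product of the space $\widehat\Aa$ of Theorem~\ref{thm:zero_component} with $\Pic^0(\Sigma)$ along the squaring map $s\colon N\mapsto N^2$.

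Next I would compute the equivalence relation. As in Theorem~\ref{THM SP4 d>0}, $q_2$ is a gauge invariant and the remaining freedom is the group $\Hh_{\sO(2,\C)}(\Vv\otimes K^{-\haf})$ described in Section~\ref{orth_gauge_transf}: a diagonal transformation $\smtrx{\lambda&0\\0&\lambda^{-1}}$ acts by $(\mu,\nu)\mapsto(\lambda^{-2}\mu,\lambda^2\nu)$, relabeling the two summands of $\Vv$ gives the involution $(N,\mu,\nu)\mapsto(N^{-1},\nu,\mu)$, and for $N^2 = \Oo$ the anti-diagonal transformations of~\eqref{EQ HO2 sw1=0 M=M^-1} produce nothing new modulo these. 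Because $\lambda\mapsto\lambda^{-2}$ is onto $\C^*$, the weight-$(\pm2)$ $\C^*$-action has exactly the same orbits on $\widehat{s^*\Aa}$ as the weight-$(\pm1)$ action of Theorem~\ref{thm:zero_component}, so $\widehat{s^*\Aa}/\C^* = s^*\Aa$, the pullback of $\Aa = \widehat\Aa/\C^*$ along $s$; and the relabeling involution descends to the $\Z_2$-action on $s^*\Aa$ that covers the inversion $N\mapsto N^{-1}$ on $\Pic^0(\Sigma)$ and restricts on $\Aa$ to the involution $(M,\mu,\nu)\mapsto(M^{-1},\nu,\mu)$ of Theorem~\ref{thm:zero_component}. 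Hence $\widehat\Psi$ descends to a continuous bijection $(s^*\Aa/\Z_2)\times H^0(\Sigma,K^2)\to\Mm_{0,0}^\mathrm{max}(\sSp(4,\R))$, which is a homeomorphism by the same argument used in Theorem~\ref{thm:zero_component}.

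Finally, for the covering statement: $s$ is a $2^{2g}$-sheeted covering of $\Pic^0(\Sigma)$, hence so is $s^*\Aa\to\Aa$, and this map is equivariant for the two inversion $\Z_2$-actions; combined with Proposition~\ref{Prop Sp(4) component covering}, which identifies $\Mm_{0,0}^\mathrm{max}(\sSp(4,\R))$ with the connected preimage of $\Mm_{0,0}^\mathrm{max}(\sP\sSp(4,\R))$ under $\pi$, this yields the claimed $2^{2g}$-covering $\Mm_{0,0}^\mathrm{max}(\sSp(4,\R))\to\Mm_{0,0}^\mathrm{max}(\sP\sSp(4,\R))$.

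The main obstacle I expect is the bookkeeping in the middle step: reconciling the $\C^*$-weight conventions (weights $\pm2$ here versus $\pm1$ in Theorem~\ref{thm:zero_component}) and verifying that the residual identifications are \emph{precisely} the inversion $\Z_2$ --- in particular using the explicit description of $\Hh_{\sO(2,\C)}$ in both cases $N^2 = \Oo$ and $N^2\neq\Oo$ to rule out further collapsing, and checking that one really obtains a homeomorphism, not merely a continuous bijection, near the singular locus.
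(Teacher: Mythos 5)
Your proposal is correct and follows essentially the same route as the paper's proof: fix $K^{\haf}$, write $\Vv=NK^{\haf}\oplus N^{-1}K^{\haf}$ with $\gamma=\smtrx{0&1\\1&0}$ and $\beta=\smtrx{\nu&q_2\\q_2&\mu}$, reduce poly-stability to the $\sSO_0(2,3)$ criterion with $M=N^2$, and quotient by the diagonal and anti-diagonal gauge transformations to land in $s^*\Aa/\Z_2\times H^0(K^2)$. Your explicit observation that the weight-$\pm2$ $\C^*$-action has the same orbits as the weight-$\pm1$ action (surjectivity of $\lambda\mapsto\lambda^2$) is exactly the point the paper uses implicitly, and the covering statement via Proposition \ref{Prop Sp(4) component covering} matches as well.
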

\begin{proof}
We will use the same notation as the proof of Theorem \ref{THM SP4 d>0}. As in the proof of Theorem \ref{thm:zero_component} consider the space
\[\widetilde\Aa=\{(L,\mu,\nu)\ |\ L\in\Pic^0(\Sigma),\ \mu\in H^0(L^{-1}K^2),\ \nu\in H^0(LK^2)\}~.\]
Let $\widehat\Aa\subset\widetilde\Aa$ denote the set of tuples $(L,\mu,\nu)$ with $\mu=0$ if and only if $\nu=0,$ and set $\Aa=\widehat\Aa/\C^*$ where $\lambda\in\C^*$ acts as $\lambda\cdot(L,\mu,\nu)=(L,\lambda\mu,\lambda^{-1}\nu).$ 
The map which takes an equivalence class $[(L,\mu,\nu)]$ to $L\in\Pic^0(\Sigma)$ turns $\Aa$ into a holomorphic bundle over $\Pic^0(\Sigma).$ 
The pullback $s^*\Aa$ by the squaring map is a holomorphic bundle over $\Pic^0(\Sigma)$ which parameterizes points in $\Aa$ together with a choice of square root of the associated line bundle $L\in\Pic^0(\Sigma).$ Recall finally that $\Z_2$ acts on $\Pic^0(\Sigma)$ by inversion and pullback lifts this action to a $\Z_2$ action on $\Aa$. Denote the the quotient of $\Aa$ by this action by $\Aa/\Z_2.$

By Proposition \ref{Prop Sp(4) component covering}, the space $\Mm_{0,0}^{\max}(\sSp(4,\R))$ is a connected covering of the component $\Mm_{0,0}^{\max}(\sP\sSp(4,\R)).$ Recall from \eqref{eq sp4 beta form} that, after fixing a square root $K^\haf$ of $K,$ a Higgs bundle in $\Mm_{0,0}^{\max}(\sSp(4,\R))$ is determined by 
\[(\Vv,\beta,\gamma)=\left(NK^\haf\oplus N^{-1}K^\haf,\mtrx{\nu&q_2\\q_2&\mu},\mtrx{0&1\\1&0}\right)~,\]
where $N\in\Pic^0(\Sigma),$ $q_2\in H^0(K^2)$, $\nu\in H^0(N^{2}K^2)$ and $\mu\in H^0(N^{-2}K^2).$ As in the $\sP\sSp(4,\R)$-case, polystability forces $\mu=0$ if and only if $\nu=0$. Thus, such a Higgs bundle is determined by a tuple $(N^2,\mu,\nu,q_2)$ together with a choice of square root of the line bundle $N^2\in\Pic^0(\Sigma)$.

Two such Higgs bundles $\bigg(NK^\haf\oplus N^{-1}K^\haf,\mtrx{\nu&q_2\\q_2&\mu},\mtrx{0&1\\1&0}\bigg)$ and $\bigg(N'K^\haf\oplus N'^{-1}K^\haf, \mtrx{\nu'&q_2'\\q_2'&\mu'},\mtrx{0&1\\1&0}\bigg)$ are isomorphic if and only if $N=N'$ or $N^{-1}=N'$ and there is a holomorphic gauge transformation $g:\Vv\to \Vv$ so that 
\[(g^{-1})^T\mtrx{0&1\\1&0}g^{-1}=\mtrx{0&1\\1&0}\ \ \ \ \ \text{and}\ \ \ \ \ g\mtrx{\nu&q_2\\q_2&\mu}g^T=\mtrx{\nu'&q_2'\\q_2'&\mu'}~.\] 
Thus, in the splitting $\Vv=NK^\haf\oplus N^{-1}K^\haf$, we have $g=\mtrx{\lambda&0\\0&\lambda^{-1}}$ or $g=\mtrx{0&\lambda\\\lambda^{-1}&0}$ for $\lambda\in\C^*$. Such gauge transformations act by
\[\mtrx{\lambda&0\\0&\lambda^{-1}}\mtrx{\nu&q_2\\q_2&\mu}\mtrx{\lambda&0\\0&\lambda^{-1}}=\mtrx{\lambda^2\nu&q_2\\q_2&\lambda^{-2}\mu}\] and
\[\mtrx{0&\lambda\\\lambda^{-1}&0}\mtrx{\nu&q_2\\q_2&\mu}\mtrx{0&\lambda^{-1}\\\lambda&0}=\mtrx{\lambda^2\mu&q_2\\q_2&\lambda^{-2}\nu}~.\]
In particular, the isomorphism class of a Higgs bundle in $\Mm_{0,0}^{\max}(\sSp(4,\R))$ is uniquely determined by a point it in $s^*\Aa/\Z_2$ and a holomorphic quadratic differential. Thus, $\Mm_{0,0}^{\max}(\sSp(4,\R))$ is homeomorphic to $s^*\Aa/\Z_2\times H^0(K^2).$
\end{proof}
For $sw_1\in H^1(\Sigma,\Z_2)\setminus\{0\},$ let $\Sigma_{sw_1}$ be the corresponding unramified covering of $\Sigma$ and denote the covering involution by $\iota.$ 

\begin{Proposition}\label{Prop squaring prym}
    For each $sw_1\in H^1(\Sigma,\Z_2)\setminus\{0\}$, the squaring map 
    \[\xymatrix@R=.2em{s:\Prym(\Sigma_{sw_1})\ar[r]&\Prym(\Sigma_{sw_1})\\M\ar@{|->}[r]&M^2}\] is a $2^{2g-1}$ covering of the connected component of the identity $\Prym^0(\Sigma_{sw_1})$.
\end{Proposition}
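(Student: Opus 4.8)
The plan is to exploit the group structure on $\Prym(\Sigma_{sw_1})$ recorded in Section \ref{orthogonal_bundles_nonvanishing} and reduce the statement to the standard behaviour of the multiplication-by-two isogeny of an abelian variety.

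First I would observe that $\Prym(\Sigma_{sw_1})=\ker(Id\otimes\iota^*)$ is an algebraic subgroup of $\Pic^0(\Sigma_{sw_1})$: indeed if $\iota^*M\cong M^{-1}$ and $\iota^*M'\cong M'^{-1}$ then $\iota^*(MM')\cong(MM')^{-1}$. By Proposition \ref{Prop Prym disconnected} its identity component $\Prym^0(\Sigma_{sw_1})$ is an abelian variety of complex dimension $g-1$ and, since $\Psi(\Pic^i(\Sigma_{sw_1}))=\Prym^{i\,(\mathrm{mod}\ 2)}(\Sigma_{sw_1})$, the component group is $\Prym/\Prym^0\cong\Z_2$. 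The squaring map $s(M)=M^2$ is a homomorphism of this group. Because it kills the component group $\Z_2$, its image is contained in $\Prym^0$; moreover $s(\Prym^0)=\Prym^0$ since multiplication by $2$ on an abelian variety is surjective, so $s\colon\Prym(\Sigma_{sw_1})\to\Prym^0(\Sigma_{sw_1})$ is onto.

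The heart of the argument is the count of sheets, done component by component. On the identity component, $s|_{\Prym^0}$ is the multiplication-by-two isogeny of the $(g-1)$-dimensional abelian variety $\Prym^0\cong(S^1)^{2g-2}$, which is an unramified covering of degree $2^{2(g-1)}=2^{2g-2}$. On the other component, fix any $M_0\in\Prym^1(\Sigma_{sw_1})$; then $M_0^2\in\Prym^0$ (its class in $\Prym/\Prym^0\cong\Z_2$ is twice that of $M_0$, hence trivial), and writing a general $M\in\Prym^1$ as $M=M_0N$ with $N\in\Prym^0$ we get $s(M)=M_0^2\cdot N^2$. Thus $s|_{\Prym^1}$ is the composite of the degree-$2^{2g-2}$ isogeny $N\mapsto N^2$ with translation by $M_0^2$, hence again a covering of $\Prym^0$ of degree $2^{2g-2}$. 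Since $\Prym(\Sigma_{sw_1})=\Prym^0\sqcup\Prym^1$ and each summand covers $\Prym^0$ by a local homeomorphism, $s$ is a covering of degree $2\cdot 2^{2g-2}=2^{2g-1}$, as claimed.

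I do not expect a genuine obstacle here; the only point requiring care is the bookkeeping of components, namely verifying $s(\Prym^1)=\Prym^0$ (equivalently $M_0^2\in\Prym^0$), which is exactly the computation above. As an alternative packaging of the sheet count one could compute $\#\ker(s)=\#\Prym(\Sigma_{sw_1})[2]$ directly: the extension $0\to\Prym^0\to\Prym\to\Z_2\to0$ splits because $\Prym^0$ is divisible, so $\Prym[2]\cong\Prym^0[2]\times\Z_2$ has order $2^{2g-2}\cdot 2=2^{2g-1}$.
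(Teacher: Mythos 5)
Your proof is correct and follows essentially the same route as the paper: both decompose $\Prym(\Sigma_{sw_1})$ into its two components, identify $s|_{\Prym^0}$ with the multiplication-by-two isogeny of the $(g-1)$-dimensional abelian variety $\Prym^0$ (degree $2^{2g-2}$), and treat $\Prym^1$ as a $\Prym^0$-torsor to get the same degree there, for a total of $2^{2g-1}$. The only cosmetic difference is that you verify $s(\Prym^1)\subset\Prym^0$ by a component-group argument, whereas the paper uses the explicit description $M=L\otimes\iota^*L^{-1}$ with $L$ of odd degree; both are fine.
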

\begin{proof}
    Recall from Proposition \ref{Prop Prym disconnected} that $\Prym(\Sigma_{sw_1})\subset \Pic^0(\Sigma_{sw_1})$ is the disjoint union of two isomorphic connected components and the connected component of the identity $\Prym^0(\Sigma_{sw_1})$ is an abelian variety of dimension $g-1.$ Moreover, recall that a line bundle $M\in\Prym(\Sigma_{sw_1})$ lies in $\Prym^0(\Sigma_{sw_1})$ if $M=L\otimes\iota^*L^{-1}$ for $L$ an even degree line bundle on $\Sigma_{sw_1}$ and $M\in\Prym^1(\Sigma_{sw_1})$ if $M=L\otimes \iota^*L^{-1}$ for $L$ an odd degree line bundle on $\Sigma_{sw_1}.$ Thus, the square $M^2$ of a line bundle $M\in\Prym^1(\Sigma_{sw_1})$ lies in $\Prym^0(\Sigma_{sw_1}).$
    
 Since $\Prym^0(\Sigma_{sw_1})$ is an abelian variety of dimension $g-1$, the restriction of the map $s$ to $\Prym^{0}(\Sigma_{sw_1})$ is a $2^{2g-2}$ cover. As $\Prym^1(\Sigma_{sw_1})$ is a $\Prym^0(\Sigma_{sw_1})$-torsor, we conclude that the squaring map is a $2^{2g-1}$-covering of $\Prym^0(\Sigma_{sw_1}).$
\end{proof}

We will consider the direct sum $s^*\Ee \oplus \Jj$, a vector bundle of rank $6g-5$ over $\Prym(\Sigma_{sw_1})$ whose total space parametrizes the tuples $(N,j,\mu)$ where $N\in \Prym(\Sigma_{sw_1})$, $j\in \mathrm{End}(N, \iota^* N^{-1})$, $\mu \in H^0(\Sigma_{sw_1},N^{-2} K_{sw_1}^2)$. We will denote by $\Kk$ the open subset:
$$\{ (N,j,\mu) \in s^*\Ee \oplus \Jj \mid j\neq 0 \} $$
parameterizing the tuples $(N,j,\mu)$ where $j$ is an isomorphism. 
We define an action of $\C^*$ on the total space of $\Kk$ via the following formula:
\begin{equation}
    \label{eq C* Sp4 action}\lambda\cdot(N,j,\mu)=(N,\lambda^2j,\lambda^2 \mu)~.
\end{equation}

The quotient $\Kk'=\Kk/\C^*$ is the space parameterizing all the gauge equivalence classes of the triples $(N,j,\mu)$, with $j$ an isomorphism. This space is a vector bundle over $\Prym(\Sigma_{sw_1})$ whose fiber over the point $N$ is isomorphic to $H^0(N^{-2}K^2_{sw_1}).$
On the space $\Kk'$ we have a $\Z_2$-action given by 
\begin{equation}
    \label{eq Z2 action Sp4}\tau \cdot [(N,j,\mu)] = [(\iota^*N, \iota^*j, \iota^*\mu)]~.
\end{equation}

 Recall that the space $\Hh$ from \eqref{eq H definition} was defined to be the subset of $\Ee\oplus \Jj\to \Prym(\Sigma_{sw_1})$ consisting of pairs $(M,f)$ where $f:M\to\iota^*M^{-1}$ is an isomorphism. Let $\Hh_0$ denote the connected component of $\Hh$ which maps to the identity component of $\Prym(\Sigma_{sw_1}).$
The natural map
\[\xymatrix@R=0em{s^*\Ee\oplus\Jj\ar[r]& \Ee\oplus \Jj~.\\(N,j,\mu)\ar@{|->}[r]&(N^2,j^2,\mu)}\]
defines a map $\Kk\to \Hh_0$ which is a covering of degree $2^{2g-1}$. Moreover, this map is equivariant with respect to the $\C^*$ actions defined by \eqref{eq C* Sp4 action} and \eqref{eq C* SO23}, and thus descends to the quotients
\[p:\Kk'\to \Hh_0'~.\]
Moreover, $p([N,j,\mu])=[M,f,\mu']$ if and only if $M=N^2$, $f=j^2$ and $\mu'=\pm \mu,$ thus, the map  $p:\Kk'\to \Hh'_0$ is a covering of degree $2^{2g}.$

Finally, note that the map $p:\Kk'\to \Hh'_0$ is equivariant with respect to the $\Z_2$ actions defined by \eqref{eq Z2 action Sp4} and \eqref{eq Z2 action So23}. Thus, the map 
\begin{equation}
    \label{eq def of covering} p:\Kk'/\Z_2\to\Hh'_0/\Z_2
\end{equation}
is a $2^{2g}$-covering.

\begin{Theorem}\label{THM Higgs Sp4 sw1not0}
    Let $\Sigma$ be a Riemann surface of genus $g\geq 2.$ For each $sw_1\in H^1(\Sigma,\Z_2)\setminus\{0\},$ the subspace $\Mm_{sw_1}^\mathrm{max}(\sSp(4,\R))$ of the moduli space of $\sSp(4,\R)$-Higgs bundles on $\Sigma$ is diffeomorphic to $\Kk'/\Z_2\times H^0(K^2)$.
    Moreover, the natural map 
    $$\Mm_{sw_1}^{\mathrm{max}}(\sSp(4,\R)) \ra \Mm_{sw_1}^{\mathrm{max},0}(\sP\sSp(4,\R))$$
    is given by \eqref{eq def of covering}. Hence, $\Mm_{sw_1}^{\mathrm{max}}(\sSp(4,\R))$ is a $2^{2g}$-covering of $\Mm_{sw_1}^{\mathrm{max},0}(\sP\sSp(4,\R))$ with two connected components
\end{Theorem}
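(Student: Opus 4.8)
The plan is to imitate the proof of Theorem \ref{THM HiggsParamsw1sw2orbifold} for $\sP\sSp(4,\R)$, using the identification $\sSp(4,\R)\cong\sSpin_0(2,3)$ and pullback to the double cover $\pi:\Sigma_{sw_1}\to\Sigma$, and then to recognize the resulting parametrization as the degree $2^{2g}$ covering $p:\Kk'/\Z_2\to\Hh'_0/\Z_2$ of \eqref{eq def of covering}. Fix once and for all a square root $K^\haf$ of $K$. For a maximal $\sSp(4,\R)$-Higgs bundle $(\Vv,\beta,\gamma)$, Propositions \ref{SO(23)bundleofSp4Rbundle} and \ref{Prop gamma not zero} imply that $\gamma:\Vv\to\Vv^*\otimes K$ is a holomorphic isomorphism, so $(\Vv\otimes K^{-\haf},\gamma^*\circ\gamma)$ is a holomorphic $\sO(2,\C)$-bundle with $sw_1(\Vv\otimes K^{-\haf})=sw_1(\Lambda^2\Vv\otimes K^{-1})=sw_1\neq 0$. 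By the classification of rank two orthogonal bundles with non-vanishing first Stiefel--Whitney class from Section \ref{orthogonal_bundles_nonvanishing}, the pullback of $\Vv\otimes K^{-\haf}$ to $\Sigma_{sw_1}$ splits holomorphically and orthogonally as $N\oplus N^{-1}$ for a unique $N\in\Prym(\Sigma_{sw_1})$, carrying the descent isomorphism $j:N\to\iota^*N^{-1}$ induced by the natural projection $\pi^*(\Vv\otimes K^{-\haf})\to\Vv\otimes K^{-\haf}$, exactly as in the proof of Proposition \ref{Prop: pullbackHiggsBundles}.

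The first step would be to pull $(\Vv,\beta,\gamma)$ back to $\Sigma_{sw_1}$. Writing $K_{sw_1}^\haf=\pi^*K^\haf$, one gets $\pi^*\Vv\cong(N\oplus N^{-1})\otimes K_{sw_1}^\haf$, and in this splitting the symmetric Higgs field becomes $\pi^*\beta=\smtrx{\nu&b\\ b&\mu}$, where $b$ is an $\iota$-invariant section of $K_{sw_1}^2$, hence the pullback $b=\pi^*q_2$ of a quadratic differential $q_2\in H^0(\Sigma,K^2)$, while $\mu\in H^0(\Sigma_{sw_1},N^{-2}K_{sw_1}^2)$ and $\nu\in H^0(\Sigma_{sw_1},N^{2}K_{sw_1}^2)$ with $\nu$ determined by $\mu$ and $j$ by $\iota$-invariance, just as in the relation $\iota^*\nu\circ f=\mu$ of Proposition \ref{Prop: pullbackHiggsBundles}. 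Thus $(\Vv,\beta,\gamma)$ is encoded by a tuple $(N,j,\mu,q_2)$ with $j$ an isomorphism. A short computation matches this with the $\sSO_0(2,3)$-data: $\Ww=S^2(\Vv)\otimes\Lambda^2\Vv^*$ pulls back to $N^2\oplus\Oo\oplus N^{-2}$, with the trivial summand equal to $\pi^*(\Lambda^2\Vv\otimes K^{-1})$, so the orthogonal bundle $\Ff=\gamma(\Lambda^2\Vv\otimes K^{-1})^\perp$ pulls back to $N^2\oplus N^{-2}$; hence the $M\in\Prym^0(\Sigma_{sw_1})$ attached to the underlying $\sSO_0(2,3)$-Higgs bundle in Proposition \ref{Prop: pullbackHiggsBundles} equals $N^2$, and $(N,j,\mu)$ is precisely a point of $\Kk\subset s^*\Ee\oplus\Jj$, with $q_2$ the accompanying quadratic differential.

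Next I would work out when two tuples produce isomorphic $\sSp(4,\R)$-Higgs bundles. An isomorphism is a holomorphic gauge transformation $g:\Vv\to\Vv$ with $(g^{-1})^T\Omega g^{-1}=\Omega$ preserving $\gamma$; passing to $\Sigma_{sw_1}$ and arguing with $\iota$-invariance as in Proposition \ref{Prop: equivalence_nonvanishing}, $g$ is either diagonal $\smtrx{\lambda&0\\0&\lambda^{-1}}$, acting by $N\mapsto N$, $j\mapsto\lambda^{-2}j$, $\mu\mapsto\lambda^{-2}\mu$, $q_2\mapsto q_2$, or anti-diagonal $\smtrx{0&\lambda\\\lambda^{-1}&0}$, which interchanges $N$ and $\iota^*N=N^{-1}$ and conjugates the remaining data by $\iota^*$. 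After the substitution $\lambda\mapsto\lambda^{-1}$ these are exactly the $\C^*$-action \eqref{eq C* Sp4 action} and the $\Z_2$-action \eqref{eq Z2 action Sp4}. Together with surjectivity of the tuple-to-Higgs-bundle construction and the fact, via Proposition \ref{Prop sw1 not 0 stability}, that every such Higgs bundle is stable as an $\sSO(5,\C)$-Higgs bundle — hence a smooth or orbifold point of the moduli space by Proposition \ref{Proposition: Orbifold points} — this yields an (orbifold) diffeomorphism $\Mm_{sw_1}^{\mathrm{max}}(\sSp(4,\R))\cong(\Kk/\C^*)/\Z_2\times H^0(\Sigma,K^2)=\Kk'/\Z_2\times H^0(\Sigma,K^2)$.

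Finally, under this identification and the one of Theorem \ref{THM HiggsParamsw1sw2orbifold} for $sw_2=0$, the forgetful map $\Mm_{sw_1}^{\mathrm{max}}(\sSp(4,\R))\to\Mm_{sw_1}^{\mathrm{max},0}(\sP\sSp(4,\R))$ of Proposition \ref{Prop Sp(4) component covering} sends $[(N,j,\mu,q_2)]$ to $[(N^2,j^2,\pm\mu,q_2)]$, i.e.\ it is $p\times\mathrm{id}$ for the degree $2^{2g}$ covering $p:\Kk'/\Z_2\to\Hh'_0/\Z_2$ of \eqref{eq def of covering}, which was already exhibited as the composition of the $2^{2g-1}$-to-one squaring map of Proposition \ref{Prop squaring prym} with the residual sign ambiguity $\mu\mapsto\pm\mu$. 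Since $\Prym(\Sigma_{sw_1})$ has two connected components, each preserved by inversion and by squaring, $\Kk'/\Z_2$ — and therefore $\Mm_{sw_1}^{\mathrm{max}}(\sSp(4,\R))$ — has two connected components, recovering the count in Proposition \ref{Prop Sp(4) component covering}. The most delicate part is the bookkeeping of the second and third paragraphs: tracking the fixed square root $K^\haf$ and the descent datum $j$ carefully enough that the gauge-equivalence relation reproduces \emph{precisely} the actions \eqref{eq C* Sp4 action} and \eqref{eq Z2 action Sp4} and is compatible under $p$ with the $\sP\sSp(4,\R)$-actions \eqref{eq C* SO23} and \eqref{eq Z2 action So23}; once these agree, everything else is assembled from results already established in the paper.
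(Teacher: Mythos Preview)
Your proposal is correct and follows essentially the same route as the paper: pull back to the double cover $\Sigma_{sw_1}$, split $\pi^*\Vv$ as $(N\oplus N^{-1})\otimes K_{sw_1}^{1/2}$ with $N\in\Prym(\Sigma_{sw_1})$, read off the tuple $(N,j,\mu,q_2)$, determine the gauge equivalence, and match it with $\Kk'/\Z_2\times H^0(K^2)$ and the covering $p$ of \eqref{eq def of covering}. You are in fact more explicit than the paper about tracking the descent datum $j$ and about why the equivalence on tuples is exactly the $\C^*\rtimes\Z_2$ action of \eqref{eq C* Sp4 action}--\eqref{eq Z2 action Sp4}; the paper computes only the $\iota$-invariant gauge transformations (finding $\lambda=\pm 1$) and then defers the rest to ``the same arguments as the proof of Theorem \ref{THM HiggsParamsw1sw2orbifold}''. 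One small phrasing issue: when you write that a gauge transformation $g:\Vv\to\Vv$ pulls back to $\smtrx{\lambda&0\\0&\lambda^{-1}}$ with arbitrary $\lambda\in\C^*$, strictly speaking the $\iota$-invariance forces $\lambda=\pm 1$ (as the paper notes); the remaining $\C^*$-freedom comes from the ambiguity in the chosen identification $\pi^*(\Vv\otimes K^{-1/2})\cong N\oplus N^{-1}$, exactly as in Proposition \ref{Prop: equivalence_nonvanishing}. This is only a matter of attribution and does not affect the argument.
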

\begin{proof}
Fix $sw_1\in H^1(\Sigma,\Z_2)\setminus\{0\}$ and let $\pi:\Sigma_{sw_1}\to\Sigma$ be the associated double cover. By Proposition \ref{Prop Sp(4) component covering}, the space $\Mm_{sw_1}^{\max}(\sSp(4,\R))$ is the disjoint union of two isomorphic connected components which cover $\Mm_{sw_1}^{0,max}(\sP\sSp(4,\R)).$ 
Let $(V,\beta,\gamma)$ be an $\sSp(4,\R)$-Higgs bundle in $\Mm_{sw_1}^{\max}(\sSp(4,\R))$ and recall that the bundle $(\Lambda^2\Vv\otimes K^{-1})$ is a holomorphic $\sO(1,\C)$-bundle with first Stiefel-Whitney class $sw_1.$ 
By \eqref{eq sp4 beta form}, 
\[\pi^*\Vv\cong N\pi^*K^\haf\oplus N^{-1}\pi^*K^{\haf}, \ \ \gamma=\mtrx{0&1\\1&0},\ \ \beta=\mtrx{\nu&q_2'\\q_2'&\mu}~,\] where $N\in\Pic^0(\Sigma_{sw_1}),$ $\mu\in H^0(N^{-2}\pi^*K^2)$, $\nu\in H^0(N^2\pi^*K^2)$ and $q_2'\in H^0(\pi^*K^2).$ 
Moreover, since the pullback is invariant under the covering involution, we have 
\[N\in\Prym(\Sigma_{sw_1}),\ \ \iota^*\mu=\nu,\ \ \ \text{and}\ \ \ q_2'=\pi^*q_2 \]
where $q_2\in H^0(K^2).$ Thus, such a Higgs bundle is determined by a holomorphic quadratic differential $q_2\in H^0(K^2)$ and a point in $s^*\Ee.$

As in the proof of Theorem \ref{THM Sp4 d=0 }, two such Higgs bundles on $\Sigma_{sw_1}$ are gauge equivalent if and only if there is a gauge transformation of $N\pi^*K^\haf\oplus N^{-1}\pi^*K^\haf$ with the form $\mtrx{\lambda&0\\0&\lambda^{-1}}$ or $\mtrx{0&\lambda\\\lambda^{-1}&0}$ for $\lambda\in\C^*.$ Such a gauge transformation descends to a gauge transformation of $\Vv$ if and only if $\iota^*g= g,$ and hence $\lambda=\pm1.$ 
 The gauge transformation $\mtrx{-1&0\\0&-1}$ acts on $\beta$ by
\[\mtrx{-1&0\\0&-1}\mtrx{\iota^*\mu&\pi^*q_2\\\pi^*q_2&\mu}\mtrx{-1&0\\0&-1}=\mtrx{\iota^*\mu&\pi^*q_2\\\pi^*q_2&\mu}\]
and the gauge transformation $\mtrx{0&\pm1\\\pm1&0}$ sends $N\pi^*K^{\haf}$ to $N^{-1}\pi^*K^\haf$ and acts on $\beta$ by 
\[\mtrx{0&\pm1\\\pm1&0}\mtrx{\iota^*\mu&\pi^*q_2\\\pi^*q_2&\mu}\mtrx{0&\pm1\\\pm1&0}=\mtrx{\mu&\pi^*q_2\\\pi^*q_2&\iota^*\mu}~.\]
The proof that $\Mm_{sw_1}^\mathrm{max}(\sSp(4,\R))$ is diffeomorphic to $\Kk'/\Z_2\times H^0(K^2)$ follows from the same arguments as the proof of Theorem \ref{THM HiggsParamsw1sw2orbifold}.
\end{proof}

\section{Unique minimal immersions}      \label{minimal}

In this section we show that for each maximal $\sP\sSp(4,\R)$-representation $\rho$ there exists a unique $\rho$-equivariant minimal immersion 
from the universal cover of $S$ to the Riemannian symmetric space of $\sP\sSp(4,\R)$. We start by recalling some basic facts about harmonic maps and the nonabelian Hodge correspondence between Higgs bundles and character varieties.

\subsection{Harmonic metrics}
For a principal $\sG$-bundle $P$, a reduction of structure group to a subgroup $\sH\leq\sG$ is equivalent to a $\sG$-equivariant map $r:P\to\sG/\sH.$
A reduction of structure group for a flat bundle is equivalent a $\Gamma$-equivariant map $r:\widetilde S\to\sG/\sH$.
A {\em metric} on a $\sG$-bundle $P$ is defined to be a reduction of structure group to the {\em maximal compact subgroup} $\sH\leq\sG.$

Let $\rho:\Gamma\to\sG$ be a representation and let $h_\rho:\widetilde S\to \sG/\sH$ be a metric on the associated flat bundle $\widetilde S\times_\rho \sG$. 
Given a metric $g$ on $S$, one can define the norm $\Vert dh_\rho \Vert$ of $dh_\rho$ which, by equivariance of $h_\rho$, descends to a function on $S$. The {\em energy} of $h_\rho$ is the $L^2$-norm of $dh_\rho$, namely:
\[E(h_\rho)=\int\limits_S \Vert dh_\rho\Vert^2dvol_g~.\]
Note that the energy of $h_\rho$ depends only on the conformal class of the metric $g,$ and so, only on the Riemann surface structure $\Sigma$ associated to $g$.

\begin{Definition}
A metric $h_\rho: \widetilde \Sigma\rightarrow \sG/\sH$ on $\widetilde\Sigma\times_\rho\sG$ is \textit{harmonic} if it is a critical point of the energy functional.  
\end{Definition}
Let $\nabla^{0,1}$ denote the holomorphic structure on $\left( T^*\Sigma\otimes h_\rho^*T(\sG/\sH)\right)\otimes\C$ induced by the Levi-Civita connection on $\sG/\sH$. The following is classical:
\begin{Proposition}\label{p-harmonicholomorphic}
    A metric $h_\rho:\widetilde X\rightarrow \sG/\sH$ is harmonic if and only if the $(1,0)$ part $\partial h_\rho$ of $dh_\rho$ is holomorphic, that is 
    $\nabla^{0,1}\p h_\rho=0.$
\end{Proposition}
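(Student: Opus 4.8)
\emph{Proof idea.} The plan is to reduce the statement to the standard characterization of harmonic maps from a surface via the tension field, and then to rewrite that equation in complex-analytic terms using a local conformal coordinate.

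First I would recall the first variation formula for the energy. Since $h_\rho$ is $\rho$-equivariant and $\sG$ acts on $\sG/\sH$ by isometries, the energy density $\Vert dh_\rho\Vert^2$ descends to $S$, and admissible variations of $h_\rho$ correspond to sections $V$ of the associated bundle $\widetilde S\times_\rho T(\sG/\sH)$ over $S$. Differentiating $E(h_{\rho,t})$ with the Levi-Civita connection $\nabla$ on $h_\rho^*T(\sG/\sH)$ and integrating by parts gives $\tfrac{d}{dt}\big|_{t=0}E(h_{\rho,t})=-2\int_S\langle \tau(h_\rho),V\rangle\,dvol_g$, where $\tau(h_\rho)=\operatorname{tr}_g\nabla dh_\rho$ is the tension field. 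Hence $h_\rho$ is harmonic if and only if $\tau(h_\rho)=0$, and, by conformal invariance of the energy on a surface, this equation depends only on the Riemann surface $\Sigma$ up to a positive conformal factor.

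Next I would pass to a local conformal coordinate $z=x+iy$ with $g=\lambda^2|dz|^2$ and complexify the relevant bundles. Writing $dh_\rho=\partial h_\rho+\bar\partial h_\rho$ with $\partial h_\rho=\tfrac{\partial h_\rho}{\partial z}\,dz$ a $(1,0)$-form valued in $h_\rho^*T(\sG/\sH)\otimes\C$, torsion-freeness of $\nabla$ together with the symmetry of the second fundamental form yields $\tau(h_\rho)=4\lambda^{-2}\,\nabla_{\partial_{\bar z}}\tfrac{\partial h_\rho}{\partial z}$. On the other hand, $\nabla^{0,1}$ is by construction the $(0,1)$-part of the connection induced by $\nabla$ on $\bigl(T^*\Sigma\otimes h_\rho^*T(\sG/\sH)\bigr)\otimes\C$; since $dz$ is holomorphic, $\nabla^{0,1}\partial h_\rho=\bigl(\nabla_{\partial_{\bar z}}\tfrac{\partial h_\rho}{\partial z}\bigr)\,d\bar z\otimes dz$. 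Comparing the two expressions shows $\tau(h_\rho)=0$ if and only if $\nabla^{0,1}\partial h_\rho=0$, which is the assertion; the local computation is coordinate-independent and $\Gamma$-equivariant, so it globalizes over $\Sigma$.

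The only point needing a little care — rather than a genuine obstacle — is verifying that $\nabla^{0,1}$ indeed defines a holomorphic structure (it does automatically, being a $\bar\partial$-operator on a curve, where $(0,2)$-forms vanish) and keeping track of the conformal factor $\lambda$ so that the identification of $\tau(h_\rho)$ with the $\bar\partial$-derivative of $\partial h_\rho$ is exact. All of this is classical; I would cite the standard references on harmonic maps from Riemann surfaces (e.g.\ \cite{SchoenYauMinimalSurfEnergy}, \cite{MinImmofRiemannSurf}, \cite{canonicalmetrics}).
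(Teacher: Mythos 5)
Your argument is correct: reducing to the vanishing of the tension field via the first variation formula and then computing in a conformal coordinate that $\tau(h_\rho)=4\lambda^{-2}\,\nabla_{\partial_{\bar z}}\tfrac{\partial h_\rho}{\partial z}$, which is exactly $\nabla^{0,1}\partial h_\rho$ up to the factor $\lambda^{-2}$ and the holomorphic frame $dz$, is the standard (Eells--Sampson/Koszul--Malgrange) proof of this statement. The paper itself offers no proof, labeling the proposition ``classical,'' so your write-up supplies precisely the omitted argument and does so correctly.
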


The following theorem, proven by Donaldson \cite{harmoicmetric} for $\sSL(2,\C)$ and Corlette \cite{canonicalmetrics} in general, is the starting point of our analysis.
\begin{Theorem}\label{CorletteTheorem}
    Let $\rho\in \Xx(\Gamma,\sG)$, for each Riemann surface structure $\Sigma$ on $S$ there is a metric $h_\rho:\widetilde\Sigma\to\sG/\sH$ on $\widetilde\Sigma\times_\rho\sG$ which is harmonic. Furthermore, $h_\rho$ is unique up to the action of the centralizer of $\rho.$
\end{Theorem}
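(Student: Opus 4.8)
The plan is to follow the classical arguments of Eells--Sampson, Corlette, and Donaldson, viewing the equivariant map $h_\rho$ as a harmonic map into the nonpositively curved target $\sG/\sH$. First I would recall that $\sG/\sH$, with a $\sG$-invariant metric, is a complete simply connected manifold of nonpositive sectional curvature (a symmetric space of noncompact type, times a Euclidean factor coming from the center of $\sG$), hence a $\mathrm{CAT}(0)$ space. Fixing a Riemann surface structure $\Sigma$ on $S$ together with a compatible metric $g$, a $\rho$-equivariant map $\widetilde\Sigma\to\sG/\sH$ is the same datum as a section of the flat bundle $\widetilde S\times_\rho(\sG/\sH)$, and the energy $E$ descends to a functional on such sections depending only on the conformal class of $g$, i.e.\ only on $\Sigma$.

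For existence I would run the harmonic map heat flow $\partial_t h=\tau(h)$, where $\tau$ is the tension field, starting from an arbitrary smooth $\rho$-equivariant map. Since the target has nonpositive curvature, the Eells--Sampson Bochner formula yields a favorable parabolic differential inequality for the energy density, so the flow exists for all time with nonincreasing energy and uniform interior estimates on all derivatives, as long as the image stays in a fixed compact set modulo the $\Gamma$-action (equivalently, the induced map to the compact quotient stays bounded). The crucial point --- and the main obstacle --- is to rule out escape to infinity, and this is exactly where the hypothesis that $\rho$ is reductive is used: following Corlette, one argues by contradiction, extracting from a diverging subsequence $h_{t_n}$ (suitably recentered) a $\rho$-equivariant map into the visual boundary of $\sG/\sH$, hence a reduction of the flat bundle to a proper parabolic subgroup $\sP\subsetneq\sG$ admitting no reductive complement, which contradicts reductivity of $\rho$. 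Therefore the flow subconverges to a smooth $\rho$-equivariant harmonic map $h_\rho$; any finite-energy critical point is smooth by elliptic regularity for harmonic maps from a surface. (Alternatively, one minimizes $E$ directly over equivariant maps of bounded energy, with the same reductivity input supplying compactness of minimizing sequences.)

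For uniqueness up to the centralizer of $\rho$, I would use the $\mathrm{CAT}(0)$ geometry of the target via Hartman's convexity argument. Given two $\rho$-equivariant harmonic maps $h_0,h_1$, the geodesic homotopy $h_t$ between them is well defined and $\rho$-equivariant, and the function $u(x)=d(h_0(x),h_1(x))$ is subharmonic on $\widetilde\Sigma$ (this uses harmonicity of both maps together with nonpositivity of curvature). Since $u$ is $\Gamma$-invariant it descends to a subharmonic function on the closed surface $S$, hence is constant; analysing the equality case, the geodesic variation $h_t$ is by a parallel field along each geodesic, and harmonicity forces this to be a Killing field on $\sG/\sH$ commuting with the $\rho(\Gamma)$-action, so $h_1=\zeta\cdot h_0$ for some $\zeta$ in the centralizer of $\rho$ in $\sG$. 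In particular the harmonic metric is genuinely unique whenever this centralizer reduces to the center of $\sG$ (e.g.\ when $\rho$ is irreducible). I expect the existence/no-escape step to be by far the hardest part, while regularity and uniqueness are soft once the $\mathrm{CAT}(0)$ geometry and the standard harmonic-map machinery are available.
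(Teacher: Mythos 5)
This is Theorem \ref{CorletteTheorem} of the paper, which is not proved there at all: it is quoted verbatim from Donaldson (for $\sSL(2,\C)$) and Corlette (in general), and the paper only uses it as a black box. So there is no internal proof to compare against; what you have written is a reconstruction of the standard argument from the literature, and in outline it is the right one: energy defined on $\rho$-equivariant maps (equivalently sections of $\widetilde S\times_\rho(\sG/\sH)$), depending only on the conformal class; existence by heat flow or direct minimization using nonpositive curvature of $\sG/\sH$; reductivity used exactly to prevent escape to infinity; uniqueness up to the centralizer via subharmonicity of $d(h_0,h_1)$ and the equality case of the convexity argument. One place where your sketch is looser than the actual proof: a reductive representation can perfectly well preserve a proper parabolic (it then lies in a Levi factor), so producing ``a reduction to a parabolic $\sP$'' is not by itself a contradiction. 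The escape-to-infinity analysis in Corlette's proof shows more, namely that $\rho(\Gamma)$ fixes a point $\xi$ of the visual boundary in such a way that it is \emph{not} conjugate into a Levi factor of the stabilizer $\sP_\xi$ (equivalently, the Busemann function of $\xi$ can be used to push the energy down indefinitely), and it is this that contradicts reductivity; in the case where the image does land in a Levi $\sL_\xi$, one restricts to the totally geodesic subsymmetric space of $\sL_\xi$ and inducts on dimension. With that refinement your argument is the standard proof; the uniqueness half as you state it is complete.
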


A homogeneous space $\sG/\sH$ is called {\em reductive} if the Lie algebra $\fg$ has an $Ad_\sH$-invariant decomposition $\fg=\fh\oplus\fm.$ If $W$ is a linear representation of $\sH$, denote the associated bundle $\sG\times_\sH W\to \sG/\sH$ by $[W].$ 
The tangent bundle $T(\sG/\sH)$ of $\sG/\sH$ is isomorphic to $[\fm]$.
Since the action of $\sH$ on $\fg$ is the restriction of the $\sG$ action, the bundle $[\fg]$ is trivializable. 
Furthermore, the inclusion $T(\sG/\sH)\cong[\fm]\subset[\fg]\cong\sG/\sH\times\fg$ can be interpreted as an equivariant 1-form $\omega$ on $\sG/\sH$ valued in $\fg,$ $\omega\in\Omega^1(M,\fg).$ 

\begin{Definition}\label{MCFormDef}
    The equivariant $\fg$-valued 1-form $\omega\in\Omega^1(\sG/\sH,\fg)$ is called the {\em Maurer-Cartan form} of the homogeneous space $\sG/\sH.$
\end{Definition}
The Maurer-Cartan form $\omega_\sG\in\Omega^1(\sG,\fg)^\sG$ of $\sG$ is $\sG$-equivariant, and admits an $\sH$-equivariant splitting $\omega_\sG=pr_\fh\omega_\sG\oplus pr_\fm\omega_\sG$,
where:
\[\xymatrix{pr_\fh\omega_\sG\in\Omega^1(\sG,\fh)^\sH&\text{and}&  pr_\fm\omega_\sG\in\Omega^1(\sG,\fm)^\sH}~.\]
The form $pr_\fh\omega_\sG$ defines a connection on the principal $\sH$-bundle $\sG\to \sG/\sH$ called the {\em canonical connection}. We will denote the corresponding covariant derivative on an associate bundle by $\nabla^c.$
The form $pr_\fm\omega_\sG$ is an equivariant $1$-form which vanishes on vertical vector fields.
Thus, $pr_\fm\omega_\sG$ descends to a 1-form on $\sG/\sH$ valued in $[\fm]$ which is the Maurer-Cartan form $\omega.$ The following is classical (see chapter 1 of \cite{TwistorTheoryHarmonicMapsBOOK}).

\begin{Lemma}\label{CanonicalConnectionandTorsion}
    Let $f:\sG/\sH\to \sG/\sH\times V$ be a smooth section of the trivial bundle, then
    $df=\nabla^c f+\omega\cdot f.$
    If $V=\fg$ is the adjoint representation, then $\nabla^c=d-ad_\omega$ and the torsion is given by
    $T_{\nabla^c}=-\haf[\omega,\omega]^\fm.$
 \end{Lemma}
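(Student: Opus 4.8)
The plan is to prove both assertions by pulling everything back to the total space of the principal bundle $\pi:\sG\to\sG/\sH$, where the Maurer-Cartan form $\omega_\sG$ and its splitting $\omega_\sG=pr_\fh\omega_\sG\oplus pr_\fm\omega_\sG$ live, and then projecting onto $\fh$ and $\fm$ using the reductive decomposition $\fg=\fh\oplus\fm$. Classically this is done in Chapter~1 of \cite{TwistorTheoryHarmonicMapsBOOK}; I include the computation for completeness.

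For the first formula, the point is that $V$ (in our application $V=\fg$) carries an action of the whole group $\sG$, so the associated bundle $[V]=\sG\times_\sH V$ is trivialized by $[g,v]\mapsto(g\sH,\,g\cdot v)$ — this is exactly the trivialization implicit in the statement $f:\sG/\sH\to\sG/\sH\times V$. Under it, a section $f$ corresponds to the $\sH$-equivariant map $\tilde f:\sG\to V$, $\tilde f(g)=g^{-1}\cdot f(g\sH)$. Differentiating this identity on $\sG$, the product rule together with the fact that the differential of $g\mapsto g^{-1}$ is governed by $\omega_\sG$ yields $d\tilde f=\pi^*(df)-\omega_\sG\cdot\tilde f$, where $\omega_\sG\cdot$ denotes the infinitesimal action of $\fg$ on $V$. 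On the other hand, by definition the canonical connection on $\pi:\sG\to\sG/\sH$ has connection form $pr_\fh\omega_\sG$, so the induced covariant derivative $\nabla^c f$ corresponds to the equivariant horizontal form $d\tilde f+(pr_\fh\omega_\sG)\cdot\tilde f$. Subtracting the two displays and using $\omega_\sG=pr_\fh\omega_\sG+pr_\fm\omega_\sG$ together with the fact that $pr_\fm\omega_\sG$ descends to the Maurer-Cartan form $\omega\in\Omega^1(\sG/\sH,[\fm])$, one obtains $\nabla^c f=df-\omega\cdot f$, i.e. $df=\nabla^c f+\omega\cdot f$. Specializing to $V=\fg$ with the adjoint representation, $\omega\cdot$ becomes $ad_\omega$, giving $\nabla^c=d-ad_\omega$ on $[\fg]\cong\sG/\sH\times\fg$.

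For the torsion, the key input is the Maurer-Cartan equation $d\omega_\sG+\haf[\omega_\sG,\omega_\sG]=0$ on $\sG$. Projecting onto $\fm$ and using reductivity ($[\fh,\fh]\subset\fh$ kills one term, $[\fh,\fm]\subset\fm$ keeps another) gives
\[
d(pr_\fm\omega_\sG)+[pr_\fh\omega_\sG\wedge pr_\fm\omega_\sG]^\fm+\tfrac12[pr_\fm\omega_\sG\wedge pr_\fm\omega_\sG]^\fm=0 .
\]
The first two terms are precisely the covariant exterior derivative $d^{\nabla^c}(pr_\fm\omega_\sG)$ for the canonical connection. Since under the isomorphism $T(\sG/\sH)\cong[\fm]$ the Maurer-Cartan form $\omega$ is the tautological ($=$ identity) $[\fm]$-valued $1$-form, its covariant exterior derivative is by definition the torsion of $\nabla^c$, so $T_{\nabla^c}=d^{\nabla^c}\omega=-\haf[\omega,\omega]^\fm$, as claimed. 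The only delicate aspect here is bookkeeping rather than a genuine obstacle: one must fix once and for all the convention for $\omega_\sG$ (left- versus right-invariant) and the dictionary between $[V]$-valued forms on $\sG/\sH$ and equivariant horizontal $V$-valued forms on $\sG$; with those pinned down, both formulas fall out of the product rule and the structure equation.
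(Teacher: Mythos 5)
Your argument is correct, and it is essentially the classical proof: the paper itself gives no proof of this lemma, deferring to Chapter~1 of \cite{TwistorTheoryHarmonicMapsBOOK}, and what you write is exactly the standard derivation one finds there (equivariant functions on the total space for the first identity, the Maurer--Cartan structure equation projected to $\fm$ via reductivity for the torsion). The only imprecision is cosmetic: the term $-\omega_\sG\cdot\tilde f$ in $d\tilde f$ comes from differentiating the action map $g\mapsto g\cdot v$ rather than from the inversion $g\mapsto g^{-1}$, and the identity $d\tilde f=\pi^*(df)-\omega_\sG\cdot\tilde f$ only makes literal sense after the trivialization $[g,v]\mapsto(g\sH,g\cdot v)$ is used to rewrite $\pi^*(df)$ in the equivariant-function picture --- a bookkeeping point you already flag, and one that does not affect the conclusion.
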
 
For any reductive homogeneous space, a smooth map $f:S\to\sG/\sH$ defines a principal $\sH$-bundle $f^*\sG\to S$ with a connection $f^*\nabla^c;$ furthermore, the derivative $df\in\Omega^0(S,T^*S\otimes f^*T(\sG/\sH))=\Omega^0(S,T^*S\otimes f^*[\fm])$ is identified with the pullback of the Maurer-Cartan form $f^*\omega.$ 

Let $\Sigma$ be a Riemann surface structure on $S$.
Complexifying the splitting $\fg=\fh\oplus\fm$ gives an $Ad_{\sH_\C}$-invariant splitting $\fg_\C=\fh_\C\oplus\fm_\C$. 
Thus $T(\sG/\sH)\otimes\C=[\fm_\C]$ and the complex linear extension of the Maurer-Cartan form $\omega$ is a $1$-form valued in $[\fm_\C].$
The $(0,1)$-part of $f^*\nabla^c$ defines a holomorphic structure on the $\sH_\C$-bundle $f^*\sG\times_{\sH}\sH_\C.$ 

\begin{Example}\label{symmspaceFlat} 
 For a Cartan involution, the splitting $\fg=\fh\oplus\fm$ is orthogonal and $B_\fg$ is positive definite on $\fm$ and negative definite on $\fh.$ 
Thus, $\fh$ is the Lie algebra of a maximal compact subgroup $\sH\subset\sG$ and $B_\fg$ induces a $\sG$-invariant  Riemannian metric on $\sG/\sH$.  
Since $[\fm,\fm]\subset\fh$, by Lemma \ref{CanonicalConnectionandTorsion}, the canonical connection is the Levi-Civita connection on $(\sG/\sH,B_\fg)$.
The flatness equations of $\nabla^c+ad_\omega$ 
on $\sG/\sH\times\fg$ decompose as
 \begin{equation}\label{flateqSymmspace}
    \begin{dcases}
   F_{\nabla^c}+\haf[\omega,\omega]=0 & \text{ on } \fh,\\
   \nabla^c\omega=0& \text{ on } \fm,
\end{dcases}
 \end{equation}

and a map $f:\Sigma\to\sG/\sH$ is harmonic if and only if $(f^*\nabla^{c})^{0,1}(f^*\omega)^{1,0}=0.$ 
\end{Example}

Let $\lambda$ be the real conjugation giving $\fg_\C=\fg\otimes\C,$ and denote the extension of $\lambda$ to forms again by $\lambda:\Omega^*(\sG/\sH,[\fm_\C])\to \Omega^*(\sG/\sH, [\fm_\C]).$ 
Denote the compact real from $\theta\circ\lambda$ of $\fg_\C$ by $\tau,$ and note that $\tau=-\lambda$ on $\Omega^1(\sG/\sH,[\fm_\C]).$  
Pulling back $\lambda$ by a map $f:\Sigma\to\sG/\sH$ defines a conjugation on forms $f^*\lambda:\Omega^{i,j}(\Sigma,f^*[\fm_\C])\to\Omega^{j,i}(\Sigma,f^*[\fm_\C])$. 
Since $\omega$ is real, we have
\[f^*\omega=f^*\omega^{1,0}+f^*\omega^{0,1}=f^*\omega^{1,0}+f^*\lambda(f^*\omega^{1,0})=f^*\omega^{1,0}-f^*\tau(f^*\omega^{1,0})~.\]
Thus, putting the flatness and harmonic equations together yields:
\begin{Proposition}
   Let $f:\widetilde\Sigma\to\sG/\sH$ be an equivariant harmonic map, the flatness equations of $f^*(\nabla^c+\omega)$ decompose as: 
    \begin{equation}\label{HarmflateqSymmspace}
    \begin{dcases}
    F_{f^*\nabla^c}+[f^*\omega^{1,0},-f^*\tau(f^*\omega^{1,0})]=0~, \\
    (f^*\nabla^c)^{0,1}f^*\omega^{1,0}=0~.
\end{dcases}
 \end{equation} 
\end{Proposition}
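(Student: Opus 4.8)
The plan is to start from the two structure equations \eqref{flateqSymmspace}, which express the flatness of $\nabla^c+ad_\omega$ on $\sG/\sH\times\fg$, and to pull them back along the equivariant map $f$. Since $\nabla^c=d-ad_\omega$ on the trivial bundle $\sG/\sH\times\fg$ (Lemma \ref{CanonicalConnectionandTorsion}), the connection $\nabla^c+ad_\omega=d$ is flat, so $f^*(\nabla^c+\omega)$ is flat and its flatness equations are exactly the pullbacks of \eqref{flateqSymmspace}. The only work is then to reorganise these pullbacks using two features of the setting: that $\Sigma$ is a complex curve, so every $(2,0)$- or $(0,2)$-form on it vanishes, and that $\omega$ is real, so $f^*\omega^{0,1}=f^*\lambda(f^*\omega^{1,0})=-f^*\tau(f^*\omega^{1,0})$, as recorded just before the statement.

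For the $\fh$-valued equation, pulling back $F_{\nabla^c}+\haf[\omega,\omega]=0$ gives $F_{f^*\nabla^c}+\haf[f^*\omega,f^*\omega]=0$. Writing $A=f^*\omega^{1,0}$, so that $f^*\omega=A-f^*\tau(A)$, and using that the bracket of two $\fg$-valued $1$-forms is symmetric, I would expand $\haf[f^*\omega,f^*\omega]=\haf[A,A]+[A,-f^*\tau(A)]+\haf[f^*\tau(A),f^*\tau(A)]$. The outer terms are a $(2,0)$- and a $(0,2)$-form and hence vanish on $\Sigma$, leaving $\haf[f^*\omega,f^*\omega]=[f^*\omega^{1,0},-f^*\tau(f^*\omega^{1,0})]$, which is the first line of \eqref{HarmflateqSymmspace}.

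For the $\fm$-valued equation, the pullback of $\nabla^c\omega=0$ reads $(f^*\nabla^c)(f^*\omega)=0$. Splitting by type, $(f^*\nabla^c)^{1,0}f^*\omega^{1,0}$ and $(f^*\nabla^c)^{0,1}f^*\omega^{0,1}$ are of types $(2,0)$ and $(0,2)$ and drop out on the curve, so what remains is the $(1,1)$-identity $(f^*\nabla^c)^{0,1}f^*\omega^{1,0}+(f^*\nabla^c)^{1,0}f^*\omega^{0,1}=0$. Since $f$ is harmonic, Proposition \ref{p-harmonicholomorphic} — after identifying $\partial h_\rho$ with $f^*\omega^{1,0}$ and $\nabla^{0,1}$ with $(f^*\nabla^c)^{0,1}$, as in Example \ref{symmspaceFlat} — gives $(f^*\nabla^c)^{0,1}f^*\omega^{1,0}=0$, which is the second line of \eqref{HarmflateqSymmspace}; the complementary term $(f^*\nabla^c)^{1,0}f^*\omega^{0,1}$ is then its conjugate and vanishes as well, so the two displayed equations indeed capture the full system. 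The only delicate point in all of this is the bookkeeping — tracking the $(p,q)$-types of the brackets and covariant derivatives and handling the real structure ($\tau=-\lambda$ on $\Omega^1(\sG/\sH,[\fm_\C])$, $f^*\omega^{0,1}=f^*\lambda(f^*\omega^{1,0})$) consistently; there is no analytic or geometric obstruction beyond this, everything being a formal consequence of \eqref{flateqSymmspace}, the inclusion $[\fm,\fm]\subset\fh$, and Proposition \ref{p-harmonicholomorphic}.
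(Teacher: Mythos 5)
Your proposal is correct and follows exactly the route the paper intends: the paper states this proposition without a separate proof because it is the immediate combination of the pulled-back structure equations \eqref{flateqSymmspace}, the vanishing of $(2,0)$- and $(0,2)$-forms on a curve, the reality $f^*\omega^{0,1}=-f^*\tau(f^*\omega^{1,0})$, and the harmonicity characterization of Proposition \ref{p-harmonicholomorphic}. Your type bookkeeping (symmetry of the bracket on $1$-forms, the $\fm$-part of flatness being the real part of the harmonicity equation and hence redundant) is exactly the content being suppressed, and it checks out.
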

\begin{Remark}\label{Remark 10 deriv harmmap Higgs}
    Let $\rho\in\Xx(\Gamma,\sG)$ and let $h_\rho:\widetilde\Sigma\to\sG/\sH$ be the corresponding $\rho$-equivariant harmonic metric from Theorem \ref{CorletteTheorem}. This data defines a $\sG$-Higgs bundle $(\Pp,\varphi)$ on $\Sigma$ as follows. 
    By equivariance, the holomorphic $\sH_\C$-bundle $h_\rho^*\sG[\sH_\C]$ on $\widetilde\Sigma$ descends to a holomorphic $\sH_\C$-bundle $\Pp_{\sH_\C}$ over $\Sigma.$ 
   Also, since $h_\rho$ is harmonic, the $(1,0)$-part of the pullback of the Maurer-Cartan form $h_\rho^*\omega^{1,0}$ is holomorphic and descends to a holomorphic section $\varphi\in H^0(\Sigma, \Pp_{\sH_\C}[\fm_\C]\otimes K).$
\end{Remark}

If $P_\sH\subset \Pp_{\sH_\C}$ is a reduction of structure group to the maximal compact subgroup $\sH,$ then the associated bundle $\Pp_{\sH_\C}[\fm_\C]=P_{\sH}[\fm_\C]$ decomposes as $P_\sH[\fm]\oplus P_\sH[i\fm].$ 
For such a reduction, the compact real form $\tau$ of $\fg_\C$ defines a conjugation 
\[\tau:\Omega^{1,0}(\Sigma, P_\sH[\fm_\C])\to\Omega^{0,1}(\Sigma,P_\sH[\fm_\C])~.\] 
Moreover, $-\tau(\psi)$ is the Hermitian adjoint of $\psi\in\Omega^{1,0}(\Sigma,P_\sH[\fm_\C])$ with respect to the metric induced by the Killing form on $P_\sH[\fm_\C].$
The following theorem was proven by Hitchin for $\sG = \sSL(2,\C)$ \cite{selfduality} and Simpson for $\sG$ complex semi-simple \cite{SimpsonVHS}. For the general statement below see \cite{HiggsPairsSTABILITY}.

\begin{Theorem}\label{HitchinEqTheorem}
    Let $(\Pp_{\sH_\C},\varphi)$ be a $\sG$-Higgs bundle, there exists a reduction of structure group $P_\sH\subset \Pp_{\sH_\C}$ to the maximal compact subgroup $\sH$, so that 
    \begin{equation}
        \label{Hitchinequation}F_{A}+[\varphi,-\tau(\varphi)]=0
    \end{equation}
    if and only if $(\Pp_{\sH_\C},\varphi) $ is poly-stable. Here $F_{A}$ denotes the curvature of the Chern connection of the reduction.
\end{Theorem}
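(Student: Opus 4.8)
\medskip
\noindent\textbf{Proof strategy.}
Although the statement is phrased for a general real semi-simple $\sG$, for the groups at issue in this paper ($\sSp(4,\R)$ and $\sP\sSp(4,\R)\cong\sSO_0(2,3)$) I would reduce both implications to the classical $\sSL(n,\C)$ case using Definition \ref{DEF SLnC poly-stable is enough}. Fix a faithful representation $\sG_\C\hookrightarrow\sSL(n,\C)$; the associated extension of structure group turns a $\sG$-Higgs bundle $(\Pp_{\sH_\C},\varphi)$ into an $\sSL(n,\C)$-Higgs bundle $(\Ee,\Phi)$ and a reduction $P_\sH\subset\Pp_{\sH_\C}$ to the maximal compact into a Hermitian metric $h$ on $\Ee$. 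If such a $P_\sH$ solves \eqref{Hitchinequation}, then $h$ satisfies the Hermitian--Einstein equation $F_A+[\Phi,\Phi^{*_h}]=0$; for any $\Phi$-invariant holomorphic subbundle $\Ff\subset\Ee$ the standard Chern--Weil computation — express $\deg(\Ff)$ as the integral of the trace of the curvature of the metric subbundle, compare with $\Ee$ via the orthogonal projection, and use the Hitchin equation together with the sign of the second-fundamental-form and $\Phi$-commutator terms — gives $\deg(\Ff)\le 0$, with equality only if $\Ff$ is a holomorphic $\Phi$-invariant orthogonal summand. Iterating along a Jordan--H\"older filtration shows $(\Ee,\Phi)$, hence $(\Pp_{\sH_\C},\varphi)$, is poly-stable.

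For the converse I would argue as follows. By Definition \ref{DEF SLnC poly-stable is enough}, $(\Ee,\Phi)$ is poly-stable, so by the Hitchin--Simpson theorem for $\sSL(n,\C)$ it carries a Hermitian--Einstein metric $h$, unique up to the compact part of $\Aut(\Ee,\Phi)$. The analytic heart is this existence statement, which I would obtain by studying Donaldson's functional on the space of Hermitian metrics: it is convex along geodesic rays $h_t=h\,e^{ts}$, its Euler--Lagrange equation is exactly Hermitian--Einstein, and poly-stability is precisely the condition under which it is bounded below and attains its infimum — the Uhlenbeck--Yau argument produces, from any non-convergent minimizing sequence, a weakly holomorphic $\Phi$-invariant subsheaf violating stability (equivalently, one runs the Donaldson heat flow and proves long-time convergence). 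It then remains to descend $h$ to a solution for $\sG$. The subgroup $\sG_\C\subset\sSL(n,\C)$ is the common stabilizer of a finite collection of holomorphic tensors carried by $(\Ee,\Phi)$ — the symplectic form for $\sSp(4,\C)$, the symmetric pairing together with the volume form for $\sSO(5,\C)$ — and each such tensor induces an involution of the set of Hermitian--Einstein metrics that permutes solutions. By the uniqueness clause these involutions fix $h$ (directly when $(\Ee,\Phi)$ is stable, and after averaging over the relevant finite subgroup of $\Aut(\Ee,\Phi)$ when it is strictly poly-stable), which says exactly that $h$ reduces the structure group to $\sH_\C$ and then, applying the compact real form $\tau=\theta\circ\lambda$, to the maximal compact $\sH$ of $\sG$. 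Unwinding the identifications yields a reduction $P_\sH\subset\Pp_{\sH_\C}$ solving \eqref{Hitchinequation}.

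The main obstacle is the existence half of the Hitchin--Simpson theorem invoked above: showing that poly-stability forces Donaldson's functional to attain its minimum (equivalently, convergence of the Donaldson heat flow) is genuinely analytic and rests on the regularity theory of weakly holomorphic subbundles and the $L^2_1$-estimates of Uhlenbeck--Yau and Simpson; everything else is essentially structural. Within the structural part, the one delicate point is the strictly poly-stable case, where $\Aut(\Pp_{\sH_\C},\varphi)$ is positive-dimensional and strict uniqueness is unavailable, so that one must select a solution invariant under the finite group of involutions cutting out $\sG_\C$. A more uniform but more laborious alternative — the route actually carried out in \cite{HiggsPairsSTABILITY} — is to bypass $\sSL(n,\C)$ entirely and set up Donaldson's functional directly on the space of $\sH$-reductions of $\Pp_{\sH_\C}$, proving the correspondence for general $\sG$ in one stroke.
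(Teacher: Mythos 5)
The paper does not prove this theorem: it is quoted as a known result, attributed to Hitchin for $\sSL(2,\C)$, Simpson for complex semi-simple groups, and to \cite{HiggsPairsSTABILITY} for general real reductive $\sG$, so there is no in-paper argument to set yours against. Your sketch is the standard ``extend to $\sSL(n,\C)$, then descend'' route and is correct in outline: the forward implication by the Chern--Weil/second-fundamental-form computation on $\Phi$-invariant subbundles, and the converse by applying the $\sSL(n,\C)$ Hitchin--Simpson existence theorem to the extended Higgs bundle of Definition~\ref{DEF SLnC poly-stable is enough} and then forcing the Hermitian--Einstein metric to be compatible with the holomorphic tensors cutting out $\sG_\C$ by the uniqueness clause. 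This is genuinely different from the route of the cited reference \cite{HiggsPairsSTABILITY}, which sets up the Donaldson functional intrinsically on the space of $\sH$-reductions of $\Pp_{\sH_\C}$; your route is shorter when a convenient faithful representation is available (as it is for $\sSp(4,\R)$ and $\sSO_0(2,3)$), while the intrinsic route covers arbitrary real reductive $\sG$ and avoids the descent step entirely.

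The one step that needs more care than you give it is the strictly poly-stable case. ``Averaging over the relevant finite subgroup'' is not, as written, an operation that preserves the Hermitian--Einstein equation: the naive average of two Hermitian metrics need not be Hermitian--Einstein. What does work is either (i) decomposing $(\Ee,\Phi)$ into its stable summands, solving on each, and checking that the defining tensors either preserve a summand or permute isomorphic summands in pairs, or (ii) observing that the set of Hermitian--Einstein metrics is a single orbit of the reductive group $\Aut(\Ee,\Phi)$, hence a totally geodesic nonpositively curved subspace of the space of metrics on which the tensor-induced involutions act by isometries, so that the Cartan fixed-point theorem produces an invariant solution. Either repair is standard, but one of them must be spelled out; as you note, the cleanest way to avoid the issue altogether is the intrinsic approach of \cite{HiggsPairsSTABILITY}.
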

Equation \eqref{Hitchinequation} is called the Hitchin equation. By definition of the Chern connection, $\nabla_A^{0,1}\varphi=0,$ thus, Hitchin's equation is the same as the decomposition of the pullback of the flatness equations \eqref{HarmflateqSymmspace} by an equivariant harmonic map. 
Given a solution to Hitchin's equation, the connection $A+\varphi-\tau(\varphi)$ is flat $\sG$-connection. 
Hence, for each Riemann surface structure $\Sigma$ on $S,$ Theorem \ref{CorletteTheorem} and Theorem \ref{HitchinEqTheorem} give a bijective correspondence between the moduli space of poly-stable $\sG$-Higgs bundles and the $\sG$-character variety of $\Gamma,$ $\Mm(\Sigma,\sG)\cong\Xx(\Gamma,\sG).$ 

\subsection{The Energy function and minimal surfaces}
Given $\rho\in\Xx(\Gamma,\sG)$ and a Riemann surface structure $\Sigma$ on $S$, let $h_\rho:\widetilde\Sigma\to\sG/\sH$ be the harmonic metric. If $\sG$ is a group of Hermitian type and $\rho$ is a maximal representation, then the centralizer of $\rho$ is compact \cite{BIWmaximalToledoAnnals}. 
Thus, the harmonic metric is unique for maximal representations.

\begin{Definition}\label{HopfDiferentialDEF}
    The {\em Hopf differential} of a harmonic map $f:\Sigma\to(N,g)$ is the holomorphic quadratic differential $q_f=(f^*g)^{(2,0)}\in H^0(\Sigma,K^2)$. 
\end{Definition}

The Hopf differential measures the failure of a map $f$ to be conformal. In particular, $q_f=0$ if and only if $f$ is a conformal immersion away from the singularities of $df.$ In this case, it is not hard to show that the rank of $df$ is either $0$ or $2$, and thus the only singularities of $df$ are branch points. This is equivalent to the image of $f$ being a branched minimal immersion \cite{MinImmofRiemannSurf,SchoenYauMinimalSurfEnergy}.

    \begin{Proposition}
Let $\sG$ be a real form of a reductive subgroup 
of $\sSL(n,\C)$ and consider $\rho\in\Xx(\Gamma,\sG)$. Fix a Riemann surface structure $\Sigma$ on $S$ and let $(\Pp_{\sH_\C},\varphi)$ be the $\sG$-Higgs bundle corresponding to $\rho$. The harmonic metric $h_\rho:\widetilde\Sigma\to\sG/\sH$ is a branched minimal immersion if and only if $\tr(\varphi^2)=0$. Moreover, $h_\rho$ is unbranched if and only if $\varphi$ is nowhere vanishing. 
    \end{Proposition}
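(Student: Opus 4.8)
The plan is to translate the differential-geometric conditions ``branched minimal immersion'' and ``unbranched'' into statements about the Higgs field $\varphi$, using the identification of $dh_\rho$ with the pullback of the Maurer--Cartan form established in Remark~\ref{Remark 10 deriv harmmap Higgs}. First I would recall that, writing $\sG$ as a real form of a reductive subgroup of $\sSL(n,\C)$ and picking the harmonic reduction $P_\sH\subset\Pp_{\sH_\C}$ from Theorem~\ref{HitchinEqTheorem}, the pulled-back derivative $h_\rho^*\omega$ decomposes as $\varphi-\tau(\varphi)$, where $\varphi\in\Omega^{1,0}(\Sigma,P_\sH[\fm_\C])$ is the Higgs field and $-\tau(\varphi)$ is its Hermitian adjoint. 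Consequently the $(2,0)$-part of the pulled-back symmetric form $h_\rho^*g$ — i.e.\ the Hopf differential $q_{h_\rho}$ of Definition~\ref{HopfDiferentialDEF} — is, up to a nonzero constant coming from the normalization of the Killing form $B_\fg$ on $\fm$, exactly $B_{\fg}(\varphi,\varphi)$. Under the standard embedding into $\sSL(n,\C)$ the Killing form is proportional to the trace form on $\End(\C^n)$, so $q_{h_\rho}$ is a nonzero multiple of $\tr(\varphi^2)\in H^0(\Sigma,K^2)$.

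Granting this, the first claim follows from the discussion immediately preceding the Proposition: a harmonic map has $q_{h_\rho}=0$ if and only if it is weakly conformal, equivalently (by \cite{MinImmofRiemannSurf,SchoenYauMinimalSurfEnergy}) a branched minimal immersion, and $q_{h_\rho}=0 \iff \tr(\varphi^2)=0$. For the second claim I would analyze the branch points, which are precisely the zeros of $dh_\rho$. At a point $p\in\Sigma$, the rank of $dh_\rho$ is the rank of $h_\rho^*\omega=\varphi-\tau(\varphi)$ evaluated at $p$; since $-\tau(\varphi)$ is the adjoint of $\varphi$ with respect to a (positive-definite) Hermitian metric on $P_\sH[\fm_\C]$, the map $\varphi-\tau(\varphi)$ vanishes at $p$ if and only if $\varphi$ vanishes at $p$ — indeed $\varphi(p)$ and $-\tau(\varphi)(p)$ live in complementary pieces (the $(1,0)$ and $(0,1)$ parts) of the complexified cotangent space, so their sum is zero exactly when each is. Hence the branch locus of $h_\rho$ equals the zero locus of $\varphi$, and $h_\rho$ is unbranched if and only if $\varphi$ is nowhere vanishing.

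The main obstacle I anticipate is the bookkeeping in the first paragraph: one must be careful that the identification $dh_\rho = \varphi - \tau(\varphi)$ holds with the correct reduction (the harmonic one, solving Hitchin's equation) and correct constants, and that the Riemannian metric $g$ on $\sG/\sH$ used to define the Hopf differential is the one induced by $B_\fg$ as in Example~\ref{symmspaceFlat}, so that pulling back and taking the $(2,0)$ part genuinely produces $B_\fg(\varphi,\varphi)$ rather than some other contraction. A secondary subtlety is that when $\sG$ is only a real form of a \emph{subgroup} of $\sSL(n,\C)$, the restriction of the $\sSL(n,\C)$ trace form to $\fm_\C$ must be a nonzero multiple of $B_{\fg}|_{\fm_\C}$; this holds because $\fg$ is reductive and $\fm_\C$ is an $\sH_\C$-submodule on which any invariant form is proportional to Killing (on each simple factor), but I would state it carefully, possibly restricting to the simple case actually used in the paper ($\sG=\sSO_0(2,3)$ or $\sSp(4,\R)$), where one can just compute $\tr(\Phi^2)$ directly from the explicit Higgs field~\eqref{SL(5,C) Higgs of SO(2,3) Higgs} or~\eqref{SL(4,C) Higgs of Sp(4,R) Higgs} and compare with the Hopf differential. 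Once these identifications are pinned down the rest is immediate from the classical theory already quoted.
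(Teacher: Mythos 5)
Your proposal is correct and follows essentially the same route as the paper's proof: identify $h_\rho^*\omega^{1,0}$ with $\varphi$ via the harmonic reduction, observe that the Hopf differential is a nonzero constant multiple of $\tr(\varphi^2)$, and note that the branch locus is exactly the vanishing locus of $\varphi$. Your extra care about the proportionality of the trace form and the Killing form on $\fm_\C$ (harmless here since the groups in play are simple) only makes explicit what the paper's one-line assertion leaves implicit.
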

    \begin{proof}
By Remark \ref{Remark 10 deriv harmmap Higgs} the Higgs field $\varphi$ is identified with the $(1,0)$-part of the derivative of the harmonic map. The metric on $\sG/\sH$ comes from the Killing form. Since $\sG$ is a real form of a subgroup of $\sSL(n,\C)$, the Hopf differential of a harmonic metric $h_\rho:\widetilde\Sigma\to\sG/\sH$ is a constant multiple of 
\[\tr(h_\rho^*\omega^{1,0}\otimes h_\rho^*\omega^{1,0})=\tr(\varphi^2)\in H^0(\Sigma,K^2)~.\]
Moreover, the branched minimal immersion is branch point free if and only if $h_\rho^*\omega^{1,0}=\varphi$ is nowhere vanishing. 
    \end{proof}

\begin{Remark} 
For a Lie group of Hermitian type, the Higgs field of a maximal Higgs bundle is nowhere vanishing, so the corresponding minimal immersions are always unbranched.  
\end{Remark}
For each representation $\rho\in\Xx(\Gamma,\sG)$ consider the {\em energy function} on Teichm\"uller space which gives the energy of the harmonic metric $h_\rho$
\begin{equation}
  \label{EnergyFunct}E_\rho:\xymatrix@R=0em{\Teich(S)\ar[r]&\R^{\geq0}\\\Sigma\ar@{|->}[r]&\haf\int\limits_\Sigma|dh_\rho|^2}~.
\end{equation} 
The critical points of $E_\rho$ are branched minimal immersions \cite{MinImmofRiemannSurf,SchoenYauMinimalSurfEnergy}. 
\begin{Remark}\label{Remark maximal are anosov}
If $\rho$ is an \emph{Anosov}\footnote{The definition of an Anosov representation is not necessary for our considerations, however we refer the reader to \cite{AnosovFlowsLabourie,GGKWAnosov,KLPAnosov1} for the appropriate definitions.} representation, then the energy function $E_\rho$ is smooth and proper \cite{CrossRatioAnosoveProperEnergy}. 
Thus, for each Anosov representation there {\em exists} a Riemann surface structure in which the harmonic metric is a branched minimal immersion. 
However, such a Riemann surface structure is not unique in general. Indeed, there are quasi-Fuchsian representations for which many such Riemann surfaces structures exist \cite{HuangWang15}.
\end{Remark}

For Hitchin representations, Labourie conjectured \cite{CrossRatioAnosoveProperEnergy} that the Riemann surface structure in which the harmonic metric is a branched minimal immersion is {\em unique}. Labourie's conjecture has been established for Hitchin representations into a rank two split Lie group \cite{cyclicSurfacesRank2}, but is open in general. 
Since maximal representations are examples of Anosov representations \cite{MaxRepsAnosov}, existence holds for all maximal representations. 
We now show that the branched minimal immersion is unique for all maximal $\sSO_0(2,3)$-representations.
\begin{Theorem}\label{UniqueMinSurface}
Let $\Gamma$ be the fundamental group of a closed oriented surface and
let $\Xx^\mathrm{max}(\Gamma,\sSO_0(2,3))$ be the character variety of maximal $\sSO_0(2,3)$-representations of $\Gamma$.
 For each $\rho\in\Xx^\mathrm{max}(\Gamma,\sSO_0(2,3))$ there is a unique Riemann surface structure $\Sigma$ in which the unique harmonic metric  $h_\rho:\widetilde\Sigma\to\sSO_0(2,3)/(\sSO(2)\times\sSO(3))$ is a minimal immersion with no branch points. 
\end{Theorem}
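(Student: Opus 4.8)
## Proof strategy for uniqueness of minimal immersions

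The plan is to reduce the statement to a theorem of the second author with Tholozan and Toulisse, or alternatively to adapt Labourie's cyclic surfaces argument, but the cleanest route exploits the explicit Higgs bundle parameterizations established in Section \ref{PSp4R}. First I would recall the setup: a critical point of the energy function $E_\rho$ on $\Teich(S)$ is exactly a Riemann surface structure $\Sigma$ in which the harmonic metric has vanishing Hopf differential, i.e. $\tr(\varphi^2)=0$; for maximal $\sSO_0(2,3)$-Higgs bundles the Higgs field is nowhere vanishing (Remark after Proposition \ref{Prop gamma not zero} and the nowhere-vanishing of $\gamma$), so such a critical point automatically yields an \emph{unbranched} minimal immersion. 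So the content is: for each $\rho\in\Xx^{\mathrm{max}}(\Gamma,\sSO_0(2,3))$ there is exactly one $\Sigma\in\Teich(S)$ at which the quadratic differential $q_2$ appearing in the Higgs bundle data $((\Ff,Q_F),q_2,\delta)$ (or $(M,\mu,\nu,q_2)$, resp. $(M,f,\mu,q_2)$) vanishes. Existence follows from properness and smoothness of $E_\rho$ for Anosov representations \cite{CrossRatioAnosoveProperEnergy} together with the fact that maximal representations are Anosov \cite{MaxRepsAnosov}; only uniqueness requires work.

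For uniqueness I would follow the variational/integrability approach of \cite{cyclicSurfacesRank2} and \cite{MySp4Gothen}. The key is to show the energy function $E_\rho$ is a strictly plurisubharmonic-type function, or more precisely that its Hessian at a critical point is positive definite, so that critical points are isolated local minima; combined with properness and connectedness of $\Teich(S)$ this forces uniqueness. The standard mechanism: at a critical point the second variation of energy is controlled by a Jacobi-type operator built from the harmonic map, and one shows this operator has trivial kernel using the maximality hypothesis. Concretely, following Labourie, one passes to the associated \emph{cyclic} structure — here the relevant fact is that a maximal $\sSO_0(2,3)\cong\sP\sSp(4,\R)$-Higgs bundle with $q_2=0$ carries a $\Z_k$-grading (the analogue of a cyclic Higgs bundle) so that the harmonic map lands in a totally geodesic cyclic subspace, and the Hessian computation reduces to a Bochner argument on a line bundle of negative degree — forcing vanishing of Jacobi fields. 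The nowhere-vanishing of $\gamma$ (Proposition \ref{Prop gamma not zero}) is exactly what provides the needed sign in the Bochner formula: $(L^{-1}K)^2 = \Oo$, so the "top" piece of the grading has the correct curvature.

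The main obstacle will be handling the \emph{singular} components $\Mm_{0,0}^{\mathrm{max}}$ and $\Mm_{sw_1}^{\mathrm{max},sw_2}$, where the representation need not be Zariski dense — the reductions to tight subgroups classified in Propositions \ref{Prop Higgs reductions M0d} and \ref{Prop Higgs reduction sw1not0} ($\sS_0(\sO(2,1)\times\sO(2))$, $\sS_0(\sO(2,2)\times\sO(1))$, $\sSO_0(2,2)$, $\sSO_0(2,1)$). For these, the harmonic map factors through a lower-rank symmetric space, and I would argue uniqueness separately: for the $\sSO_0(2,1)\times\sSO(2)$ and product-type reductions the minimal surface is the one coming from the Fuchsian factor (uniqueness is the uniformization theorem, essentially the $\sP\sSL(2,\R)$ case, with the compact/anti-de-Sitter factor contributing nothing to the second variation); for $\sSO_0(2,2)$ one uses that maximal $\sSO_0(2,2)$-representations are products of two Fuchsian representations and the minimal surface is unique by the two-dimensional argument of \cite{SchoenYauMinimalSurfEnergy}-type rigidity (or directly: the energy splits as a sum of two energies each with a unique critical point on $\Teich(S)$, but these critical points coincide only because the Hopf differentials must simultaneously vanish — one must check this is consistent, which it is since both factors share the underlying conformal structure). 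Assembling these cases into a uniform statement, and making sure the Hessian estimate degenerates gracefully (rather than failing) along the locus of reducible representations, is the delicate point; I expect to invoke semicontinuity of the index of $E_\rho$ and the density of Zariski-dense representations in each component (which for $0<d<4g-4$ is total, and in the remaining components fills out a dense open set whose complement is a lower-dimensional subvariety of reductions) to conclude that a positive-definite Hessian on the dense open set propagates to the whole component by a limiting argument. Alternatively, and more robustly, I would simply cite the extension \cite{CollierTholozanToulisse} which proves exactly Theorem \ref{UniqueMinSurface} (indeed for all rank two Hermitian $\sG$) by different, uniform methods, and present the Higgs-bundle argument above only for the Zariski-dense case as the self-contained portion.
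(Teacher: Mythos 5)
Your overall strategy coincides with the paper's: existence from properness of the energy for Anosov representations, uniqueness on the smooth locus via Labourie's cyclic-surface transversality theorem (as extended in \cite{MySp4Gothen}), and a separate treatment of the non-Zariski-dense representations using the classification of reductions to tight subgroups. However, there are three concrete gaps. First, you never address the components $\Xx^{\mathrm{max},sw_2}_{sw_1}$ with $sw_1\neq 0$ specifically; the paper does not run the cyclic-surface argument there at all, but instead pulls the representation back to the double cover $S_{sw_1}$, lands in $\Xx^{\mathrm{max}}_{0,0}(\pi_1(S_{sw_1}),\sSO_0(2,3))$, and uses injectivity of $\pi^*:\Teich(S)\to\Teich(S_{sw_1})$ to transport uniqueness back down. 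Some such reduction (or a direct verification of connectedness and transversality of the critical locus in those components) is needed; your proposal is silent on it. Second, the hypothesis of Labourie's Theorem 8.1.1 that you must verify is \emph{connectedness of the critical set} $N\subset\Teich(S)\times\Xx^{\mathrm{max}}_{0,0}(\Gamma,\sSO_0(2,3))^{sm}$, not connectedness of $\Teich(S)$; this is exactly where the explicit parameterization of the component (the fiber bundle $\Aa/\Z_2$ and the connectedness of its smooth locus) enters, and your sketch elides it.

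Third, your proposed "semicontinuity of the index / limiting argument" to propagate a positive-definite Hessian from the Zariski-dense locus to the reducible locus is both unnecessary and unsound as stated: nondegeneracy on a dense open set gives no control at the boundary points, where the Hessian can and does degenerate. The paper instead handles every non-smooth point directly: by Proposition \ref{Prop Higgs reductions M0d} such a representation factors through $\sSO_0(2,1)$ or $\sSO_0(2,2)$ (times a compact group), and uniqueness is quoted from the known $\sP\sSL(2,\R)$ and $\sP\sSL(2,\R)\times\sP\sSL(2,\R)$ cases. On that last point, your parenthetical reasoning for $\sSO_0(2,2)$ is incorrect: the minimal-surface condition for a product representation $(\rho_1,\rho_2)$ is that the \emph{sum} of the two Hopf differentials vanishes, not that each vanishes, so uniqueness does not follow from uniqueness for each factor; it is a genuine theorem (the reference \cite{SL2xSL2uniqueMinSurface} in the paper) and must be cited as such rather than derived from the splitting of the energy.
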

\begin{Remark} 
In \cite{MySp4Gothen}, Theorem \ref{UniqueMinSurface} was proven for representations in the connected components $\Xx^\mathrm{max}_{0,d}(\Gamma,\sSO_0(2,3))$ for $d\neq0$. Here we prove it for all of the other components of $\Xx^\mathrm{max}(\Gamma,\sSO_0(2,3)).$ This result has recently been extended to all maximal representations in any real rank two Lie group of Hermitian type in \cite{CollierTholozanToulisse} using very different methods.
\end{Remark}
It remains to prove Theorem \ref{UniqueMinSurface} for the components $\Xx_{0,0}^{\max}(\Gamma,\sSO_0(2,3))$ and $\Xx_{sw_1}^{max,sw_2}(\Gamma, \sSO_0(2,3)).$ 
We will first prove the statement for the smooth locus of $\Xx_{0,0}^{\max}(\Gamma,\sSO_0(2,3))$ by showing that the cyclic surface technology of \cite{cyclicSurfacesRank2} and \cite{MySp4Gothen} can be applied. 
For the non-smooth locus, we use our knowledge of the Zariski closure of such representations to establish uniqueness for all representations in $\Xx_{0,0}^{\max}(\Gamma,\sSO_0(2,3))$. 
Finally, for $\Xx^{\mathrm{max},sw_2}_{sw_1}(\Gamma,\sSO_0(2,3))$, after pulling back to the double cover $S_{sw_1}$ associated to $sw_1\in H^1(S,\Z_2)\setminus\{0\},$ we will use uniqueness for $\Xx_{0,0}^{\max}(\pi_1(S_{sw_1}),\sSO_0(2,3))$ to establish Theorem \ref{UniqueMinSurface} for the connected component $\Xx_{sw_1}^{\mathrm{max},sw_2}(\Gamma,\sSO_0(2,3))$. 

The proof of Theorem \ref{UniqueMinSurface} makes use of the following result.
\begin{Theorem}\label{DiffGeomTHM}
    (\cite[Theorem 8.1.1]{cyclicSurfacesRank2}) Let $\pi:P\to M$ be a smooth fiber bundle with connected fibers and $F : P\to\R$ be a positive smooth function. Define
\[N=\{x\in P\ |\  d_x(F|_{P_{\pi(x)}})=0\}\]
and assume for all $m\in M$ the function $F|_{P_m}$ is proper and that $N$ is connected and everywhere transverse to the fibers. Then $\pi$ is a diffeomorphism from $N$ onto $M$ and $F|_{P_m}$ has a unique critical point which is an absolute minimum.
\end{Theorem}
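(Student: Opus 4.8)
The plan is to treat the fiberwise critical set $N$ as the zero locus of a vertical differential and to combine a local-diffeomorphism statement (coming from transversality) with a Morse-theoretic count of critical points along each fiber (coming from properness). Write $V = \ker d\pi \subset TP$ for the vertical tangent bundle, so that for $x\in P$ the fiber $P_{\pi(x)}$ has tangent space $V_x$. The fiberwise differential of $F$ is a section $s$ of the vertical cotangent bundle $V^*$, given by $s(x) = (d_xF)|_{V_x}$, and $N = s^{-1}(0)$. The hypothesis that $N$ is everywhere transverse to the fibers means precisely that at each $x\in N$ the vertical Hessian $\mathrm{Hess}_x(F|_{P_{\pi(x)}})$, a symmetric form on $V_x$, is nondegenerate; equivalently $s \pitchfork 0$, so that $T_xN \oplus V_x = T_xP$. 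First I would record the two consequences: (i) by the implicit function theorem applied to $s$ in a local vertical trivialization, $N$ is a submanifold with $\dim N = \dim M$ and $\pi|_N : N \to M$ a local diffeomorphism; and (ii) every point of $N$ is a nondegenerate critical point of the restriction of $F$ to its fiber.

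Next I would use positivity and properness to control the bottom of the spectrum. Fix $m\in M$. Since $F|_{P_m}$ is smooth, bounded below by $0$, and proper, it attains a global minimum at some $x_0\in P_m$; then $x_0\in N$, and being a nondegenerate minimum it has Morse index $0$. In particular $N$ is nonempty and $\pi|_N$ is surjective. The key structural point is that the Morse index is constant on $N$: along $N$ the vertical Hessian varies continuously and remains nondegenerate, so its number of negative eigenvalues is locally constant, hence constant on the connected set $N$. Combined with the existence of the index-$0$ point $x_0$, this forces the index to equal $0$ at \emph{every} point of $N$. Thus for every $m$, every critical point of $F|_{P_m}$ is a nondegenerate local minimum.

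The heart of the argument is then to upgrade ``all critical points are local minima'' to ``there is exactly one critical point'' on each fiber, and here I would invoke the mountain pass theorem. Fix $m$ and suppose $F|_{P_m}$ had two distinct critical points $y_0\neq y_1$; by the previous step both are strict local minima. Since the fibers are connected, $P_m$ is path connected, and since $F|_{P_m}$ is proper it satisfies the Palais--Smale condition (its sublevel sets are compact). The mountain pass theorem then produces a critical point $z\in P_m$ of $F|_{P_m}$, at the min-max level over paths from $y_0$ to $y_1$, which is \emph{not} a local minimum. But $z\in N$, so by the constant-index conclusion $z$ must have index $0$, i.e. be a local minimum --- a contradiction. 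Hence $F|_{P_m}$ has a unique critical point for each $m$, necessarily its absolute minimum, so $\pi|_N$ is injective.

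Finally, $\pi|_N$ is a bijective local diffeomorphism, hence a diffeomorphism of $N$ onto $M$, and the unique fiberwise critical point is the absolute minimum, as claimed. The routine parts are the local-diffeomorphism statement and the existence of the minimum; the main obstacle is the uniqueness step, namely setting up the mountain pass theorem correctly in the noncompact fiber setting --- verifying that properness delivers the compactness (Palais--Smale) required, and, crucially, observing that the mountain pass critical point lands in $N$ so that its forced positive index collides with the constant-index computation. A secondary point requiring care is making the equivalence in (i)--(ii), between transversality of $N$ to the fibers and nondegeneracy of the vertical Hessian, fully rigorous.
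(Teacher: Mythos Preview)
The paper does not prove this theorem; it is quoted from Labourie \cite[Theorem~8.1.1]{cyclicSurfacesRank2} and used as a black box in the proof of Theorem~\ref{UniqueMinSurface}. Your argument is essentially the standard one (and, as far as I know, essentially Labourie's): the transversality hypothesis is equivalent to nondegeneracy of the vertical Hessian along $N$, which simultaneously makes $\pi|_N$ a local diffeomorphism and makes the Morse index locally constant on $N$; connectedness of $N$ plus the existence of a minimum (from properness and positivity) forces index $0$ everywhere; and then a uniqueness argument on each fiber finishes.

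One remark on the uniqueness step. Invoking the mountain pass theorem is correct, but the assertion that the resulting critical point is ``not a local minimum'' is a refinement that needs justification beyond the bare statement of the theorem. A lighter alternative: once you know $F|_{P_m}$ is a proper Morse function bounded below whose critical points all have index $0$, standard Morse theory builds $P_m$ up to homotopy as a CW complex with only $0$-cells, one per critical point; since $P_m$ is connected there is exactly one cell, hence exactly one critical point. This sidesteps Palais--Smale and the min-max machinery entirely.
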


For $d>0$, let $P=\Teich(S)\times\Xx_{0,d}^{\max}(\sSO_0(2,3))$ and $\pi:P\to\Xx_{0,d}^{\max}(\sSO_0(2,3))$ denote the projection onto the second factor. Let $F:P\to\R$ be the function giving the energy of the associated harmonic metric $F(\Sigma,\rho)=\haf\int\limits_{\Sigma}|dh_\rho|^2.$

Recall that $d_{(\Sigma,\rho)}F_\rho=0$ if and only if the corresponding harmonic metric $h_\rho$ is a minimal immersion. This space is connected since it is homeomorphic to the product of $\Teich(S)$ with the vector bundle $\Ff_d$ from Theorem \ref{THM d>0}.
Thus, since $\Xx_{0,d}^{\max}(\sSO_0(2,3))$ is smooth for $d>0,$ to prove Theorem \ref{UniqueMinSurface} for $\Xx_{0,d}^{\max}(\sSO_0(2,3))$, it only remains to prove that the critical submanifold $N$ is everywhere transverse to the fibers of $P.$
This is done by associating a special `cyclic surface' to each minimal immersion associated to a point $(\Sigma,\rho)\in N$, and showing that there are no first order deformations of this cyclic surface which fix $\rho\in\Xx_{0,d}^{\max}(\Gamma,\sSO_0(2,3)).$  

\subsection{$\sSO_0(2,3)$-cyclic surfaces}
We now briefly recall the notion of a cyclic surface from \cite{cyclicSurfacesRank2} and the generalization of \cite{MySp4Gothen}. After this we will show how to associate a cyclic surface to a representation in the smooth locus of $\Xx_{0,0}^{\max}(\Gamma,\sSO_0(2,3)).$

Let $\fg=\fso(5,\C)$ denote the Lie algebra of $\sSO(5,\C)$, let $\fc\subset\fg$ be a Cartan subalgebra and $\Delta^+(\fg,\fc)\subset\Delta(\fg,\fc)$ denote a choice of positive roots. Recall that there are two simple roots $\{\alpha_1,\alpha_2\}$ and $\Delta^+=\{\alpha_1,\alpha_2,\alpha_1+\alpha_2,\alpha_1+2\alpha_2\}.$ 
The Lie algebra $\fg$ decomposes into root spaces: 
\begin{equation}\label{rootspacedecomp}
\fg_{-\alpha_1-2\alpha_2}\oplus\fg_{-\alpha_1-\alpha_2}\oplus\fg_{-\alpha_2}\oplus\fg_{-\alpha_1}\oplus\fc\oplus\fg_{\alpha_1}\oplus\fg_{\alpha_2}\oplus\fg_{\alpha_1+\alpha_2}\oplus\fg_{\alpha_1+2\alpha_2} ~.
\end{equation}
The following decomposition will be useful. 
\begin{equation}
    \label{4-grading}
    \xymatrix@R=0em{\fg_0=\fc~,&\fg_1=\fg_{-\alpha_1}\oplus\fg_{-\alpha_2}\oplus\fg_{\alpha_1+2\alpha_2}~,\\\fg_{2}=\fg_{-\alpha_1-\alpha_2}\oplus\fg_{\alpha_1+\alpha_2}~,&\fg_3=\fg_{\alpha_1}\oplus\fg_{\alpha_2}\oplus\fg_{-\alpha_1-2\alpha_2}}~.
\end{equation}
Fix a Cartan involution $\theta:\fg\to\fg$ which preserves $\fc$ and $\theta(\fg_\alpha)=\fg_{-\alpha}$ for all roots. 
Let $\ft\subset\fc$ be the fixed point locus of $\theta|_{\fc}$. Let $\sT\subset\sSO(5,\C)$ be the connected subgroup with Lie algebra $\ft,$ $\sT$ is a maximal compact torus. 
Since the root space splittings \eqref{rootspacedecomp} and \eqref{4-grading} are $Ad_T$-invariant, the homogeneous space $\sG/\sT$ is reductive, and the Maurer-Cartan form $\omega\in\Omega^1(\sG/\sT,\fg)$ decomposes as
\[\omega=\omega_{-\alpha_1-2\alpha_2}+\omega_{-\alpha_1-\alpha_2}+\omega_{-\alpha_1}+\omega_{-\alpha_2}+\omega_{i\ft}+\omega_{\alpha_1}+\omega_{\alpha_2}+\omega_{\alpha_1+\alpha_2}+\omega_{\alpha_1+2\alpha_2}\] and 
\[\omega=\omega_{i\ft}+\omega_{1}+\omega_{2}+\omega_{3}~.\]
\begin{Remark}
In \cite{cyclicSurfacesRank2} it is shown that homogeneous space $\sSO(5,\C)/\sT$ can be identified with the space of tuples  $(\Delta^+\subset\fc^*, \theta , \lambda)$ where
\begin{itemize}
    \item $\fc$ is a Cartan subalgebra
    \item $\Delta^+\subset\fc^*$ is a choice of positive roots
    \item $\theta$ is a Cartan involution which preserves $\fc$ and $\lambda$ is a split real form which commutes with $\theta$, and globally preserves $\fc$
\end{itemize}
     The involutions $\theta$ and $\lambda$ above also must satisfy certain compatibilities. Namely, both must globally preserve a principal three dimensional subalgebra $\fs$ which contains $x = \haf \sum\limits_{\alpha\in\Delta^+} H_\alpha$ and $\lambda(H_{\alpha_1+2\alpha_2}) = -H_{\alpha_1+2\alpha_2}$. 
     \end{Remark} 
 The trivial Lie algebra bundle $[\fg]\to\sSO(5,\C)/\sT$ admits two conjugate linear involutions $\Lambda$ and $\Theta$ given by 
 \[\xymatrix{\Lambda((\Delta^+\subset\fc^*, \theta , \lambda),v)=\lambda(v)&\text{and}&\Theta((\Delta^+\subset\fc^*, \theta , \lambda),v)=\theta(v)}~,\]
 where we have used the identification of a point in $\sSO(5,\C)/\sT$ with a tuple $(\Delta^+\subset\fc^*, \theta , \lambda)$ mentioned above. 
\begin{Definition}\label{cyclicsurfaceDEF}
    Let $\Sigma$ be a Riemann surface and let $\omega\in\Omega^1(\sG/\sT,\fg)$ be the Maurer-Cartan form of $\sG/\sT.$ A smooth map $f:\Sigma\to\sG/\sT$ is called a {\em cyclic surface} if  
    $f^*\omega_1$ is a $(1,0)$-form and
\[f^*\omega_2=f^*\omega_{i\ft}=f^*\omega+f^*\Theta(\omega)=f^*\omega-f^*\Lambda(\omega)=0~.\]
\end{Definition}

\subsection{Proof of Theorem \ref{UniqueMinSurface}}    
 For Higgs bundles in $\Mm^{\mathrm{max}}_0(\sSO_0(2,3)),$ there are two important reductions of structure group. The decomposition of the holomorphic bundle $\Ee$ as a direct sum of line bundles 
 \[\Ee=M\oplus K\oplus\ \Oo\oplus K^{-1}\oplus M^{-1}\] defines a reduction of structure group to the maximal complex torus $\sC$ of $\sSO(5,\C)$. On the other hand the metric solving the Hitchin equation gives a reduction of structure group to the maximal compact subgroup $\sSO(5)\subset\sSO(5,\C)$. 
 These two reductions of structure are compatible if and only if the holomorphic line bundle decomposition of $\Ee$ is orthogonal with respect the metric solving Hitchin's equation. 
 This is equivalent to having the following commuting diagrams of $\rho$-equivariant maps:
 \begin{equation}
 \label{compatible reductions} 
 \xymatrix{\sSO(5,\C)/\sC&\sSO(5,\C)/\sT\ar[l]\ar[d]\\\widetilde \Sigma\ar[r]_{h_\rho\ \ \ \ \ }\ar[u]^{H_\rho}\ar@{-->}[ur]^{f_\rho}&\sSO(5,\C)/\sSO(5)} ~.
 \end{equation}
\begin{Proposition}\label{cyclicsurfacesProp}
    If $\rho\in\Xx_{0,0}^{\max}(\Gamma,\sSO_0(2,3))$ is a smooth point, $\Sigma$ is a Riemann surface structure on $S$ and $h_\rho:\widetilde \Sigma\to\sSO_0(2,3)/\sSO(2)\times\sSO(3)\subset\sSO(5,\C)/\sSO(5)$ is the associate harmonic metric, then the holomorphic reduction $H_\rho$ from \eqref{compatible reductions} is compatible with the harmonic metric $h_\rho$ if and only if $h_\rho$ is a minimal immersion. 
    Moreover, the reduction $f_\rho$ from \eqref{compatible reductions} is an $\sSO_0(2,3)$-cyclic surface with $f_\rho^*\omega_{-\alpha_1}$ nonzero and $f_\rho^*\omega_{-\alpha_2}$ nowhere vanishing. 
\end{Proposition}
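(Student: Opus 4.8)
The plan is to prove Proposition \ref{cyclicsurfacesProp} by following closely the strategy used by the second author in \cite{MySp4Gothen} for the components $\Xx_{0,d}^{\max}$ with $d > 0$, adapting it to the $d=0$ case. The starting observation is that for a smooth point $\rho \in \Xx_{0,0}^{\max}(\Gamma,\sSO_0(2,3))$, the associated maximal $\sSO_0(2,3)$-Higgs bundle $(L,(\Ww,Q_W),\beta,\gamma)$ has Toledo number $2g-2$, so by Proposition \ref{Prop gamma not zero} we know $\gamma$ is nowhere vanishing, $(L^{-1}K)^2 = \Oo$ and (being in the component $d=0$) in fact $\det(\Ff) = L^{-1}K = \Oo$ and $sw_1 = 0$. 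Writing the $\sSL(5,\C)$-Higgs bundle in the split form \eqref{SL5_vanishing}, the holomorphic splitting $\Ee = M \oplus K \oplus \Oo \oplus K^{-1} \oplus M^{-1}$ gives the reduction $H_\rho : \widetilde\Sigma \to \sSO(5,\C)/\sC$ to the maximal complex torus. First I would record that the harmonic metric $h_\rho$ solving Hitchin's equation \eqref{Hitchinequation} is compatible with this holomorphic splitting (i.e. the line summands are mutually orthogonal for the harmonic metric) precisely when the off-diagonal Hermitian pairing terms vanish; because the Higgs field has the specific shape in \eqref{SL5_vanishing}, and because the $(1,0)$-part of $dh_\rho$ is identified with $\Phi$ via Remark \ref{Remark 10 deriv harmmap Higgs}, this orthogonality is equivalent to $\tr(\Phi^2) = q_2 = 0$ up to a nonzero constant, which is exactly the condition that $h_\rho$ be a (branch-point-free, by the Remark following the minimal-surface proposition) minimal immersion. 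This gives the first assertion and produces the dotted arrow $f_\rho : \widetilde\Sigma \to \sSO(5,\C)/\sT$ in diagram \eqref{compatible reductions}.

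The second step is to verify that $f_\rho$ is a cyclic surface in the sense of Definition \ref{cyclicsurfaceDEF}. Here I would unwind what the pullback $f_\rho^*\omega$ of the Maurer--Cartan form of $\sSO(5,\C)/\sT$ is in terms of the Higgs bundle data. Under the $4$-grading \eqref{4-grading} with respect to the torus $\sT$ and the principal $\fsl_2$, the flat connection $A + \varphi - \tau(\varphi)$ associated to the solution of Hitchin's equations decomposes so that $f_\rho^*\omega_{i\ft}$ is the $\ft$-part of the Chern connection, $f_\rho^*\omega_1$ carries the $(1,0)$ ``holomorphic derivative'' data, $f_\rho^*\omega_3$ carries the Hermitian adjoint $-\tau(\varphi)$ contribution, and $f_\rho^*\omega_2$ is forced to vanish because the Higgs field $\Phi$ in \eqref{SL5_vanishing} has no component in the degree-$2$ piece $\fg_{-\alpha_1-\alpha_2}\oplus\fg_{\alpha_1+\alpha_2}$ once $q_2 = 0$ (the only $q_2$-entries of $\Phi$ sit in that graded piece). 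The remaining conditions $f_\rho^*\omega + f_\rho^*\Theta(\omega) = f_\rho^*\omega - f_\rho^*\Lambda(\omega) = 0$ are exactly the statements that the connection decomposes into a Chern connection plus $\varphi$ plus its $\tau$-adjoint and that $\varphi$ lies in $\fm_\C$ — both built into the non-abelian Hodge setup. This is the same bookkeeping carried out in \cite[\S 4]{MySp4Gothen}; the key point that must be checked for the $d=0$ component is that the graded pieces $f_\rho^*\omega_{-\alpha_1}$ (coming from the entry $\gamma = \smtrx{1\\0}$, i.e. the ``$1$''s in \eqref{SL5_vanishing} on the $\Oo \to K^{-1}$ and $K \to \Oo$ maps) and $f_\rho^*\omega_{-\alpha_2}$ (coming from $\mu$ and $\nu$, i.e. the entries $M \to \cdot$ and $\cdot \to M^{-1}$) are respectively nonzero and nowhere vanishing. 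The ``$1$''s are literally nowhere vanishing by construction so one of the $\alpha_1$-type entries is nowhere zero; nonvanishing of the $\alpha_2$-entry $\mu$ (resp. $\nu$) follows because $\rho$ is a \emph{smooth} point, hence by Proposition \ref{Prop: smooth/orbifold in Md} we must have $\mu \neq 0$ and $\nu \neq 0$, so this graded piece is a nonzero holomorphic section; it is nowhere vanishing only after we observe it equals a product of a nowhere-vanishing object ($\gamma$ or its image $LK^{-1} = \Oo$) with something — more precisely, one of $f_\rho^*\omega_{-\alpha_1}$, $f_\rho^*\omega_{-\alpha_2}$ corresponds to the image of $\gamma$ and is nowhere zero by Proposition \ref{Prop gamma not zero}, while the other only needs to be nonzero.

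Let me be careful about which entry gets which claim: matching the $4$-grading to \eqref{EQ schematic sw1 not 0}/\eqref{SL5_vanishing}, the ``$1$''-arrows assemble into $f_\rho^*\omega_{-\alpha_1}$, which is nowhere vanishing, and the ``$\mu, \nu$''-arrows (together with $\delta$) assemble into $f_\rho^*\omega_{-\alpha_2}$, which is a nonzero section since $\mu,\nu \neq 0$ at a smooth point — wait, the statement of the Proposition is the opposite ($\omega_{-\alpha_1}$ nonzero, $\omega_{-\alpha_2}$ nowhere vanishing), so I would re-examine the normalization of the principal $\fsl_2$-grading against the explicit form \eqref{SL5_vanishing}: the entry $\gamma$, which is the nowhere-vanishing isomorphism-onto-a-subbundle, should be assigned to $\alpha_2$ (the shorter root, whose root space is one-dimensional in this $\fso(5,\C)$ normalization) and the $q_2$-free remainder carries $\alpha_1$; since $\gamma$ being nowhere vanishing and nowhere isotropic is exactly Proposition \ref{Prop gamma not zero}, $f_\rho^*\omega_{-\alpha_2}$ is nowhere vanishing, while $f_\rho^*\omega_{-\alpha_1}$ assembles from $\mu$ and $\nu$ and is merely nonzero (being a nonzero holomorphic section of a line bundle over a compact surface it may have zeros). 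I would finish by stating this matching explicitly and citing \cite{MySp4Gothen} for the identical computation in the neighboring components, so that the only genuinely new input is the explicit form \eqref{SL5_vanishing} of the $sw_1 = 0$, $d = 0$ Higgs bundle. The main obstacle I anticipate is not any single hard estimate but getting the root-space/grading dictionary exactly right so that the two non-vanishing assertions are attached to the correct simple roots; once that dictionary is pinned down against \eqref{SL5_vanishing}, the rest is the standard translation between solutions of Hitchin's equations and cyclic surfaces.
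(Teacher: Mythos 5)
Your overall strategy is the same as the paper's: pass to the explicit $\sSO(5,\C)$-Higgs bundle \eqref{SL5_vanishing}, note that $\mu\neq0$ and $\nu\neq0$ at a smooth point, identify minimality with $q_2=0$, and then read off the cyclic-surface conditions and the graded pieces of $f_\rho^*\omega$ from the shape of $\Phi$. Two points, however, deserve attention.

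First, the step where you claim that orthogonality of the line-bundle splitting with respect to the harmonic metric ``is equivalent to $q_2=0$'' is asserted rather than proved, and the hard direction (that $q_2=0$ \emph{forces} the harmonic metric to be diagonal) is exactly the part that needs an argument. The paper's mechanism is the finite-order gauge transformation $g=\mathrm{diag}(-1,-i,1,i,-1)$: when $q_2=0$ one has $Ad_g\Phi=i\Phi$, so by uniqueness of the solution to Hitchin's equation the metric is $g$-invariant, hence block-diagonal with respect to the eigenbundle decomposition of $g$, which is the holomorphic line-bundle splitting. Without this (or an equivalent cyclic-symmetry argument), your ``off-diagonal Hermitian pairing terms vanish'' sentence does not establish the ``if'' direction of the first claim.

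Second, your root-space dictionary ends in the right place ($f_\rho^*\omega_{-\alpha_2}$ is the nowhere-vanishing piece coming from the $1$'s, i.e. from $\gamma$, while $f_\rho^*\omega_{-\alpha_1}=\mu$ is merely nonzero), but the intermediate claim that $f_\rho^*\omega_{-\alpha_1}$ ``assembles from $\mu$ and $\nu$'' is not correct: in the grading \eqref{4-grading} the section $\nu$ lives in the highest-root piece $\fg_{\alpha_1+2\alpha_2}\subset\fg_1$, not in $\fg_{-\alpha_1}$. This does not affect the conclusion (nonvanishing of $\omega_{-\alpha_1}$ needs only $\mu\neq0$), but it should be stated correctly, since the transversality argument in the cited reference is sensitive to which root spaces carry nonzero, respectively nowhere-vanishing, data.
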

\begin{proof}
Let $\rho\in\Xx(\sSO_0(2,3))$ be a smooth point, and let $\Sigma$ be a Riemann surface structure on $S.$ Recall that the $\sSO(5,\C)$-Higgs bundle $(\Ee,Q,\Phi)$ corresponding to $\rho$ is given by 
\[\left( M\oplus K\oplus\Oo\oplus K^{-1}\oplus M^{-1},\ \mtrx{&&&&-1\\&&&1&\\&&-1&&\\&1&&&\\-1&&&&},\ \mtrx{0&0&0&\nu&0\\\mu&0&q_2&0&\nu\\0&1&0&q_2&0\\0&0&1&0&0\\0&0&0&\mu&0}\right)\]
with $\mu$ and $\nu$ both nonzero. Moreover, the harmonic metric $h_\rho$ is a minimal immersion if and only if $q_2=0.$ 
In this case, the diagonal $\sSO(2,\C)\times\sSO(3,\C)$-gauge transformation $g=diag(-1,-i,1,i,-1)$ acts as $Ad_g\Phi=i\Phi.$ 
The gauge transformation $g$ is therefore preserved by the metric connection $A$ solving Hitchin's equation.
Thus, the eigen-bundle splitting of $g$ is orthogonal with respect to the metric solving Hitchin's equations, and the holomorphic line bundle splitting of $\Ee$ is compatible with the metric reduction. 

Let $H=diag(h_1,h_2,1,h_2^{-1},h_1^{-1})$ 
be the metric solving Hitchin equations and $f_\rho:\widetilde \Sigma\to\sSO(5,\C)/\sT$ be the equivariant map of the associated reduction of structure group. 
The pullback of the complexification of the Maurer-Cartan form of $\sSO(5,\C)/\sT$ is given by $f_\rho^*\omega=\Phi+\Phi^*,$ with $f_\rho^*\omega_1=\Phi$ and $f_\rho^*\omega_3=\Phi^*.$ 
Since $\rho$ is an $\sSO_0(2,3)$-representation and $f_\rho$ lifts the metric solving Hitchin's equations, we have
\[f_\rho^*\omega-f_\rho^*\Lambda(\omega)=(\Phi+\Phi^*)-(\Phi+\Phi^*)=0\]and\[f_\rho^*\omega+f_\rho^*\Theta(\omega)=(\Phi+\Phi^*)+(-\Phi^*-\Phi)=0~.\]
Thus, $f_\rho$ is a $\rho$-equivariant cyclic surface. 
Finally, $f_\rho^*\omega_{-\alpha_1}=\mu$ and $f^*_\rho\omega_{-\alpha_2}=1$.
\end{proof}

\begin{Remark}\label{transversalityRemark}
      Let $\rho_t:\Gamma\to\sSO_0(2,3)$ be a $1$-parameter family of representations in $\Xx_{0,0}^{\max}(\Gamma,\sSO_0(2,3))$ and $f_{\rho_t}:\Sigma_{\rho_t}\to\sSO_0(2,3)$ be a 1-parameter family of cyclic surfaces with $f_{\rho_0}^*\omega_{-\alpha_1}$ nonzero and $f^*_{\rho_0}\omega_{-\alpha_2}$ nowhere vanishing. 
      By Theorem 7.5 of \cite{MySp4Gothen}, if $\frac{d}{dt}\Sigma_{\rho_t}|_{_{t=0}}$ is nonzero in $T_{\Sigma_{\rho_0}}\Teich(S)$, then $[\frac{d}{dt}|_{_{t=0}}\rho_t]\neq 0\in T_\rho\Xx_{0,0}^{\max}(\Gamma,\sSO_0(2,3))$. 
\end{Remark}

\begin{proof}[Proof of Theorem \ref{UniqueMinSurface}]
Let $\Xx_{0,0}^\mathrm{max}(\Gamma,\sSO_0(2,3))^{sm}$ denote the smooth locus of the connected component $\Xx_{0,0}^\mathrm{max}(\Gamma,\sSO_0(2,3)).$ 
Set \[P=\Teich(S)\times\Xx_{0,0}^\mathrm{max}(\Gamma,\sSO_0(2,3))^{sm}\] and let $\pi:P\to\Xx_{0,0}^\mathrm{max}(\Gamma,\sSO_0(2,3))^{sm}$ denote the projection onto the second factor. 
Define the energy function 
\[F:\Teich(S)\times\Xx_{0,0}^\mathrm{max}(\Gamma,\sSO_0(2,3))^{sm}\to\R\] as in \eqref{EnergyFunct}.
Recall that the restriction $F_\rho$ to the fibers of $P$ is smooth and proper \cite{CrossRatioAnosoveProperEnergy} and that the critical points of $F_\rho$ are the minimal immersions we seek. Set 
\[
    N=\{(\Sigma,\rho)\in P\ | \ d_{(\Sigma,\rho)}(F_\rho)=0 \}~.
\]

Since $F_\rho$ is proper, for each $\rho\in\Xx_{0,0}^\mathrm{max}(\Gamma,\sSO_0(2,3))$ there exist a Riemann surface $\Sigma$ in which the harmonic metric $h_\rho:\widetilde\Sigma\to\sSO_0(2,3)/(\sSO(2)\times\sSO(3))$ is a minimal immersion. 
Moreover, by Proposition \ref{cyclicsurfacesProp}, for each such pair $(\Sigma,\rho)$ there is an associated $\sSO_0(2,3)$-cyclic surface $f_\rho:\widetilde\Sigma\to\sSO_0(2,3)/\sT$ which lifts the harmonic metric. By Remark \ref{transversalityRemark}, if $v\in T_{(\Sigma,\rho)}N$, then $d\pi(v)\neq0,$ and hence $N$ is transverse to the fiber of $P$ at $(\Sigma,\rho)$. Finally, $N$ is homeomorphic to the product of $\Teich(S)$ with the smooth locus of $\Aa/\Z_2$ from Theorem \ref{thm:zero_component}. Since the smooth locus of $\Aa/\Z_2$ is connected, the space $N$ is also connected. 
By Theorem \ref{DiffGeomTHM}, for all $\rho\in\Xx_{0,0}^\mathrm{max}(\Gamma,\sSO_0(2,3))^{sm}$ there is a unique Riemann surface structure $\Sigma$ on $S$ in which the harmonic metric is a minimal immersion. 

If $\rho\in\Xx_{0,0}^\mathrm{max}(\Gamma,\sSO_0(2,3))$ is not a smooth point, then, by Proposition \ref{Prop Higgs reductions M0d}, $\rho$ factors through the product of either a maximal $\sSO_0(1,2)$ representations or a maximal $\sSO_0(2,2)$ representation with a compact group. Since uniqueness is known for maximal $\sSO_0(1,2)$ \cite{TeichOfHarmonic} and $\sSO_0(2,2)$ representations \cite{SL2xSL2uniqueMinSurface}, we are done. 

Now suppose $\rho\in\Xx_{sw_1}^{\mathrm{max},sw_2}(\Gamma,\sSO_0(2,3))$ and let $\pi: S_{sw_1}\to S$ be the double cover associated to $sw_1\in H^1(S,\Z_2)$. 
Recall from \eqref{pullbackHiggsBundles} that the representation $\pi_*\circ\rho:\pi_1( S_{sw_1})\to\sSO_0(2,3)$ is maximal and lies in $\Xx_{0,0}^{\mathrm{max}}(\pi_1( S_{sw_1}),\sSO_0(2,3)).$ 
Since there is a unique Riemann surface structure $\Sigma_{sw_1}$ on $S_{sw_1}$ in which the $\pi_*\rho$-equivariant harmonic metric $h_{\pi_*\rho}$
is a minimal immersion and $\pi^*:\Teich(S)\to\Teich(S_{sw_1})$ is injective, we conclude that, for each maximal representation $\rho\in\Xx_{sw_1}^{\mathrm{max},sw_2}(\Gamma,\sSO_0(2,3))$, there is a unique Riemann surface structure $\Sigma_\rho$ in which the harmonic metric $h_\rho:\widetilde\Sigma_\rho\to\sSO_0(2,3)/\sSO(2)\times\sSO(3)$ is a minimal immersion. 
\end{proof}

\section{Mapping class group invariant complex structure}     \label{sec:mcg_inv_cmplx_str}

In this section we show that the space of maximal representations into a real rank two Lie group of Hermitian type admits a mapping class group invariant complex structure. More generally, we prove the following theorem.

\begin{Theorem}\label{thm:invariant complex structures}
 Let $\sG$ be a semi-simple algebraic Lie group of Hermitian type,
and let $C \subset \Xx^{\mathrm{max}}(\Gamma,\sG)$ be an open $\MCG(S)$-invariant subset of maximal representations $\rho$ admitting a unique $\rho$-equivariant minimal surface in the symmetric space of $\sG$ (i.e. $C$ is an open set of maximal representations where Labourie's conjecture holds).
The space $C$ admits the structure of a complex analytic space such that $\MCG(S)$ acts on $C$ by holomorphic maps and such that the natural map $C\ra \Teich(S)$ given by the minimal surface is holomorphic. 
\end{Theorem}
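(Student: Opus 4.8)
The plan is to build the complex structure on $C$ by pulling back the complex-analytic structure of the universal Higgs bundle moduli space $\Mm^{\mathrm{max}}(\Uu,\sG)$ constructed (for Hermitian $\sG$) in Theorem \ref{Intro thm:universal}, via the map that sends a representation to its unique equivariant minimal surface together with the associated Higgs bundle. Concretely, I would first recall that the uniqueness hypothesis on $C$ produces a well-defined map
\[
\Psi:\xymatrix@R=0em{C\ar[r]&\Mm^{\mathrm{max}}(\Uu,\sG)\\\rho\ar@{|->}[r]&(\Sigma_\rho,E_\rho,\Phi_\rho)}~,
\]
where $\Sigma_\rho\in\Teich(S)$ is the unique critical point of $\Ee_\rho$ and $(E_\rho,\Phi_\rho)$ is the polystable $\sG$-Higgs bundle on $\Sigma_\rho$ corresponding to $\rho$ under nonabelian Hodge. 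Composing with $\pi:\Mm^{\mathrm{max}}(\Uu,\sG)\to\Teich(S)$ recovers the minimal-surface map $C\to\Teich(S)$, so it suffices to show $\Psi$ is a homeomorphism onto an open (or at least locally closed complex-analytic) subset of $\Mm^{\mathrm{max}}(\Uu,\sG)$ and is $\MCG(S)$-equivariant; the complex structure and holomorphicity of $C\to\Teich(S)$ then come for free.

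The key steps, in order: (1) \emph{Equivariance.} The $\MCG(S)$-action on $C$ is by precomposition $\phi\cdot\rho=\rho\circ\phi_*$; the action on $\Mm^{\mathrm{max}}(\Uu,\sG)$ is by pullback of Higgs bundles lifting the $\Teich(S)$-action (Theorem \ref{Intro thm:universal}(3)). Since the energy function transforms by $\Ee_{\phi\cdot\rho}=\Ee_\rho\circ\phi^{-1}$ and harmonic metrics are natural under pullback, $\Sigma_{\phi\cdot\rho}=\phi^{-1}(\Sigma_\rho)$ and the Higgs bundle at $\Sigma_{\phi\cdot\rho}$ is the pullback of $(E_\rho,\Phi_\rho)$; hence $\Psi$ intertwines the two actions and $\MCG(S)$ acts by holomorphic maps on the image. (2) \emph{Continuity of $\Psi$.} Continuity of $\rho\mapsto\Sigma_\rho$ follows from properness and smoothness of the energy functional for Anosov (hence maximal) representations \cite{CrossRatioAnosoveProperEnergy}: the critical set is a graph over $C$, and an implicit-function/compactness argument shows this graph depends continuously on $\rho$; continuity of $\rho\mapsto(E_\rho,\Phi_\rho)$ is the continuity of the nonabelian Hodge correspondence in families, i.e. over the continuously varying Riemann surface $\Sigma_\rho$, which is where the universal moduli space is essential. (3) \emph{Injectivity.} $\Psi$ is injective because $\pi\circ\Psi$ records $\Sigma_\rho$ and, on a fixed Riemann surface, nonabelian Hodge (Theorem \ref{THM: NAHC}) is a bijection between $\Mm^{\mathrm{max}}(\Sigma_\rho,\sG)$ and the relevant subset of the character variety, so $(E_\rho,\Phi_\rho)$ determines $\rho$. (4) \emph{The image is a complex-analytic subspace and $\Psi$ is a homeomorphism onto it.} Here I would argue that $\Psi$ is a continuous bijection onto its image with continuous inverse (the inverse is $(\Sigma,E,\Phi)\mapsto$ the holonomy of the flat connection produced by the Hitchin equations, again continuous in families), and that the image is exactly the locus in $\Mm^{\mathrm{max}}(\Uu,\sG)$ of Higgs bundles whose Hopf differential $\tr(\Phi^2)$ (or the appropriate quadratic-differential component $q_2$) vanishes — a closed complex-analytic condition — intersected with the preimage of the open set of Riemann surfaces actually realized; this cut-out description exhibits the image as a locally closed complex-analytic subspace, and $C$ inherits its structure by transport.

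The main obstacle I expect is step (4), specifically proving that the image of $\Psi$ is genuinely a complex-analytic subspace of $\Mm^{\mathrm{max}}(\Uu,\sG)$ and not merely a closed real-analytic or topological subset. The vanishing of $\tr(\Phi^2)$ is a holomorphic condition fibrewise, but one must check it defines a complex-analytic subspace of the \emph{total} space $\Mm^{\mathrm{max}}(\Uu,\sG)$ — this requires knowing that $\tr(\Phi^2)$ assembles into a holomorphic section of a holomorphic (relative quadratic-differential) bundle over $\Mm^{\mathrm{max}}(\Uu,\sG)$, which in turn leans on Simpson's construction \cite{SimpsonModuli2} giving the universal family as a genuine complex-analytic object over $\Teich(S)$ with a relative Higgs field. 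Secondarily, one must ensure the natural map $C\to\Teich(S)$ is not just continuous but holomorphic for the transported structure; since it equals $\pi\circ\Psi$ and $\pi$ is holomorphic by Theorem \ref{Intro thm:universal}, this is automatic once $\Psi$ is established as an isomorphism onto a complex-analytic subspace, so the genuine work is concentrated in showing $\Psi$ has complex-analytic image with continuous inverse.
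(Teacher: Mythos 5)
Your proposal is correct and follows essentially the same route as the paper: the paper also works inside the universal Higgs bundle moduli space $\Mm^{\mathrm{max}}(\Uu,\sG)$, cuts out the minimal-surface locus as the zero set of the holomorphic map $\mathrm{Tr}_2(\Sigma,\Ee,\varphi)=\tr(\varphi^2)$ valued in the bundle $\Hh^2\to\Teich(S)$, and uses existence (properness of the energy functional for Anosov representations) plus the assumed uniqueness to identify this analytic subspace bijectively with $C$. The only cosmetic difference is orientation — the paper transports the structure along the inverse map $P:\mathrm{Tr}_2^{-1}(0)\cap P^{-1}(C)\to C$ given by nonabelian Hodge fiberwise, rather than along your $\Psi$, and your "open set of Riemann surfaces actually realized" should instead be the open set $P^{-1}(C)$.
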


\begin{Remark}
 The analog of Theorem \ref{thm:invariant complex structures} holds when $\sG$ is a split semi-simple Lie group and $C$ is the union of the Hitchin components on which Labourie's conjecture holds. 
 In this case, the proof is straight forward since for every $n$ there exists a holomorphic vector bundle $\Hh^n$ over Teichm\"uller space $\Teich(S)$ whose fiber at each point $\Sigma \in \Teich(S)$ is naturally identified with the vector space $H^0(\Sigma,K^n)$ of holomorphic $n$-differentials on $\Sigma$.
 
 As Labourie's conjecture has been established for the Hitchin components of the split semi-simple Lie groups of rank $2$, the Hitchin components for such groups admit the structure of a complex manifold which $\MCG(S)$ acts on holomorphically.
    \end{Remark}

The proof of Theorem \ref{thm:invariant complex structures} requires more work than its analog for the Hitchin component because the space of maximal representations has non-trivial topology. 
Moreover, the presence of singularities leads the technical complication that the components will not in general be complex manifolds but only complex analytic spaces. 
For these reasons, the proof we give uses only general principles, and thus avoids dealing with what the space of maximal representations looks like.

In Theorem \ref{UniqueMinSurface}, Labourie's conjecture was proven for maximal representations into $\sP\sSp(4,\R)$ and $\sSp(4,\R)$. As a corollary of Theorems \ref{UniqueMinSurface} and \ref{thm:invariant complex structures} we have:

\begin{Corollary}\label{COr: mapping class group invariant structure PSp4}
The spaces $\Xx^{\mathrm{max}}(\Gamma,\sP\sSp(4,\R))$ and $\Xx^{\mathrm{max}}(\Gamma, \sSp(4,\R))$ admit the structure of a complex analytic space on which $\MCG(S)$ acts by holomorphic maps. 
\end{Corollary}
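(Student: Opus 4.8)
The plan is to deduce the Corollary directly from Theorems~\ref{UniqueMinSurface} and \ref{thm:invariant complex structures}, applied with $\sG=\sP\sSp(4,\R)$ (resp.\ $\sSp(4,\R)$) and $C=\Xx^{\mathrm{max}}(\Gamma,\sG)$ the \emph{entire} space of maximal representations. One only needs to check the hypotheses of Theorem~\ref{thm:invariant complex structures}: $C$ is open in itself; it is $\MCG(S)$-invariant because the Toledo invariant, and hence maximality, is preserved by the mapping class group action; and Labourie's conjecture holds on all of $C$, which is exactly the content of Theorem~\ref{UniqueMinSurface} for $\sP\sSp(4,\R)\cong\sSO_0(2,3)$. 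For $\sSp(4,\R)=\sSpin_0(2,3)$ nothing new is needed, since $\sSp(4,\R)$ and $\sP\sSp(4,\R)$ have the same symmetric space, so the energy functional $\Ee_\rho$ and its critical points depend only on the composition $\Gamma\to\sP\sSp(4,\R)$.

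So the real work is in Theorem~\ref{thm:invariant complex structures}, which I would prove as follows. First build the universal Higgs bundle moduli space $\pi\colon\Mm^{\mathrm{max}}(\Uu,\sG)\to\Teich(S)$ of Theorem~\ref{Intro thm:universal}: over any open subset of $\Teich(S)$ that embeds into a scheme of finite type over $\C$ carrying a family of curves, Simpson's relative moduli space of Higgs bundles \cite{SimpsonModuli2} furnishes a complex analytic family; one covers $\Teich(S)$ by such charts, glues the relative moduli spaces over overlaps (the gluing is canonical because the fibres are intrinsic), and obtains a complex analytic space with a holomorphic projection $\pi$ whose fibre over $\Sigma$ is biholomorphic to $\Mm^{\mathrm{max}}(\Sigma,\sG)$. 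That $\pi$ is a smooth trivial topological fibre bundle and that $\MCG(S)$ acts on $\Mm^{\mathrm{max}}(\Uu,\sG)$ holomorphically by pullback of Higgs bundles, lifting the action on $\Teich(S)$, are then formal. Using the uniqueness statement on $C$, define
\[
\Psi\colon C\longrightarrow \Mm^{\mathrm{max}}(\Uu,\sG),\qquad \rho\longmapsto(\Sigma_\rho,E,\Phi),
\]
where $\Sigma_\rho$ is the unique critical point of $\Ee_\rho$ and $(E,\Phi)$ the Higgs bundle attached to $\rho$ on $\Sigma_\rho$ by the nonabelian Hodge correspondence. The map $\Psi$ is continuous (harmonic maps, and the critical Riemann surface $\Sigma_\rho$, depend continuously on $\rho$), it is injective (nonabelian Hodge on $\Sigma_\rho$ recovers $\rho$ from $(\Sigma_\rho,E,\Phi)$), and, using properness of $\Ee_\rho$ for Anosov representations \cite{CrossRatioAnosoveProperEnergy,MaxRepsAnosov}, it is a homeomorphism onto its image.

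Next I would identify the image of $\Psi$ with a complex analytic subspace of $\Mm^{\mathrm{max}}(\Uu,\sG)$. By the Proposition of Section~\ref{minimal} relating the Hopf differential of the harmonic metric to $\tr(\varphi^2)$, a point $(\Sigma,E,\Phi)\in\Mm^{\mathrm{max}}(\Uu,\sG)$ lies in $\operatorname{Im}\Psi$ if and only if $\Sigma$ is a critical point of the energy, i.e.\ if and only if $\tr(\Phi^2)\in H^0(\Sigma,K^2)$ vanishes (in the parameterization of Section~\ref{PSp4R} this is the condition $q_2=0$). Since $(\Sigma,E,\Phi)\mapsto\tr(\Phi^2)$ is a holomorphic section of a holomorphic vector bundle over $\Mm^{\mathrm{max}}(\Uu,\sG)$ (the pullback of the bundle of quadratic differentials on $\Teich(S)$), its zero locus $\operatorname{Im}\Psi$ is a complex analytic subspace. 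Transporting this structure back through the homeomorphism $\Psi$ endows $C$ with the structure of a complex analytic space; the map $C\to\Teich(S)$ is $\pi\circ\Psi$, a composition of holomorphic maps, hence holomorphic; and $\MCG(S)$ acts holomorphically on $C$ because it acts holomorphically on $\Mm^{\mathrm{max}}(\Uu,\sG)$, preserves the equivariant zero locus of $\tr(\Phi^2)$, and $\Psi$ intertwines the two actions.

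The principal obstacle is the construction of $\Mm^{\mathrm{max}}(\Uu,\sG)$ itself: $\Teich(S)$ is not a scheme of finite type over $\C$, so Simpson's construction does not apply directly and one must assemble finite-type relative moduli spaces into a globally defined complex analytic family and verify that the projection $\pi$ is a smooth trivial fibre bundle — this is where the topological identifications of the fibres from the earlier sections, and Corlette--Donaldson--Simpson type compactness, enter. A secondary point requiring care is showing that $\Psi$ lands in a \emph{closed} analytic subset — i.e.\ that along a convergent family $\rho_t\to\rho$ in $C$ the critical surfaces $\Sigma_{\rho_t}$ do not escape to the boundary of $\Teich(S)$ — which again follows from properness of the energy functional for Anosov representations.
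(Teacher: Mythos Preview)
Your deduction of the Corollary from Theorems~\ref{UniqueMinSurface} and~\ref{thm:invariant complex structures} is exactly what the paper does; the paper gives no further argument for the Corollary itself.

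Your sketch of Theorem~\ref{thm:invariant complex structures} is also broadly the paper's argument (zero locus of $\tr(\varphi^2)$ inside the universal moduli space, bijective onto $C$ by Anosov properness plus uniqueness), but there is one point where the paper is more concrete than your outline. You propose to build $\Mm^{\mathrm{max}}(\Uu,\sG)$ by covering $\Teich(S)$ with opens that embed into finite-type schemes and gluing Simpson's relative moduli spaces. The paper avoids an ad hoc gluing: it passes to the level-$m$ quotient $\Teich_m(S)=\Teich(S)/\sH_m$ for $m\geq 3$, which \emph{is} (the analytification of) a smooth quasi-projective scheme carrying a universal curve $\Uu_m^{qp}\to\Teich_m^{qp}(S)$, applies Simpson's construction once to get $\Mm^{qp}(\Uu_m^{qp},\sG_\C)$, and then pulls back along the covering $\Teich(S)\to\Teich_m(S)$ by fiber product. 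The real form is cut out as a union of components of the fixed locus of the holomorphic involution $\sigma$ on $\Mm(\Uu,\sG_\C)$. This makes the holomorphicity of the $\MCG(S)$-action transparent (elements of $\sH_m$ act through the fiber product, and a general $\phi$ acts via the induced scheme automorphism of $\Uu_m^{qp}$ and functoriality of Simpson's construction), and sidesteps the need to check compatibility of locally constructed families on overlaps.
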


In \cite{CollierTholozanToulisse}, Theorem \ref{UniqueMinSurface} has been extended to all maximal representations into any rank two semi-simple Lie group of Hermitian type. Thus, applying Theorem \ref{thm:invariant complex structures} we have the following extension of Corollary \ref{COr: mapping class group invariant structure PSp4}:

\begin{Corollary}
 Let $\Gamma$ be the fundamental group of a closed surface of genus $g\geq2.$ The space of maximal representations of $\Gamma$ into a rank two semi-simple Lie group of Hermitian type admits the structure of a complex analytic space on which $\MCG(S)$ acts by holomorphic maps, and such that the natural map to $\Teich(S)$ given by the minimal surface is holomorphic. 
\end{Corollary}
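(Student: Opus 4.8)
The plan is to deduce this statement directly from Theorem~\ref{thm:invariant complex structures} by taking $\sG$ to be an arbitrary rank two semi-simple Lie group of Hermitian type and letting $C$ be the \emph{entire} space $\Xx^{\mathrm{max}}(\Gamma,\sG)$ of maximal representations. Theorem~\ref{thm:invariant complex structures} produces the desired complex analytic structure, the holomorphic action of $\MCG(S)$, and the holomorphic projection to $\Teich(S)$, provided its two hypotheses hold for this choice of $C$: that $C$ is an open, $\MCG(S)$-invariant subset of the character variety, and that Labourie's conjecture holds on all of $C$, i.e.\ every $\rho\in C$ admits a \emph{unique} $\rho$-equivariant minimal surface in the symmetric space of $\sG$.

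First I would check the two structural hypotheses on $C=\Xx^{\mathrm{max}}(\Gamma,\sG)$. The Toledo invariant is a locally constant function on $\Xx(\Gamma,\sG)$ subject to the Milnor--Wood inequality, and maximal representations are precisely those attaining the extremal value; hence $C$ is a union of connected components of $\Xx(\Gamma,\sG)$ and is in particular open. Moreover, the $\MCG(S)$-action is by precomposition $\phi\cdot\rho=\rho\circ\phi_*$, which preserves the Toledo invariant because $\phi_*$ is an automorphism of $\Gamma$; therefore maximality is an $\MCG(S)$-invariant condition and $C$ is $\MCG(S)$-invariant. Both hypotheses are thus immediate from general properties of the Toledo number.

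The remaining, and genuinely substantial, input is the validity of Labourie's conjecture on all of $C$. For $\sG=\sP\sSp(4,\R)$ and $\sG=\sSp(4,\R)$ this is exactly Theorem~\ref{UniqueMinSurface}, which gives a unique equivariant minimal immersion for every maximal representation. For a general rank two Hermitian $\sG$, I would invoke the extension of Theorem~\ref{UniqueMinSurface} established in \cite{CollierTholozanToulisse}, which asserts precisely that every maximal representation of $\Gamma$ into such a $\sG$ admits a unique equivariant minimal surface in the symmetric space. With this uniqueness statement in hand, Labourie's conjecture holds throughout $C=\Xx^{\mathrm{max}}(\Gamma,\sG)$.

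With both hypotheses verified, Theorem~\ref{thm:invariant complex structures} applies verbatim to $C=\Xx^{\mathrm{max}}(\Gamma,\sG)$, yielding the claimed complex analytic structure, the holomorphicity of the $\MCG(S)$-action, and the holomorphicity of the map to $\Teich(S)$ sending each $\rho$ to the Riemann surface carrying its unique minimal surface. The only real obstacle is not in this deduction but in the cited uniqueness result: the hard analytic work is carried out in Theorem~\ref{UniqueMinSurface} and its extension in \cite{CollierTholozanToulisse}, after which the corollary is a formal consequence.
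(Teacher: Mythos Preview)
Your proposal is correct and follows exactly the paper's approach: the paper states this corollary as an immediate consequence of applying Theorem~\ref{thm:invariant complex structures} with $C=\Xx^{\mathrm{max}}(\Gamma,\sG)$, using the extension of Theorem~\ref{UniqueMinSurface} to all rank two Hermitian groups from \cite{CollierTholozanToulisse}. Your verification that $C$ is open and $\MCG(S)$-invariant makes explicit what the paper leaves implicit, but otherwise the deductions coincide.
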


Recall that character varieties also carry a mapping class group invariant symplectic structure, usually called the \emph{Goldman symplectic form}. A natural question is whether or not this symplectic structure is compatible with the complex structure from Theorem \ref{thm:invariant complex structures} since it would then define a mapping class group invariant K\"ahler structure. This is known only for the $\sP\sSL(2,\R)$-Hitchin components, i.e. the Teichm\"uller space of $S$.  

The proof of Theorem \ref{thm:invariant complex structures} will be based on the following theorem.
\begin{Theorem}\label{thm:universal}
Given a complex reductive algebraic Lie group $\sG_\C$, there is a complex analytic space $\Mm(\Uu,\sG_\C)$ with a holomorphic map $\pi:\Mm(\Uu,\sG_\C) \to \Teich(S)$ such that 
\begin{enumerate}
\item for every $\Sigma \in \Teich(S)$, $\pi^{-1}(\Sigma)$ is biholomorphic to $\Mm(\Sigma,\sG_\C)$,
 \item $\pi$ is a trivial topological fiber bundle.
\item the pullback operation on Higgs bundles gives a natural action of $\MCG(S)$ on $\Mm(\Uu,\sG_\C)$ by holomorphic maps that lifts the action on $\Teich(S)$.
\end{enumerate}
\end{Theorem}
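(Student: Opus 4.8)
The plan is to build the universal moduli space by an infinite patching argument, using Simpson's relative moduli space of Higgs bundles over a base scheme of finite type over $\C$ together with the fact that $\Teich(S)$ admits an exhaustion by (images of) such schemes. First I would recall that $\Teich(S)$ is a complex manifold which, while not itself of finite type, carries a universal family of Riemann surfaces: there is a holomorphic submersion $\mathcal{C} \to \Teich(S)$ whose fiber over $\Sigma$ is the Riemann surface $\Sigma$ itself (this is the Teichm\"uller curve, which exists precisely because marked Riemann surfaces have no nontrivial automorphisms for $g\ge 2$). Locally on $\Teich(S)$ one can pull back, via holomorphic maps from polydiscs, a versal family defined over an affine scheme of finite type over $\C$; to such a family Simpson's construction \cite[Corollary 6.7]{SimpsonModuli2} associates a relative moduli space of semistable Higgs bundles, which is a quasi-projective (hence complex analytic) space over the base, with fibers the usual moduli spaces $\Mm(\Sigma,\sG_\C)$ equipped with their complex structures. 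The first step is therefore to produce, for an open cover $\{U_i\}$ of $\Teich(S)$ by such polydiscs, complex analytic spaces $\Mm(U_i,\sG_\C) \to U_i$ whose fiber over $\Sigma$ is canonically biholomorphic to $\Mm(\Sigma,\sG_\C)$.

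The second step is to glue. Over each overlap $U_i \cap U_j$ the two relative moduli spaces have canonically identified fibers (both are $\Mm(\Sigma,\sG_\C)$ for the \emph{same} $\Sigma$, since the identification comes from the intrinsic moduli functor, not from a choice of family), so there is a canonical biholomorphism $\Mm(U_i,\sG_\C)|_{U_i\cap U_j} \cong \Mm(U_j,\sG_\C)|_{U_i\cap U_j}$ covering the identity on $U_i\cap U_j$; this uses the uniqueness part of the universal property of Simpson's moduli space. These transition maps automatically satisfy the cocycle condition, again by functoriality, so the local pieces glue to a complex analytic space $\Mm(\Uu,\sG_\C)$ with a holomorphic map $\pi$ to $\Teich(S)$ satisfying (1). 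For (2), I would use that $\Teich(S)$ is contractible (indeed diffeomorphic to a ball), so the smooth fiber bundle $\pi$ — which is locally trivial in the smooth category because the relative moduli spaces are locally, in the $C^\infty$ sense, products, by Ehresmann-type arguments applied to the gauge-theoretic description — is globally smoothly trivial. For (3), the mapping class group acts on $\Teich(S)$, and an element $\phi$ together with a choice of diffeomorphism representing it induces, for each $\Sigma$, the pullback operation $(E,\Phi) \mapsto (\phi^*E, \phi^*\Phi)$, which is a biholomorphism $\Mm(\Sigma,\sG_\C) \to \Mm(\phi^{-1}\cdot\Sigma,\sG_\C)$; assembling these over all $\Sigma$ and checking holomorphicity locally (where it reduces to functoriality of Simpson's construction under base change by the map $\phi$) gives the desired holomorphic $\MCG(S)$-action lifting the one on $\Teich(S)$. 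One must check this action is independent of the choice of representing diffeomorphism, which follows because isotopic diffeomorphisms induce isomorphic pullback functors.

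The main obstacle I expect is the gluing step, specifically verifying that the canonical fiberwise identifications over overlaps are \emph{holomorphic} maps of the total spaces and not merely bijective on points. Simpson's moduli space represents (coarsely) a well-defined functor, so over a common base scheme two families that are pulled back from the same Teichm\"uller data yield canonically isomorphic relative moduli spaces; the subtlety is that our local bases $U_i$ are complex analytic polydiscs rather than schemes, so one must pass through Simpson's schematic construction by writing $U_i$ as a union of images of finite-type schemes and checking that the resulting identifications are compatible. This is a standard but somewhat delicate descent/compatibility argument; once it is in place, properties (1)–(3) follow fairly formally. A secondary technical point worth flagging is the precise meaning of "complex analytic space" when $\Mm(\Sigma,\sG_\C)$ is singular, and ensuring the relative construction produces a genuine complex analytic structure on the total space rather than just on fibers — this is exactly what \cite[Corollary 6.7]{SimpsonModuli2} is invoked to guarantee. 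Finally, Theorem \ref{Intro thm:universal} in the introduction, and its use in defining the natural complex structure on $\Xx^{\mathrm{max}}$, will follow by applying the present theorem to $\sG_\C$ the complexification of $\sG$ and restricting to the (open) locus of maximal Higgs bundles, which is a union of connected components and hence inherits the complex analytic structure.
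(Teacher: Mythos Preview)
Your overall strategy is in the right spirit but differs from the paper's in one key simplification, and it contains one genuine gap.

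\textbf{Comparison with the paper's route.} The paper avoids your local--global gluing step entirely. Rather than covering $\Teich(S)$ by polydiscs and patching Simpson's relative moduli spaces over overlaps, it passes to the level-$m$ quotient $\Teich_m(S) = \Teich(S)/\sH_m$ for $m\geq 3$, where $\sH_m = \ker(\MCG(S)\to\sSp(2g,\Z_m))$ is torsion-free of finite index. The point is that $\Teich_m(S)$ is the analytification of a quasi-projective scheme $\Teich_m^{qp}(S)$ of finite type over $\C$, and the universal curve $\Uu_m^{qp}\to\Teich_m^{qp}(S)$ is smooth and projective, so Simpson's construction applies \emph{globally} to give $\Mm^{qp}(\Uu_m^{qp},\sG_\C)$. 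One then defines $\Mm(\Uu,\sG_\C)$ as the fiber product of the analytification $\Mm(\Uu_m,\sG_\C)\to\Teich_m(S)$ with the covering $\Teich(S)\to\Teich_m(S)$. This completely eliminates the ``main obstacle'' you flagged: there is no gluing, only a single pullback along a local biholomorphism. The $\MCG(S)$-action is then handled in two pieces: elements of $\sH_m$ act as deck transformations of the covering (hence holomorphically on the fiber product), and a general $\phi$ acts on $\Teich_m^{qp}(S)$ and $\Uu_m^{qp}$ as a scheme automorphism, so functoriality of Simpson's construction gives the holomorphic lift. Your approach may well work, but the level-structure trick is both shorter and more robust.

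\textbf{A gap in your argument for (2).} Your proof of topological triviality appeals to ``Ehresmann-type arguments applied to the gauge-theoretic description'' to get local $C^\infty$ triviality, then invokes contractibility of $\Teich(S)$. But the fibers $\Mm(\Sigma,\sG_\C)$ are singular complex analytic spaces, so Ehresmann's theorem does not apply; there is no a priori smooth structure on $\Mm(\Uu,\sG_\C)$ to speak of. The paper's argument is different and does not use smoothness at all: it observes that the map
\[
\Pi:\Mm(\Uu,\sG_\C)\to \Xx(\Gamma,\sG_\C)\times\Teich(S),\qquad E\mapsto (N_{\pi(E)}(E),\pi(E)),
\]
where $N_\Sigma$ is the nonabelian Hodge homeomorphism of Theorem~\ref{THM: NAHC}, is a homeomorphism commuting with the projections to $\Teich(S)$. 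This directly exhibits $\pi$ as a trivial topological fiber bundle, with no appeal to differential-topological local triviality.
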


We will call the complex analytic space $\Mm(\Uu,\sG_\C)$ the \emph{universal moduli space of Higgs bundles}; it contains all the Higgs bundles with reference to all the possible complex structures on $S$. The proof of Theorem \ref{thm:universal} relies on Simpson's construction of the moduli space of Higgs bundles for Riemann surfaces over schemes of finite type over $\C$. More specifically we will use \cite[Corollary 6.7]{SimpsonModuli2}. We note that $\Teich(S)$ is not a scheme of finite type over $\C$.

\subsection{Complex analytic spaces}  \label{sec:complex analytic}

In this section we recall the definition of a complex analytic space. This is a necessary framework to discuss about complex structures for singular spaces.

A \emph{complex analytic variety} is a subset $V\subset \C^n$ such that for every point $z\in V$ there exists an open neighborhood $U$ of $z$ in $\C^n$ and a finite family $f_1, \dots, f_k \in \Oo(U)$ of holomorphic functions on $U$ such that
$$V \cap U = \{ x\in U \ |\ f_1(x)=\dots=f_k(x)=0 \}~. $$
Note that the set $V$ does not need to be closed in $\C^n$, but it is always locally closed (a closed subset of an open subset of $\C^n$). For example, every open subset of a complex analytic variety is a complex analytic variety.

For a subset $U\subset V$, a function $f:U\ra\C$ is \emph{holomorphic} if there exists an open neighborhood $U'$ of $U$ in $\C^n$ and a holomorphic function $f':U' \ra \C$ such that $f = f'|_U$. We will denote by $\Oo_V(U)$ the $\C$-algebra of holomorphic functions on $U$. These $\C$-algebras form a sheaf $\Oo_V$ called the \emph{sheaf of holomorphic functions} on $V$. The pair $(V,\Oo_V)$ is a \emph{locally ringed space}, i.e., a space with a sheaf of $\C$-algebras where every stalk has a unique maximal ideal.

Similarly, if $V \subset \C^n$ and $W\subset \C^m$ are two complex analytic varieties, a map $f:V \ra W$ is \emph{holomorphic} if, for every $z\in V$, there exists an open neighborhood $U$ of $z$ in $\C^n$ and a holomorphic map $f':U \ra \C^m$ such that $f|_{U\cap  V} = f'|_{U\cap  V}$.

A complex analytic space is a locally ringed space that is locally isomorphic to a complex analytic variety. More precisely, we have the following definition.

\begin{Definition}    \label{Def: Complex analytic space}
A \emph{complex analytic space} is a locally ringed space $(X,\Oo_X)$ where for every $x\in X$ there exists an open neighborhood $U$ of $x$ in $X$ and  a complex analytic variety $V\subset\C^n$ such that the sheaves $(U,\Oo_X|_U)$  and $(V,\Oo_V)$ are isomorphic.
\end{Definition}
\begin{Definition}
If $X$ and $Y$ are complex analytic spaces, a map $f:X \ra Y$ is \emph{holomorphic} if, for every $z\in X$, there exist open neighborhoods $U$ of $z$ and $U'$ of $f(z)$ such that $f(U) \subset U'$ and both $(U,\Oo_V|_U)$ and $(U',\Oo_W|_{U'})$ are complex analytic varieties such that $f$ defines a holomorphic map between them. 
\end{Definition}

\subsection{Universal Teichm\"uller curve} \label{sec:universal curve}

The Teichm\"uller space $\Teich(S)$ has a natural complex structure that turns it into a complex manifold of complex dimension $3g-3$. This structure was defined by Teichm\"uller as the unique complex structure for which $\Teich(S)$ is a \emph{fine moduli space}. This means that there is a \emph{universal family} $f:\Uu \ra \Teich(S)$, where 
\begin{itemize}
    \item $\Uu$ is a complex manifold of dimension $3g-2$, 
    \item $f$ is a holomorphic function that is also a trivial smooth fiber bundle, 
    \item for every $\Sigma \in\Teich(S)$, $f^{-1}(\Sigma)$ is a submanifold of $\Uu$ isomorphic to $\Sigma$.
\end{itemize}
 The universal family is usually called the \emph{universal Teichm\"uller curve}. For more details and an historical account, see the survey paper \cite{AthanaseSurvey2014}.

\begin{Proposition}
There is a unique action of the mapping class group on $\Uu$ which lifts the action on $\Teich(S)$. This action is properly discontinuous but not free.
\end{Proposition}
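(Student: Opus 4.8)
The proof proposal concerns the statement: \emph{There is a unique action of the mapping class group on $\Uu$ which lifts the action on $\Teich(S)$. This action is properly discontinuous but not free.}

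\textbf{Outline of the approach.} The plan is to exploit the universal property of the Teichm\"uller curve $f : \Uu \to \Teich(S)$ as a fine moduli space. The key point is that $\MCG(S)$ already acts on $\Teich(S)$ and that an element $\phi \in \MCG(S)$ represented by a diffeomorphism $\Phi : S \to S$ induces, for each marked Riemann surface $\Sigma$, a biholomorphism from $\Sigma$ to $\phi\cdot\Sigma$ (this is precisely how the action on $\Teich(S)$ is defined on the level of complex structures). Pulling back the universal family along $\phi : \Teich(S) \to \Teich(S)$ gives a new family of Riemann surfaces over $\Teich(S)$, and by the universality of $\Uu$ this pulled-back family is classified by a unique holomorphic map $\widetilde\phi : \Uu \to \Uu$ over $\phi$. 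Uniqueness of the lift follows because a fine moduli space represents its moduli functor: the map classifying a family is unique, so any two lifts of $\phi$ covering the same map on the base and restricting fiberwise to the identification of complex structures must coincide. Functoriality $\widetilde{\phi\psi} = \widetilde\phi\,\widetilde\psi$ and $\widetilde{\mathrm{id}} = \mathrm{id}$ then also follow from uniqueness, so we get a genuine group action.

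\textbf{Key steps, in order.} First, I would recall precisely the universal property: for a complex space $T$, holomorphic families of genus $g$ Riemann surfaces with level/marking structure over $T$ are in natural bijection with holomorphic maps $T \to \Teich(S)$, via pullback of $\Uu$. Second, given $\phi \in \MCG(S)$, I would form the fiber product $\phi^*\Uu \to \Teich(S)$ and observe it is again such a family; let $\widetilde\phi : \Uu \to \phi^*\Uu \cong \Uu$ be the induced map, or more directly, note that $\phi^*\Uu$ is classified by $\mathrm{id}_{\Teich(S)} \circ \phi = \phi$, hence there is a unique isomorphism of families $\Uu \to \phi^*\Uu$, giving $\widetilde\phi$. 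Third, I would verify $\widetilde\phi$ is a biholomorphism (its inverse is $\widetilde{\phi^{-1}}$ by uniqueness) and that $\phi \mapsto \widetilde\phi$ is a homomorphism, again by uniqueness of classifying maps. Fourth, for proper discontinuity: since $\MCG(S)$ acts properly discontinuously on $\Teich(S)$ and $f : \Uu \to \Teich(S)$ is a proper map (the fibers are compact Riemann surfaces) which is $\MCG(S)$-equivariant, proper discontinuity upstairs follows from proper discontinuity downstairs together with properness of $f$ — more concretely, for a compact $K \subset \Uu$, the set $\{\phi : \widetilde\phi(K) \cap K \neq \emptyset\}$ is contained in $\{\phi : \phi(f(K)) \cap f(K) \neq \emptyset\}$, which is finite. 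Fifth, for non-freeness: it suffices to exhibit a Riemann surface $\Sigma$ with a nontrivial automorphism $\sigma$ (e.g. a hyperelliptic surface, which exists in every genus $g \geq 2$) whose mapping class fixes $[\Sigma] \in \Teich(S)$; then the lift $\widetilde\phi$ acts on the fiber $f^{-1}([\Sigma]) \cong \Sigma$ as $\sigma$, which has fixed points, so $\widetilde\phi$ is a nontrivial element with a fixed point in $\Uu$. One should take $\phi$ to be (the class of) such an automorphism; genuinely $\phi \neq \mathrm{id}$ in $\MCG(S)$ for $g\ge 2$ since automorphism groups of Riemann surfaces inject into $\MCG$ (by a theorem of Hurwitz/Nielsen realization considerations), and this gives the claimed non-freeness.

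\textbf{Expected main obstacle.} The conceptual content is routine once the universal property is invoked correctly; the main point requiring care is ensuring that the $\MCG$-action on the \emph{total space} $\Uu$ is the one compatible with the tautological identification of each fiber with its corresponding surface, i.e. checking that ``the action on the base plus the fiberwise complex-analytic identifications'' genuinely determines a unique lift and that this lift is holomorphic — this is exactly where the word ``fine'' (as opposed to merely coarse) moduli space is essential, and it is the step one must not gloss over. A secondary subtlety is the non-freeness argument: one must confirm that the lift $\widetilde\phi$ really restricts on the fixed fiber to the automorphism $\sigma$ (and not, say, to $\sigma$ composed with something), which again follows from tracking the universal property through the fiber, and that such $\phi$ is nontrivial in the mapping class group, which uses that finite-order mapping classes are detected faithfully (this is standard for $g \geq 2$). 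I would present these as short lemmas rather than grinding through explicit charts.
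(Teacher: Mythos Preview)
Your proposal is correct and follows essentially the same approach as the paper: both use the universal property of the Teichm\"uller curve to produce the unique lift $\widetilde\phi$, deduce the group law from uniqueness, obtain proper discontinuity from equivariance and properness of $f$ together with proper discontinuity on $\Teich(S)$, and exhibit non-freeness via a Riemann surface with a nontrivial automorphism fixing a point. Your non-freeness argument is in fact slightly more careful than the paper's, which asserts that \emph{every} $\Sigma$ fixed by some mapping class has a fixed point on the fiber (this is not literally true, e.g.\ for fixed-point-free involutions); your choice of a hyperelliptic surface makes the existence of a fixed point explicit.
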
 
\begin{proof}
For every mapping class $\phi \in \MCG(S)$, consider the diagram
\begin{equation}   \label{EQ Universal diagram}
 \begin{CD}
 \Uu        @.             \Uu\\
 @V{f}VV                  @V{f}VV\\
 \Teich(S)  @>{\phi}>>    \Teich(S)~.
 \end{CD}
\end{equation}
The map $\phi\circ f$ has all the properties of the universal Teichm\"uller curve $f$, so, by uniqueness of the universal Teichm\"uller curve, there exists a unique biholomorphism $\phi^\Uu: \Uu \to \Uu$ which makes \eqref{EQ Universal diagram} commute. 
This defines an action of $\MCG(S)$ because if $\phi$ and $\psi$ are two mapping classes, then $\phi^\Uu \circ \psi^\Uu$ and $(\phi\circ\psi)^\Uu$ agree since they both make \eqref{EQ Universal diagram} commute.

The map $f$ is now $\MCG(S)$-equivariant and proper, hence the action of $\MCG(S)$ on $\Uu$ is properly discontinuous since $\MCG(S)$ acts properly discontinuously on $\Teich(S)$. The action is not free since, for every $\Sigma \in \Teich(S)$ which is fixed by an element of $\MCG(S)$, there exists a point of $\Sigma$ which is fixed.
\end{proof}

We denote by $\Mod(S) = \Teich(S)/\MCG(S)$ the \emph{moduli space of Riemann surfaces}. Following \cite{DeligneMumford69}, we call the space $\Uu/\MCG(S)$ the \emph{moduli space of pointed Riemann surfaces}. 
It is a complex analytic space which has a natural holomorphic projection $\Uu/\MCG(S) \to \Mod(S)$. However, it does not have the nice properties of the map $\Uu \ra \Teich(S)$, since for a $\Sigma \in \Mod(S)$ which is fixed by some element of $\MCG(S)$, the fiber in $\Uu/\MCG(S)$ is not isomorphic to $\Sigma$ but to the quotient of $\Sigma$ by its stabilizer in $\MCG(S)$. This corresponds to the fact that $\Mod(S)$ is not a fine moduli space.

For each subgroup $\sH < \MCG(S)$, we have quotient spaces:
\[\Uu/\sH \to \Teich(S)/\sH~.\] 
The space $\Teich(S)/\sH$ can often be interpreted as the moduli spaces of Riemann surfaces with some kind of partial marking, and $\Uu/\sH$ is the pointed version. Especially interesting subgroups $\sH$ arise from the construction in the following example.
\begin{Example}\label{EXample Teichm}
Consider the action of $\MCG(S)$ on the cohomology group $H^1(S,\Z_m)$ with coefficients in the cyclic group $\Z_m$. This defines a representation 
\[h_m: \xymatrix{\MCG(S) \ar[r]& \sSp(2g,\Z_m)}~.\]
Denote the kernel of $h_m$ by $\sH_m$ and set
\begin{equation}
    \label{EQ teichm}\Teich_m(S) = \Teich(S)/\sH_m~.
\end{equation}
The space $\Teich_m(S)$ can be interpreted as the moduli space of pairs $(\Sigma,\alpha)$, where $\Sigma$ is an abstract Riemann surface homeomorphic to $S$ and $\alpha:\pi_1(\Sigma)\to \Z_m^{2g}$ is a surjective group homomorphism (a partial marking). Similarly, $\Uu_m = \Uu/H_m$ is a pointed version of this moduli space.
\end{Example}

The group $\sH_m\leq\MCG(S)$ from Example \ref{EXample Teichm} has finite index, furthermore, when $m\geq3,$ $\sH_m$ is torsion-free (see \cite[Thm 6.9]{PrimerMCG}). This gives the following proposition.
\begin{Proposition}
For each integer $m\geq 3$, the spaces $\Teich_m(S)$ and $\Uu_m$ are complex manifolds. Moreover, $\Teich(S) \to \Teich_m(S)$ and $\Uu \to \Uu_m$ are holomorphic coverings and $\Teich_m(S) \to \Mod(S)$ and $\Uu_m \to \Uu/\MCG(S)$ are  finite branched covering.
 \end{Proposition}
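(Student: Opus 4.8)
The plan is to exhibit $\sH_m \leq \MCG(S)$ as a group acting \emph{freely and properly discontinuously} by biholomorphisms on each of the complex manifolds $\Teich(S)$ and $\Uu$, and then to apply the standard quotient constructions. Proper discontinuity is inherited automatically: $\sH_m$ is a subgroup of $\MCG(S)$, which acts properly discontinuously on $\Teich(S)$ (classical) and on $\Uu$ (proven above), so the restricted $\sH_m$-actions are properly discontinuous. The real point is freeness.

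First I would note that proper discontinuity forces every point stabilizer to be finite: for $p \in \Uu$ the stabilizer $\Stab_{\MCG(S)}(p)$ is contained in $\{\phi : \phi K \cap K \neq \emptyset\}$ for the compact set $K = \{p\}$, hence is finite; likewise for points of $\Teich(S)$ (where this stabilizer is moreover the finite automorphism group of the corresponding Riemann surface, since $g \geq 2$). Now for $m \geq 3$ the group $\sH_m$ is torsion-free (see \cite[Thm 6.9]{PrimerMCG}), so the finite group $\Stab_{\sH_m}(x) = \sH_m \cap \Stab_{\MCG(S)}(x)$ is trivial for every $x$ in $\Teich(S)$ or $\Uu$. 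Hence $\sH_m$ acts freely on both spaces. A free, properly discontinuous action of a group by biholomorphisms of a complex manifold $X$ produces a quotient $X/\sH_m$ with a unique complex manifold structure making $X \to X/\sH_m$ a holomorphic covering map, since the quotient map is a local homeomorphism and one simply transports the holomorphic atlas. Applying this with $X = \Teich(S)$ and $X = \Uu$ shows that $\Teich_m(S)$ and $\Uu_m$ are complex manifolds and that $\Teich(S) \to \Teich_m(S)$ and $\Uu \to \Uu_m$ are holomorphic coverings.

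For the final assertion, observe that $\sH_m = \ker\!\big(h_m\colon \MCG(S)\to\sSp(2g,\Z_m)\big)$ is a normal subgroup of finite index, so the finite quotient group $G_m = \MCG(S)/\sH_m$ acts holomorphically on $\Teich_m(S)$ and on $\Uu_m$, with quotient spaces $\Teich_m(S)/G_m = \Mod(S)$ and $\Uu_m/G_m = \Uu/\MCG(S)$. By H.\ Cartan's theorem on quotients of complex manifolds by finite groups of automorphisms, these quotients are (normal) complex analytic spaces and the projections $\Teich_m(S) \to \Mod(S)$ and $\Uu_m \to \Uu/\MCG(S)$ are finite branched coverings: proper, surjective, finite-to-one, and honest covering maps over the complement of the proper analytic branch locus, namely the image of the set of points with nontrivial $G_m$-stabilizer. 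The only step needing genuine care is the finiteness of the $\MCG(S)$-stabilizers on $\Uu$ combined with the torsion-freeness of $\sH_m$; granting these inputs, everything else is a formal application of covering-space theory and Cartan's quotient theorem.
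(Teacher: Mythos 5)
Your proof is correct and follows exactly the route the paper intends: the paper gives no explicit argument beyond noting that $\sH_m$ has finite index and is torsion-free for $m\geq 3$, and your write-up is precisely the standard elaboration of that sketch (torsion-freeness plus finite point stabilizers from proper discontinuity gives a free action, hence manifold quotients and holomorphic coverings; the residual finite group $\MCG(S)/\sH_m$ then yields the finite branched coverings via Cartan's quotient theorem).
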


\subsection{Scheme structures}

Fix an integer $m\geq 3$. Some of the spaces introduced in Section \ref{sec:universal curve}, for instance, the moduli space $\Mod(S)$ and the pointed moduli space $\Uu/\MCG(S)$, are the analytification of quasi-projective algebraic schemes of finite type over $\C$. 
 Similarly, the spaces $\Teich_m(S)$ and $\Uu_m$ are also the analytification of smooth quasi-projective algebraic schemes of finite type over $\C$ (see \cite{DeligneMumford69}). 
Denote the corresponding quasi-projective schemes by 
\[\xymatrix{\Mod^{qp}(S)~,&(\Uu/\MCG(S))^{qp}~,&\Teich_m^{qp}(S)&\text{and}&\Uu_m^{qp}}~.\]
The Teichm\"uller space $\Teich(S)$ is {\em not} the analytification of an algebraic scheme. However, since we have a covering map $\Teich(S) \to \Teich_m(S)$, the Teichm\"uller space is locally biholomorphic to a complex manifold with this property. 

So far, we have only considered Higgs bundles on Riemann surfaces over $\C.$ However, for a complex algebraic reductive Lie group $\sG_\C$, Simpson \cite{SimpsonModuli1,SimpsonModuli2} constructed the moduli space $\Mm(C,\sG_\C)$ of $\sG_\C$-Higgs bundles on every scheme $C$ which is smooth and projective over a scheme of {\em finite type over $\C $}. Moreover, he showed that this moduli space is a quasi-projective scheme.
The scheme morphism $f_m^{qp}:\Uu_m^{qp} \ra \Teich_m^{qp}(S)$ is smooth and projective (see \cite{DeligneMumford69}), hence, applying Simpson's construction to $\Uu_m^{qp}$, we obtain the moduli space $\Mm(\Uu_m^{qp},\sG_\C)$ of $\sG_\C$-Higgs bundles on $\Uu_m^{qp}$. We summarize the above discussion in the following proposition, see \cite[Corollary 6.7]{SimpsonModuli2} for more details.
\begin{Proposition}
    The moduli space of $\sG_\C$-Higgs bundles on the Riemann surface $\Uu_m^{qp}$ defines a quasi-projective scheme $\Mm^{qp}(\Uu_m^{qp},\sG_\C)$ over $\Teich_m^{qp}(S)$ with the property that for every geometric point $(\Sigma,\alpha) \in \Teich_m^{qp}(S)$, the fiber of $(\Sigma,\alpha)$ in $\Mm^{qp}(\Uu_m^{qp},\sG_\C)$ is the moduli space $\Mm^{qp}((f_m^{qp})^{-1}(\Sigma,\alpha),\sG_\C) = \Mm^{qp}(\Sigma,\sG_\C)$.
\end{Proposition}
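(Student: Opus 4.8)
The plan is simply to apply Simpson's relative construction of the Higgs bundle moduli space to the family $f_m^{qp}\colon\Uu_m^{qp}\to\Teich_m^{qp}(S)$ and to read off its fibers. First I would record that the hypotheses of \cite[Corollary 6.7]{SimpsonModuli2} are met: by \cite{DeligneMumford69} (using that $\sH_m\leq\MCG(S)$ is torsion-free of finite index for $m\geq 3$) the space $\Teich_m^{qp}(S)$ is a smooth quasi-projective $\C$-scheme of finite type, and $f_m^{qp}$ is a smooth projective morphism all of whose geometric fibers are connected smooth curves of genus $g$. These are precisely the input data of Simpson's construction, applied with structure group $\sG_\C$ and with a fixed topological type (a class in $\pi_1(\sG_\C)$); the trivial type is the one relevant for the representations we study, but the argument is identical for any type, and $\Mm^{qp}(\Uu_m^{qp},\sG_\C)$ is the union of the finitely many pieces that can occur.

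Simpson's theorem then produces, for the morphism $f_m^{qp}$, a coarse moduli scheme $\Mm^{qp}(\Uu_m^{qp},\sG_\C)$ which is quasi-projective over $\Teich_m^{qp}(S)$ and corepresents the relative moduli functor of semistable $\sG_\C$-Higgs bundles of the chosen topological type on the fibers. For a general reductive $\sG_\C$ one may either invoke Simpson's treatment of principal bundles directly, or fix a closed embedding $\sG_\C\hookrightarrow\sGL(N,\C)$, build the relative $\sGL(N,\C)$-Higgs moduli scheme, and then carve out the locally closed subscheme on which the structure group of the bundle reduces to $\sG_\C$ and the Higgs field lies in the associated $\fm_\C$-twisted bundle, as in Section \ref{section_preliminaries}; both routes give the same scheme.

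Finally I would identify the fibers by base change. Simpson's construction is assembled from relative Quot/Hilbert schemes and a relative GIT quotient, all of which commute with arbitrary base change on $\Teich_m^{qp}(S)$; in particular, pulling back along a geometric point $\operatorname{Spec}(\C)\to\Teich_m^{qp}(S)$ representing a pair $(\Sigma,\alpha)$ yields the moduli scheme of $\sG_\C$-Higgs bundles on the fiber curve $(f_m^{qp})^{-1}(\Sigma,\alpha)$. Since $\Uu_m^{qp}\to\Teich_m^{qp}(S)$ is the quotient of the universal Teichm\"uller curve, that fiber is canonically $\Sigma$ itself, so the fiber of $\Mm^{qp}(\Uu_m^{qp},\sG_\C)$ over $(\Sigma,\alpha)$ is $\Mm^{qp}(\Sigma,\sG_\C)$, as claimed.

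Since the statement is essentially a direct appeal to Simpson's work, there is no single deep step; the only real care is bookkeeping. One must keep straight the passage to the finite cover $\Teich_m(S)$ — forced because $\Teich(S)$ itself is not of finite type over $\C$, so Simpson's theorem does not apply to it on the nose — check that the topological type and stability condition are locally constant along $f_m^{qp}$ so that a genuinely quasi-projective piece is cut out, and confirm that the base-change behaviour of the relative moduli functor is exact at geometric points rather than merely up to nilpotents, which holds because the relevant GIT quotients of Quot schemes are geometric. A secondary point, which will matter when this Proposition is fed into Theorem \ref{thm:universal}, is matching Simpson's algebraic notion of $\sG_\C$-Higgs bundle with the analytic one of Definition \ref{DEF Higgs bundle} via GAGA on the proper fibers; that comparison is where the analytification in the following subsections enters.
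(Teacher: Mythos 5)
Your proposal is correct and follows exactly the route the paper takes: the paper offers no separate argument beyond the preceding paragraph, which notes that $f_m^{qp}$ is smooth and projective over the finite-type scheme $\Teich_m^{qp}(S)$ and then cites \cite[Corollary 6.7]{SimpsonModuli2}, whose statement already includes the identification of the geometric fibers. Your additional bookkeeping (topological type, base change, GAGA for the later analytification) is a reasonable elaboration of what the paper leaves implicit, not a different method.
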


The analytifications $\Mm(\Uu_m,\sG_\C)$ and $\Mm(\Sigma,\sG_\C)$ are complex analytic spaces with the following properties.
\begin{Proposition}  \label{Prop:m-universal}
     There is a holomorphic map 
\[\Mm(\Uu_m,G_\C) \ra \Teich_m(S)\]
such that for every $(\Sigma,\alpha) \in \Teich_m(S)$, the fiber over  $(\Sigma,\alpha)$ in $\Mm(\Uu_m,\sG_\C)$ is biholomorphic to $\Mm(\Sigma,\sG_\C)$.
 \end{Proposition}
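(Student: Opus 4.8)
The plan is to deduce Proposition \ref{Prop:m-universal} from the scheme-theoretic statement of the preceding proposition by applying Serre's analytification functor. Recall that Simpson's construction (\cite[Corollary 6.7]{SimpsonModuli2}) produces a quasi-projective scheme $\Mm^{qp}(\Uu_m^{qp},\sG_\C)$ together with a morphism of $\C$-schemes $p^{qp}\colon \Mm^{qp}(\Uu_m^{qp},\sG_\C)\to\Teich_m^{qp}(S)$ whose fiber over every closed point $(\Sigma,\alpha)$ is the moduli scheme $\Mm^{qp}(\Sigma,\sG_\C)$. All these schemes are of finite type over $\C$, so the analytification functor $(-)^{an}$ applies: it sends a finite-type $\C$-scheme to a complex analytic space (in the sense of Definition \ref{Def: Complex analytic space}) and a morphism to a holomorphic map. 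First I would define $\Mm(\Uu_m,\sG_\C):=\Mm^{qp}(\Uu_m^{qp},\sG_\C)^{an}$ and set $p:=(p^{qp})^{an}$, a holomorphic map landing in $\Teich_m^{qp}(S)^{an}=\Teich_m(S)$, where the last identification is the definition of $\Teich_m(S)$ recalled in Section \ref{sec:universal curve}.

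Next I would compute the fibers of $p$. Analytification commutes with fiber products of finite-type $\C$-schemes, and $\mathrm{Spec}\,\C$ analytifies to a point, so for any closed point $(\Sigma,\alpha)\in\Teich_m^{qp}(S)$ one has a canonical isomorphism of complex analytic spaces $p^{-1}(\Sigma,\alpha)\cong \bigl((p^{qp})^{-1}(\Sigma,\alpha)\bigr)^{an}=\Mm^{qp}(\Sigma,\sG_\C)^{an}$. It then remains to identify $\Mm^{qp}(\Sigma,\sG_\C)^{an}$ with the analytic moduli space $\Mm(\Sigma,\sG_\C)$ of $\sG_\C$-Higgs bundles on the Riemann surface $\Sigma$ used throughout Section \ref{section_preliminaries}; this comparison between the algebraic and analytic moduli functors over a fixed smooth projective curve is exactly Simpson's theorem. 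Chaining the two identifications gives the desired biholomorphism $p^{-1}(\Sigma,\alpha)\cong\Mm(\Sigma,\sG_\C)$, which moreover is visibly independent of the auxiliary marking $\alpha$ since forgetting $\alpha$ is compatible with the moduli constructions and the fiber depends only on the underlying curve.

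The step I expect to require the most care is the compatibility of analytification with taking fibers over the base together with the marking-independence: although $\Teich(S)$ itself is not the analytification of a scheme, $\Teich_m^{qp}(S)$ genuinely is a finite-type $\C$-scheme, so the only input needed is the standard fact that for a morphism $f\colon X\to Y$ of finite-type $\C$-schemes and a closed point $y\in Y$ one has $(f^{-1}(y))^{an}\cong (f^{an})^{-1}(y)$, which follows from $(-)^{an}$ preserving fiber products. Everything else — that $p$ is holomorphic, that $\Mm(\Uu_m,\sG_\C)$ is a complex analytic space, and that $\Teich_m(S)$ is a complex manifold — is automatic from the general theory of analytification of quasi-projective $\C$-schemes, so no further work is needed there.
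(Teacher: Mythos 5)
Your proposal is correct and follows essentially the same route as the paper, which simply presents this proposition as the analytification of the preceding scheme-theoretic statement obtained from \cite[Corollary 6.7]{SimpsonModuli2}; you have merely spelled out the standard facts (analytification preserves fiber products, and the analytification of Simpson's algebraic moduli space agrees with the analytic moduli space of Higgs bundles on a fixed curve) that the paper leaves implicit.
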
  

 \subsection{Proof of Theorems \ref{thm:invariant complex structures} and \ref{thm:universal}}
We are now ready to prove Theorem \ref{thm:universal} which asserts that there is a complex analytic space $\Mm(\Uu,\sG_\C)$ with a holomorphic map $\pi:\Mm(\Uu,\sG_\C) \ra \Teich(S)$ such that 
for every $\Sigma \in \Teich(S)$, $\pi^{-1}(\Sigma)$ is biholomorphic to $\Mm(\Sigma,\sG_\C)$ and there is a unique lift of the action of the mapping class group on $\Teich(S)$ to $\Mm(\Uu,\sG_\C)$.

\begin{proof}[Proof of Theorem \ref{thm:universal}]
Fix an integer $m\geq 3$, and consider the covering $\Teich(S) \to \Teich_m(S)$ from \eqref{EQ teichm}. Let $\Mm(\Uu,\sG_\C)$ denote the topological space that is the fiber product of the maps $\Mm(\Uu_m,\sG_\C) \ra \Teich_m(S)$ and  $\Teich(S) \to \Teich_m(S)$. 

Since the map $\Teich(S) \to \Teich_m(S)$ is a local biholomorphism, the natural map  $\Mm(\Uu,\sG_\C) \to \Mm(\Uu_m,\sG_\C)$ is also locally invertible, so $\Mm(\Uu,\sG_\C)$ inherits a structure of complex analytic space from $\Mm(\Uu_m,\sG_\C)$. With respect to this structure, the map  
\[\pi:\xymatrix{\Mm(\Uu,\sG_\C) \ar[r]& \Teich(S)}\] is holomorphic.
Moreover, by Proposition \ref{Prop:m-universal}, for every $\Sigma \in \Teich(S)$, $\pi^{-1}(\Sigma)$ is biholomorphic to $\Mm(\Sigma,\sG_\C)$.

Consider the map 
\[\Pi:\xymatrix@R=.2em{\Mm(\Uu,\sG_\C)  \ar[r]&  \Xx(\Gamma,\sG_\C) \times \Teich(S)\\  E\ar@{|->}[r] &(N_{\pi(E)}(E), \pi(E))}~,\]
where $N_\Sigma: \Mm(\Sigma,\sG_\C) \ra \Xx(\Gamma,\sG_\C)$ is the homeomorphism given by Theorem \ref{THM: NAHC}. The map $\Pi$ is a homeomorphism which commutes with the projections to $\Teich(S)$, thus, $\pi:\Mm(\Uu,\sG_\C) \ra \Teich(S)$ is a trivial topological fiber bundle.  

The group $\MCG(S)$ acts on $\Mm(\Uu,\sG_\C)$ by pullback: let $\phi \in \MCG(S)$, and $E \in \Mm(\Uu,\sG_\C)$.  If $\pi(E) = \Sigma$, there is a unique holomorphic map $\hat{\varphi}:\phi(\Sigma) \to \Sigma$ that is homotopic to the identity. We define $\phi(E)$ to be the Higgs bundle $\hat\varphi^* E$ over $\phi(\Sigma)$. This gives the lift of the action of $\MCG(S)$ to $\Mm(\Uu,\sG_\C)$. 

Let $\sH_m\leq \MCG(S)$ be the subgroup whose quotient gives $\Teich_m(S)$. For $\phi \in \sH_m,$ the fact that the action of $\phi$ on $\Mm(\Uu,\sG_\C)$ is holomorphic follows from the construction of $\Mm(\Uu,\sG_\C)$ as a fiber product. 
If $\phi \not\in H_m$, then $\phi$ acts non-trivially on $\Teich_m^{qp}(S)$ and $\Uu_m^{qp}$: 
\[   
 \begin{CD}
 \Uu_m^{qp}        @>{\phi^{\Uu_m}}>>      \Uu_m^{qp}\\
 @V{f_m^{qp}}VV                  @V{f_m^{qp}}VV\\
 \Teich_m^{qp}(S)  @>{\phi}>>    \Teich_m^{qp}(S)~.
 \end{CD}
\]
The map $\phi\circ f_m^{qp} :\Uu_m^{qp} \to \Teich_m^{qp}(S)$ can also be used to define a moduli space of Higgs bundles $\Mm^{qp}(\Uu_m^{qp,\phi},\sG_\C)$. 
Using the fiber product as above, we can construct a complex analytic space that we denote by $\Mm(\Uu^\phi,\sG_\C)$. Pulling back by $\phi^{-1}$ gives a holomorphic map
\begin{equation} \label{eq:action universal 1}
\Mm(\Uu,\sG_\C) \to \Mm(\Uu^\phi,\sG_\C) ~.
\end{equation}
Note that the map $\phi^{\Uu_m}$ defines an isomorphism between the schemes 
\[\xymatrix{\phi\circ f_m^{qp} :\Uu_m^{qp} \to \Teich_m^{qp}(S)&\text{and}& f_m^{qp} :\Uu_m^{qp} \to \Teich_m^{qp}(S)}~.\] 
As Simpson's construction is functorial, the above isomorphisms induce an isomorphism between $\Mm^{qp}(\Uu_m^{qp,\phi},\sG_\C)$ and $\Mm^{qp}(\Uu_m^{qp},\sG_\C)$. Finally, this isomorphism induces a biholomorphism 
\begin{equation} \label{eq:action universal 2}
\Mm(\Uu^\phi,\sG_\C) \cong \Mm(\Uu,\sG_\C)~. 
\end{equation}

The action of $\phi$ on $\Mm(\Uu,\sG_\C)$ is given by the composition of the holomorphic maps in \eqref{eq:action universal 1} and \eqref{eq:action universal 2}, hence it is holomorphic.
\end{proof}

The following corollary is important for our application of Theorem \ref{thm:universal} to maximal representations. 

\begin{Corollary}
Let $\sG$ be a semi-simple algebraic Lie group of Hermitian type. 
For each integer $\tau \in \Z \setminus \{0\}$, there is a complex analytic space $\Mm^\tau(\Uu,\sG)$ with a holomorphic map $\pi:\Mm^\tau(\Uu,\sG) \ra \Teich(S)$ such that 
\begin{enumerate}
\item for each $\Sigma \in \Teich(S)$, the fiber $\pi^{-1}(\Sigma)$ is biholomorphic to the moduli space $\Mm^\tau(\Sigma,\sG)$ of $\sG$-Higgs bundles with Toledo invariant equal to $\tau$,
\item $\pi$ is a trivial topological fiber bundle.
\item the pullback operation on Higgs bundles gives a natural action of $\MCG(S)$ on $\Mm^\tau(\Uu,\sG_\C)$ by holomorphic maps that lifts the action on $\Teich(S)$.
\end{enumerate}
\end{Corollary}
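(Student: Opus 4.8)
The plan is to deduce this corollary from Theorem \ref{thm:universal} by two essentially formal operations: restricting to a fixed topological type of bundle (to isolate the Toledo invariant) and applying the reality/compactness structure that distinguishes $\sG$-Higgs bundles inside $\sG_\C$-Higgs bundles. First I would recall that for $\sG$ semisimple of Hermitian type, the nonabelian Hodge correspondence (Theorem \ref{THM: NAHC}) identifies $\Mm^\tau(\Sigma,\sG)$ with the corresponding union of connected components of $\Xx(\Gamma,\sG)$, and that the Toledo invariant $\tau$ is a locally constant (hence continuous) function on the relevant moduli spaces. The key point is that $\tau$ is also constant along the fibration $\pi: \Mm(\Uu,\sG_\C)\to\Teich(S)$ produced by Theorem \ref{thm:universal}: since $\pi$ is a trivial topological fiber bundle and the Toledo invariant is discrete and varies continuously, the subset of $\Mm(\Uu,\sG_\C)$ corresponding fiberwise to Toledo invariant $\tau$ is a union of connected components of $\Mm(\Uu,\sG_\C)$, hence is itself a complex analytic space with a holomorphic map to $\Teich(S)$.

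Next I would address the passage from $\sG_\C$ to $\sG$. The real group $\sG$ sits inside its complexification $\sG_\C$, and a $\sG$-Higgs bundle is a $\sG_\C$-Higgs bundle together with a reduction of structure group of the underlying $\sH_\C$-bundle compatible with the real form, equivalently, a fixed point of an anti-holomorphic involution on $\Mm(\Sigma,\sG_\C)$ coming from the real structure $\lambda$ (as in Section \ref{section the isomorphism}). This involution is defined functorially in $\Sigma$, so it extends to an anti-holomorphic involution of $\Mm(\Uu,\sG_\C)$ commuting with $\pi$ and with the $\MCG(S)$-action; its fixed-point locus is a real-analytic subspace which, on each fiber, recovers $\Mm(\Sigma,\sG)$. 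Rather than work with fixed loci of anti-holomorphic involutions (which a priori only gives a real-analytic, not complex-analytic, structure), the cleaner route is to observe that for Hermitian $\sG$ the relevant components $\Mm^\tau(\Sigma,\sG)$ with $\tau\neq 0$ are themselves \emph{open and closed} in $\Mm(\Sigma,\sG_\C)$ — more precisely, one uses that a maximal or high-Toledo $\sG$-Higgs bundle, via its associated $\sG_\C$-Higgs bundle as in \eqref{SL(5,C) Higgs of SO(2,3) Higgs}, \eqref{SL(4,C) Higgs of Sp(4,R) Higgs}, has underlying $\sG_\C$-data constrained to a fixed topological type whose moduli space of $\sG_\C$-Higgs bundles coincides with $\Mm^\tau(\Sigma,\sG)$ (this is the content of Proposition \ref{liftingtoSp(4R)Prop} and its analogues, and of the identification of maximal Higgs bundles with the explicit tuples in Sections \ref{PSp4R}, \ref{Sp4R}). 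Thus $\Mm^\tau(\Sigma,\sG)$ is a connected-component-union of $\Mm(\Sigma,\sG')$ for an appropriate complex group $\sG'$, and restricting the universal moduli space $\Mm(\Uu,\sG')$ from Theorem \ref{thm:universal} to these components yields the desired $\Mm^\tau(\Uu,\sG)$.

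Concretely, then, the steps I would carry out are: (1) invoke Theorem \ref{thm:universal} for the complexification $\sG_\C$ (or the auxiliary complex group relevant to the chosen real form) to get $\pi:\Mm(\Uu,\sG_\C)\to\Teich(S)$ with its $\MCG(S)$-action; (2) show $\tau$ is locally constant on $\Mm(\Uu,\sG_\C)$, so that $\Mm^\tau(\Uu,\sG):=\{E\mid \tau(E)=\tau\}$ is an open-and-closed complex analytic subspace; (3) check that this subspace is $\MCG(S)$-invariant, since pullback of Higgs bundles preserves the Toledo invariant (it is a characteristic-class type quantity unchanged by pullback along a homotopy-identity biholomorphism); (4) verify fiberwise that $\pi^{-1}(\Sigma)\cap\Mm^\tau(\Uu,\sG)$ is biholomorphic to $\Mm^\tau(\Sigma,\sG)$, using the component-wise identification above together with Theorem \ref{THM: NAHC}; (5) conclude that $\pi$ restricted to $\Mm^\tau(\Uu,\sG)$ is still a trivial topological fiber bundle, since it is the restriction of a trivial bundle to a union of fiberwise-connected-components, and the homeomorphism $\Pi$ of the proof of Theorem \ref{thm:universal} carries this restriction to $\Xx^\tau(\Gamma,\sG)\times\Teich(S)$. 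The main obstacle I anticipate is step (4): one must be careful that the moduli space of $\sG$-Higgs bundles of fixed Toledo type genuinely sits as a union of connected components \emph{inside} the $\sG_\C$-moduli space (rather than as a proper real-analytic subvariety), so that it inherits the complex-analytic structure rather than merely a real one. For $|\tau|=2g-2$ this is precisely the rigidity encoded in the explicit parameterizations of Sections \ref{PSp4R} and \ref{Sp4R} (the orthogonal bundle $(\Ff,Q_F)$ and the nowhere-vanishing, nowhere-isotropic $\gamma$), and more generally for $\tau\neq 0$ it follows from the analogous Milnor--Wood rigidity; making this identification precise and functorial in $\Sigma$ is where the real work lies. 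Everything else is a formal consequence of Theorem \ref{thm:universal}.
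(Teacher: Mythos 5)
Your overall strategy --- produce $\Mm^\tau(\Uu,\sG)$ by cutting a suitable sublocus out of the universal $\sG_\C$-moduli space of Theorem \ref{thm:universal} --- is the right one, and you correctly identify the central difficulty: the naive description of $\sG$-Higgs bundles as the fixed locus of the \emph{anti}-holomorphic involution coming from the real structure $\lambda$ would only yield a real-analytic subspace. But the way you resolve this difficulty is wrong. You claim that for $\tau\neq 0$ the space $\Mm^\tau(\Sigma,\sG)$ is ``open and closed in $\Mm(\Sigma,\sG_\C)$,'' i.e.\ a union of connected components of the moduli space of $\sG_\C$-Higgs bundles of a fixed topological type. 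This fails already on dimension grounds: $\dim_\R \Mm(\Sigma,\sG) = (2g-2)\dim_\R\sG$ while $\dim_\R\Mm(\Sigma,\sG_\C) = (2g-2)\dim_\R\sG_\C = 2(2g-2)\dim_\R\sG$, so the former can never be open in the latter. Concretely, for $\sSp(4,\R)$ the associated $\sSp(4,\C)$-Higgs bundle \eqref{SL(4,C) Higgs of Sp(4,R) Higgs} has the special shape $\bigl(\Vv\oplus\Vv^*,\smtrx{0&\beta\\\gamma&0}\bigr)$, and a generic nearby $\sSp(4,\C)$-Higgs bundle of the same topological type does not split this way; Proposition \ref{liftingtoSp(4R)Prop} concerns lifting $\sSO_0(2,3)$ to $\sSp(4,\R)$, not embedding the real moduli space as an open set of the complex one. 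Relatedly, your step (2) does not parse as stated: the Toledo invariant is an invariant of the $\sH_\C$-reduction and is not defined on $\Mm(\Uu,\sG_\C)$ itself.

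The missing idea is that there is a \emph{holomorphic} involution available: the complexified Cartan involution $\sigma:\sG_\C\to\sG_\C$ with fixed-point set $\sH_\C$ (acting as $+1$ on $\fh_\C$ and $-1$ on $\fm_\C$, hence sending $(\Pp,\varphi)\mapsto(\Pp,-\varphi)$ after extension of structure group). Being holomorphic and functorial in $\Sigma$, it induces a holomorphic involution of $\Mm(\Uu,\sG_\C)$ (via $\Mm^{qp}(\Uu_m^{qp},\sG_\C)$), whose fixed locus $\Mm(\Uu,\sG_\C)^\sigma$ is a genuine complex analytic subspace. The extension-of-structure-group map $\Mm^\tau(\Sigma,\sG)\to\Mm(\Sigma,\sG_\C)^\sigma$ is injective precisely because $\tau\neq 0$ (this is where the hypothesis is used, and your proposal never addresses injectivity), and its image is a union of connected components \emph{of the fixed locus} --- not of $\Mm(\Sigma,\sG_\C)$. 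Defining $\Mm^\tau(\Uu,\sG)$ as the union of the components of $\Mm(\Uu,\sG_\C)^\sigma$ meeting these images then gives all the stated properties. So your proof needs to replace the ``open and closed in $\Mm(\Sigma,\sG_\C)$'' step by this fixed-locus-of-a-holomorphic-involution argument; as written, step (4) of your outline cannot be completed.
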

\begin{proof}
Let $\sG_\C$ be the complexification of $\sG$. If $\sH$ is the maximal compact subgroup of $\sG$, let $\sH_\C < \sG_\C$ be its complexification. 
The group $\sH_\C$ is the fixed point set of a holomorphic involution $\sigma:\sG_\C\ra \sG_\C$. The involution $\sigma$ induces involutions
\[\xymatrix{\sigma: \Mm(\Sigma,\sG_\C) \to \Mm(\Sigma,\sG_\C)&\text{and}&\sigma:\Mm^{qp}(\Uu_m^{qp},\sG_\C) \ra \Mm^{qp}(\Uu_m^{qp},\sG_\C)}~.\]
 The involution on $\Mm^{qp}(\Uu_m^{qp},\sG_\C)$ induces a holomorphic involution on the universal moduli space of Higgs bundles $\sigma: \Mm(\Uu,\sG_\C) \ra \Mm(\Uu,\sG_\C)$. 

Let $\Mm(\Sigma,\sG_\C)^\sigma$ and $\Mm(\Uu,\sG_\C)^\sigma$ be the fixed point sets of the involutions $\sigma$, they are complex analytic subsets. For each $\tau \neq 0$, the map $\Mm^\tau(\Sigma,\sG) \ra \Mm(\Sigma,\sG_\C)^\sigma$ induced by the inclusion $\sG \ra \sG_\C$ is injective since we restrict our attention to a non-zero value of $\tau$. Moreover, its image is a union of connected components of $\Mm(\Sigma,\sG_\C)^\sigma$. 
If $\Mm^\tau(\Uu,\sG)$ is the union of the connected components of $\Mm(\Uu,\sG_\C)^\sigma$ that contain the images of $\Mm^\tau(\Sigma,\sG)$, then $\Mm^\tau(\Uu,\sG)$ is a complex analytic space has all the properties required by the theorem.
\end{proof}

We are now ready to prove Theorem \ref{thm:invariant complex structures} which asserts that if $C$ is an open $\MCG(S)$-invariant subset of maximal representations on which Labourie's conjecture holds, then $C$ admits a complex analytic structure such that the mapping class group acts on $C$ by holomorphic maps. 

\begin{proof}[Proof of Theorem \ref{thm:invariant complex structures}]

Let $\Mm^\mathrm{max}(\Uu,\sG)$ denote the space $\Mm^\tau(\Uu,\sG)$ for the maximal value that $\tau$ can assume for the group $\sG$. Let's consider the map
\[P: \xymatrix@R=.1em{\Mm^\mathrm{max}(\Uu,\sG)\ar[r]&  \Xx^\mathrm{max}(\Gamma,\sG)\\ E\ar@{|->}[r]&N_{\pi(E)}(E)}~, \]
where $N_\Sigma: \Mm^\mathrm{max}(\Sigma,\sG) \to \Xx(\Gamma,\sG)$ is the homeomorphism from Theorem \ref{THM: NAHC}, and $\pi:\Mm^\mathrm{max}(\Uu,\sG) \to \Teich(S)$ is the natural projection.
The map $P$ is surjective, but not injective since $P^{-1}(\Sigma)\cong\Teich(S)$. We will write 
\[C^\Uu = P^{-1}(C) \subset \Mm^\mathrm{max}(\Uu,\sG)~; \] 
$C^\Uu$ is open since $C$ is open. 

Recall that holomorphic tangent bundle of Teichm\"uller space is the bundle $\Hh^2$ whose fiber over a $\Sigma \in \Teich(S)$ is naturally identified with the vector space $H^0(\Sigma,K^2)$ of holomorphic differentials. Consider the holomorphic function
\begin{equation}
\label{EQ Tr_2}\mathrm{Tr}_2:\xymatrix@R=.2em{C^\Uu\ar[r]&\Hh^2\\(\Sigma,\Ee,\varphi) \ar@{|->}[r]&\tr(\varphi^2)~.}
\end{equation}
 
The subspace $C^\Uu_{0} = \mathrm{Tr}_2^{-1}(0)$ is a complex analytic subspace of $C^\Uu$. This space parametrizes the set of all the $\sG$-Higgs bundles where the harmonic map given by the solution of Hitchin's equations is a branched minimal immersion. 

Now consider the restriction of the map $P$ to the complex analytic subspace $C^\Uu_0$:
\[P|_{C^\Uu_0}: C^\Uu_0 \ra C~.\]
The restricted map is surjective. Indeed, maximal representations are Anosov \cite{MaxRepsAnosov} and hence admit at least one equivariant branched minimal immersion, see Remark \ref{Remark maximal are anosov}. 
The map is injective since we are assuming Labourie's conjecture holds on $C$.
This bijection gives $C$ the structure of complex analytic space such that $\MCG(S)$ acts by holomorphic maps.
\end{proof}

\section{Mapping class group equivariant Parametrization}     \label{equivariant}
In this section we will give a detailed description of the construction of Section \ref{sec:mcg_inv_cmplx_str} for the groups $\sP\sSp(4,\R)$ and $\sSp(4,\R)$. As a result, we obtain a parametrization of the components of maximal representations that is equivariant for the action of $\MCG(S)$. Moreover, we describe the quotient $\Xx^{\mathrm{max}}(\Gamma,\sG)/\MCG(S)$. 

\subsection{Description of $\Xx^{\mathrm{max}}(\Gamma,\sG)$}  \label{sec:mcg-invariant description}
Let $\sG=\sP\sSp(4,\R)$ or $\sSp(4,\R)$ and consider the holomorphic function 
\[\mathrm{Tr_2}:\Mm^\mathrm{max}(\Uu,\sG)\to\Hh^2\] from \eqref{EQ Tr_2}. 
Recall from Sections \ref{PSp4R} and \ref{Sp4R} that the moduli space of maximal Higgs bundles can be written as a product
\[\Mm^\mathrm{max}(\Sigma,\sG)=\Nn\times H^0(\Sigma, K^2)~.\]
Moreover, the $\sG_\C$-Higgs bundle $(\Ee,\varphi)$ associated to a point of $(x,q_2)\in\Nn\times H^0(\Sigma,K^2)$ has $\mathrm{Tr_2}(\Ee,\varphi,\Sigma)=0$ if and only $q_2=0$.

As we have seen in the proof of Theorem \ref{thm:invariant complex structures}, $\mathrm{Tr_2}^{-1}(0)$ is a complex analytic space which is  homeomorphic to $\Xx^{\mathrm{max}}(\Gamma,\sG)$ by Theorem \ref{UniqueMinSurface}. From now on, we will identify the two spaces
\[\Xx^{\mathrm{max}}(\Gamma,\sG) = \mathrm{Tr_2}^{-1}(0)~.\]

Consider the natural map 
\begin{equation}\label{EQ pi to Teich}
T:\xymatrix@R=.2em{\Xx^{\mathrm{max}}(\Gamma,\sG)\ar[r]&\Teich(S)\\(\Sigma,\Ee,\varphi)\ar@{|->}[r]&\Sigma}~.
\end{equation} 
The parameterization theorems in Sections \ref{PSp4R} and \ref{Sp4R} give an explicit description of the map $T$. We will restrict the map to the different connected components of $\Xx^{\mathrm{max}}(\Gamma,\sG)$, and describe each one of them.

For $\sG = \sP\sSp(4,\R)$, the following statements  follow directly from putting Theorems \ref{UniqueMinSurface} and \ref{thm:invariant complex structures} together with Theorems \ref{THM d>0}, \ref{thm:zero_component}, and \ref{THM HiggsParamsw1sw2orbifold}.

\begin{Corollary}
For every $d\in(0,4g-4]$, the map $T$ restricted to the component $\Xx^{\mathrm{max}}_{0,d}(\Gamma,\sP\sSp(4,\R))$ is a trivial fiber bundle over $\Teich(S).$ The fiber over $\Sigma \in \Teich(S)$ is biholomorphic to the rank $3g-3+d$ holomorphic vector bundle $\Ff_d$ over the $(4g-4-d)^{th}$-symmetric product of $\Sigma$ described in Theorem \ref{THM d>0}.
\end{Corollary}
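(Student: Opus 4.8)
The plan is to assemble the statement from three ingredients already in place: the Higgs-theoretic parameterization of $\Mm^{\mathrm{max}}_{0,d}(\sSO_0(2,3))$ from Theorem~\ref{THM d>0}, the uniqueness of the minimal surface from Theorem~\ref{UniqueMinSurface}, and the universal moduli space machinery of Theorem~\ref{thm:universal} together with Theorem~\ref{thm:invariant complex structures}. First I would recall the identification $\Xx^{\mathrm{max}}(\Gamma,\sP\sSp(4,\R)) = \mathrm{Tr}_2^{-1}(0) \subset \Mm^{\mathrm{max}}(\Uu,\sSO_0(2,3))$ established just above, and note that this identification respects the decomposition into connected components; in particular it restricts to a homeomorphism between $\Xx^{\mathrm{max}}_{0,d}(\Gamma,\sP\sSp(4,\R))$ and the subspace of $\mathrm{Tr}_2^{-1}(0)$ lying over the component $\Mm^{\mathrm{max}}_{0,d}$. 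Under this identification the map $T$ of \eqref{EQ pi to Teich} is exactly the restriction of the projection $\pi:\Mm^{\mathrm{max}}(\Uu,\sSO_0(2,3))\to\Teich(S)$, which is holomorphic by Theorem~\ref{thm:universal}.

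Next I would identify the fiber $T^{-1}(\Sigma)$. By construction $\pi^{-1}(\Sigma)$ is biholomorphic to $\Mm^{\mathrm{max}}(\Sigma,\sSO_0(2,3))$, so $T^{-1}(\Sigma)$ is biholomorphic to the locus in $\Mm^{\mathrm{max}}_{0,d}(\Sigma,\sSO_0(2,3))$ where $\tr(\varphi^2)=0$, i.e.\ where $q_2=0$. By Theorem~\ref{THM d>0} we have a diffeomorphism $\Mm^{\mathrm{max}}_{0,d}(\Sigma,\sSO_0(2,3))\cong \Ff_d\times H^0(\Sigma,K^2)$, under which $q_2$ is precisely the $H^0(\Sigma,K^2)$-coordinate (this is how the parameterization was set up: the triple is $((\Ff,Q_F),q_2,\delta)$ and $q_2$ is the quadratic differential appearing in $\beta=\smtrx{q_2\\\delta}$). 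Hence the fiber is identified with $\Ff_d\times\{0\}\cong\Ff_d$, the rank $3g-3+d$ holomorphic vector bundle over $\Sym^{4g-4-d}(\Sigma)$. I would also remark that since the holomorphic structure on $\Mm^{\mathrm{max}}(\Sigma,\sSO_0(2,3))$ coming from $\Mm(\Uu,\sSO(5,\C)_\C)$ restricts on each component to the Higgs-bundle complex structure, this identification of the fiber with $\Ff_d$ is a biholomorphism, not merely a diffeomorphism.

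Finally, for triviality of the bundle $T:\Xx^{\mathrm{max}}_{0,d}(\Gamma,\sP\sSp(4,\R))\to\Teich(S)$, I would invoke part~(2) of Theorem~\ref{thm:universal} (or its corollary for fixed Toledo invariant): $\pi$ is a trivial topological fiber bundle, and the trivialization there is compatible with the non-abelian Hodge homeomorphism $\Pi$, hence carries $\mathrm{Tr}_2^{-1}(0)$ to a product $C\times\Teich(S)$ where $C$ is a single fiber. Restricting to the component with invariant $(0,d)$ gives the claimed trivial fiber bundle. The main obstacle I anticipate is the bookkeeping to confirm that the Higgs-bundle holomorphic structure on the fiber (coming from Simpson's construction of $\Mm(\Uu_m^{qp},\sG_\C)$ restricted to a point of $\Teich(S)$) genuinely agrees with the explicit complex structure on $\Ff_d$ described in Theorem~\ref{THM d>0} — that is, that the $\C^*$-quotient description $\Ff_d=\widehat\Ff_d/\C^*$ and its projection to $\Sym^{4g-4-d}(\Sigma)$ are biholomorphic, not just homeomorphic, to the fiber of $\pi$. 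This is essentially automatic since all the maps involved (the $\C^*$-action, the passage from a nonzero section $\mu$ to its divisor, the identification of $\pi^{-1}(D)$ with $H^0(\Oo(D)^{-1}K^4)$) are holomorphic, but it deserves an explicit sentence; everything else is a direct citation.
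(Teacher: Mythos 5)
Your overall route is the paper's: the paper offers no written proof of this corollary beyond the sentence that it ``follows directly'' from Theorems \ref{UniqueMinSurface}, \ref{thm:invariant complex structures} and \ref{THM d>0}, and your identification of $\Xx^{\mathrm{max}}_{0,d}(\Gamma,\sP\sSp(4,\R))$ with the part of $\mathrm{Tr}_2^{-1}(0)$ over $\Mm^{\mathrm{max}}_{0,d}$, of $T$ with the restriction of $\pi$, and of the fiber $T^{-1}(\Sigma)$ with $\Ff_d\times\{0\}\cong\Ff_d$ (using that $\tr(\varphi^2)$ is a nonzero multiple of $q_2$ in the parameterization of Theorem \ref{THM d>0}) is exactly the intended content and is correct.

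There is, however, a genuine gap in your justification of \emph{triviality}. You claim that the trivialization $\Pi:\Mm^{\mathrm{max}}(\Uu,\sG)\to\Xx^{\mathrm{max}}(\Gamma,\sG)\times\Teich(S)$, $E\mapsto(N_{\pi(E)}(E),\pi(E))$, carries $\mathrm{Tr}_2^{-1}(0)$ to a product $C\times\Teich(S)$. It does not: for $E\in\mathrm{Tr}_2^{-1}(0)$ with $\pi(E)=\Sigma$, the condition $\tr(\varphi^2)=0$ says precisely that $\Sigma$ is the critical point of the energy function of $\rho=N_\Sigma(E)$, so by Theorem \ref{UniqueMinSurface} the image $\Pi(\mathrm{Tr}_2^{-1}(0))$ is the \emph{graph} $\{(\rho,\Sigma_\rho)\}$ of the map $\rho\mapsto\Sigma_\rho$, not a set of the form $C\times\Teich(S)$. (If it were a product, the set of representations whose minimal surface is $\Sigma$ would be independent of $\Sigma$, which is false.) Consequently the triviality of $\pi$ on the ambient universal moduli space does not restrict to give triviality of $T$ on $\mathrm{Tr}_2^{-1}(0)$; the graph of a continuous map is a trivial bundle over the \emph{source}, not over the target. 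The correct argument is different: one checks that $T$ is a locally trivial fiber bundle — the fibers $\Ff_d(\Sigma)$ are assembled from $\Pic^d(\Sigma)$, $\Sym^{4g-4-d}(\Sigma)$ and direct images of the relevant line bundles along the universal curve $\Uu\to\Teich(S)$, all of which vary locally trivially in the smooth category — and then uses that $\Teich(S)$ is contractible, so any fiber bundle over it is (smoothly) trivial. With that substitution the rest of your argument, including the remark that the fiber identification is holomorphic and not merely smooth, goes through.
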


\begin{Corollary}
The map $T$ restricted to the component $\Xx^{\mathrm{max}}_{0,0}(\Gamma,\sP\sSp(4,\R))$ is a trivial fiber bundle over $\Teich(S).$ The fiber over $\Sigma \in \Teich(S)$ is biholomorphic to $(\Aa/\Z_2)$, where, as described in  Theorem \ref{thm:zero_component}, $\Aa$ is the holomorphic fiber bundle over $\Pic^0(\Sigma)$ and $\Z_2$ acts on $\Aa$ by pullback by inversion on $\Pic^0(\Sigma).$
\end{Corollary}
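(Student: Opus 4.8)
The statement to prove is the final Corollary: the map $T$ restricted to $\Xx^{\mathrm{max}}_{0,0}(\Gamma,\sP\sSp(4,\R))$ is a trivial fiber bundle over $\Teich(S)$ whose fiber over $\Sigma$ is biholomorphic to $\Aa/\Z_2$ described in Theorem~\ref{thm:zero_component}.

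The plan is to assemble this from the machinery already developed, exactly as the parallel Corollary for $d\in(0,4g-4]$ is assembled. First I would recall that, by Theorem~\ref{UniqueMinSurface}, every maximal $\sP\sSp(4,\R)$-representation admits a unique equivariant minimal immersion, so that $\Xx^{\mathrm{max}}(\Gamma,\sP\sSp(4,\R))$ is identified (as in Section~\ref{sec:mcg-invariant description}) with the complex analytic subspace $\mathrm{Tr}_2^{-1}(0)\subset\Mm^{\mathrm{max}}(\Uu,\sP\sSp(4,\R))$, and this identification is compatible with the projections to $\Teich(S)$: the map $T$ of \eqref{EQ pi to Teich} becomes the restriction to $\mathrm{Tr}_2^{-1}(0)$ of the holomorphic projection $\pi:\Mm^{\mathrm{max}}(\Uu,\sP\sSp(4,\R))\to\Teich(S)$ from Theorem~\ref{thm:universal}, which by construction is holomorphic. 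Restricting to the connected component $\Xx^{\mathrm{max}}_{0,0}$ corresponds, under the correspondence between Higgs bundles and representations, to restricting to the component $\Mm^{\mathrm{max}}_{0,0}(\sSO_0(2,3))$, using the isomorphism $\sP\sSp(4,\R)\cong\sSO_0(2,3)$.

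Next I would identify the fibers. For a fixed $\Sigma\in\Teich(S)$, the fiber $\pi^{-1}(\Sigma)$ is biholomorphic to $\Mm^{\mathrm{max}}(\Sigma,\sSO_0(2,3))$, and by Theorem~\ref{thm:zero_component} the component $\Mm^{\mathrm{max}}_{0,0}(\Sigma,\sSO_0(2,3))$ is homeomorphic to $(\Aa/\Z_2)\times H^0(\Sigma,K^2)$, where $\Aa\to\Pic^0(\Sigma)$ is the holomorphic fiber bundle with fiber $\Uu_{3g-3}$ and $\Z_2$ acts by pullback by inversion. Under this identification the function $\mathrm{Tr}_2$ restricted to the fiber is (a constant multiple of) the projection $(x,q_2)\mapsto q_2$, as recorded in Section~\ref{sec:mcg-invariant description} (the $\sG_\C$-Higgs bundle associated to $(x,q_2)$ has $\mathrm{Tr}_2=0$ iff $q_2=0$; this is immediate from the explicit form of the Higgs field \eqref{SL5_vanishing}, since $\tr(\Phi^2)$ is proportional to $q_2$). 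Hence the fiber of $T|_{\Xx^{\mathrm{max}}_{0,0}}$ over $\Sigma$ is $\mathrm{Tr}_2^{-1}(0)\cap\pi^{-1}(\Sigma)\cong\Aa/\Z_2$, and this identification is biholomorphic because, by the construction in Theorem~\ref{thm:zero_component} and the naturality of the parameterizations, $\Aa$ and its $\Z_2$-quotient carry the holomorphic structure induced from $\Mm^{\mathrm{max}}_{0,0}(\Sigma,\sSO_0(2,3))$, i.e.\ from the complex analytic structure on $\Mm^{\mathrm{max}}(\Uu,\sP\sSp(4,\R))$.

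Finally I would argue triviality of the bundle $T|_{\Xx^{\mathrm{max}}_{0,0}}:\Xx^{\mathrm{max}}_{0,0}(\Gamma,\sP\sSp(4,\R))\to\Teich(S)$. This follows from Theorem~\ref{thm:universal}(2): $\pi:\Mm^{\mathrm{max}}(\Uu,\sP\sSp(4,\R))\to\Teich(S)$ is a trivial topological fiber bundle, the trivialization being given by the homeomorphism $\Pi$ (equivalently $P$) with $\Xx^{\mathrm{max}}(\Gamma,\sP\sSp(4,\R))\times\Teich(S)$ coming from the nonabelian Hodge correspondence of Theorem~\ref{THM: NAHC}. This global trivialization carries the complex analytic subspace $\mathrm{Tr}_2^{-1}(0)$ to $\Xx^{\mathrm{max}}_{0,0}(\Gamma,\sP\sSp(4,\R))\times\Teich(S)$ (restricted to the relevant component), exhibiting $T|_{\Xx^{\mathrm{max}}_{0,0}}$ as a trivial fiber bundle with fiber $\Aa/\Z_2$. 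The main obstacle — and the only nontrivial point — is verifying that the parameterization of Theorem~\ref{thm:zero_component}, originally produced fiberwise as a homeomorphism, is actually a \emph{biholomorphism} onto the fiber $\pi^{-1}(\Sigma)$ with its complex analytic structure (so that "holomorphic fiber bundle" is literally correct, not just "topological fiber bundle with holomorphically-structured fibers"); this requires checking that the surjection $\widehat\Psi$ of Theorem~\ref{thm:zero_component} descends to a holomorphic map from $\Aa/\Z_2$ and has holomorphic inverse away from singularities, which in turn follows from the holomorphicity of the Hitchin--Kobayashi correspondence in families and the explicit algebraic nature of the quotient constructions (Picard variety, tautological bundles $\Uu_n$, the $\C^*$- and $\Z_2$-quotients), all of which are manifestly holomorphic.
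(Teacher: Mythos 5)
Your proposal is correct and follows essentially the same route as the paper: the paper states this corollary as following directly from combining Theorem \ref{UniqueMinSurface} (identifying $\Xx^{\mathrm{max}}$ with $\mathrm{Tr}_2^{-1}(0)$ inside the universal moduli space), Theorem \ref{thm:invariant complex structures} / \ref{thm:universal} (holomorphicity of $\pi$ and topological triviality via the nonabelian Hodge trivialization), and the fiberwise parameterization of Theorem \ref{thm:zero_component}, which is exactly the assembly you carry out. Your computation that $\tr(\Phi^2)$ is proportional to $q_2$ (so that $\mathrm{Tr}_2^{-1}(0)$ cuts out precisely the $\Aa/\Z_2$ factor in each fiber) is correct and matches the remark the paper makes at the start of Section \ref{sec:mcg-invariant description}.
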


\begin{Corollary}
For each $(sw_1,sw_2)\in H^1(\Sigma,\Z_2)\setminus\{0\}\times H^2(\Sigma,\Z_2)$, the map $T$ restricted to the component $\Xx^{\mathrm{max},sw_2}_{sw_1}(\Gamma,\sP\sSp(4,\R))$ is a trivial fiber bundle over $\Teich(S)$. The fiber over $\Sigma \in \Teich(S)$ is biholomorphic to $\left(\Hh' / \Z_2\right)$, where $\Hh'$ is the bundle over $\Prym^{sw_2}(X_{sw_1},\Sigma)$ described in Theorem \ref{THM HiggsParamsw1sw2orbifold}.
\end{Corollary}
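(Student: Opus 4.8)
The statement to prove is that the restriction of the map $T:\Xx^{\mathrm{max}}(\Gamma,\sP\sSp(4,\R))\to\Teich(S)$ to the component $\Xx^{\mathrm{max},sw_2}_{sw_1}(\Gamma,\sP\sSp(4,\R))$ is a trivial fiber bundle whose fiber over $\Sigma\in\Teich(S)$ is biholomorphic to $\Hh'/\Z_2$ (the bundle over $\Prym^{sw_2}(\Sigma_{sw_1})$ described in Theorem \ref{THM HiggsParamsw1sw2orbifold}). The plan is to assemble this exactly as the two preceding Corollaries are assembled: combine Theorem \ref{UniqueMinSurface} (uniqueness of the minimal surface, i.e. Labourie's conjecture for maximal $\sSO_0(2,3)$-representations) with Theorem \ref{thm:invariant complex structures} (which produces the complex analytic structure on $\Xx^{\mathrm{max}}$ and the holomorphic map to $\Teich(S)$) and with the Higgs-bundle parameterization Theorem \ref{THM HiggsParamsw1sw2orbifold} (which describes the fiber over a \emph{fixed} $\Sigma$). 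First I would recall, as in Section \ref{sec:mcg-invariant description}, the identification $\Xx^{\mathrm{max}}(\Gamma,\sP\sSp(4,\R))=\mathrm{Tr}_2^{-1}(0)\subset\Mm^{\mathrm{max}}(\Uu,\sP\sSp(4,\R))$, and note that this identification is $\MCG(S)$-equivariant and compatible with the projections to $\Teich(S)$; in particular it carries the connected component $\Xx^{\mathrm{max},sw_2}_{sw_1}$ to the sub-locus of $\mathrm{Tr}_2^{-1}(0)$ consisting of Higgs bundles whose discrete invariants $(sw_1,sw_2)$ are fixed, since these invariants are locally constant along the universal moduli space.

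The core of the argument is that $T$ restricted to this component is a \emph{trivial} topological fiber bundle. This is inherited from Theorem \ref{thm:universal}(2): the projection $\pi:\Mm^{\mathrm{max}}(\Uu,\sP\sSp(4,\R))\to\Teich(S)$ is a trivial smooth fiber bundle, via the homeomorphism $\Pi$ built from the non-abelian Hodge correspondence $N_\Sigma$ that commutes with the projections; restricting $\Pi$ to $\mathrm{Tr}_2^{-1}(0)$ and then to the open-and-closed subset corresponding to $(sw_1,sw_2)$ gives a trivialization of $T$ over that component, with fiber $\Mm^{\mathrm{max},sw_2}_{sw_1}(\Sigma,\sSO_0(2,3))\cap\{q_2=0\}$ for any chosen $\Sigma$. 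Then I would invoke Theorem \ref{THM HiggsParamsw1sw2orbifold}: it gives an orbifold isomorphism $\Mm^{\mathrm{max},sw_2}_{sw_1}(\Sigma,\sSO_0(2,3))\cong(\Hh'/\Z_2)\times H^0(\Sigma,K^2)$ under which the holomorphic quadratic differential factor is precisely the $H^0(\Sigma,K^2)$ coordinate, which by the discussion in Section \ref{sec:mcg-invariant description} is exactly $\mathrm{Tr}_2$ up to a nonzero constant. Hence the slice $\{q_2=0\}$ of the fiber is $\Hh'/\Z_2$. Finally, because the complex analytic structure on $\Xx^{\mathrm{max}}$ produced by Theorem \ref{thm:invariant complex structures} is the one pulled back from $\mathrm{Tr}_2^{-1}(0)\subset\Mm^{\mathrm{max}}(\Uu,\sP\sSp(4,\R))$, and the fiber $\pi^{-1}(\Sigma)$ of the universal moduli space is \emph{biholomorphic} to $\Mm^{\mathrm{max}}(\Sigma,\sP\sSp(4,\R))$ (Theorem \ref{thm:universal}(1)), the fiberwise identification with $\Hh'/\Z_2$ is biholomorphic and not merely a homeomorphism; the map to $\Teich(S)$ is holomorphic by the last clause of Theorem \ref{thm:invariant complex structures}.

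The one point that genuinely requires care — the ``main obstacle'', though it is more a bookkeeping issue than a deep one — is checking that all the identifications are \emph{compatible}, i.e. that the homeomorphism $\Xx^{\mathrm{max},sw_2}_{sw_1}\cong(\Hh'/\Z_2)$-bundle obtained this way sends the fiber of $T$ over $\Sigma$ to the fiber of $\Hh'\to\Prym^{sw_2}(\Sigma_{sw_1})$ built from \emph{that same} $\Sigma$, with the correct complex structure. Concretely one must verify: (i) the homeomorphism $N_\Sigma$ of Theorem \ref{THM: NAHC} intertwines the Toledo/Stiefel–Whitney invariants with those used to label the components $\Xx^{\mathrm{max},sw_2}_{sw_1}$, so that the component really corresponds to the piece $\Mm^{\mathrm{max},sw_2}_{sw_1}(\Uu,\sP\sSp(4,\R))$; (ii) the coordinate called $q_2$ in Theorem \ref{THM HiggsParamsw1sw2orbifold} agrees with $\mathrm{Tr}_2$ of the associated $\sSL(5,\C)$-Higgs field — this follows by a direct computation with the explicit Higgs field in Remark \ref{SL5 Higgs bundle for sw_1}, where $\tr(\Phi^2)$ visibly picks out the $q_2$ entry; and (iii) the $\Prym$-bundle $\Hh'$ really does vary holomorphically with $\Sigma$, which is where one uses that $\Mm^{\mathrm{max}}(\Uu,\sP\sSp(4,\R))$ is a genuine complex analytic space over $\Teich(S)$ rather than just a smooth family. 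Since items (i)–(iii) are each already established (or immediate from earlier results) and the two preceding Corollaries are proved by exactly this template, I expect the proof to be short: essentially ``apply Theorems \ref{UniqueMinSurface}, \ref{thm:invariant complex structures}, \ref{thm:universal} and \ref{THM HiggsParamsw1sw2orbifold}, noting that the slice $\mathrm{Tr}_2=0$ of the fiber of Theorem \ref{THM HiggsParamsw1sw2orbifold} is $\Hh'/\Z_2$.''
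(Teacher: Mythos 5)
Your proposal is correct and follows exactly the route the paper takes: the paper derives this corollary by "putting Theorems \ref{UniqueMinSurface} and \ref{thm:invariant complex structures} together with Theorem \ref{THM HiggsParamsw1sw2orbifold}," using the identification $\Xx^{\mathrm{max}}=\mathrm{Tr}_2^{-1}(0)$ inside the universal moduli space, the trivial-fiber-bundle structure from Theorem \ref{thm:universal}, and the fact that the slice $q_2=0$ of the fiber $(\Hh'/\Z_2)\times H^0(\Sigma,K^2)$ is $\Hh'/\Z_2$. Your additional compatibility checks (i)--(iii) are sensible bookkeeping that the paper leaves implicit.
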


Similarly, for $\sG = \sSp(4,\R)$, the following statements follow directly from putting Theorems \ref{UniqueMinSurface} and \ref{thm:invariant complex structures} together with Theorems \ref{THM SP4 d>0}, \ref{THM Sp4 d=0 }, \ref{THM Higgs Sp4 sw1not0}. As with $\sP\sSp(4,\R),$ we denote the connected components of the character variety $\Xx^\mathrm{max}(\Gamma,\sSp(4,\R))$ which correspond to the Higgs bundle connected components $\Mm_{d,0}^\mathrm{max}(\sSp(4,\R))$ and $\Mm_{sw_1}^{\mathrm{max},sw_2}(\sSp(4,\R))$ by $\Xx_{d,0}^\mathrm{max}(\Gamma,\sSp(4,\R))$ and $\Xx_{sw_1}^{\mathrm{max},sw_2}(\Gamma,\sSp(4,\R))$ respectively.

\begin{Corollary}
For each $d\in(0, 2g-2]$, the map $T$ restricted to the component $\Xx^{\mathrm{max}}_{0,d}(\Gamma,\sSp(4,\R))$ is a trivial fiber bundle over $\Teich(S).$ The fiber over $\Sigma \in \Teich(S)$ is biholomorphic to the space $\pi^* \Ff_{2d}$ described in Theorem \ref{THM SP4 d>0}.
\end{Corollary}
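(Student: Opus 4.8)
The plan is to assemble this corollary purely from results already in place, with essentially no new argument. First I would recall the identification $\Xx^{\mathrm{max}}(\Gamma,\sSp(4,\R)) = \mathrm{Tr}_2^{-1}(0) \subset \Mm^{\mathrm{max}}(\Uu,\sSp(4,\R))$, which is exactly the content of Theorem \ref{UniqueMinSurface} applied to the $\sSp(4,\R)$ components (so that every maximal representation has a unique equivariant minimal surface) together with Theorem \ref{thm:invariant complex structures} and the universal moduli space $\Mm^{\mathrm{max}}(\Uu,\sSp(4,\R))$ from Theorem \ref{thm:universal}. Under this identification, restricting the holomorphic projection $\pi:\Mm^{\mathrm{max}}(\Uu,\sSp(4,\R))\to\Teich(S)$ to $\mathrm{Tr}_2^{-1}(0)$ gives precisely the map $T$ of \eqref{EQ pi to Teich}. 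The connected component $\Xx^{\mathrm{max}}_{0,d}(\Gamma,\sSp(4,\R))$ corresponds, as noted in Section \ref{sec: param sp4}, to the Higgs bundle component $\Mm^{\mathrm{max}}_{0,d}(\sSp(4,\R))$, so I am reduced to understanding the fiber $T^{-1}(\Sigma)$ inside this component.

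The second step is to pin down that fiber. By definition of $T$ and of $\mathrm{Tr}_2$, the fiber $T^{-1}(\Sigma)$ is the subspace of $\Mm^{\mathrm{max}}_{0,d}(\Sigma,\sSp(4,\R))$ consisting of those Higgs bundles $(\Vv,\beta,\gamma)$ whose associated $\sSL(4,\C)$-Higgs field $\Phi$ satisfies $\tr(\Phi^2)=0$. Theorem \ref{THM SP4 d>0} gives a diffeomorphism $\Mm^{\mathrm{max}}_{0,d}(\Sigma,\sSp(4,\R)) \cong \pi^*\Ff_{2d}\times H^0(\Sigma,K^2)$, and under this diffeomorphism the $H^0(\Sigma,K^2)$-coordinate is exactly the quadratic differential $q_2$ appearing in $\beta=\smtrx{\nu&q_2\\q_2&\mu}$. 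A short computation with \eqref{SL(4,C) Higgs of Sp(4,R) Higgs} and $\gamma=\smtrx{0&1\\1&0}$ shows $\tr(\Phi^2) = 2\tr(\beta\gamma)$ is a nonzero multiple of $q_2$ (this is also recorded in the general Proposition relating $\tr(\varphi^2)$ to the Hopf differential), so the condition $\mathrm{Tr}_2 = 0$ is equivalent to $q_2 = 0$. Hence $T^{-1}(\Sigma)$ is biholomorphic to the fiber of $\pi^*\Ff_{2d}$ over $\Sigma$, i.e.\ to $\pi^*\Ff_{2d}$ as described in Theorem \ref{THM SP4 d>0} (the holomorphic vector bundle $\Ff_{2d}$ over $\Sym^{4g-4-2d}(\Sigma)$ pulled back along the $2^{2g}$-covering $a^*\Pic^{2g-2-d}(\Sigma)\to \Sym^{4g-4-2d}(\Sigma)$).

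The third step is to upgrade this fiberwise picture to the claimed \emph{trivial fiber bundle over} $\Teich(S)$. This is inherited directly from Theorem \ref{thm:universal}(2): $\pi:\Mm^{\mathrm{max}}(\Uu,\sSp(4,\R))\to\Teich(S)$ is a trivial topological fiber bundle via the homeomorphism $\Pi$ with $\Xx^{\mathrm{max}}(\Gamma,\sSp(4,\R))\times\Teich(S)$ built from the nonabelian Hodge correspondence, and $\mathrm{Tr}_2^{-1}(0)$ is a closed $\pi$-saturated subspace meeting each fiber in the copy of $\pi^*\Ff_{2d}$ just identified; since the component $\Mm^{\mathrm{max}}_{0,d}(\sSp(4,\R))$ is connected (cited from \cite{sp4GothenConnComp}) this restriction is again a trivial bundle, and the fiberwise biholomorphism type is the one in Theorem \ref{THM SP4 d>0}. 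Putting these together is the statement.

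The only genuine subtlety — and the step I would be most careful about — is the identification ``$\mathrm{Tr}_2=0 \iff q_2=0$'' in the precise normalization of Theorem \ref{THM SP4 d>0}: one must check that the $H^0(K^2)$-factor produced by the proof of that theorem really is the trace of the Higgs field (equivalently the Hopf differential of the harmonic map), rather than some other quadratic differential built out of $(\mu,\nu)$ and the bundle data. This follows because in the splitting $\Vv = NK^{1/2}\oplus N^{-1}K^{1/2}$ the off-diagonal entry of $\beta$ is the unique term contributing to $\tr(\beta\gamma)$, the diagonal entries $\nu,\mu$ pairing to zero against $\gamma=\smtrx{0&1\\1&0}$; so the computation is forced and there is no genuine obstacle, merely bookkeeping. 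Everything else is a formal consequence of Theorems \ref{UniqueMinSurface}, \ref{thm:invariant complex structures}, \ref{thm:universal} and \ref{THM SP4 d>0}.
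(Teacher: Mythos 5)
Your proposal is correct and is essentially the paper's own argument: the paper derives this corollary by simply combining Theorem \ref{UniqueMinSurface}, Theorem \ref{thm:invariant complex structures} (via the identification $\Xx^{\mathrm{max}}=\mathrm{Tr}_2^{-1}(0)$ and the factorization $\Mm^{\mathrm{max}}(\Sigma,\sG)=\Nn\times H^0(\Sigma,K^2)$ stated in Section \ref{sec:mcg-invariant description}) with Theorem \ref{THM SP4 d>0}, exactly as you do. Your explicit check that $\tr(\Phi^2)=2\tr(\beta\gamma)=4q_2$ in the splitting $\Vv=NK^{1/2}\oplus N^{-1}K^{1/2}$ is the one step the paper leaves implicit, and you supply it correctly.
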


\begin{Corollary}
The map $T$ restricted to the component $\Xx^{\mathrm{max}}_{0,0}(\Gamma,\sSp(4,\R))$ is a trivial fiber bundle over $\Teich(S)$. The fiber over $\Sigma \in \Teich(S)$ is biholomorphic to the space $s^*\Aa/\Z_2$ described in Theorem \ref{THM Sp4 d=0 }.
\end{Corollary}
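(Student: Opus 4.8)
The statement to prove is the following Corollary:

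\medskip

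\emph{The map $T$ restricted to the component $\Xx^{\mathrm{max}}_{0,0}(\Gamma,\sSp(4,\R))$ is a trivial fiber bundle over $\Teich(S)$. The fiber over $\Sigma \in \Teich(S)$ is biholomorphic to the space $s^*\Aa/\Z_2$ described in Theorem \ref{THM Sp4 d=0 }.}

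\medskip

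\textbf{Approach.} The plan is to combine the pointwise (fixed Riemann surface) parametrization from Theorem \ref{THM Sp4 d=0 } with the universal construction of Section \ref{sec:mcg_inv_cmplx_str} and the uniqueness of minimal surfaces from Theorem \ref{UniqueMinSurface}, exactly as was done for the other components of $\Xx^{\mathrm{max}}(\Gamma,\sP\sSp(4,\R))$ and $\Xx^{\mathrm{max}}(\Gamma,\sSp(4,\R))$. The key point is that, having identified $\Xx^{\mathrm{max}}(\Gamma,\sSp(4,\R))$ with $\mathrm{Tr_2}^{-1}(0)\subset \Mm^\mathrm{max}(\Uu,\sSp(4,\R))$, the restriction of the projection $T$ to a single fiber $\pi^{-1}(\Sigma)$ recovers precisely the $q_2=0$ locus of $\Mm^\mathrm{max}_{0,0}(\Sigma,\sSp(4,\R))$, which by Theorem \ref{THM Sp4 d=0 } is biholomorphic to $s^*\Aa/\Z_2$ (the factor $H^0(\Sigma,K^2)$ being exactly the $q_2$-coordinate that is killed by setting $\mathrm{Tr_2}=0$).

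\textbf{Key steps, in order.} First I would recall from Section \ref{sec:mcg-invariant description} that $\Xx^{\mathrm{max}}(\Gamma,\sSp(4,\R)) = \mathrm{Tr_2}^{-1}(0)$ and that the connected component $\Xx^{\mathrm{max}}_{0,0}(\Gamma,\sSp(4,\R))$ corresponds, under the non-abelian Hodge correspondence on each fixed $\Sigma$, to the Higgs bundle component $\Mm^{\mathrm{max}}_{0,0}(\sSp(4,\R))$; here one uses Theorem \ref{UniqueMinSurface} to know that each such representation has a well-defined preferred Riemann surface, so that $T$ is well-defined and that $\mathrm{Tr_2}^{-1}(0)$ meets each fiber $\pi^{-1}(\Sigma)$ in exactly the set of Higgs bundles whose Hopf differential vanishes. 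Second, for a fixed $\Sigma$, I would invoke Theorem \ref{THM Sp4 d=0 }, which gives a homeomorphism (in fact an isomorphism of complex analytic spaces away from the singular locus, since the identification is by holomorphic data: line bundles in $\Pic^0(\Sigma)$, their square roots, sections of $L^{\pm1}K^2$, and the $\C^*$- and $\Z_2$-quotients) between $\Mm^{\mathrm{max}}_{0,0}(\Sigma,\sSp(4,\R))$ and $s^*\Aa/\Z_2 \times H^0(\Sigma,K^2)$, with $q_2\in H^0(\Sigma,K^2)$ the last factor; intersecting with $\{q_2=0\}$ gives $\pi^{-1}(\Sigma)\cap \mathrm{Tr_2}^{-1}(0) \cong s^*\Aa/\Z_2$ biholomorphically. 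Third, I would note that by Theorem \ref{thm:universal}(2) the map $\pi:\Mm^\mathrm{max}(\Uu,\sSp(4,\R))\to\Teich(S)$ is a trivial topological fiber bundle and that $\mathrm{Tr_2}$ is a holomorphic section-valued map over $\Teich(S)$; the restriction of a trivial bundle to the zero locus of such a map is again, after using the product structure $\Mm^\mathrm{max}(\Sigma,\sSp(4,\R))=\Nn\times H^0(\Sigma,K^2)$ which is compatible fiberwise, a trivial fiber bundle with fiber $\Nn = s^*\Aa/\Z_2$. Finally I would package this: the holomorphic structure on $T^{-1}(\Xx^{\mathrm{max}}_{0,0})$ is the one induced from $\Mm^\mathrm{max}(\Uu,\sSp(4,\R))$ via Theorem \ref{thm:invariant complex structures}, and $\MCG(S)$-equivariance of $T$ is automatic from the construction, so the statement follows. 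The proof is essentially identical to that of the corresponding Corollary for $\sP\sSp(4,\R)$, substituting Theorem \ref{THM Sp4 d=0 } for Theorem \ref{thm:zero_component}.

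\textbf{Main obstacle.} The only genuinely nontrivial point — and the one deserving care — is the compatibility of the fiberwise product decomposition $\Mm^\mathrm{max}_{0,0}(\Sigma,\sSp(4,\R))\cong s^*\Aa/\Z_2 \times H^0(\Sigma,K^2)$ with the universal complex analytic structure on $\Mm^\mathrm{max}(\Uu,\sSp(4,\R))$: one must check that the splitting off of the $q_2$-factor, and hence the identification of $\mathrm{Tr_2}^{-1}(0)\cap\pi^{-1}(\Sigma)$ with $s^*\Aa/\Z_2$, varies holomorphically in $\Sigma$ and can be made $\MCG(S)$-equivariantly. This is handled as in the $\sP\sSp(4,\R)$ case by observing that the whole parametrization of Theorem \ref{THM Sp4 d=0 } is built from objects (the Poincaré bundle on $\Pic^0$, its square-root covering $s$, pushforwards $\Ee$ and $\Jj$, the $\C^*$-quotient $\Aa$, and the inversion $\Z_2$) that have universal versions over $\Teich(S)$, plus the holomorphicity of $\mathrm{Tr_2}$; since no new phenomenon arises relative to the already-treated components, I expect this to go through without difficulty, and the corollary follows formally from Theorems \ref{UniqueMinSurface}, \ref{thm:invariant complex structures}, and \ref{THM Sp4 d=0 }.
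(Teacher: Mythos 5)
Your proposal is correct and follows exactly the route the paper takes: the paper itself only remarks that these corollaries ``follow directly from putting Theorems \ref{UniqueMinSurface} and \ref{thm:invariant complex structures} together with Theorems \ref{THM SP4 d>0}, \ref{THM Sp4 d=0 } and \ref{THM Higgs Sp4 sw1not0},'' which is precisely your identification of the component with $\mathrm{Tr_2}^{-1}(0)$ inside the universal moduli space and the fiberwise application of Theorem \ref{THM Sp4 d=0 } to cut out $s^*\Aa/\Z_2$ as the $q_2=0$ locus. Your added discussion of the compatibility of the product decomposition with the universal structure is a reasonable elaboration of a point the paper leaves implicit.
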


\begin{Corollary}
For $(sw_1,sw_2)\in H^1(\Sigma,\Z_2)\setminus\{0\}\times H^2(\Sigma,\Z_2)$, the map $T$ restricted to the component $\Xx^{\mathrm{max},sw_2}_{sw_1}(\Gamma,\sSp(4,\R))$ is a trivial fiber bundle over $\Teich(S).$ The fiber over $\Sigma \in \Teich(S)$ is biholomorphic to the space $\Kk'/\Z_2$ described in Theorem \ref{THM Higgs Sp4 sw1not0}.
\end{Corollary}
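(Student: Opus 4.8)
The plan is to obtain this statement exactly as its companions are obtained, by feeding the fixed–surface parametrization of Theorem~\ref{THM Higgs Sp4 sw1not0} into the universal construction of Section~\ref{sec:mcg_inv_cmplx_str}. First I would apply Theorem~\ref{thm:invariant complex structures} with $C=\Xx^{\mathrm{max}}(\Gamma,\sSp(4,\R))$; this is legitimate since Theorem~\ref{UniqueMinSurface} establishes Labourie's conjecture on all of $\Xx^{\mathrm{max}}(\Gamma,\sSp(4,\R))$. The proof of Theorem~\ref{thm:invariant complex structures} identifies $\Xx^{\mathrm{max}}(\Gamma,\sSp(4,\R))$, as a complex analytic space, with the zero locus $\mathrm{Tr}_2^{-1}(0)\subset\Mm^{\mathrm{max}}(\Uu,\sSp(4,\R))$ of the holomorphic function \eqref{EQ Tr_2}, in such a way that the map $T$ of \eqref{EQ pi to Teich} becomes the restriction to $\mathrm{Tr}_2^{-1}(0)$ of the projection $\pi:\Mm^{\mathrm{max}}(\Uu,\sSp(4,\R))\to\Teich(S)$, which is holomorphic by Theorem~\ref{thm:universal}. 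Restricting to the connected component with topological invariants $(sw_1,sw_2)$, $sw_1\neq0$, gives the component $\Xx^{\mathrm{max},sw_2}_{sw_1}(\Gamma,\sSp(4,\R))$.

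Next I would identify the fiber of $T$. For fixed $\Sigma\in\Teich(S)$ the fiber is the subspace of $\Mm^{\mathrm{max},sw_2}_{sw_1}(\Sigma,\sSp(4,\R))$ cut out by $\tr(\varphi^2)=0$. From the $\sSp(4,\C)$-Higgs field in \eqref{SL(4,C) Higgs of Sp(4,R) Higgs} one computes $\tr(\varphi^2)=2\,\tr(\beta\gamma)$, which, in the normal form of \eqref{eq sp4 beta form} used in Theorem~\ref{THM Higgs Sp4 sw1not0} (with $\gamma$ the standard isomorphism and the off-diagonal entry of $\beta$ equal to the quadratic differential $q_2$), is a nonzero scalar multiple of $q_2$; so the condition $\tr(\varphi^2)=0$ is exactly $q_2=0$. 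Since Theorem~\ref{THM Higgs Sp4 sw1not0} gives a holomorphic identification $\Mm^{\mathrm{max},sw_2}_{sw_1}(\Sigma,\sSp(4,\R))\cong\Kk'/\Z_2\times H^0(\Sigma,K^2)$ in which the second factor is precisely $q_2$, the fiber $T^{-1}(\Sigma)$ is biholomorphic to $\Kk'/\Z_2$.

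For triviality over $\Teich(S)$, I would run the constructions preceding Theorem~\ref{THM Higgs Sp4 sw1not0} — the Prym varieties $\Prym^{sw_2}(\Sigma_{sw_1})$, the bundles $\Ee$, $\Jj$ and hence $\Kk'$ — over the universal Teichm\"uller curve, equivalently work inside $\Mm^{\mathrm{max},sw_2}_{sw_1}(\Uu,\sSp(4,\R))$. This exhibits the $q_2$-coordinate as a holomorphic vector-bundle factor $\Hh^2$ of the total space, so that $\mathrm{Tr}_2^{-1}(0)=\{q_2=0\}$ is the complementary fiber bundle over $\Teich(S)$ with fiber $\Kk'/\Z_2$; since $\Teich(S)$ is contractible, this bundle is trivial. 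Combined with the previous two steps, this is the assertion.

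The one genuinely non-formal point — and the step I expect to be the main obstacle — is checking that the fixed-surface identification of Theorem~\ref{THM Higgs Sp4 sw1not0}, which is stated only as a diffeomorphism, is compatible with the complex-analytic structure that $\Mm^{\mathrm{max}}(\Uu,\sSp(4,\R))$ induces, so that it can be promoted to a biholomorphism and globalizes holomorphically in $\Sigma$. This I would handle as in the proofs of the corresponding corollaries for $\sP\sSp(4,\R)$, using in addition Proposition~\ref{Prop Sp(4) component covering} to present $\Xx^{\mathrm{max},sw_2}_{sw_1}(\Gamma,\sSp(4,\R))$ as a covering of the $\sP\sSp(4,\R)$-component whose family structure is already understood, together with Theorem~\ref{THM: NAHC} for the fiberwise nonabelian Hodge correspondence.
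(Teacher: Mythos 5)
Your proposal is correct and follows essentially the same route as the paper: the corollary is obtained by combining Theorem \ref{UniqueMinSurface} (uniqueness of the minimal surface) and Theorem \ref{thm:invariant complex structures} with the fixed-surface parametrization of Theorem \ref{THM Higgs Sp4 sw1not0}, identifying the component with $\mathrm{Tr}_2^{-1}(0)$ inside the universal moduli space, where $\mathrm{Tr}_2$ vanishes exactly when $q_2=0$ so that each fiber of $T$ is $\Kk'/\Z_2$. Your extra details — the computation $\tr(\varphi^2)=2\tr(\beta\gamma)\sim q_2$ and the remark that the identification must be checked to be holomorphic in families — are exactly the points the paper leaves implicit.
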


\subsection{Action of $\MCG(S)$}   \label{sec:action of mcg}

If we assume that we understand the action of $\MCG(S)$ on $\Teich(S)$, then the parameterizations given by the above corollaries allow us to understand the action of $\MCG(S)$ on $\Xx^{\mathrm{max}}(\Gamma,\sG)$ in an explicit way. We will now describe the quotient space 
\[\Qq(\Gamma,\sG) = \Xx^{\mathrm{max}}(\Gamma,\sG)/\MCG(S)~.\] 
The action of $\MCG(S)$ on $\Xx^{\mathrm{max}}(\Gamma,\sG)$ is properly discontinuous (see \cite{WienhardAction}, \cite{AnosovFlowsLabourie}), thus the quotient $\Qq(\Gamma,\sG)$ is again a complex analytic space. 

Recall that the fiber bundle map $T:\Xx^{\mathrm{max}}(\Gamma,\sG) \to \Teich(S)$ from \eqref{EQ pi to Teich} is $\MCG(S)$-equivariant, and so induces a map 
\begin{equation}
\label{EQ hat T}\widehat{T}: \Qq(\Gamma,\sG) \to \Mod(S)~.
\end{equation}
The map $\widehat{T}$ is a holomorphic submersion, but, is no longer a fiber bundle map. This is because the action of $\MCG(S)$ on $\Teich(S)$ is not free, and the Riemann surfaces with non-trivial stabilizer project to the singular points of $\Mod(S)$. 

We will write $\Teich(S)=\Teich(S)^\text{free}\sqcup\Teich(S)^\text{fix}$ where $\Teich(S)^\text{free}$ is the open dense subset where the action of $\MCG(S)$ is free, and $\Teich^{\text{fix}}(S)$ is the subset of points where the action of $\MCG(S)$ is not free. Similarly, we will write $\Mod(S)=\Mod^{\text{sm}}(S)\sqcup\Mod^{\text{sing}}(S)$
where $\Mod^{\text{sm}}(S)$ is the open dense subset consisting of smooth points and $\Mod^{\text{sing}}(S)$ is the subset consisting of orbifold singularities. 

On $T^{-1}(\Teich^{\text{free}}(S))$, the action of $\MCG(S)$ is properly discontinuous and free, and the quotient is $\widehat{T}^{-1}(\Mod^{\text{sm}}(S))$. The restriction of $\widehat{T}$ to this open dense subset of $\Qq(\Gamma,\sG)$ is a holomorphic fiber bundle over $\Mod^{\text{sm}}(S)$. The fibers over the points in $\Mod^{\text{sm}}(S)$ will be called \emph{generic fibers} and will be described below. The fibers over the points of $\Mod^{\text{sing}}(S)$ are harder to describe because it is necessary to take into account the action of the stabilizer of the corresponding point in $\Teich^{\text{fix}}(S)$.   

\subsection{The connected components of $\Qq(\Gamma,\sP\sSp(4,\R))$}
We first count the connected components of $\Qq(\Gamma,\sP\sSp(4,\R))$, this was done in \cite{TopInvariantsAnosov} for maximal $\sSp(4,\R)$-representations, we follow a similar line of argument here. The mapping class group acts on $H^1(S,\Z_2)$ and on $H^2(S,\Z_2)$. The action on the second homology is trivial, and the action on the first homology induces a surjective homomorphism 
\[\MCG(S) \ra \sSp(H^1(S,\Z_2))~.\]
In particular, $\MCG(S)$ acts transitively on all non-zero elements of $H^1(S,\Z_2)$, and so there are two orbits: the orbit of zero and the orbit through the nonzero elements. Given a non-zero element $sw_1 \in H^1(S,\Z_2)$ consider its stabilizer:
\[\PMCG(S) = \{ g\in \MCG(S) \ |\ g \cdot sw_1 = sw_1  \}~. \]
We will call $\PMCG(S)$ the \emph{parabolic mapping class group} because it is the inverse image of a parabolic subgroup of $\sSp(H^1(S,\Z_2))$; it is a subgroup of index $2^{2g}-1$.
The invariant $d$ is preserved by the action of $\MCG(S)$ since the pullback of a line bundle of degree $d$ by a holomorphic map still has degree $d$.
These simple considerations already tell us what are the connected components of $\Qq(\Gamma,\sP\sSp(4,\R))$:

\begin{Theorem} \label{Components of PSp4R Q}
For each $d\in[0,4g-4]$, the mapping class group $\MCG(S)$ preserves each connected component $\Xx_{d}^\mathrm{max}(\Gamma,\sP\sSp(4,\R))$ and for each $sw_2$ $\MCG(S)$ permutes the connected components $\Xx^{\mathrm{max},sw_2}_{sw_1}(\Gamma,\sP\sSp(4,\R)).$ In particular, $\Qq(\Gamma,\sP\sSp(4,\R))$ has $4g-1$ connected components. 
\end{Theorem}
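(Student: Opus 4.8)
The plan is to determine the orbits of the induced $\MCG(S)$-action on the set of connected components $\pi_0\!\left(\Xx^{\mathrm{max}}(\Gamma,\sP\sSp(4,\R))\right)$ and then to count them. Since $\MCG(S)$ acts on $\Xx^{\mathrm{max}}$ by homeomorphisms through $\rho\mapsto\rho\circ\phi_*$ and the action is properly discontinuous, the quotient $\Qq(\Gamma,\sP\sSp(4,\R))$ is a complex analytic space whose connected components correspond exactly to these orbits, so it suffices to show there are $4g-1$ of them. By the classification recalled in the excerpt the components are labelled by $\{0,\dots,4g-4\}\sqcup(H^1(S,\Z_2)\setminus\{0\})\times H^2(S,\Z_2)$ via the invariants $d$, $sw_1$, $sw_2$, and the discussion preceding the theorem already records how $\MCG(S)$ acts on these labels: it acts trivially on $H^2(S,\Z_2)$, transitively on $H^1(S,\Z_2)\setminus\{0\}$ (through the surjection $\MCG(S)\to\sSp(H^1(S,\Z_2))$), and it fixes the integer $d$.

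Assembling these facts gives the orbit structure directly. The components $\Xx_{0,d}^{\mathrm{max}}$ all have $sw_1=0$, which is a fixed point of the action on $H^1(S,\Z_2)$, and they are further distinguished by the $\MCG(S)$-invariant integer $d\in[0,4g-4]$; hence each of these $4g-3$ components is preserved, contributing $4g-3$ orbits. The components $\Xx_{sw_1}^{\mathrm{max},sw_2}$ all have $sw_1\neq0$ and so cannot be mixed with the previous family, nor with components carrying a different $sw_2$, since $sw_2$ is invariant. For a fixed $sw_2$, transitivity of $\MCG(S)$ on the nonzero elements of $H^1(S,\Z_2)$ shows that the family $\{\Xx_{sw_1}^{\mathrm{max},sw_2}\}_{sw_1\neq0}$ is a single orbit; as $sw_2$ ranges over the two elements of $H^2(S,\Z_2)\cong\Z_2$ this yields $2$ further orbits. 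In total $\Qq(\Gamma,\sP\sSp(4,\R))$ has $4g-3+2=4g-1$ connected components, as claimed; the orbit--stabilizer relation, with $\PMCG(S)$ of index $2^{2g}-1$, serves as a consistency check on the transitivity.

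The one point deserving care, and the main obstacle, is the naturality of the labelling invariants: one must verify that $sw_1,sw_2,d$ --- defined via the associated $\sO(2,\C)$-bundle $(\Ff,Q_F)$ of a maximal $\sSO_0(2,3)$-Higgs bundle over a chosen $\Sigma$ --- agree with topological invariants of the flat bundle of $\rho$ and transform through the induced map on $H^*(S,\Z_2)$ under $\rho\mapsto\rho\circ\phi_*$. Working in the equivariant model $\Xx^{\mathrm{max}}=\mathrm{Tr}_2^{-1}(0)$ inside the universal moduli space, where $\MCG(S)$ acts by pullback along the biholomorphism $\hat\varphi\colon\phi(\Sigma)\to\Sigma$, this reduces to the observations that pullback by a biholomorphism preserves the degree of $L$ (hence $d$) and carries Stiefel--Whitney classes to their pullbacks (hence $sw_1\mapsto\phi^*sw_1$ and $sw_2\mapsto\phi^*sw_2=sw_2$, the latter being fixed since $\phi$ is orientation-preserving). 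Once this equivariance is established, the orbit count above completes the proof.
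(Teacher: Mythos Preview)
Your argument is correct and follows essentially the same route as the paper: the paragraph preceding the theorem already records that $\MCG(S)$ acts trivially on $H^2(S,\Z_2)$, transitively on $H^1(S,\Z_2)\setminus\{0\}$ via the surjection onto $\sSp(H^1(S,\Z_2))$, and preserves $d$ because pullback by a biholomorphism preserves degree, and the theorem is stated as a direct consequence of these ``simple considerations''. Your third paragraph on the naturality of the invariants under the $\MCG(S)$-action is in fact more careful than what the paper spells out, but it is exactly the justification implicit in the paper's remark that $d$ is preserved under pullback.
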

The connected components of $\Qq(\Gamma,\sP\sSp(4,\R))$ will be denoted as follows
\[\Qq_d(\Gamma,\sP\sSp(4,\R))=\Xx_d^\mathrm{max}(\Gamma,\sP\sSp(4,\R))/\MCG(S)~,\]
\[\Qq^{sw_2}(\Gamma,\sP\sSp(4,\R))=\left(\bigsqcup\limits_{sw_1\in H^1(S,\Z_2)\setminus\{0\}}\Xx_{sw_1}^{\mathrm{max},sw_2}(\Gamma,\sP\sSp(4,\R))\right)\left\slash\right.\MCG(S)~.\]
We can now describe the topology of the components of $\Qq(\Gamma,\sP\sSp(4,\R))$, the next two theorems follow from Theorem \ref{Components of PSp4R Q}, the considerations in Section \ref{sec:action of mcg}, and the corollaries in Section \ref{sec:mcg-invariant description}.

\begin{Theorem}
For $0 \leq d \leq 4g-4$, the map $\widehat{T}:\Qq_d(\Gamma,\sP\sSp(4,\R)) \ra \Mod(S)$ is a holomorphic submersion over $\Mod(S)$. 
\begin{itemize}
    \item When $d> 0$, the generic fiber is biholomorphic to the rank $3g-3+d$ holomorphic vector bundle $\Ff_d$ over the $(4g-4-d)^{th}$-symmetric product of $\Sigma$ described in Theorem \ref{THM d>0}. 
    \item  When $d=0$, the generic fiber is biholomorphic to $(\Aa/\Z_2)$, where, as described in Theorem \ref{thm:zero_component}, $\Aa$ is the holomorphic fiber bundle over $\Pic^0(\Sigma)$ and $\Z_2$ acts on $\Aa$ by pullback by inversion on $\Pic^0(\Sigma).$
\end{itemize}
\end{Theorem}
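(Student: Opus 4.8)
The plan is to assemble the statement from the three ingredients flagged before it: the component count of Theorem \ref{Components of PSp4R Q}, the $\MCG(S)$-equivariant fiber bundle picture of the corollaries in Section \ref{sec:mcg-invariant description}, and the general discussion of the quotient map $\widehat{T}$ in Section \ref{sec:action of mcg}. First I would record that, by Theorem \ref{Components of PSp4R Q}, the mapping class group preserves the connected component $\Xx^{\mathrm{max}}_{0,d}(\Gamma,\sP\sSp(4,\R))$ for each $d\in[0,4g-4]$, so that $\Qq_d(\Gamma,\sP\sSp(4,\R))$ is indeed a connected component of $\Qq(\Gamma,\sP\sSp(4,\R))$ and the restriction of the map $T$ from \eqref{EQ pi to Teich} descends to the well-defined map $\widehat{T}\colon\Qq_d(\Gamma,\sP\sSp(4,\R))\to\Mod(S)$ of \eqref{EQ hat T}.

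Next I would argue that $\widehat{T}$ is a holomorphic submersion. By Theorem \ref{thm:invariant complex structures} the source $\Xx^{\mathrm{max}}(\Gamma,\sP\sSp(4,\R))$ carries a complex analytic structure for which $T$ is holomorphic, and by the corollaries of Section \ref{sec:mcg-invariant description} the restriction of $T$ to $\Xx^{\mathrm{max}}_{0,d}(\Gamma,\sP\sSp(4,\R))$ is a trivial holomorphic fiber bundle over $\Teich(S)$, hence in particular a holomorphic submersion with complex-manifold base. Since $T$ is $\MCG(S)$-equivariant and the action is properly discontinuous both on $\Xx^{\mathrm{max}}_{0,d}$ and on $\Teich(S)$ (with quotient $\Mod(S)$), the induced map $\widehat{T}$ is holomorphic; being a submersion is a local condition that one checks on the $\MCG(S)$-covers, where at a surface $\Sigma$ with stabilizer $\Gamma_\Sigma$ one has a $\Gamma_\Sigma$-equivariant local product $\mathrm{(fiber)}\times U \to U$, so the map of local quotient models $(\mathrm{fiber}\times U)/\Gamma_\Sigma \to U/\Gamma_\Sigma$ is a submersion of complex analytic spaces. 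I expect this to be the step requiring the most care: making the word \emph{submersion} precise over the orbifold locus $\Mod^{\mathrm{sing}}(S)$, and checking that the analytic structure carried by $\Xx^{\mathrm{max}}(\Gamma,\sP\sSp(4,\R))=\mathrm{Tr_2}^{-1}(0)$ in Theorem \ref{thm:invariant complex structures} is compatible with these quotient presentations.

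Finally, for the description of the generic fiber, I would restrict to $\widehat{T}^{-1}(\Mod^{\mathrm{sm}}(S))$. Over the open dense locus $\Teich(S)^{\mathrm{free}}$ the $\MCG(S)$-action is free and properly discontinuous, so the quotient of the equivariant holomorphic fiber bundle $T\colon T^{-1}(\Teich(S)^{\mathrm{free}})\to\Teich(S)^{\mathrm{free}}$ is again a holomorphic fiber bundle, now over $\Mod^{\mathrm{sm}}(S)$, with the same typical fiber. The fiber over a point $[\Sigma]\in\Mod^{\mathrm{sm}}(S)$ with trivial stabilizer is $T^{-1}(\Sigma)$ with its induced complex structure, which by the corollaries of Section \ref{sec:mcg-invariant description} — resting on Theorem \ref{THM d>0} when $d>0$ and on Theorem \ref{thm:zero_component} when $d=0$ — is biholomorphic to the rank $3g-3+d$ holomorphic vector bundle $\Ff_d$ over $\Sym^{4g-4-d}(\Sigma)$ in the case $d>0$, and to $\Aa/\Z_2$, with $\Aa\to\Pic^0(\Sigma)$ the holomorphic fiber bundle of Theorem \ref{thm:zero_component} and $\Z_2$ acting by pullback of the inversion map, in the case $d=0$. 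Combining the three steps yields the theorem.
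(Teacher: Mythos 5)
Your proposal is correct and follows essentially the same route as the paper, which derives this theorem directly from Theorem \ref{Components of PSp4R Q}, the free/fixed-locus discussion of Section \ref{sec:action of mcg}, and the fiber-bundle corollaries of Section \ref{sec:mcg-invariant description}. Your added care about what ``submersion'' means over the orbifold locus $\Mod^{\mathrm{sing}}(S)$ is a reasonable elaboration of a point the paper leaves implicit, but it does not change the argument.
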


The description of $\Qq^{sw_2}(\Gamma,\sP\sSp(4,\R))$ is slightly harder because an orbit of $\MCG(S)$ intersects $2^{2g}-1$ different components. The stabilizer of one of these components is the parabolic mapping class group $\PMCG(S)$. The quotient of the Teichm\"uller space by this subgroup is a  $(2^{2g}-1)$-orbifold cover of the moduli space:
\[\Teich(S)/\PMCG(S) \ra \Mod(S)~. \]
Each component $\Qq^{sw_2}(\Gamma,\sP\sSp(4,\R))$ can be seen as the quotient 
\[\Qq^{sw_2}(\Gamma,\sP\sSp(4,\R)) = \Xx^{\mathrm{max},sw_2}_{sw_1}(\Gamma, \sP\sSp(4,\R))/\PMCG(S) ~.\] 
Thus, there is a holomorphic submersion
\[\widehat{T}':\Qq^{sw_2}(\Gamma,\sP\sSp(4,\R)) \ra \Teich(S)/\PMCG(S)~.\]
In this way we find two descriptions of $\Qq^{sw_2}(\Gamma,\sP\sSp(4,\R))$, one describes it using a map to $\Mod(S)$ with a disconnected fiber, and the other using a map to $\Teich(S)/\PMCG(S)$ with a connected fiber.
\begin{Theorem}
Let $sw_2 \in H^2(S,\Z_2)$.
\begin{itemize}
     \item The map $\widehat{T}:\Qq^{sw_2}(\Gamma,\sP\sSp(4,\R)) \ra \Mod(S)$ is a holomorphic submersion with generic fiber biholomorphic to the disjoint union of $2^{2g}-1$ copies of $\left(\Hh'/\Z_2\right)$, where $\Hh'$ is the  bundle over $\Prym^{sw_2}(X_{sw_1},\Sigma)$ described in Proposition \ref{bundleoverPrym}.
\item The map $\widehat{T}':\Qq^{sw_2}(\Gamma,\sP\sSp(4,\R)) \ra \Teich(S)/\PMCG(S)$ is a holomorphic submersion with generic fiber biholomorphic to $\left(\Hh'/\Z_2\right)$. 
 \end{itemize} 
\end{Theorem}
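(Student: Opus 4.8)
The plan is to deduce both assertions by combining the $\MCG(S)$-equivariant parameterization of the components $\Xx^{\mathrm{max},sw_2}_{sw_1}(\Gamma,\sP\sSp(4,\R))$ with the description of the $\MCG(S)$-orbits on the set of connected components obtained in Theorem~\ref{Components of PSp4R Q}. First I would record the group-theoretic input: for a fixed $sw_2\in H^2(S,\Z_2)$, the $2^{2g}-1$ components $\Xx^{\mathrm{max},sw_2}_{sw_1}(\Gamma,\sP\sSp(4,\R))$, indexed by $sw_1\in H^1(S,\Z_2)\setminus\{0\}$, form a single $\MCG(S)$-orbit. Indeed the action on $H^1(S,\Z_2)$ factors through the surjection $\MCG(S)\to\sSp(H^1(S,\Z_2))$, which is transitive on nonzero classes, while $sw_2$ is fixed since the action on $H^2(S,\Z_2)$ is trivial; the stabilizer of a chosen component is precisely the parabolic mapping class group $\PMCG(S)$, of index $2^{2g}-1$. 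The elementary orbit--stabilizer principle for a group transitively permuting the pieces of a disjoint union, applied to the properly discontinuous holomorphic $\MCG(S)$-action, then yields a canonical biholomorphism
\[
\Qq^{sw_2}(\Gamma,\sP\sSp(4,\R))\;\cong\;\Xx^{\mathrm{max},sw_2}_{sw_1}(\Gamma,\sP\sSp(4,\R))\big/\PMCG(S)
\]
for any fixed $sw_1\neq0$.

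Next I would pass to the quotients and verify the regularity statements. The map $T\colon\Xx^{\mathrm{max}}(\Gamma,\sP\sSp(4,\R))\to\Teich(S)$ of \eqref{EQ pi to Teich} is holomorphic by Theorem~\ref{thm:invariant complex structures} and $\MCG(S)$-equivariant; restricted to $\Xx^{\mathrm{max},sw_2}_{sw_1}$ it is $\PMCG(S)$-equivariant, hence descends to a holomorphic map $\widehat T'\colon\Qq^{sw_2}(\Gamma,\sP\sSp(4,\R))\to\Teich(S)/\PMCG(S)$, and post-composing with the covering $\Teich(S)/\PMCG(S)\to\Mod(S)$ produces the $\MCG(S)$-invariant map $\widehat T$ of \eqref{EQ hat T}. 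Since $\MCG(S)$, hence $\PMCG(S)$, acts properly discontinuously and by holomorphic maps on $\Xx^{\mathrm{max}}(\Gamma,\sP\sSp(4,\R))$ (\cite{WienhardAction}, \cite{AnosovFlowsLabourie}), these quotients are complex analytic spaces; and because $T$ restricted to each component is a trivial smooth fiber bundle onto $\Teich(S)$ --- it is $\mathrm{Tr}_2^{-1}(0)$ inside $\Mm^{\mathrm{max}}(\Uu,\sP\sSp(4,\R))$, which is a trivial smooth fiber bundle over $\Teich(S)$ by Theorem~\ref{thm:universal} --- the submersion property descends, in the orbifold sense, to $\widehat T$ and $\widehat T'$; this is the bookkeeping already outlined in Section~\ref{sec:action of mcg}.

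Finally I would identify the generic fibers by restricting over the open dense locus $\Teich(S)^{\mathrm{free}}$ on which $\MCG(S)$, and therefore $\PMCG(S)$, acts freely; its image in $\Mod(S)$ is $\Mod^{\mathrm{sm}}(S)$. Over this locus the relevant quotient maps are genuine coverings, so $\widehat T$ and $\widehat T'$ are honest holomorphic fiber bundles there. For $\widehat T'$, the fiber over the point represented by $\Sigma\in\Teich(S)^{\mathrm{free}}$ is $T^{-1}(\Sigma)\cap\Xx^{\mathrm{max},sw_2}_{sw_1}$, which by the corollary of Section~\ref{sec:mcg-invariant description} (a consequence of Theorem~\ref{THM HiggsParamsw1sw2orbifold}) is biholomorphic to $\Hh'/\Z_2$, with $\Hh'$ the bundle over $\Prym^{sw_2}(\Sigma_{sw_1})$. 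For $\widehat T$, since such a $\Sigma$ has trivial $\MCG(S)$-stabilizer, each point of the fiber over $[\Sigma]$ has a unique representative lying over $\Sigma$ itself, and these representatives range over all $2^{2g}-1$ components; hence the fiber is $\bigsqcup_{sw_1\neq0}\bigl(T^{-1}(\Sigma)\cap\Xx^{\mathrm{max},sw_2}_{sw_1}\bigr)$, a disjoint union of $2^{2g}-1$ copies of $\Hh'/\Z_2$, one for each double cover $\Sigma_{sw_1}$.

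The step that needs the most care is not deep but is the one most prone to error: the orbit bookkeeping identifying the fibers of $\widehat T$. One must use that the component set forms a \emph{single} $\MCG(S)$-orbit while a generic $\Sigma$ has trivial stabilizer, so that the fiber over $[\Sigma]$ genuinely meets all $2^{2g}-1$ components rather than collapsing, together with a precise meaning of ``holomorphic submersion'' over the branch locus $\Mod^{\mathrm{sing}}(S)$, where only the orbifold statement and the finiteness of the stabilizers are available. All remaining ingredients are an assembly of Theorems~\ref{UniqueMinSurface}, \ref{thm:invariant complex structures}, \ref{Components of PSp4R Q} and \ref{THM HiggsParamsw1sw2orbifold}.
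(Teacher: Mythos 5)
Your proposal is correct and follows the same route the paper takes: it assembles Theorem \ref{Components of PSp4R Q} (transitivity on the $2^{2g}-1$ components with stabilizer $\PMCG(S)$), the free/fixed-locus bookkeeping of Section \ref{sec:action of mcg}, and the fiberwise identification with $\Hh'/\Z_2$ from the corollaries of Section \ref{sec:mcg-invariant description}, which is exactly the paper's stated derivation. Your write-up merely makes explicit the orbit--stabilizer step and the caveat about the orbifold meaning of ``submersion'' over $\Mod^{\mathrm{sing}}(S)$, both of which the paper leaves implicit.
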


\subsection{The connected components of $\Qq(\Gamma,\sSp(4,\R))$}

Counting the connected components of $\Qq(\Gamma,\sSp(4,\R))$ is slightly more complicated; they were counted in \cite[Theorem 10]{TopInvariantsAnosov}. To distinguish the components of $\Xx^\mathrm{max}(\Gamma,\sSp(4,\R))$ we have Higgs bundle invariants $sw_1 \in H^1(S,\Z_2)$, $sw_2 \in H^2(S,\Z_2)$, $d\in \Z$ and an extra invariant to distinguish between the $2^{2g}$ Hitchin components. 

The invariant that distinguishes the Hitchin components is a choice of a square root of $K$. There is a well known topological interpretation of this choice, it is equivalent to a choice of a spin structure on $S$. 
Spin structures have a topological invariant called the \emph{Arf invariant} which takes values in $\Z_2$ and is preserved by the action of $\MCG(S)$. A spin structure is called \emph{even} or \emph{odd} depending on the value of the Arf invariant. There are $2^{g-1}(2^g+1)$ even and $2^{g-1}(2^g-1)$ odd spin structures (see \cite{AtiyahSpinStructures}). The mapping class group acts transitively on the set of odd spin structures and on the set of even spin structures. 

 Recall from Section \ref{Sp4R} that $\Xx^\mathrm{max}(\Gamma,\sSp(4,\R))$ decomposes as
\[\bigsqcup\limits_{\substack{(sw_1,sw_2)\in\\ H^1(S,\Z_2)\setminus\{0\}\times H^2(S,\Z_2)}}\Xx^{\mathrm{max},sw_2}_{sw_1}(\Gamma,\sSp(4,\R))\ \sqcup\bigsqcup\limits_{d\in[0,2g-2]}\Xx_{d}^\mathrm{max}(\Gamma,\sSp(4,\R))~.\]
 Using the notation above, we now state the theorem which determines the connected components $\Qq(\Gamma,\sSp(4,\R))$.

\begin{Theorem}\cite[Theorem 10]{TopInvariantsAnosov}\label{Components of Sp4R Q}
For each $d\in[0,2g-2),$ the mapping class group $\MCG(S)$ preserves $\Xx_d^\mathrm{max}(\Gamma,\sSp(4,\R))$. For $sw_2\in H^2(S,\Z_2)$, the action of $\MCG(S)$ permutes the components $\Xx_{sw_1}^{\textrm{max},sw_1}(\Gamma,\sSp(4,\R))$. For $\Xx^\textrm{max}_{2g-2}(\Gamma,\sSp(4,\R))$, $\MCG(S)$ acts on the $2^{2g}$ connected components with two orbits distinguished by the Arf invariant. 
In particular, $\Qq(\Gamma,\sSp(4,\R))$ has $2g+2$ connected components.
\end{Theorem}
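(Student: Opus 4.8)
The plan is to count the $\MCG(S)$-orbits on the set of connected components of $\Xx^{\mathrm{max}}(\Gamma,\sSp(4,\R))$, since $\Qq(\Gamma,\sSp(4,\R))$ is the quotient and each orbit of components contributes exactly one connected piece of the quotient. The components of $\Xx^{\mathrm{max}}(\Gamma,\sSp(4,\R))$ are, via the nonabelian Hodge correspondence, in bijection with those of $\Mm^{\mathrm{max}}(\sSp(4,\R))$, which by the corollary following Gothen's theorem number $3\cdot 2^{2g}+2g-4$: the $2g-3$ components $\Mm_{0,d}^{\mathrm{max}}$ with $0\le d<2g-2$, the $2(2^{2g}-1)$ components $\Mm_{sw_1}^{\mathrm{max},sw_2}$ with $sw_1\neq 0$, and the $2^{2g}$ Hitchin components with $d=2g-2$. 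So the task reduces to understanding the $\MCG(S)$ action on this index set, which is precisely the content of Theorem \ref{Components of Sp4R Q} quoted above from \cite[Theorem 10]{TopInvariantsAnosov}.

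First I would invoke the fact that all of the discrete Higgs bundle invariants — $sw_1\in H^1(S,\Z_2)$, $sw_2\in H^2(S,\Z_2)$, the degree $d$, and the spin structure (square root of $K$) for the Hitchin locus — are natural with respect to pullback by orientation-preserving diffeomorphisms, hence $\MCG(S)$-equivariant data. The action of $\MCG(S)$ on $H^2(S,\Z_2)=\Z_2$ is trivial; the action on $H^1(S,\Z_2)=\Z_2^{2g}$ factors through the full symplectic group $\sSp(2g,\Z_2)$ (the homomorphism $\MCG(S)\to\sSp(H^1(S,\Z_2))$ is surjective, as recalled in Section \ref{sec:mcg-invariant description}), so there are exactly two orbits on $H^1(S,\Z_2)$, namely $\{0\}$ and $H^1(S,\Z_2)\setminus\{0\}$. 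The invariant $d$ is individually fixed. On the $2^{2g}$ spin structures, $\MCG(S)$ acts with two orbits distinguished by the Arf invariant (even and odd), by the classical count of $2^{g-1}(2^g+1)$ even and $2^{g-1}(2^g-1)$ odd spin structures and the transitivity of $\MCG(S)$ on each — this is the one genuinely external input, which I would cite from \cite{AtiyahSpinStructures} together with the classical transitivity statement.

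Next I would assemble the orbit count. The $2g-3$ components $\Mm_{0,d}^{\mathrm{max}}$ with $0\le d<2g-2$ are each individually $\MCG(S)$-invariant, contributing $2g-3$ quotient components. For fixed $sw_2\in\{0,1\}$, the $2^{2g}-1$ components $\Mm_{sw_1}^{\mathrm{max},sw_2}$ with $sw_1\neq 0$ form a single orbit (by transitivity on $H^1(S,\Z_2)\setminus\{0\}$), contributing $2$ quotient components as $sw_2$ ranges over $\Z_2$. The $2^{2g}$ Hitchin components split into two orbits by the Arf invariant, contributing $2$ more. Total: $(2g-3)+2+2=2g+1$ — and here I must be careful, because the stated answer is $2g+2$; the discrepancy is resolved by noting that in this paper the range for the non-Hitchin components in the $sw_1=0$ family is $0\le d<2g-2$ giving $2g-2$ values of $d$, not $2g-3$ (recall Proposition \ref{Prop Sp(4) component covering} and the decomposition $\bigsqcup_{d\in[0,2g-2]}\Xx_d^\mathrm{max}\sqcup\bigsqcup_{sw_1\neq 0,sw_2}\Xx_{sw_1}^{\mathrm{max},sw_2}$ where $d=2g-2$ is the Hitchin locus), so the tally is $(2g-2)+2+2=2g+2$. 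The main obstacle will be precisely this bookkeeping: keeping the index ranges consistent between the Higgs-theoretic decomposition of $\Mm^{\mathrm{max}}(\sSp(4,\R))$ and the character-variety notation $\Xx_d^\mathrm{max}$, and correctly attributing the $d=2g-2$ stratum to the Hitchin components where the extra spin invariant lives. Once the ranges are pinned down, the orbit count is immediate, and the conclusion that $\Qq(\Gamma,\sSp(4,\R))$ has $2g+2$ connected components follows since $\MCG(S)$ acts on $\Xx^\mathrm{max}(\Gamma,\sSp(4,\R))$ preserving the partition into these orbits of components, each orbit being connected after quotienting because the individual components are connected and glued along the quotient.
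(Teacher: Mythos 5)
Your overall strategy coincides with the paper's: the theorem is imported from \cite{TopInvariantsAnosov}, and the paper's own surrounding discussion --- the parallel "simple considerations" preceding Theorem \ref{Components of PSp4R Q} for $\sP\sSp(4,\R)$, together with the paragraph on spin structures and the Arf invariant before Theorem \ref{Components of Sp4R Q} --- is exactly the orbit count you carry out. After your self-correction from $2g-3$ to $2g-2$ values of $d$ in $[0,2g-2)$, the arithmetic $(2g-2)+2+2=2g+2$ is right, and your treatment of the components $\Xx_{0,d}^{\mathrm{max}}$ with $d<2g-2$ (degree is preserved) and of the Hitchin components (two orbits via transitivity of $\MCG(S)$ on even and on odd spin structures) is sound.

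The one step that is genuinely under-justified is the blanket claim that $sw_2$ is ``natural with respect to pullback, hence $\MCG(S)$-equivariant'' for the components with $sw_1\neq 0$. For $\sSp(4,\R)$ this invariant is $sw_2(\Vv\otimes K^{-\haf},\gamma^*\circ\gamma)$, and by the paper's own Proposition \ref{Prop sw2 of Sp4R} it \emph{does} depend on the choice of square root $K^{\haf}$ exactly when $sw_1\neq 0$: replacing $K^{\haf}$ by $K^{\haf}\otimes I$ changes $sw_2$ by $sw_1\wedge sw_1(I)$, and this correction is onto $\Z_2$. Since a mapping class moves the complex structure and there is no canonical compatible choice of square roots on source and target, naive naturality does not show that the $sw_2$-label of a component is preserved; a priori the $2(2^{2g}-1)$ components with $sw_1\neq0$ could form a single $\MCG(S)$-orbit, which would give $2g+1$ rather than $2g+2$. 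To close this you must either appeal to the topological (representation-theoretic) definition of the invariants used in \cite{TopInvariantsAnosov}, or observe that the combination $sw_2+q_{K^{\haf}}(sw_1)$ --- where $q_{K^{\haf}}$ is the quadratic refinement of the mod-$2$ intersection form attached to the spin structure $K^{\haf}$ --- is independent of the choice of $K^{\haf}$ (the two correction terms $sw_1\wedge sw_1(I)$ cancel) and is therefore a canonical $\MCG(S)$-equivariant $\Z_2$-valued invariant of the component; combined with transitivity of $\MCG(S)$ on $H^1(S,\Z_2)\setminus\{0\}$ this yields exactly the two orbits in that family and completes the count.
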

The connected components of $\Qq(\Gamma,\sSp(4,\R))$ will be denoted as follows
\[\xymatrix{\Qq_d(\Gamma,\sSp(4,\R))=\Xx_d^\mathrm{max}(\Gamma,\sSp(4,\R))/\MCG(S)&\text{for\ } d\in[0,2g-2)}~,\]
\[\Qq^{sw_2}(\Gamma,\sSp(4,\R))=\left(\bigsqcup\limits_{sw_1\in H^1(S,\Z_2)\setminus\{0\}}\Xx_{sw_1}^{\mathrm{max},sw_2}(\Gamma,\sSp(4,\R))\right)\slash\MCG(S)~,\]
\[\Qq_{2g-2,0}(\Gamma,\sSp(4,\R))\sqcup\Qq_{2g-2,1}(\Gamma,\sSp(4,\R))=\Xx_{2g-2}^\mathrm{max}(\Gamma,\sSp(4,\R))/\MCG(S)~.\]
where $\Qq_{2g-2,a}(\Gamma,\sSp(4,\R))$ denotes the quotient space for Arf invariant $a\in\Z_2.$

We can now describe the components of $\Qq(\Gamma,\sSp(4,\R))$. The next three theorems follow from Theorem \ref{Components of Sp4R Q}, the considerations in Section \ref{sec:action of mcg},  and the corollaries in Section \ref{sec:mcg-invariant description}.

\begin{Theorem}
For $0 \leq d < 2g-2$, the map $\widehat{T}:\Qq_d(\Gamma,\sSp(4,\R)) \to \Mod(S)$ from \eqref{EQ hat T} is a holomorphic submersion. 
\begin{itemize}
    \item When $0 < d < 2g-2$, the generic fiber is biholomorphic to the space $\pi^* \Ff_{2d}$ described in Theorem \ref{THM SP4 d>0}.
    \item When $d=0$, the generic fiber  is biholomorphic to the space $s^*\Aa/\Z_2$ described in Theorem \ref{THM Sp4 d=0 }.
\end{itemize}
\end{Theorem}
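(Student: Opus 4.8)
The statement to prove is the description of the map $\widehat{T}:\Qq_d(\Gamma,\sSp(4,\R))\to\Mod(S)$ as a holomorphic submersion, with the indicated generic fibers. The plan is to assemble this from three ingredients already available in the paper: (i) the $\MCG(S)$-equivariant parameterization of $\Xx^{\mathrm{max}}(\Gamma,\sSp(4,\R))$ identified with $\mathrm{Tr}_2^{-1}(0)\subset\Mm^{\mathrm{max}}(\Uu,\sSp(4,\R))$, together with the holomorphic bundle map $T$ of \eqref{EQ pi to Teich} coming from Theorems \ref{UniqueMinSurface} and \ref{thm:invariant complex structures}; (ii) the explicit fiberwise descriptions of $T$ over a fixed $\Sigma$ given by the corollaries in Section \ref{sec:mcg-invariant description}, which for $\sG=\sSp(4,\R)$ read off Theorems \ref{THM SP4 d>0} and \ref{THM Sp4 d=0 }; and (iii) the statement (Theorem \ref{Components of Sp4R Q}) that for $0\le d<2g-2$ the component $\Xx_d^{\mathrm{max}}(\Gamma,\sSp(4,\R))$ is $\MCG(S)$-invariant, together with proper discontinuity of the $\MCG(S)$-action. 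These together reduce the theorem to a formal statement about equivariant fiber bundles and their quotients.

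First I would fix $d\in[0,2g-2)$ and restrict attention to the $\MCG(S)$-invariant connected component $\Xx_d^{\mathrm{max}}(\Gamma,\sSp(4,\R))$. By the equivariant parameterization, $T$ restricts to a $\MCG(S)$-equivariant holomorphic trivial fiber bundle over $\Teich(S)$, with fiber the space $F_d$ which is $\pi^*\Ff_{2d}$ when $d>0$ and $s^*\Aa/\Z_2$ when $d=0$; these fibers are holomorphic objects attached naturally to the Riemann surface $\Sigma$ (symmetric products, Picard and Prym varieties, Poincaré bundles), so the total space $T^{-1}(\Teich(S))\to\Teich(S)$ is a holomorphic family whose fiber over $\Sigma$ is $F_d(\Sigma)$. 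Next I would pass to the quotient: since the $\MCG(S)$-action on $\Xx^{\mathrm{max}}$ is properly discontinuous (\cite{WienhardAction}, \cite{AnosovFlowsLabourie}) and $T$ is $\MCG(S)$-equivariant, $T$ descends to a holomorphic map $\widehat T:\Qq_d(\Gamma,\sSp(4,\R))\to\Mod(S)$, and the claim that it is a submersion follows from the fact that $T$ is a submersion (indeed a bundle projection) and the quotient maps on source and target are local biholomorphisms away from orbifold loci — more precisely one argues locally, lifting to $\Teich(S)$ where the map is literally a projection. Then I would restrict over $\Mod^{\mathrm{sm}}(S)$, the locus where the $\MCG(S)$-action on $\Teich(S)$ is free: over its preimage $\Teich^{\mathrm{free}}(S)$ the action is free and properly discontinuous, so the quotient of $T^{-1}(\Teich^{\mathrm{free}}(S))$ is a genuine holomorphic fiber bundle over $\Mod^{\mathrm{sm}}(S)$, whose fiber over the image of $\Sigma$ is exactly $F_d(\Sigma)$ — this is the generic fiber, giving the two bullet points via Theorems \ref{THM SP4 d>0} and \ref{THM Sp4 d=0 }.

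The main obstacle is the careful treatment of the orbifold locus $\Mod^{\mathrm{sing}}(S)$: over a Riemann surface $\Sigma$ with nontrivial stabilizer $\Gamma_\Sigma<\MCG(S)$, the fiber of $\widehat T$ is not $F_d(\Sigma)$ but the quotient $F_d(\Sigma)/\Gamma_\Sigma$, so $\widehat T$ is only a submersion of complex analytic spaces and not a fiber bundle. To handle this cleanly I would argue entirely in the complex-analytic-space category of Section \ref{sec:complex analytic}: work locally on $\Teich(S)$, where everything is a product, push the $\Gamma_\Sigma$-action down, and invoke that a finite group acting holomorphically on a complex analytic space has a complex analytic quotient, with the quotient map holomorphic; submersivity of $\widehat T$ is then inherited from submersivity of the $\Gamma_\Sigma$-equivariant local model $F_d\times(\text{disk})\to(\text{disk})$. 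A secondary point requiring attention is that the fiber identifications $F_d(\Sigma)\cong\pi^*\Ff_{2d}$ etc. are natural in $\Sigma$ (so that they genuinely glue into a holomorphic family and are $\MCG(S)$-equivariant) — this is where one uses that the constructions of Theorems \ref{THM SP4 d>0} and \ref{THM Sp4 d=0 } are built from the universal Teichmüller curve and universal Higgs moduli space $\Mm^{\mathrm{max}}(\Uu,\sSp(4,\R))$ of Theorem \ref{thm:universal}, exactly as in the proof of Theorem \ref{thm:invariant complex structures}, rather than from ad hoc choices; with that in hand, everything else is formal.
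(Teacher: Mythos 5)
Your proposal is correct and follows essentially the same route as the paper: the paper derives this theorem by combining Theorem \ref{Components of Sp4R Q} (invariance of the components $\Xx_d^{\mathrm{max}}$ under $\MCG(S)$ for $0\le d<2g-2$), the free/fixed decomposition of $\Teich(S)$ and the generic-fiber discussion of Section \ref{sec:action of mcg}, and the fiberwise descriptions from the corollaries of Section \ref{sec:mcg-invariant description} resting on Theorems \ref{THM SP4 d>0} and \ref{THM Sp4 d=0 }. Your additional care about the orbifold locus and the naturality of the fiber identifications via the universal moduli space is exactly the content the paper delegates to Sections \ref{sec:action of mcg} and \ref{sec:mcg_inv_cmplx_str}, so there is no substantive divergence.
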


To describe the components corresponding to $sw_1 \neq 0$, we consider also the map
\[\widehat{T}':\Qq^{sw_2}(\Gamma,\sSp(4,\R) \to \Teich(S)/\PMCG(S)~.\]
Again, we find two descriptions of $\Qq^{sw_2}(\Gamma,\sSp(4,\R))$, one describes it using a map to $\Mod(S)$ with a disconnected fiber, and the other using a map to $\Teich(S)/\PMCG(S)$ with a connected fiber.
\begin{Theorem}
Let $sw_2 \in H^2(S,\Z_2)$.
\begin{itemize}
     \item The map $\widehat{T}:\Qq^{sw_2}(\Gamma,\sSp(4,\R)) \to \Mod(S)$ is a holomorphic submersion with generic fiber biholomorphic to the disjoint union of $2^{2g}-1$ copies of 
the space $\Kk'/\Z_2$ described in Theorem \ref{THM Higgs Sp4 sw1not0}.
\item The map $\widehat{T}':\Qq^{sw_2}(\Gamma,\sSp(4,\R)) \to \Teich(S)/\PMCG(S)$ is a holomorphic submersion with generic fiber biholomorphic to $\Kk'/\Z_2$. 
 \end{itemize} 
\end{Theorem}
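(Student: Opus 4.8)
The statement to prove is the description of $\Qq^{sw_2}(\Gamma,\sSp(4,\R))$ via the two submersions $\widehat{T}$ and $\widehat{T}'$. The plan is to deduce this directly from the machinery already assembled, so the proof is essentially a bookkeeping argument combining equivariance with the explicit fiber descriptions. First I would recall that by Theorem \ref{Components of Sp4R Q} the union $\bigsqcup_{sw_1\neq 0}\Xx_{sw_1}^{\mathrm{max},sw_2}(\Gamma,\sSp(4,\R))$ is a single $\MCG(S)$-invariant piece, and that $\MCG(S)$ acts transitively on the $2^{2g}-1$ components, each of which is $\PMCG(S)$-invariant; hence $\Qq^{sw_2}(\Gamma,\sSp(4,\R))=\Xx_{sw_1}^{\mathrm{max},sw_2}(\Gamma,\sSp(4,\R))/\PMCG(S)$ for any fixed $sw_1\neq 0$. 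This is the same reduction used for $\sP\sSp(4,\R)$ in the preceding subsection, so I would simply cite it.

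Next I would invoke the equivariance of the bundle map $T:\Xx^{\mathrm{max}}(\Gamma,\sG)\to\Teich(S)$ from \eqref{EQ pi to Teich}, which is part of the output of Theorems \ref{UniqueMinSurface} and \ref{thm:invariant complex structures}: since the minimal-surface assignment $\rho\mapsto\Sigma_\rho$ is canonical, it commutes with the $\MCG(S)$-action, so $T$ descends to $\widehat{T}:\Qq(\Gamma,\sSp(4,\R))\to\Mod(S)$ and, restricting to the subgroup $\PMCG(S)$ which preserves each individual component, to $\widehat{T}':\Qq^{sw_2}(\Gamma,\sSp(4,\R))\to\Teich(S)/\PMCG(S)$. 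That both maps are holomorphic submersions follows from the fact that $T$ is holomorphic (the complex analytic structure is built precisely so that $C\to\Teich(S)$ is holomorphic, Theorem \ref{thm:invariant complex structures}) together with holomorphicity of the quotient maps $\Teich(S)\to\Teich(S)/\PMCG(S)\to\Mod(S)$; submersivity is inherited because $T$ itself is a (trivial) fiber bundle, hence a submersion, and both source and target quotients are by properly discontinuous actions. I would note that $\widehat{T}'$ is genuinely a fiber bundle away from the orbifold locus, while $\widehat{T}$ has disconnected fibers because one $\MCG(S)$-orbit meets all $2^{2g}-1$ of the components — this is the content of the dichotomy in the statement.

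For the fiber identifications I would argue pointwise. Fix $\Sigma\in\Teich(S)$ lying over a smooth (i.e. free) point of $\Mod(S)$, equivalently of $\Teich(S)/\PMCG(S)$. Over such a point the $\PMCG(S)$-action on $T^{-1}(\text{orbit})$ is free and properly discontinuous, so the generic fiber of $\widehat{T}'$ is biholomorphic to $T^{-1}(\Sigma)\cap\Xx_{sw_1}^{\mathrm{max},sw_2}(\Gamma,\sSp(4,\R))$, which by the Corollary in Section \ref{sec:mcg-invariant description} (itself Theorem \ref{THM Higgs Sp4 sw1not0} transported through the nonabelian Hodge correspondence and Labourie's conjecture) is biholomorphic to $\Kk'/\Z_2$. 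For $\widehat{T}$, the fiber over the image point in $\Mod(S)$ is the preimage under $\widehat{T}'$ of the $2^{2g}-1$ points of $\Teich(S)/\PMCG(S)$ over it, i.e. the disjoint union of $2^{2g}-1$ copies of $\Kk'/\Z_2$. The main obstacle — and it is a mild one since the paper has already done the analogous argument for $\sP\sSp(4,\R)$ — is making sure that the biholomorphism type of the fiber $\Kk'/\Z_2$ is independent of the chosen $\Sigma$ in a way compatible with the complex structure, rather than merely diffeomorphism-invariant; this is handled by observing that $\Kk'$ is constructed functorially from $\Sigma_{sw_1}$ (the Prym variety, the Poincaré bundle, the pullbacks of $K_{sw_1}^2$) and that all the relevant operations — pullback of line bundles, fiber products, $\C^*$- and $\Z_2$-quotients — are holomorphic, so the whole family fits together holomorphically over $\Teich(S)$, as already encoded in the universal moduli space $\Mm^{\mathrm{max}}(\Uu,\sG)$ of Theorem \ref{thm:universal}. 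With that, the two bullet points of the theorem follow.
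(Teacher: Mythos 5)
Your proposal is correct and follows essentially the same route as the paper: the paper derives this theorem directly from Theorem \ref{Components of Sp4R Q} (transitivity of $\MCG(S)$ on the $2^{2g}-1$ components with stabilizer $\PMCG(S)$), the general discussion of the quotient $\widehat{T}$ in Section \ref{sec:action of mcg}, and the fiber identification with $\Kk'/\Z_2$ coming from Theorem \ref{THM Higgs Sp4 sw1not0} via the corollaries of Section \ref{sec:mcg-invariant description}. Your write-up merely spells out this assembly in more detail, including the correct observation that the generic fiber of $\widehat{T}$ is the union of the $\widehat{T}'$-fibers over the $2^{2g}-1$ preimages in $\Teich(S)/\PMCG(S)$.
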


Finally, we consider quotients of the Hitchin components $\Qq_{2g-2,a}(\Gamma,\sSp(4,\R))$.
Given a spin structure on $S$, the subgroup of the mapping class group preserving it is called the \emph{spin mapping class group}, see \cite{HarerSpin} for more information. There are two conjugacy classes of spin mapping class groups which depend on the Arf invariant of the preserved spin structure. They will be denoted by $\SMCG_a(S)$, where $a\in \Z_2$ is the corresponding Arf invariant. The group $\SMCG_0(S)$ is a subgroup of index $2^{g-1}(2^g+1)$, and $\SMCG_1(S)$ has index $2^{g-1}(2^g-1)$. 
The stabilizer in $\MCG(S)$ of a Hitchin component is the group $\SMCG_a(S)$. The quotient of the Teichm\"uller space by one of these subgroups is a finite orbifold cover of the moduli space:
\[\Teich(S)/\SMCG_a(S) \to \Mod(S)\]
Each component $\Qq_{2g-2,a}(\Gamma,\sSp(4,\R))$ can be seen as the quotient of one Hitchin component by the relevant spin mapping class group. Thus, there is a holomorphic submersion
\[\widehat{T}':\Qq_{2g-2,a}(\Gamma,\sSp(4,\R)) \ra \Teich(S)/\SMCG_a(S)~.\]
As before, we find two description of the spaces $\Qq_{2g-2,a}(\Gamma,\sSp(4,\R))$.
\begin{Theorem}
For each $a \in \Z_2$, the map $\widehat{T}:\Qq_{2g-2,a}(\Gamma,\sSp(4,\R)) \ra \Mod(S)$ is a holomorphic submersion with generic fiber biholomorphic to the disjoint union of $2^{g-1}(2^g+1)$ or $2^{g-1}(2^g-1)$ copies of the vector space $H^0(\Sigma,K^4)$. 
The map $\widehat{T}'$ is a holomorphic submersion with generic fiber biholomorphic to $H^0(\Sigma,K^4)$. 
\end{Theorem}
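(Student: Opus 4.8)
The plan is to derive this theorem as a direct consequence of the general framework already set up, combined with the Hitchin parametrization of the $\sSp(4,\R)$-Hitchin components. First I would recall that each of the $2^{2g}$ $\sSp(4,\R)$-Hitchin components is, by Hitchin's theorem \eqref{EQ Hitchin comp param} (with exponents $m_1=1$, $m_2=3$), parametrized by $H^0(\Sigma, K^2)\oplus H^0(\Sigma, K^4)$ once a Riemann surface $\Sigma$ is fixed; after imposing the vanishing of the quadratic differential $q_2$ (equivalently, $\mathrm{Tr}_2=0$, i.e. passing to the minimal-surface locus as in the proof of Theorem \ref{thm:invariant complex structures}), the fiber over $\Sigma\in\Teich(S)$ becomes biholomorphic to $H^0(\Sigma,K^4)$. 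By Theorem \ref{UniqueMinSurface} and the constructions of Section \ref{equivariant}, the component $\Xx^{\mathrm{max}}_{2g-2}(\Gamma,\sSp(4,\R))$ — a disjoint union of $2^{2g}$ Hitchin components indexed by spin structures — is identified with a trivial holomorphic fiber bundle over $\Teich(S)$ whose fiber is the disjoint union of $2^{2g}$ copies of $H^0(\Sigma,K^4)$, i.e. the restriction of the bundle $\Hh^4$ to each sheet. This is just the $d=2g-2$, $sw_1=0$ instance of the corollaries already proved in Section \ref{sec:mcg-invariant description}, noting that $\Ff_{4g-4}$ is exactly the bundle of quartic differentials (see the Remark after Theorem \ref{THM d>0}).

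Next I would unravel the $\MCG(S)$-action. By Theorem \ref{Components of Sp4R Q}, $\MCG(S)$ acts on the $2^{2g}$ Hitchin sheets through its action on spin structures, with exactly two orbits distinguished by the Arf invariant $a\in\Z_2$; the stabilizer of a fixed sheet is the spin mapping class group $\SMCG_a(S)$, of index $2^{g-1}(2^g+1)$ for $a=0$ and $2^{g-1}(2^g-1)$ for $a=1$. Hence $\Qq_{2g-2,a}(\Gamma,\sSp(4,\R))$ is the quotient of a \emph{single} Hitchin component by $\SMCG_a(S)$, and the map $T$ to $\Teich(S)$ descends — since it is $\MCG(S)$-equivariant and a fortiori $\SMCG_a(S)$-equivariant — to a map $\widehat{T}':\Qq_{2g-2,a}(\Gamma,\sSp(4,\R))\to\Teich(S)/\SMCG_a(S)$. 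That $\widehat T'$ is a holomorphic submersion follows from the general principle in Section \ref{sec:action of mcg}: $T$ is a holomorphic trivial bundle, $\SMCG_a(S)$ acts properly discontinuously (being a finite-index subgroup of $\MCG(S)$, which acts properly discontinuously on $\Xx^{\mathrm{max}}$ by \cite{WienhardAction,AnosovFlowsLabourie}), and over the free locus the quotient is a holomorphic fiber bundle with fiber $H^0(\Sigma,K^4)$; at the orbifold points of $\Teich(S)/\SMCG_a(S)$ submersivity survives because one quotients a smooth submersion by a finite group. The generic fiber over $\Mod(S)$ under $\widehat T$ is then the disjoint union of $[\MCG(S):\SMCG_a(S)] = 2^{g-1}(2^g\pm1)$ copies of $H^0(\Sigma,K^4)$, since a generic point of $\Mod(S)$ has $\MCG(S)$-orbit meeting all sheets in that single orbit, which is partitioned into $[\MCG(S):\SMCG_a(S)]$ blocks by the $\SMCG_a(S)$-action.

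I expect the main obstacle to be purely expository rather than mathematical: assembling, in the right order, the identification of $\Ff_{4g-4}$ with the bundle of quartic differentials, the reduction to the $q_2=0$ locus, and the index computations for $\SMCG_a(S)$, so that nothing is circular. A secondary point requiring a little care is justifying that $\widehat T$ and $\widehat T'$ remain \emph{submersions} after passing to orbifold quotients — this is where one should invoke that the fibers $H^0(\Sigma,K^4)$ vary holomorphically and the group actions are by biholomorphisms covering the $\Teich(S)$-action, so the local model is $(\text{disc})\times H^0(\Sigma,K^4)$ modulo a finite group acting on the disc factor, whose quotient still submerses onto the quotiented base. Beyond that, the proof is a bookkeeping exercise: every analytic input (complex structure, equivariance, minimal-surface uniqueness, Hitchin's parametrization) is already in hand, so the statement follows by specializing the machinery of Sections \ref{Sp4R}, \ref{sec:mcg_inv_cmplx_str}, and \ref{equivariant} to the Hitchin locus.
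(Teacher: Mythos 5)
Your proposal is correct and follows essentially the same route as the paper, which derives this theorem by combining the component count and Arf-invariant orbit structure (Theorem 8.8), the general free-locus/orbifold discussion of the quotient by $\MCG(S)$, and the $\MCG(S)$-equivariant fiber-bundle description of $\Xx^{\mathrm{max}}_{2g-2}(\Gamma,\sSp(4,\R))$ over $\Teich(S)$ with fiber $H^0(\Sigma,K^4)$ coming from Hitchin's parametrization restricted to the $q_2=0$ locus. The only minor imprecision is your local model at orbifold points: the stabilizer of $\Sigma$ acts nontrivially on the fiber $H^0(\Sigma,K^4)$ by pullback as well as on the disc factor, but this does not affect the generic-fiber statement or the submersivity of the induced equivariant map.
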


\bibliography{sample}{}

\begin{thebibliography}{10}

\bibitem{AthanaseSurvey2014}
Annette A'Campo-Neuen, Norbert A'Campo, Lizhen Ji, and Athanase Papadopoulos.
\newblock A commentary on {T}eichm\"uller's paper: {V}er\"anderliche
  {R}iemannsche {F}l\"achen ({V}ariable {R}iemann surfaces) {D}eutsche {M}ath.
  7 (1944), 344--359.
\newblock In {\em Handbook of {T}eichm\"uller theory. {V}ol. {IV}}, volume~19
  of {\em IRMA Lect. Math. Theor. Phys.}, pages 805--814. Eur. Math. Soc.,
  Z\"urich, 2014.

\bibitem{AtiyahSpinStructures}
Michael~F. Atiyah.
\newblock Riemann surfaces and spin structures.
\newblock {\em Ann. Sci. \'Ecole Norm. Sup. (4)}, 4:47--62, 1971.

\bibitem{MaxRepsHermSymmSpace}
Steven~B. Bradlow, Oscar Garc{\'{\i}}a-Prada, and Peter~B. Gothen.
\newblock Maximal surface group representations in isometry groups of classical
  {H}ermitian symmetric spaces.
\newblock {\em Geom. Dedicata}, 122:185--213, 2006.

\bibitem{MaximalSP4}
Steven~B. Bradlow, Oscar Garc{\'{\i}}a-Prada, and Peter~B. Gothen.
\newblock Deformations of maximal representations in {${\rm Sp}(4,\Bbb R)$}.
\newblock {\em Q. J. Math.}, 63(4):795--843, 2012.

\bibitem{MaxRepsAnosov}
Marc Burger, Alessandra Iozzi, Fran{\c{c}}ois Labourie, and Anna Wienhard.
\newblock Maximal representations of surface groups: symplectic {A}nosov
  structures.
\newblock {\em Pure Appl. Math. Q.}, 1(3, Special Issue: In memory of Armand
  Borel. Part 2):543--590, 2005.

\bibitem{BIWmaximalToledoAnnals}
Marc Burger, Alessandra Iozzi, and Anna Wienhard.
\newblock Surface group representations with maximal {T}oledo invariant.
\newblock {\em Ann. of Math. (2)}, 172(1):517--566, 2010.

\bibitem{BurgerIozziWienhardSurvey}
Marc Burger, Alessandra Iozzi, and Anna Wienhard.
\newblock Higher {T}eichm\"uller spaces: from {${\rm SL}(2,\Bbb R)$} to other
  {L}ie groups.
\newblock In {\em Handbook of {T}eichm\"uller theory. {V}ol. {IV}}, volume~19
  of {\em IRMA Lect. Math. Theor. Phys.}, pages 539--618. Eur. Math. Soc.,
  Z\"urich, 2014.

\bibitem{TwistorTheoryHarmonicMapsBOOK}
Francis~E. Burstall and John~H. Rawnsley.
\newblock {\em Twistor theory for {R}iemannian symmetric spaces}, volume 1424
  of {\em Lecture Notes in Mathematics}.
\newblock Springer-Verlag, Berlin, 1990.
\newblock With applications to harmonic maps of Riemann surfaces.

\bibitem{MySp4Gothen}
Brian Collier.
\newblock {Maximal $\mathsf{Sp}(4,\mathbb{R})$ surface group representations,
  minimal immersions and cyclic surfaces}.
\newblock {\em Geometriae Dedicata}, 180(1):241--285, 2015.

\bibitem{CollierTholozanToulisse}
Brian Collier, Nicolas Tholozan, and J\'er\'emy Toulisse.
\newblock The geometry of maximal representations of surface groups into
  {SO}(2,n).
\newblock {\em Preprint arXiv:1702.08799}, 2017.

\bibitem{canonicalmetrics}
Kevin Corlette.
\newblock Flat {$G$}-bundles with canonical metrics.
\newblock {\em J. Differential Geom.}, 28(3):361--382, 1988.

\bibitem{DeligneMumford69}
P.~Deligne and D.~Mumford.
\newblock The irreducibility of the space of curves of given genus.
\newblock {\em Inst. Hautes \'Etudes Sci. Publ. Math.}, (36):75--109, 1969.

\bibitem{harmoicmetric}
S.~K. Donaldson.
\newblock Twisted harmonic maps and the self-duality equations.
\newblock {\em Proc. London Math. Soc. (3)}, 55(1):127--131, 1987.

\bibitem{PrimerMCG}
Benson Farb and Dan Margalit.
\newblock {\em A primer on mapping class groups}, volume~49 of {\em Princeton
  Mathematical Series}.
\newblock Princeton University Press, Princeton, NJ, 2012.

\bibitem{HiggsPairsSTABILITY}
Oscar Garc{\'{\i}}a-Prada, Peter~B. Gothen, and Ignasi Mundet~i Riera.
\newblock {The Hitchin-Kobayashi correspondence, Higgs pairs and surface group
  representations}.
\newblock {\em ArXiv e-prints}, September 2009.

\bibitem{HiggsbundlesSP2nR}
Oscar Garc{\'{\i}}a-Prada, Peter~B. Gothen, and Ignasi Mundet~i Riera.
\newblock Higgs bundles and surface group representations in the real
  symplectic group.
\newblock {\em J. Topol.}, 6(1):64--118, 2013.

\bibitem{SymplecticNatureofFund}
William~M. Goldman.
\newblock The symplectic nature of fundamental groups of surfaces.
\newblock {\em Adv. in Math.}, 54(2):200--225, 1984.

\bibitem{TopologicalComponents}
William~M. Goldman.
\newblock Topological components of spaces of representations.
\newblock {\em Invent. Math.}, 93(3):557--607, 1988.

\bibitem{GoldmanMappingClassAction}
William~M. Goldman.
\newblock Mapping class group dynamics on surface group representations.
\newblock In {\em Problems on mapping class groups and related topics},
  volume~74 of {\em Proc. Sympos. Pure Math.}, pages 189--214. Amer. Math.
  Soc., Providence, RI, 2006.

\bibitem{sp4GothenConnComp}
Peter~B. Gothen.
\newblock Components of spaces of representations and stable triples.
\newblock {\em Topology}, 40(4):823--850, 2001.

\bibitem{AndreQuadraticPairs}
Peter~B. Gothen and Andr{\'e}~G. Oliveira.
\newblock Rank two quadratic pairs and surface group representations.
\newblock {\em Geom. Dedicata}, 161:335--375, 2012.

\bibitem{GrothendickTohoku}
Alexander Grothendieck.
\newblock Sur quelques points d’algebre homologique.
\newblock {\em Tôhoku Math. J. (2) 9}, pages 119--221, 1957.

\bibitem{GGKWAnosov}
Fran\c{c}ois Gu\'eritaud, Olivier Guichard, Fanny Kassel, and Anna Wienhard.
\newblock Anosov representations and proper actions.
\newblock {\em Geom. Topol.}, 21(1):485--584, 2017.

\bibitem{GWDomainsofDiscont}
O.~Guichard and A.~Wienhard.
\newblock Anosov representations: domains of discontinuity and applications.
\newblock {\em Invent. Math.}, 190(2):357--438, 2012.

\bibitem{TopInvariantsAnosov}
Olivier Guichard and Anna Wienhard.
\newblock Topological invariants of {A}nosov representations.
\newblock {\em J. Topol.}, 3(3):578--642, 2010.

\bibitem{TightHomomorphismClassification}
O.~Hamlet.
\newblock Tight maps and holomorphicity.
\newblock {\em Transform. Groups}, 19(4):999--1026, 2014.

\bibitem{HarerSpin}
John~L. Harer.
\newblock Stability of the homology of the moduli spaces of {R}iemann surfaces
  with spin structure.
\newblock {\em Math. Ann.}, 287(2):323--334, 1990.

\bibitem{selfduality}
N.~J. Hitchin.
\newblock The self-duality equations on a {R}iemann surface.
\newblock {\em Proc. London Math. Soc. (3)}, 55(1):59--126, 1987.

\bibitem{liegroupsteichmuller}
N.~J. Hitchin.
\newblock Lie groups and {T}eichm\"uller space.
\newblock {\em Topology}, 31(3):449--473, 1992.

\bibitem{HuangWang15}
Z.~Huang and B.~Wang.
\newblock Counting minimal surfaces in quasi-{F}uchsian three-manifolds.
\newblock {\em Trans. Amer. Math. Soc.}, 367(9):6063--6083, 2015.

\bibitem{KLPAnosov1}
Michael Kapovich, Bernhard Leeb, and Joan Porti.
\newblock Dynamics on flag manifolds: domains of proper discontinuity and
  cocompactness.
\newblock {\em Geom. Topol.}, 22(1):157--234, 2018.

\bibitem{LabourieCubic}
Fran\c{c}ois Labourie.
\newblock Flat projective structures on surfaces and cubic holomorphic
  differentials.
\newblock {\em Pure Appl. Math. Q.}, 3(4, Special Issue: In honor of Grigory
  Margulis. Part 1):1057--1099, 2007.

\bibitem{cyclicSurfacesRank2}
Fran\c{c}ois Labourie.
\newblock Cyclic surfaces and {H}itchin components in rank 2.
\newblock {\em Ann. of Math. (2)}, 185(1):1--58, 2017.

\bibitem{AnosovFlowsLabourie}
Fran{\c{c}}ois Labourie.
\newblock Anosov flows, surface groups and curves in projective space.
\newblock {\em Invent. Math.}, 165(1):51--114, 2006.

\bibitem{CrossRatioAnosoveProperEnergy}
Fran{\c{c}}ois Labourie.
\newblock Cross ratios, {A}nosov representations and the energy functional on
  {T}eichm\"uller space.
\newblock {\em Ann. Sci. \'Ec. Norm. Sup\'er. (4)}, 41(3):437--469, 2008.

\bibitem{Lang52}
Serge Lang.
\newblock On quasi algebraic closure.
\newblock {\em Ann. of Math. (2)}, 55:373--390, 1952.

\bibitem{AffSpheresConvexRPn}
John~C. Loftin.
\newblock Affine spheres and convex {$\Bbb{RP}^n$}-manifolds.
\newblock {\em Amer. J. Math.}, 123(2):255--274, 2001.

\bibitem{SymmetricProductsofAlgebraicCurves}
I.~G. Macdonald.
\newblock Symmetric products of an algebraic curve.
\newblock {\em Topology}, 1:319--343, 1962.

\bibitem{MilnorStasheff}
John~W. Milnor and James~D. Stasheff.
\newblock {\em Characteristic classes}.
\newblock Princeton University Press, Princeton, N. J.; University of Tokyo
  Press, Tokyo, 1974.
\newblock Annals of Mathematics Studies, No. 76.

\bibitem{MumO2Bun}
David Mumford.
\newblock Theta characteristics of an algebraic curve.
\newblock {\em Ann. Sci. \'Ecole Norm. Sup. (4)}, 4:181--192, 1971.

\bibitem{NarasimhanSeshadri}
M.~S. Narasimhan and C.~S. Seshadri.
\newblock Stable and unitary vector bundles on a compact {R}iemann surface.
\newblock {\em Ann. of Math. (2)}, 82:540--567, 1965.

\bibitem{ramanathan_1975}
A.~Ramanathan.
\newblock Stable principal bundles on a compact {R}iemann surface.
\newblock {\em Math. Ann.}, 213:129--152, 1975.

\bibitem{MinImmofRiemannSurf}
J.~Sacks and K.~Uhlenbeck.
\newblock Minimal immersions of closed {R}iemann surfaces.
\newblock {\em Trans. Amer. Math. Soc.}, 271(2):639--652, 1982.

\bibitem{schmitt_2005}
Alexander Schmitt.
\newblock Moduli for decorated tuples of sheaves and representation spaces for
  quivers.
\newblock {\em Proceedings Mathematical Sciences}, 115(1):15--–49, 2005.

\bibitem{SchoenYauMinimalSurfEnergy}
R.~Schoen and Shing~Tung Yau.
\newblock Existence of incompressible minimal surfaces and the topology of
  three-dimensional manifolds with nonnegative scalar curvature.
\newblock {\em Ann. of Math. (2)}, 110(1):127--142, 1979.

\bibitem{SL2xSL2uniqueMinSurface}
Richard~M. Schoen.
\newblock The role of harmonic mappings in rigidity and deformation problems.
\newblock In {\em Complex geometry ({O}saka, 1990)}, volume 143 of {\em Lecture
  Notes in Pure and Appl. Math.}, pages 179--200. Dekker, New York, 1993.

\bibitem{SimpsonVHS}
Carlos~T. Simpson.
\newblock Constructing variations of {H}odge structure using {Y}ang-{M}ills
  theory and applications to uniformization.
\newblock {\em J. Amer. Math. Soc.}, 1(4):867--918, 1988.

\bibitem{SimpsonModuli1}
Carlos~T. Simpson.
\newblock Moduli of representations of the fundamental group of a smooth
  projective variety. {I}.
\newblock {\em Inst. Hautes \'Etudes Sci. Publ. Math.}, (79):47--129, 1994.

\bibitem{SimpsonModuli2}
Carlos~T. Simpson.
\newblock Moduli of representations of the fundamental group of a smooth
  projective variety. {II}.
\newblock {\em Inst. Hautes \'Etudes Sci. Publ. Math.}, (80):5--79 (1995),
  1994.

\bibitem{WienhardAction}
Anna Wienhard.
\newblock The action of the mapping class group on maximal representations.
\newblock {\em Geom. Dedicata}, 120:179--191, 2006.

\bibitem{TeichOfHarmonic}
Michael Wolf.
\newblock The {T}eichm\"uller theory of harmonic maps.
\newblock {\em J. Differential Geom.}, 29(2):449--479, 1989.

\end{thebibliography}
\bibliographystyle{plain}

\end{document}